\DeclareMathAlphabet{\mathpzc}{OT1}{pzc}{m}{it}
\numberwithin{equation}{section}
\theoremstyle{plain}
\newtheorem{thm}{Theorem}[section]
\newtheorem{lem}[thm]{Lemma}
\newtheorem{cor}[thm]{Corollary}
\newtheorem{prop}[thm]{Proposition}
\theoremstyle{definition}
\newtheorem{dfn}{Definition}[section]
\newtheorem{ntz}{Notation}[section]
\newtheorem{exam}[thm]{Example}
\newtheorem{rmk}[thm]{Remark}
\newcommand\Lfs{\mathbf{L}_{\sigmaup}}
\newcommand\et{\mathfrak{e}}
\DeclareMathAlphabet{\mathpzc}{OT1}{pzc}{m}{it}
\newcommand\SO{\mathbf{SO}}
\newcommand\gr{\mathfrak{g}_{\mathbb{R}}}
\DeclareMathOperator{\R}{{\mathbb{R}}}
\DeclareMathOperator{\Ub}{\mathbf{U}}
\DeclareMathOperator{\kt}{{\kappaup}}
\DeclareMathOperator{\Gf}{\mathbf{G}}
\DeclareMathOperator{\Gfs}{\mathbf{G}_{\sigmaup}}
\DeclareMathOperator{\Gfsp}{\mathbf{G}^{\prime}_{\sigmaup^{\prime}}}
\DeclareMathOperator{\supp}{\mathrm{supp}}
\DeclareMathOperator{\ad}{\mathrm{ad}}
\DeclareMathOperator{\Ad}{\mathrm{Ad}}
\DeclareMathOperator{\Lie}{\mathrm{Lie}}
\newcommand\Cd{\mathfrak{C}}
\DeclareMathOperator{\st}{\mathfrak{s}}
\DeclareMathOperator{\N}{\mathbb{N}}
\DeclareMathOperator{\Z}{\mathbb{Z}}
\DeclareMathOperator{\at}{\mathfrak{a}}
\DeclareMathOperator{\pt}{\mathfrak{p}}
\DeclareMathOperator{\ft}{\mathfrak{f}}
\DeclareMathOperator{\im}{{\mathrm{Im}}}
\DeclareMathOperator{\re}{{\mathrm{Re}}}
\DeclareMathOperator{\Pcr}{{\mathscr{P}}}
\DeclareMathOperator{\C}{\mathbb{C}}
\newcommand\Kf{\mathbf{K}}
\newcommand\Kfs{\mathbf{K}_{\sigmaup}}
\newcommand\Qf{\mathbf{Q}}
\newcommand\Aut{\mathpzc{Aut}}
\newcommand\Inv{\mathpzc{I\!{n}{v}}}
\newcommand\Invs{\mathpzc{I\!{n}{v}}^{\tauup}_{\!\stt}(\gr,\hr)}
\newcommand\wt{\mathfrak{w}}
\newcommand\qt{\mathfrak{q}}
\newcommand\qts{\qt\,{\cap}\,\sigmaup(\qt)}
\newcommand\gt{\mathfrak{g}}
\newcommand\ord{\textfrak{L}}
\newcommand\Rad{\mathpzc{R}}
\newcommand\hg{\mathfrak{h}}
\newcommand\Aq{\mathpzc{A}}
\newcommand\vt{\mathfrak{v}}
\newcommand\wb{\mathbf{w}}
\newcommand\Jd{\mathrm{J}}
\newcommand\Sd{\mathrm{S}}
\newcommand\mts{\textswab{m}_{\sigmaup}}
\newcommand\Wf{\mathbf{W}}
\newcommand\SL{\mathbf{SL}}
\newcommand\SU{\mathbf{SU}}
\newcommand\CP{\mathbb{CP}}
\newcommand\RP{\mathbb{RP}}
\newcommand\Sb{\mathbf{S}}
\newcommand\Hc{\mathcal{H}}
\newcommand\Ci{\mathcal{C}^{\infty}}
\newcommand\diag{\mathrm{diag}}
\newcommand\bt{\mathfrak{b}}
\newcommand\nt{\mathfrak{n}}
\newcommand\gl{\mathfrak{gl}}
\newcommand\slt{\mathfrak{sl}}
\newcommand\su{\mathfrak{su}}
\newcommand\rt{\mathfrak{r}}
\newcommand\td{\mathfrak{t}}
\newcommand\Af{\mathbf{A}}
\newcommand\Eq{\mathpzc{E}}
\newcommand{\Bz}{\mathpzc{B}}
\newcommand\lt{\textswab{l}}
\newcommand\lts{\textswab{l}_{\sigmaup}}
\newcommand\id{\mathrm{id}}
\newcommand\Bt{\mathfrak{B}}
\newcommand\vq{\mathpzc{v}}
\newcommand\T{\mathrm{T}}
\newcommand\Hd{\mathrm{H}}
\newcommand\Ft{\mathfrak{F}}
\newcommand\bil{\texttt{b}}
\newcommand\Qt{\mathfrak{Q}}
\newcommand\Hb{\mathbb{H}}
\newcommand\Qq{\mathpzc{Q}}
\newcommand\Pq{\mathpzc{P}}
\newcommand\Lq{\mathcal{L}}
\newcommand\sfV{\textsf{V}}
\newcommand\sfv{\textsf{v}}
\newcommand\Hom{\mathrm{Hom}}
\renewcommand\rq{\mathpzc{r}}
\newcommand\epi{\varepsilon}
\newcommand\Nt{\textfrak{N}}
\newcommand\Kt{\textfrak{K}}
\newcommand\sfa{\textsf{a}}
\newcommand\ttt{\texttt{t}}
\newcommand\Vtt{\texttt{V}}
\newcommand\Pt{\mathfrak{P}}
\newcommand\ktt{\texttt{k}}
\newcommand\stt{\texttt{s}}
\newcommand\pct{\textsf{p}}
\newcommand\qct{\textsf{q}}
\newcommand\sfM{\textsf{M}}
\newcommand\sfF{\textsf{F}}
\newcommand\sfG{\textsf{G}}
\newcommand\sfB{\textsf{B}}
\newcommand\e{\texttt{e}}
\newcommand\hr{\mathfrak{h}_{\mathbb{R}}}
\newcommand\gu{\mathfrak{g}_{\tauup}}
\newcommand\gs{\mathfrak{g}_{\sigmaup}}
\newcommand\gsp{\mathfrak{g}'_{\sigmaup'}}
\newcommand\Qc{\Qq^{c}}
\newcommand\Qn{\Qq^{n}}
\newcommand\qtc{\mathfrak{q}^{c}}
\newcommand\rtt[1]{\texttt{r}_{#1}}
\newcommand\hs{\mathfrak{h}_{\sigmaup}}
\newcommand\ptt{\texttt{p}}
\newcommand\Stt{\texttt{S}}
\newcommand\hst{\mathfrak{h}_{\stt}}
\newcommand\go{\mathfrak{g}_{0}}
\newcommand\ho{\mathfrak{h}_{0}}
\newcommand\codim{\mathrm{codim}}
   \def\DHLhksqrt#1#2{\setbox0=\hbox{$#1\sqrt{#2\,}$}\dimen0=\ht0
     \advance\dimen0-0.2\ht0
     \setbox2=\hbox{\vrule height\ht0 depth -\dimen0}%
     {\box0\lower0.4pt\box2}}
\title{On Contact and Finitely Levi-nondegenerate $CR$ algebras }
\author{S.Marini, C.Medori, M.Nacinovich}
\address{Stefano Marini: Dipartimento di Scienze Matematiche, Fisiche e Informatiche\\ Universit\`a di Parma\\ Parco Area delle Scienze 53/A (Campus), 43124 Parma
 (Italy)} \email{stefano.marini@unipr.it}
\address{Costantino Medori:
Dipartimento di Scienze Matematiche, Fisiche e Informatiche\\ Universit\`a di Parma\\ Parco Area delle Scienze 53/A (Campus), 43124 Parma
 (Italy)} \email{costantino.medori@unipr.it}
\address{Mauro Nacinovich:
Dipartimento di Matematica\\ II Universit\`a di Roma
``Tor Ver\-ga\-ta''\\ Via della Ricerca Scientifica\\ 00133 Roma
(Italy)}
\email{nacinovi@mat.uniroma2.it}
\subjclass[2000]{Primary: 32V35, 32V40,
Secondary:  17B20, 17B22 }
\keywords{$CR$-manifold, $CR$-algebra, Levi order, contact order, depth}
\thanks{The Authors were partially  supported by 
the group G.N.S.A.G.A. of I.N.d.A.M. This research was granted by University of Parma through the action Bando di Ateneo 2025 per la ricerca"
}
\date\today
\begin{document}
\maketitle
\begin{abstract} We study $CR$-manifolds of arbitrary $CR$ codimension,
mainly focusing on Levi and contact-nondegeneracy and depth.
We investigate these and other invariants in the locally homogeneous case,
developing a comprehensive theory which establishes 
correspondences with related 
properties of the associated $CR$-algebras and, 
in the parabolic case, with the combinatorics of their
cross-marked painted
root diagrams.
\end{abstract} 
\tableofcontents
\section*{Introduction}
The Levi form is a fundamental invariant of $CR$ geometry.  Its nondegeneracy 
was shown in \cite{Tan70} to be a sufficient condition to apply Cartan's 
method to investigate 
automorphisms 
and the equivalence problem 
of $CR$ structures. Generalised  Levi forms  
(see e.g. \cite{BHLN})
were concocted to extend these results and
describe other $CR$ invariants.
\par
The main goal of  this paper is investigating 
three  such 
fundamental invariants 
in the case 
of locally homogeneous $CR$-manifolds
of arbitrary $CR$-codimension: \emph{Levi} and 
\emph{contact $k$-nondegeneracy}, and \emph{depth}. \par

Levi $k$-nondegeneracy  
refers to the minimal number of Lie brackets with anti-holomorphic tangent vector fields
needed to move a holomorphic vector field out of the complexified 
contact $CR$ distribution
and is gauged by
the nested sequence of 
kernels of higher order Levi forms
(see e.g. \cite{Freeman1977, MMN21}). 
For  
a point $\pct$  
of 
a real-analytic generic $CR$-manifold
$\sfM$,
Levi nondegeneracy is equivalent to \textit{holomorphic nondegeneracy}, 
that is, 
to 
the absence of any nontrivial germ $X_{(\pct)}$
of holomorphic vector field with holomorphic coefficients  
in the complex ambient space $\tilde{\sfM}$ 
tangent to $\sfM$ 
at $\pct$ 
(see \cite[Th.~1.5.1, p. 330]{BER61999}).
This is a natural  obstruction for locally representing 
$\sfM$ 
as a product of a $CR$ manifold of smaller dimension and a nontrivial complex manifold 
and 
implies 
that the group of germs of $CR$-automorphisms of $\sfM$ at $\pct$ is finite dimensional. 
\par
Levi $k$-nondegeneracy is 
used in the literature as a key invariant 
for studying  
$CR$ equivalence  
and  groups of $CR$ automorphisms, 
see e.g. \cite{IsaevZaitsev2013, KolarKoss2022, Medori_Nacinovich_2001,
MedoriSpiro2014, MedoriSpiro2015, 
MerkerPocchiola2020, ZelenkoSykes2023}. \par

Contact $k$-nondegeneracy 
and depth
are  
basic 
concepts in the wider
context of 
generalised  
contact manifolds. 
They measure  non-in\-te\-gra\-bil\-ity of the contact distribution.
For a point $\pct$ of a generalised contact manifold $\sfM$ and 
a given  tangent vector field of the contact distribution $\Hc(\sfM)$,
not vanishing at $\pct$, 
the first  counts 
the minimal number of Lie brackets 
with vector fields of $\Hc(\sfM)$ 
needed to obtain a section whose value at $\pct$ is not 
taken by any element of~$\Hc(\sfM)$.
The second counts the number of brackets of elements of $\Hc(\sfM)$
that are
needed to generate the  full space of tangent vector fields to $\sfM$. 
\par 
For $CR$-manifolds, 
this applies specifically to the underlying real $CR$ contact distribution
and has noteworthy applications to properties of $CR$ functions
and distributions
(see e.g. \cite{AHNP08a, HN2000, NP2015}).\par
Contact $k$-nondegeneracy turns  
out to be useful in contexts where, for example, 
the local symmetry group of  
the underlying \textit{contact pair}
$(\sfM,\Hd\sfM)$ 
coincides 
with that of 
local 
CR-automorphisms of  the $CR$-triple $(\sfM,\Hd\sfM,\Jd)$ 
(this is the case of \textit{accidental $CR$-manifolds} \cite[Definition 2.8]{hill2025accidentalcrstructures}).
\par
In general, the contact order of a 
$CR$-manifold 
is less than or equal to  its 
order of 
Levi-nondegeneracy. 
Strict inequality 
may occur 
when the contact order is greater or equal to  $2$, 
while 
contact order $1$ 
implies 
Levi 
$1$-nondegeneracy.
Although this invariant is a  
weaker condition than 
Levi-nondegeneracy,  
it  
proves to be useful in the study of the $CR$-equivalence problem 
(see \cite[\S{2}]{KruSan2025}) and 
implies  
finite dimensionality 
of the $CR$-au\-to\-mor\-phism group 
in the locally homogeneous case (see \cite{Marini_Medori_Nacinovich_2020,NMSM}). \par

For 
locally homogeneous $CR$-manifolds, 
it is often convenient to translate 
issues of $CR$ geometry 
into the study of 
suitable 
properties of related 
real Lie algebras as, for instance,  
 in 
\cite{Doubrov2021, Greg2021, sykes2025homogeneous}.
To  
investigate 
locally homogeneous $CR$-manifolds, 
associated $CR$-algebras were introduced in \cite{MN05}.
They are  
 pairs consisting of a real Lie algebra and a complex 
Lie 
subalgebra of its complexification,
encoding 
the Lie algebra of 
the Lie group of  
$CR$-symmetries and the $CR$ structure 
at a base point, respectively. 
\par
In this paper, 
we analyze 
various 
$CR$-invariants  
by using 
$CR$-algebras 
as a tool. 
This approach leads to a comprehensive theory
establishing  
 a correspondence 
 between  $CR$ geometry of the manifold
 and 
algebraic properties of these objects.  
 In the semisimple case, this may be encoded 
 in the combinatorics of their root systems. Prerequisites of Lie theory,
 concepts and notation from \cite{MMN23} will be consistently used
in this research.
\par\vspace{3pt}
The paper is structured as follows.
\par
In 
\S{1} we recall from the literature basic notions of $CR$ geometry,
along with those of Levi $k$-nondegeneracy, 
contact $k$-nondegeneracy and depth.\par
In 
\S{2} we introduce  $CR$-algebras and concepts and notation of
that will be used throughout the rest of the paper (see ~\cite{MN05}).\par
In 
\S{3} we 
use  
$CR$-algebras to 
characterise  
contact 
and Levi nondegeneracy 
in the case of 
(locally) $\Gfs$-homogeneous $CR$-manifolds. 
In particular, we 
construct 
$\Gfs$-equivariant fibrations, 
obtaining quotient $CR$-manifolds that are \textit{fundamental}, \textit{Levi} 
and \textit{contact-nondegenerate},
and describe relations between these fibrations (cf. also \cite{AMN2013}).
\par
In 
\S{4} 
we restrain our consideration to the $CR$ structures of the orbits of real forms 
in complex flag manifolds. 
These objects were first studied by J.A. Wolf in~\cite{Wolf69}.   
In their associated $CR$-algebras $(\gs,\qt)$ the symmetry algebra $\gs$ and
its complexification $\gt$ are semisimple and  
the complex subalgebra $\qt$
describing their $CR$ structure is \textit{parabolic}.
By using suitable Cartan subalgebras, we obtain root systems
in which real orbits may be characterised by suitable 
modified Dynkin diagrams. The root involution induced by the real form
\textit{paints}  roots by different colours and joins  pairs of roots
by arrows, while the parabolic $\qt$
is described by crossing all roots whose root-space is not contained in $\qt$.
Choosing appropriate systems of simple roots, we obtain  modified Dynkin diagrams
where colours, arrows and crosses completely determine $\gs$ and $\qt$.
\par 
In this framework, cross-marked Satake diagrams are well suited to describe
closed orbits and cross-marked Vogan diagrams to describe open orbits.
Intermediate orbits may require to employ Cartan subalgebras of $\gs$
different from the maximally vector and the maximally compact ones
and lead to more general cross-marked diagrams. Moreover, different
systems of simple roots are admissible.  
Two special choices are more useful and lead to \emph{S-} and
\emph{$V$-diagrams},
generalising  cross-marked Satake and Vogan diagrams 
(see~\cite{AMN06b, AMN2013, MMN23}). 
 The   
 $\Gfs$-equivariant fibrations of \S{3}
are 
graphically described 
by using these diagrams, which are also used to compute contact depth
and to study polarization and holomorphic foliations.
\par
In the final  
\S{5},
we 
estimate orders of Levi and contact-nondegeneracy, pursuing 
the work of \cite{MMN21,mmn2022}.
For parbolic $CR$-algebras $(\gs,\qt)$ 
having finite orders and 
in which $\gs$ is the real form of a simple
complex Lie algebra $\gt$, we show that,  
when 
$\mathfrak{g}$ is of type 
$\textsc{A}_{\ell}$, $\textsc{B}_{2}$, or $\textsc{C}_{\ell}$, 
the Levi-order of 
$(\mathfrak{g},\mathfrak{q})$ is at most two. If $\mathfrak{g}$ is of one of the types 
$\textsc{B}_{\ell\geq 3}$, $\textsc{D}_{\ell{\geq}4}$, $\textsc{E}_{6}$, $\textsc{E}_{7}$, $\textsc{E}_{8}$,
$\textsc{F}_{4}$, $\textsc{G}_{2}$, 
then the Levi-order of $(\gs,\qt)$ is at most three, 
and for each of these types there exist $CR$-algebras with Levi-orders one, two, three.
\par 
Minimal orbits have Levi-order at most two 
(cf. \cite{MMN21, mmn2022}). We show that this bound applies to a larger class,
that we call \emph{weakly integrable}, 
clarifying and extenfing the notion of \textit{minimal type}
introduced  in~\cite{MMN21}.


\section{Levi and contact-nondegeneracy of $CR$ manifolds}
 \begin{dfn}
Let $\sfM$ be a smooth manifold of real dimension $2n{+}d$. \par
A \emph{Cauchy-Riemann  structure} of type $(n,d)$ on $\sfM$
is a rank $n$ 
smooth complex  subbundle
$\T^{0,1}\sfM$ of its complexified tangent bundle $\C{\otimes}\T{\sfM},$ satisfying 
\begin{enumerate}
\item  $\T^{0,1}\sfM\cap\overline{\T^{0,1}\sfM}=\{0\}$;
\item  $[\Gamma^{\infty}(\sfM,\T^{0,1}\sfM),\Gamma^{\infty}(\sfM,\T^{0,1}\sfM)]\subseteq
 \Gamma^{\infty}(\sfM,\T^{0,1}\sfM).$
\end{enumerate}
The  pair $(\sfM, \T^{0,1}\sfM)$ is  
 called a $CR$ manifold, $n$ its \emph{$CR$-dimension}, $d$ its \emph{$CR$-codimension}.
 We set $\T^{1,0}\sfM\,{=}\,\overline{\T^{0,1}\sfM}$.
 \end{dfn}
 \subsection{Levi order and Levi-nondegeneracy} 
Strict Levi-nondegeneracy at a point $\pct$ of $\sfM$ means that, for every
complex vector field
$Z\,{\in}\,\Gamma^{\infty}(\sfM,\T^{0,1}\sfM)$ with $Z_{\pct}{\neq}\,0$ we can find 
$Z'\,{\in}\,\Gamma^{\infty}(\sfM,\T^{0,1}\sfM)$ such that 
$[Z,\bar{Z}']_{\pct}\,{\notin}\,\T_{\pct}^{1,0}\sfM\,{\oplus}\,\T_{\pct}^{0,1}\sfM$.
In 
\cite[\S{13}]{MN05},  
a weaker notion of \textit{Levi-nondegeneracy} was introduced. 
\begin{dfn}\label{d1.2}
Let $L\,{\in}\,\Gamma^\infty(\sfM,\T^{1,0}\sfM)$.
Its \emph{Levi-order} at $\pct\,{\in}\,\sfM$  
is defined by\footnote{Higher order commutators of
vector fields are recursively defined by $[X_{1},\hdots,X_{m-1},X_{m}]\,{=}\,[[X_{1},\hdots,X_{m-1}],X_{m}]$.} 
\begin{equation}\label{e4.9}
{k}_{\pct}(L)=\inf\left\{m\,{\in}\,\Z_{+}\left| \begin{gathered} 
 \exists\,\bar{Z}_1,\,\hdots ,\,\, \bar{Z}_m\,{\in}\,\Gamma^\infty(\sfM,\T^{0,1}\sfM)\;\; s.t.\\ 
 [L\,,\,\,\bar{Z}_1\,,\,\,\hdots\, ,
\,\,\bar{Z}_m]_{\pct}\notin \T_{\pct}^{1,0}
{\sfM}\oplus
\T_{\pct}^{0,1}{\sfM}
\end{gathered}\right.\right\}.
\end{equation}
\par
We say that $\sfM$ is \emph{Levi nondegenerate} at $\pct$ if all $L\,{\in}\,\Ci(\sfM,\T^{1,0}\sfM)$ 
with $L_{\pct}\,{\neq}\,0$ have finite Levi-order at $\pct$. The positive integer, or ${+}\infty$,
\begin{equation}
 k_{\pct}(\sfM)=\sup\big\{{k}_{\pct}(L)\mid L\,{\in}\,\Gamma^\infty(M,\T^{1,0}\sfM),\;
 L_{\pct}\,{\neq}\,0\big\} 
\end{equation}
is called the \emph{Levi-order} of $\sfM$ at $\pct$.
 We say that $\sfM$ is \emph{ Levi-$k$-nondegenerate}
at points having exactly finite Levi order $k$.
Strict Levi-nondegeneracy means Levi-$1$-nondegeneracy. 
\end{dfn}
The weaker nondegeneracy condition in Definition\,\ref{d2.2}
is motivated by
the following\footnote{For the notion of $CR$-map and $CR$-submersion the reader may
refer to \cite[\S{1.1}]{AMN2013}.
The map $\piup\,{:}\,\sfM\,{\to}\,\sfM'$ is $CR$ iff $\piup_{*}(\T^{0,1}\sfM)\,{\subseteq}\,\T^{0,1}\sfM'$ and is a
$CR$-fibration iff, moreover,  $\piup$ and $\piup_{*}$ are onto and $\piup_{*}(\T^{0,1}\sfM)\,{=}\,\T^{0,1}\sfM'$.}
\begin{prop}Let $\sfM$ and $\sfM^{\,\prime}$ be $CR$-manifolds. Assume that
$\sfM^{\,\prime}$ is locally embeddable and that
there is a $CR$-fibration $\piup\,{:}\,\sfM\,{\to}\,\sfM^{\,\prime}$, with totally complex
fibres of positive dimension. Then $\sfM$ is Levi-degenerate.\end{prop}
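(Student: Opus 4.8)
The plan is to produce a single nonzero holomorphic vector field, tangent to the fibres, that can never be pushed out of $\T^{1,0}\sfM\,{\oplus}\,\T^{0,1}\sfM$ by iterated anti-holomorphic brackets, thereby exhibiting a field of infinite Levi-order at a point. Write $\cV$ for the complexification of $\ker\piup_*$, the vertical bundle of the fibration. Because the fibres are totally complex of positive complex dimension $f$, the subbundle $\cV$ is invariant under the complex structure and splits as $\cV\,{=}\,\cV^{1,0}\,{\oplus}\,\cV^{0,1}$, with $\cV^{1,0}\,{=}\,\cV\,{\cap}\,\T^{1,0}\sfM$ of rank $f\,{>}\,0$ and $\cV\,{\subseteq}\,\T^{1,0}\sfM\,{\oplus}\,\T^{0,1}\sfM$. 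I fix once and for all a local section $L$ of $\cV^{1,0}$ with $L_{\pct}\,{\neq}\,0$; this is the candidate of infinite Levi-order, and establishing $k_{\pct}(L)\,{=}\,{+}\infty$ proves Levi-degeneracy at $\pct$.

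The decisive input is the local embeddability of $\sfM^{\prime}$, which I use to manufacture enough $CR$ functions on $\sfM$ near $\pct$. Denoting by $(n,d)$ and $(n',d')$ the types of $\sfM$ and $\sfM^{\prime}$, the hypotheses give $d'\,{=}\,d$ and $n\,{=}\,n'{+}f$. Near $\piup(\pct)$ I embed $\sfM^{\prime}$ generically into $\C^{N'}$ with $N'\,{=}\,n'{+}d$, so that the restrictions $w_1,\dots,w_{N'}$ of the ambient holomorphic coordinates are $CR$ functions on $\sfM^{\prime}$ with $\C$-linearly independent differentials. Since $\piup$ is a $CR$ map, the pullbacks $\varphi_k\,{=}\,w_k\,{\circ}\,\piup$ are $CR$ functions on $\sfM$: every $\bar{Z}\,{\in}\,\Gamma^\infty(\sfM,\T^{0,1}\sfM)$ satisfies $\bar{Z}\varphi_k\,{=}\,(\piup_*\bar{Z})(w_k)\,{\circ}\,\piup\,{=}\,0$, because $\piup_*\bar{Z}$ is a section of $\T^{0,1}\sfM^{\prime}$ and $w_k$ is annihilated by it. Moreover each $\varphi_k$ is constant along the fibres, so $d\varphi_k$ annihilates $\cV$; in particular $d\varphi_k$ annihilates both $\T^{0,1}\sfM$ and $\cV^{1,0}$.

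I then consider the distribution $\cD\,{=}\,\T^{0,1}\sfM\,{\oplus}\,\cV^{1,0}\,{\subseteq}\,\T^{1,0}\sfM\,{\oplus}\,\T^{0,1}\sfM$, of complex rank $n{+}f$. By the previous step $\cD\,{\subseteq}\,\bigcap_{k}\ker d\varphi_k$, and since the $d\varphi_k\,{=}\,\piup^*dw_k$ are $\C$-linearly independent (as $\piup$ is a submersion), their common kernel has rank $(2n{+}d)\,{-}\,N'\,{=}\,n{+}f$. Hence $\cD\,{=}\,\bigcap_{k}\ker d\varphi_k$. As the common kernel of the exact, hence closed, $1$-forms $d\varphi_k$, the distribution $\cD$ is involutive: for sections $X,Y$ of $\cD$ one has $d\varphi_k([X,Y])\,{=}\,X(d\varphi_k(Y))\,{-}\,Y(d\varphi_k(X))\,{=}\,0$, so $[X,Y]$ is again a section of $\cD$.

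It remains to conclude. Both $L$ and every $\bar{Z}_j\,{\in}\,\Gamma^\infty(\sfM,\T^{0,1}\sfM)$ are, near $\pct$, sections of $\cD$, so by involutivity the iterated bracket $[L,\bar{Z}_1,\dots,\bar{Z}_m]$ is a section of $\cD$ for every $m$ and every choice of the $\bar{Z}_j$; its value at $\pct$ depends only on the germs at $\pct$, hence lies in $\cD_{\pct}\,{\subseteq}\,\T^{1,0}_{\pct}\sfM\,{\oplus}\,\T^{0,1}_{\pct}\sfM$. Thus the set defining $k_{\pct}(L)$ in \eqref{e4.9} is empty, so $k_{\pct}(L)\,{=}\,{+}\infty$, and since $L_{\pct}\,{\neq}\,0$ the manifold $\sfM$ is Levi-degenerate at $\pct$. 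The main obstacle is precisely the involutivity of $\cD$: merely combining the involutivity of $\T^{0,1}\sfM$ with that of $\cV$ does not suffice, since $[\T^{0,1}\sfM,\cV^{1,0}]$ need not lie in $\cD$, and it is exactly the local embeddability of $\sfM^{\prime}$—through the $CR$ functions $\varphi_k$ that cut out $\cD$—which forces this involutivity.
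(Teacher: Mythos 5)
Your proof is correct and rests on the same mechanism as the paper's own argument: pull back CR functions from the locally embeddable base, observe that iterated brackets of $\T^{0,1}$-fields with a fibre-tangent $(1,0)$-field continue to annihilate these pullbacks, and use the total complexity of the fibres to conclude that such brackets can never leave $\T^{1,0}\sfM\,{\oplus}\,\T^{0,1}\sfM$. The only difference is organizational: you package this as an exact rank count identifying $\T^{0,1}\sfM\,{\oplus}\,\cV^{1,0}$ with the involutive common kernel of the forms $d\varphi_{k}$ coming from a generic local embedding, whereas the paper argues by contradiction, producing a posteriori a single CR function on $\sfM'$ whose pullback would detect a hypothetical bracket escaping the complexified contact distribution.
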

\begin{proof}
Let $f$ be a smooth $CR$-function\footnote{This means that $f\,{\in}\,\Ci(U',\C)$ and $df(\qct')$ 
vanishes on $\T^{0,1}_{\qct'}\sfM'$ for all $\qct'\,{\in}\,U'$.}
 defined on a neighborhood $U'$ of a point
$\pct'$ of $\sfM'$. Then $\pi^*f$ is a $CR$-function in $U\,{=}\,\piup^{-1}(U')$,
that is constant along the fibres of $\piup$. Then, if 
$L\,{\in}\,\Gamma^\infty(\sfM,T^{1,0}\sfM)$ is tangent to the fibres of $\piup$
in $U$, we obtain that 
$[L\,,\,\,\bar{Z}_1\,,\,\,\hdots\, ,
\,\,\bar{Z}_m]\left( \pi^*f\right)\,{=}\,0$ for every choice of 
$\bar{Z}_1,\,\hdots ,\,\, \bar{Z}_m\,{\in}\,\Gamma^{\infty}(\sfM,T^{0,1}\sfM)$.
Assume by contradiction that $\sfM$ is Levi-nondegenerate
at some $\pct$ with $\piup(\pct)\,{=}\,\pct'$. Then for
some choice of  
$\bar{Z}_1,\,\hdots ,\,\, \bar{Z}_m\in\Gamma^\infty(\sfM,T^{0,1}\sfM)$
we would have 
$$\vq_{\pct}\,{=}\,[L\,,\,\,\bar{Z}_1\,,\,\,\hdots\, ,
\,\,\bar{Z}_m]_{\pct}\,{\notin}\, T_{\pct}^{1,0}\sfM\,{\oplus}\,
T_{\pct}^{0,1}\sfM.$$ 
Since the fibres of $\piup$ are totally complex,
$\piup_*(\vq_{p})\,{\neq}\, 0$. By the assumption that $\sfM^{\,\prime}$ is locally embeddable
at $\pct'$, the real parts of the 
(locally defined) $CR$-functions give local coordinates at $\pct'$ for
$\sfM^{\,\prime}$ and therefore
there is a $CR$-function $f$ defined on a neighborhood $U'$ 
of $\pct'$ with $\vq_\pct(\piup^*f)=\piup_*(\vq_{\pct})(f)\,{\neq}\, 0$. This gives a contradiction,
proving our statement.
\end{proof}
\subsection{Contact order, contact-nondegeneracy}
Let $\sfM$ be {a}
$CR$-manifold. 
The real parts of its complex vector fields of type $(1,0)$ are 
smooth sections of its \emph{analytic tangent space} {$\Hd\sfM$}, 
defining a distribution of real vector fields on $\sfM$, that we 
denote by $\Hc(\sfM)$ and call its
\textit{generalised contact structure}.
\begin{dfn}
The \emph{contact order} of {$X\,{\in}\,\Gamma^{\infty}(\sfM,\Hd\sfM)$} 
at a point $\pct$ is 
\begin{equation}
 k^{c}_{\pct}(X)
 =\inf\left\{m\,{\in}\,\Z_{+}\left| \begin{gathered}
 \exists\,X_{1},\hdots,X_{m}\,{\in}\,\Gamma^{\infty}(\sfM,\Hd\sfM) \\
s.t.\; [X,X_{1},\hdots,X_{m}]_{\pct}\,{\notin}\,\Hd_{\pct}\sfM\end{gathered}\right.\right\}.
\end{equation}
We call \emph{contact order} of $\sfM$ at $\pct$ the value
\begin{equation}
 k^{c}_{\pct}(\sfM)=\sup\{k_{\pct}^{c}(X)\,{\mid}\, X\,{\in}\,\Gamma^{\infty}(\sfM,\Hd\sfM),\;X_{\pct}{\neq}\,0\}
\end{equation}  
and
say that $\sfM$ is \emph{contact-nondegenerate}
at points having finite contact order. We say that $\sfM$ is \emph{contact-nondegenerate} if it is such
at all points. \end{dfn}
\begin{prop}
 Let $Z\,{\in}\,\Gamma^{\infty}(\sfM,\T^{0,1}\sfM)$ with $Z_{\pct}{\neq}\,0$. The contact order of $\re(Z)$
 at $\pct$ 
 is the minimal positive integer $m$ such that 
\begin{equation}\label{1.5}
\begin{cases}
 \exists \, Z_{1},\hdots,Z_{m}\in\Gamma^{\infty}(\sfM,\T^{1,0}\sfM)\cup\Gamma^{\infty}(\sfM,\T^{0,1}\sfM)\\
 \text{s.t.}\;\; [Z,Z_{1},\hdots, Z_{m}]_{\pct}\notin\T^{1,0}_{\pct}\sfM\oplus\T^{0,1}_{\pct}\sfM.\end{cases}
\end{equation} 
\end{prop}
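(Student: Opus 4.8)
The plan is to read the real contact order of $\re(Z)$ through the complexified tangent bundle and match it slot-by-slot with the complex condition \eqref{1.5}. The inclusion $\Hd\sfM\hookrightarrow\C\otimes\T\sfM$ complexifies to $\C\otimes\Hd\sfM=\T^{1,0}\sfM\oplus\T^{0,1}\sfM$, and a \emph{real} tangent vector lies in $\Hd_{\pct}\sfM$ exactly when, viewed in $\C\otimes\T_{\pct}\sfM$, it lies in $\T^{1,0}_{\pct}\sfM\oplus\T^{0,1}_{\pct}\sfM$; hence the escape condition defining the contact order is the same condition as in \eqref{1.5}, read on real brackets. I would use two elementary conversions throughout: every real section of $\Hd\sfM$ is the real part $\re(W)$ of a $W\in\Gamma^{\infty}(\sfM,\T^{1,0}\sfM)$, and conversely every pure-type field $W$ is the combination $W=\re(W)+i\,\im(W)$ of two real sections of $\Hd\sfM$. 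Writing $m_0$ for the minimal integer in \eqref{1.5}, the goal is $k^{c}_{\pct}(\re(Z))=m_0$.

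For $k^{c}_{\pct}(\re(Z))\ge m_0$ I would start from a realizing real tail $X_1,\dots,X_k\in\Gamma^{\infty}(\sfM,\Hd\sfM)$, with $k=k^{c}_{\pct}(\re(Z))$ and $[\re(Z),X_1,\dots,X_k]_{\pct}\notin\T^{1,0}_{\pct}\sfM\oplus\T^{0,1}_{\pct}\sfM$. Writing $\re(Z)=\tfrac12(Z+\bar Z)$ and each $X_j$ as the sum of its $(1,0)$- and $(0,1)$-parts, $\R$-multilinearity of the iterated bracket expands the left-hand side into a sum of pure-type brackets $[\epsilon_0,\epsilon_1,\dots,\epsilon_k]_{\pct}$ with $\epsilon_0\in\{Z,\bar Z\}$ and each $\epsilon_j$ of pure type. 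At least one summand escapes $\T^{1,0}_{\pct}\sfM\oplus\T^{0,1}_{\pct}\sfM$; if its leading entry is $Z$ this is already a witness for \eqref{1.5}, and if it is $\bar Z$ I conjugate the whole bracket (conjugation preserves $\T^{1,0}\sfM\oplus\T^{0,1}\sfM$ and carries $\bar Z$ to $Z$). In either case one gets a pure-type tail of length $k$ realizing \eqref{1.5}, so $m_0\le k$.

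For the reverse inequality I would fix a minimal pure-type witness $Z_1,\dots,Z_{m_0}$ with $[Z,Z_1,\dots,Z_{m_0}]_{\pct}\notin\T^{1,0}_{\pct}\sfM\oplus\T^{0,1}_{\pct}\sfM$ and use
\[
[\re(Z),Z_1,\dots,Z_{m_0}]_{\pct}=\tfrac12[Z,Z_1,\dots,Z_{m_0}]_{\pct}+\tfrac12[\bar Z,Z_1,\dots,Z_{m_0}]_{\pct}.
\]
The first term escapes by hypothesis, so it suffices to exclude an exact cancellation by the second one modulo $\T^{1,0}_{\pct}\sfM\oplus\T^{0,1}_{\pct}\sfM$. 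Here integrability of $\T^{0,1}\sfM$ (and, by conjugation, of $\T^{1,0}\sfM$) is decisive: in the order-one model a nonzero Levi form forces the escaping tail entry to be of type $(1,0)$, whence $[\bar Z,Z_1]$ is a bracket of two $(1,0)$-fields, a section of $\T^{1,0}\sfM$ that is inert modulo $\T^{1,0}_{\pct}\sfM\oplus\T^{0,1}_{\pct}\sfM$, so that $[\re(Z),Z_1]_{\pct}\equiv\tfrac12[Z,Z_1]_{\pct}\not\equiv0$. The plan for general $m_0$ is to normalize the minimal witness, using integrability, so that the companion bracket $[\bar Z,Z_1,\dots,Z_{m_0}]_{\pct}$ again lands in $\T^{1,0}_{\pct}\sfM\oplus\T^{0,1}_{\pct}\sfM$, forcing $[\re(Z),Z_1,\dots,Z_{m_0}]_{\pct}\not\equiv0$; expanding the complex tail into its real and imaginary parts while keeping $\re(Z)$ in the leading slot then produces, by multilinearity, a genuine real tail $X_1,\dots,X_{m_0}\in\Gamma^{\infty}(\sfM,\Hd\sfM)$ with $[\re(Z),X_1,\dots,X_{m_0}]_{\pct}\notin\Hd_{\pct}\sfM$, giving $k^{c}_{\pct}(\re(Z))\le m_0$.

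The main obstacle is precisely this cancellation control at orders $\ge2$: there $[Z,Z_1,\dots,Z_{m_0}]_{\pct}$ depends on the tail fields near $\pct$, not only on their values, so one cannot argue pointwise. I expect to overcome it by combining minimality of $m_0$—which, through the Leibniz rule, annihilates all shorter $Z$- and $\bar Z$-started brackets modulo $\T^{1,0}\sfM\oplus\T^{0,1}\sfM$ at $\pct$ and thereby renders the top-order bracket tensorial in the tail—with the integrability-driven type normalization that makes the $\bar Z$-started companion collapse into $\T^{1,0}\sfM\oplus\T^{0,1}\sfM$. Everything else, namely the dictionary between real sections of $\Hd\sfM$ and pure-type fields together with the conjugation symmetry, is routine bookkeeping.
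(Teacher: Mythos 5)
Your second inequality ($m_0\le k^{c}_{\pct}(\re(Z))$, obtained by expanding a real witness into pure-type brackets and conjugating the $\bar Z$-started term) is correct and is exactly the argument of the paper. The problem is in the other direction, where you depart from the paper's route and the departure opens a genuine gap. You propose to show that the specific bracket $[\re(Z),Z_{1},\hdots,Z_{m_0}]_{\pct}$ escapes $\T^{1,0}_{\pct}\sfM\oplus\T^{0,1}_{\pct}\sfM$ by ``normalizing'' the minimal witness so that the companion $[\bar Z,Z_{1},\hdots,Z_{m_0}]_{\pct}$ lands in $\T^{1,0}_{\pct}\sfM\oplus\T^{0,1}_{\pct}\sfM$. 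At order one this works because integrability forces $Z_{1}$ to be of type $(1,0)$, so $[\bar Z,Z_{1}]$ is a bracket of two $(1,0)$-fields. But at order $\ge 2$ the intermediate field $[\bar Z,Z_{1}]$ is merely some section of $\T^{1,0}\sfM$ about which the minimality of $m_{0}$ tells you nothing (minimality constrains brackets starting with $Z$, not with an arbitrary $(1,0)$-field), so $[[\bar Z,Z_{1}],Z_{2},\hdots]_{\pct}$ can perfectly well leave $\T^{1,0}_{\pct}\sfM\oplus\T^{0,1}_{\pct}\sfM$ and cancel the $Z$-started term. No construction of the claimed normalization is given, and there is an obstruction to producing one by these means: if the cancellation $[\bar Z,\text{tail}]_{\pct}\equiv-[Z,\text{tail}]_{\pct}$ occurs modulo $\T^{1,0}_{\pct}\sfM\oplus\T^{0,1}_{\pct}\sfM$, what you learn is that $[\im(Z),\text{tail}]_{\pct}$ escapes, i.e.\ that $\Jd\re(Z)$ (not $\re(Z)$) has contact order $\le m_{0}$. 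So the issue is not a removable defect of the witness; it is the genuine ambiguity between $X=\re(Z)$ and $\Jd X$.

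The paper resolves this differently: it expands the escaping pure-type bracket $[Z,Z_{1},\hdots,Z_{m}]$ \emph{entirely} into real brackets whose leading entry is $X$ or $\Jd X$, concludes that one of $X$, $\Jd X$ has contact order $\le m$, and then invokes the statement that $X$ and $\Jd X$ have the same contact order (this is where formal integrability is actually used, as a separate fact about the underlying contact structure). Your proposal never isolates this lemma, and the ``tensoriality from minimality'' you invoke only controls the dependence on the tail, not the $X$-versus-$\Jd X$ ambiguity in the leading slot, which is where the whole difficulty lives. To repair your argument you would either have to prove $k^{c}_{\pct}(X)=k^{c}_{\pct}(\Jd X)$ (after which the cancellation scenario is harmless, since it would contradict the minimality of $m_{0}$ over \emph{all} pure-type tails, including those detecting $\Jd X$), or substantiate the normalization claim — and the latter, as stated, I do not believe can be done.
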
 
\begin{proof}
 Write $Z\,{=}\,X\,{-}\,\Jd{X}$
 and $Z_{i}{=}\,X_{i}{\pm}i\Jd{X_{i}}$, with $X,X_{i}{\in}\,\Gamma^{\infty}(\sfM,\Hd\sfM)$, 
 where we denoted by $\Jd$ the partial complex structure on $\Hd\sfM$. If the second line
 of \eqref{1.5} holds true, by writing the higher order commutator $[Z,Z_{1},\hdots,Z_{m}]$ 
 as a sum of higher order commutators 
with the first term equal to 
 $X$ or $\Jd{X}$ and the other $m$ terms to  $X_{1},\hdots,X_{m},\Jd{X}_{1},\hdots,\Jd{X}_{m}$,
 we deduce that either $X$ or $\Jd{X}$ has contact order at $\pct$ less than or equal to $m$.
 By the formal integrability of the $CR$ structure, $X$ and $\Jd(X)$ have the same contact order
at $\pct$.   Vice versa, if $X_{1},\hdots,X_{\ell}\in\Gamma^{\infty}(\sfM,\Hd\sfM)$ and
$[\re(Z),X_{1},\hdots,X_{\ell}]_{\pct}{\notin}\Hd_{\pct}\sfM$, then by setting $\re(Z)\,{=}\,\tfrac{1}{2}(Z{+}\bar{Z})$
and $L_{i}{=}\,\tfrac{1}{2}(X_{i}{-}i\Jd{X}_{i})$, $L_{\ell+i}{=}\,\tfrac{1}{2}(X_{i}{+}i\Jd{X}_{i})$,
we deduce that we can find 
$Z_{1},\hdots,Z_{\ell}{\in}\,\{Z_{1},\hdots,Z_{\ell},Z_{\ell+1},\hdots,Z_{2\ell}\}$ 
such that either $[Z,Z_{1},\hdots,Z_{\ell}]_{\pct}$ or $[\bar{Z},Z_{1},\hdots,Z_{\ell}]_{\pct}$
does not belong to $\T^{1,0}_{\pct}\sfM\oplus\T^{0,1}_{\pct}\sfM$. Therefore  either
$[Z,Z_{1},\hdots,Z_{\ell}]_{\pct}$ or $[Z,\bar{Z}_{1},\hdots,\bar{Z}_{\ell}]_{\pct}$
does not belong to $\T^{1,0}_{\pct}\sfM\oplus\T^{0,1}_{\pct}\sfM$, showing that the positive integer
for which \eqref{1.5} is satisfied is less than or equal to the contact order of $\re(Z)$.
This completes the proof.
\end{proof} 
\begin{dfn}
 If $Z\,{\in}\,\Gamma^{\infty}(\sfM,\T^{1,0}\sfM)$, we call \emph{contact order} of $Z$ at $\pct$, and
 denote by $k^{c}_{\pct}(Z)$, the contact order at $\pct$ of its real part. 
\end{dfn}
For $Z\,{\in}\,\Gamma^{\infty}(\sfM,\T^{1,0}\sfM)$ and $\pct\,{\in}\,\sfM$
we have $ k^{c}_{\pct}(\re(Z))\,{\leq}\, k_{\pct}(Z)$ 
 and hence  Levi-nondegeneracy
 is a stronger condition than contact-nondegeneracy. 
 \subsection{Contact depth}
Define by recurrence 
\begin{equation} 
\begin{cases}
 \Hc^{(0)}(\sfM)=0,\; \Hc^{(1)}(\sfM)=\Hc(\sfM),\\
 \Hc^{(\nuup)}(\sfM)=\Hc(\sfM)+{\sum}_{1\leq{h}<\nuup}[\Hc(\sfM),\Hc^{(h)}(\sfM)], & \nuup\geq{2}.
\end{cases}
 \end{equation}
 \begin{dfn}\label{d1.4}
We say that a tangent vector  $\vq\,{\in}\,\T_{\pct}\sfM$ has \emph{finite contact depth} 
if we can find an integer $\nuup\,{\geq}\,0$ and $X\,{\in}\,\Hc^{(\nuup)}(\sfM)$ such that $Z_{\pct}{=}\,\vq$.
We call the smallest $\nuup$ with this property  its \emph{contact depth}.\par
A $CR$ manifold $\sfM$ all whose tangent vectors have finite depth is called \emph{effective}
and the upper bound of the contact depths of its tangent vectors is called its \emph{contact depth}.
\end{dfn}
Likewise, we may define by recurrence 
\begin{equation} 
\begin{cases}
 \hat{\Hc}^{(0)}(\sfM)=0,\;\; \hat{\Hc}^{(1)}(\sfM)=\Gamma^{\infty}(\sfM,\T^{1,0}\sfM)
 \oplus \Gamma^{\infty}(\sfM,\T^{0,1}\sfM),\\
 \hat{\Hc}^{(\nuup)}(\sfM)
 =\hat{\Hc}(\sfM)+{\sum}_{1\leq{h}<\nuup}[\hat{\Hc}(\sfM),\hat{\Hc}^{(h)}(\sfM)], \quad\qquad \nuup\geq{2}.
\end{cases}
 \end{equation}
 
\begin{prop}
 The contact depth of a tangent vector $\vq\,{\in}\,\T_{\pct}\sfM$ is the smallest integer $\nuup\,{\geq}\,0$
 such that $\vq\,{=}\,Z_{\pct}$ for some $Z\,{\in}\,\hat{\Hc}^{(\nuup)}(\sfM)$. \par
 The contact depth of $\sfM$ equals the smallest positive  integer $q$ such that
 $\C\T_{\pct}\sfM\,{=}\,\{Z_{p}{\mid}\,Z\,{\in}\,\hat{\Hc}^{(q)}(sfM)\}$ for all $\pct\,{\in}\,\sfM$. \qed
\end{prop}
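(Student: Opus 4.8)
The plan is to reduce both assertions to a single structural identity: the complex filtration $\hat{\Hc}^{(\nuup)}(\sfM)$ is the complexification of the real filtration $\Hc^{(\nuup)}(\sfM)$, that is,
\[
\hat{\Hc}^{(\nuup)}(\sfM)=\C\otimes\Hc^{(\nuup)}(\sfM)\qquad\text{for every }\nuup\geq0 .
\]
I would prove this by induction on $\nuup$. For $\nuup=1$, note that $\T^{1,0}\sfM$ being a complex subbundle forces $iL\in\Gamma^{\infty}(\sfM,\T^{1,0}\sfM)$ whenever $L$ is, so both $\re(L)=\tfrac12(L+\bar L)$ and $\im(L)=\re(-iL)$ lie in $\Hc(\sfM)$; hence $L=\re(L)+i\,\im(L)$ and $\bar L$ belong to $\C\otimes\Hc(\sfM)$, while conversely every element of $\C\otimes\Hc(\sfM)$ is a complex combination of such real parts. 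This gives $\hat{\Hc}(\sfM)=\C\otimes\Hc(\sfM)$. The inductive step rests on the compatibility of the Lie bracket with complexification: $\R$-bilinearity yields $[A_1+iA_2,B_1+iB_2]=([A_1,B_1]-[A_2,B_2])+i([A_1,B_2]+[A_2,B_1])$, whence $[\C\otimes\Hc,\C\otimes\Hc^{(h)}]=\C\otimes[\Hc,\Hc^{(h)}]$. Assuming $\hat{\Hc}^{(h)}=\C\otimes\Hc^{(h)}$ for all $h<\nuup$ and inserting this into the two defining recursions, the sums $\sum_{1\le h<\nuup}[\hat{\Hc},\hat{\Hc}^{(h)}]$ and $\C\otimes\sum_{1\le h<\nuup}[\Hc,\Hc^{(h)}]$ coincide, giving $\hat{\Hc}^{(\nuup)}=\C\otimes\Hc^{(\nuup)}$.

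For the first statement I would argue by two inequalities. Since $\Hc^{(\nuup)}(\sfM)\subseteq\hat{\Hc}^{(\nuup)}(\sfM)$, the complex depth is at most the real depth of Definition~\ref{d1.4}. For the converse, given $\vq=Z_{\pct}$ with $Z\in\hat{\Hc}^{(\nuup)}(\sfM)=\C\otimes\Hc^{(\nuup)}(\sfM)$, I write $Z=X_1+iX_2$ with real $X_1,X_2\in\Hc^{(\nuup)}(\sfM)$; as $\C\T_{\pct}\sfM=\T_{\pct}\sfM\oplus i\,\T_{\pct}\sfM$ and $\vq\in\T_{\pct}\sfM$ is real, the decomposition $Z_{\pct}=(X_1)_{\pct}+i(X_2)_{\pct}$ forces $(X_2)_{\pct}=0$ and $\vq=(X_1)_{\pct}$ with $X_1\in\Hc^{(\nuup)}(\sfM)$. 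This also fixes the intended reading of Definition~\ref{d1.4}, where $\vq=X_{\pct}$ is attained by a real representative $X$. Hence the two depths agree.

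For the second statement I would pass to pointwise evaluation. Setting $V_{\pct}=\{X_{\pct}\mid X\in\Hc^{(q)}(\sfM)\}\subseteq\T_{\pct}\sfM$, the $\C$-linearity of evaluation together with $\hat{\Hc}^{(q)}(\sfM)=\C\otimes\Hc^{(q)}(\sfM)$ shows that $\{Z_{\pct}\mid Z\in\hat{\Hc}^{(q)}(\sfM)\}=V_{\pct}+i\,V_{\pct}$. A real part computation, or the dimension count $\dim_{\C}(V_{\pct}+i\,V_{\pct})=\dim_{\R}V_{\pct}$, shows that $V_{\pct}+i\,V_{\pct}=\C\T_{\pct}\sfM$ if and only if $V_{\pct}=\T_{\pct}\sfM$. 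Consequently the smallest $q$ for which $\C\T_{\pct}\sfM=\{Z_{\pct}\mid Z\in\hat{\Hc}^{(q)}(\sfM)\}$ holds at every $\pct$ coincides with the smallest $q$ for which $\T_{\pct}\sfM=V_{\pct}$ at every $\pct$, and by the first statement and Definition~\ref{d1.4} the latter is precisely the contact depth of $\sfM$.

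The computation is essentially bookkeeping, so the only genuinely load-bearing point is the identity $\hat{\Hc}^{(\nuup)}(\sfM)=\C\otimes\Hc^{(\nuup)}(\sfM)$: one must verify the bracket--complexification compatibility at the level of modules of vector fields and, throughout, keep careful track of real versus complex evaluation at $\pct$. No step presents an analytic difficulty; the care needed is purely algebraic.
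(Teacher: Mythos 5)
Your proof is correct. The paper offers no argument for this proposition (the \qed follows the statement directly, the claim being regarded as immediate), and your route — establishing $\hat{\Hc}^{(\nuup)}(\sfM)=\C\otimes\Hc^{(\nuup)}(\sfM)$ by induction via the compatibility of the bracket with complexification, then comparing real and complex evaluations at $\pct$ — is exactly the natural way to fill in the omitted details; the only point worth keeping explicit, which you do, is that a real tangent vector realized as $Z_{\pct}$ with $Z=X_{1}+iX_{2}$ forces $(X_{2})_{\pct}=0$, so nothing is lost in passing back to the real filtration.
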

Contact depth is an important invariant of 
$CR$-manifolds, being involved in  regularity estimates 
of solutions {to}
tangential Cauchy-Riemann equations (cf. \cite{AHNP08a}).

\section{The notion of $CR$-algebra}\label{s2}

In this section we establish  
concepts and notation that will
be used throughout the rest of the paper. \par 

\subsection{Motivation} Let $\sfM$ be a homogeneous space of
a real Lie group $\Gf_{\sigmaup}$.
A $\Gf_{\sigmaup}$-invariant $CR$-structure on $\sfM$ is completely determined by the datum of 
the complex space $\T^{0,1}_{\pct_{0}}\sfM$ of $(0,1)$-type tangent vectors at any point $\pct_0$ of
$\sfM$, since $\,\sfa_{*}\T^{0,1}_{\pct_0}\sfM\,{=}\,\T^{0,1}_{\sfa\,{\cdot}\,\pct_0}\sfM\,$ for all 
$\sfa\,{\in}\,\Gf_{\sigmaup}$. To a point $\pct_0$ in $\sfM$ we associate the 
smooth submersion $\piup_{\pct_{0}}\,{:}\,\sfa\,{\to}\,\sfa\,{\cdot}\,\pct_0$ of $\Gf_{\sigmaup}$ onto $\sfM$.
The inverse image of $\T^{0,1}_{\pct_0}\sfM$ 
by the complexification of its differential  at the identity 
is a complex
subspace $\qt$ of the complexification $\gt$ of
the Lie algebra $\gt_{\sigmaup}$ of $\Gf_{\sigmaup}$. The left-invariant complex vector fields 
on $\Gf_{\sigmaup}$ 
corresponding to the elements of $\qt$ generate a complex distribution $\mathcal{Q}$ 
on $\Gf_{\sigmaup}$ which
is $\piup_{\pct_0}$-related to the distribution of $(0,1)$ vector fields on $\sfM$.
The formal integrability of the partial complex structure of $\sfM$ is equivalent to the fact
that $[\mathcal{Q},\mathcal{Q}]\,{\subseteq}\,\mathcal{Q}$ and hence to the fact that 
$\qt$ is a complex Lie subalgebra of $\gt$. 
\begin{dfn}
 We call the pair $(\gt_{\sigmaup},\qt)$ the \emph{$CR$-algebra} of $\sfM$ at $\pct_{0}$.
\end{dfn}
\par\smallskip
If $\Gf_{\sigmaup}$  admits a complexification $\Gf$, containing  
a 
{closed} 
complex Lie subgroup $\Qf$ with Lie algebra $\qt$, and $\Qf\,{\cap}\,\Gf_{\sigmaup}$
contains and  has the same
identity component of 
the isotropy subgroup 
$${\Gf_{\sigmaup}}_{\pct_{0}}\,{=}\,\{\sfa\,{\in}\,\Gf_{\sigmaup}\,{\mid}\,\sfa{\cdot}\pct_{0}\,{=}\,\pct_{0}\},$$
 then the commutative diagram 
\begin{equation}\label{CRimmersion}\vspace{4pt}
 \xymatrix{\Gf_{\sigmaup}\; \ar@{^{(}->}[r] \ar_{{\piup_{\pct_{0}}}}[d] & \;\Gf \ar^{\piup_{\Qf}}[d]\\
 \sfM \;\ar@{^{(}->}[r]&\; \sfF}
\end{equation}
defines a smooth immersion of $\sfM$ into $\sfF\,{\coloneqq}\,\Gf{\slash}\Qf$,
and the $CR$-structure on $\sfM$ is inherited from the complex structure of $\sfF$.
The embedding $\sfM\,{\hookrightarrow}\,\sfF$ is \emph{generic}: this means that,
 denoting by $\Jd_{\sfF}$ the complex structure of $\sfF$, 
$\T_{\pct}\sfM\,{+}\,\Jd_{\sfF}\T_{\pct}\sfM\,{=}\,\T_{\pct}\sfF$ for all $\pct$ in $\sfM$.
\par
In general, we can associate to the pair $(\gt,\qt)$ a \emph{germ} of 
\textit{locally homogeneous} complex manifold
$\sfF_{(\pct_{0})}$ and the \emph{germ} of  an analytic embedding $\sfM_{(\pct_{0})}\,{\hookrightarrow}\,
\sfF_{(\pct_{0})}$ defining the $CR$-structure of $\sfM$ near $\pct_{0}$. \par
Thus, the pair $(\gt_{\sigmaup},\qt)$ yields an
\textit{infinitesimal} description characterising $\sfM$ 
and its \emph{generic $CR$-embedding} into a complex manifold  
\textit{near the point $\pct_{0}$}.\par 
These pairs, carrying relevant information about the \textit{local} $CR$ geometry of $\sfM$,
where introduced and investigated in  \cite{MN05}.
\subsection{General notions} \label{s2.2}
In this subsection we review the notion of $CR$-\textit{al\-ge\-bra} and \textit{fundamentality} and 
\textit{nondegeneracy
conditions}. 
The reader may find more details in \cite{AMN06, AMN2013, NMSM, MN05}. 
\par
We will indicate by $\gt$ a 
complex Lie algebra,
by $\sigmaup$ an anti-$\C$-linear involution of $\gt$, by $\gs$ the real form consisting of its
fixed points.
\begin{dfn} \label{d2.2} A $CR$-algebra is a pair $({\gt_{\sigmaup}},\qt)$ 
consisting of a real Lie algebra ${\gt_{\sigmaup}}$ and a complex Lie subalgebra
$\qt$ of its complexification $\gt$.
We call 
\begin{equation*} 
\begin{array}{|c  c  l|} 
\hline
\gt_{\sigmaup} &\text{its} &\text{\emph{symmetry algebra}},\\[3pt]
\hline
\qt & \text{its} & \text{\emph{lifted $CR$-structure}},\\[3pt] 
\hline
{\qt\,{\cap}\,\gs}
 & \text{its} &\textit{{isotropy subalgebra}},\\[3pt]
 \hline
 \qt{/}(\qts) & \text{its} &
\textit{$CR$-structure},\\[3pt]  
\hline
 (\qt\,{+}\,\sigmaup(\qt))\cap{\gt_{\sigmaup}} &\text{its} &\text{\emph{lifted contact module}},\\[3pt]
 \hline 
 \left((\qt\,{+}\,\sigmaup(\qt))\cap\gs\right)/(\qt\,{\cap}\,\gs)
&\text{its} 
&\text{\emph{contact module}.}\\[3pt]
 \hline
\end{array}
\end{equation*}

We say that the $CR$-algebra $(\gt_{\sigmaup},\qt)$ is 
\begin{equation*}
 \begin{array}{| c   l |} \hline
\textit{trivial} &\text{if}\;\; \qt=\gt,\\ \hline
 \textit{effective}  
& \text{if}\;\;\nexists \,\text{an ideal $\at_{\sigmaup}$ of $\gs$ contained in $\qt\,{\cap}\,\gs$}
,\\ \hline
 \textit{integrable} &\text{if}\;\;
 {\qt+\sigmaup(\qt) \;\text{is a Lie subalgebra of $\gt$},}
 \\ \hline
 \textit{totally complex} &\text{if}\;\; \gt=\qt\,{+}\,\sigmaup(\qt),\\ \hline
 \textit{fundamental} &\text{if}\;\; 
 \qt\,{+}\,\sigmaup(\qt)\text{ generates } \gt  
 \\ \hline
\textit{totally real} &\text{if}\;\; \sigmaup(\qt)\,{=}\,\qt,\\ \hline
{\textit{Levi-degenerate}} &
{\text{if}\;\;\exists\, \text{a Lie subalgebra $\qt'$ of $\gt$  s.t. 
$\qt\subsetneqq\qt'\subseteq\qt\,{+}\,\sigmaup(\qt)$},}\\
\hline
{\textit{contact-degenerate}} & \text{if}\;\;
{
\begin{cases}
 \exists\,\text{a Lie subalgebra $\at_{\sigmaup}$
 of $\gs$ {s.t.}}\\
 \text{$\qt\cap\at_{\sigmaup}\subsetneqq\at_{\sigmaup}\subseteq(\qt{+}\sigmaup(\qt)){\cap}\gs$},\\
 [\at_{\sigmaup}, (\qt{+}\sigmaup(\qt))\,{\cap}\,\gs]\subseteq\at_{\sigmaup}.
\end{cases}}
\\
\hline\end{array}
\end{equation*}
\end{dfn}

\begin{rmk}  
Levi-nondegeneracy implies  contact-nondegeneracy and 
totally real $CR$-algebras are 
{trivially} Levi-nondegenerate. 
 We point out that
 {``weakly nondegenerate''} 
 was used in 
 \cite{AMN06, AMN2013, NMSM, MN05},
 instead of  {``Levi-non\-de\-gen\-er\-ate''}
  and 
 ``ideal'' was used for ``contact'' nondegerate in \cite{MN05}. In fact, when $(\gs,\qt)$ is fundamental, the
 condition that $[\at_{\sigmaup}, (\qt{+}\sigmaup(\qt))\,{\cap}\,\gs]\,{\subseteq}\,\at_{\sigmaup}$
implies that $\at_{\sigmaup}$ is an ideal of $\gs$.
\end{rmk}
 The quotients $\gt{/}\qt$ and 
 ${\gs/(\qt\,{\cap}\,\gs)}$
 may be viewed as infinitesimal descriptions
 of germs of complex and real homogeneous spaces. \par
 If $\mts$ is a real Lie subalgebra
 of $\gt_{\sigmaup},$ the choice of a $\qt\subseteq\gt$ 
 { with $\qt\,{\cap}\,\gs\,{=}\,\mts$}
yields a \textit{$\gt_{\sigmaup}$-covariant
 $CR$-structure} on $\gt_{\sigmaup}{/}\mts$
 and 
\begin{equation}\label{e2.2a} \Qt(\gt,\gs,\mts)=
 \{\qt\subseteq\gt
 \mid {[\qt,\qt]\subseteq\qt,\;\qt\cap\gs=\mts\}}
\end{equation}
is the set of \emph{$\gt_{\sigmaup}$-invariant lifted $CR$ structures} on $\gt_{\sigmaup}{/}\mts$. 
If $\qt,\qt' 
$ {belong to} $\Qt(\gt,\gs,\mts)$
and $\qt\,{\subseteq}\,\qt'$ we say that 
$\qt'$ is \emph{stronger than} $\qt$ and $\qt$ 
\emph{weaker than~$\qt'$}.
 \par

\begin{dfn} For a $CR$-algebra $({\gt_{\sigmaup}},\qt)$ we set  
\begin{equation} \label{e2.3}
\begin{cases}
 \dim_{\R}(\gs,\qt)=\dim_{\C}(\gt)-\dim_{\C}(\qt\cap\sigmaup(\qt)), & [\textit{real dimension}],\\
 \dim_{\C}(\gs,\qt)=\dim_{\C}(\qt)\,{-}\,\dim_{\C}(\qt\,{\cap}\,\sigmaup(\qt)), &[\textit{$CR$-dimension}],\\
 \codim_{\R}(\gs,\qt)=\dim_{\C}(\gt)-\dim_{\C}(\qt+\sigmaup(\qt)), &[\textit{$CR$-codimension}].
\end{cases}
\end{equation}
\end{dfn}
Since 
\begin{equation*}
 2\dim_{\C}(\qt)=\dim_{\C}(\qt)\,{+}\,\dim_{\C}(\sigmaup(\qt))=\dim_{\C}(\qt{+}\sigmaup(\qt))\,{+}\,\dim_{\C}
 (\qt\,{\cap}\,\sigmaup(\qt)),
\end{equation*}
we have 
\begin{equation}\label{e2.4}
\begin{cases}
 \dim_{\R}({\gt_{\sigmaup}},\qt)=2\dim_{\C}({\gt_{\sigmaup}},\qt)\,{+}\,\codim_{\R}({\gt_{\sigmaup}},\qt),\\
 \dim_{\C}(\gt)-\dim_{\C}(\qt)=\dim_{\C}({\gt_{\sigmaup}},\qt)+\codim_{\R}({\gt_{\sigmaup}},\qt).
 \end{cases}
\end{equation}
 
\par\smallskip

\begin{rmk}
If $\sfM$ is a  
$\Gf_{\sigmaup}$-homogeneous
$CR$-manifold 
with $CR$-algebra $({\gt_{\sigmaup}},\qt)$ at $\pct_{0}$, then  
$\dim_{\C}({\gt_{\sigmaup}},\qt)$,
$\codim_{\R}({\gt_{\sigmaup}},\qt)$ and $\dim_{\R}({\gt_{\sigmaup}},\qt)$ 
are its $CR$-dimension, its $CR$-codimension and its real dimension,  respectively.
The second equality in \eqref{e2.4}
means that the embedding $\sfM_{(\pct_{0})}\,{\hookrightarrow}\,\sfF_{(\pct_{0})}$ is
\emph{generic}.
 \end{rmk}


A $CR$-algebra is \emph{totally real} 
iff its $CR$-dimension is zero and \emph{totally complex} iff its $CR$-codimension is $0$.
Totally real and totally complex $CR$-al\-ge\-bras are integrable.
\begin{exam}\label{ex2.3}
Consider   
$\mathbb{CP}^2$  as the Grassmanian of complex lines in $\C^3$,  i.e. the compact homogeneous space 
 $\sfF{=}\SL_{3}(\C)/\Qf$ where 
\begin{equation*}
 \Qf\coloneqq\left.\left\{ \left( 
\begin{smallmatrix}
  z_{00}&z_{01}& z_{02}\\
 0& z_{11}&z_{12}\\
 0 & z_{21}& z_{22}
\end{smallmatrix}\right) \right| z_{i,j}\in\C,\; z_{0,0}(z_{11}z_{22}-z_{12}z_{21})=1\right\}.
\end{equation*}
One  example of the  notion of  
\emph{espaces g\'en\'eralis\'ees} of \'E.Cartan
is given by the $3$-dimensional sphere 
$\Stt^{3}\,{=}\,\{[z_0{:}z_1{:}z_2]\,{\in}\,\CP^2\,{\mid}\,z_{0}\bar{z}_{2}\,{+}\,
z_{1}\bar z_{1}\,{+}\,z_{2}\bar{z}_{0}\,{=}\,0\}$
  on which the group $\SU(1,2)$  acts transitively.    
The sphere $\Stt^{3}$ is endowed with an $\SU(1,2)$-invariant $CR$-structure,
 inherited from its embedding  \eqref{CRimmersion} in the complex projective plane.
 Its $CR$-algebra at $\piup(\e_{0})$ is $(\su(1,2),\qt)$ with 
\begin{equation*}
 \su(1,2)\,{=}\,\left.\left\{\left( 
\begin{smallmatrix}
\lambdaup & \zetaup&i\stt\\
 z&\bar{\lambdaup}{-}\lambdaup&{-}\bar{\zetaup}\\
 i\ttt&{-}\bar{z}&{-}\bar{\lambdaup}
\end{smallmatrix}\right)\right| \ttt,\stt\,{\in}\,\R,\; \lambdaup,z,\zetaup\,{\in}\,\C
\right\}
\end{equation*}
and $\qt$ equal to the Lie algebra of $\Qf$.
We have $\dim_{\C} \qt{=}6$ and $\dim_{\C}(\qt\,{+}\,\sigmaup(\qt))\,{=}\,7$.
Since 
\begin{equation*}
 \qt\,{+}\,\sigmaup(\qt)=\left.\left\{ \left( 
\begin{smallmatrix}
 z_{0,0}& z_{0,1}&z_{0,2}\\
 z_{1,0}&z_{1,1}&z_{1,2}\\
 0 & z_{2,1}&z_{2,2}
\end{smallmatrix}\right)\right| z_{i,j}\,{\in}\,\C,\; z_{0.0}{+}z_{1,1}{+}z_{2,2}{=}0\right\}
\end{equation*}
is not a Lie subalgebra of $\slt_{3}(\C)$, Levi-nondegeneracy is obvious. In fact $(\su(1,2),\qt)$
is strictly Levi-nondegenerate. \end{exam} 
\begin{dfn}
 Given two $CR$-algebras $(\gs',\qt')$ and $(\gs'',\qt'')$, the pair $(\gs'\,{\oplus}\,\gs'',\qt'\,{\oplus}\,\qt'')$
 is a $CR$-algebra, which is called their \emph{direct sum}. In an analogous way we may define the
 direct sum of any family of $CR$-algebras.
\end{dfn}

\section{Contact and Levi order}\label{s3}
 Contact and Levi-nondegeneracy 
can be  gauged by using \textit{higher order commutators} (cf. \cite{MN05}).
\subsection{Higher order commutators}
In this preliminary
subsection we establish some lemmas on higher order commutators.
More general results  can be found 
e.g. in \cite{Al2016,BL92,Reu93}.
Here $\gt$ is any Lie algebra over a field of characteristic zero. 

\textit{Higher
order commutators} of elements of  $\gt$ are recursively defined by 
\begin{equation*} 
\begin{cases}
 [X_{1}]=X_{1},\\
 [X_{1},X_{2}]\;\text{is the Lie algebra product of $X_{1}$ and $X_{2}$ in $\gt$,}\\
 [X_{1},\hdots,X_{h-1},X_{h}]=[[X_{1},\hdots,X_{h-1}],X_{h}],\;\;\text{if $h\,{\geq}\,3$.}
\end{cases}
\end{equation*}
\par

\begin{lem}\label{l2.1}
 Let $X_{1},\hdots,X_{m}$ be elements of a Lie algebra $\gt$. 
 The Lie subalgebra of $\gt$ generated by $X_{1},\hdots,X_{m}$
is the linear span of the
 higher order commutators 
\begin{equation*}
 [X_{i_{1}},\hdots,X_{i_{h}}],\;\;\text{with}\;\; 1\leq {i}_{1},\hdots,i_{h}\leq{m}.
\end{equation*}
\end{lem}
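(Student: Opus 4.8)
The plan is to prove the two inclusions separately, the nontrivial one being that the linear span $V$ of all left-normed commutators $[X_{i_{1}},\hdots,X_{i_{h}}]$ (with $1\,{\leq}\,i_{1},\hdots,i_{h}\,{\leq}\,m$) is already closed under the bracket. Write $\lie\{X_{1},\hdots,X_{m}\}$ for the generated subalgebra. Each left-normed commutator is an iterated bracket of the $X_{i}$, so $V\,{\subseteq}\,\lie\{X_{1},\hdots,X_{m}\}$ is immediate. Conversely, $V$ contains each generator $X_{i}$ as a length-one commutator, so once we know that $V$ is a Lie subalgebra we obtain $\lie\{X_{1},\hdots,X_{m}\}\,{\subseteq}\,V$, and the two inclusions give equality.

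Since the bracket is bilinear and $V$ is spanned by left-normed commutators, closure of $V$ reduces to showing that the bracket of two left-normed commutators $u\,{=}\,[X_{i_{1}},\hdots,X_{i_{p}}]$ and $w\,{=}\,[X_{j_{1}},\hdots,X_{j_{q}}]$ again lies in $V$. I would prove this by induction on the length $q$ of the second word. The base case $q\,{=}\,1$ is immediate, since $[u,X_{j_{1}}]\,{=}\,[X_{i_{1}},\hdots,X_{i_{p}},X_{j_{1}}]$ is itself left-normed, hence in $V$.

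For the inductive step, write $w\,{=}\,[w',X_{j_{q}}]$ with $w'\,{=}\,[X_{j_{1}},\hdots,X_{j_{q-1}}]$ of length $q-1$, and apply the Jacobi identity in the form
\begin{equation*}
 [u,[w',X_{j_{q}}]]=[[u,w'],X_{j_{q}}]+[w',[u,X_{j_{q}}]].
\end{equation*}
In the first term, $[u,w']\,{\in}\,V$ by the inductive hypothesis, and bracketing a linear combination of left-normed commutators with the single element $X_{j_{q}}$ merely appends $X_{j_{q}}$ to each, so $[[u,w'],X_{j_{q}}]\,{\in}\,V$. In the second term, $[u,X_{j_{q}}]$ is again a left-normed commutator, so after rewriting $[w',[u,X_{j_{q}}]]\,{=}\,-[[u,X_{j_{q}}],w']$ the inductive hypothesis (applied to the word $w'$ of length $q-1$) gives membership in $V$. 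Hence $[u,w]\,{\in}\,V$, completing the induction and thereby the proof that $V$ is a subalgebra.

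The routine but essential point, and the only real content, is the Jacobi rewriting displayed above: it is exactly what lets one express any bracket monomial in the generators as a linear combination of left-normed ones, and tracking the decreasing length $q$ is what makes the induction terminate. Everything else is the bookkeeping of the two inclusions, so I do not expect any genuine obstacle beyond getting the signs and the length parameter right.
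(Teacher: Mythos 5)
Your proof is correct and follows essentially the same route as the paper's: both reduce the problem to showing that the bracket of two left-normed commutators lies in the span of left-normed commutators, via exactly the same Jacobi rearrangement and an induction on the length of one factor (you induct on the length of the second word; the paper inducts on the lesser of the two lengths, which is an inessential difference). Your framing via ``the span contains the generators and is closed under the bracket'' is a slightly cleaner way to package the two inclusions than the paper's detour through the graded free Lie algebra, but the mathematical content is the same.
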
 
\begin{proof} It suffices to consider the situation where $\gt$ is the free Lie algebra
generated by independent objects $X_{1},\hdots,X_{m}$. We recall that $\gt$ is $\Z$-graded
in such a way that 
a higher order commutator $ [X_{i_{1}},\hdots,X_{i_{h}}]$ has degree $h$.
We want to show that elements of degree $h$ are linear combinations of commutators of
order $h$ of $X_{1},\hdots,X_{m}$. We first prove that the Lie product of two commutators
of degree $h,k$, respectively, are sums of commutators of degree $h{+}k$. We can assume that
$h{\leq}k$ and argue by recurrence on $h$. In fact the case $h\,{=}\,1$ is trivial. 
If $h\,{>}\,1$ and the statement holds true when the  lesser degree is ${\leq}\,h{-}1$,
we use the identity 
\begin{align*}
 &[[X_{j_{1}},\hdots,X_{j_{k}}],[X_{i_{1}},\hdots,X_{i_{h}}]]=
  [[X_{j_{1}},\hdots,X_{j_{k}}],[[X_{i_{1}},\hdots,X_{i_{h-1}}],X_{i_{h}}]]\\
  &{=}\,  [[[X_{j_{1}},\hdots,X_{j_{k}}],[X_{i_{1}},\hdots,X_{i_{h-1}}]],X_{i_{h}}]
  +  [[X_{i_{1}},\hdots,X_{i_{h-1}}], [X_{j_{1}},\hdots,X_{j_{k}},X_{i_{h}}]].
\end{align*}
\par
By the recursive assumption, $[[X_{j_{1}},\hdots,X_{j_{k}}],[X_{i_{1}},\hdots,X_{i_{h-1}}]]$ is 
a linear combination of higher order commutators of degree $h\,{+}\,{k}\,{-}\,1$
and thus the first summand 
$ [[[X_{j_{1}},\hdots,X_{j_{k}}],[X_{i_{1}},\hdots,X_{i_{h-1}}]],X_{i_{h}}]$ is 
a linear combination of higher order commutators of degree
$h{+}k$. 
Again by the recursive assumption, the second summand 
$[[X_{i_{1}},\hdots,X_{i_{h-1}}], [X_{j_{1}},\hdots,X_{j_{k}},X_{i_{h}}]]$
is a linear combination of higher order commutators of degree $h{+}k$. \par
The case of a Lie product of three of more higher order commutators reduces, by iterating the
argument for two,  
to sums
of products of pairs of commutators.
\end{proof}
\begin{lem}\label{l3.2a}
 Let $r\,{\geq}\,2$ and $X_{1},\hdots,X_{r}\,{\in}\,\gt$. If $[X_{1},\hdots,X_{r}]\,{\neq}\,0$, then
 we can find $j$, with $1{\leq}j{<}r$, such that $[X_{j},X_{r}]\,{\neq}\,0$. 
\end{lem}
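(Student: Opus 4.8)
The plan is to prove the contrapositive. Assuming that $[X_j,X_r]\,{=}\,0$ for every $j$ with $1\,{\leq}\,j\,{<}\,r$, I would show that the left-normed commutator $[X_1,\dots,X_r]$ necessarily vanishes. Setting $Y\,{=}\,[X_1,\dots,X_{r-1}]$, the recursive definition of higher order commutators gives $[X_1,\dots,X_r]\,{=}\,[Y,X_r]\,{=}\,{-}(\ad X_r)(Y)$, so it suffices to prove that $(\ad X_r)(Y)\,{=}\,0$.

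The key observation is that $\ad X_r$ is a derivation of $\gt$, and that the kernel of any derivation $D$ is a Lie subalgebra of $\gt$: if $D(a)\,{=}\,D(b)\,{=}\,0$, then $D([a,b])\,{=}\,[D(a),b]\,{+}\,[a,D(b)]\,{=}\,0$. By the standing hypothesis, $(\ad X_r)(X_j)\,{=}\,{-}[X_r,X_j]\,{=}\,0$ for $1\,{\leq}\,j\,{<}\,r$, so the kernel of $\ad X_r$ contains each of $X_1,\dots,X_{r-1}$. Being a subalgebra, it must then contain the whole Lie subalgebra generated by $X_1,\dots,X_{r-1}$. By Lemma~\ref{l2.1} this subalgebra is the linear span of the higher order commutators in $X_1,\dots,X_{r-1}$, and in particular it contains $Y\,{=}\,[X_1,\dots,X_{r-1}]$. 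Hence $(\ad X_r)(Y)\,{=}\,0$, which gives $[X_1,\dots,X_r]\,{=}\,0$, as desired.

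There is essentially no serious obstacle here; the substance of the argument is the elementary fact that the kernel of a derivation is a subalgebra, which is precisely what converts the hypothesis $[X_j,X_r]\,{=}\,0$ on the generators into the statement about the iterated bracket. The only points requiring care are the bookkeeping of signs when passing between $[\,{\cdot}\,,X_r]$ and $\ad X_r$, and the degenerate case $r\,{=}\,2$, in which the conclusion (that $[X_1,X_2]\,{\neq}\,0$ forces $j\,{=}\,1$ to work) is immediate. As a variant, one could instead argue by induction on $r$, expanding $[Y,X_r]\,{=}\,[[X_1,\dots,X_{r-1}],X_r]$ with the Jacobi identity to push $\ad X_r$ inward onto each generator; but the kernel-of-derivation formulation is cleaner and avoids the iterated computation.
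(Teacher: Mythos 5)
Your proof is correct, and it rests on the same mathematical fact as the paper's: that $\ad X_{r}$ is a derivation of $\gt$. The paper argues directly, writing the Leibniz expansion $[X_{1},\hdots,X_{r-1},X_{r}]={\sum}_{i=1}^{r-1}[X_{i,1},\hdots,X_{i,r-1}]$ with $X_{i,i}=[X_{i},X_{r}]$ and concluding that some summand, hence some $[X_{i},X_{r}]$, is nonzero; you take the contrapositive and package the same Leibniz property as ``the kernel of a derivation is a subalgebra,'' which is an equivalent and equally valid formulation (and your appeal to Lemma~\ref{l2.1} is not even needed, since $[X_{1},\hdots,X_{r-1}]$ lies in the subalgebra generated by $X_{1},\hdots,X_{r-1}$ by definition).
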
 
\begin{proof} Set 
$
 X_{i,j}= 
\begin{cases}
 X_{j}, & j\neq{i},\\
 [X_{i},X_{r}], & j=i,
\end{cases}\quad\text{for $1{\leq}i{\neq}{j}{\leq} r{-}1$.}
$
The statement follows from 
\begin{equation*}
  [X_{1},\hdots,X_{r-1},X_{r}]={\sum}_{i=1}^{r-1}[X_{i,1},\hdots,X_{i,r-1}].
\end{equation*}
Indeed, 
The left hand side  being different from zero, at least one summand in the right,
and hence some $X_{i,i}\,{=}\,[X_{i},X_{r}]$ is 
different from zero.
\end{proof}
\subsection{A canonical filtration} Let $(\gt_{\sigmaup},\qt)$ be a $CR$-algebra. Its
lifted contact module and isotropy subalgebra  \label{contct}
can be inserted into a sequence of linear subspaces of $\gt_{\sigmaup}$. We set  
 \begin{equation*} 
\begin{cases}
 \Ft_{  0}(\gs,\qt)=\qt
 \cap\gt_{\sigmaup}, \qquad\qquad\qquad\;\qquad\text{(the isotropy subalgebra)},\\[3pt]
\Ft_{ -1}(\gs,\qt)= 
 (\qt+\sigmaup(\qt))\cap\gt_{\sigmaup}, \qquad\qquad\text{(the lifted contact module)},\\[3pt]
\Ft_{  -h}(\gs,\qt)= \Ft_{  1{-}h}(\gs,\qt) +\left[\Ft_{  1-h}(\gs,\qt), \Ft_{  -1}(\gs,\qt)\right],
  \qquad\quad h\geq{2},\\[3pt]
  \Ft_{  h}(\gs,\qt)=\left\{Z\in \Ft_{  h-1}(\gs,\qt)\left| \left[Z, \Ft_{  -1}(\gs,\qt)\right]
  \subseteq\Ft_{h-1}(\gs,\qt)\right\}\right., \qquad\quad h\geq{1},\\[3pt]
 \Ft_{  {-}\infty}(\gs,\qt)=
 {\bigcup}_{h\in\Z}\Ft_{   h}(\gs,\qt) ,\\[3pt]
 \Ft_{  {+}\infty}(\gs,\qt)={\bigcap}_{h\in\Z}\Ft_{  h}(\gs,\qt).
\end{cases}
\end{equation*}
\begin{prop} Given any
$CR$-algebra $(\gt_{\sigmaup},\qt)$, 
\begin{enumerate}
\item  $\Ft_{  {-}\infty}(\gs,\qt)$ is the Lie subalgebra of $\gt_{\sigmaup}$ 
generated by $\Ft_{  {-}1}(\gs,\qt)$;
\item
the sequence 
\begin{equation}\label{e2.1}\left\{ \begin{aligned}
 \cdots\,{\subseteq}\,\Ft_{  h}(\gs,\qt)\,{\subseteq}\,\Ft_{  h-1}(\gs,\qt)
\, {\subseteq}\,\cdots\,{\subseteq}\,\Ft_{  1}(\gs,\qt) 
 \,{\subseteq}\,
 \Ft_{  0}(\gs,\qt)\qquad \qquad \\ 
 \,{\subseteq}\,
 \Ft_{  -1}(\gs,\qt)\,{\subseteq}\,\Ft_{ -2}(\gs,\qt)\,
 \,{\subseteq}\,\cdots\,{\subseteq}\,\Ft_{  -h}(\gs,\qt)\,{\subseteq}\,
 \Ft_{  -h-1}(\gs,\qt)
 \,{\subseteq}\,\cdots \end{aligned} \right.
\end{equation}
is a $\Z$-filtration of $\Ft_{  {-}\infty}(\gs,\qt)$, i.e.  $\Ft_{-\infty}(\gs,\qt)\,{=}\,{\bigcup}_{h\in\Z}\Ft_{h}(\gs,\qt)$ and
$[\Ft_{  h}(\gs,\qt),\Ft_{  k}(\gs,\qt)]\,{\subseteq}\,
 \Ft_{  h{+}k}(\gs,\qt)$ for all $h,k\,{\in}\,\Z$;
\item for $h\,{\geq}\,0$ each $\Ft_{  h}(\gs,\qt)$ is a Lie subalgebra of $\gt_{\sigmaup}$;
\item
If  $h\,{\geq}\,0$ and $\Ft_{  -h}(\gs,\qt)\,{=}\,\Ft_{  {-}h{-}1}(\gs,\qt)$,
then $\Ft_{  {-}k}(\gs,\qt)\,{=}\,\Ft_{  {-}h}(\gs,\qt)$ for all $k\,{>}\,h$; 
\item If  $h\,{\geq}\,0$ and $\Ft_{  h+1}(\gs,\qt)\,{=}\,\Ft_{  h}(\gs,\qt)$,
then $\Ft_{  k}(\gs,\qt)\,{=}\,\Ft_{  h}(\gs,\qt)$ for all $k\,{>}\,h$. 
\end{enumerate}
\par
The  lower limit
$\Ft_{  +\infty}(\gs,\qt)$ of \eqref{e2.1}
is the maximal ideal of $\Ft_{  {-}\infty}(\gs,\qt)$ contained in the isotropy subalgebra 
$\Ft_{  0}(\gs,\qt)$. Thus 
$(\gt_{\sigmaup},\qt)$ is 
\begin{itemize}
 \item fundamental if and only if $\Ft_{  {-}\infty}(\gs,\qt)\,{=}\,\gt_{\sigmaup}$;
 \item effective if  $\Ft_{  {+}\infty}(\gs,\qt)\,{=}\,\{0\}$. 
\end{itemize}
The condition that $\Ft_{  {+}\infty}(\gs,\qt)\,{=}\,\{0\}$ is also necessary for effectiveness if 
$(\gt_{\sigmaup},\qt)$ is fundamental.
\end{prop}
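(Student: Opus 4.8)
The plan is to dispose of the structural parts by direct inductions on the two recursions defining the $\Ft_h$ (I suppress the argument $(\gs,\qt)$ throughout), and to concentrate the real work on the filtration identity $[\Ft_a,\Ft_b]\subseteq\Ft_{a+b}$ of part~(2), from which part~(3) is immediate and the closing statements follow by a maximal-ideal argument. A preliminary fact, used everywhere, is the monotonicity $\Ft_{j}\subseteq\Ft_{j-1}$ for every $j\in\Z$: for $j\geq 1$ and for $j\leq -1$ it is built into the two recursions, while for $j=0$ it is the inclusion $\qt\cap\gs\subseteq(\qt+\sigmaup(\qt))\cap\gs$.

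For part~(1) I would invoke Lemma~\ref{l2.1}: the subalgebra $\Li$ of $\gs$ generated by $\Ft_{-1}$ is the linear span of the higher order commutators of elements of $\Ft_{-1}$. An induction on $h$ identifies $\Ft_{-h}$ with the span of such commutators of length at most $h$: the inclusion $\Ft_{-h}\subseteq\Li$ is clear since $\Li$ is a subalgebra, and conversely a commutator of length $h$ is $[[\cdots],X]$ with $[\cdots]\in\Ft_{-(h-1)}$ and $X\in\Ft_{-1}$, hence lies in $[\Ft_{-(h-1)},\Ft_{-1}]\subseteq\Ft_{-h}$. Since the nonnegative terms are nested inside $\Ft_0\subseteq\Ft_{-1}\subseteq\Li$, they add nothing, so $\Ft_{-\infty}=\bigcup_h\Ft_h=\Li$.

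The heart of the argument, and the step I expect to be the main obstacle, is the bracket relation $[\Ft_a,\Ft_b]\subseteq\Ft_{a+b}$, because the nonnegative $\Ft_h$ are defined by a normalizer condition rather than explicitly by brackets. I would first establish the base relation $[\Ft_a,\Ft_{-1}]\subseteq\Ft_{a-1}$ for every $a\in\Z$: for $a\geq 1$ it is the definition of $\Ft_a$, for $a\leq -1$ the definition of $\Ft_{a-1}$, and for $a=0$ it uses that $X\in\qt\cap\gs$ satisfies $\sigmaup(X)=X$, whence $X\in\qt\cap\sigmaup(\qt)$ and $[X,\qt+\sigmaup(\qt)]\subseteq\qt+\sigmaup(\qt)$. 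A first induction on $-b$ then yields $[\Ft_a,\Ft_b]\subseteq\Ft_{a+b}$ for all $a$ whenever $b\leq -1$ (hence, by antisymmetry, whenever either index is negative): writing $\Ft_{-(h+1)}=\Ft_{-h}+[\Ft_{-h},\Ft_{-1}]$ and applying the Jacobi identity to $[\Ft_a,[\Ft_{-h},\Ft_{-1}]]$ reduces the new bracket to lower instances controlled by the inductive hypothesis and the base relation. A second induction, on $n=a+b\geq 0$ with $a,b\geq 0$, settles the remaining case: to place $[\Ft_a,\Ft_b]$ in $\Ft_n$ one verifies the two defining requirements, $[\Ft_a,\Ft_b]\subseteq\Ft_{n-1}$ (from $\Ft_b\subseteq\Ft_{b-1}$ and the inductive hypothesis) and $[[\Ft_a,\Ft_b],\Ft_{-1}]\subseteq\Ft_{n-1}$ (from Jacobi together with $[\Ft_a,\Ft_{-1}]\subseteq\Ft_{a-1}$ and $[\Ft_b,\Ft_{-1}]\subseteq\Ft_{b-1}$). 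The bookkeeping in these Jacobi expansions is where care is needed. Part~(3) is then immediate, since for $h\geq 0$ one has $[\Ft_h,\Ft_h]\subseteq\Ft_{2h}\subseteq\Ft_h$.

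Parts~(4) and~(5) follow by inspecting the recursions: if $\Ft_{-h}=\Ft_{-h-1}$ then $[\Ft_{-h},\Ft_{-1}]\subseteq\Ft_{-h}$, and feeding this into the definition of $\Ft_{-h-2}$ propagates the equality; dually, if $\Ft_{h+1}=\Ft_h$ then $[\Ft_h,\Ft_{-1}]\subseteq\Ft_h$, so the defining condition for $\Ft_{h+2}$ coincides with that for $\Ft_{h+1}$. For the closing statements I argue as follows. By part~(2), $\Ft_{+\infty}=\bigcap_h\Ft_h$ is an ideal of $\Ft_{-\infty}$: if $Z\in\Ft_{+\infty}$ and $W\in\Ft_k$, then $Z\in\Ft_{m-k}$ for every $m$, so $[Z,W]\in\Ft_m$ for every $m$. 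Maximality among ideals of $\Ft_{-\infty}$ contained in $\Ft_0$ is proved by the induction $\ig\subseteq\Ft_h\Rightarrow\ig\subseteq\Ft_{h+1}$, valid because $[\ig,\Ft_{-1}]\subseteq\ig\subseteq\Ft_h$. Fundamentality is equivalent to $\Ft_{-\infty}=\gt_{\sigmaup}$ because $\qt+\sigmaup(\qt)=\C\otimes_{\R}\Ft_{-1}$ is $\sigmaup$-invariant, so the complex subalgebra it generates is $\C\otimes_{\R}\Ft_{-\infty}$, which equals $\gt$ exactly when $\Ft_{-\infty}=\gs$. Finally, any ideal $\at_{\sigmaup}$ of $\gs$ contained in $\Ft_0$ is in particular an ideal of $\Ft_{-\infty}$ inside $\Ft_0$, hence contained in $\Ft_{+\infty}$; this gives effectiveness when $\Ft_{+\infty}=\{0\}$, and conversely, if $(\gt_{\sigmaup},\qt)$ is fundamental then $\Ft_{-\infty}=\gs$, so $\Ft_{+\infty}$ is itself an ideal of $\gs$ contained in $\Ft_0$ and must vanish under effectiveness.
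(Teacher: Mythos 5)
Your proof is correct and follows essentially the same route as the paper's: Lemma~\ref{l2.1} for the negative part of the filtration, a Jacobi-identity induction for the mixed and nonnegative brackets (the paper organizes the mixed case as a Leibniz expansion of higher-order commutators with recurrence on the nonnegative index, while you induct on the negative index from the base relation $[\Ft_a,\Ft_{-1}]\subseteq\Ft_{a-1}$, but the content is the same), and the standard maximal-ideal argument for the closing statements, which the paper leaves as "straightforward."
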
 
\begin{proof} By Lemma\,\ref{l2.1}, the elements of  
$\Ft_{ -h}(\gs,\qt)$ are linear combinations of 
 commutators of order less or equal $h$ of elements of 
$\Ft_{ -1}(\gs,\qt)$ and we obtain 
$[\Ft_{ -h}(\gs,\qt),\Ft_{ -k}]\,{\subseteq}\,\Ft_{ {-}h{-}k}$
for $h,k\,{\geq}\,1$.
\par
The inclusion {$[\Ft_{ -h}(\gs,\qt),\Ft_{\,k}(\gs,\qt)]\,{\subseteq}\,
\Ft_{ k{-}h}$ for $h\,{>}\,0$} 
follows then from 
\begin{equation*}\begin{aligned}\;
 [[X_{1},\hdots,X_{p}],Y]\,{=}\,[[X_{1},Y],X_{2},\hdots,X_{p}]\,{+}\,\cdots\,{+}\,
 [X_{1},\hdots,[X_{i},Y],\hdots,X_{p}]\,\\
 +\cdots+[X_{1},\hdots,X_{p-1},[X_{p},Y]],\end{aligned}
\end{equation*} 
by taking $X_{1},\hdots,X_{p}\,{\in}\,\Ft_{ {-}1}(\gs,\qt)$, 
$Y\,{\in}\,\Ft_{ k}(\gs,\qt)$, 
and arguing by recurrence on~$k$.
To prove 
the inclusion 
$[\Ft_{ h}(\gs,\qt),\Ft_{ k}(\gs,\qt)]\,{\subseteq}\,\Ft_{ h{+}k}(\gs,\qt)$
for $h,k\,{\geq}\, 0$,
we argue by recurrence on the sum $h{+}k$. The case $h\,{=}\,k\,{=}\,0$ being
trivial, the inductive assumption yields,  for $Y_{1}\,{\in}\,\Ft_{ h}(\gs,\qt)$,
$Y_{2}\,{\in}\,\Ft_{ k}(\gs,\qt)$, $X\,{\in}\,\Ft_{ {-}1}(\gs,\qt)$, 
\begin{align*}
& [[Y_{1},Y_{2}],X]\,{=}\,[[Y_{1},X],Y_{2}]{+}[Y_{1},[Y_{2},X]] \\
&\qquad\quad \,{\in}\, [\Ft_{ h-1}(\gs,\qt),\Ft_{\, k}(\gs,\qt)]\,{+}
 \,[\Ft_{ h}(\gs,\qt),\Ft_{ k-1}(\gs,\qt)]
 \,{\subseteq}\, \Ft_{ h{+}k{-}1}(\gs,\qt),
\end{align*}
showing that $[Y_{1},Y_{2}]\,{\in}\,\Ft_{ h+k}(\gs,\qt)$.\par
The remaining claims follow 
straightforwardly.
\end{proof}
When $\gt$ if finite dimensional, as we will assume in the following, the filtration 
\eqref{e2.1} stabilizes. 
\begin{dfn} \label{d1.4} A nonzero element $X$ of $\gt_{\sigmaup}$ is said to have  
\textit{depth}, or \textit{degree~$p$} 
if it belongs to  
{$\Ft_{ -p}(\gs,\qt){\setminus}\Ft_{ 1-p}(\gs,\qt)$.}
The smallest positive integer $\nuup$ for which 
{$\Ft_{ -\infty}(\gs,\qt)\,{=}\,\Ft_{ -\nuup}(\gs,\qt)$}
 is called the 
 \emph{depth} 
 of $(\gt_{\sigmaup},\qt)$. \par 
 The smallest nonnegative integer $\deltaup$
 for which either 
{$\Ft_{ \deltaup}(\gs,\qt)$}
 is different from $\{0\}$ and equals 
{$\Ft_{ +\infty}(\gs,\qt)$}
 or
 is such that 
{$\Ft_{ \nuup+1}(\gs,\qt)\,{=}\,\{0\}$}
 is the 
 \emph{heigth}
 of $(\gt_{\sigmaup},\qt)$. 
\end{dfn} 
\begin{rmk}
 A $CR$-algebra $(\gt_{\sigmaup},\qt)$ is called  \emph{Levi-Tanaka} if it is fundamental and effective and 
 \eqref{e2.1} is the filtration associated to
a $\Z$-gradation
 $\gt_{\sigmaup}\,{=}\,{\sum}_{h={-}\nuup}^{\deltaup}{\gt_{\sigmaup}}_{,h}$ of $\gt_{\sigmaup}$.
 For Levi-Tanaka algebras, 
 Levi-nondegeneracy is equivalent to 
{strict Levi-nondegeneracy} 
(see e.g. \cite{MMN20z, MN97,MN98, Tan67, Tan70}).
\end{rmk}

\subsection{Levi-nondegeneracy} 
Levi-nondegeneracy may be described in terms of higher order commutators. 
We begin by proving a general lemma.
\begin{lem} \label{l3.5}
Let $\gt$ be a Lie algebra over a field $\mathbb{K}$.
 If $\qt,\wt$ are Lie subalgebras of $\gt$, then 
\begin{equation}\label{e3.2}
\qt_{\{\wt\}}\coloneqq\{Z\,{\in}\,\qt\,{+}\,\wt\mid \ad(\qt)^{h}(Z)\,{\subseteq}\,\qt\,{+}\,\wt,\,\forall{h}\geq{0}\}
\end{equation}
is the largest Lie subalgebra of $\gt$ {containing $\qt$ and  contained in $\qt{+}\wt$}.
\end{lem}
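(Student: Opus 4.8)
The plan is to establish, in this order, that $\qt_{\{\wt\}}$ is a linear subspace, that it contains $\qt$ and is contained in $\qt{+}\wt$, that it is stable under $\ad(\qt)$, and finally that it is closed under the bracket; the maximality assertion will then follow almost for free. Write $N\coloneqq\qt_{\{\wt\}}$. Linearity of $N$ is immediate from the linearity of each $\ad(X)$ together with the fact that $\qt{+}\wt$ is a subspace; the inclusion $N\subseteq\qt{+}\wt$ is the case $h{=}0$ of the defining condition; and $\qt\subseteq N$ holds because $\qt$ is a subalgebra, so $\ad(\qt)^h(\qt)\subseteq\qt\subseteq\qt{+}\wt$ for every $h\geq 0$. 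The first genuine point is $\ad(\qt)$-stability, i.e. $[\qt,N]\subseteq N$: for $Z\in N$ and $X\in\qt$ I would simply note that every iterated bracket in $\ad(\qt)^h([X,Z])$ lies in $\ad(\qt)^{h+1}(Z)$, which is contained in $\qt{+}\wt$ since $Z\in N$; hence $[X,Z]\in N$.

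The main obstacle is to show that $N$ is closed under the bracket, the difficulty being that $\qt{+}\wt$ need not itself be a subalgebra, so there is no a priori reason for $[Z_1,Z_2]$ to land back in $\qt{+}\wt$. I would isolate this as a base step: for $Z_1,Z_2\in N$ the claim is $[Z_1,Z_2]\in\qt{+}\wt$. To prove it, decompose $Z_i=X_i+W_i$ with $X_i\in\qt$ and $W_i\in\wt$, and expand
$[Z_1,Z_2]=[X_1,Z_2]+[W_1,X_2]+[W_1,W_2]$. Here $[X_1,Z_2]\in N$ by the $\ad(\qt)$-stability just established, $[W_1,W_2]\in\wt$ because $\wt$ is a subalgebra, and $[W_1,X_2]=-[X_2,Z_1]+[X_2,X_1]$ lies in $\qt{+}\wt$ since $[X_2,Z_1]\in N$ and $[X_2,X_1]\in\qt$. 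Thus $[Z_1,Z_2]\in\qt{+}\wt$, and one sees that both hypotheses---that $\qt$ and $\wt$ are subalgebras---are used here.

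Once the base step holds, I would obtain closure under the bracket by induction on $h$, proving $\ad(\qt)^h([Z_1,Z_2])\subseteq\qt{+}\wt$ for all $Z_1,Z_2\in N$. The inductive step pushes one $\ad(X)$, $X\in\qt$, through the bracket via the derivation identity $\ad(X)[Z_1,Z_2]=[[X,Z_1],Z_2]+[Z_1,[X,Z_2]]$; since $[X,Z_1]$ and $[X,Z_2]$ again lie in $N$ by $\ad(\qt)$-stability, applying the inductive hypothesis to the two resulting pairs in $N\times N$ gives the result, the base case $h{=}0$ being exactly the claim of the previous paragraph. This yields $[N,N]\subseteq N$, so $N$ is a Lie subalgebra lying between $\qt$ and $\qt{+}\wt$.

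Finally, for maximality: if $\st$ is any Lie subalgebra with $\qt\subseteq\st\subseteq\qt{+}\wt$, then for $Z\in\st$ we have $\ad(\qt)^h(Z)\subseteq\ad(\st)^h(Z)\subseteq\st\subseteq\qt{+}\wt$ for all $h$, whence $\st\subseteq N$. Combined with the fact that $N$ is itself such a subalgebra, this shows $N=\qt_{\{\wt\}}$ is the largest Lie subalgebra of $\gt$ containing $\qt$ and contained in $\qt{+}\wt$, completing the proof.
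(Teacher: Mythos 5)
Your proof is correct and follows essentially the same route as the paper: both arguments rest on the $\ad(\qt)$-stability of $\qt_{\{\wt\}}$, the decomposition of its elements into a $\qt$-part and a $\wt$-part, and the Leibniz expansion of $\ad(\qt)^{h}$ applied to a bracket, followed by the same one-line maximality argument. The only difference is organizational: you run an explicit induction on $h$ with a separately proved base case, whereas the paper packages the same content into a single display via $\ad(\qt)^{h}([\qt_{\{\wt\}},\qt_{\{\wt\}}])\subseteq{\sum}_{i}[\ad(\qt)^{i}(\qt_{\{\wt\}}),\ad(\qt)^{h-i}(\qt_{\{\wt\}})]$ with $\qt_{\{\wt\}}=\st+\qt$, $\st=\qt_{\{\wt\}}\cap\wt$.
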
 
\begin{proof}
Set $\st\,{=}\,\qt_{\{\wt\}}\,{\cap}\,\wt$.
Then $\qt_{\{\wt\}}\,{=}\,\st{+}\qt$\, and, 
for all nonnegative
integers~$h$, 
\begin{align*}
 \ad(\qt)^{h}([\qt_{\{\wt\}},\qt_{\{\wt\}}])={\sum}_{i=0}^{h}[\ad(\qt)^{i}(\qt_{\{\wt\}}),\ad(\qt)^{h-i}(\qt_{\{\wt\}})]\qquad 
 \\
 \subseteq [\st+\qt,\st+\qt]\subseteq [\st,\st]+[\st,\qt]+\qt\subseteq \qt+\wt,
\end{align*}
because $[\st,\st]\,{\subseteq}\,\wt$ and $[\st,\qt]\,{\subseteq}\,\qt\,{+}\,\wt$. Thus 
$\qt_{\{\wt\}}$ is a Lie subalgebra of $\gt$. \par
 Since $\ad(\qt)^{h}(\qt')\,{\subseteq}\,\qt'$ for every Lie subalgebra $\qt'$ of $\gt$ containing $\qt$,
 it follows  that $\qt_{\{\wt\}}$ is the largest Lie subalgebra $\qt'$ with
 $\qt\,{\subseteq}\,\qt'\,{\subseteq}\,\qt{+}\wt.$ 
\end{proof} 
\begin{dfn}
We call the Lie subalgebra $\qt_{\{\wt\}}$ defined by
 \eqref{e3.2} the \emph{$\wt$-ex\-ten\-sion of $\qt$ in $\gt$}.
\end{dfn}
\begin{thm} \label{t3.6}
 Let $(\gt_{\sigmaup},\qt)$ be a $CR$-algebra. Then $\qt'\,{=}\,\qt_{\{\sigmaup(\qt)\}}$ is {the}
maximal complex Lie subalgebra of $\gt$
 satisfying
 \begin{equation}\label{e3.3}
 \qt\,{\subseteq}\,\qt'\,{\subseteq}\,\qt{+}\sigmaup(\qt).
 \end{equation}
\end{thm}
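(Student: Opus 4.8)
The plan is to obtain the theorem as an immediate specialization of Lemma~\ref{l3.5}, taking the ground field to be $\mathbb{K}\,{=}\,\C$ and setting $\wt\,{=}\,\sigmaup(\qt)$. The only hypothesis of that lemma that is not tautologically satisfied is that $\sigmaup(\qt)$ be a complex Lie subalgebra of $\gt$, so the first---and essentially only---step is to check this.

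To verify it, I would use the two defining properties of $\sigmaup$ separately. Because $\sigmaup$ is an involutive automorphism of $\gt$ (as a real Lie algebra, respecting the $\C$-bilinear bracket), it maps Lie subalgebras to Lie subalgebras, so $\sigmaup(\qt)$ is closed under the bracket. Because $\sigmaup$ is anti-$\C$-linear, $\sigmaup(\qt)$ is still a complex subspace: for $v\,{\in}\,\qt$ and $\lambda\,{\in}\,\C$ one has $\lambda\,\sigmaup(v)\,{=}\,\sigmaup(\bar{\lambda}\,v)\,{\in}\,\sigmaup(\qt)$, since $\bar{\lambda}\,v\,{\in}\,\qt$. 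Hence $\sigmaup(\qt)$ is a complex Lie subalgebra and, consequently, $\qt\,{+}\,\sigmaup(\qt)$ is a complex subspace of $\gt$.

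With this in hand, Lemma~\ref{l3.5} applied with $\mathbb{K}\,{=}\,\C$ and $\wt\,{=}\,\sigmaup(\qt)$ asserts precisely that $\qt'\,{=}\,\qt_{\{\sigmaup(\qt)\}}$ is the largest complex Lie subalgebra of $\gt$ containing $\qt$ and contained in $\qt\,{+}\,\sigmaup(\qt)$, which is the statement. If one prefers to invoke the lemma only over $\R$, then one obtains the largest \emph{real} subalgebra satisfying \eqref{e3.3}, and it remains to upgrade it to a complex one; this follows from the explicit description \eqref{e3.2}, since for fixed $X\,{\in}\,\qt$ the operator $\ad(X)$ is $\C$-linear, whence $Z\,{\in}\,\qt'$ forces $\lambda Z\,{\in}\,\qt'$ for every $\lambda\,{\in}\,\C$.

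The argument presents no real obstacle: the whole content---existence, the inclusions \eqref{e3.3}, and maximality---is furnished by Lemma~\ref{l3.5}, and the single point requiring attention is the interplay between the anti-linearity of $\sigmaup$ and the complex-linearity of $\ad$. One uses the former to see that $\sigmaup(\qt)$, and hence $\qt\,{+}\,\sigmaup(\qt)$, carries a complex structure, and the latter to guarantee that this complex structure descends to $\qt'$.
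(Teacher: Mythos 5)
Your proof is correct and follows exactly the paper's route: the authors' entire proof is ``apply Lemma~\ref{l3.5} with $\wt\,{=}\,\sigmaup(\qt)$,'' and your verification that $\sigmaup(\qt)$ is a complex Lie subalgebra (using that $\sigmaup$ preserves the bracket and that anti-linearity gives $\lambdaup\,\sigmaup(v)\,{=}\,\sigmaup(\bar{\lambdaup}\,v)$) simply makes explicit a point the paper leaves tacit.
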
 
\begin{proof}
 We apply Lemma\,\ref{l3.5}, taking $\wt\,{=}\,\sigmaup(\qt)$.
\end{proof} 
\begin{dfn}
 We call $\qt_{\{\sigmaup(\qt)\}}$, that will be indicated for simplicity by $\qt_{\{\sigmaup\}}$, 
 the \emph{$\sigmaup$-extension} of $\qt$. 
\end{dfn}
\begin{dfn}
 Let $(\gt_{\sigmaup},\qt)$ be a $CR$-algebra and $Z$ an element of $\sigmaup(\qt){\setminus}\qt$.
 A sequence $(Z_{1},\hdots,Z_{k})$ in $\qt$ such that $[Z,Z_{1},\hdots,Z_{k}]\,{\notin}\,\qt\,{+}\,\sigmaup(\qt)$
 is called a \emph{Levi-sequence} for $Z$. Elements $Z$ in $\sigmaup(\qt){\setminus}\qt$ admitting
 a Levi-sequence are said to have \emph{finite Levi-order} and the minimal length $k(Z)$ 
 of
 their Levi-sequences is called their \emph{Levi-order}:
\begin{equation}
 k(Z)=\inf\big\{k\,{\in}\,\N\mid \exists\, Z_{1},\hdots,Z_{k}\in\qt\;\,\text{s.t.}\;\,[Z,Z_{1},\hdots,Z_{k}]
 \notin\qt{+}\sigmaup(\qt)\big\}.
\end{equation}
The \emph{Levi-order} $k(\gs,\qt)$ 
of $(\gs,\qt)$ is the supremum of the Levi orders of elements of $\sigmaup(\qt){\setminus}\qt$.
\end{dfn}
\begin{cor}\label{c2.7}
 A $CR$-algebra $(\gt_{\sigmaup},\qt)$ 
  is 
  Levi-nondegenerate if and only if it has finite Levi order.
 \qed
\end{cor}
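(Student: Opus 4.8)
The plan is to reduce everything to Theorem~\ref{t3.6}, which identifies the $\sigmaup$-extension $\qt_{\{\sigmaup\}}$ with the maximal complex Lie subalgebra lying between $\qt$ and $\qt{+}\sigmaup(\qt)$. By Definition~\ref{d2.2}, $(\gt_{\sigmaup},\qt)$ is Levi-degenerate exactly when some Lie subalgebra $\qt'$ strictly contains $\qt$ while staying inside $\qt{+}\sigmaup(\qt)$; by maximality such a $\qt'$ exists if and only if $\qt_{\{\sigmaup\}}\,{\neq}\,\qt$, since $\qt_{\{\sigmaup\}}$ is itself such a subalgebra whenever it is proper. Hence Levi-nondegeneracy is equivalent to $\qt_{\{\sigmaup\}}\,{=}\,\qt$, and the whole statement amounts to proving
\[
\qt_{\{\sigmaup\}}=\qt \iff k(\gs,\qt)<\infty .
\]

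First I would establish the bookkeeping bridge between higher order commutators and the defining condition \eqref{e3.2} of $\qt_{\{\sigmaup\}}$. An easy induction on $k$, using the convention $[Z,Z_{1},\hdots,Z_{k}]\,{=}\,[[Z,Z_{1},\hdots,Z_{k-1}],Z_{k}]$, gives $[Z,Z_{1},\hdots,Z_{k}]\,{=}\,(-1)^{k}\ad(Z_{k})\cdots\ad(Z_{1})(Z)$, so that, as $Z_{1},\hdots,Z_{k}$ range over $\qt$, these commutators span $\ad(\qt)^{k}(Z)$. Since $\qt{+}\sigmaup(\qt)$ is a linear subspace, for $Z\,{\in}\,\sigmaup(\qt){\setminus}\qt$ a Levi-sequence of length $k$ exists precisely when $\ad(\qt)^{k}(Z)\,{\not\subseteq}\,\qt{+}\sigmaup(\qt)$; consequently
\[
k(Z)=\min\{k\,{\geq}\,1\mid \ad(\qt)^{k}(Z)\,{\not\subseteq}\,\qt{+}\sigmaup(\qt)\},
\]
and, reading off \eqref{e3.2}, the element $Z$ has finite Levi-order if and only if $Z\,{\notin}\,\qt_{\{\sigmaup\}}$.

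Next I would connect the extension to the elements of $\sigmaup(\qt){\setminus}\qt$. If $\qt_{\{\sigmaup\}}\,{\supsetneq}\,\qt$, choose $Y\,{\in}\,\qt_{\{\sigmaup\}}{\setminus}\qt$ and write $Y\,{=}\,A{+}Z$ with $A\,{\in}\,\qt$ and $Z\,{\in}\,\sigmaup(\qt)$; since $\qt\,{\subseteq}\,\qt_{\{\sigmaup\}}$ we get $Z\,{=}\,Y{-}A\,{\in}\,\qt_{\{\sigmaup\}}\cap\sigmaup(\qt)$, with $Z\,{\notin}\,\qt$ (otherwise $Y\,{\in}\,\qt$). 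By the previous step this $Z$ has infinite Levi-order, so $k(\gs,\qt)\,{=}\,{+}\infty$; this yields the implication $k(\gs,\qt)\,{<}\,\infty\,{\Rightarrow}\,\qt_{\{\sigmaup\}}\,{=}\,\qt$. For the converse, assume $\qt_{\{\sigmaup\}}\,{=}\,\qt$; then each $Z\,{\in}\,\sigmaup(\qt){\setminus}\qt$ lies outside $\qt_{\{\sigmaup\}}$ and hence has finite Levi-order. To bound the supremum $k(\gs,\qt)$ I would use finite dimensionality of $\gt$: the descending chain $V_{h}\,{=}\,\{Z\,{\in}\,\qt{+}\sigmaup(\qt)\mid \ad(\qt)^{j}(Z)\,{\subseteq}\,\qt{+}\sigmaup(\qt),\;0\,{\leq}\,j\,{\leq}\,h\}$ has non-increasing dimensions, so it stabilizes at some $N$ with $V_{N}\,{=}\,\bigcap_{h}V_{h}\,{=}\,\qt_{\{\sigmaup\}}\,{=}\,\qt$; thus every $Z\,{\in}\,\sigmaup(\qt){\setminus}\qt$, lying in $V_{0}$ but not in $V_{N}$, drops out of some $V_{h}$ with $h\,{\leq}\,N$, whence $k(Z)\,{\leq}\,N$ and $k(\gs,\qt)\,{\leq}\,N$.

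The signed-commutator identity is routine, and the equivalence "$Z$ has finite Levi-order $\iff Z\,{\notin}\,\qt_{\{\sigmaup\}}$" is a direct unravelling of \eqref{e3.2} and Theorem~\ref{t3.6}. The one point genuinely requiring care is the passage from "each element of $\sigmaup(\qt){\setminus}\qt$ has finite Levi-order" to "the supremum $k(\gs,\qt)$ is finite", which is where finite dimensionality of $\gt$ enters through the stabilization of the chain $V_{h}$; everything else follows formally from Definition~\ref{d2.2} and the identification of Levi-nondegeneracy with $\qt_{\{\sigmaup\}}\,{=}\,\qt$.
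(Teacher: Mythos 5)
Your proof is correct and follows the route the paper intends: it identifies Levi-nondegeneracy with $\qt_{\{\sigmaup\}}\,{=}\,\qt$ via Theorem~\ref{t3.6} and Definition~\ref{d2.2}, and translates membership in $\qt_{\{\sigmaup\}}$ into the (non)existence of Levi-sequences through the identity $[Z,Z_{1},\hdots,Z_{k}]\,{=}\,({-}1)^{k}\ad(Z_{k})\cdots\ad(Z_{1})(Z)$. The stabilization argument for the chain $V_{h}$ is a welcome explicit justification of the uniform bound on $k(Z)$ — the step the paper's \qed leaves implicit, relying on the standing finite-dimensionality assumption.
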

\begin{dfn}
 $CR$-algebras 
with finite Levi order $k$ is called  
{\emph{ Levi-$k$-non\-de\-gen\-er\-ate}.}
{Levi $1$-nondegenerate means  \textit{strictly} Levi-nondegenerate.}\end{dfn}
A homogeneous $CR$-manifold $\sfM$ with a Levi-$k$-nondegenerate associated $CR$-algebra
is \emph{Levi-$k$-nondegenerate}.
\begin{lem}\label{l3.8} Assume that 
$Z\,{\in}\,\sigmaup(\qt){\setminus}\qt$ has finite Levi-order. Let  $(Z_{1},\hdots,Z_{k})$
be a minimal length Levi-sequence for $Z$.
Then 
\begin{itemize}
\item $Z_{i}\,{\in}\,\qt{\setminus}\sigmaup(\qt)$ \, for $1{\leq}i{\leq}k$;
\item if $k\,{>}\,1$, then $[Z,Z_{i}]\,{\in}\,(\qt\,{+}\,\sigmaup(\qt)){\setminus}\qt$;
\item $(Z_{i_{1}},\hdots,Z_{i_{k}})$ is a Levi-sequence for $Z$  
for every permutation $i\,{\in}\,\Sb_{k}$.
\end{itemize}
\end{lem}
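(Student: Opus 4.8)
The plan is to isolate the one consequence of minimality that does all the work and then read off the three assertions from it. Since $(Z_1,\dots,Z_k)$ has minimal length among Levi-sequences for $Z$, no Levi-sequence of length $j<k$ exists; unwinding the definition, this says that
\[
[Z,W_1,\dots,W_j]\in\qt{+}\sigmaup(\qt)\quad\text{for every }j<k\text{ and all }W_1,\dots,W_j\in\qt,
\]
the case $j{=}0$ reducing to $Z\in\sigmaup(\qt)\subseteq\qt{+}\sigmaup(\qt)$. Call this property $(\ast)$. Everything below rests on $(\ast)$ together with the fact that $\qt$ and $\sigmaup(\qt)$ are Lie subalgebras of $\gt$.

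I would prove the permutation statement first, since the other two follow from it. It is enough to handle a transposition of two adjacent entries $Z_i,Z_{i+1}$, because adjacent transpositions generate $\Sb_k$ and each such swap leaves the reordered tuple inside $\qt^k$. Putting $A=[Z,Z_1,\dots,Z_{i-1}]$ and using the Jacobi identity in the form $[A,X,Y]-[A,Y,X]=[A,[X,Y]]$, then applying $\ad(Z_{i+2}),\dots,\ad(Z_k)$ to both sides, I get
\[
[Z,\dots,Z_i,Z_{i+1},\dots,Z_k]-[Z,\dots,Z_{i+1},Z_i,\dots,Z_k]=[Z,Z_1,\dots,Z_{i-1},[Z_i,Z_{i+1}],Z_{i+2},\dots,Z_k].
\]
The right-hand side is a higher order commutator of $Z$ with the $k{-}1$ elements $Z_1,\dots,Z_{i-1},[Z_i,Z_{i+1}],Z_{i+2},\dots,Z_k$ of $\qt$ (here $[Z_i,Z_{i+1}]\in\qt$ since $\qt$ is a subalgebra), hence lies in $\qt{+}\sigmaup(\qt)$ by $(\ast)$. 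Thus the two bracketings are congruent modulo $\qt{+}\sigmaup(\qt)$, so since the unswapped one lies outside $\qt{+}\sigmaup(\qt)$ so does the swapped one. This proves the third assertion.

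For the first assertion, each $Z_i\in\qt$ by definition, so only $Z_i\notin\sigmaup(\qt)$ needs checking. Suppose some $Z_j\in\qt\cap\sigmaup(\qt)$; by the permutation statement I may assume $j=k$. By $(\ast)$ write $[Z,Z_1,\dots,Z_{k-1}]=A+B$ with $A\in\qt$, $B\in\sigmaup(\qt)$. Bracketing with $Z_k$ and using $A,Z_k\in\qt$ to get $[A,Z_k]\in\qt$ and $B,Z_k\in\sigmaup(\qt)$ to get $[B,Z_k]\in\sigmaup(\qt)$, I find $[Z,Z_1,\dots,Z_k]=[A,Z_k]+[B,Z_k]\in\qt{+}\sigmaup(\qt)$, contradicting that $(Z_1,\dots,Z_k)$ is a Levi-sequence. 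For the second assertion, the membership $[Z,Z_i]\in\qt{+}\sigmaup(\qt)$ is immediate from $(\ast)$ with the single factor $Z_i$, which is legitimate precisely because $1<k$. To see $[Z,Z_i]\notin\qt$, assume $[Z,Z_i]\in\qt$ and move $Z_i$ to the front; then $[Z,Z_i,\dots]=[[Z,Z_i],\dots]$ is a higher order commutator of $[Z,Z_i]\in\qt$ with elements of $\qt$, hence lies in $\qt$, again contradicting the Levi-sequence property.

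The only delicate point, and the step I would write most carefully, is the bookkeeping in the Jacobi computation for an interior transposition: one must verify that after applying the trailing operators $\ad(Z_{i+2}),\dots,\ad(Z_k)$ to the elementary identity, the resulting commutator genuinely involves exactly $k{-}1$ factors from $\qt$, so that $(\ast)$ applies. Once the permutation statement is in hand, the remaining two assertions are short consequences of $(\ast)$ and of the closure of $\qt$ and $\sigmaup(\qt)$ under bracket; no further input is required.
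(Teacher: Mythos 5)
Your proof is correct, and the route you take to the permutation statement is genuinely different from the paper's. The paper first establishes $Z_{k}\,{\notin}\,\sigmaup(\qt)$ and $[Z,Z_{1}]\,{\notin}\,\qt$ directly, then proves that \emph{cyclic} shifts preserve minimal Levi-sequences via the identity
$[Z,Z_{1},\hdots,Z_{k}]=[Z,Z_{k},Z_{1},\hdots,Z_{k-1}]+{\sum}_{i=1}^{k-1}[Z,Z_{i,1},\hdots,Z_{i,k-1}]$
(with $Z_{i,i}\,{=}\,[Z_{i},Z_{k}]$), and finally upgrades cyclic invariance to full permutation invariance by an induction on $k$: it decomposes $[Z,Z_{1}]\,{=}\,Z'{+}Z''$ with $Z'\,{\in}\,\sigmaup(\qt){\setminus}\qt$, $Z''\,{\in}\,\qt$, and applies the inductive hypothesis to the shorter sequence $(Z_{2},\hdots,Z_{k})$ viewed as a minimal Levi-sequence for $Z'$. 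You instead reduce directly to adjacent transpositions and use the single Jacobi identity $[A,X,Y]-[A,Y,X]=[A,[X,Y]]$ followed by the trailing $\ad$'s, so that the defect term is a commutator of $Z$ with exactly $k{-}1$ elements of $\qt$ and is killed by minimality; this dispenses with both the cyclic-shift bookkeeping and the induction on $k$ (and with the need to verify that $(Z_{2},\hdots,Z_{k})$ really is a minimal sequence for $Z'$). Your derivations of the first two bullets from permutation invariance coincide in substance with the paper's direct arguments (the $A{+}B$ decomposition for $Z_{k}\,{\in}\,\qt\,{\cap}\,\sigmaup(\qt)$, and the observation that $[Z,Z_{i}]\,{\in}\,\qt$ would trap the whole commutator in $\qt$). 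The net effect is a shorter and arguably cleaner proof of the same lemma; the paper's cyclic-shift identity is the one piece of machinery it reuses elsewhere (e.g.\ in Lemma\,\ref{l3.2a} and Corollary\,\ref{c2.46}), which may explain its choice.
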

\begin{proof}
Clearly $Z_{k}{\notin}\,\sigmaup(\qt)$, because $[Z,Z_{1},\hdots,Z_{k-1}]\,{\in}\,\qt{+}\sigmaup(\qt)$
by the minimality assumption:  $[[Z,Z_{1},\hdots,Z_{k-1}],Z_{k}]$ would belong to
$\qt{+}\sigmaup(\qt)$ if $Z_{k}$ belongs to $\qt\,{\cap}\,\sigmaup(\qt)$. 
Moreover, $[Z,Z_{1}]\,{\notin}\,\qt$ because $[[Z,Z_{1}],Z_{2},\hdots,Z_{k}]\,{\notin}\,\qt$.
For $1{\leq}i{\leq}k$,  set
$Z_{i,j}{=}\,Z_{j}$ for  $1{\leq}j{\leq}k{-}1$ and $j\,{\neq}i$, $Z_{i,i}{=}[Z_{i},Z_{k}]$.
Then 
\begin{equation*}
  [Z,Z_{1},\hdots,Z_{k-1},Z_{k}]=[Z,Z_{k},Z_{1},\hdots,Z_{k-1}]+
  {\sum}_{i=1}^{k-1}[Z,Z_{i,1},\hdots,Z_{i,k-1}].
\end{equation*}
Since $Z_{i,j}\,{\in}\,\qt$ for all $1{\leq}i,j{\leq}k{-}1$, all summands 
 $[Z,Z_{i,1},\hdots,Z_{i,k-1}]$ belong to $\qt{+}\sigmaup(\qt)$ by minimality. Thus
 $[Z,Z_{k},Z_{1},\hdots,Z_{k-1}]\,{\notin}\,\qt{+}\sigmaup(\qt)$ and  hence also
 $(Z_{k},Z_{1},\hdots,Z_{k-1})$ is a  Levi-sequence for $Z$.
 By repeating the same reasoning, we obtain that, for each $1{\leq}i{\leq}k{-}1$,
 $(Z_{i+1},\hdots,Z_{k},Z_{1},\hdots,Z_{i})$ is a Levi-sequence for $Z$.
This shows that  $Z_{i}{\notin}\,\sigmaup(\qt)$ for all $1{\leq}i{\leq}k$ and that, 
if $k\,{>}\,1$, then $[Z,Z_{i}]\,{\in}\,(\qt\,{+}\,\sigmaup(\qt)){\setminus}\qt$. 
 \par
 To prove that minimal length Levi-sequences are preserved by permutations, we argue by 
 recurrence on the length of the sequence. If $k(Z)\,{=}\,1$, 
then there is nothing to prove.
 The case $k(Z)\,{=}\,2$ was treated above.
  Assume that $k\,{>}\,2$ and that the statement 
 is true for Levi-sequences of length not exceeding $k{-}1$. 
 The commutator $[Z,Z_{1}]$ is a sum $Z'{+}\,Z''$ with $Z'\,{\in}\,\sigmaup(\qt){\setminus}\qt$
 and $Z''\,{\in}\,\qt$. 
Then 
 $(Z_{2},\hdots,Z_{k})$ is a minimal length sequence for $Z'$ and hence, by the recursive assumption,
 all $(Z_{i_{2}},\hdots,Z_{i_{k}})$ are minimal length Levi-sequences for $Z'$, for all permutations
 $(i_{2},\hdots,i_{k})$ of $(2,\hdots,k)$. This implies  that all $(Z_{1},Z_{i_{2}},\hdots,Z_{i_{k}})$
 are minimal length Levi-sequence for $Z$. Since we already showed that $(Z_{i},\hdots,Z_{k},Z_{1},
 \hdots,Z_{i-1})$ are  minimal length Levi-sequences for $i\,{=}\,2,\hdots,k$,
 we obtain invariance under permutations. 
 \end{proof}
\begin{lem}\label{l3.9}
 Elements of $\sigmaup(\qt){\setminus}\qt$ differing by an element of $\qt\,{\cap}\,\sigmaup(\qt)$ have
 same Levi-sequences. \qed
\end{lem}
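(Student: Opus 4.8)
The plan is to show that the defining condition for a sequence $(Z_1,\hdots,Z_k)$ to be a Levi-sequence is literally unchanged when $Z$ is replaced by a congruent element modulo $\qt\,{\cap}\,\sigmaup(\qt)$. So I would start by fixing $Z,Z'\,{\in}\,\sigmaup(\qt){\setminus}\qt$ with $W\,{\coloneqq}\,Z'{-}Z\,{\in}\,\qt\,{\cap}\,\sigmaup(\qt)$, and observe first that $Z'$ does indeed lie in $\sigmaup(\qt){\setminus}\qt$: it is a sum of two elements of the subspace $\sigmaup(\qt)$, and if it belonged to $\qt$ then $Z\,{=}\,Z'{-}W$ would too, since $W\,{\in}\,\qt$, contradicting $Z\,{\notin}\,\qt$. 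Then I would fix an arbitrary sequence $(Z_1,\hdots,Z_k)$ with $Z_i\,{\in}\,\qt$ and compare the two higher order commutators.

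The computation uses linearity of the higher order commutator in its first entry, which follows by an immediate induction from the recursive definition $[X_1,\hdots,X_{h-1},X_h]\,{=}\,[[X_1,\hdots,X_{h-1}],X_h]$ together with bilinearity of the bracket, to split
\begin{equation*}
 [Z',Z_1,\hdots,Z_k]\,{=}\,[Z,Z_1,\hdots,Z_k]\,{+}\,[W,Z_1,\hdots,Z_k].
\end{equation*}
The key point is that the correction term lies in $\qt$: since $W$ and all the $Z_i$ belong to the Lie subalgebra $\qt$, Lemma\,\ref{l2.1} (or simply iterated closure of $\qt$ under brackets) gives $[W,Z_1,\hdots,Z_k]\,{\in}\,\qt\,{\subseteq}\,\qt{+}\sigmaup(\qt)$.

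Consequently $[Z',Z_1,\hdots,Z_k]$ and $[Z,Z_1,\hdots,Z_k]$ are congruent modulo $\qt{+}\sigmaup(\qt)$, so one of them lies outside $\qt{+}\sigmaup(\qt)$ if and only if the other does. Hence $(Z_1,\hdots,Z_k)$ is a Levi-sequence for $Z$ precisely when it is one for $Z'$, which is the assertion, and in particular $Z$ and $Z'$ have the same Levi-order. There is essentially no obstacle here: the entire content is the closure of $\qt$ under higher order commutators combined with linearity in the first slot. The only point worth stating carefully is that the correction term is absorbed into $\qt$ itself, not merely into $\sigmaup(\qt)$, so that membership in $\qt{+}\sigmaup(\qt)$ is genuinely preserved.
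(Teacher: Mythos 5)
Your proof is correct and is exactly the immediate argument the paper has in mind when it states Lemma\,\ref{l3.9} with no written proof: linearity of the higher order commutator in its first slot, plus the fact that $[W,Z_{1},\hdots,Z_{k}]\,{\in}\,\qt$ because $\qt$ is a Lie subalgebra, so the two commutators agree modulo $\qt\,{+}\,\sigmaup(\qt)$. Nothing is missing.
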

\begin{dfn}\label{d3.6}
 We call  
\begin{equation}
 \Kt(\gs,\qt){=}\{Z\,{\in}\,\sigmaup(\qt)\,{\mid}\, [Z,Z_{1},\hdots,Z_{k}]\,{\in}\,\qt{+}\sigmaup(\qt),\,\forall k{\in}\Z_{+},\,
 \forall Z_{1},\hdots,Z_{k}{\in}\qt\}
\end{equation}
the \emph{Levi-degeneracy kernel} of $(\gs,\qt)$.
\end{dfn}
We recall from \cite{MMN21}: 
\begin{prop}
The Levi-degeneracy kernel of a $CR$-algebra $(\gs,\qt)$ is a Lie subalgebra of $\sigmaup(\qt)$ containing
$\qt\,{\cap}\,\sigmaup(\qt)$. 
\end{prop}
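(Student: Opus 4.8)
The plan is to identify the Levi-degeneracy kernel with an intersection of two Lie subalgebras that are already at our disposal, and then invoke the elementary fact that an intersection of Lie subalgebras is again a Lie subalgebra. The two subalgebras are the $\sigmaup$-extension $\qt_{\{\sigmaup\}}$ of Theorem~\ref{t3.6} and $\sigmaup(\qt)$ itself.

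First I would record that $\sigmaup(\qt)$ is a complex Lie subalgebra of $\gt$: since $\sigmaup$ is an anti-$\C$-linear involution with $\sigmaup([X,Y])\,{=}\,[\sigmaup(X),\sigmaup(Y)]$, the image $\sigmaup(\qt)$ is a complex subspace, and it is closed under brackets because $[\sigmaup(a),\sigmaup(b)]\,{=}\,\sigmaup([a,b])\,{\in}\,\sigmaup(\qt)$ for $a,b\,{\in}\,\qt$. By Lemma~\ref{l3.5}, applied with $\wt\,{=}\,\sigmaup(\qt)$, the extension $\qt_{\{\sigmaup\}}$ is a Lie subalgebra of $\gt$ containing $\qt$.

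The heart of the argument is the identity
\begin{equation*}
 \Kt(\gs,\qt)=\qt_{\{\sigmaup\}}\cap\sigmaup(\qt).
\end{equation*}
To check it I would compare the defining conditions. By definition $\qt_{\{\sigmaup\}}$ consists of the $Z\,{\in}\,\qt{+}\sigmaup(\qt)$ with $\ad(\qt)^{h}(Z)\,{\subseteq}\,\qt{+}\sigmaup(\qt)$ for all $h\,{\geq}\,0$, whereas $\Kt(\gs,\qt)$ consists of the $Z\,{\in}\,\sigmaup(\qt)$ with $[Z,Z_{1},\hdots,Z_{k}]\,{\in}\,\qt{+}\sigmaup(\qt)$ for all $k$ and all $Z_{i}\,{\in}\,\qt$. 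Writing $R_{Y}\,{=}\,-\ad(Y)$ one has $[Z,Z_{1},\hdots,Z_{k}]\,{=}\,R_{Z_{k}}\cdots R_{Z_{1}}(Z)\,{=}\,(-1)^{k}\ad(Z_{k})\cdots\ad(Z_{1})(Z)$, so that the higher order commutators of $Z$ of length $k$ with elements of $\qt$ span precisely $\ad(\qt)^{k}(Z)$. Hence the two families of bracket conditions are equivalent, and $Z\,{\in}\,\Kt(\gs,\qt)$ holds exactly when $Z\,{\in}\,\sigmaup(\qt)$ (which in particular forces $Z\,{\in}\,\qt{+}\sigmaup(\qt)$) and $Z\,{\in}\,\qt_{\{\sigmaup\}}$. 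This matching of the iterated $\ad$-condition with the higher order commutators, keeping track of the signs, is the one place demanding genuine care; everything else is formal. I emphasize that the naive direct route, expanding $[[Z,W],Z_{1},\hdots,Z_{k}]$ by Leibniz, breaks down precisely because $\qt{+}\sigmaup(\qt)$ need not be a subalgebra, which is exactly the obstacle the reduction to $\qt_{\{\sigmaup\}}$ circumvents.

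With the identity established, both assertions are immediate. Being the intersection of the Lie subalgebras $\qt_{\{\sigmaup\}}$ and $\sigmaup(\qt)$, the set $\Kt(\gs,\qt)$ is a Lie subalgebra of $\gt$ contained in $\sigmaup(\qt)$, hence a Lie subalgebra of $\sigmaup(\qt)$. For the stated inclusion, $\qt\,{\cap}\,\sigmaup(\qt)\,{\subseteq}\,\qt\,{\subseteq}\,\qt_{\{\sigmaup\}}$ and $\qt\,{\cap}\,\sigmaup(\qt)\,{\subseteq}\,\sigmaup(\qt)$ together give $\qt\,{\cap}\,\sigmaup(\qt)\,{\subseteq}\,\qt_{\{\sigmaup\}}\cap\sigmaup(\qt)\,{=}\,\Kt(\gs,\qt)$; equivalently, for $Z\,{\in}\,\qt\,{\cap}\,\sigmaup(\qt)$ every commutator $[Z,Z_{1},\hdots,Z_{k}]$ with $Z_{i}\,{\in}\,\qt$ stays in $\qt$, since $\qt$ is a subalgebra, and a fortiori in $\qt{+}\sigmaup(\qt)$.
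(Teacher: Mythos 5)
Your proof is correct, but it follows a genuinely different route from the paper's. The paper argues directly: it introduces the decreasing chain $\Kt_{0}(\gs,\qt)\,{=}\,\sigmaup(\qt)$, $\Kt_{p}(\gs,\qt)\,{=}\,\{Z\,{\in}\,\Kt_{p-1}(\gs,\qt)\,{\mid}\,[Z,\qt]\,{\subseteq}\,\qt\,{+}\,\Kt_{p-1}(\gs,\qt)\}$, observes that $\Kt(\gs,\qt)$ is the intersection of all the $\Kt_{p}(\gs,\qt)$, and proves by induction on $p$, via the Jacobi identity, that each $\Kt_{p}(\gs,\qt)$ is a Lie subalgebra of $\sigmaup(\qt)$. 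You instead establish the identity $\Kt(\gs,\qt)\,{=}\,\qt_{\{\sigmaup\}}\,{\cap}\,\sigmaup(\qt)$ and fall back on Lemma~\ref{l3.5}, which has already done the Jacobi-identity work in proving that the $\sigmaup$-extension is a Lie subalgebra. Your translation between the higher order commutators $[Z,Z_{1},\hdots,Z_{k}]$ and the iterated adjoints $\ad(Z_{k})\cdots\ad(Z_{1})(Z)$ (up to the irrelevant sign $(-1)^{k}$) is the only delicate point, and you handle it correctly; the $h\,{=}\,0$ condition in \eqref{e3.2} is absorbed by $Z\,{\in}\,\sigmaup(\qt)\,{\subseteq}\,\qt\,{+}\,\sigmaup(\qt)$, and the inclusion $\qt\,{\cap}\,\sigmaup(\qt)\,{\subseteq}\,\qt\,{\subseteq}\,\qt_{\{\sigmaup\}}$ gives the final claim. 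What your approach buys is economy and a structural remark the paper leaves implicit: the Levi-degeneracy kernel is exactly the part of $\sigmaup(\qt)$ lying in the $\sigmaup$-extension, which ties Definition~\ref{d3.6} directly to the Levi-nondegenerate reduction of Theorem~\ref{t3.29} (indeed $\Kt(\gs,\qt)\,{=}\,\sigmaup(\kt(\gs,\qt))$ up to the obvious identification). What the paper's filtration buys is a finer invariant — the stage $p$ at which an element drops out of $\Kt_{p}(\gs,\qt)$ tracks Levi-order — which the single intersection does not record. Both proofs are complete and correct.
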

\begin{proof}
 We define by recurrence the sequence 
\begin{equation*}\begin{cases}
 \Kt_{0}(\gs,\qt)=\sigmaup(\qt),\\
 \Kt_{p}(\gs,\qt)=\left.\big\{Z\in\Kt_{p-1}(\gs,\qt)\,\right| \, [Z,\qt]\subseteq\qt+\Kt_{p-1}(\gs,\qt)\big\},& p\geq {1}.
 \end{cases}
\end{equation*}
Clearly all $\Kt_{p}(\gs,\qt)$ are complex linear spaces and
 $\Kt(\gs,\qt)\,{=}\,{\bigcap}_{p=0}^{\infty}\Kt_{p}(\gs,\qt)$. The statement of the Proposition is a consequence of: 
\begin{equation*}\tag{$*$}
\textsl{For all $p\,{=}\,0,1,\hdots$, the set $\Kt_{p}(\gs,\qt)$ is a Lie subalgebra of $\sigmaup(\qt)$.}
\end{equation*}
We prove $(*)$ by recurrence. It is trivially true for $p\,{=}\,0$. Assuming that it is true for some $p\,{\geq}0$, we obtain for
$Z_{1},Z_{2}\,{\in}\,\Kt_{p+1}(\gs,\qt)$, 
\begin{align*} &
 [[Z_{1},Z_{2}],\qt]\,{=}\,[Z_{1},[Z_{2},\qt]]]\,{+}\,
[Z_{2},[Z_{1},\qt]]] \\
&\quad \qquad \subseteq [(Z_{1}+Z_{2}),\qt+\Kt_{p}(\gs,\qt))]\subseteq \qt+\Kt_{p}(\gs,\qt)
\end{align*}
because $ [(Z_{1}+Z_{2}),\qt]\,{\subseteq}\, \qt\,{+}\,\Kt_{p}(\gs,\qt)$ 
 by  the definition of $\Kt_{p+1}(\gs,\qt)$ and we  get
 $[\Kt_{p}(\gs,\qt),\Kt_{p}(\gs,\qt)]\,{\subseteq}\,\Kt_{p}(\gs,\qt)$
 the recursive assumption.
\end{proof}
\subsection{Contact-nondegeneracy} 
Contact-nondegeneracy may also be characterised
 by using higher order commutators. 
\begin{thm}
 Let $(\gt_{\sigmaup},\qt)$ be a $CR$-algebra. Then 
\begin{equation*}
\at_0=\left\{X\in
{\Ft_{ -1}(\gs,\qt)}
\mid [X,X_{1},\hdots,X_{p}]\,{\in}\,{\Ft_{ -1}(\gs,\qt)},
\,\forall X_{1},\hdots,X_{p}\,{\in}\,{\Ft_{ -1}(\gs,\qt)}
\right\}
\end{equation*}
is the largest ideal 
of the real Lie algebra ${\Ft_{ -\infty}(\gs,\qt)}$ contained in $\Ft_{ -1}(\gs,\qt)$.
\qed
\end{thm}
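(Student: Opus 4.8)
The plan is to prove two things about $\at_0$: that it is an ideal of $\Ft_{-\infty}(\gs,\qt)$ contained in $\Ft_{-1}(\gs,\qt)$, and that it contains every ideal of $\Ft_{-\infty}(\gs,\qt)$ contained in $\Ft_{-1}(\gs,\qt)$. Write $\mathfrak{L}\coloneqq\Ft_{-\infty}(\gs,\qt)$ and $V\coloneqq\Ft_{-1}(\gs,\qt)$, and recall from the foregoing Proposition that $\mathfrak{L}$ is the real Lie subalgebra of $\gt_{\sigmaup}$ generated by $V$. That $\at_0$ is a linear subspace of $V$ is immediate from the bilinearity of the bracket, so the content lies in the ideal property and in maximality.

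The heart of the argument is the single inclusion $[V,\at_0]\subseteq\at_0$. To prove it, fix $Y\in V$ and $A\in\at_0$. By antisymmetry and the higher-order commutator convention, for every $X_{1},\hdots,X_{p}\in V$ one has
\[
[[Y,A],X_{1},\hdots,X_{p}]=-[A,Y,X_{1},\hdots,X_{p}],
\]
and the right-hand side belongs to $V$ directly by the definition of $\at_0$, since $(Y,X_{1},\hdots,X_{p})$ is a sequence of elements of $V$. Taking $p=0$ gives $[Y,A]\in V$ as well, whence $[Y,A]\in\at_0$.

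Next I would bootstrap this to $[\mathfrak{L},\at_0]\subseteq\at_0$. By Lemma~\ref{l2.1} applied to the generators $V$ of $\mathfrak{L}$, every element of $\mathfrak{L}$ is a linear combination of higher-order commutators $W=[X_{i_{1}},\hdots,X_{i_{h}}]$ of elements of $V$, so by linearity it suffices to show $[W,A]\in\at_0$ for each such $W$ and each $A\in\at_0$, which I would do by induction on the degree $h$. The case $h=1$ is exactly the inclusion just established. For $h>1$ write $W=[W',X]$ with $W'$ of degree $h-1$ and $X\in V$, and apply the Jacobi identity
\[
[W,A]=[W',[X,A]]-[X,[W',A]].
\]
Here $[X,A]\in\at_0$ by the case $h=1$, so $[W',[X,A]]\in\at_0$ by the inductive hypothesis; and $[W',A]\in\at_0$ by the inductive hypothesis, so $[X,[W',A]]\in\at_0$ again by the case $h=1$. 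Hence $[W,A]\in\at_0$, closing the induction. Since $\at_0\subseteq V\subseteq\mathfrak{L}$, this shows $\at_0$ is an ideal of $\mathfrak{L}$ contained in $\Ft_{-1}(\gs,\qt)$.

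Finally, maximality is quick: if $\mathfrak{b}$ is any ideal of $\mathfrak{L}$ with $\mathfrak{b}\subseteq V$ and $X\in\mathfrak{b}$, then for all $X_{1},\hdots,X_{p}\in V\subseteq\mathfrak{L}$ repeated use of $[\mathfrak{b},\mathfrak{L}]\subseteq\mathfrak{b}$ gives $[X,X_{1},\hdots,X_{p}]\in\mathfrak{b}\subseteq V$, so $X\in\at_0$ and $\mathfrak{b}\subseteq\at_0$. I do not expect a genuine obstacle; the only step demanding care is the bookkeeping of the bracket conventions in the displayed identity $[[Y,A],X_{1},\hdots,X_{p}]=-[A,Y,X_{1},\hdots,X_{p}]$ and checking that the Jacobi-based induction keeps both summands inside $\at_0$ rather than merely inside $V$.
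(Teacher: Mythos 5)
Your proof is correct. The paper states this theorem with no proof at all (it is asserted with a \qed), so there is nothing to diverge from; your argument uses exactly the tools the paper has already set up — Lemma~\ref{l2.1} identifying $\Ft_{-\infty}(\gs,\qt)$ as the span of higher-order commutators of elements of $\Ft_{-1}(\gs,\qt)$, the left-nested commutator convention, and the Jacobi identity — and each step (the antisymmetry identity $[[Y,A],X_{1},\hdots,X_{p}]=-[A,Y,X_{1},\hdots,X_{p}]$, the induction on commutator degree, and the maximality check) is sound.
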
 
\begin{cor}
 A $CR$-algebra $(\gt_{\sigmaup},\qt)$ 
 is contact-nondegenerate if and only if for every $X\,{\in}\,{\Ft_{ -1}(\gs,\qt)}{\setminus}
 {\Ft_{ 0}(\gs,\qt)}
 $
 we can find $X_{1},\hdots,X_{r}\,{\in}\,{\Ft_{ -1}(\gs,\qt)}$
 such that \begin{equation*}\vspace{-20pt}[X,X_{1},\hdots,X_{r}]\,{\notin}\,{\Ft_{ -1}(\gs,\qt)}
 .\end{equation*}
 \qed 
\end{cor}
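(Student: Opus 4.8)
The plan is to read the displayed condition of the Corollary as the single inclusion $\at_0\subseteq\Ft_0(\gs,\qt)$ and then match this against the definition of contact-degeneracy by means of the preceding Theorem. First I would observe that, by the very definition of $\at_0$, an element $X\in\Ft_{-1}(\gs,\qt)$ admits \emph{no} sequence $X_1,\hdots,X_r\in\Ft_{-1}(\gs,\qt)$ with $[X,X_1,\hdots,X_r]\notin\Ft_{-1}(\gs,\qt)$ precisely when $X\in\at_0$. Hence the condition in the statement — that such a sequence exists for every $X\in\Ft_{-1}(\gs,\qt)\setminus\Ft_0(\gs,\qt)$ — says exactly that $\at_0$ contains no element of $\Ft_{-1}(\gs,\qt)\setminus\Ft_0(\gs,\qt)$, i.e. (since $\at_0\subseteq\Ft_{-1}(\gs,\qt)$ by construction) that $\at_0\subseteq\Ft_0(\gs,\qt)$.

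Next I would unwind the definition of contact-degeneracy. A $CR$-algebra is contact-degenerate exactly when there is a Lie subalgebra $\at_{\sigmaup}$ of $\gs$ with $\qt\cap\at_{\sigmaup}\subsetneqq\at_{\sigmaup}\subseteq\Ft_{-1}(\gs,\qt)$ and $[\at_{\sigmaup},\Ft_{-1}(\gs,\qt)]\subseteq\at_{\sigmaup}$. Since $\at_{\sigmaup}\subseteq\gs$ one has $\qt\cap\at_{\sigmaup}=\Ft_0(\gs,\qt)\cap\at_{\sigmaup}$, so the proper inclusion $\qt\cap\at_{\sigmaup}\subsetneqq\at_{\sigmaup}$ is merely the condition $\at_{\sigmaup}\not\subseteq\Ft_0(\gs,\qt)$. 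Moreover, because $\Ft_{-1}(\gs,\qt)$ generates $\Ft_{-\infty}(\gs,\qt)$, the one-sided bracket condition $[\at_{\sigmaup},\Ft_{-1}(\gs,\qt)]\subseteq\at_{\sigmaup}$ promotes, via the Jacobi identity and induction on commutator length exactly as in the Theorem, to $[\at_{\sigmaup},\Ft_{-\infty}(\gs,\qt)]\subseteq\at_{\sigmaup}$; that is, $\at_{\sigmaup}$ is an ideal of $\Ft_{-\infty}(\gs,\qt)$ contained in $\Ft_{-1}(\gs,\qt)$. Conversely, every such ideal is automatically a subalgebra of $\gs$ and fulfils the two inclusion-and-bracket requirements. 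Thus contact-degeneracy is equivalent to the existence of an ideal of $\Ft_{-\infty}(\gs,\qt)$ that is contained in $\Ft_{-1}(\gs,\qt)$ but not in $\Ft_0(\gs,\qt)$.

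Finally I would invoke the Theorem, which identifies $\at_0$ as the \emph{largest} ideal of $\Ft_{-\infty}(\gs,\qt)$ contained in $\Ft_{-1}(\gs,\qt)$. Any ideal not contained in $\Ft_0(\gs,\qt)$ is in particular contained in $\at_0$, so such an ideal exists if and only if $\at_0$ itself fails to sit inside $\Ft_0(\gs,\qt)$. Hence $(\gt_{\sigmaup},\qt)$ is contact-degenerate iff $\at_0\not\subseteq\Ft_0(\gs,\qt)$, and therefore contact-nondegenerate iff $\at_0\subseteq\Ft_0(\gs,\qt)$, which is the reformulation obtained in the first step.

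I expect the only genuine subtlety to be the bookkeeping in the second paragraph: checking that the membership tests defining $\at_0$ and $\at_{\sigmaup}$ align (so that $\qt\cap\at_{\sigmaup}=\Ft_0(\gs,\qt)\cap\at_{\sigmaup}$ and the proper inclusion reads as non-containment in $\Ft_0(\gs,\qt)$), and that the hypothesis bracketing only against the generating space $\Ft_{-1}(\gs,\qt)$ really upgrades $\at_{\sigmaup}$ to an ideal of the whole generated algebra $\Ft_{-\infty}(\gs,\qt)$. Both points are routine given the Theorem, but they must be stated with care precisely because the definition of contact-degeneracy is phrased in terms of subalgebras of $\gs$ rather than of ideals of $\Ft_{-\infty}(\gs,\qt)$.
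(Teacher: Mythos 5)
Your proposal is correct and follows the route the paper intends: the corollary is stated with a \qed{} precisely because it is the immediate combination of the preceding theorem (identifying $\at_0$ as the largest ideal of $\Ft_{-\infty}(\gs,\qt)$ contained in $\Ft_{-1}(\gs,\qt)$) with the definition of contact-degeneracy, which is exactly what you carry out. The two bookkeeping points you flag — that $\qt\cap\at_{\sigmaup}=\Ft_{0}(\gs,\qt)\cap\at_{\sigmaup}$ for $\at_{\sigmaup}\subseteq\gs$, and that bracketing against the generators $\Ft_{-1}(\gs,\qt)$ upgrades to an ideal of $\Ft_{-\infty}(\gs,\qt)$ — are indeed the only details to check, and you handle them correctly.
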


\begin{dfn}
 Let $(\gt_{\sigmaup},\qt)$ be a $CR$-algebra.  
The \emph{contact order} $
{
k^{c}\,{\in}\,\N{\cup}\{+\infty\}}$
 of an element $X$ of 
{$\Ft_{ -1}(\gs,\qt){\setminus}\Ft_{ 0}(\gs,\qt)$}
is 
\begin{equation*} k^{c}(X)\,{=}\,\inf\big\{r\,{\in}\,\N\mid \exists\, X_{1},\hdots,X_{r}\in
{\Ft_{ -1}(\gs,\qt)}\;\,\text{s.t.}
 \;\,[X,X_{1},\hdots,X_{r}]
 \notin
 {\Ft_{ -1}(\gs,\qt)}\big\}.
\end{equation*}
 Elements $X$ of $\Ft_{{-}1}(\gs,\qt){\setminus}\Ft_{{0}}(\gs,\qt)$
 with $k^{c}(X)\,{<}\,{+}\infty$ are said to have \emph{finite contact order}. 
 The \emph{contact order} of $(\gt_{\sigmaup},\qt)$ is 
$$k^{c}(\gs,\qt)\,{=}\,\sup\{k^{c}(X)\,{\mid}\,X\,{\in}\,\Ft_{ -1}(\gs,\qt)
{\setminus}\Ft_{ 0}(\gs,\qt)\}.$$
\end{dfn}
\begin{rmk} 
{In general, the contact order of a $CR$ algebra is less than or equal to its Levi order.}
We may have strict inequality 
{when the}
contact order {is}  greater or equal 
$2$, 
while 
$CR$-algebras of contact order  
{ $1$ are also} 
Levi $1$-nondegenerate.  
  In fact Levi-degenerate
$CR$-algebras may have finite contact order (see Example\,\ref{e3.32}).
\end{rmk}
\subsection{Homomorphisms of $CR$-algebras} 
A thorough discussion of 
$CR$-al\-ge\-bra homomorphisms can be found in \cite[\S{2}]{AMN06b} and \cite{AMN2013},
where also the corresponding notions for $CR$-manifolds are discussed.
\par 
 Let $\gt_{\sigmaup}$, 
 {$\gt_{\sigmaup'}'$}
 be real forms {of the complex Lie algebras $\gt$, $\gt'$, corresponding
 to anti-$\C$-linear involutions $\sigmaup$, $\sigmaup'$, respectively.}
A \textit{real} Lie algebra homomorphism 
 {$\phiup_{0}\,{:}\,\gs\,{\to}\,\gt_{\sigmaup'}'$} uniquely 
lifts to a 
\textit{complex} Lie algebra 
homomorphism
$\phiup\,{:}\,\gt\,{\to}\,\gt'$, characterized by  the property that 
\begin{equation}\label{e3.5} 
 \phiup\,{\circ}\,\sigmaup\,{=}\,\sigmaup'\,{\circ}\,\phiup.
\end{equation}
Let $\qt$ and $\qt'$ be complex Lie subalgebras of $\gt,$ $\gt'$, respectively.
\begin{dfn} 
A \emph{$CR$-algebra
homomorphism} 
\begin{equation}\label{e3.6} \phiup^{\sharp}:
 (\gt_{\sigmaup},\qt)\to(\gt_{\sigmaup}',\qt')
\end{equation}
is a real Lie algebra homomorphism 
\begin{equation}\label{e3.10}
 {\phiup_{0}}
 \,{\in}\,\Hom_{\R}(\gs,\gsp),\quad\text{such that}\quad \phiup(\qt)\,{\subseteq}\,\qt',
\end{equation}
for the complexification $\phiup$ of $\phiup_0$. Set
 \begin{equation} \label{e3.11}
\et\,{=}\,\{Z\,{\in}\,\gt\,{\mid}\,\phiup(Z)\,{\in}\,\qt'\,{\cap}\,\sigmaup'(\qt')\},\;\;
\et_{\sigmaup}\,{=}\,\et\,{\cap}\,\gs,\;\;\ft\,{=}\,\qt\,{\cap}\,\et.
\end{equation}
\par 
By \eqref{e3.5}, $\phiup(\gs)$  and $\et_{\sigmaup}$ 
are real forms of $\phiup(\gt)$ and $\et$, respectively.\par \smallskip
The  $CR$-algebras
 $(\phiup_{0}(\gs),\phiup(\qt))$ and $(\et_{\sigmaup},\ft)$
are the \emph{image} and \emph{typical fibre} of $\phiup^{\sharp}$,
respectively.
\end{dfn}
By \eqref{e3.5} and \eqref{e3.10} we also have the inclusions \;
$\phiup(\sigmaup(\qt))\,{\subseteq}\,\sigmaup'(\qt')$
and \par\noindent
{$\phiup_{0}(\qt\,{\cap}\,\gs)\,{\subseteq}\,\qt'{\cap}\,\gsp$.}
 {By passing to quotients, \eqref{e3.6} defines linear maps 
\begin{equation}\label{e3.9}
\,
\dfrac{\gs}{\qt\cap\gs}\xrightarrow{\;\phiup_{0}^{\flat}\;}\dfrac{\gsp}{\qt'\cap\gsp},\;\; \;
\dfrac{\qt}{\qt\cap\sigmaup(\qt)}\xrightarrow{\;\phiup^{\natural}\;}\dfrac{\qt'}{\qt'\cap\sigmaup'(\qt')},\;\;\;
\dfrac{\gt}{\qt}\xrightarrow{\;\phiup^{\flat}\;}\dfrac{\gt'}{\qt'}.
\end{equation}}
\begin{rmk}\label{r3.10}
Let $\Gf$, $\Gf'$ be complex Lie groups with Lie algebras $\gt$, $\gt'$ and 
$\Qf\,{\subseteq}\,\Gf,$ $\Qf'\,{\subseteq}\,\Gf'$ 
closed complex subgroups,
with  Lie algebras $\qt\,{\subseteq}\,\gt,$ $\qt'\,{\subseteq}\,\gt'$.
\par
Let $\Gfs$, $\Gfsp$  be real forms of $\Gf$, $\Gf'$, 
with Lie algebras $\gs$, $\gsp$, which are real forms of $\gt$, $\gt'$, respectively.
Then  
$\Kf_0\,{\coloneqq}\,\Qf\,{\cap}\,\Gfs$
and $\Kf_0'\,{\coloneqq}\,\Qf'\,{\cap}\,\Gfsp$ 
have Lie algebras
$\qt\,{\cap}\,\gs$ and $\qt'\,{\cap}\,\gt'_{\sigmaup'}$. \par 
{ The quotients
$\sfM\,{=}\,\Gfs/\Kf_0$ and  $\sfM'\,{=}\,\Gfsp/\Kf_0'$}
are homogeneous $CR$-man\-i\-folds with $CR$-algebras $(\gs,\qt)$ and $(\gsp,\qt')$
at the base points  $\pct{\simeq}\Kf_0$, $\pct'{\simeq}\Kf'_0$. \par
{Let $\Phi_{0}{:}\Gfs\,{\to}\,\Gfsp$ be a Lie group homomorphism, 
with {$\Phi_{0}(\Kf_{0})\,{\subseteq}\,\Kf'_{0}$.}
The induced map $\Phi_{0}^{\flat}\,{:}\,\sfM\,{\to}\,\sfM'$}
{on manifolds} 
is $CR$ if and only if 
the differential 
{$\phiup_{0}$ of $\Phi_{0}$} at the identity yields 
{a homomorphism
$\phiup^{\sharp}{:}(\gs,\qt)\,{\to}\,(\gsp,\qt')$ of $CR$-algebras.
The maps $\phiup_{0}^{\flat}$, $\phiup^{\natural}$, $\phiup^{\flat}$ correspond to differentials at 
corresponding base points $\pct,\,\pct'$:
the first of the smooth map for the  underling real smooth structures, 
the second of the restriction of its complexification to a
 map of  $\T^{0,1}_{\pct}\sfM$ into $\T^{0,1}_{\pct'}\sfM'$, 
 the third of the holomorphic map  of their ambient homogeneous complex manifolds
 $\Gf/\Qf$, $\Gf'/\Qf'$. It is natural to  classify
 $CR$-algebra homomorphisms fitting with the properties of $\phiup_{0}^{\flat}$, $\phiup^{\natural}$ 
 and $\phiup^{\flat}$. }
{According to \cite{AMN2013} and the definitions below, $\Phi_{0}^{\flat}$}
is a local $CR$-immersion, submersion,  isomorphism, spread, deployment, lift
of $CR$-manifold{s} 
if and only if $\phiup^{\sharp}${, as a $CR$-algebra homomorphism, is} 
a local immersion, submersion, isomorphism, spread, deployment, 
lift\footnote{In case {either $\Qf_{0}$ or $\Qf'_{0}$} is not closed, we may consider the \textit{germs} of 
locally homogeneous real analytic $CR$
manifolds defined by the pairs $(\gt_{\sigmaup},\qt)$, $(\gt_{\sigmaup}',\qt')$ 
(see \cite[\S{13}]{MN05}).}.
\end{rmk} 
 
 {
\begin{lem}
 Let \eqref{e3.6} be a $CR$-algebra homomorphism. Then  
\begin{equation}
 \qt^{\phiup}=\text{Lie subalgebra of $\gt'$ generated by  $\phiup(\qt)\,{+}\,\qt'{\cap}\,\sigmaup'(\qt')$}
\end{equation}
is the smallest Lie subalgebra of $\qt'$ containing 
$\qt'{\cap}\,\sigmaup(\qt')$ and for which $\phiup^{\sharp}\,{:}\,(\gs,\qt)\,{\to}\,(\gsp,\qt^{\phiup})$
is a $CR$-homomorphism. \qed
\end{lem}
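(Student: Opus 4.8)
The plan is to verify, in turn, the three assertions packed into the statement: that $\qt^{\phiup}$ is a complex Lie subalgebra of $\gt'$ lying between $\qt'{\cap}\,\sigmaup'(\qt')$ and $\qt'$, that $\phiup^{\sharp}\,{:}\,(\gs,\qt)\,{\to}\,(\gsp,\qt^{\phiup})$ is indeed a $CR$-homomorphism, and that $\qt^{\phiup}$ is the smallest subalgebra with these features. None of these requires more than chasing the definitions, so the argument is short; the only point deserving care is the complex-linear structure of the generated subalgebra.

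First I would observe that the generating set $\phiup(\qt)\,{+}\,\qt'{\cap}\,\sigmaup'(\qt')$ is a \emph{complex} subspace of $\gt'$: the image $\phiup(\qt)$ is complex because $\phiup$ is $\C$-linear and $\qt$ is complex, while $\qt'{\cap}\,\sigmaup'(\qt')$ is the intersection of the complex subspace $\qt'$ with $\sigmaup'(\qt')$, the latter being complex since the anti-$\C$-linear $\sigmaup'$ carries the complex subspace $\qt'$ to a complex subspace. By Lemma\,\ref{l2.1} the Lie subalgebra it generates is the linear span of the higher order commutators of its elements, hence again a complex subspace; so $\qt^{\phiup}$ is a complex Lie subalgebra. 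Since $\phiup(\qt)\,{\subseteq}\,\qt'$ by \eqref{e3.10} and $\qt'{\cap}\,\sigmaup'(\qt')\,{\subseteq}\,\qt'$, the whole generating set lies in the Lie subalgebra $\qt'$, whence $\qt^{\phiup}\,{\subseteq}\,\qt'$; and the inclusion $\qt'{\cap}\,\sigmaup'(\qt')\,{\subseteq}\,\qt^{\phiup}$ is immediate from the definition.

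Next, to see that $\phiup^{\sharp}$ is a $CR$-homomorphism into $(\gsp,\qt^{\phiup})$, I would simply note that the underlying real homomorphism $\phiup_{0}\,{:}\,\gs\,{\to}\,\gsp$ is unchanged and that the defining inclusion $\phiup(\qt)\,{\subseteq}\,\qt^{\phiup}$ holds by construction, so condition \eqref{e3.10} is satisfied with $\qt'$ replaced by $\qt^{\phiup}$.

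Finally, for minimality I would take any Lie subalgebra $\tilde\qt$ of $\qt'$ that contains $\qt'{\cap}\,\sigmaup'(\qt')$ and for which $\phiup^{\sharp}\,{:}\,(\gs,\qt)\,{\to}\,(\gsp,\tilde\qt)$ is a $CR$-homomorphism. The latter condition forces $\phiup(\qt)\,{\subseteq}\,\tilde\qt$, and the former gives $\qt'{\cap}\,\sigmaup'(\qt')\,{\subseteq}\,\tilde\qt$; thus $\tilde\qt$ contains the generating set $\phiup(\qt)\,{+}\,\qt'{\cap}\,\sigmaup'(\qt')$, and being a Lie subalgebra it contains the subalgebra this set generates, namely $\qt^{\phiup}$. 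Hence $\qt^{\phiup}\,{\subseteq}\,\tilde\qt$, and $\qt^{\phiup}$ is the smallest such subalgebra. I expect the verification of complex-linearity in the first step to be the only place where something beyond a one-line definition chase is needed, and even there Lemma\,\ref{l2.1} does the work; I do not anticipate a genuine obstacle.
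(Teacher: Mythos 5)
Your proof is correct, and since the paper states this lemma with a \qed (i.e.\ leaves the verification as an immediate consequence of the definitions), your definition chase is exactly the argument intended: the generating set $\phiup(\qt)\,{+}\,\qt'{\cap}\,\sigmaup'(\qt')$ is a complex subspace contained in $\qt'$, so the subalgebra it generates sits between $\qt'{\cap}\,\sigmaup'(\qt')$ and $\qt'$, satisfies \eqref{e3.10}, and is contained in any competitor. Your use of Lemma\,\ref{l2.1} to confirm complex-linearity of the generated subalgebra is a reasonable (if slightly more than necessary) way to nail down the one nontrivial point.
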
}
\begin{rmk}
The quotient $\qt^{\phiup}{/}(\qt'{\cap}\,\sigmaup'(\qt'))$,
 in the situation described by Remark\,\ref{r3.10}, 
 represents, at the base point of $\sfM'$, the formally integrable
 distribution generated by
the tangent complex vectors of $\sfM'$
that are $\Phi_{0}^{\flat}$-related to the type $(0,1)$ complex vector fields
 of $\sfM$.
\end{rmk}

  \begin{dfn} A $CR$-algebra homomorphism $\phiup^{\sharp}\,{:}\,(\gs,\qt)\,{\to}\,(\gt'_{\sigmaup'},\qt')$ is
\begin{itemize}
 \item a \emph{local $CR$-algebra immersion} if both $\phiup_{0}^{\flat}$ and 
 $\phiup^{\natural}$ 
 are injective;
\item a \emph{$CR$-algebra immersion} if, moreover, $\ker(\phiup)\,{=}\,\{0\}$. 
\item a \emph{local $CR$-algebra submersion} if $\phiup^{\flat}$ and $\phiup^{\natural}$ are onto;
\item a \emph{$CR$-algebra submersion} if 
moreover
$\phiup(\gt)\,{=}\,\gt'$;
\item a \emph{$CR$-algebra spread} if 
$\qt'\,{=}\,\qt^{\phiup}$;
\item a \emph{$CR$-algebra deployment} if it is a $CR$-spread with $\phiup^{\natural}$ injective;
\item a \emph{$CR$-lift} if it is a $CR$-deployment and $(\go',\qt')$
is totally complex; 
\item a \emph{local $CR$-algebra isomorphism} if it is both
a local $CR$-algebra immersion and a local $CR$-algebra submersion.
\item a \emph{$CR$-algebra isomorphism} if $\phiup_{0}$ is an isomorphism of Lie algebras and
$\phiup(\qt)\,{=}\,\qt'$
\end{itemize}
A local $CR$-algebra isomorphism $\phiup^{\sharp}\,{:}\,(\gs,\qt)\,{\to}\,(\gsp,\qt')$ 
such that $\phiup\,{:}\,\gt\,{\to}\,\gt'$ is injective is called (cf. \cite{MMN20z, Tan70})
\begin{itemize}
 \item a \emph{prolongation} of $(\gs,\qt)$ by $(\gsp,\qt')$;
 \item a \emph{reduction} of $(\gsp,\qt')$ to $(\gs,\qt)$.
\end{itemize}
A $CR$-algebra homomorphism \eqref{e3.6} for which  $\phiup^{\flat}$ is an isomorphism and
$\phiup^{\natural}$ is injective is called a \emph{strengthening} of $(\gs,\qt)$ by $(\gsp,\qt')$
or a \emph{weakening} of $(\gsp,\qt')$ by $(\gs,\qt)$.
\end{dfn}
{We caution that the typical fibre of a homomorphism of effective $CR$-algebras
may fail to be effective. } 
\begin{lem}\label{l3.16} Let $\gs,\vt_{\sigmaup}$ be real forms of complex Lie algebras 
$\gt,\vt$, with $\gt\,{\subseteq}\,\vt$, $\gs\,{\subseteq}\,\vt_{\sigmaup}$ and
$\qt,\wt$ complex Lie subalgebras of $\gt,\vt$, with $\qt\,{\subseteq}\,\wt$.
 \par
 The inclusion $\imath_{0}\,{:}\,\gs\,{\hookrightarrow}\,\vt_{\sigmaup}$ lifts to a 
$CR$-algebra homomorphism $$\imath^{\sharp}\,{:}\,(\gs,\qt)\,{\to}\,(\vt_{\sigmaup},\wt)$$  
 and $(\vt_{\sigmaup},\wt)$ is a prolongation of $(\gs,\qt)$ if and only if 
\begin{equation}\label{e3.11}
 \qt=\wt\cap\,\gt,\quad \wt=\qt+\wt\cap\,\sigmaup(\wt),\quad 
 \vt=\gt+\wt\cap\,\sigmaup(\wt).
\end{equation}
\end{lem}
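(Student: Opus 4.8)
The plan is to reduce the biconditional to the standard kernel-and-image computation for the three linear maps $\imath_{0}^{\flat}$, $\imath^{\natural}$, $\imath^{\flat}$ associated by \eqref{e3.9} to the lifted homomorphism $\imath^{\sharp}$. Since $\qt\subseteq\wt$ by hypothesis we have $\imath(\qt)\subseteq\wt$, so the inclusion $\imath_{0}$ always lifts to a $CR$-algebra homomorphism, and its complexification $\imath\colon\gt\hookrightarrow\vt$ is injective; consequently $(\vt_{\sigmaup},\wt)$ is a prolongation of $(\gs,\qt)$ exactly when $\imath^{\sharp}$ is a local $CR$-algebra isomorphism, i.e.\ when $\imath_{0}^{\flat}$ is injective, $\imath^{\natural}$ is bijective and $\imath^{\flat}$ is onto. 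First I would record the three elementary identifications, all immediate from $\gs\subseteq\gt$ and $\sigmaup(\gt)=\gt$: one has $\ker\imath_{0}^{\flat}\cong(\gs\cap\wt)/(\gs\cap\qt)$; one has $\ker\imath^{\natural}\cong(\qt\cap\sigmaup(\wt))/(\qt\cap\sigmaup(\qt))$, while $\imath^{\natural}$ is onto iff $\wt=\qt+\wt\cap\sigmaup(\wt)$; and $\imath^{\flat}$ is onto iff $\vt=\gt+\wt$, with $\ker\imath^{\flat}\cong(\gt\cap\wt)/\qt$.

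Granting the three identities of \eqref{e3.11}, the forward implication is then a direct check. The first identity $\qt=\wt\cap\gt$ gives $\gs\cap\wt=\gs\cap\qt$ upon intersecting with $\gs$, so $\imath_{0}^{\flat}$ is injective; applying $\sigmaup$ to $\wt\cap\gt=\qt$ (legitimate as $\sigmaup(\gt)=\gt$) yields $\sigmaup(\wt)\cap\gt=\sigmaup(\qt)$, hence $\qt\cap\sigmaup(\wt)=\qt\cap\sigmaup(\qt)$ and $\imath^{\natural}$ is injective. The second identity is precisely surjectivity of $\imath^{\natural}$, and the third, together with $\qt\subseteq\gt$, gives $\vt=\gt+\wt$, so $\imath^{\flat}$ is onto. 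Thus $\imath^{\sharp}$ is a local isomorphism with $\imath$ injective, i.e.\ a prolongation.

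For the converse I would first dispose of the two easy identities: surjectivity of $\imath^{\natural}$ is literally the second identity, and combining $\vt=\gt+\wt$ (from $\imath^{\flat}$ onto) with $\wt=\qt+\wt\cap\sigmaup(\wt)$ and $\qt\subseteq\gt$ produces the third, $\vt=\gt+\wt\cap\sigmaup(\wt)$. The remaining first identity $\qt=\wt\cap\gt$ is equivalent to injectivity of $\imath^{\flat}$, which is \emph{not} one of the defining conditions of a local isomorphism; this is the main obstacle. I would overcome it by a dimension count. Since $\qt\cap\sigmaup(\qt)$ and $\wt\cap\sigmaup(\wt)$ are $\sigmaup$-stable, they are the complexifications of $\gs\cap\qt$ and $\vt_{\sigmaup}\cap\wt$; writing $m=\dim_{\C}\gt$, $M=\dim_{\C}\vt$, $q=\dim_{\C}\qt$, $w=\dim_{\C}\wt$, $a=\dim_{\C}(\qt\cap\sigmaup(\qt))$ and $b=\dim_{\C}(\wt\cap\sigmaup(\wt))$, the three local-isomorphism conditions become $m-a\leq M-b$ (injectivity of $\imath_{0}^{\flat}$), $q-a=w-b$ (bijectivity of $\imath^{\natural}$) and $m-q\geq M-w$ (surjectivity of $\imath^{\flat}$). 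These force $M-m\geq b-a=w-q\geq M-m$, so equality holds throughout; in particular $M-w=m-q$, whence the surjection $\imath^{\flat}$ maps between spaces of equal dimension and is therefore bijective. Its injectivity is exactly the first identity, finishing the converse.
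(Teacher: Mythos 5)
Your proof is correct, and in the converse direction it is genuinely more careful than the paper's. The paper's own proof simply translates surjectivity of all three maps $\imath_{0}^{\flat},\imath^{\natural},\imath^{\flat}$ into one triple of identities and injectivity of all three into another, then observes that the conjunction of all six is equivalent to \eqref{e3.11}; it thereby implicitly treats ``prolongation'' as requiring all three maps to be bijective, whereas the stated definition (local immersion plus local submersion, with $\imath$ injective) only demands injectivity of $\imath_{0}^{\flat}$, bijectivity of $\imath^{\natural}$ and surjectivity of $\imath^{\flat}$. You correctly isolate the resulting issue — that the first identity $\qt=\wt\cap\gt$ is injectivity of $\imath^{\flat}$, which is not among the defining conditions — and close it with a dimension count, which is valid here since the paper works with finite-dimensional Lie algebras. (A purely algebraic alternative, closer in spirit to the paper: given $x\in\wt\cap\gt$, write $x=u+v$ with $u\in\qt$ and $v\in\wt\cap\sigmaup(\wt)$ using surjectivity of $\imath^{\natural}$; then $v\in\gt\cap\wt\cap\sigmaup(\wt)$, which is $\sigmaup$-stable and hence the complexification of $\gs\cap\wt=\gs\cap\qt$ by injectivity of $\imath_{0}^{\flat}$, so $v\in\qt\cap\sigmaup(\qt)$ and $x\in\qt$.) One cosmetic remark: your dimension relations are consequences of, not equivalent to, the injectivity/surjectivity conditions; since you only use the forward implications, the argument stands, but the word ``become'' overstates it.
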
 
\begin{proof} The fact that the maps $\imath_{0}^{\flat}$, $\imath^{\natural}$, $\imath^{\flat}$ 
in \eqref{e3.9} are onto can be described by 
\begin{equation*}\tag{$*$}
 \vt=\gt+\wt\cap\,\sigmaup(\wt),\quad \wt=\qt+\wt\cap\,\sigmaup(\wt),\quad \vt=\gt+\wt,
\end{equation*}
while their injectivity translates into 
\begin{equation*}\tag{$**$}
 \wt\cap\,\sigmaup(\wt)\cap\gt=\qt\cap\sigmaup(\qt),\quad 
 \wt\cap\,\sigmaup(\wt)\cap\qt=\qt\cap\,\sigmaup(\qt),\quad
 \wt\cap\,\gt=\qt.
\end{equation*}
Clearly \eqref{e3.11} is equivalent to the validity of both
$(*)$ and $(**)$.
\end{proof} 
\begin{prop} Keep the notation of Lemma\,\ref{l3.16} and assume that $(\vt_{\sigmaup},\wt)$
is a prolongation of $(\gs,\qt)$.
Then $(\gs,\qt)$ is integrable (resp. totally complex, fundamental, totally real, Levi-degenerate,
contact-degenerate) if and only if $(\vt_{\sigmaup},\wt)$ shares the same property. Moreover,
$(\gs,\qt)$ and $(\vt_{\sigmaup},\wt)$ have the same Levi-order and the same contact order.
\end{prop}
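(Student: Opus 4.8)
The plan is to distill the prolongation conditions of Lemma~\ref{l3.16} into one structural picture and then test each assertion against it. Setting $\mt=\wt\cap\sigmaup(\wt)$ — a $\sigmaup$-invariant complex subalgebra of $\vt$ — those conditions give
\begin{equation*}
 \wt=\qt+\mt,\quad \sigmaup(\wt)=\sigmaup(\qt)+\mt,\quad \gt\cap\mt=\qt\cap\sigmaup(\qt),\quad (\wt+\sigmaup(\wt))\cap\gt=\qt+\sigmaup(\qt),
\end{equation*}
where the last identity follows by decomposing an element of its left side along $\wt=\qt+\mt$ and $\sigmaup(\wt)=\sigmaup(\qt)+\mt$ and using $\gt\cap\mt=\qt\cap\sigmaup(\qt)$. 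Two facts will carry most of the weight. First, since $\mt$ lies in both subalgebras $\wt$ and $\sigmaup(\wt)$, it \emph{normalizes} each, so $[\mt,\wt]\subseteq\wt$, $[\mt,\sigmaup(\wt)]\subseteq\sigmaup(\wt)$, and hence $[\mt,\wt+\sigmaup(\wt)]\subseteq\wt+\sigmaup(\wt)$. Second, the conditions of Lemma~\ref{l3.16} are exactly the statements $(*)$ and $(**)$ in its proof, i.e. the three maps $\imath_{0}^{\flat}$, $\imath^{\natural}$, $\imath^{\flat}$ of \eqref{e3.9} are isomorphisms; thus $\imath^{\sharp}$ is a local $CR$-algebra isomorphism, and by the dictionary of Remark~\ref{r3.10} it corresponds to a local $CR$-diffeomorphism of the associated germs, under which all the invariants in question are preserved. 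This gives a conceptual guarantee of the result; the body of the proof makes it explicit and self-contained.

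The easy invariants come first. Because $\imath^{\natural}$ is an isomorphism, $\dim_{\C}(\gs,\qt)=\dim_{\C}(\vt_{\sigmaup},\wt)$, and because $\imath^{\flat}$ is (equivalently, by the last displayed identity), $\codim_{\R}(\gs,\qt)=\codim_{\R}(\vt_{\sigmaup},\wt)$; total reality (vanishing $CR$-dimension) and total complexity (vanishing $CR$-codimension) then transfer at once. For \emph{integrability} and \emph{fundamentality} I would compare the subalgebras $\gt_{1}$ and $\vt_{1}$ generated by $\qt+\sigmaup(\qt)$ in $\gt$ and by $\wt+\sigmaup(\wt)$ in $\vt$; the key lemma is
\begin{equation*}
 \vt_{1}=\gt_{1}+\mt.
\end{equation*}
One inclusion is clear, and for the other I would verify that $\gt_{1}+\mt$ is a subalgebra, i.e. $[\gt_{1},\mt]\subseteq\gt_{1}+\mt$, by induction on bracket length via Lemma~\ref{l2.1}: the base case is $[\qt+\sigmaup(\qt),\mt]\subseteq\wt+\sigmaup(\wt)=(\qt+\sigmaup(\qt))+\mt$, and the inductive step uses Jacobi together with the normalization. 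Intersecting $\vt_{1}=\gt_{1}+\mt$ with $\gt$ (and $\gt\cap\mt=\qt\cap\sigmaup(\qt)\subseteq\gt_{1}$) gives $\vt_{1}\cap\gt=\gt_{1}$, whence $(\gs,\qt)$ is integrable (resp. fundamental) iff $\gt_{1}=\qt+\sigmaup(\qt)$ (resp. $\gt_{1}=\gt$), iff $\vt_{1}=\wt+\sigmaup(\wt)$ (resp. $\vt_{1}=\vt$), iff $(\vt_{\sigmaup},\wt)$ is.

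For the two orders I would first reduce the supremum defining $k(\vt_{\sigmaup},\wt)$ to elements of $\sigmaup(\qt)\setminus\qt$: every $W\in\sigmaup(\wt)\setminus\wt$ is congruent modulo $\mt=\wt\cap\sigmaup(\wt)$ to some $Z\in\sigmaup(\qt)\setminus\qt$ (as $\sigmaup(\wt)=\sigmaup(\qt)+\mt$), and by the $(\vt_{\sigmaup},\wt)$-analogue of Lemma~\ref{l3.9} the two have the same Levi-sequences. One inequality is then immediate: by $(\wt+\sigmaup(\wt))\cap\gt=\qt+\sigmaup(\qt)$, any $(\gs,\qt)$-Levi-sequence for $Z$ is a $(\vt_{\sigmaup},\wt)$-Levi-sequence, so the $(\vt_{\sigmaup},\wt)$-order of $Z$ is at most its $(\gs,\qt)$-order. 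For the reverse I would take a \emph{minimal} $(\vt_{\sigmaup},\wt)$-Levi-sequence $(W_{1},\dots,W_{k})$ for $Z$ and replace each $W_{i}=q_{i}+m_{i}$ ($q_{i}\in\qt$, $m_{i}\in\mt$) by $q_{i}$. Lemma~\ref{l3.8} is essential here: its entries satisfy $W_{i}\notin\sigmaup(\wt)$ and $[Z,W_{i}]\in(\wt+\sigmaup(\wt))\setminus\wt$, so each first bracket $[Z,q_{i}]=[Z,W_{i}]-[Z,m_{i}]$ already lies in $\wt+\sigmaup(\wt)$ (since $[Z,m_{i}]\in\sigmaup(\wt)$), after which the normalization $[\mt,\wt+\sigmaup(\wt)]\subseteq\wt+\sigmaup(\wt)$ lets the $m_{i}$-contributions be absorbed modulo $\wt+\sigmaup(\wt)$, so that $[Z,q_{1},\dots,q_{k}]\notin\wt+\sigmaup(\wt)$ and, being in $\gt$, $\notin\qt+\sigmaup(\qt)$. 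The same scheme on the filtration $\Ft_{\bullet}$, with $\wt\cap\vt_{\sigmaup}$ in place of $\mt$ and the characterisation of contact-nondegeneracy of \S\ref{s3} in place of Theorem~\ref{t3.6}, gives equality of contact orders. Equality of both orders (the value $+\infty$ included) contains the equivalences for Levi- and contact-degeneracy, by Corollary~\ref{c2.7} and its contact analogue.

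The hard part will be precisely this reverse inequality, namely controlling modulo $\wt+\sigmaup(\wt)$ the higher commutators produced once an $\mt$-factor is interspersed among the $\qt$-entries: the normalization only helps after the running bracket has entered $\wt+\sigmaup(\wt)$, whereas a mixed term such as $[Z,m_{i},q_{j}]$ starts with $[Z,m_{i}]\in\sigmaup(\wt)$ and is then bracketed against $q_{j}\in\wt$, which need not return to $\wt+\sigmaup(\wt)$. To close the induction I would organize it around the full force of Lemma~\ref{l3.8} (minimality, permutation-invariance, and $[Z,W_{i}]\in\wt+\sigmaup(\wt)$ for every $i$) together with the reordering identity $[A,X,Y]=[A,Y,X]+[A,[X,Y]]$, chosen so that each correction term is either of strictly lower order or already inside $\wt+\sigmaup(\wt)$; the careful accounting of these mixed terms is the delicate point, and the conceptual identification of $\imath^{\sharp}$ as a local $CR$-algebra isomorphism is the safety net confirming that the bookkeeping must succeed.
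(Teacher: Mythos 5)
Your proposal is correct and follows essentially the same route as the paper: the structural identities you extract are the paper's $(\dag)$ together with the three equalities of Lemma~\ref{l3.16}, your comparison $\vt_{1}=\gt_{1}+\mt$ of generated subalgebras repackages the paper's commutator-length induction for fundamentality, and your scheme for the order equalities (replace $W_{i}=q_{i}+m_{i}$ by $q_{i}$ and absorb the $\mt$-contributions using Jacobi reordering, minimality and permutation-invariance) is precisely the ``similar argument'' that the paper itself leaves implicit. One caution: the ``safety net'' that $\imath^{\sharp}$ is a local $CR$-algebra isomorphism is circular here --- that such maps preserve these invariants is exactly what the proposition is establishing --- so the inductive claim you need (every commutator $[Z,X_{1},\hdots,X_{k}]$ with all $X_{i}\,{\in}\,\wt$ and at least one $X_{i}\,{\in}\,\wt\,{\cap}\,\sigmaup(\wt)$ lies in $\wt\,{+}\,\sigmaup(\wt)$, proved by pushing the $\mt$-entry to the last slot, each swap producing a correction term of length $k{-}1$ that is handled by minimality or by the inductive hypothesis) must actually be carried out rather than trusted.
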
 
\begin{proof} We claim that 
\begin{equation*}\tag{$\dag$}
 (\wt{+}\,\sigmaup(\wt))\,{\cap}\,\gt=\qt\,{+}\,\sigmaup(\qt).
\end{equation*}
 Indeed, by \eqref{e3.11} we get  $\wt\,{+}\,\sigmaup(\wt)\,{=}\,\qt\,{+}\,\sigmaup(\qt)\,{+}\,\wt\,{\cap}\,\sigmaup(\wt)$
 and $(\dag)$  follows from $\qt\,{+}\,\sigmaup(\qt)\,{\subseteq}\,\gt$ and
 $\wt\,{\cap}\,\sigmaup(\wt)\,{\cap}\gt\,{=}\,\qt\,{\cap}\,\sigmaup(\qt)$. As a consequence, if $(\vt_{\sigmaup},\wt)$
 is integrable, then also $(\gs,\qt)$ is integrable. Vice versa,  
\begin{align*}
 [\wt,\sigmaup(\wt)]\,{=}\,[\qt+\wt\cap\sigmaup(\wt),\sigmaup(\qt)+\wt\cap\sigmaup(\wt)]
 \subseteq \wt+\sigmaup(\wt)+[\qt,\sigmaup(\qt)]
\end{align*}
shows that $[\wt,\sigmaup(\wt)]\,{\subseteq}\,\wt{+}\,\sigmaup(\wt)$
when $[\qt,\sigmaup(\qt)]\,{\subseteq}\,\qt\,{+}\,\sigmaup(\qt)$ and hence that $(\vt_{\sigmaup},\wt)$
is integrable when $(\gs,\qt)$ is integrable. \par
Again from $(\dag)$ it follows that $(\gs,\qt)$ is totally complex when $(\vt_{\sigmaup},\wt)$
is totally complex, while the vice versa is an easy consequence of the last equality in \eqref{e3.11}. \par
From the second equality in \eqref{e3.11} we deduce that, if $(\gs,\qt)$ is fundamental, then also
$(\vt_{\sigmaup},\wt)$ is fundamental. To prove the vice versa, we observe that, if $(\vt_{\sigmaup},\wt)$
is fundamental, then every element of $\gt$ is a linear combination of higher order commutators of
elements of $\wt{\cup}\,\sigmaup(\wt)$. We prove by recurrence on the integer $m$ that
elements of $\gt$ which are linear combination of commutators of elements of $\wt{\cup}\,\sigmaup(\wt)$
of length at most $m$ are also linear combinations of commutators of 
length at most $m$ of elements
of $\qt\,{\cup}\,\sigmaup(\qt)$. The case $m\,{=}\,0$ is trivial. For $m\,{=}\,1$,
we reduce to the case of a sum $Z_{1}{+}Z_{2}\,{\in}\,\gt$ with $Z_{1},Z_{2}\,{\in}\,\wt{\cup}\,\sigmaup(\wt)$.
We can write $Z_{i}{=}\,Z_{i}'{+}Z_{i}''$ with $Z_{i}\,{\in}\,\qt\,{\cup}\,\sigmaup(\qt)$
and $Z_{i}''{\in}\,\wt{\cap}\,\sigmaup(\wt)$. Then $Z''_{1}{+}Z''_{2}{\in}\wt{\cap}\,\sigmaup(\wt)\,{\cap}\,\gt
\,{=}\,\qt\,{\cap}\,\sigmaup(\wt)$ and  $Z_{1}{+}\,Z_{2}\,{=}\,Z_{1}'{+}Z_{2}'{+}(Z_{1}''{+}Z_{2}'')$  
is a sum of length one commutators of elements of $\qt\,{\cup}\,\sigmaup(\qt)$.
\par
 Let $m\,{\geq}\,2$ and assume that our assertion is true 
for commutators containing a lesser number of terms. It suffices to consider the case of a $Z\,{\in}\,\gt$
which is a sum of length $m$ commutators of elements of $\wt{\cup}\,\sigmaup(\wt)$: 
\begin{equation*}
 Z={\sum}_{h=1}^{r}[Z_{h,1},\hdots,Z_{h,m}],\;\;\; \text{with $Z_{h,i}\,{\in}\,\wt{\cup}\,\sigmaup(\wt)$}.
\end{equation*}
Let us write $Z_{h,i}\,{=}\,Z'_{h,i}\,{+}\,Z''_{h,i}$, with $Z'_{h,i}\,{\in}\,\qt\,{\cup}\,\sigmaup(\qt)$ and
$Z''_{h,i}\,{\in}\,\wt{\cap}\,\sigmaup(\wt)$. We have 
\begin{equation*}
 W={\sum}_{h=1}^{r}[Z'_{h,1},\hdots,Z'_{h,m}]\,{\in}\,\gt,
\end{equation*}
while $Z\,{-}\,W$ is an element of $\gt$ which can be written as a linear combination of
commutators of elements of $\wt{\cup}\,\sigmaup(\wt)$ of length less or equal $m{-}1$. 
In fact, a commutator of length $m$ of elements of $\wt{\cup}\,\sigmaup(\wt)$, in which at least
one element belongs to $\wt{\cap}\sigmaup(\wt)$, is a linear combination of commutators of
length less or equal $m{-}1$ of elements of $\wt{\cup}\,\sigmaup(\wt)$. \par
Similar arguments show that $(\gs,\qt)$ and $(\vt_{\sigmaup},\wt)$ have the same Levi-order
and the same contact order. \par 
If $\wt{=}\,\sigmaup(\wt)$, then also $\qt\,{=}\,\wt{\cap}\,\gt\,{=}\,\sigmaup(\wt)\,{\cap}\,\gt\,{=}\,\sigmaup(\qt)$.
If vice versa $\qt\,{=}\,\sigmaup(\qt)$, then $\wt{=}\,\qt\,{+}\wt{\cap}\sigmaup(\wt)=\sigmaup(\qt)\,{+}
\wt{\cap}\sigmaup(\wt)\,{=}\,\sigmaup(\wt)$. This shows that $(\gs,\qt)$ is totally real if and only if
$(\vt_{\sigmaup},\wt)$ is totally real.\par
The proof is complete.
\end{proof}

\begin{rmk}
 A $CR$-algebra homomorphism is a $CR$-algebra immersion if and only if the typical fibre is trivial,
 i.e. when $\ft\,{=}\,\et$ in \eqref{e3.11}. In this case 
 {$\ker(\phiup_{0})$} is contained in 
 {$\qt\,{\cap}\,\gs$}
 and hence
 $\phiup^{\sharp}$ 
 yields a local $CR$-algebra isomorphism
onto the basis 
 {$(\phiup_{0}(\gs),\phiup(\qt))$} and is  a $CR$-algebra immersion, yielding 
a $CR$-algebra isomorphism onto the basis
when $(\gs,\qt)$ is \textit{effective}.
\end{rmk}
\begin{exam} {As in Example\,\ref{ex2.3}, we take}
$\gt\,{=}\,\slt_{3}(\C)
$ 
{and} 
$\gs\,{=}\,\su(1,2)$ to be the set of
fixed points of the anti-$\C$-linear involution 
\begin{equation*}
 \sigmaup(X)=-K\,X^{*}\,K,\;\;\;\text{with}\;\;\; K= \left( 
\begin{smallmatrix}
  && 1\\
 & 1\\
 1
\end{smallmatrix}\right).
\end{equation*}
Then $(\gs,\qt)$ is the $CR$-algebra of the minimal orbit of $\SU(1,2)$ in $\CP^{2}$,
which is diffeomorphic to the sphere $\Sb^{3}$.\par
The two subalgebras 
\begin{equation*}\begin{aligned}
 \gt'&=\left.\left\{\left( 
\begin{smallmatrix}
  x_{11}&0& 0\\
 x_{21}& x_{22}&0\\
 x_{31} & x_{32}& x_{33}
\end{smallmatrix}\right)\right| x_{ij}\,{\in}\,\C,\; x_{11}{+}x_{22}{+}x_{33}{=}0\right\},\\
 \gt''&=\left.\left\{\left( 
\begin{smallmatrix}
0&{-}x_{21}& {-}x_{31}\\
 x_{21}& 0&{-}x_{32}\\
 x_{31} & x_{32}& 0
\end{smallmatrix}\right)\right| x_{ij}\,{\in}\,\C\right\}
\end{aligned}
\end{equation*}
of $\gt$ are both $\sigmaup$-invariant and have real forms $\gt'_{\sigmaup},\,\gt''_{\sigmaup}$.
Set 
\begin{equation*}
 \qt=\left.\left\{\left( 
\begin{smallmatrix}
  x_{11}&x_{12}& x_{13}\\
 0& x_{22}&x_{23}\\
0 & x_{32}& x_{33}
\end{smallmatrix}\right)\right| x_{ij}\,{\in}\,\C,\; x_{11}{+}x_{22}{+}x_{33}{=}0\right\},\;\;
\qt'=\qt\,{\cap}\,\gt',\;\;\qt''=\qt\,{\cap}\,\gt''.
\end{equation*}
The inclusions of $\gt'_{\sigmaup}$ and $\gt''_{\sigmaup}$ into $\gs$ induce 
local $CR$-isomorphisms 
\begin{equation*}
\varpiup'\,{:}\,(\gt'_{\sigmaup},\qt')\,{\to}\,(\gs,\qt)\;\;\text{and}\;\;
\varpiup''\,{:}\,(\gt''_{\sigmaup},\qt'')\,{\to}\,(\gs,\qt).
\end{equation*}
The Lie subalgebras $\gs'$ and $\gs''$ generate Lie subgroups $\Gf'$, $\Gf''$ of $\SU(1,2)$,
whose orbits in $\CP^{2}$ we denote by $\sfM',$ $\sfM''$.
The local $CR$-algebra homomorphisms $\varpiup',$ $\varpiup''$  
 lift to $CR$-embeddings $\sfM'\,{\hookrightarrow}\,\sfM$, $\sfM''\,{\hookrightarrow}\,\sfM$,
the first onto $\sfM$ minus one point, the second being a $CR$-isomorphism. 
\end{exam} 
\begin{exam} We consider the odd dimensional sphere $\Stt^{2n+1}$ as a real hypersurface in
$\C^{n+1}$, on which 
the group $\SU(n{+}1)$ acts transitively.  \par
Let $\e_{0},\e_{1},\hdots,\e_{n}$ be the canonical orthonormal
basis of $\C^{n+1}$. We associate to $\Stt^{2n+1}$ at $\e_{0}$ the $CR$-algebra
$(\su(n{+}1),\qt)$, where 
\begin{equation*}
 \qt=\left.\left\{ 
\begin{pmatrix}
 0 & \vq^{\intercal}\\
 0 & A
\end{pmatrix}\in\slt_{n+1}(\C)\,\right| \, \vq\,{\in}\,\C^{n},\; A\in\slt_{n}(\C)\right\}.
\end{equation*}
The parabolic Lie subalgebra 
\begin{equation*}
\qt'=\left.\left\{ 
\begin{pmatrix}
 a & \vq^{\intercal}\\
 0 & A
\end{pmatrix}\in\slt_{n+1}(\C)\,\right| \,a\in\C,\;  \vq\,{\in}\,\C^{n},\; A\in\gl_{n}(\C)\right\} 
\end{equation*}
of $\slt_{n+1}(\C)$ is the smallest containing $\qt$ and 
$(\su(n{+}1),\qt')$ is the $CR$-algebra of $\CP^{n}$ at its point $\ptt_{0}$ having  homogeneous
representative $\e_{0}$. \par
The identity map on $\su(n{+}1)$ yields a $CR$-algebra homomorphism
\begin{equation*}
\id^{\sharp}\,{:}\,(\su(n{+}1),\qt)\,{\to}\,(\su(n{+}1),\qt')\end{equation*}
 which is a $CR$-algebra lift: it 
corresponds to the Hopf fibration \begin{equation*}\Stt^{2n+1}\,{\ni}\,(z_{0},z_{1},\hdots,z_{n})\,{\to}\,
(z_{0}:z_{1}:\cdots:z_{n})\,{\in}\,\CP^{n}.\end{equation*}
\par This example illustrates a general geometrical notion 
that was introduced and investigated in
\cite{AMN2013}.
\end{exam}
\begin{exam}
 Let $\gt$ be a finite dimensional complex Lie algebra. Its \emph{complex conjugate} $\bar{\gt}$ is the complex
 Lie algebra obtained by considering on the same underlying real Lie algebra $\gt_{[\R]}$ the scalar multiplication
 $\lambdaup\,{*}\,Z\,{=}\,\bar{\,\lambdaup}\,{\cdot}\,Z$ (for $\lambdaup\,{\in}\,\C$ and $Z\,{\in}\,\gt)$.
 The identity map on $\gt_{[\R]}$ is an anti-$\C$-linear isomorphism of $\gt$ onto $\bar{\gt}$ and of $\bar{\gt}$
 onto $\gt$, that we will indicate by $Z\,{\to}\,\bar{Z}$.
 \par
 The \emph{complexification} $\gt_{[\R]}^{\C}$ of $\gt_{[\R]}$ 
 can be identified with the direct sum $\gt\,{\oplus}\,\bar{\gt}$ and $\gt_{[\R]}$ with the locus 
 of points of $\gt_{[\R]}^{\C}$ which are fixed by  the anti-$\C$-linear involution $(Z_{1},Z_{2})\,{\to}\,(\bar{Z}_{2},\bar{Z}_{1})$. \par
 If $\Gf$ is a complex Lie group with Lie algebra $\gt$ and $\Qf$ a closed subgroup with Lie algebra $\qt$, then
 the pair  $(\gt_{[\R]},\qt\,{\oplus}\,\bar{\gt})$ is the $CR$-algebra at the base point of the homogeneous complex
 manifold $\Gf{/}\Qf$. Anyhow, $(\gt_{[\R]},\qt\,{\oplus}\,\bar{\gt})$ is a \textit{totally complex} $CR$-algebra and,
 if $\gs$ is any real form of $\gt$, then the inclusion $\imath$ 
 of $\gs$ into $\gt_{[\R]}$ yields  a $CR$-map 
 $\imath^{\sharp}\,{:}\,(\gs,\qt)\,{\to}\,(\gt_{[\R]},\at\,{\oplus}\,\bar{\qt})$, which is a $CR$-immersion
 and a local $CR$-isomorphism if and only if $(\gs,\qt)$ is totally complex.  
\end{exam}
\subsection{$\gs$-equivariant $CR$-fibrations}\label{s3.7}
Let $\gt_{\sigmaup}$ be a real Lie algebra, $\qt,\qt'$ 
complex Lie subalgebras of its complexification $\gt$.
If $\qt\,{\subseteq}\,\qt'$,
then the identity map on $\gt_{\sigmaup}$ and the inclusion $\qt\,{\hookrightarrow}\,
\qt'$ define a $CR$-algebra homomorphism  
\begin{equation}\label{e1.9} \varpi\,{=}\,\id^{\sharp}:
(\gt_{\sigmaup},\qt)\to(\gt_{\sigmaup},\qt')\, .\end{equation}
\begin{dfn}(cf.  \cite[\S{5}]{MN05})
We call \eqref{e1.9} a \emph{$\gt_{\sigmaup}$-equivariant $CR$-fibration}
iff  \eqref{e1.9} is a $CR$-submersion, i.e. 
{if}
\begin{equation}\label{e3.13}
 \qt'=\qt'\cap\sigmaup(\qt')+\qt.
\end{equation}
\end{dfn}
{Note} 
that \eqref{e3.13} is always satisfied when
$\qt\,{\subseteq}\,\qt'\,{\subseteq}\,\qt\,{+}\,\sigmaup(\qt)$. 
\begin{rmk} If $\sfM$ and $\sfM'$ are homogeneous $CR$-manifolds of a real Lie group 
{$\Gf_{\sigmaup}$}
with Lie algebra $\gt_{\sigmaup}$ and $(\gt_{\sigmaup},\qt)$, $(\gt_{\sigmaup},\qt')$ their $CR$-algebras at corresponding points
$\pct,\pct'$, then
condition \eqref{e3.13} means that 
the corresponding $\Gf_{\sigmaup}$-equivariant map $f\,{:}\,\sfM\,{\to}\, \sfM'$ is a $CR$-submersion, 
{therefore defining}
a \emph{$CR$-fibration.}
\end{rmk}
\begin{dfn}
We call the $CR$-algebras 
$(\gs,\qt')$ and 
{$(\gs\,{\cap}\,\qt',\qt\,{\cap}\,\sigmaup(\qt'))$}
the \textit{basis} and the \textit{typical fibre} 
of 
\eqref{e1.9}, respectively. \par
By writing  
\begin{equation} 
\begin{CD}
 0 @>>>(\gs'',\qt'')@>>> (\gs,\qt) @>>>(\gs,\qt') @>>> 0
\end{CD}
\end{equation}
we mean that {the arrow $(\gs,\qt)\,{\to}\,(\gs,\qt')$ represents 
a $CR$-algebra homomorphism
satisfying}
\eqref{e3.13}  and that {$\gt_{\sigmaup}''\,{\simeq}\,
\gs\,{\cap}\,\qt'$, $\qt''\,{\simeq}\,\qt\,{\cap}\,\sigmaup(\qt')$ define its} typical fibre.
{Note that the complexification of $\gs\,{\cap}\,\qt'$ is $\qt'\,{\cap}\,\sigmaup(\qt')$
and $\qt\,{\cap}\,\sigmaup(\qt')\,{=}\,\qt\,{\cap}\,(\qt'\,{\cap}\,\sigmaup(\qt'))$.} 
\end{dfn}
\begin{rmk}
When
$\Kf_0$ and ${\Kf_0'}$ are closed subgroups of $\Gfs$ with Lie algebras $\qt\,{\cap}\gs$
and $\qt'{\cap}\,\gs$, representing the isotropies for
 $\sfM$ and $\sfM'$ at corresponding base points, 
the homogeneous space $\Kf_0'{/}\Kf_0$ is the typical fibre. It
is a homogeneous $CR$ manifold, with 
$CR$ algebra $(\gs\,{\cap}\,\qt',\qt'\,{\cap}\,\sigmaup(\qt))$.
\end{rmk} 
\subsection{Effective reduction} 
\begin{thm}
 If $(\gs,\qt)$ is a $CR$-algebra, $\at_{\sigmaup}$ an ideal of $\gs$ contained in $\qts$, with
 complexification $\at,$ and $\piup\,{:}\,\gs\,{\to}\,\gs{/}\!\at_{\sigmaup}$ the canonical projection, 
 then $\piup^{\sharp}\,{:}\,(\gs,\qt)\,{\to}\,(\gs{/}\!\at_{\sigmaup},\;\qt{/}\!\at)$
 is a local $CR$-isomorphism. \par
 Given a $CR$-algebra $(\gs,\qt)$, there is an effective $CR$-algebra $(\gs',\qt')$, unique up to  
 $CR$-algebra isomorphisms,
which is locally $CR$-isomorphic to $(\gs,\qt)$. 
\end{thm}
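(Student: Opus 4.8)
The plan is to treat the two assertions separately: the first by computing directly the three linear maps attached to $\piup^{\sharp}$, and the second by isolating the canonical maximal ideal and showing that any effective quotient must kill exactly that ideal.

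For the first assertion, I would begin by recording that, since $\at_{\sigmaup}$ is an ideal of $\gs$, its complexification $\at$ is a $\sigmaup$-invariant ideal of $\gt$, and the hypothesis $\at_{\sigmaup}\subseteq\qts$ together with the $\sigmaup$-invariance of $\qts$ gives $\at\subseteq\qts\subseteq\qt$. Hence $\sigmaup$ descends to an anti-$\C$-linear involution $\bar{\sigmaup}$ of $\gt/\at$ with fixed-point set $\gs/\at_{\sigmaup}$, and $\qt/\at$ is a complex Lie subalgebra, so $(\gs/\at_{\sigmaup},\qt/\at)$ is a $CR$-algebra and $\piup^{\sharp}$ a $CR$-algebra homomorphism. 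The decisive point is to identify the three quotients attached to the target: from $\bar{\sigmaup}(\qt/\at)=\sigmaup(\qt)/\at$ one gets $(\qt/\at)\cap\bar{\sigmaup}(\qt/\at)=\qts/\at$, while $(\qt/\at)\cap(\gs/\at_{\sigmaup})=(\qt\cap\gs)/\at_{\sigmaup}$ and $(\gt/\at)/(\qt/\at)\cong\gt/\qt$, each because $\at$ is contained in $\qts$, in $\qt\cap\gs$ and in $\qt$ respectively. The third isomorphism theorem then identifies the maps $\phiup_{0}^{\flat}$, $\phiup^{\natural}$, $\phiup^{\flat}$ of \eqref{e3.9} with canonical isomorphisms; in particular $\phiup_{0}^{\flat}$ is injective, $\phiup^{\natural}$ bijective and $\phiup^{\flat}$ onto, which is exactly the definition of a local $CR$-algebra isomorphism.

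For existence in the second assertion, I would let $\at_{\sigmaup}$ be the sum of all ideals of $\gs$ contained in the isotropy $\qt\cap\gs$; this sum is again such an ideal, so by finite-dimensionality $\at_{\sigmaup}$ is the unique maximal ideal of $\gs$ contained in $\qt\cap\gs$. Any such ideal is fixed by $\sigmaup$, hence also lies in $\sigmaup(\qt)$ and therefore in $\qts$, so $\at_{\sigmaup}\subseteq\qts$ and the first assertion applies, yielding a local $CR$-isomorphism $\piup^{\sharp}\colon(\gs,\qt)\to(\gs',\qt'):=(\gs/\at_{\sigmaup},\qt/\at)$. Effectiveness of $(\gs',\qt')$ follows by contradiction: a nonzero ideal of $\gs'$ inside its isotropy $(\qt\cap\gs)/\at_{\sigmaup}$ would pull back along $\piup$ to an ideal of $\gs$ contained in $\qt\cap\gs$ and strictly larger than $\at_{\sigmaup}$, against maximality.

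For uniqueness I would show that the canonical reduction is the only effective $CR$-algebra reachable from $(\gs,\qt)$ by a quotient, that is, by a local $CR$-isomorphism $\psiup^{\sharp}\colon(\gs,\qt)\to(\bt_{\sigmaup},\wt)$ with $\psiup_{0}$ surjective and $(\bt_{\sigmaup},\wt)$ effective; surjectivity is what the construction $\piup$ provides and is what makes images of ideals ideals. Here $\ker\psiup_{0}$ is an ideal of $\gs$ which, by injectivity of $\psiup_{0}^{\flat}$, lies in $\qt\cap\gs$ and hence in $\at_{\sigmaup}$; conversely $\psiup_{0}(\at_{\sigmaup})$ is an ideal of $\bt_{\sigmaup}$ contained in its isotropy, so effectiveness forces $\psiup_{0}(\at_{\sigmaup})=0$ and thus $\ker\psiup_{0}=\at_{\sigmaup}$. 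Then $\psiup_{0}$ descends to a Lie algebra isomorphism $\gs/\at_{\sigmaup}\xrightarrow{\sim}\bt_{\sigmaup}$, and it remains to promote it to a $CR$-algebra isomorphism by checking that the image of $\qt/\at$ is all of $\wt$. Identifying the complexifications through this isomorphism, one has subalgebras $\qt/\at\subseteq\wt$ with the same real part (the common isotropy, by injectivity of $\phiup_{0}^{\flat}$); the step I expect to be the main obstacle is precisely this final equality, which I would settle by setting $\wt^{s}=\wt\cap\bar{\sigmaup}(\wt)$ and observing that, being $\bar{\sigmaup}$-invariant, $\wt^{s}$ equals the complexification of its real points, these lie in the common isotropy and hence in $\qt/\at$, so the submersion identity $\wt=(\qt/\at)+\wt^{s}$ collapses to $\wt=\qt/\at$. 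Applying this to two quotient reductions of the same $(\gs,\qt)$ shows that each is $CR$-isomorphic to $(\gs/\at_{\sigmaup},\qt/\at)$, which gives the asserted uniqueness up to $CR$-algebra isomorphism.
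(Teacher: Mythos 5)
Your argument for the first assertion and for the existence half of the second is essentially the paper's: the paper's proof consists precisely of observing that $\at$ is a $\sigmaup$-invariant ideal, that passing to quotients gives isomorphisms $\gs/(\qt\,{\cap}\,\gs)\,{\simeq}\,(\gs/\at_{\sigmaup})/((\qt\,{\cap}\,\gs)/\at_{\sigmaup})$ and $\qt/(\qt\,{\cap}\,\sigmaup(\qt))\,{\simeq}\,(\qt/\at)/((\qt/\at)\,{\cap}\,(\sigmaup(\qt)/\at))$, and then taking $\at_{\sigmaup}$ to be the largest ideal of $\gs$ contained in $\qt\,{\cap}\,\gs$. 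Where you genuinely go beyond the paper is the uniqueness clause: the paper's proof stops at existence and offers no argument for uniqueness at all, whereas you verify effectiveness of the quotient and prove that any surjective local $CR$-isomorphism onto an effective target must have kernel exactly $\at_{\sigmaup}$ and must carry $\qt/\at$ onto the target's lifted $CR$-structure (via $\wt\,{=}\,(\qt/\at)+\wt\,{\cap}\,\bar{\sigmaup}(\wt)$ together with $\wt\,{\cap}\,\bar{\sigmaup}(\wt)\,{\subseteq}\,\qt/\at$); both steps are correct. One caveat worth making explicit: your uniqueness argument quantifies only over effective $CR$-algebras reached by a \emph{quotient} (surjective $\psiup_{0}$), not over all effective $CR$-algebras locally $CR$-isomorphic to $(\gs,\qt)$. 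This restriction is in fact necessary for the statement to be true as read literally, since a reduction such as $(\su(1,2),\qt)\,{\to}\,(\gl_{2}$-type$,\qt')$ for $\Sb^{3}$ gives two effective, locally $CR$-isomorphic $CR$-algebras with symmetry algebras of different dimensions; so your reading is the correct one, but it deserves a sentence of justification rather than being built silently into the setup.
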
 
\begin{proof} If 
{$\at_{\sigmaup}$ is an ideal of $\gs$ containend in $\Ft_{ 0}(\gs,\qt)$,}
 then its complexification $\at$ is {a $\sigmaup$-invariant ideal of $\gt$}
 and thus,
 by passing to the quotients, $\piup$ defines an isomorphism
 of $\gs{/}(\qt\,{\cap}\,\gs)$ onto $(\gs{/}\!\at_{\sigmaup})/((\qt\,{\cap}\,\gs){/}\!\at_{\sigmaup})$. Its complexification is an
 isomorphism of $\qt{/}(\qt\,{\cap}\,\sigmaup(\qt))$ onto $(\qt{/}\!\at){/}((\qt/\!\at)\,{\cap}\,(\sigmaup(\qt){/}\!\at)$.
 \par
The last statement follows by taking $\at_{\sigmaup}$ equal to the largest ideal of $\gs$
contained in $\qt\,{\cap}\,\gs$.
\end{proof}
\subsection{Fundamental reduction}
\begin{thm} 
\label{t4.4}
Every 
$CR$-algebra $(\gs,\qt)$
admits a unique $\gs$-equivariant $CR$-fibration with 
{totally real basis and fundamental typical fibre.} 
\begin{itemize}
 \item {Its} totally real basis {is} $(\gs,\Lie(\qt\,{+}\,\sigmaup(\qt))$;
 \item {its} fundamental typical fibre {is} 
 $(\gs',\qt)$ with $\gs'\,{=}\,\gs\,{\cap}\,\Lie(\qt\,{+}\,\sigmaup(\qt))$.
\end{itemize}\par
The fundamental fibre is totally complex iff $(\gs,\qt)$ is integrable.
\end{thm}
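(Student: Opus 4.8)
The plan is to realize the claimed fibration as the canonical homomorphism $\varpi = \id^{\sharp}\colon(\gs,\qt)\to(\gs,\qt')$ determined by the single choice $\qt'=\Lie(\qt+\sigmaup(\qt))$, the complex Lie subalgebra of $\gt$ generated by the subspace $\qt+\sigmaup(\qt)$. Here $\qt+\sigmaup(\qt)$ is a genuine complex subspace and $\sigmaup(\qt)$ a complex subalgebra, since $\sigmaup$ is anti-$\C$-linear but respects Lie brackets; hence $\qt'$ is a well-defined complex Lie subalgebra. Everything then reduces to one structural observation, namely that $\qt'$ is $\sigmaup$-stable.

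First I would record that $\sigmaup(\qt')=\qt'$. The generating set is itself $\sigmaup$-invariant, because $\sigmaup(\qt+\sigmaup(\qt))=\sigmaup(\qt)+\qt$; and as $\sigmaup$ is an involutive automorphism of $\gt$ regarded as a real Lie algebra, it carries the subalgebra generated by a $\sigmaup$-stable set onto itself, whence $\sigmaup(\qt')=\qt'$. This at once makes the basis $(\gs,\qt')$ totally real. It also shows that $\varpi$ is a $\gs$-equivariant $CR$-fibration: condition \eqref{e3.13} reads $\qt'=\qt'\cap\sigmaup(\qt')+\qt$, which, since $\qt'\cap\sigmaup(\qt')=\qt'$ and $\qt\subseteq\qt'$, collapses to the tautology $\qt'=\qt'$.

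Next I would identify the typical fibre, which by definition is $(\gs\cap\qt',\,\qt\cap\sigmaup(\qt'))$. Using $\sigmaup(\qt')=\qt'$, its lifted $CR$-structure is $\qt\cap\sigmaup(\qt')=\qt$, and the complexification of its symmetry algebra $\gs'=\gs\cap\qt'$ is $\qt'\cap\sigmaup(\qt')=\qt'$. Thus the fibre is $(\gs',\qt)$ with ambient complex algebra $\qt'$, exactly as asserted. Fundamentality of this $CR$-algebra is precisely the statement that $\qt+\sigmaup(\qt)$ generates $\qt'$, which holds by the very definition $\qt'=\Lie(\qt+\sigmaup(\qt))$; so the fibre is fundamental by construction.

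For uniqueness I would argue that any $\gs$-equivariant $CR$-fibration $(\gs,\qt)\to(\gs,\qt'')$ with totally real basis satisfies $\sigmaup(\qt'')=\qt''$, so that $\qt+\sigmaup(\qt)\subseteq\qt''$ and hence $\qt'\subseteq\qt''$; if moreover its fibre is fundamental then $\qt+\sigmaup(\qt)$ generates $\qt''$, and since the subalgebra it generates is $\qt'$, this forces $\qt''=\qt'$. Finally, the fibre is totally complex iff its complexification equals $\qt+\sigmaup(\qt)$, that is $\qt'=\qt+\sigmaup(\qt)$; because $\qt'$ is a subalgebra this is equivalent to $\qt+\sigmaup(\qt)$ being a subalgebra, i.e.\ to integrability of $(\gs,\qt)$. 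The only delicate point is the first step, confirming that the anti-$\C$-linear $\sigmaup$ nonetheless acts as a Lie-algebra automorphism so that $\sigmaup(\qt')=\qt'$, together with the careful matching of the abstract notions of basis, typical fibre and fundamentality against these concrete subalgebras; once $\sigmaup(\qt')=\qt'$ is in hand, all remaining verifications are formal.
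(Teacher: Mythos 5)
Your proposal is correct and follows essentially the same route as the paper: set $\qt'=\Lie(\qt+\sigmaup(\qt))$, observe it is $\sigmaup$-invariant so that the basis $(\gs,\qt')$ is totally real and the fibration condition is automatic, identify the fibre as $(\gs',\qt)$ which is fundamental by construction, and note that total complexity of the fibre amounts to $\qt+\sigmaup(\qt)$ already being a subalgebra. You in fact supply slightly more detail than the paper, notably the explicit uniqueness argument, which the paper leaves implicit.
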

\begin{proof} Let $(\gs,\qt)$ be a  $CR$
algebra and $\qt'{=}\,\Lie(\qt\,{+}\,\sigmaup(\qt))$. 
Then $\gs'\,{=}\,\qt'\,{\cap}\,\gs$ is a real form
of the complex $\sigmaup$-invariant Lie algebra $\qt'$
and \eqref{e1.9} yields
a $\gs$-equivariant $CR$-fibration with a \textit{totally real} basis $(\gs,\qt')$.
The fibre $(\gs',\qt)$ is clearly fundamental and totally complex iff 
$\Lie(\qt{+}\sigmaup(\qt))\,{=}\,\qt{+}\sigmaup(\qt)$. 
\end{proof}
\subsection{Complex foliations} \label{s3.9}
Let $\sfM$ be a $\Gfs$-homogeneous $CR$-manifold, $\pct\,{\in}\,\sfM$ and  
$(\gs,\qt)$ its $CR$-algebra at $\pct$. 
If $\textfrak{F}$ is a $\Gfs$-invariant foliation of $\sfM$, then the stabiliser in $\Gfs$ 
of the leaf of $\textfrak{F}$ through $\pct$ is a closed Lie subgroup $\Lfs$ of $\Gfs$
containing the isotropy 
subgroup $\Kfs$ and its Lie algebra $\lts$  is a real Lie subalgebra of $\gs$ 
containing $\qt\,{\cap}\,\gs$.
\begin{dfn}\label{d3.11}
 Let $(\gs,\qt)$ be a $CR$-algebra, $\lts$ a Lie subalgebra of $\gs$ and $\lt$ its complexification.
 If $\qt\,{\cap}\,\gs\,{\subseteq}\,\lts$, we say that the $CR$-algebra $(\lts,\lt\,{\cap}\,\qt)$ is 
 the \emph{typical leaf} of a $\gs$-invariant foliation of $(\gs,\qt)$. 
\end{dfn}
\begin{lem}\label{l3.22} Keep the notation of Definition\,\ref{d3.11}.
Let
$(\lts,\lt\,{\cap}\,\qt)$ 
 be the typical leaf of a $\gs$-equivariant
 foliation of the $CR$-algebra $(\gs,\qt)$.
If 
\begin{equation}\label{e3.14}
\qt\cap\sigmaup(\qt)\subseteq\lt\subseteq\qt+\sigmaup(\qt),
\end{equation}
then $(\lts,\qt\,{\cap}\,\lt)$ is totally complex and 
\begin{equation}\label{e3.15}
 \lt=(\lt\cap\qt)+\sigmaup(\lt\cap\qt).
\end{equation}
\end{lem}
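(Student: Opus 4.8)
The two assertions actually coincide. Since $\lt$ is the complexification of the real subalgebra $\lts$, it is $\sigmaup$-stable, so $\sigmaup(\lt\cap\qt)=\sigmaup(\lt)\cap\sigmaup(\qt)=\lt\cap\sigmaup(\qt)$, and \eqref{e3.15} reads $\lt=(\lt\cap\qt)+(\lt\cap\sigmaup(\qt))$. This is precisely the statement that $(\lts,\qt\cap\lt)$ has zero $CR$-codimension, i.e. is totally complex. Hence it suffices to prove \eqref{e3.15}, and since $\supseteq$ is trivial, the entire content is the inclusion $\lt\subseteq(\lt\cap\qt)+(\lt\cap\sigmaup(\qt))$.

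The plan is first to simplify the right-hand side by Dedekind's modular law. Because $\lt\cap\sigmaup(\qt)\subseteq\lt$, the identity $A\cap(B+C)=(A\cap B)+C$ (for $C\subseteq A$), with $A=\lt$, $B=\qt$, $C=\lt\cap\sigmaup(\qt)$, yields
\begin{equation*}
 (\lt\cap\qt)+(\lt\cap\sigmaup(\qt))=\lt\cap\big(\qt+(\lt\cap\sigmaup(\qt))\big).
\end{equation*}
Thus \eqref{e3.15} is equivalent to $\lt\subseteq\qt+(\lt\cap\sigmaup(\qt))$, namely to the assertion that the $\sigmaup(\qt)$-component of each element of $\lt$ may be chosen inside $\lt$. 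I note in passing that the left inclusion in \eqref{e3.14} is automatic: as $\qt\cap\gs\subseteq\lts$ and the complexification of $\qt\cap\gs$ is the $\sigmaup$-stable subspace $\qt\cap\sigmaup(\qt)$, one always has $\qt\cap\sigmaup(\qt)\subseteq\lt$; so the working hypothesis is really the right inclusion $\lt\subseteq\qt+\sigmaup(\qt)$.

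Next I would reduce to real elements. Since $(\lt\cap\qt)+(\lt\cap\sigmaup(\qt))$ is a complex subspace and $\lt=\lts\oplus i\,\lts$, it is enough to decompose each $X\in\lts$. Writing $X=a+b$ with $a\in\qt$, $b\in\sigmaup(\qt)$, the relation $\sigmaup(X)=X$ gives $a-\sigmaup(b)=\sigmaup(a)-b\in\qt\cap\sigmaup(\qt)\subseteq\lt$. Calling this element $w$ and substituting $a=\sigmaup(b)+w$, we obtain
\begin{equation*}
 X=(b+\sigmaup(b))+w,\qquad\text{whence}\qquad b+\sigmaup(b)=X-w\in\lt .
\end{equation*}

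The main obstacle is exactly the last step: upgrading $b+\sigmaup(b)\in\lt$ to $b\in\lt$, equivalently establishing $\lt\subseteq\qt+(\lt\cap\sigmaup(\qt))$. This \emph{cannot} follow from \eqref{e3.14} and the $\sigmaup$-stability of $\lt$ by linear algebra alone: a $\sigmaup$-stable line transverse to both $\qt$ and $\sigmaup(\qt)$ (e.g. the diagonal in a splitting $\gt=\qt\oplus\sigmaup(\qt)$) satisfies all these constraints yet fails \eqref{e3.15}, corresponding geometrically to a totally real leaf of a perfectly good invariant foliation. Consequently I expect to have to use genuinely that $\lt$ is a Lie subalgebra and that the foliation is tangent to the contact distribution, so that the leaf carries the full $CR$-direction; the decisive compatibility is that $\qt\subseteq\lt$ (equivalently, $\lts$ is $\Jd$-invariant inside $\Ft_{-1}(\gs,\qt)$). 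Once this is in hand the argument closes immediately: then $\lt\cap\qt=\qt$, and Dedekind's law applied to $\qt\subseteq\lt\subseteq\qt+\sigmaup(\qt)$ gives $\lt=\qt+(\lt\cap\sigmaup(\qt))=(\lt\cap\qt)+\sigmaup(\lt\cap\qt)$, which is \eqref{e3.15} and the total complexity of $(\lts,\qt\cap\lt)$. Securing this compatibility from the definition of a $\gs$-equivariant foliation is, I anticipate, the crux of the proof.
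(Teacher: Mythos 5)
Your reductions are all sound, and the obstruction you isolate is the real one: after the modular-law step everything comes down to the inclusion $\lt\subseteq(\lt\cap\qt)+(\lt\cap\sigmaup(\qt))$, and your diagonal example (take $\gt=\C^{2}$ abelian, $\sigmaup(z_{1},z_{2})=(\bar{z}_{2},\bar{z}_{1})$, $\qt=\C\,\e_{1}$, $\lts=\R(\e_{1}{+}\e_{2})$, so that $\qt\cap\gs=0\subseteq\lts$ and \eqref{e3.14} holds while $\lt\cap\qt=0$) shows that this inclusion does not follow from \eqref{e3.14} together with the $\sigmaup$-stability of $\lt$. You should know that the paper's own proof is exactly the linear-algebra argument you rule out: it fixes a $\C$-linear complement $\Vtt$ of $\qt\cap\sigmaup(\qt)$ in $\qt$, records $\qt+\sigmaup(\qt)=(\qt\cap\sigmaup(\qt))\oplus\Vtt\oplus\sigmaup(\Vtt)$ and $\qt\cap\lt=(\qt\cap\sigmaup(\qt))\oplus(\Vtt\cap\lt)$, and then asserts $\lt=(\qt\cap\sigmaup(\qt))\oplus(\Vtt\cap\lt)\oplus\sigmaup(\Vtt\cap\lt)$ ``from \eqref{e3.14}''. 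The nontrivial inclusion in that last display is verbatim the statement to be proved (it says the $\Vtt$- and $\sigmaup(\Vtt)$-components of an element of $\lt$ again lie in $\lt$), and it fails on your example. So you are not missing an idea that the paper supplies; the statement needs a hypothesis stronger than \eqref{e3.14}.

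Where your proposal goes astray is the intended repair. The condition $\qt\subseteq\lt$ cannot be extracted from Definition\,\ref{d3.11}, which yields only $\qt\cap\gs\subseteq\lts$ and hence $\qt\cap\sigmaup(\qt)\subseteq\lt$ (your correct remark that the left half of \eqref{e3.14} is automatic); and it is false precisely where the lemma is used. In Lemma\,\ref{l3.28} the leaf has $\lt=\qt'\cap\sigmaup(\qt')$ for a complex Lie subalgebra $\qt'$ with $\qt\subseteq\qt'\subseteq\qt+\sigmaup(\qt)$, and such an $\lt$ contains $\qt$ only in the degenerate case $\qt'=\qt+\sigmaup(\qt)$. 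The hypothesis that actually does the work is this specific shape of $\lt$: if $x\in\qt'\cap\sigmaup(\qt')$ and $x=a+b$ with $a\in\qt$, $b\in\sigmaup(\qt)$, then $b=x-a\in\qt'$ and $a=x-b\in\sigmaup(\qt')$, so $a\in\lt\cap\qt$ and $b\in\lt\cap\sigmaup(\qt)=\sigmaup(\lt\cap\qt)$, which is \eqref{e3.15}. The correct course is therefore to prove the lemma with \eqref{e3.14} replaced by the hypothesis that $\lt=\qt'\cap\sigmaup(\qt')$ for some complex Lie subalgebra $\qt'$ with $\qt\subseteq\qt'\subseteq\qt+\sigmaup(\qt)$; that version is all that Theorem\,\ref{t3.29} requires, and its proof is the two lines just given.
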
 
\begin{proof}
 Let $\Vtt$ be a $\C$-linear complement of $\qt\,{\cap}\,\sigmaup(\qt)$ in $\qt$. Then 
\begin{equation*}
  \qt+\sigmaup(\qt)=(\qt\cap\sigmaup(\qt))\oplus\Vtt\oplus\sigmaup(\Vtt),
\end{equation*}
and from \eqref{e3.14} we obtain 
\begin{gather*}
\qt\cap\lt=(\qt\cap\sigmaup(\qt))\oplus(\Vtt\cap\lt),\\
 \lt=(\qt\cap\sigmaup(\qt))\oplus(\Vtt\cap\lt)\oplus\sigmaup(\Vtt\cap\lt),
\end{gather*}
yielding  \eqref{e3.15} and proving our claim. 
\end{proof}

\begin{lem}
 Let $(\gs,\qt)$ be a $CR$ algebra, $\wt$ a complex Lie subalgebra of $\gt$ satisfying 
\begin{gather}
 \qt\cap\sigmaup(\qt)\subseteq\wt\subseteq\qt+\sigmaup(\qt) 
 \intertext{and set}
  \pt=\{Z\in\wt\mid [Z,\sigmaup(\wt)]\subseteq\wt+\sigmaup(\wt)\}.
\end{gather} 
Then 
$\pt$ and $\lt\,{=}\,\pt{+}\,\sigmaup(\pt)$ are complex Lie subalgebras  and 
$\lt$ satisfies \eqref{e3.14}.
\end{lem}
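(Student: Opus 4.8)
The plan is to avoid working with $\pt$ and $\lt$ through their defining conditions directly and instead to recognise both as intersections involving a single auxiliary subalgebra, after which the subalgebra property and the bracket closure become formal. Write $\mathfrak{n}=\{Z\in\gt\mid[Z,\wt+\sigmaup(\wt)]\subseteq\wt+\sigmaup(\wt)\}$ for the idealizer of the complex subspace $\wt+\sigmaup(\wt)$. This $\mathfrak{n}$ is a complex Lie subalgebra: the defining condition is $\C$-linear in $Z$ because $\ad$ is $\C$-linear and $\wt+\sigmaup(\wt)$ is a complex subspace, and closure under bracket is the standard Jacobi-identity computation for idealizers. Since $\sigmaup$ is an anti-$\C$-linear involution preserving brackets, $\sigmaup(\wt)$ is again a complex Lie subalgebra, $\sigmaup$ fixes $\wt+\sigmaup(\wt)$ as a set, and hence $\sigmaup(\mathfrak{n})=\mathfrak{n}$.

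First I would reformulate $\pt$. Because $\wt$ is a subalgebra, $[\wt,\wt]\subseteq\wt\subseteq\wt+\sigmaup(\wt)$, so for $Z\in\wt$ the single requirement $[Z,\sigmaup(\wt)]\subseteq\wt+\sigmaup(\wt)$ is equivalent to $[Z,\wt+\sigmaup(\wt)]\subseteq\wt+\sigmaup(\wt)$. Therefore
\[
\pt=\wt\cap\mathfrak{n},
\]
an intersection of two complex Lie subalgebras, which already shows that $\pt$ is a complex Lie subalgebra. Applying $\sigmaup$ and using $\sigmaup(\mathfrak{n})=\mathfrak{n}$ yields the symmetric description $\sigmaup(\pt)=\sigmaup(\wt)\cap\mathfrak{n}$.

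The heart of the argument is the identity
\[
\lt=\pt+\sigmaup(\pt)=(\wt+\sigmaup(\wt))\cap\mathfrak{n}.
\]
The inclusion $\subseteq$ is immediate from the two descriptions above. For $\supseteq$, given $Z\in(\wt+\sigmaup(\wt))\cap\mathfrak{n}$ I would decompose $Z=w+v$ with $w\in\wt$ and $v\in\sigmaup(\wt)$ and show the normalising condition splits: restricting $[Z,\wt+\sigmaup(\wt)]\subseteq\wt+\sigmaup(\wt)$ to $\sigmaup(\wt)$ and subtracting $[v,\sigmaup(\wt)]\subseteq\sigmaup(\wt)$ forces $[w,\sigmaup(\wt)]\subseteq\wt+\sigmaup(\wt)$, i.e.\ $w\in\wt\cap\mathfrak{n}=\pt$; restricting to $\wt$ and subtracting $[w,\wt]\subseteq\wt$ forces $[v,\wt]\subseteq\wt+\sigmaup(\wt)$, i.e.\ $v\in\sigmaup(\wt)\cap\mathfrak{n}=\sigmaup(\pt)$. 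This splitting is the step I expect to be the main obstacle, since neither the sum $\pt+\sigmaup(\pt)$ of two subalgebras nor the module $\wt+\sigmaup(\wt)$ is in general a subalgebra; everything hinges on separating the single idealizer condition by exploiting the subalgebra property of $\wt$ and of $\sigmaup(\wt)$ \emph{individually}. Once the identity is in hand, closure of $\lt$ is free: for $a,b\in\lt$ one has $[a,b]\in\mathfrak{n}$ because $\mathfrak{n}$ is a subalgebra, and $[a,b]\in\wt+\sigmaup(\wt)$ because $a\in\mathfrak{n}$ normalises $\wt+\sigmaup(\wt)$ while $b\in\wt+\sigmaup(\wt)$, so $[a,b]\in\lt$.

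Finally I would verify \eqref{e3.14}. The upper bound is $\lt\subseteq\wt+\sigmaup(\wt)\subseteq\qt+\sigmaup(\qt)$, using the hypothesis $\wt\subseteq\qt+\sigmaup(\qt)$ together with $\sigmaup^{2}=\id$ to absorb $\sigmaup(\wt)$. For the lower bound, the hypothesis $\qt\cap\sigmaup(\qt)\subseteq\wt$ and its $\sigmaup$-image give $\qt\cap\sigmaup(\qt)\subseteq\wt\cap\sigmaup(\wt)$, and any $Z\in\wt\cap\sigmaup(\wt)$ lies in $\mathfrak{n}$ since $[Z,\wt]\subseteq\wt$ and $[Z,\sigmaup(\wt)]\subseteq\sigmaup(\wt)$; hence $\wt\cap\sigmaup(\wt)\subseteq(\wt+\sigmaup(\wt))\cap\mathfrak{n}=\lt$, and in particular $\qt\cap\sigmaup(\qt)\subseteq\lt$.
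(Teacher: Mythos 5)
Your proof is correct and follows essentially the same route as the paper: both arguments identify $\lt$ with the set of elements of $\wt{+}\sigmaup(\wt)$ normalising $\wt{+}\sigmaup(\wt)$ (your $(\wt{+}\sigmaup(\wt))\cap\mathfrak{n}$ is exactly the paper's auxiliary subalgebra $\at$), and the crucial step in each is the same splitting of $Z=w+v$ with $w\in\wt$, $v\in\sigmaup(\wt)$ to show each summand separately satisfies the normalising condition. The only cosmetic differences are that you obtain the subalgebra property of $\pt$ by writing it as $\wt\cap\mathfrak{n}$ instead of by a direct Jacobi computation, and that you spell out the verification of \eqref{e3.14}, which the paper leaves implicit.
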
 
\begin{proof}If $Z_{1},Z_{2}\,{\in}\,\pt$, then 
\begin{align*}\,
& [[Z_{1},Z_{2}],\sigmaup(\wt)]=[Z_{1},[Z_{2},\sigmaup(\wt)]]+[Z_{2},[Z_{1},\sigmaup(\wt)]] \\
&\qquad\quad\subseteq [Z_{1},\wt+\sigmaup(\wt)]+[Z_{2},\wt+\sigmaup(\wt)]\subseteq\wt+\sigmaup(\wt)
\end{align*}
 shows that $\pt$ is a Lie subalgebra. \par
Clearly \, $
\at\,{=}\,\big\{Z\,{\in}\,\wt\,{+}\,\sigmaup(\wt)\,{\mid}\, [Z,\wt\,{+}\,\sigmaup(\wt)]\,{\subseteq}\,\wt\,{+}\,\sigmaup(\wt)\big\}$
\,  is
a Lie subalgebra of $\gt$ containing $\pt{+}\,\sigmaup(\pt)$.
It suffices to show that $ \pt{+}\,\sigmaup(\pt)\,{=}\,\at$. \par
Take $Z_{1},Z_{2}\,{\in}\,\wt$ with
$Z_{1}{+}\sigmaup(Z_{2})\,{\in}\,\at$. Then 
\begin{align*}
 [Z_{1}+\sigmaup(Z_{2}),\sigmaup(\wt)]\subseteq\wt+\sigmaup(\wt) \Longrightarrow [Z_{1},\sigmaup(\wt)]
 \subseteq\wt+\sigmaup(\wt),\\
  [Z_{1}+\sigmaup(Z_{2}),\wt]\subseteq\wt+\sigmaup(\wt) \Longrightarrow [\sigmaup(Z_{2}),\wt]
 \subseteq\wt+\sigmaup(\wt) \\
 \Longrightarrow [Z_{2},\sigmaup(\wt)]
 \subseteq\wt+\sigmaup(\wt),
\end{align*}
show that $Z_{1},Z_{2}\,{\in}\,\pt$.
This completes the proof.
\end{proof}
\begin{dfn} Let $(\gs,\qt)$ be a $CR$-algebra. Its \emph{Levi-kernel} is the subspace 
\begin{equation}
\kt(\gs,\qt)=\{Z\in\qt\mid [Z,\sigmaup(\qt)]\subseteq\qt+\sigmaup(\qt)\}
\end{equation}
 \end{dfn}
\begin{prop} Let $(\gs,\qt)$ be a $CR$ algebra. 
Its Levi-kernel  $\kt(\gs,\qt)$ and $\kt(\gs,\qt){+}\sigmaup(\kt(\gs,\qt))$
are Lie subalgebras of $\gt$, the latter satisfying
\eqref{e3.14}.\par
An element $Z\,{\in}\,\sigmaup(\qt){\setminus}\qt$ has Levi order larger of equal to  two if and only if
$\sigmaup(Z)\,{\in}\,\kt(\gs,\qt)$. \qed
 \end{prop}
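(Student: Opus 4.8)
The plan is to obtain the structural claims as an immediate specialisation of the preceding Lemma, and then to settle the equivalence by unwinding the definition of Levi order together with the $\sigmaup$-equivariance of the bracket.

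First I would observe that the condition defining the Levi-kernel $\kt(\gs,\qt)$ coincides with the condition defining $\pt$ in the preceding Lemma once one takes $\wt\,{=}\,\qt$. Since $\qt$ is a complex Lie subalgebra of $\gt$ and trivially satisfies $\qt\,{\cap}\,\sigmaup(\qt)\,{\subseteq}\,\qt\,{\subseteq}\,\qt\,{+}\,\sigmaup(\qt)$, the hypotheses of that Lemma hold, so it yields at once that $\kt(\gs,\qt)\,{=}\,\pt$ and $\kt(\gs,\qt)\,{+}\,\sigmaup(\kt(\gs,\qt))\,{=}\,\lt$ are complex Lie subalgebras of $\gt$, the latter satisfying \eqref{e3.14}. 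Thus the first assertion needs no separate argument.

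Next I would turn to the equivalence. For $Z\,{\in}\,\sigmaup(\qt){\setminus}\qt$ one always has $k(Z)\,{\geq}\,1$, since $Z\,{\in}\,\sigmaup(\qt)\,{\subseteq}\,\qt\,{+}\,\sigmaup(\qt)$ excludes a length-zero Levi-sequence. Hence $k(Z)\,{\geq}\,2$ is equivalent to the absence of a Levi-sequence of length one, i.e. to
\[
 [Z,Z_{1}]\,{\in}\,\qt\,{+}\,\sigmaup(\qt)\quad\text{for all } Z_{1}\,{\in}\,\qt,
\]
which is precisely the inclusion $[Z,\qt]\,{\subseteq}\,\qt\,{+}\,\sigmaup(\qt)$. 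It then remains to rewrite this as a condition on $\sigmaup(Z)$. Because $\sigmaup$ is an anti-$\C$-linear involution it is an automorphism of the real Lie algebra underlying $\gt$, so $\sigmaup([A,B])\,{=}\,[\sigmaup(A),\sigmaup(B)]$ and $\sigmaup(\qt\,{+}\,\sigmaup(\qt))\,{=}\,\qt\,{+}\,\sigmaup(\qt)$. Applying $\sigmaup$ to $[Z,\qt]\,{\subseteq}\,\qt\,{+}\,\sigmaup(\qt)$ gives $[\sigmaup(Z),\sigmaup(\qt)]\,{\subseteq}\,\qt\,{+}\,\sigmaup(\qt)$, and since $\sigmaup(Z)\,{\in}\,\qt$ (as $Z\,{\in}\,\sigmaup(\qt)$) this is exactly $\sigmaup(Z)\,{\in}\,\kt(\gs,\qt)$; the involutivity of $\sigmaup$ gives the reverse implication.

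I do not expect a genuine obstacle here: the argument is bookkeeping that combines the preceding Lemma with the equivariance of the bracket under $\sigmaup$. The only points meriting care are the convention on the index set of the infimum defining $k(Z)$, which must be checked to guarantee $k(Z)\,{\geq}\,1$ for every admissible $Z$ (so that ``$k(Z)\,{\geq}\,2$'' really reduces to ruling out length-one sequences, including the case $k(Z)\,{=}\,{+}\infty$), and the elementary fact that $\sigmaup$ preserves $\qt\,{+}\,\sigmaup(\qt)$, both of which are routine.
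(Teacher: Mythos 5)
Your proposal is correct and follows exactly the route the paper intends: the proposition carries no proof (it is stated with \qed immediately after the lemma on $\pt$ and $\lt\,{=}\,\pt\,{+}\,\sigmaup(\pt)$), and the expected argument is precisely your specialisation $\wt\,{=}\,\qt$, which identifies $\pt$ with $\kt(\gs,\qt)$, together with the observation that $k(Z)\,{\geq}\,2$ means $[Z,\qt]\,{\subseteq}\,\qt\,{+}\,\sigmaup(\qt)$ and the $\sigmaup$-equivariance of the bracket. Your care about the case $k(Z)\,{=}\,{+}\infty$ and about $\sigmaup$ preserving $\qt\,{+}\,\sigmaup(\qt)$ is exactly the right bookkeeping.
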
 
At infinitesimal level, the study 
of invariant complex foliations of 
homogeneous $CR$-manifolds 
reduces to that of Lie subalgebras satisfying~\eqref{e3.14}.  
\begin{prop}
 Let $(\ft_{\sigmaup},\pt)$ be a prolongation of $(\gs,\qt)$, with $\gs\,{\subseteq}\,\ft_{\sigmaup}$
 and $\qt\,{\subseteq}\,\pt$. If $(\lt_{\sigmaup},\lt\,{\cap}\,\pt)$ is the typical leaf of an $\ft_{\sigmaup}$-invariant
 foliation of $(\ft_{\sigmaup},\pt)$, then 
$(\lt_{\sigmaup}{\cap}\,\gs,\lt\,{\cap}\,\qt)$ is the typical leaf of a $\gs$-invariant
 foliation of $(\gt_{\sigmaup},\qt)$ and the $CR$-algebras $(\lt_{\sigmaup},\lt\,{\cap}\,\pt)$
 and $(\lt_{\sigmaup}{\cap}\,\gs,\lt\,{\cap}\,\qt)$ are locally isomorphic.\qed
\end{prop}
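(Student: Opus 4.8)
The statement has two parts: that $(\lt_{\sigmaup}\cap\gs,\lt\cap\qt)$ is the typical leaf of a $\gs$-invariant foliation of $(\gs,\qt)$, and that it is locally $CR$-isomorphic to $(\lt_{\sigmaup},\lt\cap\pt)$. The plan is to reduce both to the prolongation calculus of Lemma~\ref{l3.16}. Writing $\ft$ for the complexification of $\ft_{\sigmaup}$ and keeping $\sigmaup$ for its conjugation (which restricts to the given one on $\gt$), the fact that $(\ft_{\sigmaup},\pt)$ prolongs $(\gs,\qt)$ gives
\[
\qt=\pt\cap\gt,\qquad \pt=\qt+\pt\cap\sigmaup(\pt),\qquad \ft=\gt+\pt\cap\sigmaup(\pt).
\]
Since $\lt$ is the complexification of $\lt_{\sigmaup}$ it is $\sigmaup$-invariant, hence so is $\lt\cap\gt$, and the standard real/imaginary-part decomposition shows $\lt\cap\gt$ is the complexification of $\lt_{\sigmaup}\cap\gs$; this identifies the correct complexified datum for a leaf of $(\gs,\qt)$.

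First I would settle the foliation claim. The set $\lt_{\sigmaup}\cap\gs$ is a Lie subalgebra of $\gs$ as an intersection of subalgebras. From $\qt=\pt\cap\gt$ and $\gs\subseteq\gt$ one gets $\qt\cap\gs=\pt\cap\gs\subseteq\pt\cap\ft_{\sigmaup}\subseteq\lt_{\sigmaup}$, the last inclusion being exactly the condition of Definition~\ref{d3.11} that $(\lt_{\sigmaup},\lt\cap\pt)$ be a typical leaf of $(\ft_{\sigmaup},\pt)$; hence $\qt\cap\gs\subseteq\lt_{\sigmaup}\cap\gs$. As $\qt\subseteq\gt$ forces $(\lt\cap\gt)\cap\qt=\lt\cap\qt$, Definition~\ref{d3.11} applies and exhibits $(\lt_{\sigmaup}\cap\gs,\lt\cap\qt)$ as the typical leaf of a $\gs$-invariant foliation of $(\gs,\qt)$.

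The heart of the argument is the local isomorphism, and the decisive observation is that $\mt:=\pt\cap\sigmaup(\pt)$ is contained in $\lt$. Indeed, for a $\sigmaup$-fixed vector $x$ one has $x\in\pt\cap\sigmaup(\pt)$ iff $x\in\pt$, so $\mt$ is the complexification of the real isotropy $\pt\cap\ft_{\sigmaup}$; since $\pt\cap\ft_{\sigmaup}\subseteq\lt_{\sigmaup}$, complexifying yields $\mt\subseteq\lt$. With this in hand I would show that the inclusion $\lt_{\sigmaup}\cap\gs\hookrightarrow\lt_{\sigmaup}$ realizes $(\lt_{\sigmaup},\lt\cap\pt)$ as a prolongation of $(\lt_{\sigmaup}\cap\gs,\lt\cap\qt)$, by checking the three conditions of Lemma~\ref{l3.16} for the data $\bigl(\lt\cap\gt,\lt\cap\qt\bigr)\subseteq(\lt,\lt\cap\pt)$. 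The first, $\lt\cap\qt=(\lt\cap\pt)\cap(\lt\cap\gt)$, is immediate from $\pt\cap\gt=\qt$. For the others I note $(\lt\cap\pt)\cap\sigmaup(\lt\cap\pt)=\lt\cap\pt\cap\sigmaup(\pt)=\lt\cap\mt=\mt$; then intersecting $\pt=\qt+\mt$ and $\ft=\gt+\mt$ with $\lt$ and splitting any $z$ (in $\lt\cap\pt$, resp.\ in $\lt$) as $z=x+y$ with $y\in\mt\subseteq\lt$, whence $x=z-y\in\lt$, produces $\lt\cap\pt=(\lt\cap\qt)+\mt$ and $\lt=(\lt\cap\gt)+\mt$, the two remaining conditions. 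A prolongation is by definition a local $CR$-algebra isomorphism, so the two leaves are locally $CR$-isomorphic.

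The main obstacle I anticipate is precisely the two splitting identities $\lt\cap\pt=(\lt\cap\qt)+(\lt\cap\pt)\cap\sigmaup(\lt\cap\pt)$ and $\lt=(\lt\cap\gt)+(\lt\cap\pt)\cap\sigmaup(\lt\cap\pt)$: these are instances of a modular law that fails for arbitrary subspaces. What rescues them is the inclusion $\mt\subseteq\lt$, so the entire proof hinges on extracting this inclusion from the typical-leaf hypothesis through the identification of the complexification of $\pt\cap\ft_{\sigmaup}$ with $\pt\cap\sigmaup(\pt)$; everything else is formal bookkeeping with the prolongation relations.
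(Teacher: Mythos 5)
Your argument is correct and complete. The paper states this proposition with no proof at all (it is marked \qed as immediate), and your route is exactly the natural one the authors presumably had in mind: reduce everything to the prolongation relations of Lemma~\ref{l3.16}, observe that $\pt\,{\cap}\,\sigmaup(\pt)$ is the complexification of the isotropy $\pt\,{\cap}\,\ft_{\sigmaup}$ and hence lies in $\lt$ by the typical-leaf hypothesis, and use that inclusion to make the modular-law splittings $\lt\,{\cap}\,\pt\,{=}\,(\lt\,{\cap}\,\qt)\,{+}\,\pt\,{\cap}\,\sigmaup(\pt)$ and $\lt\,{=}\,(\lt\,{\cap}\,\gt)\,{+}\,\pt\,{\cap}\,\sigmaup(\pt)$ legitimate, so that $(\lt_{\sigmaup},\lt\,{\cap}\,\pt)$ is a prolongation of $(\lt_{\sigmaup}\,{\cap}\,\gs,\lt\,{\cap}\,\qt)$ and in particular locally isomorphic to it.
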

\subsection{Levi-nondegenerate reduction} 
Levi-nondegenerate reductions are an important special example of complex foliations.
\begin{lem}\label{l3.28}
 Assume that $(\gs,\qt)$ is  Levi-degenerate and let $\qt'$ be a complex Lie subalgebra of $\gt$ with 
\begin{equation}\label{e3.20}
 \qt\subsetneqq\qt'\subseteq\qt+\sigmaup(\qt).
\end{equation}
\par
 Then the typical fibre $(\qt'\,{\cap}\,\gs,\qt\,{\cap}\,\sigmaup(\qt'))$ 
 of the $\gs$-equivariant fibration $\varpi\,{:}\,(\gs,\qt)\,{\to}\,(\gs,\qt')$ 
 is totally complex.
\end{lem}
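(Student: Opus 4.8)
The plan is to realise the typical fibre as the \emph{typical leaf} produced by Lemma~\ref{l3.22}, so that the assertion reduces to a direct application of that lemma; note first that the inclusion \eqref{e3.20} guarantees $\qt'\,{=}\,\qt'\,{\cap}\,\sigmaup(\qt')\,{+}\,\qt$, i.e. \eqref{e3.13}, so that $\varpi$ is genuinely a $\gs$-equivariant $CR$-fibration and the word ``typical fibre'' is legitimate.

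First I would set $\lt\,{=}\,\qt'\,{\cap}\,\sigmaup(\qt')$. Since $\sigmaup$ is an involution, $\sigmaup(\lt)\,{=}\,\sigmaup(\qt')\,{\cap}\,\qt'\,{=}\,\lt$, so $\lt$ is $\sigmaup$-invariant and hence the complexification of the real Lie subalgebra $\lts\,{=}\,\lt\,{\cap}\,\gs$; and because $\sigmaup$ fixes $\gs$ pointwise, an element $Z\,{\in}\,\gs$ lies in $\qt'$ exactly when it lies in $\sigmaup(\qt')$, whence $\lts\,{=}\,\gs\,{\cap}\,\qt'$. Next I would check the two hypotheses needed to view $(\lts,\lt\,{\cap}\,\qt)$ as a typical leaf obeying the sandwich \eqref{e3.14}. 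From $\qt\,{\subseteq}\,\qt'$ one has $\sigmaup(\qt)\,{\subseteq}\,\sigmaup(\qt')$, so $\qt\,{\cap}\,\sigmaup(\qt)\,{\subseteq}\,\lt$, while $\lt\,{\subseteq}\,\qt'\,{\subseteq}\,\qt\,{+}\,\sigmaup(\qt)$ by \eqref{e3.20}; this is exactly \eqref{e3.14}. Moreover $\qt\,{\cap}\,\gs\,{\subseteq}\,\qt'\,{\cap}\,\gs\,{=}\,\lts$, so by Definition~\ref{d3.11} the pair $(\lts,\lt\,{\cap}\,\qt)$ is the typical leaf of a $\gs$-invariant foliation of $(\gs,\qt)$.

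Then I would identify this leaf with the fibre of $\varpi$: since $\qt\,{\subseteq}\,\qt'$ forces $\qt'\,{\cap}\,\qt\,{=}\,\qt$, one computes
\begin{equation*}
 \lt\,{\cap}\,\qt\,{=}\,\qt'\,{\cap}\,\sigmaup(\qt')\,{\cap}\,\qt\,{=}\,\qt\,{\cap}\,\sigmaup(\qt'),
\end{equation*}
so that $(\lts,\lt\,{\cap}\,\qt)$ is precisely the typical fibre $(\gs\,{\cap}\,\qt',\qt\,{\cap}\,\sigmaup(\qt'))$ of $\varpi$. Lemma~\ref{l3.22} then applies verbatim and yields at once that this $CR$-algebra is totally complex, together with the decomposition \eqref{e3.15}. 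The argument is essentially bookkeeping, matching the fibre against the hypotheses of Lemma~\ref{l3.22}; the only points requiring care are the verification of the sandwich \eqref{e3.14} and the simplification $\lt\,{\cap}\,\qt\,{=}\,\qt\,{\cap}\,\sigmaup(\qt')$, both of which flow from the single inclusion $\qt\,{\subseteq}\,\qt'$. I therefore expect no substantive obstacle beyond this identification.
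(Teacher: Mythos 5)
Your proof is correct and follows exactly the route the paper takes: the paper's own proof is the one-line "apply Lemma~\ref{l3.22} with $\lts\,{=}\,\qt'\,{\cap}\,\gs$", and you have simply written out the routine verifications (that $\qt'\,{\cap}\,\sigmaup(\qt')$ is the complexification of $\qt'\,{\cap}\,\gs$, that the sandwich \eqref{e3.14} follows from $\qt\,{\subseteq}\,\qt'\,{\subseteq}\,\qt\,{+}\,\sigmaup(\qt)$, and that $\lt\,{\cap}\,\qt\,{=}\,\qt\,{\cap}\,\sigmaup(\qt')$). No gaps.
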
 
\begin{proof}
 We apply Lemma\,\ref{l3.22}, taking $\lts\,{=}\,\qt'\,{\cap}\,\gs$.
\end{proof}
The fact  that there is a unique maximal complex Lie subalgebra 
$\qt'$ satisfying \eqref{e3.20}
is the contents of Theorem\,\ref{t3.6}. 
\begin{thm}[{Levi}-nondegenerate reduction]\label{t3.29}
Every fundamental
$CR$-al\-ge\-bra $(\gs,\qt)$ admits a unique $\gs$-equivariant fibration. Its  
Levi-non\-de\-gen\-er\-ate basis is
 $(\gs,\qt_{\{\sigmaup\}})$, where $\qt_{\{\sigmaup\}}$ is the $\sigmaup$-ex\-ten\-sion of $\qt$,
  and  the  typical fibre is 
 totally complex. 
\par
The $CR$-algebra $(\gs,\qt)$ 
is 
Levi-nondegenerate if and only if
$\qt_{\{\sigmaup\}}\,{=}\,\qt.$ 
\end{thm}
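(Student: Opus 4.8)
The plan is to produce the required fibration explicitly by taking $\qt'\,{=}\,\qt_{\{\sigmaup\}}$ and then to check that it has the two asserted properties and is the only fibration enjoying them. First I would invoke Theorem~\ref{t3.6}, which gives the inclusions $\qt\,{\subseteq}\,\qt_{\{\sigmaup\}}\,{\subseteq}\,\qt{+}\sigmaup(\qt)$ together with the maximality of $\qt_{\{\sigmaup\}}$ among Lie subalgebras lying between $\qt$ and $\qt{+}\sigmaup(\qt)$. By the remark following \eqref{e3.13}, these inclusions alone force condition \eqref{e3.13}, so $\varpi\,{:}\,(\gs,\qt)\,{\to}\,(\gs,\qt_{\{\sigmaup\}})$ is genuinely a $\gs$-equivariant $CR$-fibration. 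The computational linchpin is the identity
\[
\qt_{\{\sigmaup\}}{+}\sigmaup(\qt_{\{\sigmaup\}})=\qt{+}\sigmaup(\qt),
\]
whose inclusion ``$\subseteq$'' follows by applying $\sigmaup$ to $\qt_{\{\sigmaup\}}\,{\subseteq}\,\qt{+}\sigmaup(\qt)$, and whose reverse inclusion follows from $\qt\,{\subseteq}\,\qt_{\{\sigmaup\}}$.

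Granting this identity, \emph{Levi-nondegeneracy of the basis} is immediate: any complex Lie subalgebra $\qt''$ with $\qt_{\{\sigmaup\}}\,{\subsetneqq}\,\qt''\,{\subseteq}\,\qt_{\{\sigmaup\}}{+}\sigmaup(\qt_{\{\sigmaup\}})=\qt{+}\sigmaup(\qt)$ would satisfy $\qt\,{\subseteq}\,\qt''\,{\subseteq}\,\qt{+}\sigmaup(\qt)$ while strictly containing $\qt_{\{\sigmaup\}}$, contradicting maximality. For the \emph{typical fibre} I would apply Lemma~\ref{l3.22} with $\lt\,{=}\,\qt_{\{\sigmaup\}}\cap\sigmaup(\qt_{\{\sigmaup\}})$, the complexification of $\lts\,{=}\,\qt_{\{\sigmaup\}}\cap\gs$; the containment $\qt\cap\gs\,{\subseteq}\,\lts$ holds because $\qt\,{\subseteq}\,\qt_{\{\sigmaup\}}$, and hypothesis \eqref{e3.14} holds since $\qt\cap\sigmaup(\qt)\,{\subseteq}\,\lt\,{\subseteq}\,\qt{+}\sigmaup(\qt)$. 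Because $\qt\cap\lt\,{=}\,\qt\cap\sigmaup(\qt_{\{\sigmaup\}})$, the leaf $(\lts,\qt\cap\lt)$ produced by the lemma is precisely the typical fibre $(\gs\cap\qt_{\{\sigmaup\}},\qt\cap\sigmaup(\qt_{\{\sigmaup\}}))$ of $\varpi$, which the lemma declares totally complex; this treats uniformly both the Levi-degenerate case (where it specialises to Lemma~\ref{l3.28}) and the trivial case $\qt\,{=}\,\qt_{\{\sigmaup\}}$.

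For \emph{uniqueness}, suppose $(\gs,\qt'')$ is the basis of some $\gs$-equivariant fibration that is Levi-nondegenerate with totally complex fibre. From \eqref{e3.13} I get $\qt''\,{=}\,(\qt''\cap\sigmaup(\qt''))+\qt$, while total complexity of the fibre $(\gs\cap\qt'',\qt\cap\sigmaup(\qt''))$ rewrites $\qt''\cap\sigmaup(\qt'')$ as $(\qt\cap\sigmaup(\qt''))+(\sigmaup(\qt)\cap\qt'')$; substituting yields $\qt''\,{\subseteq}\,\qt{+}\sigmaup(\qt)$, whence $\qt''\,{\subseteq}\,\qt_{\{\sigmaup\}}$ by maximality. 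Since $\qt\,{\subseteq}\,\qt''$ gives $\qt''{+}\sigmaup(\qt'')\,{=}\,\qt{+}\sigmaup(\qt)\,{\supseteq}\,\qt_{\{\sigmaup\}}$, a strict inclusion $\qt''\,{\subsetneqq}\,\qt_{\{\sigmaup\}}$ would exhibit $\qt_{\{\sigmaup\}}$ as a Lie subalgebra strictly between $\qt''$ and $\qt''{+}\sigmaup(\qt'')$, making $(\gs,\qt'')$ Levi-degenerate; hence $\qt''\,{=}\,\qt_{\{\sigmaup\}}$. The concluding equivalence rests on the same maximality: $\qt_{\{\sigmaup\}}\,{=}\,\qt$ holds exactly when no complex Lie subalgebra lies strictly between $\qt$ and $\qt{+}\sigmaup(\qt)$, which is the definition of Levi-nondegeneracy of $(\gs,\qt)$.

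The step I expect to demand the most care is matching the leaf of Lemma~\ref{l3.22} with the typical fibre of $\varpi$ — that is, verifying $\qt\cap\lt\,{=}\,\qt\cap\sigmaup(\qt_{\{\sigmaup\}})$ and that the complexification of $\qt_{\{\sigmaup\}}\cap\gs$ is $\qt_{\{\sigmaup\}}\cap\sigmaup(\qt_{\{\sigmaup\}})$ — and confirming that the argument degenerates gracefully when $\qt\,{=}\,\qt_{\{\sigmaup\}}$, so that the ``fibration'' is the identity and its fibre is trivially totally complex. Fundamentality plays only a background role: it is the standing hypothesis that renders the reduction geometrically meaningful, but the algebraic identities above do not invoke it directly.
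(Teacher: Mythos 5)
Your proposal is correct and takes essentially the same route as the paper, whose proof simply cites Theorem\,\ref{t3.6} and Lemma\,\ref{l3.28} (the latter being exactly the application of Lemma\,\ref{l3.22} with $\lts\,{=}\,\qt'\cap\gs$ that you carry out). You have merely made explicit the details the paper leaves to the reader, in particular the identity $\qt_{\{\sigmaup\}}{+}\sigmaup(\qt_{\{\sigmaup\}})\,{=}\,\qt{+}\sigmaup(\qt)$ and the uniqueness argument.
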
 
\begin{proof}
 The  claims are consequences of Theorem\,\ref{t3.6} and
 Lemma\,\ref{l3.28}.  
\end{proof}
\subsection{Reductive $CR$-algebras} \label{s.reductive}
We specialise to homogeneous 
$CR$-man\-i\-folds the classical notion of \textit{reductiveness}  for general 
homogeneous
spaces (see \cite[Ch.X]{KN}). 
\begin{dfn}\label{d3.13}
We call \emph{reductive} 
a $CR$-algebra $(\gs,\qt)$ 
whose lifted $CR$-structure $\qt$ admits 
an \emph{invariant complement}, i.e. 
a  $\C$-linear 
complement  $\qt^{c}$ 
in $\gt$
such that  
\begin{align}\label{e3.21}
&[\qt\cap\sigmaup(\qt),\,\qt^{c}]\subseteq\qt^{c},\\
\label{e3.22}
 &\gt=(\qt\cap\sigmaup(\qt))\,\oplus\, (\qt\cap\sigmaup(\qt^{c}))\,
 \oplus\,(\qt^{c}\cap\sigmaup(\qt))\,\oplus\,(\qt^{c}\cap\sigmaup(\qt^{c})) ,
\end{align}
and
a $\Gfs$-homogeneous $CR$-manifold $\sfM$ whose 
 associated $CR$-algebra at $\pct$  is reductive, with an $\Ad(\Kfs)$-invariant
  $\qt^{c}$.
\end{dfn}
\begin{rmk} As an easy consequence of \eqref{e3.21} we obtain 
\begin{equation*} 
\begin{cases}
 [(\qt\cap\sigmaup(\qt)),\,(\qt\cap\sigmaup(\qt^{c}))]\subseteq\qt\cap\sigmaup(\qt^{c}),\\
 [(\qt\cap\sigmaup(\qt)),\,(\qt^{c}\cap\sigmaup(\qt)]\subseteq\qt^{c}\cap\sigmaup(\qt),\\
 [(\qt\cap\sigmaup(\qt)),\,(\qt^{c}\cap\sigmaup(\qt^{c}))]\subseteq\qt^{c}\cap\sigmaup(\qt^{c}),
\end{cases}
\end{equation*} 
so that \eqref{e3.22} is a decomposition of $\gt$ into a direct sum of $\qt\,{\cap}\,\sigmaup(\qt)$-modules.
\par
If $(\gs,\qt)$ is the $CR$-algebra at $\pct$ of a $\Gfs$-homogeneous $CR$-manifold $\sfM$,
then the canonical projection identifies $(\qt\,{\cap}\,\sigmaup(\qt^{c}))$ with $\T^{0,1}_{\pct}\sfM$,
$(\qt^{c}\,{\cap}\,\sigmaup(\qt))$ with $\T^{1,0}_{\pct}\sfM$ and
$(\qt^{c}\,{\cap}\,\sigmaup(\qt^{c}))$ with the fibre on $\pct$ of the complexification of the Reeb distribution
of the underlying generalised contact structure of $\sfM$.
\end{rmk}

\begin{exam}
Being \textit{reductive} depends on the choice of symmetries. 
 A very simple example is provided by the sphere $\Sb^{3}$ in $\CP^{2}$. If we take 
 $\Gfs\,{=}\,\SU(1,2)$ then $\gt=\slt_{3}(\C)$, with 
\begin{equation*}
\gs=\left.\left\{ \left( 
\begin{smallmatrix}
\lambdaup& \zetaup& i \stt\\
z & i \ktt &{-}\bar{\zetaup}\\
i\ttt &{-}\bar{z}&{-}\bar{\lambdaup}\end{smallmatrix}\right) \right| 
\begin{smallmatrix}
 \lambdaup,\zetaup,z\in\C,\\
 \stt,\ttt,\ktt\in\R,\\
 \lambdaup-\bar{\lambdaup}+i\ktt=0
\end{smallmatrix}\right\},\quad
 \qt=\left.\left\{ \left( 
\begin{smallmatrix}
 a_{1,1}& a_{1,2}&a_{1,3}\\
 0 & a_{2,2}&a_{2,3}\\
 0 &a_{3,2}&a_{3,3}\end{smallmatrix}\right) \right| a_{1,1}{+}a_{2,2}{+}a_{3,3}\,{=}\,0
\right\}.
\end{equation*}
One easily checks that it is not reductive for the action of $\SU(1,2)$. \par
A maximal compact subgroup of $\SU(1,2)$ acts transitively on $\Sb^{3}$. Thus we can make a
\textit{reduction} by taking
$\Gf'_{\sigmaup}{=}\Gfs\,{\cap}\,\SU(3)\,{\simeq}\,\Ub(2)$,  $\gt'\,{\simeq}\,\gl_{2}(\C)$, with 
\begin{equation*}
 \gt'{=}\left.\left\{ \left( 
\begin{smallmatrix}
\lambdaup& \zetaup & {-}\thetaup\\
z & {-}2\lambdaup& z \\
\thetaup & \zetaup & \lambdaup 
\end{smallmatrix}\right)\right| \begin{smallmatrix}
\lambdaup,z,\zetaup,\thetaup\,{\in}\,\C
\end{smallmatrix}\right\},\,
\gt'_{\sigmaup}{=}\left.\left\{ \left( 
\begin{smallmatrix}
 i\stt & {-}\bar{z} & {-}i\ttt\\
 z & {-}2i\stt & z \\
 i\ttt & {-}\bar{z} &i\stt 
\end{smallmatrix}\right)\right| 
\begin{smallmatrix}
 z\in\C,\\ \stt,\ttt\in\R
\end{smallmatrix}\right\},\,
 \qt'{=}\left.\left\{ \left( 
\begin{smallmatrix}
\lambdaup& \zetaup & 0\\
0 & {-}2\lambdaup& 0 \\
0 &\zetaup  & \lambdaup 
\end{smallmatrix}\right)\right| \begin{smallmatrix}
\lambdaup,\zetaup\,{\in}\,\C
\end{smallmatrix}\right\}.
\end{equation*}
The matrices in $\gt'{\cap}\,\sigmaup(\gt')$ are in this case diagonal and $(\gt'_{\sigmaup},\qt')$ is reductive.
\end{exam}
\begin{lem}\label{l3.33}
 Let $(\gs,\qt)$ be a reductive $CR$-algebra, with invariant complement $\qt^{c}$. Then 
\begin{equation} \label{e3.23} \begin{cases}
\qt=(\qt\cap\sigmaup(\qt))\oplus(\qt\cap\sigmaup(\qt^{c})),\;\; 
\sigmaup(\qt)=(\qt\cap\sigmaup(\qt))\oplus(\qt^{c}\cap\sigmaup(\qt)),\\
(\qt{+}\sigmaup(\qt))\cap\qt^{c}=\sigmaup(\qt)\cap\qt^{c},\;\; 
 (\qt{+}\sigmaup(\qt))\cap\sigmaup(\qt^{c})=\qt\cap\sigmaup(\qt^{c}). 
 \end{cases}
\end{equation}
\end{lem}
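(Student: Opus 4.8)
The plan is to derive all four equalities in \eqref{e3.23} purely from the vector-space decompositions supplied by reductiveness, with the bracket condition \eqref{e3.21} playing no role whatsoever. First I would record the decomposition dual to $\gt\,{=}\,\qt\,{\oplus}\,\qt^{c}$: since $\sigmaup$ is an ($\R$-linear) bijection, applying it yields $\gt\,{=}\,\sigmaup(\qt)\,{\oplus}\,\sigmaup(\qt^{c})$, and in particular $\sigmaup(\qt)\,{\cap}\,\sigmaup(\qt^{c})\,{=}\,\sigmaup(\qt\,{\cap}\,\qt^{c})\,{=}\,\{0\}$. It is convenient to abbreviate the four summands of \eqref{e3.22} as $A\,{=}\,\qt\,{\cap}\,\sigmaup(\qt)$, $B\,{=}\,\qt\,{\cap}\,\sigmaup(\qt^{c})$, $C\,{=}\,\qt^{c}\,{\cap}\,\sigmaup(\qt)$, $D\,{=}\,\qt^{c}\,{\cap}\,\sigmaup(\qt^{c})$, so that $\gt\,{=}\,A\,{\oplus}\,B\,{\oplus}\,C\,{\oplus}\,D$, and to note at the outset the containments $A,B\,{\subseteq}\,\qt$, $C,D\,{\subseteq}\,\qt^{c}$, $A,C\,{\subseteq}\,\sigmaup(\qt)$, $B,D\,{\subseteq}\,\sigmaup(\qt^{c})$.

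For the first equality I would intersect $\qt$ with \eqref{e3.22}: a general element $a{+}b{+}c{+}d$ splits as $(a{+}b)\,{+}\,(c{+}d)$ with $a{+}b\,{\in}\,\qt$ and $c{+}d\,{\in}\,\qt^{c}$, so it lies in $\qt$ exactly when $c{+}d\,{\in}\,\qt\,{\cap}\,\qt^{c}\,{=}\,\{0\}$, forcing $c\,{=}\,d\,{=}\,0$ by directness. Hence $\qt\,{=}\,A\,{\oplus}\,B$. The second equality is obtained symmetrically by intersecting $\sigmaup(\qt)$ with \eqref{e3.22} and using $\sigmaup(\qt)\,{\cap}\,\sigmaup(\qt^{c})\,{=}\,\{0\}$ to kill the $B$- and $D$-components, leaving $\sigmaup(\qt)\,{=}\,A\,{\oplus}\,C$.

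For the last two equalities I would first combine the first two to get $\qt\,{+}\,\sigmaup(\qt)\,{=}\,A\,{\oplus}\,B\,{\oplus}\,C$, the sum being direct because it is a partial sum of \eqref{e3.22}. Intersecting with $\qt^{c}$, an element $a{+}b{+}c$ lies in $\qt^{c}$ iff $a{+}b\,{\in}\,\qt\,{\cap}\,\qt^{c}\,{=}\,\{0\}$, i.e. iff $a\,{=}\,b\,{=}\,0$, so $(\qt{+}\sigmaup(\qt))\,{\cap}\,\qt^{c}\,{=}\,C\,{=}\,\sigmaup(\qt)\,{\cap}\,\qt^{c}$. Dually, intersecting $A\,{\oplus}\,B\,{\oplus}\,C$ with $\sigmaup(\qt^{c})$ annihilates the $A$- and $C$-components (both contained in $\sigmaup(\qt)$) and leaves $(\qt{+}\sigmaup(\qt))\,{\cap}\,\sigmaup(\qt^{c})\,{=}\,B\,{=}\,\qt\,{\cap}\,\sigmaup(\qt^{c})$.

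I do not expect a genuine obstacle: the statement is a bookkeeping consequence of the decomposition \eqref{e3.22} together with $\gt\,{=}\,\qt\,{\oplus}\,\qt^{c}$ and its $\sigmaup$-image. The one point deserving attention is that each intersection must be justified through the directness of the relevant partial sum rather than asserted component by component; sorting the four pieces $A,B,C,D$ by their containment in $\qt$, $\qt^{c}$, $\sigmaup(\qt)$, $\sigmaup(\qt^{c})$ before starting makes all four computations immediate.
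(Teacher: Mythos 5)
Your proof is correct and follows essentially the same route as the paper's: both derive all four identities as linear-algebra consequences of the direct-sum decomposition \eqref{e3.22} together with $\gt\,{=}\,\qt\,{\oplus}\,\qt^{c}$ and its $\sigmaup$-image, without invoking the bracket condition \eqref{e3.21}. The paper's version is just a terser statement of the same bookkeeping, which you carry out in full detail.
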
 
\begin{proof} From \eqref{e3.22} we get the first line of \eqref{e3.23} and 
the direct sum decomposition 
\begin{equation*}
 \qt+\sigmaup(\qt)=\qt\,{\cap}\,\sigmaup(\qt)\,\oplus\,\qt^{c}{\cap}\,\sigmaup(\qt)\,\oplus\,
 \qt\,{\cap}\,\sigmaup(\qt^{c}),
\end{equation*}
yielding the second line of  \eqref{e3.23}.
\end{proof}
\begin{lem}\label{l3.10}
  Let $(\gs,\qt)$ a reductive  
  $CR$-algebra, with invariant complement $\qt^{c}$. 
An element of $\sigmaup(\qt){\setminus}\,\qt$ 
having finite Levi-order admits a minimal length Levi-sequence with
elements in $\qt\,{\cap}\,\sigmaup(\qt^{c})$. 
\end{lem}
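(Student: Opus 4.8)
The plan is to begin with any minimal length Levi-sequence $(Z_{1},\hdots,Z_{k})$ for the given $Z\,{\in}\,\sigmaup(\qt){\setminus}\qt$, where $k\,{=}\,k(Z)$, and to progressively replace its entries by entries in $\qt\,{\cap}\,\sigmaup(\qtc)$ while keeping the length equal to $k$. By Lemma\,\ref{l3.33}, reductiveness yields the direct sum $\qt\,{=}\,(\qt\,{\cap}\,\sigmaup(\qt))\,{\oplus}\,(\qt\,{\cap}\,\sigmaup(\qtc))$, so each factor splits uniquely as $Z_{i}\,{=}\,Z_{i}'\,{+}\,Z_{i}''$ with $Z_{i}'\,{\in}\,\qt\,{\cap}\,\sigmaup(\qt)$ and $Z_{i}''\,{\in}\,\qt\,{\cap}\,\sigmaup(\qtc)$. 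I first record that $\qt\,{\cap}\,\sigmaup(\qt)$ normalises $\qt\,{+}\,\sigmaup(\qt)$: indeed $\qt\,{\cap}\,\sigmaup(\qt)$ is contained in each of the subalgebras $\qt$ and $\sigmaup(\qt)$, hence $[\qt\,{\cap}\,\sigmaup(\qt),\qt]\,{\subseteq}\,\qt$ and $[\qt\,{\cap}\,\sigmaup(\qt),\sigmaup(\qt)]\,{\subseteq}\,\sigmaup(\qt)$, so that $[\qt\,{\cap}\,\sigmaup(\qt),\,\qt\,{+}\,\sigmaup(\qt)]\,{\subseteq}\,\qt\,{+}\,\sigmaup(\qt)$.

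The key step is to show that the last entry of a minimal Levi-sequence may be replaced by its $\qt\,{\cap}\,\sigmaup(\qtc)$-component. Since $(Z_{1},\hdots,Z_{k-1})$ has length $k{-}1\,{<}\,k(Z)$, it is not a Levi-sequence for $Z$, and therefore $[Z,Z_{1},\hdots,Z_{k-1}]\,{\in}\,\qt\,{+}\,\sigmaup(\qt)$. Writing $Z_{k}\,{=}\,Z_{k}'\,{+}\,Z_{k}''$ as above and using that $Z_{k}'\,{\in}\,\qt\,{\cap}\,\sigmaup(\qt)$ normalises $\qt\,{+}\,\sigmaup(\qt)$, I obtain
\begin{equation*}
 [Z,Z_{1},\hdots,Z_{k-1},Z_{k}']=\big[[Z,Z_{1},\hdots,Z_{k-1}],Z_{k}'\big]\,{\in}\,\qt\,{+}\,\sigmaup(\qt).
\end{equation*}
As $[Z,Z_{1},\hdots,Z_{k-1},Z_{k}]\,{\notin}\,\qt\,{+}\,\sigmaup(\qt)$ by hypothesis, linearity in the last slot forces $[Z,Z_{1},\hdots,Z_{k-1},Z_{k}'']\,{\notin}\,\qt\,{+}\,\sigmaup(\qt)$; hence $(Z_{1},\hdots,Z_{k-1},Z_{k}'')$ is again a minimal length Levi-sequence for $Z$, now with its last entry in $\qt\,{\cap}\,\sigmaup(\qtc)$.

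Finally I would run an extremal argument. Among all minimal length Levi-sequences for $Z$, choose one, $(Z_{1},\hdots,Z_{k})$, with the greatest number of entries lying in $\qt\,{\cap}\,\sigmaup(\qtc)$, and suppose for contradiction that some entry $Z_{i}$ is not in $\qt\,{\cap}\,\sigmaup(\qtc)$, i.e. $Z_{i}'\,{\neq}\,0$. By Lemma\,\ref{l3.8} every permutation of a minimal Levi-sequence is again one, so I may move $Z_{i}$ into the last position; the key step then replaces it by $Z_{i}''\,{\in}\,\qt\,{\cap}\,\sigmaup(\qtc)$, leaving the remaining entries, and hence the number of entries already in $\qt\,{\cap}\,\sigmaup(\qtc)$, unchanged. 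This produces a minimal length Levi-sequence with strictly more entries in $\qt\,{\cap}\,\sigmaup(\qtc)$, contradicting maximality. Therefore every entry lies in $\qt\,{\cap}\,\sigmaup(\qtc)$, as required. The only delicate point is the key step, where it is essential that minimality of $k$ places the truncated commutator $[Z,Z_{1},\hdots,Z_{k-1}]$ inside $\qt\,{+}\,\sigmaup(\qt)$, so that the isotropy part $Z_{k}'$ of the last factor, which normalises $\qt\,{+}\,\sigmaup(\qt)$, contributes nothing modulo $\qt\,{+}\,\sigmaup(\qt)$; combining this with the permutation invariance of Lemma\,\ref{l3.8} is what lets the replacement reach every slot.
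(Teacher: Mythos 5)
Your proof is correct and follows essentially the same route as the paper's: an extremal choice of a minimal length Levi-sequence with the greatest number of entries in $\qt\,{\cap}\,\sigmaup(\qt^{c})$, permutation invariance from Lemma\,\ref{l3.8} to move an offending entry to the last slot, the reductive splitting $\qt\,{=}\,(\qt\,{\cap}\,\sigmaup(\qt))\,{\oplus}\,(\qt\,{\cap}\,\sigmaup(\qt^{c}))$, and minimality of $k$ to discard the $\qt\,{\cap}\,\sigmaup(\qt)$-component modulo $\qt\,{+}\,\sigmaup(\qt)$. Your explicit observation that $\qt\,{\cap}\,\sigmaup(\qt)$ normalises $\qt\,{+}\,\sigmaup(\qt)$ usefully fills in a step the paper compresses into ``by the minimality of $k$''.
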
 
\begin{proof} Assume that
$Z\,{\in}\,\sigmaup(\qt){\setminus}\qt$ has finite Levi-order $k$ and let
 $(Z_{1},\hdots,Z_{k})$ be a minimal length Levi-sequence in $\qt$
 for $Z$, having a maximal number of terms
 in $\qt\,{\cap}\,\sigmaup(\qt^{c})$. Assume by contradiction that the sequence contains
 an element non belonging to $\qt\,{\cap}\,\sigmaup(\qt^{c})$. By the invariance under
 permutations, we may assume that this element is $Z_{k}$. Decompose $Z_{k}$ into a sum
 $Z_{k}{=}Z_{k}'{+}Z_{k}''$ with
 $Z_{k}'\,{\in}\,\qt\,{\cap}\sigmaup(\qt^{c})$ and $Z_{k}''\,{\in}\,\qt\,{\cap}\,\sigmaup(\qt)$.
 Then $$[Z,Z_{1},\hdots,Z_{k}]\,{=}\,[Z,Z_{1},\hdots,Z'_{k}]{+}[Z,Z_{1},\hdots,Z''_{k}]$$ 
 implies that $(Z_{1},\hdots,Z_{k-1},Z_{k}')$ is a Levi-sequence for $Z$, since 
 $[Z,Z_{1},\hdots,Z''_{k}]$, by the minimality of $k$, belongs to $\qt\,{+}\,\sigmaup(\qt)$. 
 This contradicts the maximality
 of the number of terms in $\qt\,{\cap}\,\sigmaup(\qt^{c})$ of the Levi-sequence 
 $(Z_{1},\hdots,Z_{k})$. Thus all its elements belong to $\qt\,{\cap}\,\sigmaup(\qt^{c})$. 
\end{proof}
\begin{rmk}
 By Lemmas\,\ref{l3.9} and \ref{l3.10}, while discussing Levi-non\-deg\-en\-er\-acy of reductive
 $CR$-algebras, we can limit ourselves to computing Levi-orders of elements
 of $\sigmaup(\qt)\,{\cap}\,\qt^{c}$ and taking Levi-sequences in $\qt\,{\cap}\,\sigmaup(\qt^{c})$. 
\end{rmk} 
\begin{prop}
 Let $(\gs,\qt)$ be a reductive $CR$-algebra, with invariant complement $\qt^{c}$. Then 
\begin{equation}
 \Nt(\qt,\qt^{c})=\{Z\in\qt\mid [Z,\qt^{c}]\subseteq\qt^{c}\}
\end{equation}
is a Lie subalgebra of $\qt$
containing $\qt\,{\cap}\,\sigmaup(\qt)$.
\par 
If $Z\,{\in}\,\Nt(\qt,\qt^{c}){\setminus}\sigmaup(\qt)$, then  the Levi order of $\sigmaup(Z)$ is either
$1$ or $\infty$.
\end{prop}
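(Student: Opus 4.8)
The plan is to treat the two assertions separately: the closure statement by a one-line Jacobi computation, and the dichotomy for the Levi order by a minimality argument that excludes every finite value larger than one.

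For the first assertion I would begin by noting that $\qt\,{\cap}\,\sigmaup(\qt)\,{\subseteq}\,\Nt(\qt,\qt^{c})$ is immediate from \eqref{e3.21}: any $Z\,{\in}\,\qt\,{\cap}\,\sigmaup(\qt)$ lies in $\qt$ and satisfies $[Z,\qt^{c}]\,{\subseteq}\,\qt^{c}$. To see that $\Nt(\qt,\qt^{c})$ is a subalgebra, I take $Z_{1},Z_{2}\,{\in}\,\Nt(\qt,\qt^{c})$; then $[Z_{1},Z_{2}]\,{\in}\,\qt$ because $\qt$ is a subalgebra, and for every $X\,{\in}\,\qt^{c}$ the Jacobi identity
\begin{equation*}
 [[Z_{1},Z_{2}],X]=[Z_{1},[Z_{2},X]]-[Z_{2},[Z_{1},X]]
\end{equation*}
stays in $\qt^{c}$, since $[Z_{i},X]\,{\in}\,\qt^{c}$ and hence $[Z_{j},[Z_{i},X]]\,{\in}\,\qt^{c}$. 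Thus $[Z_{1},Z_{2}]\,{\in}\,\Nt(\qt,\qt^{c})$.

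For the second assertion I fix $Z\,{\in}\,\Nt(\qt,\qt^{c}){\setminus}\sigmaup(\qt)$. Since $\Nt(\qt,\qt^{c})\,{\subseteq}\,\qt$, we have $\sigmaup(Z)\,{\in}\,\sigmaup(\qt){\setminus}\qt$, so its Levi order is defined and is at least $1$. The strategy is to prove that a \emph{finite} Levi order can only be $1$, by deriving a contradiction from the assumption that $\sigmaup(Z)$ has finite Levi order $k\,{\geq}\,2$. The single structural input from $\Nt(\qt,\qt^{c})$ is that, applying the homomorphism $\sigmaup$ to $[Z,\qt^{c}]\,{\subseteq}\,\qt^{c}$, one gets $[\sigmaup(Z),\sigmaup(\qt^{c})]\,{\subseteq}\,\sigmaup(\qt^{c})$. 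Now suppose $\sigmaup(Z)$ has finite Levi order $k\,{\geq}\,2$. By Lemma\,\ref{l3.10} I may pick a minimal length Levi-sequence $(Z_{1},\hdots,Z_{k})$ for $\sigmaup(Z)$ with every $Z_{i}\,{\in}\,\qt\,{\cap}\,\sigmaup(\qt^{c})$. As $k\,{>}\,1$, Lemma\,\ref{l3.8} gives $[\sigmaup(Z),Z_{1}]\,{\in}\,(\qt\,{+}\,\sigmaup(\qt)){\setminus}\qt$; on the other hand $Z_{1}\,{\in}\,\sigmaup(\qt^{c})$ and the inclusion above force $[\sigmaup(Z),Z_{1}]\,{\in}\,\sigmaup(\qt^{c})$, so by the last identity of \eqref{e3.23} in Lemma\,\ref{l3.33},
\begin{equation*}
 [\sigmaup(Z),Z_{1}]\,{\in}\,(\qt\,{+}\,\sigmaup(\qt))\,{\cap}\,\sigmaup(\qt^{c})=\qt\,{\cap}\,\sigmaup(\qt^{c})\,{\subseteq}\,\qt .
\end{equation*}
This contradicts $[\sigmaup(Z),Z_{1}]\,{\notin}\,\qt$; hence no finite Levi order $k\,{\geq}\,2$ occurs, and the Levi order of $\sigmaup(Z)$ is $1$ or $+\infty$.

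The delicate point, and the place where the hypothesis $Z\,{\in}\,\Nt(\qt,\qt^{c})$ is indispensable, is the step that pins the single commutator $[\sigmaup(Z),Z_{1}]$ inside $\qt\,{\cap}\,\sigmaup(\qt^{c})$. This needs both that $Z_{1}$ be taken in $\qt\,{\cap}\,\sigmaup(\qt^{c})$ (the reductive normal form of Lemma\,\ref{l3.10}) and that $\sigmaup(Z)$ normalise $\sigmaup(\qt^{c})$ (membership in $\Nt(\qt,\qt^{c})$). Without the first, the $\qt\,{\cap}\,\sigmaup(\qt)$-part of a general $Z_{1}$ would feed a $\qt^{c}\,{\cap}\,\sigmaup(\qt)$-component into the bracket and the intersection argument would collapse. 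So the substance of the proof is really the invocation of Lemmas\,\ref{l3.10} and \ref{l3.8} in the reductive setting, after which the intersection identity from Lemma\,\ref{l3.33} finishes the dichotomy at once.
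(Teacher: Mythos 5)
Your proof is correct and follows essentially the same route as the paper's: the first part is the same Jacobi computation, and the second part uses the same chain of facts (Lemma~\ref{l3.10} to put the Levi-sequence in $\qt\,{\cap}\,\sigmaup(\qt^{c})$, the inclusion $[\sigmaup(Z),\sigmaup(\qt^{c})]\,{\subseteq}\,\sigmaup(\qt^{c})$, and the identity $(\qt{+}\sigmaup(\qt))\,{\cap}\,\sigmaup(\qt^{c})\,{=}\,\qt\,{\cap}\,\sigmaup(\qt^{c})$ from Lemma~\ref{l3.33}), merely packaged as a contradiction from $k\,{\geq}\,2$ rather than as the paper's direct deduction that $[\sigmaup(Z),Z_{1}]\,{\notin}\,\qt{+}\sigmaup(\qt)$, which forces Levi order one.
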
 
\begin{proof}
If $Z_{1},Z_{2}\,{\in}\,\Nt(\qt,\qt^{c})$, then 
\begin{equation*}\,
 [[Z_{1},Z_{2}],\qt^{c}]\subseteq [Z_{1},[Z_{2},\qt^{c}]]+[Z_{2},[Z_{1},\qt^{c}]]\subseteq
 [Z_{1},\qt^{c}]+[Z_{2},\qt^{c}]\subseteq\qt^{c}.
\end{equation*}
This shows that $\Nt(\qt,\qt^{c})$ is a Lie subalbebra of $\qt$. \par
If $Z\,{\in}\,\Nt(\qt,\qt^{c}){\setminus}\sigmaup(\qt)$ and  $\sigmaup(Z)$ has finite Levi order, then
we can find some $Z_{1}\,{\in}\,\qt\,{\cap}\,\sigmaup(\qt^{c})$ such that $[\sigmaup(Z),Z_{1}]\,{\notin}\,\qt$.
Since 
\begin{equation*}
[\sigmaup(Z),Z_{1}]\,{=}\,\sigmaup([Z,\sigmaup(Z_{1})])\,{\subseteq}\,\sigmaup([Z,\qt^{c}])\,
\subseteq\sigmaup(\qt^{c}),\end{equation*} 
we obtain that $[\sigmaup(Z),Z_{1}]\,{\in}\,\sigmaup(\qt^{c}){\setminus}\qt$ and hence, by \eqref{e3.23}, 
$[\sigmaup(Z),Z_{1}]\,{\notin}\,\qt\,{+}\,\sigmaup(\qt)$, showing that $\sigmaup(Z)$ has Levi order one.
\end{proof} 
\begin{exam}(see \cite{MMN21}) \label{ex3.37}
We construct examples of reductive $CR$-algebras by taking  as $\gt$ 
Abelian extensions of $\slt_{2}(\C)$.
The vector fields
$Z\,{=}\,\bar{z}\frac{\partial}{\partial{z}}$, 
$\bar{Z}\,{=}\,z\frac{\partial}{\partial\bar{z}}$,
$H\,{=}\,[Z,\bar{Z}]\,{=}\,\bar{z}\frac{\partial}{\partial{\bar{z}}}\,{-}\,z\frac{\partial}{\partial{z}}$,
generate a Lie algebra $\st$ 
isomorphic to $\slt_{2}(\C)$. The space $\sfV$ of homogeneous polynomials
of degree $2n$ is an $\st$-module for the action of the elements of $\st$ as partial differential operators
and the monomials $\sfv_{h}\,{=}\,z^{n+h}\bar{z}^{n-h}$, for $h\,{\in}\,\Z$ and $|h|\,{\leq}\,n$,
form a basis of $\sfV$ with  
\begin{equation*}
 H(\sfv_{h})=2h\,\sfv_{h},\;\; Z(\sfv_{h})=(n+h)\,\sfv_{h-1},\;\; \bar{Z}(\sfv_{h})=(n-h)\,\sfv_{h+1},\;\; -n\,{\leq}\,h\,{\leq}\,n.
\end{equation*}
We take $\gt\,{=}\,\st\,{\oplus}\,\sfV$, with 
\begin{equation*}
 [(W,\sfv),(W',\sfv')]=[W,W']+W(\sfv')-W'(\sfv),\;\;\forall W,W'\in\st,\;\forall \sfv,\sfv'\in\sfV.
\end{equation*}
We fix the conjugation $\sigmaup(W,\sfv)\,{=}\,(\bar{W},\bar{\sfv})$ and the complex Lie subalgebra
\begin{equation*}\qt\,{=}\,\langle\bar{Z},H,\sfv_{1},\sfv_{2},\hdots,\sfv_{n}\rangle\end{equation*}
of $\gt$, defining the 
$CR$-algebra $(\gs,\qt)$. Since $\sigmaup(\bar{Z})\,{=}\,Z$, $\sigmaup(H)\,{=}\,{-}H$,
$\sigmaup(\sfv_{h})\,{=}\,\sfv_{{-}h}$, we obtain that $\qt\,{\cap}\,\sigmaup(\qt)\,{=}\,\langle{H}\rangle$.
The subspace $\qt^{c}\,{=}\,\langle{Z},\sfv_{0},\sfv_{-1},\hdots,\sfv_{-n}\rangle$ is an invariant
complement. We have \begin{equation*}
\qt\,{\cap}\,\sigmaup(\qt^{c})\,{=}\,\langle\bar{Z},\sfv_{1},\hdots,\sfv_{n}\rangle,\;\,
\qt^{c}\,{\cap}\,\sigmaup(\qt)\,{=}\,\langle{Z},\sfv_{-1},\hdots,\sfv_{-n}\rangle,\,\;  \qt^{c}\,{\cap}\,\sigmaup(\qt^{c})\,{=}\,
\langle\sfv_{0}\rangle.\end{equation*}
The element $Z$ has Levi-order $1$, since $[Z,\sfv_{1}]\,{=}\,(n{+}1)\sfv_{0}$.  
For each $1{\leq}h{\leq}n$, the element $\sfv_{-h}$ has Levi order $h$, since 
\begin{equation*}
 [\sfv_{-h},\underset{h\;\text{times}}{\underbrace{\bar{Z},\hdots,\bar{Z}}}]=(-1)^{h}\,\frac{(n+h)!}{n!}\,\sfv_{0}.
\end{equation*}
Thus $(\gs,\qt)$ is a $CR$ algebra of $CR$-dimension $(n{+}1)$, $CR$-codimension $1$, fundamental
and Levi-nondegenerate of finite Levi-order $n$ (see also \cite{Marini_Medori_2025} for local equation associated to this $CR$ algebra).
\end{exam}

\section{Parabolic $CR$-algebras}\label{s4}
 Let $\gt$ be a complex Lie algebra. Its maximal solvable subalgebras are 
 all conjugate and are called its
 \emph{Borel subalgebras}.
 Lie subalgebras containing a Borel subalgebra of $\gt$ are called \emph{parabolic}.
 We will denote by $\Pt(\gt)$ \index{$\Pt(\gt)$} (resp. $\Bt(\gt)$)
 \index{$\Pt(\gt,\hg)$}
 the set of parabolic (resp. Borel)
 Lie subalgebras of $\gt$ and by $\Pt(\gt,\hg)$ (resp. $\Bt(\gt,\hg)$) \label{parbor}
 those containing a given 
 subalgebra $\hg$ of $\gt$.\par
If $\rt$ is the solvable radical and \, $\st\,{\simeq}\,\gt{/}\rt$\,  a semisimple Levi factor of $\gt$,
then Borel and parabolic subalgebras of $\gt$ are pull-backs of Borel and parabolic
subalgebras of $\st$ by the natural projection $\piup:\gt\,{\to}\,\st$. 
\par 
\begin{dfn}{We call \emph{parabolic} a $CR$ algebra $(\gs,\qt)$
whose lifted $CR$-structure $\qt$ is parabolic in $\gt$.}
\end{dfn}

We show in this section that,
for the special class
of \emph{parabolic $CR$-al\-ge\-bras}, 
by using convenient root systems, 
notions and results of \S\ref{s2} and \S\ref{s3}
may  be  translated into
properties  of
suitable systems of simple roots.  These can be encoded in additional marks on
the corresponding Dynkin diagrams.\par\smallskip
We denote by
$\Rad(\gt,\hg)$ the root system associated to a semisimple complex Lie algebra $\gt$ and
its Cartan subalgebra~$\hg$. To simplify notation, we will usually write $\Rad$ instead of $\Rad(\gt,\hg)$. 
Let $\hr$ be the real form of $\hg$ consisting 
of the elements on which
the roots of $\Rad$ are real valued, so that $\Rad\,{\subset}\,\hr^{*}$.  The dual of the restriction to
$\hr$ of the Killing form of $\gt$ endows 
$\hr^{*}$ by a natural Euclidean structure. We denote by 
\begin{equation*}
 (\xiup\,|\,\etaup),\;\; \|\,\xiup\,\|=\sqrt{(\xiup\,|\,\xiup)} ,\;\;\forall \xiup,\etaup\,{\in}\,\hr^{*}
\end{equation*}
its scalar product and norm and by 
\begin{equation*}
 \rtt{\etaup}(\xiup)=\xiup-\langle\xiup\,|\,\etaup\rangle,\;\;\text{with}\;\; \langle\xiup\,|\,\etaup\rangle=
 \dfrac{2(\xiup\,|\,\etaup)}{\|\,\etaup\,\|^{2}},\;\;\forall \xiup,\etaup\,{\in}\,\hr^{*},
\end{equation*}
the reflection in the hyperplane orthogonal to the vector $\etaup$. The pairing $\langle\,\cdot\,|\,\cdot\,\rangle$
is linear with respect to the first term, while, for $\etaup_{1},\etaup_{2}\,{\in}\,\hr^{*}$
and $\etaup_{1}\,{+}\,\etaup_{2}\,{\neq}\,0$, we get 
\begin{equation*}
 \langle\xiup\,|\,\etaup_{1}\,{+}\,\etaup_{2}\rangle=
 \frac{\|\etaup_{1}\|^{2}}{\|\etaup_{1}{+}\etaup_{2}\|^{2}}\langle\xiup\,|\,\etaup_{1}\rangle
 +\frac{\|\etaup_{2}\|^{2}}{\|\etaup_{1}{+}\etaup_{2}\|^{2}}\langle\xiup\,|\,\etaup_{2}\rangle
, \;\;\;\forall\xiup\in\hr^{*}.
\end{equation*}

\subsection{Closed, parabolic and horocyclic sets} \label{s1.2}
In this and the following two subsections we summarize preliminary 
results and notation for  regular
and parabolic sets of roots and  corresponding complex subalgebras. 
For a more detailed discussion we refer to  \cite{MMN23}.
\begin{dfn} \!\![cf. \cite[Ch.VI,\S{1.7}]{Bou68}]\;\;
A subset $\Qq$ in $\Rad$ is said to be:
\begin{itemize}
 \item \emph{closed}, \; if \; $\alphaup,\betaup\,{\in}\,\Qq\,,$ $\alphaup{+}\betaup\,{\in}\,\Rad$ $\Longrightarrow$
 $\alphaup{+}\betaup\,{\in}\,\Qq\,$;
 \item \emph{parabolic}, \; if $\Qq$ is closed and $\Qq\,{\cup}\,({-}\Qq)\,{=}\,\Rad$;
 \item \emph{horocyclic}, \; if $\Rad{\setminus}\Qq$ is parabolic;
 \item  \emph{generated by} 
 $\Eq$, \; if $\Eq\,{\subseteq}\,\Rad$ and $\Qq\,{=}\,\Z_{+}[\Eq]\,{\cap}\,\Rad$. 
\end{itemize}
\end{dfn}

\begin{ntz}
Let $\Qq$ be any subset of $\Rad$. We set 
\begin{equation}\label{e1.12}\begin{cases}
 \Qq^{r}{=}\{\alphaup\,{\in}\,\Qq\,{\mid}\,{-}\alphaup\,{\in}\,\Qq\},\\
 \Qq^{n}{=}\{\alphaup\,{\in}\,\Qq\,{\mid}\,{-}\alphaup\,{\notin}\,\Qq\},\\
  \Qc{=}\Rad{\setminus}\Qq\,,\\
  \Qq^{\vee}{=}\Qq^{r}\,{\cup}\,\Qc.
  \end{cases}
\end{equation}
\end{ntz}
\par
If $\Qq$ is parabolic, then $\Qq^{n}$ and $\Qc$ are closed and horocyclic,
$\Qc\,{=}\,{-}\Qq^{n}$ and $\Qq^{\vee}$ is 
\emph{parabolic}. Denote by $\Pcr(\Rad)$ the set of parabolic subsets of $\Rad$. 
\begin{ntz} The \textit{Weyl chambers} of $\Rad$ are the connected components of the open set of
\textit{regular} elements of $\hr$. Denote by $\Cd(\Rad)$ the set of Weyl chambers
of $\Rad$. Having fixed $C$ in $\Cd(\Rad)$, we get a decomposition of $\Rad$
into the disjoint union $\Rad^{+}(C)
\,{\cup}\,\Rad^{-}(C)$ of  roots which are \textit{positive} and \textit{negative}
for the lexicographic order defined by $C$:
 $\Rad^{+}(C)$ (resp. $\Rad^{-}(C)$)
 consists of the roots which are positive (resp. negative) on one
(and hence on all) elements of $C$.  
Roots of $\Rad^{+}(C)$ which are not sums of two $C$-positive roots are called \emph{simple}.
The simple positive roots in $\Rad^{+}(C)$ are a basis of $\hr^{*}$, that we denote by $\Bz(C)$.
Any root $\alphaup$ uniquely decomposes into a sum 
\begin{equation*}
 \alphaup={\sum}_{\betaup\in\Bz(C)}k_{\alphaup,\betaup}\betaup
\end{equation*}
in which the coefficients $k_{\alphaup,\betaup}$ are integers, all nonnegative if $\alphaup\,{\in}\,\Rad^{+}(C)$,  
all nonpositive if $\alphaup\,{\in}\,\Rad^{-}(C)$. We define 
\begin{equation*}
 \supp_{C}(\alphaup)=\big\{\betaup\,{\in}\,\Bz(C)\,{\mid}\, k_{\alphaup,\betaup}\,{\neq}\,0\big\}.
\end{equation*}
\end{ntz}
The sets 
$\Rad^{+}(C)$ and $\Rad^{-}(C)$ are minimal parabolic 
and maximal horocyclic with respect to inclusion.  
Any closed subset containing a parabolic one is parabolic. Proper parabolic sets are
intersections of $\Rad$ with \textit{closed} half-spaces of $\hr^{*}$, while
horocyclic sets are  intersections 
of $\Rad$ with  \textit{open} half-spaces in $\hr^{*}$ (cf. Prop.\ref{p2.5} below).
\subsection{$\hg$-regular subalgebras} \label{s1.3}
We consider in this subsection subalgebras of $\gt$
having  a nice description in terms of  roots (cf. \cite[Ch.6]{GOV94}). 
\begin{dfn} Let $\hg$ be a Cartan subalgebra of $\gt$. A Lie subalgebra $\qt$ of 
$\gt$ is called 
 \emph{$\hg$-regular} if 
\begin{equation}
 \label{reg}
 [\hg,\qt]\subseteq\qt.
\end{equation}\par
\emph{Regular} subalgebras $\qt$ of $\gt$ are those which are $\hg$-regular for some
Cartan subalgebra $\hg$ of $\gt$.
\end{dfn}
\begin{prop}
 Let $\Qq$ be a closed system of roots and $\td$ a linear subspace of $\hg$ containing 
$\{H_{\alphaup}\,{\mid}\,\alphaup\,{\in}\,\Qq^{r}\}.$ Then 
\begin{equation}\label{e1.19}
 \qt\,{=}\,\qt^{r}\,{\oplus}\,\qt^{n},\;\;\text{with}\;\;\; \qt^{r}\,{=}\,\td\,{\oplus}\,{\sum}_{\alphaup\,{\in}\,\Qq^{r}}\gt^{\alphaup},\;
 \;\qt^{n}\,{=}\,{\sum}_{\alphaup\in\Qq^{n}}\gt^{\alphaup}
\end{equation}
is a Levi-Chevalley decomposition of an $\hg$-regular subalgebra $\qt,$ with nilradical
$\qt^{n}$ and uniquely determined 
$\hg$-regular reductive Levi factor~$\qt^{r}$.\par
Vice versa, every $\hg$-regular subalgebra $\qt$ of $\gt$ is described by \eqref{e1.19}
with 
\begin{equation}
 \td\,{=}\,\hg\,{\cap}\,\qt,\quad\Qq\,{=}\,\{\alphaup\,{\in}\,\Rad\,{\mid}\,\gt^{\alphaup}\,{\subseteq}\,\qt\}.
\end{equation}
\par
 Every Lie subalgebra  containing a Cartan subalgebra of $\gt$ is regular.
\qed
\end{prop}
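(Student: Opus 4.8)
The plan is to work entirely inside the root-space decomposition $\gt\,{=}\,\hg\,{\oplus}\,\bigoplus_{\alphaup\in\Rad}\gt^{\alphaup}$, using the bracket relations $[\hg,\gt^{\alphaup}]\,{=}\,\gt^{\alphaup}$, $[\gt^{\alphaup},\gt^{\betaup}]\,{\subseteq}\,\gt^{\alphaup+\betaup}$ when $\alphaup{+}\betaup\,{\in}\,\Rad$, $[\gt^{\alphaup},\gt^{-\alphaup}]\,{=}\,\C H_{\alphaup}$, and $[\gt^{\alphaup},\gt^{\betaup}]\,{=}\,0$ when $\alphaup{+}\betaup\,{\notin}\,\Rad\,{\cup}\,\{0\}$; everything then reduces to bookkeeping of root sums governed by the closedness of $\Qq$. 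First I would verify that the $\qt$ of \eqref{e1.19} is a subalgebra: the only products that could leave $\qt$ are $[\gt^{\alphaup},\gt^{\betaup}]$ with $\alphaup,\betaup\,{\in}\,\Qq$, and these land in $\gt^{\alphaup+\betaup}$ with $\alphaup{+}\betaup\,{\in}\,\Qq$ by closedness, or in $\C H_{\alphaup}\,{\subseteq}\,\td$ when $\betaup\,{=}\,{-}\alphaup$ (which forces $\alphaup\,{\in}\,\Qr$), using the hypothesis $\{H_{\alphaup}\,{\mid}\,\alphaup\,{\in}\,\Qr\}\,{\subseteq}\,\td$; regularity $[\hg,\qt]\,{\subseteq}\,\qt$ is immediate. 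I would then record two closedness lemmas, each a one-line root-sum argument: $\Qr$ is symmetric and closed (so $\qt^{r}$ is a subalgebra), while $\Qn$ is closed with $\Qn\,{\cap}\,({-}\Qn)\,{=}\,\varnothing$ and satisfies $\alphaup\,{\in}\,\Qq,\ \betaup\,{\in}\,\Qn,\ \alphaup{+}\betaup\,{\in}\,\Rad\,{\Rightarrow}\,\alphaup{+}\betaup\,{\in}\,\Qn$ (if it were in $\Qr$ then $-(\alphaup{+}\betaup)\,{\in}\,\Qq$, and adding $\alphaup$ would put $-\betaup\,{\in}\,\Qq$, contradicting $\betaup\,{\in}\,\Qn$). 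The second statement gives at once that $\qt^{n}$ is an ideal of $\qt$.

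Next I would identify $\qt^{r}$ as a reductive Levi factor. Writing $\hg_{\Qr}\,{=}\,\mathrm{span}_{\C}\{H_{\alphaup}\,{\mid}\,\alphaup\,{\in}\,\Qr\}$, the Killing form yields $\hg\,{=}\,\hg_{\Qr}\,{\oplus}\,\hg_{\Qr}^{\perp}$ with $\hg_{\Qr}^{\perp}\,{=}\,\{H\,{\mid}\,\alphaup(H)\,{=}\,0\ \forall\alphaup\,{\in}\,\Qr\}$; since $\hg_{\Qr}\,{\subseteq}\,\td$ I may split $\td\,{=}\,\hg_{\Qr}\,{\oplus}\,\zt$ with $\zt\,{=}\,\td\,{\cap}\,\hg_{\Qr}^{\perp}$. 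Then $\zt$ is central in $\qt^{r}$ and $\st\,{=}\,\hg_{\Qr}\,{\oplus}\,\sum_{\alphaup\in\Qr}\gt^{\alphaup}$ is the semisimple subalgebra attached to the closed symmetric subsystem $\Qr$, whence $\qt^{r}\,{=}\,\zt\,{\oplus}\,\st$ is reductive and equals $[\qt^{r},\qt^{r}]\,{+}\,\zt$.

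The main obstacle is the nilpotency of $\qt^{n}$. Here I would use that a closed root subset disjoint from its opposite lies in an open half-space, i.e. there is $H_{0}\,{\in}\,\hr$ with $\alphaup(H_{0})\,{>}\,0$ for every $\alphaup\,{\in}\,\Qn$ (a convexity fact about closed subsets, for which I would cite \cite{Bou68,MMN23}). Grading $\qt^{n}$ by the eigenvalues of $\ad(H_{0})$ and setting $\deltaup\,{=}\,\min_{\alphaup\in\Qn}\alphaup(H_{0})\,{>}\,0$, any iterated bracket of length $m$ lands in the eigenspace of eigenvalue $\,{\geq}\,m\deltaup$; since these eigenvalues are bounded, brackets of large length vanish and $\qt^{n}$ is nilpotent. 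Because $\qt{/}\qt^{n}\,{\cong}\,\qt^{r}$ is reductive, $\qt^{n}$ is the nilpotent radical; and since any $\hg$-regular reductive complement must contain all $\gt^{\alphaup}$, $\alphaup\,{\in}\,\Qr$, together with a toral part, the $\hg$-gradation pins it down as the unique one, namely $\qt^{r}$.

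Finally, the converse and the last assertion are short. For $\hg$-regular $\qt$, invariance under the semisimple family $\ad(\hg)$ splits $\qt\,{=}\,(\qt\,{\cap}\,\hg)\,{\oplus}\,\bigoplus_{\alphaup}(\qt\,{\cap}\,\gt^{\alphaup})$, and one-dimensionality of root spaces forces $\qt\,{\cap}\,\gt^{\alphaup}\,{\in}\,\{0,\gt^{\alphaup}\}$; so with $\td\,{=}\,\hg\,{\cap}\,\qt$ and $\Qq\,{=}\,\{\alphaup\,{\mid}\,\gt^{\alphaup}\,{\subseteq}\,\qt\}$ one recovers \eqref{e1.19}. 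Closedness of $\Qq$ and the inclusion $\{H_{\alphaup}\,{\mid}\,\alphaup\,{\in}\,\Qr\}\,{\subseteq}\,\td$ follow exactly as in the direct part from $\qt$ being a subalgebra. And if $\qt\,{\supseteq}\,\hg$, then $[\hg,\qt]\,{\subseteq}\,[\qt,\qt]\,{\subseteq}\,\qt$, so $\qt$ is $\hg$-regular; hence every subalgebra containing a Cartan subalgebra is regular.
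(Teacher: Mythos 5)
The paper states this proposition without proof (it carries an immediate end-of-proof mark and defers to the cited reference \cite{GOV94}), so there is no in-text argument to compare against; your proof is the standard one and is essentially correct. The closedness bookkeeping for $\Qr$ and $\Qn$, the splitting $\qt^{r}\,{=}\,\zt\,{\oplus}\,\st$ with $\st$ the semisimple algebra attached to the symmetric closed subsystem $\Qr$, the half-space plus $\ad(H_{0})$-grading argument for nilpotency of $\qt^{n}$, the uniqueness of the $\hg$-regular Levi factor via $\ad(\hg)$-invariance, and the converse are all sound. The one step that needs tightening is the inference ``$\qt/\qt^{n}$ is reductive, hence $\qt^{n}$ is the nilpotent radical'': a reductive algebra may have a nonzero centre, which is itself a nilpotent ideal, so reductivity of the quotient does not force maximality. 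Concretely, if $\td$ contains an $H$ with $\alphaup(H)\,{=}\,0$ for every $\alphaup\,{\in}\,\Qq$ (extreme case: $\Qq\,{=}\,\emptyset$, $\td\,{=}\,\hg$), then $\C H\,{\oplus}\,\qt^{n}$ is a nilpotent ideal strictly containing $\qt^{n}$, so $\qt^{n}$ is \emph{not} the largest nilpotent ideal in general. What makes the statement true is the Levi--Chevalley convention that the nilradical is the set of $\gt$-nilpotent elements of $\rad(\qt)\,{=}\,\zt\,{\oplus}\,\qt^{n}$; since $\zt\,{\subseteq}\,\hg$ consists of $\gt$-semisimple elements and, by the filtration you already introduced via $H_{0}$, $\ad(H{+}n)$ has the same spectrum as $\ad(H)$ for $H\,{\in}\,\zt$ and $n\,{\in}\,\qt^{n}$, that set is exactly $\qt^{n}$. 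Adding one sentence to this effect closes the only soft spot.
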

\begin{rmk} Let $\qt\,{\in}\,\Pt(\gt)$.
Note that $\qt^{n}$, being its nilradical, is independent of the choice of $\hg$.
If $\hg$ has been fixed and 
 $\qt$ is $\hg$-regular, then the subalgebras $\gt^{r}$, $\gt^{c}$ and 
 $\qt^{\vee}{\coloneqq}\,\qt^{r}\,{\oplus}\,\qt^{c}$ are  determined by the data of both
 $\qt$ and $\hg$. \par
The set $\Bt(\qt)$ of Borel sublagebras of $\qt$ coincides with the set
 of Borel subalgebras of $\gt$ which are contained in $\qt$.
\end{rmk}
\begin{prop}
 Parabolic subalgebras of $\gt$ are regular. \par
 The $\hg$-regular Borel subalgebras of $\gt$ are those of the form 
\begin{equation*}
 \bt_{C}=
 \hg\oplus{\sum}_{\alphaup\,{\in}\,\Rad^{+}(C)}\gt^{\alphaup}\quad\text{for some $C\,{\in}\,\Cd(\Rad)$}.
\end{equation*} \par
The $\hg$-regular parabolic subalgebras of $\gt$ are those of the form 
\begin{equation*}
 \qt_{\Qq}\,{=}\,\hg\,{\oplus}\,{\sum}_{\alphaup\in\Qq}\gt^{\alphaup}
\end{equation*}
for a parabolic subset $\Qq$ in $\Rad$ and vice versa, if {$\hg$ is a Cartan
subalgebra of $\gt$ contained in $\qt\,{\in}\,\Pt(\gt)$, }
then 
$\Qq\,{=}\,\{\alphaup\,{\in}\,\Rad\,{\mid}\,\gt^{\alphaup}\,{\subset}\,\gt\}$ is a parabolic
subset of~$\Rad$. \qed
\end{prop}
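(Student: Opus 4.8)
The plan is to derive all three assertions from the description \eqref{e1.19} of $\hg$-regular subalgebras, combined with two standard structural facts about a semisimple $\gt$ (see e.g. \cite{Bou68,GOV94}): every Borel subalgebra is self-normalizing and contains a Cartan subalgebra, and it is maximal among the solvable subalgebras of $\gt$. First I would dispose of regularity: a parabolic $\qt$ contains, by definition, a Borel subalgebra $\bt$, and $\bt$ contains a Cartan subalgebra $\hg$; hence $\hg\,{\subseteq}\,\qt$ and $\qt$ is $\hg$-regular. For the remaining statements fix $\hg$ and let $\qt$ be $\hg$-regular. Since Borel and parabolic subalgebras both equal their own normalizer in $\gt$, and $\hg$ normalizes $\qt$ (this is just $[\hg,\qt]\,{\subseteq}\,\qt$), we get $\hg\,{\subseteq}\,\qt$. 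Thus in the decomposition \eqref{e1.19} the torus is the full Cartan, $\td\,{=}\,\hg$, and $\qt\,{=}\,\qt_{\Qq}\,{=}\,\hg\,{\oplus}\,{\sum}_{\alphaup\in\Qq}\gt^{\alphaup}$ for the closed set $\Qq\,{=}\,\{\alphaup\,{\in}\,\Rad\,{\mid}\,\gt^{\alphaup}\,{\subseteq}\,\qt\}$.

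Next I would treat the Borel case. If $\bt$ is an $\hg$-regular Borel and some $\alphaup$ satisfied $\pm\alphaup\,{\in}\,\Qq$, then $\gt^{\alphaup}$, $\gt^{-\alphaup}$ and $[\gt^{\alphaup},\gt^{-\alphaup}]$ would span a subalgebra isomorphic to $\slt_{2}(\C)$ inside $\bt$, contradicting solvability; hence $\Qq\,{\cap}\,({-}\Qq)\,{=}\,\emptyset$. A closed set of roots with this property is contained in some $\Rad^{+}(C)$, so $\bt\,{\subseteq}\,\bt_{C}$; as $\bt_{C}$ is solvable and $\bt$ is maximal solvable, $\bt\,{=}\,\bt_{C}$ and $\Qq\,{=}\,\Rad^{+}(C)$. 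Conversely each $\bt_{C}$ is solvable, $\hg$-regular and maximal solvable, hence a Borel, which identifies the $\hg$-regular Borel subalgebras with the $\bt_{C}$, $C\,{\in}\,\Cd(\Rad)$.

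Finally, for the parabolic case I would show that the closed set $\Qq$ attached to an $\hg$-regular parabolic $\qt$ satisfies $\Qq\,{\cup}\,({-}\Qq)\,{=}\,\Rad$. The idea is to exhibit an $\hg$-regular Borel inside $\qt$: the Borel $\bt$ furnished by parabolicity contains a Cartan $\hg'$, and since $\hg,\hg'$ are both Cartan subalgebras of $\qt$ they are conjugate under an inner automorphism of $\qt$; this automorphism extends to an inner automorphism of $\gt$, which carries Borel subalgebras of $\gt$ to Borel subalgebras of $\gt$, so the image of $\bt$ is an $\hg$-regular Borel $\bt_{C}\,{\subseteq}\,\qt$. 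By the previous paragraph $\Rad^{+}(C)\,{\subseteq}\,\Qq$, whence $\Qq$ is parabolic. For the converse, a parabolic root set $\Qq$ contains some positive system $\Rad^{+}(C)$, so $\qt_{\Qq}\,{\supseteq}\,\bt_{C}$ is parabolic as a subalgebra.

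The delicate point, which I expect to be the main obstacle, is exactly the passage from \textquotedblleft$\qt$ contains a Borel\textquotedblright\ to \textquotedblleft$\qt$ contains an \emph{$\hg$-regular} Borel\textquotedblright: it rests on the conjugacy of Cartan subalgebras inside $\qt$ together with the fact that the conjugating inner automorphism of $\qt$ preserves the class of Borel subalgebras of the ambient $\gt$. As an alternative one may argue combinatorially, choosing a positive system of the root subsystem $\Qq^{r}$ of the reductive Levi factor $\qt^{r}$ and verifying that its union with $\Qq^{n}$ is a positive system of $\Rad$; but this too ultimately requires separating $\Qq^{n}$ from $\Qq^{r}$ by a hyperplane in $\hr^{*}$, i.e. the very parabolicity one is after, so the conjugacy argument is the cleaner route.
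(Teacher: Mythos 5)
Your proof is correct. There is nothing in the paper to compare it against: the proposition is closed with a tombstone and no argument is given, the whole subsection being presented as a summary of standard material on regular subalgebras (cf.\ \cite[Ch.~6]{GOV94} and Bourbaki). Your reconstruction is the standard one and all the steps check out: self-normalization of Borel and parabolic subalgebras forces $\hg\,{\subseteq}\,\qt$ for an $\hg$-regular $\qt$, so \eqref{e1.19} reduces everything to the closed root set $\Qq$; the $\slt_{2}$-obstruction plus Bourbaki's criterion (a closed $\Qq$ with $\Qq\,{\cap}\,({-}\Qq)\,{=}\,\emptyset$ lies in some $\Rad^{+}(C)$) settles the Borel case; and producing an $\hg$-regular Borel inside an $\hg$-regular parabolic settles the parabolic case. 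The step you flag as delicate is indeed the only one needing care, and your resolution is sound: the two Cartan subalgebras of $\gt$ contained in $\qt$ are conjugate by $\Ad(q)$ for $q$ in the parabolic subgroup with Lie algebra $\qt$ (essentially \cite[Ch.~VIII, \S{3}, Prop.~10]{Bou82}, which the paper itself invokes elsewhere for the same purpose), and such an automorphism is inner on $\gt$, preserves $\qt$, and permutes Borel subalgebras of $\gt$. Phrasing it through the group, as you do, rather than through exponentials of $\ad_{\qt}$-nilpotent elements, correctly sidesteps the question of whether $\ad_{\qt}$-nilpotency implies $\ad_{\gt}$-nilpotency.
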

\subsection{Description of parabolic subsets and subalgebras} \label{s4.3}
\begin{prop}
 The following are equivalent: 
\begin{enumerate}
 \item $\Qq$ is parabolic;
 \item $\Qq$ is closed and contains $\Rad^{+}(C)$ for some 
$C\,{\in}\,\Cd(\Rad)$.\qed
\end{enumerate}
\end{prop}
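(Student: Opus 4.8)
The implication $(2)\Rightarrow(1)$ is immediate from facts already recorded in the excerpt: $\Rad^{+}(C)$ is a (minimal) parabolic set, and any closed subset of $\Rad$ containing a parabolic one is parabolic; since $\Qq$ is closed and $\Rad^{+}(C)\subseteq\Qq$, it is parabolic. So the whole content lies in the converse $(1)\Rightarrow(2)$, and the plan is to manufacture the chamber $C$ out of a suitable linear functional on $\hr$.

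Assume $\Qq$ is parabolic and decompose $\Qq=\Qr\sqcup\Qn$, recalling that $\Qn$ is closed and that $\Qc=\Rad\setminus\Qq=-\Qn$. First I would prove the key separation statement: \emph{there exists $t\in\hr$ with $\alpha(t)>0$ for every $\alpha\in\Qn$}. By the finite-dimensional alternative (Gordan's theorem), such a $t$ fails to exist only if $0$ is a nontrivial nonnegative combination of elements of $\Qn$; hence it suffices to exclude any relation $\sum_{\alpha\in\Qn}c_{\alpha}\alpha=0$ with $c_{\alpha}\geq 0$ not all zero. Since roots lie in the rational root lattice, I may take the $c_{\alpha}$ to be nonnegative integers. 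Establishing this pointed‑cone property of a closed set containing no opposite pair is where I expect the \textbf{main obstacle} to sit.

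To dispose of it, among all such nontrivial integer relations I would choose one minimizing the potential $\sum_{\alpha}c_{\alpha}\|\alpha\|^{2}$. Expanding $0=\|\sum_{\alpha}c_{\alpha}\alpha\|^{2}=\sum_{\alpha,\betaup}c_{\alpha}c_{\betaup}(\alpha\,|\,\betaup)$ and using positivity of the diagonal terms, some pair $\alpha\neq\betaup$ in the support satisfies $(\alpha\,|\,\betaup)<0$. As $\alpha,\betaup\in\Qn$ and $\Qn\cap(-\Qn)=\varnothing$ we have $\betaup\neq-\alpha$, so $\gammaup\coloneqq\alpha+\betaup\in\Rad$, and closedness of $\Qn$ forces $\gammaup\in\Qn$. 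Replacing the summand $c\,\alpha+c\,\betaup$ by $c\,\gammaup$, where $c=\min(c_{\alpha},c_{\betaup})\geq 1$, produces a new nontrivial integer relation whose potential has dropped by $-2c(\alpha\,|\,\betaup)>0$, contradicting minimality. (Alternatively one may cite Bourbaki, Ch.~VI, \S1.7.) This yields the desired $t$.

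Finally I would convert $t$ into a chamber. Since $\Qc=-\Qn$, we get $\alpha(t)<0$ for all $\alpha\in\Qc$; as $\Qc$ is finite, the condition ``$\alpha(\per)<0$ for all $\alpha\in\Qc$'' is open around $t$, while the regular elements of $\hr$ are dense, being the complement of the finitely many hyperplanes $\ker\alpha$. I can therefore choose a regular $t'$ in this open neighbourhood, and let $C\in\Cd(\Rad)$ be the chamber containing it. Then $\Rad^{+}(C)=\{\alpha\in\Rad\mid\alpha(t')>0\}$ is disjoint from $\Qc$, whence $\Rad^{+}(C)\subseteq\Qq$. Together with the closedness of $\Qq$ recorded in~(1), this is precisely statement~(2), completing the argument.
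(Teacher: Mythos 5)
Your argument is correct. Note that the paper offers no proof of this proposition at all: it is stated with a \qed, implicitly deferring to the treatment of parabolic sets in Bourbaki, Ch.~VI, \S{1.7} (the reference given at the head of the subsection), where the implication $(1)\Rightarrow(2)$ is proved combinatorially — one chooses a chamber $C$ maximizing $\#(\Rad^{+}(C)\cap\Qq)$ and shows, by reflecting in a simple root $\alphaup\in\Bz(C){\setminus}\Qq$ (whose opposite then lies in $\Qq$), that maximality forces $\Bz(C)\subseteq\Qq$ and hence $\Rad^{+}(C)\subseteq\Qq$ by closedness. Your route is genuinely different: you first prove that $\Qn$ spans a pointed cone (no nontrivial nonnegative relation), using Gordan's alternative together with a descent on integer relations that exploits the closedness of $\Qn$ and the fact that two non-opposite roots with negative inner product sum to a root, and you then separate $\Qn$ from $\Qc={-}\Qn$ by a regular element. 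This buys, as a by-product, the half-space description of proper parabolic sets recorded just before the statement, at the price of importing convex-separation machinery. Two details are worth making explicit: the minimum of your potential $\sum c_{\alphaup}\|\alphaup\|^{2}$ is attained because squared root lengths take finitely many commensurable values, so the potential ranges over a discrete subset of $[0,\infty)$ (alternatively, minimizing $\sum c_{\alphaup}$ works just as well, since your substitution lowers it by $c$); and the relation produced by the substitution is still nontrivial because the coefficient of $\gammaup=\alphaup{+}\betaup$ becomes at least $\min(c_{\alphaup},c_{\betaup})\geq 1$. With these remarks your proof is complete and correct.
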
 
\begin{dfn}\label{d4.4}
 We call  \emph{admissible for $\Qq$}  a $C\,{\in}\,\Cd(\Rad)$ with
 $\Rad^{+}(C)\,{\subseteq}\,\Qq$. Let
\begin{equation}\label{e4.5}
 \Cd(\Rad,\Qq)=\{C\in\Cd(\Rad)\mid \Rad^{+}(C)\,{\subseteq}\,\Qq\}
\end{equation}
be the collection  of admissible Weyl chambers for $\Qq$. For $C\,{\in}\,\Cd(\Rad)$, 
{we}
set
\begin{equation}\label{e4.6}
 \Phi_{C}=\Qq^{n}\cap\Bz(C),\quad \Phi^{\vee}_{C}=\Bz(C){\setminus}\Phi_{C}.
\end{equation}
\end{dfn} 
\begin{lem} \label{l2.5}
Let $\Qq\,{\in}\Pcr(\Rad)$ and $C\,{\in}\,\Cd(\Rad)$.  
The following are equivalent 
\begin{equation*}\vspace{-18pt}
\begin{aligned}
( 1)  &\qquad C\in\Cd(\Rad,\Qq),\\
(2)  &\qquad \Qq=\Qq_{\;\Phi_{C}}\,{\coloneqq}\,\Rad^{+}(C)\cup
 \{\alphaup\,{\in}\,\Rad^{-}(C)\,{\mid}\,\supp_{C}(\alphaup)\,{\cap}\,
 \Phi_{C}\,{=}\,\emptyset\},\\ 
(3)  &\qquad \Qq=\Rad^{+}(C)\cup\{\alphaup\,{\in}\,\Rad^{-}(C)\mid \supp_{C}(\alphaup)\,{\subseteq}\,
 \Phi_{C}^{\vee}\},\\
 (4) &\qquad \Qq^{n}\subseteq\Rad^{+}(C).
\end{aligned}\end{equation*}\qed
\end{lem}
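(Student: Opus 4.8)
The plan is to establish the cycle $(2)\Leftrightarrow(3)$, $(2)\Rightarrow(1)\Leftrightarrow(4)$ and $(1)\Rightarrow(3)$, with the final implication carrying all the real content. Three of these are immediate. The equivalence $(2)\Leftrightarrow(3)$ follows at once from $\Phi^{\vee}_{C}=\Bz(C)\setminus\Phi_{C}$ in \eqref{e4.6}, since $\supp_{C}(\alphaup)\cap\Phi_{C}=\emptyset$ means precisely $\supp_{C}(\alphaup)\subseteq\Phi^{\vee}_{C}$; and $(2)\Rightarrow(1)$ holds because the set on the right of $(2)$ visibly contains $\Rad^{+}(C)$. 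For $(1)\Leftrightarrow(4)$ I would use that $\Qq$ parabolic gives the disjoint decomposition $\Rad=\Qq^{r}\sqcup\Qq^{n}\sqcup(-\Qq^{n})$ with $\Qc=-\Qq^{n}$: if $\Rad^{+}(C)\subseteq\Qq$ and $\alphaup\in\Qq^{n}$, then $-\alphaup\notin\Qq$ forces $\alphaup\in\Rad^{+}(C)$, whence $(4)$; conversely $(4)$ gives $\Qc=-\Qq^{n}\subseteq\Rad^{-}(C)$, hence $\Rad^{+}(C)\subseteq\Qq$.

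The substance is $(1)\Rightarrow(3)$: assuming $\Rad^{+}(C)\subseteq\Qq$, I must show $\Qq=\Qq_{\Phi_{C}}$. The positive roots are already accounted for, so everything reduces to identifying $\Qq\cap\Rad^{-}(C)$. I would first observe that any $\alphaup\in\Qq\cap\Rad^{-}(C)$ in fact lies in $\Qq^{r}$, since $-\alphaup\in\Rad^{+}(C)\subseteq\Qq$; and that, because $\Rad^{+}(C)\subseteq\Qq$, the definition of $\Phi_{C}$ rewrites as $\Phi^{\vee}_{C}=\{\betaup\in\Bz(C)\mid-\betaup\in\Qq\}$. Thus $(3)$ is equivalent to the clean assertion $\Qq^{r}=\Rad\cap\Z[\Phi^{\vee}_{C}]$, i.e.\ $\supp_{C}(\alphaup)\subseteq\Phi^{\vee}_{C}$ for every $\alphaup\in\Qq^{r}$.

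One inclusion is routine. If $\supp_{C}(\alphaup)\subseteq\Phi^{\vee}_{C}$, I write the positive one of $\pm\alphaup$ as an ordered sum of simple roots drawn from its support whose partial sums are all roots (a standard fact; cf.\ \cite[Ch.~VI]{Bou68}); since each such simple root $\betaup$ and its negative lie in $\Qq$, closedness of $\Qq$ propagates along the chain to give $\pm\alphaup\in\Qq$, so $\alphaup\in\Qq^{r}$. The reverse inclusion is the main obstacle, and I would prove it by induction on the height of $\alphaup\in\Qq^{r}\cap\Rad^{+}(C)$. A simple $\alphaup$ lies in $\Bz(C)\cap\Qq^{r}=\Phi^{\vee}_{C}$. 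If $\alphaup$ is not simple, choose a simple $\betaup$ with $\alphaup-\betaup\in\Rad^{+}(C)$. The key is then to push $\alphaup-\betaup$ and $\betaup$ back into the reductive part using closedness in reverse: from $-\alphaup\in\Qq$ and $\betaup\in\Rad^{+}(C)\subseteq\Qq$ one gets $\betaup-\alphaup=\betaup+(-\alphaup)\in\Qq$, so $\alphaup-\betaup\in\Qq^{r}$; and from $-\alphaup\in\Qq$ and $\alphaup-\betaup\in\Qq$ one gets $-\betaup=(-\alphaup)+(\alphaup-\betaup)\in\Qq$, so $\betaup\in\Phi^{\vee}_{C}$. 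The inductive hypothesis applied to $\alphaup-\betaup$ then yields $\supp_{C}(\alphaup)\subseteq\supp_{C}(\alphaup-\betaup)\cup\{\betaup\}\subseteq\Phi^{\vee}_{C}$.

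I expect the forward inclusion $\Qq^{r}\subseteq\Z[\Phi^{\vee}_{C}]$ to be the only delicate point: naively, closedness only lets one combine two roots already in $\Qq$ into a third, so the content is to run this the other way, peeling off a simple root $\betaup$ from $\alphaup$ while certifying, via the two sums $\betaup+(-\alphaup)$ and $(-\alphaup)+(\alphaup-\betaup)$, that both $\alphaup-\betaup$ and $\betaup$ remain in $\Qq^{r}$. Everything else is bookkeeping with the partition of $\Rad$ and with supports; once the support characterisation of $\Qq^{r}$ is in hand, $(3)$ (and hence $(2)$) follows immediately, closing all the equivalences.
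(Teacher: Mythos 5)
Your proof is correct. Note that the paper states Lemma~\ref{l2.5} with no proof at all (it is asserted as a standard fact about parabolic subsets, in the spirit of \cite[Ch.~VI,\S 1.7]{Bou68}), so there is nothing in the text to compare your argument against; what you have written is a complete and valid verification. The easy links $(2)\Leftrightarrow(3)$, $(2)\Rightarrow(1)$, $(1)\Leftrightarrow(4)$ are handled correctly, and the substantive implication $(1)\Rightarrow(3)$ is sound: your identification $\Phi_C^{\vee}=\Bz(C)\cap\Qq^{r}$ under $(1)$, the use of the partial-sum decomposition of a positive root for the inclusion $\Rad\cap\Z[\Phi_C^{\vee}]\subseteq\Qq^{r}$, and the height induction for the reverse inclusion --- certifying $\alphaup-\betaup\in\Qq^{r}$ and $\betaup\in\Phi_C^{\vee}$ via the two closedness applications $\betaup+({-}\alphaup)$ and $({-}\alphaup)+(\alphaup-\betaup)$ --- together close the cycle of equivalences.
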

\begin{rmk}\label{r1.7}
Let $C\,{\in}\,\Cd(\Rad)$ and $\Phi_{C}$ be any subset of $\Bz(C)$. The set $\Qq_{\;\Phi_{C}}$
defined by $(2)$ of Lemma\,\ref{l2.5} is parabolic. 
We will use the notation 
\begin{equation} \label{e2.5}\left\{
\begin{array}{l  l}
\Qq_{\;\Phi_{C}}^{r}\,{=}\,\{\alphaup\,{\in}\,\Rad\,{\mid}\,\supp_{C}(\alphaup)\,{\cap}\,\Phi_{C}\,{=}\,\emptyset\},
&\quad  \qt_{\Phi_{C}}^{r}\,{=}\,\hg\,{\oplus}\,{\sum}_{\alphaup\in\Qq^{r}_{\;\Phi_{C}}}\gt^{\alphaup},\\
\Qq_{\;\Phi_{C}}^{n}\,{=}\,\{\alphaup\,{\in}\,\Rad^{+}(C)\,{\mid}\,\supp_{C}(\alphaup)\,{\cap}\,\Phi_{C}\,{\neq}\,\emptyset\}, &\quad  \qt_{\Phi_{C}}^{n}\,{=}\,{\sum}_{\alphaup\in\Qq^{n}_{\;\Phi_{C}}}\gt^{\alphaup},\\
\qt_{\Phi_{C}}\,{=}\,\qt^{r}_{\Phi_{C}}\,{\oplus}\,\qt^{n}_{\Phi_{C}}, & \quad
\qt_{\Phi_{C}}\,{=}\,\qt^{r}_{\Phi_{C}}\,{\oplus}\,\qt^{n}_{\Phi_{C}}, 
\end{array}\right.
\end{equation}
and call $\qt_{\Phi_{C}}$ its 
corresponding \textit{parabolic Lie subalgebra}. Note that 
\begin{equation}
 \qt_{\Phi_{C}}=\qt_{\Phi_{C}}^{r}\oplus\qt_{\Phi_{C}}^{n},
 \;\;\text{with}\;\; \qt^{r}_{\Phi_{C}}=\hg\oplus{\sum}_{\alphaup\in\Qq^{r}_{\;\Phi_{C}}}
 \gt^{\alphaup},\;\; \qt^{n}_{\Phi_{C}}={\sum}_{\alphaup\in\Qq^{n}_{\;\Phi_{C}}}
 \gt^{\alphaup}
\end{equation}
is its Levi-Chevalley decomposition, with \emph{nilradical} $\qt^{n}_{\Phi_{C}}$, 
{and}
$\hg$-regular 
\emph{reductive Levi factor}
{$\qt_{\Phi_{C}}^{r}$}. 
{We get $\Qq^{r}_{\Phi_{C}}\,{=}\,\Z[\Phi^{\vee}_{C}]\,{\cap}\,\Rad$.}
\par
Every parabolic
subalgebra $\qt$ of $\gt$ containing $\hg$ can be
described in this way, by choosing a $C\,{\in}\,\Cd(\Rad)$
with $\bt_{C}\,{\subseteq}\,\qt$
and   $\Phi_{C}\,{=}\,\{\alphaup\,{\in}\,\Bz(C)\,{\mid}\,\gt^{\alphaup}\,{\subseteq}\,\qt^{n}\}$.
\end{rmk}
\begin{lem} \label{l4.7}
Let $C_{1},C_{2}$ be admissible Weyl chambers for $\Qq\,{\in}\Pcr(\Rad)$.
 Then we can find a sequence $(\betaup_{1},\hdots,\betaup_{p})$ in $\Qq^{r}$ such that 
\begin{equation*}
 \rtt{\betaup_{1}}\circ\cdots\circ\rtt{\betaup_{p}}(\Rad^{+}(C_{1}))=\Rad^{+}(C_{2}).
\end{equation*}
where  $\rtt{\beta_i}$ {is}
the root reflection with respect to $\beta_i$.
\end{lem}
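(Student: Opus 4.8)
The plan is to reduce everything to the Weyl group of the Levi subsystem $\Qq^{r}$, the heart of the matter being that this subgroup stabilises $\Qq^{n}$ as a set. First I would record the structure of admissible chambers. Since $\Qq$ is parabolic we have the disjoint decomposition $\Rad=\Qq^{n}\sqcup\Qq^{r}\sqcup(-\Qq^{n})$, using that $\Qc={-}\Qq^{n}$. By Lemma\,\ref{l2.5}, a chamber $C$ is admissible for $\Qq$ exactly when $\Qq^{n}\subseteq\Rad^{+}(C)$; combined with $\Rad^{+}(C)\subseteq\Qq=\Qq^{n}\sqcup\Qq^{r}$ this gives $\Rad^{+}(C)=\Qq^{n}\sqcup\Sigma_{C}$, where $\Sigma_{C}\coloneqq\Rad^{+}(C)\cap\Qq^{r}$. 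Picking $t\in C$, so that $\Rad^{+}(C)=\{\alphaup\mid\alphaup(t){>}0\}$, the set $\Sigma_{C}=\{\alphaup\in\Qq^{r}\mid\alphaup(t){>}0\}$ is a positive system of the (closed, symmetric) root subsystem $\Qq^{r}$, because $t$ is regular and hence nonzero on every root of $\Qq^{r}$. Thus $\Rad^{+}(C_{1})$ and $\Rad^{+}(C_{2})$ share the common block $\Qq^{n}$ and differ only through their positive systems $\Sigma_{1}\coloneqq\Sigma_{C_{1}}$ and $\Sigma_{2}\coloneqq\Sigma_{C_{2}}$ of $\Qq^{r}$.

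The key step I would prove is that each reflection $\rtt{\betaup}$ with $\betaup\in\Qq^{r}$ sends $\Qq^{n}$ into $\Qq^{n}$. For $\gammaup\in\Qq^{n}$, the element $\rtt{\betaup}(\gammaup)$ lies on the $\betaup$-string through $\gammaup$ and is reached from $\gammaup$ by successively adding $\pm\betaup$; since $\betaup,{-}\betaup\in\Qq^{r}\subseteq\Qq$ and $\gammaup\in\Qq$, applying closedness of $\Qq$ step by step along the string forces $\rtt{\betaup}(\gammaup)\in\Qq$. To upgrade this to membership in $\Qq^{n}$ I would argue by contradiction: if $\rtt{\betaup}(\gammaup)\in\Qq^{r}$, then also ${-}\rtt{\betaup}(\gammaup)=\rtt{\betaup}({-}\gammaup)\in\Qq$, and the same closedness argument run along the $\betaup$-string through ${-}\rtt{\betaup}(\gammaup)$ would yield ${-}\gammaup=\rtt{\betaup}({-}\rtt{\betaup}(\gammaup))\in\Qq$, contradicting $\gammaup\in\Qq^{n}$. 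Hence $\rtt{\betaup}$ restricts to an injection of the finite set $\Qq^{n}$ into itself, so a bijection; consequently the group $W(\Qq^{r})\coloneqq\langle\rtt{\betaup}\mid\betaup\in\Qq^{r}\rangle$ acts on $\Qq^{n}$ by permutations and stabilises it setwise.

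Finally I would invoke the standard fact that the Weyl group of the root system $\Qq^{r}$ acts transitively on its positive systems, so there exists $w=\rtt{\betaup_{1}}\circ\cdots\circ\rtt{\betaup_{p}}$ with $\betaup_{i}\in\Qq^{r}$ and $w(\Sigma_{1})=\Sigma_{2}$. Since $w$ fixes $\Qq^{n}$ setwise by the previous step, I conclude
\[
w(\Rad^{+}(C_{1}))=w(\Qq^{n})\sqcup w(\Sigma_{1})=\Qq^{n}\sqcup\Sigma_{2}=\Rad^{+}(C_{2}),
\]
which is exactly the claimed identity $\rtt{\betaup_{1}}\circ\cdots\circ\rtt{\betaup_{p}}(\Rad^{+}(C_{1}))=\Rad^{+}(C_{2})$. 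The main obstacle is the invariance of $\Qq^{n}$ under $W(\Qq^{r})$; the remaining ingredients are the parabolic decomposition recorded via Lemma\,\ref{l2.5} and the transitivity of a Weyl group on positive systems, both of which are routine.
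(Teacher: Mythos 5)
Your proof is correct, but it takes a genuinely different route from the paper's. The paper argues by induction on $\#(\Rad^{+}(C_{2}){\setminus}\Rad^{+}(C_{1}))$: it picks a simple root $\betaup\,{\in}\,\Bz(C_{1})$ with $\betaup\,{\notin}\,\Rad^{+}(C_{2})$, observes that such a $\betaup$ automatically lies in $\Qq^{r}$ (both $\pm\betaup$ lie in $\Qq$ because both positive systems are contained in $\Qq$), and uses that the simple reflection $\rtt{\betaup}$ flips only that one root, producing an admissible chamber strictly closer to $C_{2}$. You instead decompose $\Rad^{+}(C)\,{=}\,\Qq^{n}\sqcup\Sigma_{C}$ with $\Sigma_{C}$ a positive system of the subsystem $\Qq^{r}$, prove that $W(\Qq^{r})$ stabilises $\Qq^{n}$ setwise via a root-string/closedness argument, and then invoke transitivity of a Weyl group on its positive systems. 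Both arguments are complete; your key lemma (the $W(\Qq^{r})$-invariance of $\Qq^{n}$) is checked carefully, including the upgrade from $\rtt{\betaup}(\gammaup)\,{\in}\,\Qq$ to $\rtt{\betaup}(\gammaup)\,{\in}\,\Qq^{n}$. What the paper's induction buys, and yours does not directly, is explicit control of the intermediate chambers: each step lands in $\Cd(\Rad,\Qq)$, which is exploited later (e.g.\ in Corollary\,\ref{c2.4} and in Proposition\,\ref{p2.57}, where the reflections are applied one at a time to track lowest roots). What your argument buys is a cleaner conceptual picture — the whole statement reduces to the standard transitivity of $W(\Qq^{r})$ on positive systems of $\Qq^{r}$ — together with the independently useful fact that $\Qq^{n}$ is $W(\Qq^{r})$-stable, which the paper never isolates but implicitly relies on elsewhere.
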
 

\begin{proof}
 We argue by recurrence on the number $m$ 
 of roots in $\Rad^{+}(C_{2}){\setminus}\Rad^{+}(C_{1})$.
 If $m$ is zero, then $C_{1}\,{=}\,C_{2}$ and there is nothing to prove.
Assume now that 
$m\,{>}\,0$ and that the statement holds true when $\#(\Rad^{+}(C_{2}){\setminus}\Rad^{+}(C_{1}))
 {<}m$.
 Since $C_{1}\,{=}\,C_{2}$ when $\Bz(C_{1})\,{\subset}\,\Rad^{+}(C_{2})$, the basis 
$\Bz(C_{1})$ is not contained in $\Rad^{+}(C_{2})$. A root 
 $\betaup\,{\in}\,\Bz(C_{1})$ not belonging to $\Rad^{+}(C_{2})$ belongs to $\Qq^{r}$
and
 \begin{equation*}
 \rtt{\betaup}(\Rad^{+}(C_{1}))\,{=}\,(\Rad^{+}(C_{1}){\setminus}\{\betaup\})\,{\cup}\,\{{-}\betaup\}
 \end{equation*}
 is the set $\Rad^{+}(C')$, for an admissible 
 Weyl chamber $C'$ for $\Qq$, for which 
 \begin{equation*}\#(\Rad^{+}(C_{2}){\setminus}\Rad^{+}(C'))
 \,{=}\,m{-}1.\end{equation*}
 By the recursive assumption there are $\betaup_{1},\hdots,\betaup_{\ell}$ in $\Qq^{r}$ such that 
\begin{equation*}\rtt{\betaup_{1}}\circ\cdots\circ\rtt{\betaup_{\ell}}\circ\rtt{\betaup}(\Rad^{+}(C_{1}))=
 \rtt{\betaup_{1}}\circ\cdots\circ\rtt{\betaup_{\ell}}(\Rad^{+}(C'))=\Rad^{+}(C_{2}).
\end{equation*}
The proof is complete.
 \end{proof}
 \begin{ntz}
 {We denote by $\Af(\Rad)$ the automorphism group of $\Rad$, consisting of
 the linear isometries of $\hr^{*}$ leaving $\Rad$ invariant and by
 $\Wf(\Rad)$ the Weyl group of $\Rad$, which is the normal
 subgroup of $\Af(\Rad)$ generated by the reflections
 $\rtt{\alphaup}$ with respect to the roots $\alphaup$ of $\Rad$.}
 \end{ntz}
\begin{cor}\label{c2.4} The subgroup 
 {$\Wf(\Rad,\Qq)$ of $\Wf(\Rad)$} generated by the 
 {reflections} 
 $\rtt{\betaup}$
 with $\betaup\,{\in}\,\Qq^{r}$, acts simply transitively on the set
 $\Cd(\Rad,\Qq)$ of admissible Weyl chambers for~$\Qq$.\qed
\end{cor}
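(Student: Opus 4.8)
The plan is to separate the two assertions, since transitivity is essentially already in hand and the real content lies in freeness. Given two admissible chambers $C_1,C_2\,{\in}\,\Cd(\Rad,\Qq)$, Lemma~\ref{l4.7} produces $\betaup_1,\dots,\betaup_p\,{\in}\,\Qr$ with $\rtt{\betaup_1}\circ\cdots\circ\rtt{\betaup_p}(\Rad^{+}(C_1))\,{=}\,\Rad^{+}(C_2)$, so the element $w\,{=}\,\rtt{\betaup_1}\circ\cdots\circ\rtt{\betaup_p}$ of $\Wf(\Rad,\Qq)$ carries $C_1$ to $C_2$. Hence the whole burden is to prove the action \emph{free}, i.e.\ that such a $w$ is unique, equivalently that only the identity stabilises an admissible chamber.

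For freeness I would recognise $\Wf(\Rad,\Qq)$ as the Weyl group of the root subsystem $\Qr$ and match $\Cd(\Rad,\Qq)$ with the Weyl chambers of $\Qr$. Since $\Qr$ is closed and symmetric ($-\Qr{=}\Qr$), it is a root subsystem spanning a subspace $\Vtt\,{\subseteq}\,\hr^{*}$, and each $\rtt{\betaup}$ with $\betaup\,{\in}\,\Qr$ restricts on $\Vtt$ to the reflection of $\Qr$ while fixing $\Vtt^{\perp}$ pointwise; thus $\Wf(\Rad,\Qq)$ is precisely the Weyl group of $\Qr$. For an admissible $C$ one has both $\Rad^{+}(C)\,{\subseteq}\,\Qq$ (Definition~\ref{d4.4}) and $\Qn\,{\subseteq}\,\Rad^{+}(C)$ (Lemma~\ref{l2.5}(4)), so that
\begin{equation*}
 \Rad^{+}(C)=\Qn\sqcup\big(\Rad^{+}(C)\cap\Qr\big),
\end{equation*}
and $\Rad^{+}(C)\cap\Qr$ is a system of positive roots for $\Qr$. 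The assignment $C\mapsto\Rad^{+}(C)\cap\Qr$ is then injective—as $\Qn$ is fixed, $\Rad^{+}(C)$, hence $C$, is recovered—and equivariant for the two $\Wf(\Rad,\Qq)$-actions. By the classical simple transitivity of a Weyl group on its chambers, the Weyl group of $\Qr$ acts freely on the positive systems of $\Qr$; therefore any $w$ fixing $C$ fixes $\Rad^{+}(C)\cap\Qr$ and must be the identity. Combined with the transitivity above, this gives simple transitivity.

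The step I expect to require the most care—and which makes the equivariant correspondence meaningful—is checking that $\Wf(\Rad,\Qq)$ genuinely preserves $\Cd(\Rad,\Qq)$. For this I would verify that each generator $\rtt{\betaup}$, $\betaup\,{\in}\,\Qr$, satisfies $\rtt{\betaup}(\Qq)\,{=}\,\Qq$: since $\Qq$ is closed and $\betaup,-\betaup\,{\in}\,\Qq$, the entire $\betaup$-string through any $\alphaup\,{\in}\,\Qq$ remains in $\Qq$, and as $\rtt{\betaup}(\alphaup)\,{=}\,\alphaup-\langle\alphaup\,|\,\betaup\rangle\betaup$ lies on that string one gets $\rtt{\betaup}(\Qq)\,{\subseteq}\,\Qq$, hence equality by involutivity. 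Because $\rtt{\betaup}$ commutes with $\alphaup\mapsto-\alphaup$, it then preserves $\Qr$ and $\Qn$ separately; from $\rtt{\betaup}(\Qn)\,{=}\,\Qn$ and Lemma~\ref{l2.5}(4), $\rtt{\betaup}$ sends admissible chambers to admissible chambers. Finally, nonemptiness of $\Cd(\Rad,\Qq)$, needed to launch the argument, is guaranteed by the parabolicity of $\Qq$.
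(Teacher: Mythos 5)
Your argument is correct and follows the route the paper intends: transitivity is exactly Lemma~\ref{l4.7}, and freeness is the standard identification of $\Wf(\Rad,\Qq)$ with the Weyl group of the subsystem $\Qq^{r}$ acting simply transitively on its own chambers, via the injective, equivariant map $C\mapsto\Rad^{+}(C)\cap\Qq^{r}$ (using $\Rad^{+}(C)=\Qq^{n}\sqcup(\Rad^{+}(C)\cap\Qq^{r})$ for admissible $C$). The paper leaves the freeness half implicit behind the \qed, and your write-up, including the verification that reflections in roots of $\Qq^{r}$ preserve $\Qq$ and hence admissibility, correctly supplies those details.
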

\par\smallskip

A {$\Z$-gradation}
of $\Rad$ (or, equivalently, of $\Z[\Rad]$) is any group homomorphism 
in $\Hom(\Z[\Rad],\Z)$.
\begin{prop}\label{p2.5}
For each parabolic subset $\Qq$ in $\Rad$ there is a unique
 $\Z$-gradation $\chiup_{\Qq}$
 of $\Rad$ such that 
\begin{equation} \label{e4.5} \begin{cases}
  \Qq^{r}{=}\{\alphaup\,{\in}\,\Rad\,{\mid} \chiup_{\Qq}(\alphaup){=}0\},\\
  \Qq^{n}{=}\{\alphaup\,{\in}\,\Rad\,{\mid}
 \chiup_{\Qq}(\alphaup){>}0\},\\
 \Qc{=}\{\alphaup\,{\in}\,\Rad\,{\mid}\chiup_{\Qq}(\alphaup){<}0\},\\
\big\{\alphaup\,{\in}\,\Rad\,{\mid}\,\chiup_{\Qq}(\alphaup)\,{=}\,{1}\big\}\;\;\text{generates}\;\;\Qq^{n}.
\end{cases}
\end{equation}
\end{prop}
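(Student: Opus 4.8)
The plan is to realize $\chiup_{\Qq}$ as the ``degree in the crossed simple roots'' attached to a suitable admissible basis, and then to read off the four conditions from the combinatorics of $\supp_{C}$. First I would fix an admissible Weyl chamber $C\in\Cd(\Rad,\Qq)$; such a $C$ exists because a parabolic $\Qq$ contains $\Rad^{+}(C)$ for some $C\in\Cd(\Rad)$. Together with $C$ I take its basis of simple roots $\Bz(C)$ and the subset $\Phi_{C}=\Qq^{n}\cap\Bz(C)$ of \eqref{e4.6}. Since $\Bz(C)$ is a $\Z$-basis of the root lattice $\Z[\Rad]$, I may define $\chiup_{\Qq}\in\Hom(\Z[\Rad],\Z)$ by prescribing its values on simple roots,
\begin{equation*}
 \chiup_{\Qq}(\betaup)=
 \begin{cases} 1, & \betaup\in\Phi_{C},\\ 0, & \betaup\in\Phi_{C}^{\vee},\end{cases}
\end{equation*}
and extending $\Z$-linearly, so that $\chiup_{\Qq}(\alphaup)=\sum_{\betaup\in\Phi_{C}}k_{\alphaup,\betaup}$ for every root $\alphaup=\sum_{\betaup\in\Bz(C)}k_{\alphaup,\betaup}\betaup$.

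Next I would verify the first three displayed conditions directly from this formula, using Lemma\,\ref{l2.5} and Remark\,\ref{r1.7}. For a root $\alphaup$ the integers $k_{\alphaup,\betaup}$ are all $\geq 0$ if $\alphaup\in\Rad^{+}(C)$ and all $\leq 0$ if $\alphaup\in\Rad^{-}(C)$, and $\supp_{C}(\alphaup)=\supp_{C}(-\alphaup)$; hence $\chiup_{\Qq}(\alphaup)=0$ exactly when $\supp_{C}(\alphaup)\cap\Phi_{C}=\emptyset$, which by Remark\,\ref{r1.7} characterises $\Qq^{r}=\Qq^{r}_{\Phi_{C}}$. When $\chiup_{\Qq}(\alphaup)>0$ the common sign of the coefficients forces $\alphaup\in\Rad^{+}(C)$ with $\supp_{C}(\alphaup)\cap\Phi_{C}\neq\emptyset$, i.e.\ $\alphaup\in\Qq^{n}_{\Phi_{C}}=\Qq^{n}$; symmetrically $\chiup_{\Qq}(\alphaup)<0$ characterises $\Qc=-\Qq^{n}$. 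This gives the three sign lines.

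For the fourth condition I must show that $\Eq=\{\alphaup\in\Rad\mid\chiup_{\Qq}(\alphaup)=1\}$ generates $\Qq^{n}$, i.e.\ $\Qq^{n}=\Z_{+}[\Eq]\cap\Rad$. The inclusion $\Z_{+}[\Eq]\cap\Rad\subseteq\Qq^{n}$ is immediate, since every degree-one root is positive, so a nonzero element of $\Z_{+}[\Eq]$ has $\chiup_{\Qq}\geq 1$ and, if it is a root, lies in the set just identified with $\Qq^{n}$. The reverse inclusion is the main obstacle. I would prove it by induction on the degree $d=\chiup_{\Qq}(\alphaup)$ of a root $\alphaup\in\Qq^{n}$: the case $d=1$ is trivial, and for $d\geq 2$ it suffices to produce a root $\gammaup$ with $\chiup_{\Qq}(\gammaup)=1$ and $\alphaup-\gammaup\in\Rad$, for then $\alphaup-\gammaup\in\Qq^{n}$ has degree $d-1$ and the inductive hypothesis applies. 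Writing $\gt_{j}=\bigoplus_{\chiup_{\Qq}(\alphaup)=j}\gt^{\alphaup}$, this amounts to the standard fact that for a parabolic grading the nilradical $\qt^{n}=\bigoplus_{j\geq 1}\gt_{j}$ is generated by its degree-one part, i.e.\ $[\gt_{1},\gt_{j}]=\gt_{j+1}$ for all $j\geq 1$; I would either invoke this from the theory recalled in \cite{MMN23}, or derive the required decomposition from a chain $\alphaup=\betaup_{1}+\cdots+\betaup_{h}$ of simple roots whose partial sums are all roots, tracking the degree (which increases by $0$ or $1$ along the chain). The delicate point in the purely root-theoretic route is the absorption of degree-zero simple roots into a single degree-one summand, which is handled by a root-string argument.

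Finally I would establish uniqueness, which at the same time shows that $\chiup_{\Qq}$ is independent of the chosen chamber $C$. If $\chiup\in\Hom(\Z[\Rad],\Z)$ satisfies all four conditions, then for $\betaup\in\Phi_{C}^{\vee}\subseteq\Qq^{r}$ the vanishing condition gives $\chiup(\betaup)=0$, while for a simple root $\betaup\in\Phi_{C}\subseteq\Qq^{n}$ the generation condition forces $\betaup\in\Z_{+}[\Eq]$; since $\betaup$ is simple and every element of $\Eq$ is a positive root, the only such expression is $\betaup$ itself, whence $\chiup(\betaup)=1$. Thus $\chiup$ agrees with $\chiup_{\Qq}$ on the $\Z$-basis $\Bz(C)$, hence on all of $\Z[\Rad]$, proving uniqueness and completing the argument.
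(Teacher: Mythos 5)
Your construction is the same as the paper's: fix an admissible chamber $C$, set $\chiup_{\Qq}$ equal to $1$ on $\Phi_{C}$ and $0$ on the remaining simple roots, and read off the sign conditions from the support characterisation of $\Qq^{r}$ and $\Qq^{n}$. The two places where you diverge are both to your credit. For the generation condition the paper simply writes ``by $(*)$ we obtain \eqref{e4.5}'', whereas you isolate it as the genuinely nontrivial point (that the nilradical of a parabolic grading is generated in degree one, i.e. every root of degree $d\geq 2$ admits a degree-one summand whose complement is again a root) and indicate the standard root-string argument; your sketch is at least as complete as the paper's silence, and the fact is indeed standard. For uniqueness the paper appeals to Lemma\,\ref{l4.7}, which really establishes that the construction is independent of the choice of admissible chamber; your argument instead pins down the value of any gradation satisfying \eqref{e4.5} on each simple root (zero on $\Phi_{C}^{\vee}$ by the first condition, one on $\Phi_{C}$ because a simple root in $\Z_{+}[\Eq]$ with $\Eq$ consisting of positive roots must itself lie in $\Eq$), which gives uniqueness among all homomorphisms satisfying the four conditions and is the cleaner and more conclusive route.
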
 
\begin{proof} Fix a Weyl chamber $C$  in $\Cd(\Rad,\Qq)$  and let
$\Phi_{C}\,{=}\,\Bz(C)\,{\cap}\,\Qq^{n}$. Then 
\begin{equation*} \tag{$*$}
\begin{cases}
 \Qq^{r}\,{=}\,\{\alphaup\,{\in}\,\Rad\,{\mid}\,\supp_{C}(\alphaup)\,{\cap}\,\Phi_{C}\,{=}\,\emptyset\},\\
 \Qq^{n}\,{=}\,\{\alphaup\,{\in}\,\Rad^{+}(C)\,{\mid}\,\supp_{C}(\alphaup)\,{\cap}\,\Phi_{C}\,{\neq}\,\emptyset\}.
\end{cases}
\end{equation*}
Let us define $\chiup_{\Qq}\,{\in}\,\Hom(\Z[\Rad],\Z)$ on the basis $\Bz(C)$ by setting 
\begin{equation*}\chiup_{\Qq}(\alphaup)= 
\begin{cases}
 1, & \forall \alphaup\,{\in}\,\Phi_{C},\\
 0, & \forall \alphaup\,{\in}\,\Bz(C)\,{\cap}\,\Qq^{r}.
\end{cases}
\end{equation*}
By $(*)$ we obtain \eqref{e4.5}. Uniqueness follows from Lemma\,\ref{l4.7}.\end{proof}
\begin{cor}\label{c2.10}
 Let $\Qq\,{\in}\Pcr(\Rad)$. The degree $\chiup_{\Qq}(\deltaup_{C})$ of the largest root 
 $\deltaup_{C}$ in $\Rad^{+}(C)$ is the same for all $C\,{\in}\,\Cd(\Rad,\Qq)$.
\end{cor}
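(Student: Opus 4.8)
The plan is to combine the simple transitivity of Corollary~\ref{c2.4} with the fact that the gradation $\chiup_{\Qq}$ vanishes on $\Qq^{r}$. First I would record that $\chiup_{\Qq}$, viewed as an element of $\Hom(\Z[\Rad],\Z)$, is invariant under the subgroup $\Wf(\Rad,\Qq)$. Indeed, for $\betaup\,{\in}\,\Qq^{r}$ Proposition~\ref{p2.5} gives $\chiup_{\Qq}(\betaup)\,{=}\,0$, and since $\rtt{\betaup}(\alphaup)\,{=}\,\alphaup-\langle\alphaup\,{\mid}\,\betaup\rangle\,\betaup$ with $\langle\alphaup\,{\mid}\,\betaup\rangle\,{\in}\,\Z$, linearity yields
\[
 \chiup_{\Qq}(\rtt{\betaup}(\alphaup))=\chiup_{\Qq}(\alphaup)-\langle\alphaup\,{\mid}\,\betaup\rangle\,\chiup_{\Qq}(\betaup)=\chiup_{\Qq}(\alphaup)
\]
for every $\alphaup\,{\in}\,\Rad$. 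As $\Wf(\Rad,\Qq)$ is generated by the $\rtt{\betaup}$ with $\betaup\,{\in}\,\Qq^{r}$, this gives $\chiup_{\Qq}\,{\circ}\,w\,{=}\,\chiup_{\Qq}$ for all $w\,{\in}\,\Wf(\Rad,\Qq)$.

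Next, given two admissible chambers $C_{1},C_{2}\,{\in}\,\Cd(\Rad,\Qq)$, Corollary~\ref{c2.4} furnishes a $w\,{\in}\,\Wf(\Rad,\Qq)$ with $w(\Rad^{+}(C_{1}))\,{=}\,\Rad^{+}(C_{2})$; concretely $w$ is a composition of reflections $\rtt{\betaup}$, $\betaup\,{\in}\,\Qq^{r}$, as produced in Lemma~\ref{l4.7}. I would then argue that such a $w$ carries $\deltaup_{C_{1}}$ to $\deltaup_{C_{2}}$. Since $w$ is an additive bijection of $\Rad$ mapping $\Rad^{+}(C_{1})$ onto $\Rad^{+}(C_{2})$, a positive root is a sum of two positive roots before applying $w$ if and only if its image is; hence $w$ sends $\Bz(C_{1})$ onto $\Bz(C_{2})$ and transports the dominance order attached to $C_{1}$ onto that attached to $C_{2}$. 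Being an order isomorphism between the two root posets, $w$ must send the maximal element $\deltaup_{C_{1}}$ of $\Rad^{+}(C_{1})$ to the maximal element $\deltaup_{C_{2}}$ of $\Rad^{+}(C_{2})$.

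Combining the two steps gives $\chiup_{\Qq}(\deltaup_{C_{2}})\,{=}\,\chiup_{\Qq}(w(\deltaup_{C_{1}}))\,{=}\,\chiup_{\Qq}(\deltaup_{C_{1}})$, which is the assertion. The only delicate point is the second step, namely the verification that the Weyl-group element relating the two admissible chambers actually sends highest root to highest root; this is exactly the observation that $w$ preserves the unique maximal element of each root poset. When $\Rad$ is reducible and $\deltaup_{C}$ is read off componentwise, the same reasoning applies within each irreducible component (with $w$ permuting the components compatibly), so no additional work is needed.
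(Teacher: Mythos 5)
Your proof is correct and takes essentially the same route as the paper's: both invoke Lemma~\ref{l4.7} (equivalently Corollary~\ref{c2.4}) to write one admissible chamber as the image of the other under a composition of reflections $\rtt{\betaup}$ with $\betaup\,{\in}\,\Qq^{r}$, and both conclude from the vanishing of $\chiup_{\Qq}$ on $\Qq^{r}$ that the degree of the highest root is unchanged. The only difference is that you explicitly justify the step that such a Weyl-group element carries highest root to highest root (via the order isomorphism of the two root posets), a point the paper's proof asserts without comment.
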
 
\begin{proof}
 If $C_{0}, C_{1}\,{\in}\,\Cd(\Rad,\Qq)$, by Lemma\,\ref{l4.7}
 we can find  $\betaup_{1},\hdots,\betaup_{p}\,{\in}\,\Qq^{r}$ such that 
 $\Rad^{+}(C_{1})\,{=}\,\rtt{\betaup_{1}}{\circ}\cdots{\circ}\rtt{\betaup_{p}}(\Rad^{+}(C_{0}))$.
 Then 
\begin{equation*}
 \deltaup_{C_{1}}=\rtt{\betaup_{1}}{\circ}\cdots{\circ}\rtt{\betaup_{p}}(\deltaup_{C_{0}})=
 \deltaup_{C_{0}}+c_{1}\betaup_{1}+\cdots+c_{p}\betaup_{p},
\end{equation*}
with $c_{i}\,{\in}\,\Z$ for $1{\leq}i{\leq}p$,
 implying that $\chiup_{\Qq}(\deltaup_{C_{1}})\,{=}\,\chiup_{\Qq}(\deltaup_{C_{0}})$, because
 $\chiup_{\Qq}(\betaup_{i})\,{=}\,0$ for $1{\leq}i{\leq}p$. 
\end{proof}
\begin{lem}
 Let $\Qq$ be any parabolic subset of $\Rad$. Set 
\begin{equation}
 \rhoup_{\Qq}=\tfrac{1}{2}{\sum}_{\alphaup\in\Qq}\alphaup=\tfrac{1}{2}{\sum}_{\alphaup\in\Qq^{n}}\alphaup.
\end{equation}
If $C$ is any admissible Weyl chamber for $\Qq$, then 
\begin{equation*}
 \chiup_{\Qq}(\alphaup)=\langle\rhoup_{\Qq}\,|\,\alphaup\rangle,\;\;\forall \alphaup\,{\in}\,\Bz(C).
\end{equation*}
\end{lem}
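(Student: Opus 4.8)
The plan is to reduce the identity to the values of both sides on the basis $\Bz(C)$, which by Proposition~\ref{p2.5} already pins down $\chiup_{\Qq}$, and then to exploit the action of the simple reflections $\rtt{\alphaup}$ on $\Qn$. First, the second equality $\tfrac12\sum_{\alphaup\in\Qq}\alphaup=\tfrac12\sum_{\alphaup\in\Qn}\alphaup$ is immediate: the reductive part satisfies $\Qr=-\Qr$, so $\sum_{\alphaup\in\Qr}\alphaup=0$ and only the $\Qn$-summands survive. Since $\Qn\subseteq\Rad^{+}(C)$ by Lemma~\ref{l2.5}(4), the vector $\rhoup_{\Qq}$ is the half-sum of a set of $C$-positive roots. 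Recall from Proposition~\ref{p2.5} that, for the admissible $C$, the grading $\chiup_{\Qq}$ is characterised by $\chiup_{\Qq}\,{=}\,1$ on $\Phi_{C}=\Qn\cap\Bz(C)$ and $\chiup_{\Qq}\,{=}\,0$ on $\Phi^{\vee}_{C}=\Bz(C)\cap\Qr$. As both $\chiup_{\Qq}$ and $\alphaup\mapsto\langle\rhoup_{\Qq}\,|\,\alphaup\rangle$ are to be compared on $\Bz(C)=\Phi_{C}\sqcup\Phi^{\vee}_{C}$, I treat the two kinds of simple roots separately.

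For $\alphaup\in\Phi^{\vee}_{C}$, that is $\alphaup\in\Qr$, the reflection $\rtt{\alphaup}$ lies in $\Wf(\Rad,\Qq)$ and preserves the grading: for every root $\betaup$ one has $\chiup_{\Qq}(\rtt{\alphaup}(\betaup))=\chiup_{\Qq}(\betaup)-\langle\betaup\,|\,\alphaup\rangle\,\chiup_{\Qq}(\alphaup)=\chiup_{\Qq}(\betaup)$, since $\chiup_{\Qq}(\alphaup)=0$. Hence $\rtt{\alphaup}$ maps $\Qn$ onto itself, so $\rtt{\alphaup}(\rhoup_{\Qq})=\rhoup_{\Qq}$; comparing with $\rtt{\alphaup}(\rhoup_{\Qq})=\rhoup_{\Qq}-\langle\rhoup_{\Qq}\,|\,\alphaup\rangle\,\alphaup$ yields $\langle\rhoup_{\Qq}\,|\,\alphaup\rangle=0=\chiup_{\Qq}(\alphaup)$, as required.

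The substantive case is $\alphaup\in\Phi_{C}$, where the target value is $\chiup_{\Qq}(\alphaup)=1$. Here I would try to mimic the classical identity $\langle\rhoup\,|\,\alphaup\rangle=1$ for the half-sum $\rhoup$ of all $C$-positive roots, valid because $\rtt{\alphaup}$ permutes $\Rad^{+}(C)\setminus\{\alphaup\}$ and sends $\alphaup$ to $-\alphaup$. Writing $\Rad^{+}(C)=\Qn\sqcup(\Qr\cap\Rad^{+}(C))$ gives $\rhoup=\rhoup_{\Qq}+\rhoup_{L}$, with $\rhoup_{L}=\tfrac12\sum_{\betaup\in\Qr\cap\Rad^{+}(C)}\betaup$ the half-sum of the positive roots of the reductive Levi factor $\qt^{r}_{\Phi_{C}}$, whence $\langle\rhoup_{\Qq}\,|\,\alphaup\rangle=1-\langle\rhoup_{L}\,|\,\alphaup\rangle$.

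I expect the main obstacle to be precisely the control of the Levi contribution $\langle\rhoup_{L}\,|\,\alphaup\rangle$: the clean identity $\langle\rhoup_{\Qq}\,|\,\alphaup\rangle=1$ amounts to the set equality $\rtt{\alphaup}\bigl(\Qn\setminus\{\alphaup\}\bigr)=\Qn\setminus\{\alphaup\}$, and this is \emph{not} automatic, because $\alphaup$ crosses a node of the nilradical and $\rtt{\alphaup}$ can reflect a root of $\Qn$ into the Levi. Establishing it therefore needs a genuine argument — isolating, via the support description \eqref{e2.5} of $\Qn$, the roots whose $\alphaup$-string leaves $\Qn$ and pairing them off — rather than the one-line permutation count that suffices for the full $\rhoup$. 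This is where I would concentrate the work, and before fixing the constant I would carefully recheck the normalisation of $\chiup_{\Qq}$ against $\langle\rhoup_{\Qq}\,|\,\cdot\,\rangle$, paying attention to the behaviour on short and long simple roots where the coroot pairing $\langle\,\cdot\,|\,\cdot\,\rangle$ differs from $(\,\cdot\,|\,\cdot\,)$.
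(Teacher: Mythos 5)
Your handling of the simple roots in $\Phi_{C}^{\vee}$ is correct and is precisely the paper's argument: since $\chiup_{\Qq}(\betaup)=0$, the reflection $\rtt{\betaup}$ preserves the grading, hence permutes $\Qq^{n}$, hence fixes $\rhoup_{\Qq}$, and $\rtt{\betaup}(\rhoup_{\Qq})=\rhoup_{\Qq}-\langle\rhoup_{\Qq}\,|\,\betaup\rangle\betaup$ forces $\langle\rhoup_{\Qq}\,|\,\betaup\rangle=0$. The decomposition $\rhoup_{\Qq}=\rhoup-\rhoup_{L}$ is also fine. But the substantive case $\alphaup\in\Phi_{C}$ is never carried out: you reduce it to the set equality $\rtt{\alphaup}\bigl(\Qq^{n}{\setminus}\{\alphaup\}\bigr)=\Qq^{n}{\setminus}\{\alphaup\}$, observe that it is not automatic, and stop. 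As written, the proposal proves only half the lemma, so there is a genuine gap.

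Your hesitation, however, is exactly right, and you should not try to close that gap, because the set equality is false and with it the asserted identity. The paper's own proof states, without justification, that $\rtt{\betaup}(\Qq^{n})=(\Qq^{n}{\setminus}\{\betaup\})\cup\{{-}\betaup\}$ for $\betaup\in\Phi_{C}$; this already fails in type $\textsc{A}_{2}$. Take $\Bz(C)=\{\alphaup_{1},\alphaup_{2}\}$ and $\Phi_{C}=\{\alphaup_{1}\}$, so that $\Qq^{n}=\{\alphaup_{1},\,\alphaup_{1}{+}\alphaup_{2}\}$: then $\rtt{\alphaup_{1}}(\alphaup_{1}{+}\alphaup_{2})=\alphaup_{2}\in\Qq^{r}$, and $\rhoup_{\Qq}=\alphaup_{1}{+}\tfrac{1}{2}\alphaup_{2}$ gives $\langle\rhoup_{\Qq}\,|\,\alphaup_{1}\rangle=2-\tfrac{1}{2}=\tfrac{3}{2}\neq 1=\chiup_{\Qq}(\alphaup_{1})$. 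A non--simply-laced instance: in $\textsc{B}_{2}$ with $\Phi_{C}=\{\e_{2}\}$ one gets $\Qq^{n}=\{\e_{2},\e_{1},\e_{1}{+}\e_{2}\}$, $\rhoup_{\Qq}=\e_{1}{+}\e_{2}$ and $\langle\rhoup_{\Qq}\,|\,\e_{2}\rangle=2$. What your $\Phi_{C}^{\vee}$ computation genuinely establishes is that $\rhoup_{\Qq}$ is $\Wf(\Rad,\Qq)$-invariant, i.e.\ it pairs to zero with every simple root of the Levi factor and is therefore a positive combination of the fundamental weights dual to $\Phi_{C}$; but the coefficients $\langle\rhoup_{\Qq}\,|\,\alphaup\rangle$ for $\alphaup\in\Phi_{C}$ are in general larger than $1$, so $\langle\rhoup_{\Qq}\,|\,\cdot\,\rangle$ does not recover $\chiup_{\Qq}$ on the nose. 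The lemma as stated needs to be corrected (e.g.\ restricted to $\Phi_{C}^{\vee}$, or reformulated with the appropriate coweight in place of $\rhoup_{\Qq}$), and no permutation argument of the kind you sketch can rescue the case $\alphaup\in\Phi_{C}$.
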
 
\begin{proof}
 If $\betaup\,{\in}\,\Phi_{C}^{\vee}$, then $\rtt{\betaup}(\Qq^{n})\,{=}\,\Qq^{n}$. Thus 
 $\rtt{\betaup}(\rhoup_{\Qq})\,{=}\,\rhoup_{\Qq}$ implies that $\langle{\rhoup_{\Qq}}|\,\betaup\rangle\,{=}\,0$. 
 If $\betaup\,{\in}\,\Phi_{C}$, then $\rtt{\betaup}(\Qq^{n})\,{=}\,(\Qq^{n}{\setminus}\,\{\betaup\}\,{\cup}\,\{{-}\betaup\}$
 implies that 
$\rtt{\betaup}(\rhoup_{\Qq})\,{=}\,\rhoup_{\Qq}{-}\,\betaup$, yielding 
 $\langle{\rhoup_{\Qq}}|\,\betaup\rangle\,{=}\,1$.
\end{proof}

\subsection{Adapted Cartan subalgebras}\label{s4.4}
Here and 
in the following we will consistently employ  the notation of~\cite{MMN23}.
\begin{dfn}   \label{d4.5}
Cartan subalgebras of $\gs$ contained in  
{$\qt\,{\cap}\,\gs$} are said to be \emph{adapted to $(\gs,\qt)$}. A \emph{parabolic $CR$-triple}
$(\gs,\qt,\ho)$ is the datum of a parabolic $CR$ algebra $(\gs,\qt)$ and an adapred
Cartan subalgebra.
\end{dfn}
Parabolic $CR$-algebras $(\gs,\qt)$ admit adapted Cartan subalgebras
(see e.g. \cite[Proposition 3.2]{AMN06b}, \cite[Thm.2.6]{Wolf69}). \par
The complexification $\hg$ of an adapted Cartan subalgebra defines a root system
$\Rad\,{=}\,\Rad(\gt,\hg)$. The elements of $\hg$ on which the roots of  
$\Rad$
take real values
yield a $\sigmaup$-invariant
real form $\hr$ of $\hg$ and $\Rad$ is a subset of the dual $\hr^{*}$ of $\hr$.
The restriction of $\sigmaup$ to $\hr$ defines by duality
an involution $\stt$ on $\hr^{*}$. The Cartan subalgebra 
$\hg\,{\cap}\,\gs$ decomposes into the direct sum $\hs^{+}\,{\oplus}\,i\hs^{-}$,
where $\hs^{\pm}{=}\{H\,{\in}\,\hr\,{\mid}\,\sigmaup(H){=}{\pm}H\}$. In fact
this set only depends on the \textit{restriction} of $\sigmaup$ to $\hr$ and thus
on the involution $\stt$ of $\Rad$. This is our reason to
indicate in the following
by $\hst$ the Cartan subalgebra $\hg\,{\cap}\,\gs$ of $\gs$. In fact 
different choices of adapted Cartan subalgebras yield isomorphic root systems $\Rad$
and equivalent involutions $\stt$ of $\Rad$ may correspond to inequivalent 
anti-$\C$-linear involutions $\sigmaup$ of $\gt$, while inequivalent involutions $\stt$ of $\Rad$
may correspond to the same anti-$\C$-linear involution $\sigmaup$ of $\gt$.
In fact the involutions $\stt$ can be used to classify the conjugacy classes of
adapted Cartan subalgebras to $(\gs,\qt)$.
\par

Vice versa, 
 fix 
a Cartan subalgebra $\hg$ and a 
Chevalley system $(X_{\alphaup},H_{\alphaup})_{\alphaup\in\Rad}$ of the complex 
semisimple Lie algebra $\gt$. 
This determines
split and compact real forms $\gr$ and $\gu$ of $\gt$, together with  
their Cartan subalgebras $\hst$,
$\hr$ and $i\hr$, which are real forms of $\hg$.
We restrain to anti-$\C$-linear involutions $\sigmaup$ of $\gt$ leaving $\gr,\,\gu,\,\hr$ invariant:
they induce involutions $\stt$ of the root system $\Rad$ 
and vice versa all involutions $\stt$ of $\Rad$ lift to elements of the set $\Inv^{\tauup}(\gr,\hr)$
of anti-$\C$-linear
involutions of $\gt$ 
leaving $\gr,\,\gu,\,\hr$ invariant (cf. \cite[\S{4}]{MMN23}), whose collection we indicate by
$\Invs$.
In doing so, we don't loose generality,
because {\textsl{every anti-$\C$-linear involution of $\gt$ is conjugated in $\Aut({\gt})$
with an element
$\sigmaup$ of $\Invs$}}
 (\cite[Theorem 4.2]{MMN23}) and by
\cite[Thm.2.29]{MMN23}
\textsl{we can, modulo isomorphisms, 
restrain our consideration to $CR$-algebras $(\gs,\qt)$, with
an $\hg$-regular parabolic $\qt$ and $\sigmaup\,{\in}\,\Invs$.}
 The canonical Cartan subalgebra $\hg\,{\cap}\,\gs$ 
 is the real form $\hst$ described above.\par
  We point out that 
 we need to keep $\sigmaup$ and $\stt$ distinct, as
 non equivalent anti-$C$-linear involutions 
 $\sigmaup$ of $\gt$ may correspond to a same involution $\stt$ and non equivalent involutions
 $\stt$ of $\Rad$ to
a same anti-$\C$-linear involution $\sigmaup$.
\par\smallskip
We get a partition $\Rad\,{=}\,\Rad_{\,\;\circ}^{\stt}\,{\cup}\,\Rad_{\,\;\bullet}^{\stt}
\,{\cup}\,\Rad_{\,\;\star}^{\stt}$ 
of roots into \textit{real, imaginary and complex}, depending
on being kept fixed, changed into their opposite by $\stt$, or neither of the two, respectively.  
When $\sigmaup\,{\in}\,\Invs$ is fixed, imaginary roots further decompose
into compact {($\Rad^{\sigmaup}_{\,\;\bullet}$)}
and non compact {($\Rad^{\sigmaup}_{\,\;\oast}$)}. 
\par\smallskip
Let $(\gs,\qt,\hst)$ be a parabolic $CR$-triple. The datum of the adapted Cartan subalgebra $\hst$ 
yields,  in the root system $\Rad$ of the complexification $\hg$ of $\hst$,
the parabolic subset  
\begin{equation}\label{e4.10}
 \Qq=\{\alphaup\,{\in}\,\Rad\mid \gt^{\alphaup}\subset\qt\}.
\end{equation} 
\begin{dfn}
 We say that the parabolic set in \eqref{e4.10} is associated to the parabolic $CR$-triple
 $(\gs,\qt,\hst)$ and call $(\Qq\,,\hst)$ its \emph{associated pair}.
\end{dfn}

\begin{rmk}[Cross-marked diagrams]
 Let $(\gs,\qt,\hst)$ be a parabolic $CR$-triple, $(\Qq\,,\stt)$ its associated pair. 
 Fix  an admissible
 Weyl chamber $C$ for $\Qq$. To keep track of the involution $\stt$ of $\Rad$, we 
paint the nodes of the Dynkin diagram according to the nature of the corresponding
root of $\Bz(C)$: we use ``$\medcirc$'' for the real, ``$\medbullet$'' for the imaginary,
 ``$\oplus$'' and ``$\ominus$'' for the complex with
$C$-positive and $C$-negative $\stt$-conjugate, respectively. If we want to emphasise 
the actual lifting $\sigmaup$ of $\stt$, we substitute ``$\oast$'' for ``$\medbullet$'' 
when the node corresponds to a non compact imaginary root.
\par
Eventually, 
the parabolic $\qt$ is determined by crossing the nodes corresponding to roots
in $\Phi_{C}{\coloneqq}\,\Bz(C)\,{\cap}\,\Qq^{n}$.
\end{rmk}
\begin{exam} A  parabolic $CR$-algebra may have different 
not isomorphic 
adapted
$CR$-algebras, as we show in discussing the example below. \par
The flag manifold $\sfF$ of $\SL(4,\C)$, consisting of pairs $(\ell_1,\ell_2),$ with
$\ell_{i}$ a linear subspace of dimension $i$ in $\C^{4}$ and $\ell_{1}\,{\subset}\,\ell_{2}$,
is a smooth complex projective variety of  dimension $5$.
The different orbits 
of the real form $\SU(2,2)$
 in $\sfF$
are characterised by the signature of the restriction to $\ell_1$ and to $\ell_2$ of  a
hermitian symmetric form of signature $(2,2)$ (they are listed below in the first two columns
of Table \ref{e6.2}).
\par
{There}
are three {conjugacy classes of} 
Cartan subalgebras of $\su(2,2)$.  
Namely, we can take 
\begin{align*}
 \hg_{\stt_{1}}&=\{\diag(\lambdaup_{1},\lambdaup_{2},{-}\bar{\,\lambdaup}_{2},{-}\bar{\,\lambdaup}_{1})\mid
 \lambdaup_{1},\lambdaup_{2}\,{\in}\,\C,\;\im(\lambdaup_{1}{+}\lambdaup_{2})\,{=}\,0\},\\
  \hg_{\stt_{2}}&=\{\diag(\lambdaup_{1},i\lambdaup_{2},i\lambdaup_{3},{-}\bar{\,\lambdaup}_{1})\mid
 \lambdaup_{1}\,{\in}\,\C,\;\lambdaup_{2},\lambdaup_{3}\,{\in}\,\R,\;2\im(\lambdaup_{1}){+}\lambdaup_{2}{+}
 \lambdaup_{3}\,{=}\,0\},\\
   \hg_{\stt_{3}}&=\{\diag(i\lambdaup_{1},i\lambdaup_{2},i\lambdaup_{3},i\lambdaup_{4})\mid
\lambdaup_{i}\,{\in}\,\R,\;\lambdaup_{1}{+}\lambdaup_{2}{+}
 \lambdaup_{3}{+}\lambdaup_{4}\,{=}\,0\}.
\end{align*}
In fact, $\hg_{\stt_{i}}$ is the space of diagonal matrices {of the real form}
\begin{equation*}
 \gt_{\sigmaup_{i}}\,{=}\,\{X\,{\in}\,\slt_{4}(\C)\,{\mid}\,X^{*}K_{i}{+}\,K_{i}X\,{=}\,0\},
\end{equation*}
where $\sigmaup_{i}(X)\,{=}\,{-}K_{i}X^{*}K_{i}$, for 
\begin{equation*}
 K_{1}=\left( 
\begin{smallmatrix}
 &&& 1\\
 &&1\\
 &1\\
 1
\end{smallmatrix}\right),\;\; K_{2}=\left( 
\begin{smallmatrix}
 &&& 1\\
 &1\\
 &&{-}1\\
 1
\end{smallmatrix}\right),\;\;
K_{3}=\left( 
\begin{smallmatrix}
1\\
 &1\\
 &&{-}1\\
 &&&{-}1
\end{smallmatrix}\right).
\end{equation*}

We have 
\begin{equation*}\begin{gathered}
 \dim_{\R}(\hg^{+}_{\stt_{i}})=
\begin{cases}
 2, & i=1,\\
 1, & i=2,\\
 0, & i=3,
\end{cases}
\qquad \dim_{\R}(\hg^{-}_{\stt_{i}})= 
\begin{cases}
 1, & i=1,\\
 2, & i=2,\\
 3, & i=3,
\end{cases}
\end{gathered}
\end{equation*}
where $\hg_{\R}=\hg^{+}_{\stt_{i}}\oplus\hg^{-}_{\stt_{i}} $
,   with $\hg^{\pm}_{\stt_{i}}=\{H\in\hg_{\R}\,|\,\stt_i(H)=\pm H \}$. 
 \par\smallskip
The ten orbits of $\SU(2,2)$ in $\sfF$ are listed in the following array. In the second to last column
we write, for each orbit, the adapted Cartan subalgebras of their $CR$-algebras.
In the last column we comment on the 
{\textit{type}}.
\begin{equation*}\label{e6.2}
 \begin{array}{| c | c | c | c | l |}
 \hline
 \ell_{1} & \ell_{2} & \text{$CR$-type} & \text{adapted $\hg_{\sigmaup}$} &\qquad\text{type}\\
 \hline
 0 & 0,0 & 1,4 & \hg_{\stt_{1}}&(1,4)\;\;\text{(minimal)}\\
 \hline
 0 & 0,+ & 2,3 & \hg_{\stt_{2}}&(2,3)\;\;\text{(intermediate)} \\
 \hline
  0 & 0,- & 2,3 & \hg_{\stt_{2}}&(2,3)\;\;\text{(intermediate)}\\
  \hline
   0 & +,- & 4,1 & \hg_{\stt_{1}},\,\hg_{\stt_{2}}&(4,1)\;\;\text{(intermediate)}\\
 \hline
    + & 0,+ & 4,1 & \hg_{\stt_{1}},\,\hg_{\stt_{2}}&(4,1)\;\;\text{(intermediate)}\\
    \hline
     - & 0,- & 4,1 & \hg_{\stt_{1}},\,\hg_{\stt_{2}}&(4,1)\;\;\text{(intermediate)}\\
 \hline
    + & +,+ & 5,0 & \hg_{\stt_{3}}&(5,0)\;\;\text{(open)}\\
 \hline
    + & +,- & 5,0 & \hg_{\stt_{3}}&(5,0)\;\;\text{(open)}\\
 \hline
    - & -,- & 5,0 & \hg_{\stt_{3}}&(5,0)\;\;\text{(open)}\\
 \hline
    - & +,- & 5,0 & \hg_{\stt_{3}}&(5,0)\;\;\text{(open)}\\
 \hline
\end{array}
\end{equation*}
\end{exam}\medskip
In the case of parabolic $CR$-algebras,
by using parabolic $CR$-triples, notions and results of \S\ref{s2.2} and \S\ref{s3.7},
 can be reformulated in terms of the associated pairs
$(\Qq\,,\stt)$.

\par 
This approach makes clear 
that several \emph{local} properties of the 
$CR$-mani\-fold corresponding to  a parabolic $CR$-algebra 
only depend 
on the involution
$\stt$ of $\Rad$, and are shared by all 
{$(\gs,\qt)$ with} $\sigmaup$ in $\Invs$. 
\begin{dfn}
 Let $\Qq$ be a parabolic set in $\Rad$ and $\stt$ an involution of $\Rad$. We say that the
 pair $(\Qq\, ,\stt)$ is 
 \begin{equation*}
 \begin{array}{| c   l |} \hline
 \textit{trivial} &\text{if}\;\; \Qq=\Rad,\\ \hline
 \textit{integrable} &\text{if}\;\; \Qq\cup\stt(\Qq)\;\;\text{is closed},\\ \hline
 \textit{fundamental} &\text{if}\;\; \Qq\cup\stt(\Qq)\;\,\text{generates $\Rad$},
\\ \hline
  \textit{totally complex} &\text{if}\;\; \Qq\cup\stt(\Qq)=\Rad,\\ \hline
 \textit{totally real} &\text{if}\;\; \stt(\Qq)=\Qq\, ,\\ \hline
 \textit{$1$-nondegenerate} &\text{if}\;\; \forall\,\alphaup\,{\in}\,\Qq{\setminus}\stt(\Qq),\,\;
 (\alphaup\,{+}\,\stt(\Qq))\,{\cap}\,\Rad\not\subseteq\Qq\,{\cup}\,\stt(\Qq),\\ \hline 
 {\textit{Levi-nondegenerate}}
 &\text{if}\; 
\Qq'\,{\in}\Pcr(\Rad) \;\text{and}\; \Qq\,{\subseteq}\,\Qq'\,{\subseteq}\,\Qq\,{\cup}\,\stt(\Qq){\Rightarrow}
 \Qq'\,{=}\,\Qq\, ,
  \\ \hline
\end{array}
\end{equation*}
\end{dfn}
It follows from the relationship between parabolic subalgebras and parabolic sets of roots that
the proposition below holds true.
\begin{prop} Let $\Qq$ be a parabolic set in $\Rad$, $\qt$ the corresponding parabolic subalgebra
of $\gt$, 
$\stt$ an involution of   $\Rad$
and $\sigmaup\,{\in}\,\Invs$ a lifting of $\stt$ to {an anti-$\C$-linear involution of}
$\gt.$ \par
The parabolic $CR$-algebra
$(\gs,\qt)$ is trivial (resp. integrable, fundamental, totally complex, totally real, $1$-nondegenerate,
finitely 
Levi-nondegenerate) iff the pair $(\Qq\, ,\stt)$ is
trivial (resp. integrable, fundamental, totally complex, totally real, $1$- nondegenerate,
Levi-nondegenerate).\qed
 \end{prop}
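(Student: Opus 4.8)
The plan is to reduce all seven equivalences to one structural dictionary made available by the adapted Cartan subalgebra $\hst$, with complexification $\hg$, which by construction lies in $\qt\,{\cap}\,\gs$, so that both $\qt$ and $\sigmaup(\qt)$ are $\hg$-regular and hence completely recorded by their root sets. First I would set up the master correspondence. Since $\sigmaup\,{\in}\,\Invs$ normalizes $\hr$ and satisfies $\sigmaup(\gt^{\alphaup})\,{=}\,\gt^{\stt(\alphaup)}$ for every $\alphaup\,{\in}\,\Rad$ — this is precisely how the involution $\stt$ of $\Rad$ is obtained from $\sigmaup$ by duality — we have $\sigmaup(\qt)\,{=}\,\hg\,{\oplus}\,{\sum}_{\alphaup\in\stt(\Qq)}\gt^{\alphaup}$, whence
\[
\qt+\sigmaup(\qt)=\hg\oplus{\sum}_{\alphaup\in\Qq\cup\stt(\Qq)}\gt^{\alphaup},\qquad
\qt\cap\sigmaup(\qt)=\hg\oplus{\sum}_{\alphaup\in\Qq\cap\stt(\Qq)}\gt^{\alphaup}.
\]
Because root spaces are one-dimensional, any subspace $V$ with $\hg\,{\subseteq}\,V$ and $[\hg,V]\,{\subseteq}\,V$ equals $\hg\,{\oplus}\,{\sum}_{\alphaup\in S}\gt^{\alphaup}$ for $S\,{=}\,\{\alphaup\,{\mid}\,\gt^{\alphaup}\,{\subseteq}\,V\}$; such a $V$ is a Lie subalgebra exactly when $S$ is closed (using that the structure constant $N_{\alphaup\betaup}\,{\neq}\,0$ whenever $\alphaup,\betaup,\alphaup{+}\betaup\,{\in}\,\Rad$), and the subalgebra it generates has root set $\Z_{+}[S]\,{\cap}\,\Rad$.

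Granting this dictionary, the first five equivalences are immediate. Triviality ($\qt\,{=}\,\gt$) reads off as $\Qq\,{=}\,\Rad$; totally complex ($\gt\,{=}\,\qt{+}\sigmaup(\qt)$) as $\Qq\,{\cup}\,\stt(\Qq)\,{=}\,\Rad$; totally real ($\sigmaup(\qt)\,{=}\,\qt$) as $\stt(\Qq)\,{=}\,\Qq$, since $\hg$-regular parabolic subalgebras biject with parabolic subsets; integrability ($\qt{+}\sigmaup(\qt)$ a subalgebra) as closedness of $\Qq\,{\cup}\,\stt(\Qq)$; and fundamentality as $\Z_{+}[\Qq\,{\cup}\,\stt(\Qq)]\,{\cap}\,\Rad\,{=}\,\Rad$, i.e. that $\Qq\,{\cup}\,\stt(\Qq)$ generates $\Rad$.

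For $1$-nondegeneracy I would pass through root vectors. Applying $\stt$, the root condition over all $\alphaup\,{\in}\,\Qq{\setminus}\stt(\Qq)$ is equivalent to the statement that for every $\gammaup\,{\in}\,\stt(\Qq){\setminus}\Qq$ there is $\betaup\,{\in}\,\Qq$ with $\gammaup{+}\betaup\,{\in}\,\Rad{\setminus}(\Qq\,{\cup}\,\stt(\Qq))$. Given $Z\,{\in}\,\sigmaup(\qt){\setminus}\qt$, write $Z\,{\equiv}\,{\sum}_{\gammaup}c_{\gammaup}X_{\gammaup}$ modulo $\qt\,{\cap}\,\sigmaup(\qt)$, permissible by Lemma\,\ref{l3.9}, the sum running over $\gammaup\,{\in}\,\stt(\Qq){\setminus}\Qq$ with some $c_{\gammaup}\,{\neq}\,0$; choosing $\betaup$ for this $\gammaup$, the $\gt^{\gammaup+\betaup}$-component of $[Z,X_{\betaup}]$ equals $c_{\gammaup}N_{\gammaup\betaup}X_{\gammaup+\betaup}\,{\neq}\,0$, since no other summand nor the Cartan part can contribute to this weight, so $[Z,X_{\betaup}]\,{\notin}\,\qt{+}\sigmaup(\qt)$ and $Z$ has Levi order $1$. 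Conversely, if the condition fails for some $\gammaup$, then the weight decomposition shows $[X_{\gammaup},Z_{1}]\,{\in}\,\qt{+}\sigmaup(\qt)$ for every $Z_{1}\,{\in}\,\qt$, so $X_{\gammaup}$ has Levi order $\,{>}\,1$. This reduction from arbitrary elements to single root vectors, and the verification that no cancellation spoils the weight bookkeeping, is the \emph{main obstacle}.

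Finally, for finite Levi-nondegeneracy I would combine Corollary\,\ref{c2.7} with Theorem\,\ref{t3.6}: $(\gs,\qt)$ is Levi-nondegenerate iff there is no Lie subalgebra $\qt'$ with $\qt\,{\subsetneqq}\,\qt'\,{\subseteq}\,\qt{+}\sigmaup(\qt)$. Any such $\qt'$ contains $\hg$, hence is $\hg$-regular with a closed root set $\Qq'$ satisfying $\Qq\,{\subseteq}\,\Qq'\,{\subseteq}\,\Qq\,{\cup}\,\stt(\Qq)$; being closed and containing the parabolic $\Qq$, it is itself parabolic, and conversely each such parabolic $\Qq'$ yields an intermediate subalgebra. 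Thus intermediate subalgebras correspond bijectively to parabolic sets strictly between $\Qq$ and $\Qq\,{\cup}\,\stt(\Qq)$, so $(\gs,\qt)$ is finitely Levi-nondegenerate exactly when $(\Qq,\stt)$ is Levi-nondegenerate, completing the proof.
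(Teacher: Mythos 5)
Your proof is correct and follows exactly the route the paper intends: the paper states this proposition without proof, asserting only that it ``follows from the relationship between parabolic subalgebras and parabolic sets of roots,'' and your root-space dictionary for $\qt$, $\sigmaup(\qt)$, their sum and intersection is precisely that relationship made explicit. The weight-space bookkeeping you supply for the $1$-nondegeneracy case and the identification of intermediate Lie subalgebras with intermediate parabolic sets are the right details to fill in, and they check out.
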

\subsection{Levi-Chevalley decomposition of parabolic $CR$-algebras}
Let $(\gs,\qt)$ be a parabolic $CR$-algebra. Fix an adapted Cartan subalgebra $\hst$
and let $\Rad$ be the root system of $\gt\,{=}\,\C\,{\otimes}_{\R}\gs$ for the complexification $\hg$
of $\hst$. 
{Let $\stt$ be the involution on $\Rad$ induced by $\sigmaup$
and $\Qq$ the parabolic subset of $\Rad$ associated to $\qt$. We have 
Levi-Chevalley decompositions} 
\begin{gather*}\qt=\qt^{n}\oplus\qt^{r},\quad\text{with}\;\;
 \qt^{n}{=}{\sum}_{\alphaup\in\Qq^{n}}\gt^{\alphaup},\;\; 
 \qt^{r}{=}\hg\,{\oplus}{\sum}_{\alphaup\in\Qq^{r}}\gt^{\alphaup},
 \\
 \sigmaup(\qt)=\sigmaup(\qt^{n})\oplus\sigmaup(\qt^{r}),\quad\text{with}\;\;
 \sigmaup(\qt^{n}){=}{\sum}_{\alphaup\in\Qq^{n}}\gt^{\stt(\alphaup)},\;\; \sigmaup(\qt^{r}){=}\hg
 \,{\oplus}{\sum}_{\alphaup\in\Qq^{r}}\gt^{\stt(\alphaup)},
\end{gather*}
{in which}
 $\qt^{n},\,\sigmaup(\qt^{n})$ and $\qt^{r},\,\sigmaup(\qt^{r})$ are the nilradicals and 
{the $\hg$-regular reductive Levi factors}  
 of $\qt$ and $\sigmaup(\qt)$, respectively. 
 We get an analogous decomposition for the complexification of the isotropy.
 \begin{prop} Let $(\gs,\qt)$ be a parabolic $CR$-algebra.
  The complexification 
 $\qt\,{\cap}\,\sigmaup(\qt)$ of its isotropy subalgebra $\gs\cap\qt$ 
 is a regular Lie subalgebra of $\gt,$ having the Levi-Chevalley decomposition 
\begin{equation}\vspace{-16pt}
\begin{cases}
 \qt\cap\sigmaup(\qt)=(\qt\cap\sigmaup(\qt))^{r}\oplus(\qt\cap\sigmaup(\qt))^{n},\;\;\text{with}\\
 (\qt\cap\sigmaup(\qt))^{r}\,{=}\,\qt^{r}\,{\cap}\,\sigmaup(\qt^{r}),\,\quad\text{({$\hg$-regular}
  reductive Levi factor)},
 \\
(\qt\cap\sigmaup(\qt))^{n}\,{=}\, 
\qt^{r}{\cap}\,\sigmaup(\qt^{n})\,{\oplus}\,\qt^{n}{\cap}\,\sigmaup(\qt^{r})\,{\oplus}\,
\qt^{n}{\cap}\,\sigmaup(\qt^{n})
\;\;
\text{(nilradical)}.
\end{cases}
\end{equation}\qed
\end{prop}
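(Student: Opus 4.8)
The plan is to compute everything at the level of root spaces, using that $\sigmaup\,{\in}\,\Invs$ fixes $\hg$ and permutes the root spaces via $\stt$. The Levi--Chevalley decompositions displayed just above already record $\sigmaup(\qt^{r})\,{=}\,\hg\,{\oplus}\,{\sum}_{\alphaup\in\Qq^{r}}\gt^{\stt(\alphaup)}$ and $\sigmaup(\qt^{n})\,{=}\,{\sum}_{\alphaup\in\Qq^{n}}\gt^{\stt(\alphaup)}$. Since $\stt$ is a linear involution of $\hr^{*}$ with $\stt({-}\alphaup)\,{=}\,{-}\stt(\alphaup)$, it carries $\Qq^{r}$ bijectively onto $(\stt(\Qq))^{r}$ and $\Qq^{n}$ onto $(\stt(\Qq))^{n}$; reindexing, $\sigmaup(\qt)\,{=}\,\hg\,{\oplus}\,{\sum}_{\betaup\in\stt(\Qq)}\gt^{\betaup}$ is the $\hg$-regular parabolic subalgebra attached to the parabolic set $\stt(\Qq)$, with reductive factor and nilradical indexed by $(\stt(\Qq))^{r}$ and $(\stt(\Qq))^{n}$.

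Next I would note that $\qt\,{\cap}\,\sigmaup(\qt)$ is a Lie subalgebra of $\gt$ containing the Cartan subalgebra $\hg$, hence $\hg$-regular by the description recalled in \S\ref{s1.3}; its associated set of roots is $\Pp\,{\coloneqq}\,\Qq\,{\cap}\,\stt(\Qq)$, which is closed as an intersection of two closed sets. That description then gives the Levi--Chevalley decomposition $\qt\,{\cap}\,\sigmaup(\qt)\,{=}\,\bigl(\hg\,{\oplus}\,{\sum}_{\alphaup\in\Pp^{r}}\gt^{\alphaup}\bigr)\,{\oplus}\,{\sum}_{\alphaup\in\Pp^{n}}\gt^{\alphaup}$, the first summand being the $\hg$-regular reductive Levi factor and the second the nilradical. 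It therefore remains only to identify $\Pp^{r}$ and $\Pp^{n}$.

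The combinatorial heart of the matter is the identity $\Pp^{r}\,{=}\,\Qq^{r}\,{\cap}\,(\stt(\Qq))^{r}$. Indeed $\alphaup\,{\in}\,\Pp^{r}$ means $\pm\alphaup\,{\in}\,\Qq\,{\cap}\,\stt(\Qq)$, i.e. $\pm\alphaup\,{\in}\,\Qq$ and $\pm\alphaup\,{\in}\,\stt(\Qq)$, which is exactly $\alphaup\,{\in}\,\Qq^{r}$ together with $\alphaup\,{\in}\,(\stt(\Qq))^{r}$. Because $\Qq\,{=}\,\Qq^{r}\,{\sqcup}\,\Qq^{n}$ and $\stt(\Qq)\,{=}\,(\stt(\Qq))^{r}\,{\sqcup}\,(\stt(\Qq))^{n}$, the set $\Pp$ is the disjoint union of the four intersections $\Qq^{x}\,{\cap}\,(\stt(\Qq))^{y}$ with $x,y\,{\in}\,\{r,n\}$; removing the piece $\Qq^{r}\,{\cap}\,(\stt(\Qq))^{r}$ leaves $\Pp^{n}\,{=}\,(\Qq^{r}\,{\cap}\,(\stt(\Qq))^{n})\,{\sqcup}\,(\Qq^{n}\,{\cap}\,(\stt(\Qq))^{r})\,{\sqcup}\,(\Qq^{n}\,{\cap}\,(\stt(\Qq))^{n})$.

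Finally I would match the summands. As the root spaces are linearly independent and neither $\qt^{n}$ nor $\sigmaup(\qt^{n})$ has an $\hg$-component, intersecting the four regular pieces of $\qt$ and $\sigmaup(\qt)$ gives $\hg\,{\oplus}\,{\sum}_{\alphaup\in\Pp^{r}}\gt^{\alphaup}\,{=}\,\qt^{r}\,{\cap}\,\sigmaup(\qt^{r})$, together with $\qt^{r}\,{\cap}\,\sigmaup(\qt^{n})\,{=}\,{\sum}_{\alphaup\in\Qq^{r}\cap(\stt(\Qq))^{n}}\gt^{\alphaup}$, $\qt^{n}\,{\cap}\,\sigmaup(\qt^{r})\,{=}\,{\sum}_{\alphaup\in\Qq^{n}\cap(\stt(\Qq))^{r}}\gt^{\alphaup}$ and $\qt^{n}\,{\cap}\,\sigmaup(\qt^{n})\,{=}\,{\sum}_{\alphaup\in\Qq^{n}\cap(\stt(\Qq))^{n}}\gt^{\alphaup}$; by the previous paragraph their sum is precisely ${\sum}_{\alphaup\in\Pp^{n}}\gt^{\alphaup}$. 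This reproduces the asserted formulas for the reductive Levi factor and the nilradical. I expect no genuine difficulty here: the only thing demanding attention --- the main, and rather modest, obstacle --- is the bookkeeping that $\sigmaup$ (equivalently $\stt$) is compatible with the passage to $r$- and $n$-parts, after which everything follows from the general theory of $\hg$-regular subalgebras.
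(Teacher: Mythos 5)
Your argument is correct and is precisely the root-space bookkeeping the paper intends: the proposition is stated without a written proof, being regarded as an immediate consequence of the displayed Levi--Chevalley decompositions of $\qt$ and $\sigmaup(\qt)$ together with the general description of $\hg$-regular subalgebras (a subalgebra containing $\hg$ is $\hg$-regular, its root set $\Qq\,{\cap}\,\stt(\Qq)$ is closed, and its reductive part and nilradical are indexed by the $r$- and $n$-parts of that set). Your identification $(\Qq\,{\cap}\,\stt(\Qq))^{r}\,{=}\,\Qq^{r}\,{\cap}\,\stt(\Qq^{r})$ and the resulting three-piece splitting of the nilradical are exactly what is needed, so there is nothing to add.
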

\begin{prop}
 Let $(\gt_{\sigmaup},\qt)$ be a parabolic $CR$-algebra. We can find a nilpotent horocyclic Lie
 subalgebra 
 $\qtc$ of $\gt$ such that 
\begin{equation}
 \gt=\qt\oplus\qtc
\end{equation}
and we get 
a decomposition {of $\gt$} 
into a sum of complex Lie subalgebras 
\begin{equation}
 \gt=(\qt\cap\sigmaup(\qt))\oplus(\qt\cap\sigmaup(\qtc))\oplus(\qtc\cap\sigmaup(\qt))\oplus
 (\qtc\cap\sigmaup(\qtc)).
\end{equation}
\end{prop}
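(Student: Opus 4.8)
The plan is to construct $\qtc$ explicitly as the span of the root spaces opposite to the nilradical and to read off the four-fold decomposition directly from a partition of $\Rad$. Fix an adapted Cartan subalgebra $\hst$ with complexification $\hg$, let $\Rad\,{=}\,\Rad(\gt,\hg)$, and let $\Qq\,{=}\,\{\alphaup\,{\in}\,\Rad\mid\gt^{\alphaup}\,{\subseteq}\,\qt\}$ be the parabolic set associated to $\qt$; write $\Qc\,{=}\,\Rad{\setminus}\Qq$ as before. Since $\sigmaup\,{\in}\,\Invs$ preserves $\hr$ and induces $\stt$ on $\hr^{*}$, it carries each root space to its conjugate, $\sigmaup(\gt^{\alphaup})\,{=}\,\gt^{\stt(\alphaup)}$, so that $\sigmaup(\qt)\,{=}\,\hg\,{\oplus}\,{\sum}_{\alphaup\in\stt(\Qq)}\gt^{\alphaup}$; this identity is the only structural input needed.

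First I would set $\qtc\,{=}\,{\sum}_{\alphaup\in\Qc}\gt^{\alphaup}$. Because $\Qc$ is closed, horocyclic, and equal to ${-}\Qq^{n}$, the subspace $\qtc$ is a Lie subalgebra with $\Rad{\setminus}\Qc\,{=}\,\Qq$ parabolic, hence horocyclic; and since every $\alphaup\,{\in}\,\Qc$ satisfies $\chiup_{\Qq}(\alphaup)\,{<}\,0$, the grading is strictly lowered under bracketing, so iterated commutators eventually drop below the minimal degree and $\qtc$ is nilpotent. As $\Qq$ and $\Qc$ partition $\Rad$ and the Cartan part $\hg$ lies in $\qt$, the root-space decomposition yields at once $\gt\,{=}\,\qt\,{\oplus}\,\qtc$.

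Next I would produce the four-fold decomposition. The two partitions $\Rad\,{=}\,\Qq\,{\sqcup}\,\Qc$ and $\Rad\,{=}\,\stt(\Qq)\,{\sqcup}\,\stt(\Qc)$ refine to the disjoint union
\begin{equation*}
\Rad=\big(\Qq\cap\stt(\Qq)\big)\sqcup\big(\Qq\cap\stt(\Qc)\big)\sqcup\big(\Qc\cap\stt(\Qq)\big)\sqcup\big(\Qc\cap\stt(\Qc)\big).
\end{equation*}
Using $\sigmaup(\gt^{\alphaup})\,{=}\,\gt^{\stt(\alphaup)}$ together with the one-dimensionality and linear independence of the root spaces, each pairwise intersection matches one block: one finds $\qt\,{\cap}\,\sigmaup(\qt)\,{=}\,\hg\,{\oplus}\,{\sum}_{\Qq\cap\stt(\Qq)}\gt^{\alphaup}$, while $\qt\,{\cap}\,\sigmaup(\qtc)$, $\qtc\,{\cap}\,\sigmaup(\qt)$ and $\qtc\,{\cap}\,\sigmaup(\qtc)$ are the spans over $\Qq\,{\cap}\,\stt(\Qc)$, $\Qc\,{\cap}\,\stt(\Qq)$ and $\Qc\,{\cap}\,\stt(\Qc)$, respectively (the Cartan part falls only into the first block, since $\qtc$ and $\sigmaup(\qtc)$ carry none). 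Summing over the four disjoint blocks and adjoining $\hg$ reproduces $\gt$, which is the asserted direct sum.

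Finally I would check that each summand is a complex Lie subalgebra. Since $\sigmaup$ is an anti-$\C$-linear automorphism, it sends complex Lie subalgebras to complex Lie subalgebras, so $\sigmaup(\qt)$ and $\sigmaup(\qtc)$ are complex Lie subalgebras; each of the four summands is then an intersection of two complex Lie subalgebras, hence itself one. The only point demanding care is the passage between root subsets and subalgebras—specifically the identity $\sigmaup(\gt^{\alphaup})\,{=}\,\gt^{\stt(\alphaup)}$ and the verification that the four root blocks genuinely partition $\Rad$—but this is bookkeeping rather than a real obstacle, and no commutator estimate or inequality is required.
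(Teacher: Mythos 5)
Your construction is exactly the paper's: fix an adapted Cartan subalgebra, let $\Qq$ be the parabolic set of $\qt$, and take $\qt^{c}\,{=}\,{\sum}_{\alphaup\in\Qc}\gt^{\alphaup}$; the paper's proof consists of precisely this definition plus the remark that it "has the required properties," and your write-up just fills in the bookkeeping (nilpotency via the grading, the four-block partition of $\Rad$, and $\sigmaup(\gt^{\alphaup})\,{=}\,\gt^{\stt(\alphaup)}$). The argument is correct and matches the paper's route.
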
 
\begin{proof} 
We fix a 
Cartan subalgebra 
{$\hst$ adapted to $(\gs,\qt)$} and consider the pair $(\Qq\,,\stt)$ of the parabolic $CR$-triple
$(\gs,\qt,\hst)$.
 Then $\Qc\,{=}\,\Rad{\setminus}\Qq$ is horocyclic  and $\qt^{c}\,{=}\,{\sum}_{\alphaup\in\Qc}
 {\gt^{\alphaup}}$ has the required properties. 
\end{proof}
We described in
Proposition\,\ref{p2.5} the natural $\Z$-grading $\chiup_{\Qq}$ of $\Rad$
associated to a $\Qq\,{\in}\Pcr(\Rad)$.
Having introduced the involution $\stt$, it is natural to consider in $\Rad$ the 
bi-grading $(\chiup_{\Qq},\chiup_{\Qq}\,{\circ}\,\stt)$. For a pair of integers $p,q$ set 
\begin{equation}\label{e2.14}\begin{cases}
 \Qq^{p,q}=\{\alphaup\,{\in}\,\Rad\,{\mid}\,\chiup_{\Qq}(\alphaup)\,{=}\,p,\,\chiup_{\Qq}(\stt(\alphaup))\,{=}\,q\},\\
 \qt^{p,q}\,{=}\,
\begin{cases}
 {\sum}_{\alphaup\,{\in}\,\Qq^{p,q}}\gt^{\alphaup}, & (p,q)\neq(0,0),\\
 \hg\oplus{\sum}_{\alphaup\,{\in}\,\Qq^{0,0}}\gt^{\alphaup}, & (p,q)=(0,0).
\end{cases}
\end{cases}
\end{equation}
Then, e.g.  $\Qq\,{=}\,{\bigcup}_{p\geq{0}}\Qq^{p,q}$, $\stt(\Qq)\,{=}\,{\bigcup}_{q\geq{0}}\Qq^{p,q}$,
$\Qc\,{\cap}\,\stt(\Qc)\,{=}\,{\bigcup}_{p<0,\,q<0}\Qq^{p,q}$. 

\begin{rmk}\label{r4.16}
The restriction of the complexification of differentials of the smooth submersion in the commutative diagram \eqref{CRimmersion} 
 yields isomorphisms (that can be further detailed by using
\eqref{e2.14})
\begin{equation*}
 \qt\cap\sigmaup(\qtc)\simeq \qt/(\qt\,{\cap}\,\sigmaup(\qt)),\quad \qtc\cap\sigmaup(\qtc)\simeq
 \gt/(\qt+\sigmaup(\qt)).
\end{equation*}
We recall that, $(\gs,\qt)$ being the $CR$-algebra at a point $\pct_{0}$ of a homogeneous $CR$-manifold
$\sfM$, the quotient $\qt/(\qt\,{\cap}\,\sigmaup(\qt))$ represents the space $T^{0,1}_{\pct_{0}}\sfM$ 
and $\gt/(\qt\,{+}\,\sigmaup(\qt))$ is the complexification 
{of $\T_{\pct_{0}}\sfM{/}\Hd_{\pct_{0}}\sfM$.}
The Lie subalgebra {$\qtc\,{\cap}\,\gs$}
represents the stalk at $\pct_{0}$ of a
Reeb distribution for the contact structure underlying the $CR$-structure of $\sfM$.\end{rmk}


\subsection{Reduction to simple symmetry  algebras} 
{We can reduce the}
study of effective parabolic
$CR$-algebras  to the cases where their symmetry algebra 
is \textit{simple} (see e.g. \cite{AMN06,AMN06b}).
\begin{prop}
Let $(\gs,\qt)$ be an effective parabolic $CR$-algebra
and let $\gs ={\gs}_{1}\oplus \cdots \oplus{\gs}_{\ell}$ be the
decomposition of $\gs$ into the direct sum of its simple ideals.
Then, denoting by ${\gt}_{j}$ the complexification of ${\gs}_{j},$
we get:
\begin{itemize}
\item[(i)] $\qt=\qt_{1}\oplus \cdots \oplus\qt_{\ell}$
where $\qt_{j}=\qt\cap{\gt}_{j}$ for $j=1,\hdots,\ell$;
\item[(ii)] for each $j=1,\hdots,\ell$, $({\gs}_j,\qt_{j})$
is an effective parabolic $CR$-algebra;
\item[(iii)] $(\gs,\qt)$ is contact (resp. Levi, 
{strictly} Levi)
nondegenerate if and only if for 
{every}
$j{=}1,\hdots,\ell$, the
$CR$-algebra $ ({\gs}_j,\qt_{j})$ is  contact (resp. Levi, 
{strictly} Levi)
nondegenerate;
\item[(iv)] $(\gs,\qt)$ is isomorphic to the direct sum of the $CR$-algebras $({\gs}_j,\qt_{j})$;
\item[(v)] $(\gs,\qt)$ is fundamental (resp. integrable, totally real, totally complex)
if and only if for 
{every}
$j\,{=}\,1,\hdots,\ell$, the
$CR$-algebra $ ({\gs}_j,\qt_j)$ is fundamental (resp. integrable, totally real, totally complex). \qed
\end{itemize}\end{prop}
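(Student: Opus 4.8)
The plan is to reduce the whole proposition to one structural fact and then read off each item. The fact is that the complexification splits as a direct sum of pairwise commuting $\sigmaup$-invariant ideals $\gt={\gt}_{1}\oplus\cdots\oplus{\gt}_{\ell}$, where ${\gt}_{j}=\C\otimes_{\R}{\gs}_{j}$ and $[{\gt}_{j},{\gt}_{k}]=0$ for $j\neq k$, and $\sigmaup$ stabilises each ${\gt}_{j}$ (since $\sigmaup$ fixes ${\gs}_{j}\subseteq\gs$ pointwise, it preserves ${\gt}_{j}={\gs}_{j}\oplus i{\gs}_{j}$). To prove (i), I would use that a parabolic $\qt$ contains a Cartan subalgebra $\hg=\bigoplus_{j}{\hg}_{j}$ with ${\hg}_{j}=\hg\cap{\gt}_{j}$, so that the root system is a disjoint union $\Rad=\bigsqcup_{j}{\Rad}_{j}$ in which no root spans two factors. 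The associated parabolic set $\Qq=\{\alphaup\in\Rad\mid\gt^{\alphaup}\subseteq\qt\}$ is closed and therefore splits as $\Qq=\bigsqcup_{j}(\Qq\cap{\Rad}_{j})$ with each $\Qq\cap{\Rad}_{j}$ parabolic in ${\Rad}_{j}$; hence $\qt=\hg\oplus\sum_{\alphaup\in\Qq}\gt^{\alphaup}=\bigoplus_{j}\qt_{j}$ with $\qt_{j}=\qt\cap{\gt}_{j}$, which is (i).

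Statement (iv) then follows at once from (i) and the definition of the direct sum of $CR$-algebras. For (ii), the splitting of $\Qq$ shows each $\qt_{j}$ is parabolic in ${\gt}_{j}$, and ${\gs}_{j}$ is the real form of ${\gt}_{j}$ fixed by $\sigmaup|_{{\gt}_{j}}$. Effectiveness I would establish by contraposition: if $\qt_{j}={\gt}_{j}$ for some $j$, then ${\gs}_{j}$ is a nonzero ideal of $\gs$ contained in $\qt\cap\gs$, contradicting effectiveness of $(\gs,\qt)$; hence each $\qt_{j}$ is a proper parabolic, and a proper complex subalgebra cannot contain the real form ${\gs}_{j}$ (otherwise it would contain ${\gs}_{j}\oplus i{\gs}_{j}={\gt}_{j}$). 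As ${\gs}_{j}$ is simple, its only nonzero ideal is itself, so $({\gs}_{j},\qt_{j})$ is effective.

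The core of (iii) and (v) is a single \emph{splitting principle}: because the ${\gt}_{j}$ are pairwise commuting $\sigmaup$-invariant ideals, every construction of \S\ref{s3} distributes over the decomposition. Thus $\sigmaup(\qt)=\bigoplus_{j}\sigmaup(\qt_{j})$, $\qt\cap\sigmaup(\qt)=\bigoplus_{j}(\qt_{j}\cap\sigmaup(\qt_{j}))$ and $\qt+\sigmaup(\qt)=\bigoplus_{j}(\qt_{j}+\sigmaup(\qt_{j}))$; and since any higher-order commutator mixing two distinct factors vanishes, an induction gives $\Ft_{h}(\gs,\qt)=\bigoplus_{j}\Ft_{h}({\gs}_{j},\qt_{j})$, together with $\qt_{\{\sigmaup\}}=\bigoplus_{j}(\qt_{j})_{\{\sigmaup\}}$ for the $\sigmaup$-extension and $\Kt(\gs,\qt)=\bigoplus_{j}\Kt({\gs}_{j},\qt_{j})$ for the Levi-degeneracy kernel. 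Each property is then read off as a direct-sum identity that holds exactly when it holds in every factor: fundamentality is $\Ft_{-\infty}(\gs,\qt)=\gs$, integrability is that $\qt+\sigmaup(\qt)$ is a subalgebra, totally complex is $\gt=\qt+\sigmaup(\qt)$, and totally real is $\sigmaup(\qt)=\qt$. By Theorem\,\ref{t3.6} and Corollary\,\ref{c2.7}, Levi-nondegeneracy is equivalent to $\qt_{\{\sigmaup\}}=\qt$, hence to $(\qt_{j})_{\{\sigmaup\}}=\qt_{j}$ for all $j$; and the Levi order of $(\gs,\qt)$ equals the maximum of those of the $({\gs}_{j},\qt_{j})$, because a Levi-sequence for $Z=\sum_{j}Z_{j}$ can be concentrated in a single factor, which also yields the strictly Levi ($=1$) case. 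Contact-nondegeneracy and the contact order split in the same way through the filtration $\Ft_{\bullet}$.

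I expect the principal obstacle to be bookkeeping rather than any genuine difficulty: namely, checking that the higher-order-commutator constructions underlying Levi and contact nondegeneracy really respect the decomposition, which is exactly the vanishing $[{\gt}_{j},{\gt}_{k}]=0$. The one subtlety to flag is that ${\gt}_{j}$ need not be simple — when ${\gs}_{j}$ is a complex simple Lie algebra viewed as a real one, ${\gt}_{j}$ is a sum of two simple complex factors — but this plays no role, since the entire argument uses only that the ${\gt}_{j}$ are pairwise commuting $\sigmaup$-stable ideals equal to $\C\otimes_{\R}{\gs}_{j}$.
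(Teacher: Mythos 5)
Your proposal is correct, and it supplies the argument that the paper itself omits: the proposition is stated with a bare \verb|\qed| and a pointer to \cite{AMN06,AMN06b}, so there is no in-text proof to diverge from. Your route — splitting $\gt=\bigoplus_j\gt_j$ into pairwise commuting $\sigmaup$-stable ideals, decomposing the parabolic via a Cartan subalgebra and the disjoint union of root systems, and then observing that $\sigmaup(\qt)$, $\qt\,{\cap}\,\sigmaup(\qt)$, $\qt\,{+}\,\sigmaup(\qt)$, the filtration $\Ft_{\bullet}$, the $\sigmaup$-extension and the Levi/contact orders all distribute over the factors because cross-commutators vanish — is exactly the standard argument these references use, and your handling of effectiveness in (ii) and of the simple-complex-viewed-as-real subtlety is accurate.
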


\subsection{Contact-degeneracy} 

\begin{prop} 
A fundamental 
parabolic $CR$-algebra $(\gs,\qt)$ with $\gs$ simple
is either totally complex or contact-nondegenerate. 
\end{prop}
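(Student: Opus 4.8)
The plan is to convert the geometric dichotomy into an algebraic statement about ideals of $\gs$, using the canonical filtration $\Ft_{\bullet}(\gs,\qt)$ together with the contact criterion proved above. Since $(\gs,\qt)$ is fundamental, the characterization of fundamentality in terms of the filtration gives $\Ft_{-\infty}(\gs,\qt)=\gs$; equivalently, the lifted contact module $\Ft_{-1}(\gs,\qt)=(\qt+\sigmaup(\qt))\cap\gs$ generates $\gs$ as a real Lie algebra. This is the only place fundamentality enters, and it is what lets us treat $\gs$ itself as the ambient algebra of the filtration.

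First I would invoke the theorem identifying $\at_0$ as the largest ideal of the real Lie algebra $\Ft_{-\infty}(\gs,\qt)$ contained in $\Ft_{-1}(\gs,\qt)$. In the fundamental case this reads: $\at_0$ is the largest ideal of $\gs$ lying inside $\Ft_{-1}(\gs,\qt)$. Because $\gs$ is simple, its only ideals are $\{0\}$ and $\gs$, so we are forced into the clean dichotomy $\at_0=\{0\}$ or $\at_0=\gs$. This dichotomy is the heart of the argument.

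In the branch $\at_0=\{0\}$ I would read off contact-nondegeneracy from the Corollary characterizing it: an element $X\in\Ft_{-1}(\gs,\qt)$ for which every iterated bracket $[X,X_1,\dots,X_p]$ with $X_i\in\Ft_{-1}(\gs,\qt)$ stays in $\Ft_{-1}(\gs,\qt)$ is by definition an element of $\at_0$. Hence $\at_0=\{0\}$ means that every $X\in\Ft_{-1}(\gs,\qt)\setminus\Ft_0(\gs,\qt)$ (noting $0\in\Ft_0(\gs,\qt)$, so such $X$ is never in $\at_0$) admits $X_1,\dots,X_r\in\Ft_{-1}(\gs,\qt)$ with $[X,X_1,\dots,X_r]\notin\Ft_{-1}(\gs,\qt)$, which is exactly contact-nondegeneracy. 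In the branch $\at_0=\gs$ I would argue total complexity: then $\gs=\at_0\subseteq\Ft_{-1}(\gs,\qt)=(\qt+\sigmaup(\qt))\cap\gs$, so $\gs\subseteq\qt+\sigmaup(\qt)$; since $\qt+\sigmaup(\qt)$ is a complex subspace of $\gt$ containing the real form $\gs$, it contains $\gs+i\gs=\gt$, whence $\gt=\qt+\sigmaup(\qt)$, i.e.\ $(\gs,\qt)$ is totally complex. Combining the two branches proves the statement.

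I expect the only genuinely delicate point to be the bookkeeping that promotes $\at_0$ to an ideal of the full algebra $\gs$ and correctly matches the two formulations of contact-(non)degeneracy: one must be sure that fundamentality is what turns the a priori merely $\Ft_{-\infty}(\gs,\qt)$-ideal $\at_0$ into a $\gs$-ideal (consistent with the Remark that, under fundamentality, any contact-degeneracy witness is automatically an ideal of $\gs$), and that the vanishing $\at_0=\{0\}$ coincides verbatim with the condition in the Corollary. Once these identifications are secured, the simplicity of $\gs$ supplies the dichotomy and each branch is a one-line verification; notably the argument uses only that $\gs$ has no proper nonzero ideals, together with the filtration machinery, rather than the finer root-theoretic structure of the parabolic $\qt$.
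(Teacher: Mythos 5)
Your proposal is correct and follows essentially the same route as the paper: the paper's proof likewise observes that for a fundamental $CR$-algebra contact-nondegeneracy amounts to $\Ft_{-1}(\gs,\qt)$ containing no nontrivial ideal of $\gs$, and that simplicity of $\gs$ reduces this to the dichotomy $(\qt+\sigmaup(\qt))\neq\gt$ versus total complexity. Your version merely makes explicit the intermediate bookkeeping (that $\at_0$ becomes a $\gs$-ideal under fundamentality, and that $\gs\subseteq\qt+\sigmaup(\qt)$ forces $\gt=\qt+\sigmaup(\qt)$) that the paper leaves implicit.
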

\begin{proof}
Indeed
a fundamental %
$(\gs,\qt)$ is contact-nondegenerate iff 
{$\Ft_{ -1}(\gs,\qt)$}
does not
contain a nontrivial ideal of $\gs$. When $\gs$ is simple,
this is equivalent to the fact that $(\qt{+}\sigmaup(\qt))\,{\neq}\,\gt$, i.e.
that $(\gs,\qt)$ is not totally complex.
\end{proof}

\subsection{$S$-fit and $V$-fit chambers} \label{s5.3}  
While drawing representing diagrams for semisimple real forms corresponding to different involutions $\stt$
of the root system, we used in \cite{MMN23} 
special Weyl chambers, that we called of type $S$
(from \textit{Satake}: are those in which positive $\stt$-complex roots have positive conjugate)
and of type $V$ (from \textit{Vogan}:  are those in which positive 
$\stt$-complex roots have negative conjugate). When we need to take into account also
a parabolic subset $\Qq$ in $\Rad$, we need to restrict to admissibe Weyl chamber which resemble
as much as possible to $S$\!{-} or $V$\!-chambers.
\par\smallskip
We recall (Definition\,\ref{d4.4}) that 
a Weyl chamber $C$
 in $\Cd(\Rad)$ is \emph{admissible} for a parabolic set $\Qq$ in $\Rad$ if $\Rad^{+}(C)\,{\subseteq}\,\Qq\, .$ 
\par
Properties of a parabolic  $(\gs,\qt)$ may be related 
to the behaviour under the involution $\stt$ of the simple positive roots
of an admissible Weyl chamber   for the parabolic set $\Qq$ of $\qt$.
{We  use the notation: 
\begin{equation*}
 \Bz_{\circ}^{\stt}(C)=\Bz(C)\cap\Rad^{\stt}_{\,\;\circ},\;\;
  \Bz_{\bullet}^{\stt}(C)=\Bz(C)\cap\Rad^{\stt}_{\,\;\bullet},\;\;
  \Bz_{\star}^{\stt}(C)=\Bz(C)\cap\Rad^{\stt}_{\,\;\star},\;\;
\end{equation*}
and  
$\Phi_{C}(\Qq)\,{=}\,\Qq^{n}\cap\Bz(C)$ (Formula \eqref{e4.6}).}

\begin{dfn} (cf. \cite[Lemma\,4.3 and Def.4.4]{AMN06b})
Let $\stt$ be an involution of $\Rad$ and $\Qq$ a parabolic set in  $\Rad$. 
An admissibe Weyl chamber $C\,{\in}\,\Cd(\Rad,\Qq)$ is called 
\begin{itemize}
  \item $S$-fit for $(\Qq\, ,\stt)$ \; iff\;\; $ \stt(\alphaup)\,{\in}\,\Rad^{+}(C),\;\forall\alphaup\,{\in}\,
 \Bz_{\star}^{\stt}(C){\setminus}\Phi_{C}(\Qq)
 $,
  \item $V$-fit for $(\Qq\, ,\stt)$ \; iff\;\; $ \stt(\alphaup)\,{\in}\,\Rad^{-}(C),\;\forall\alphaup\,{\in}\,
 \Bz_{\star}^{\stt}(C){\setminus}\Phi_{C}(\Qq)$.
 \end{itemize}
\par
If $\sigmaup\,{\in}\,\Invs$ and $\Qq$ is the set of roots of the parabolic subalgebra $\qt$ of
$\gt$, we say that $C$ is $S$-fit (resp. $V$-fit) for the parabolic $CR$-algebra $(\gs,\qt)$ iff it is 
$S$-fit (resp. $V$-fit) for $(\Qq\, ,\stt)$.
 \end{dfn}
 
\begin{prop} For every  involution $\stt$ and parabolic set $\Qq$ in $\Rad$
there are both $S$\!{-} and $V$-fit
Weyl chambers for $(\Qq\, ,\stt)$. 
\end{prop}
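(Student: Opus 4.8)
The plan is to construct each fit chamber explicitly by exhibiting a regular element of $\hr$ whose induced lexicographic order on $\Rad$ puts admissibility at top priority and enforces the required sign of $\stt(\alphaup)$ on non-crossed complex simple roots at lower priorities. Throughout I would use the $\Z$-grading $\chiup_{\Qq}$ of Proposition\,\ref{p2.5}, realised by a coweight $H_{\Qq}\in\hr$ with $\alphaup(H_{\Qq})=\chiup_{\Qq}(\alphaup)$, together with the eigenspace decomposition $\hr=\hr^{+}_{\stt}\oplus\hr^{-}_{\stt}$, where $\hr^{\pm}_{\stt}=\{H\in\hr\mid\stt(H)=\pm H\}$ and $(\stt\alphaup)(H)=\alphaup(\stt H)$.

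For the $S$-fit chamber I would pick generic $H_{+}\in\hr^{+}_{\stt}$, $H_{-}\in\hr^{-}_{\stt}$ and set
\[
 H_{S}=N^{3}H_{\Qq}+N^{2}\,\stt(H_{\Qq})+N\,H_{+}+H_{-},\qquad N\gg 1 .
\]
For $N$ large and $H_{\pm}$ generic this is regular, and its chamber $C$ realises the lexicographic order given, in decreasing priority, by $\chiup_{\Qq}$, $\chiup_{\Qq}\circ\stt$, $H_{+}$, $H_{-}$. First I would check admissibility: since $\chiup_{\Qq}$ dominates, every root with $\chiup_{\Qq}>0$ is $C$-positive and every root with $\chiup_{\Qq}<0$ is $C$-negative, so $\Rad^{+}(C)\subseteq\Qn\cup\Qr=\Qq$, i.e. $C\in\Cd(\Rad,\Qq)$.

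The heart of the matter is the sign check. Let $\alphaup\in\Bz(C)\cap\Rad^{\stt}_{\,\;\star}$ be non-crossed; since $\Bz(C)\subseteq\Qq$, being non-crossed means $\alphaup\in\Qr$, so $\chiup_{\Qq}(\alphaup)=0$. As $\alphaup$ is $C$-positive and the top functional vanishes on it, the second functional must be nonnegative on it, yielding the crucial inequality $\chiup_{\Qq}(\stt(\alphaup))\ge 0$, which is exactly what excludes $\stt(\alphaup)\in\Qc$. If $\chiup_{\Qq}(\stt(\alphaup))>0$ then $\stt(\alphaup)\in\Qn\subseteq\Rad^{+}(C)$. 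If it vanishes, then $\stt(\alphaup)\in\Qr$, and here I would use that a complex root cannot vanish identically on $\hr^{+}_{\stt}$ (else $\stt(\alphaup)=-\alphaup$, making $\alphaup$ imaginary); thus $\alphaup(H_{+})\ne 0$, hence $\alphaup(H_{+})>0$ since $\alphaup$ is positive with the two top functionals vanishing, whence $(\stt\alphaup)(H_{+})=\alphaup(H_{+})>0$ while the two top functionals vanish on $\stt(\alphaup)$ as well, so $\stt(\alphaup)\in\Rad^{+}(C)$. In either case $\stt(\alphaup)\in\Rad^{+}(C)$, which is the $S$-fit condition. The $V$-fit chamber comes symmetrically from $H_{V}=N^{3}H_{\Qq}-N^{2}\,\stt(H_{\Qq})+N\,H_{-}+H_{+}$: the second functional now forces $\chiup_{\Qq}(\stt(\alphaup))\le 0$ for every non-crossed complex simple $\alphaup$, excluding $\stt(\alphaup)\in\Qn$, and when $\chiup_{\Qq}(\stt(\alphaup))=0$ the relation $(\stt\alphaup)(H_{-})=-\alphaup(H_{-})<0$ places $\stt(\alphaup)$ in $\Rad^{-}(C)$.

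I expect the main obstacle to be precisely the verification that the second-priority functional eliminates the bad case \emph{uniformly for all simple roots at once}: this is where the parabolic datum $\Qq$ and the involution $\stt$ are forced to interact, and it rests on the two elementary but essential facts that $\chiup_{\Qq}$ vanishes on $\Qr$ and that complex roots restrict nontrivially to $\hr^{+}_{\stt}$ (resp.\ $\hr^{-}_{\stt}$). The remaining points are routine: the regularity of $H_{S}$ and $H_{V}$ for generic data (the functionals $H_{+},H_{-}$ already separate all roots, since $\hr=\hr^{+}_{\stt}\oplus\hr^{-}_{\stt}$), and the identification, used above, of the non-crossed simple roots with $\Bz(C)\cap\Qr$.
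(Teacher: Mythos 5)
Your construction is correct, but it takes a genuinely different route from the paper. The paper argues by an extremal principle: it picks an admissible chamber $C$ minimizing the cardinality of $\Aq(C)=\{\alphaup\in\Qq^{r}\cap\Rad^{+}(C)\mid \stt(\alphaup)\in\Rad^{-}(C)\}$ and shows, using Lemma~\ref{l4.7} (reflections $\rtt{\betaup}$ with $\betaup\in\Qq^{r}$ preserve admissibility), that if some non-crossed complex simple root had a $C$-negative conjugate one could reflect it away and strictly decrease $\#\Aq(C)$; the $V$-fit case is handled symmetrically by minimizing the complementary count. You instead build the chamber directly from the regular element $H_{S}=N^{3}H_{\Qq}+N^{2}\stt(H_{\Qq})+NH_{+}+H_{-}$, so that positivity refines the lexicographic order $(\chiup_{\Qq},\,\chiup_{\Qq}\circ\stt,\,H_{+},\,H_{-})$. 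All the steps check out: admissibility from dominance of $\chiup_{\Qq}$; the inequality $\chiup_{\Qq}(\stt(\alphaup))\geq 0$ for a positive simple $\alphaup$ with $\chiup_{\Qq}(\alphaup)=0$; and in the residual case $\chiup_{\Qq}(\stt(\alphaup))=0$ the observation that a complex root cannot annihilate $\hr^{+}_{\stt}$ (resp.\ $\hr^{-}_{\stt}$), which pins down the sign of $\stt(\alphaup)$ via $(\stt\alphaup)(H_{\pm})=\pm\alphaup(H_{\pm})$. What each approach buys: the paper's argument is shorter because it reuses Lemma~\ref{l4.7}, and it makes visible that one can pass from any admissible chamber to a fit one by reflections in $\Qq^{r}$ (consistent with Corollary~\ref{c2.4}); your argument is self-contained and constructive, produces an explicit regular element, and ties the fit chambers to the bigrading $(\chiup_{\Qq},\chiup_{\Qq}\circ\stt)$ of \eqref{e2.14}, which is conceptually illuminating. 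Two small points of care, both of which you essentially address: the choice of $N$ must be uniform over the (finitely many) roots, and ``generic'' for $H_{\pm}$ means avoiding the finitely many hyperplanes $\alphaup|_{\hr^{\pm}_{\stt}}=0$ for roots not vanishing identically there.
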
 
\begin{proof} Let $C\,{\in}\,\Cd(\Rad,\Qq)$ be a chamber for which the number of roots in 
\begin{equation*}
\Aq(C)= \{\alphaup\in\Qq^{r}\,{\cap}\,\Rad^{+}(C)\,{\mid}\,\stt(\alphaup)\,{\in}\,\Rad^{-}(C)\}
\end{equation*}
is minimal. We claim that $C$ is $S$-fit. Assume by contradiction that there is 
$\betaup\,{\in}\,\Bz^{\stt}_{\star}(C){\setminus}\Phi_{C}$ such that $\stt(\alphaup)\,{\in}\,\Rad^{-}(C)$.
Since $\betaup\,{\in}\,\Qq^{r}$, the chamber $C'$ with
\begin{equation*}
 \Rad^{+}(C')=\rtt{\betaup}(\Rad^{+}(C))=(\Rad^{+}(C){\setminus}\{\betaup\})\cup\{{-}\betaup\}
\end{equation*}
is, by Lemma\,\ref{l4.7}, 
 admissible for $\Qq$ 
and then $\Aq(C')\,{=}\,\Aq(C){\setminus}\{\betaup\}$ contradicts minimality. 
\par\smallskip
Analogously, a $C'\,{\in}\,\Cd(\Rad,\Qq)$ for which the number of roots in 
\begin{equation*}
 \Aq'(C')=\{\alphaup\,{\in}\,\Qq^{r}\,{\cap}\,\Rad^{+}(C')\mid \stt(\alphaup)\,{\in}\,\Rad^{+}(C')\}
\end{equation*}
is minimal is $V$-fit for $(\gs,\qt).$
\end{proof}
\begin{lem}\label{l4.20}
 Let $C\,{\in}\,\Cd(\Rad)$. Then 
\begin{align*} \begin{array}{c c c}
 C \;\text{is $S$-fit for $(\Qq\, ,\stt)$} & \Longleftrightarrow & \stt(\alphaup)\,{\in}\,\Rad^{+}(C),\;
 \forall\alphaup\,{\in}\,\Qq^{r}\,{\cap}\,\Rad^{+}(C)\,{\cap}\,\Rad^{\stt}_{\,\;\star},\\
 C \;\text{is $V$-fit for $(\Qq\, ,\stt)$} & \Longleftrightarrow & \stt(\alphaup)\,{\in}\,\Rad^{-}(C),\;
 \forall\alphaup\,{\in}\,\Qq^{r}\,{\cap}\,\Rad^{+}(C)\,{\cap}\,\Rad^{\stt}_{\,\;\star}.
 \end{array}
\end{align*}
\end{lem}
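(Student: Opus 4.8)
The plan is to reduce both equivalences to a statement about the reductive subsystem $\Qq^{r}$ and then exploit the fact that the simple-root coordinates of a root are all of one sign. Since $S$-fitness and $V$-fitness are only defined for admissible chambers, I would first take $C\,{\in}\,\Cd(\Rad,\Qq)$, so that $\Bz(C)\,{\subseteq}\,\Rad^{+}(C)\,{\subseteq}\,\Qq$ and hence $\Bz(C){\setminus}\Phi_{C}\,{=}\,\Bz(C)\,{\cap}\,\Qq^{r}\,{=}\,\Phi^{\vee}_{C}$; thus $\Bz_{\star}^{\stt}(C){\setminus}\Phi_{C}$ is precisely the set of $\stt$-complex roots in $\Phi^{\vee}_{C}$. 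In each line the implication ``$\Leftarrow$'' is then immediate, since the simple roots lie among the positive roots of $\Qq^{r}$, so the condition over all $\alphaup\,{\in}\,\Qq^{r}\,{\cap}\,\Rad^{+}(C)\,{\cap}\,\Rad^{\stt}_{\,\;\star}$ specialises to the defining condition of $S$- (resp. $V$-) fitness. It remains to prove ``$\Rightarrow$''.

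For the $S$-fit direction I would record three elementary facts: by Remark~\ref{r1.7} one has $\Qq^{r}\,{=}\,\Z[\Phi^{\vee}_{C}]\,{\cap}\,\Rad$, so every $\betaup\,{\in}\,\Qq^{r}\,{\cap}\,\Rad^{+}(C)$ is a nonnegative integral combination $\betaup\,{=}\,{\sum}_{\gammaup\in\Phi^{\vee}_{C}}k_{\gammaup}\gammaup$; since $\stt$ acts by ${-}\id$ on the imaginary simple roots, any root lying in the $\Z$-span of the imaginary simple roots is imaginary; and $\stt$ preserves the trichotomy real/imaginary/complex, because $\stt(\alphaup)$ is imaginary exactly when $\alphaup$ is. Now let $\betaup$ be $\stt$-complex. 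Applying $\stt$ gives $\stt(\betaup)\,{=}\,{\sum}_{\gammaup}k_{\gammaup}\stt(\gammaup)$, where $\stt(\gammaup)\,{=}\,{-}\gammaup$ for imaginary $\gammaup$, while $\stt(\gammaup)\,{\in}\,\Rad^{+}(C)$ for the real $\gammaup$ (trivially) and for the complex $\gammaup$ in $\Phi^{\vee}_{C}$ (by $S$-fitness). Let $P$ discard, in the $\Bz(C)$-expansion, the coordinates of the imaginary simple roots. Then $P(\stt(\gammaup))\,{=}\,0$ for imaginary $\gammaup$, whereas for a non-imaginary $\gammaup\,{\in}\,\Phi^{\vee}_{C}$ the positive root $\stt(\gammaup)$ is again non-imaginary, hence not supported on imaginary simple roots, so $P(\stt(\gammaup))$ is a nonzero vector with nonnegative entries. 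As $\betaup$ is complex it is not imaginary, so some $k_{\gammaup_{0}}\,{>}\,0$ with $\gammaup_{0}$ non-imaginary; therefore $P(\stt(\betaup))$ is nonzero with nonnegative entries, i.e. $\stt(\betaup)$ has a strictly positive coordinate on some non-imaginary simple root. Since the $\Bz(C)$-coordinates of a root are either all ${\geq}\,0$ or all ${\leq}\,0$, this forces $\stt(\betaup)\,{\in}\,\Rad^{+}(C)$.

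The $V$-fit direction is entirely symmetric, with the roles of the imaginary and the real simple roots interchanged: there I would show ${-}\stt(\betaup)\,{\in}\,\Rad^{+}(C)$, let $P'$ discard the coordinates of the \emph{real} simple roots, and use that a root in the span of the real simple roots is real, together with ${-}\stt(\gammaup)\,{\in}\,\Rad^{+}(C)$ for complex $\gammaup$ (by $V$-fitness) and ${-}\stt(\deltaup)\,{=}\,\deltaup$ for imaginary $\deltaup$. The main obstacle in both cases is exactly that the non-complex simple roots appearing in the support of $\betaup$ produce terms of the ``wrong'' sign after applying $\stt$ (the imaginary ones for $S$, the real ones for $V$); the device that overcomes it is to project these offending coordinates away, check that the remaining, visibly nonnegative, projection is nonzero, and then invoke the all-coordinates-one-sign property of roots to upgrade ``one positive coordinate'' to ``globally positive''.
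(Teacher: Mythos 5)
Your proposal is correct and follows essentially the same route as the paper: reduce to the expansion of $\alphaup\,{\in}\,\Qq^{r}\,{\cap}\,\Rad^{+}(C)\,{\cap}\,\Rad^{\stt}_{\,\;\star}$ in the simple roots of $\Phi^{\vee}_{C}$, apply $\stt$ term by term, and use the all-coordinates-of-one-sign property of roots to detect the sign of $\stt(\alphaup)$. The only (harmless) difference is bookkeeping: the paper reads off the coefficients on the \emph{complex} simple roots in both cases, whereas you project away the imaginary (resp. real) coordinates for the $S$- (resp. $V$-) case, which lets you bypass the strong-orthogonality remark the paper invokes.
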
 
\begin{proof}
 Since 
{$\Bz_{\star}^{\stt}(C){\setminus}\Phi_{C}\,{\subseteq}\,
 \Qq^{r}\,{\cap}\,\Rad^{+}(C)\,{\cap}\,\Rad^{\stt}_{\,\;\star},$}
 the implications ``$\Leftarrow$'' are trivial. Let us prove the vice versa. Let 
 \begin{equation*}
 \alphaup={\sum}_{\betaup\in\Bz(C)\setminus\Phi_{C}}k_{\alphaup,\betaup}\betaup\in
 \Qq^{r}\,{\cap}\,\Rad^{+}(C)\,{\cap}\,\Rad^{\stt}_{\,\;\star}.
\end{equation*}
Since $\Rad_{\,\;\circ}^{\stt}$ and $\Rad_{\,\;\bullet}^{\stt}$ are 
root subsystems of $\Rad$ orthogonal to each other, 
{and}
real and imaginary simple roots 
{are}
strongly orthogonal,
$\supp_{C}(\alphaup)$ is not contained in any of them. 
Hence it
contains elements of
$\Bz_{\star}^{\stt}(C)$. Therefore
$\supp_{C}(\stt(\alphaup))$ contains elements of $\Bz_{\star}^{\stt}(C),$
which, in its decomposition as a linear combination of elements of $\Bz(C)$,
appear with positive coefficients when $C$ is $S$-fit, with negative coefficients
when $C$ is a $V$-fit. 
The statement follows from this observation.
\end{proof}
\begin{lem}\label{l4.21}
 Let $C\,{\in}\,\Cd(\Rad)$.  
\begin{align*} \begin{array}{c c c}
\text{if $C$ is $S$-fit for $(\Qq\, ,\stt)$} & \text{then} & \begin{cases}
\stt(\Qq^{n})\subseteq\Qc\,{\cup}\,\Rad^{+}(C),\\
\stt(\Qq^{r}\cap\Rad^{+}(C))\subset\Qq\, ,
\end{cases}\\
\quad
\\
\text{If $C$ is  $V$-fit for $(\Qq\, ,\stt)$} & \text{then} & \begin{cases}
\stt(\Qq^{n})\subseteq\Qq^{n}\,{\cup}\,\Rad^{-}(C),\\
\stt(\Qq^{r}\cap\Rad^{-}(C))\subset\Qq\, .
\end{cases}
 \end{array}
\end{align*}
\end{lem}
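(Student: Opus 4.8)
The plan is to reduce each of the four inclusions to the criterion established in Lemma~\ref{l4.20}, sorting every root according to its $\stt$-type (real, imaginary, or complex) and using throughout that admissibility of $C$ means $\Rad^{+}(C)\subseteq\Qq$, equivalently $\Qq^{n}\subseteq\Rad^{+}(C)$ by Lemma~\ref{l2.5}. I would also record at the outset that $\stt$ is linear with $\stt^{2}\,{=}\,\id$, that it preserves the set $\Rad^{\stt}_{\,\;\star}$ of complex roots, and that $\Qq\,{\cap}\,\Rad^{-}(C)\,{=}\,\Qq^{r}\,{\cap}\,\Rad^{-}(C)$ (if $\beta\,{\in}\,\Qq$ is $C$-negative then $-\beta\,{\in}\,\Rad^{+}(C)\subseteq\Qq$, so $\beta\,{\in}\,\Qq^{r}$).

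First I would dispose of the two inclusions involving $\Qq^{r}$, which are the easy ones. Take $\alpha\,{\in}\,\Qq^{r}\,{\cap}\,\Rad^{+}(C)$ (for the $S$-fit case) or $\alpha\,{\in}\,\Qq^{r}\,{\cap}\,\Rad^{-}(C)$ (for the $V$-fit case). If $\alpha$ is real or imaginary, then $\stt(\alpha)\,{=}\,\pm\alpha$, and since $\alpha\,{\in}\,\Qq^{r}$ forces $-\alpha\,{\in}\,\Qq$, we get $\stt(\alpha)\,{\in}\,\Qq$ in either subcase. If $\alpha$ is complex, then $\alpha\,{\in}\,\Qq^{r}\,{\cap}\,\Rad^{+}(C)\,{\cap}\,\Rad^{\stt}_{\,\;\star}$ (after replacing $\alpha$ by $-\alpha$ in the $V$-fit case, which stays in $\Qq^{r}$ and becomes $C$-positive), so Lemma~\ref{l4.20} gives $\stt(\alpha)\,{\in}\,\Rad^{+}(C)\subseteq\Qq$ for $S$-fit, and $\stt(-\alpha)\,{\in}\,\Rad^{-}(C)$, i.e. $\stt(\alpha)\,{\in}\,\Rad^{+}(C)\subseteq\Qq$, for $V$-fit. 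This settles both $\Qq^{r}$-inclusions.

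The two inclusions involving $\Qq^{n}$ are the substantive ones and call for a contradiction. For the $S$-fit case, let $\alpha\,{\in}\,\Qq^{n}$; since the target $\Qc\,{\cup}\,\Rad^{+}(C)$ already contains $\Rad^{+}(C)$, I may assume $\stt(\alpha)\,{\in}\,\Rad^{-}(C)$ and must show $\stt(\alpha)\,{\in}\,\Qc$. The real subcase cannot occur under this assumption (it would give $\stt(\alpha)\,{=}\,\alpha\,{\in}\,\Rad^{+}(C)$), and the imaginary subcase is immediate since $\stt(\alpha)\,{=}\,{-}\alpha\,{\in}\,\Qc$ because $\alpha\,{\in}\,\Qq^{n}$. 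If $\alpha$ is complex and, contrary to the claim, $\stt(\alpha)\,{\in}\,\Qq$, then $\stt(\alpha)\,{\in}\,\Qq^{r}\,{\cap}\,\Rad^{-}(C)$, so $\gamma\,{=}\,{-}\stt(\alpha)\,{\in}\,\Qq^{r}\,{\cap}\,\Rad^{+}(C)\,{\cap}\,\Rad^{\stt}_{\,\;\star}$; Lemma~\ref{l4.20} then forces $\stt(\gamma)\,{\in}\,\Rad^{+}(C)$, whereas $\stt(\gamma)\,{=}\,{-}\alpha\,{\in}\,\Rad^{-}(C)$ because $\alpha\,{\in}\,\Qq^{n}\subseteq\Rad^{+}(C)$ --- a contradiction. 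The $V$-fit case is parallel: for $\alpha\,{\in}\,\Qq^{n}$ one may assume $\stt(\alpha)\,{\in}\,\Rad^{+}(C)\subseteq\Qq$ and must rule out $\stt(\alpha)\,{\in}\,\Qq^{r}$; the real and imaginary subcases are again trivial, while for complex $\alpha$ an assumed $\stt(\alpha)\,{\in}\,\Qq^{r}\,{\cap}\,\Rad^{+}(C)\,{\cap}\,\Rad^{\stt}_{\,\;\star}$ yields, via Lemma~\ref{l4.20}, $\stt(\stt(\alpha))\,{=}\,\alpha\,{\in}\,\Rad^{-}(C)$, again contradicting $\Qq^{n}\subseteq\Rad^{+}(C)$.

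The main obstacle is precisely this last step, namely getting the bookkeeping of the contradiction right: one must apply the Lemma~\ref{l4.20} criterion not to $\stt(\alpha)$ itself but to the correctly signed complex root lying in $\Qq^{r}\,{\cap}\,\Rad^{+}(C)$, and then use $\stt^{2}\,{=}\,\id$ together with $\Qq^{n}\subseteq\Rad^{+}(C)$ to produce the sign clash. Everything else is routine type-checking, the one point needing care being the verification that the candidate root genuinely lies in $\Qq^{r}\,{\cap}\,\Rad^{+}(C)\,{\cap}\,\Rad^{\stt}_{\,\;\star}$ before Lemma~\ref{l4.20} is invoked.
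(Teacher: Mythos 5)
Your proof is correct and follows essentially the same route as the paper's: both split roots by their $\stt$-type, handle real and imaginary roots directly, and for complex roots apply the criterion of Lemma~\ref{l4.20} to the appropriately signed root in $\Qq^{r}\,{\cap}\,\Rad^{+}(C)\,{\cap}\,\Rad^{\stt}_{\,\;\star}$, deriving a contradiction with $\Qq^{n}\,{\subseteq}\,\Rad^{+}(C)$ via $\stt^{2}\,{=}\,\id$. Your write-up is merely more explicit about the sign bookkeeping that the paper leaves terse.
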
 
\begin{proof} We have
\begin{equation*}
\stt(\Qq^{n}\,{\cap}\,(\Rad_{\,\;\circ}^{\stt}\,{\cup}\,\Rad_{\,\;\bullet}^{\stt}))
\,{\subseteq}\, 
\Qq^{n}\cup\Qc\;\;\text{and}\;\; \Rad^{-}(C)\,{\subseteq}\,\Qq^{\vee}\,{=}\,\Qc\,{\cup}\,
 \Qq^{r}.
\end{equation*}
 Consider a root $\alphaup$ in $\Qq^{n}\,{\cap}\,\Rad_{\,\;\star}^{\stt}$. \par
 If $C$ is $S$-fit for $(\Qq\, ,\stt)$ and $\stt(\alphaup)\,{\in}\,\Rad^{-}(C)$, then  $\stt(\alphaup)\,{\in}\Qc$.
 Indeed, if by contradiction 
$\stt(\alphaup)\,{\in}\,\Qq^{r}$, then, 
 since $\stt(\alphaup)\,{\in}\,\Rad_{\,\;\star}^{\stt}$, by Lemma\,\ref{l4.20}
 this would imply that $\alphaup\,{\in}\,\Rad^{-}(C)$, yielding a contradiction. 
 This completes the proof of the first inclusion and,   
{to}
 check
the second one, it suffices to consider roots $\alphaup\,{\in}\,
\Qq^{r} \,{\cap}\,\Rad^{+}(C)\,{\cap}\,\Rad_{\,\;\bullet}^{\stt}$, for which $\stt(\alphaup)\,{=}\,{-}\alphaup
\,{\in}\,\Qq^{r}\,{\subseteq}\,\Qq$.\par\smallskip
A similar argument applies when $C$ is $V$-fit for $(\gs,\qt)$. Indeed, in this case, if 
$\alphaup\,{\in}\,\Qq^{n}\,{\cap}\,\Rad^{\stt}_{\,\;\star}$ and $\stt(\alphaup)\,{\in}\,\Rad^{+}(C)$,
then $\stt(\alphaup)\,{\in}\,\Qq^{n}$, because $C$-positive complex roots in $\Qq^{r}$ have negative
$\stt$-conjugate. For the second inclusion again it suffices to consider real and imaginary roots in $\Qq^{r}$,
for which the statement is clear. The proof is complete.
\end{proof}
\begin{lem} \label{l4.22}
Let $\stt$ be an involution of $\Rad,$
 $\Qq\,{\in}\Pcr(\Rad)$, $C_{S}$ and $C_{V}$ an $S$-fit and a $V$-fit 
Weyl chamber for $(\Qq\, ,\stt)$, respectively. Then 
\begin{equation}\label{e2.12}\begin{cases}
\Qq^{n}\,{\cap}\,\stt(\Qc)\,{=}\,\{\alphaup\,{\in}\,\Qq^{n}\,{\mid}\,\stt(\alphaup)\,{\in}\,\Rad^{-}(C_{S})\},\\
 \Qq^{n}\,{\cap}\,\stt(\Qq^{n})\,{=}\,\{\alphaup\,{\in}\,\Qq^{n}\,{\mid}\,\stt(\alphaup)\,{\in}\,\Rad^{+}(C_{V})\},\\
  \Rad^{+}(C_{S})\cap\stt(\Rad^{-}(C_{S}))=(\Rad^{+}(C_{S})\cap\Rad^{\stt}_{\,\;\bullet})\cup(\Qq^{n}\cap\stt(\Qq^{c})),\\
 \Rad^{+}(C_{V})\cap\stt(\Rad^{+}(C_{V}))=(\Rad^{+}(C_{V})\cap\Rad^{\stt}_{\,\;\circ})\cup(\Qq^{n}\cap\stt(\Qq^{n})).
 \end{cases}
\end{equation}
\end{lem}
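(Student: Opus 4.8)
The plan is to first establish the two ``pointwise'' identities (the first and second lines of \eqref{e2.12}), and then obtain the two ``chamber'' identities (the third and fourth lines) by combining them with a case analysis on the $\stt$-type of a root. Throughout I will use that $\stt$ is an involution, so that $\alphaup\,{\in}\,\stt(S)\,{\Leftrightarrow}\,\stt(\alphaup)\,{\in}\,S$ for any $S\,{\subseteq}\,\Rad$, and that each root is either real ($\stt(\alphaup)\,{=}\,\alphaup$, i.e.\ $\alphaup\,{\in}\,\Rad^{\stt}_{\,\;\circ}$), imaginary ($\stt(\alphaup)\,{=}\,{-}\alphaup$, i.e.\ $\alphaup\,{\in}\,\Rad^{\stt}_{\,\;\bullet}$), or complex ($\alphaup\,{\in}\,\Rad^{\stt}_{\,\;\star}$). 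Two consequences of admissibility will be invoked repeatedly: $\Qq^{n}\,{\subseteq}\,\Rad^{+}(C)$ for every $C\,{\in}\,\Cd(\Rad,\Qq)$ (Lemma~\ref{l2.5}(4)), and, since $\Rad^{+}(C)\,{\subseteq}\,\Qq$, also $\Qc\,{=}\,\Rad{\setminus}\Qq\,{\subseteq}\,\Rad^{-}(C)$.

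For the first identity I would show that, for $\alphaup\,{\in}\,\Qq^{n}$, the conditions $\stt(\alphaup)\,{\in}\,\Qc$ and $\stt(\alphaup)\,{\in}\,\Rad^{-}(C_{S})$ are equivalent: if $\stt(\alphaup)\,{\in}\,\Qc$ then $\stt(\alphaup)\,{\in}\,\Rad^{-}(C_{S})$ by the inclusion $\Qc\,{\subseteq}\,\Rad^{-}(C_{S})$ just noted; conversely, if $\stt(\alphaup)\,{\in}\,\Rad^{-}(C_{S})$, then the $S$-fit inclusion $\stt(\Qq^{n})\,{\subseteq}\,\Qc\,{\cup}\,\Rad^{+}(C_{S})$ of Lemma~\ref{l4.21} forces $\stt(\alphaup)\,{\in}\,\Qc$. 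The second identity is dual: for $\alphaup\,{\in}\,\Qq^{n}$, if $\stt(\alphaup)\,{\in}\,\Qq^{n}$ then $\stt(\alphaup)\,{\in}\,\Rad^{+}(C_{V})$ because $\Qq^{n}\,{\subseteq}\,\Rad^{+}(C_{V})$; conversely, if $\stt(\alphaup)\,{\in}\,\Rad^{+}(C_{V})$, the $V$-fit inclusion $\stt(\Qq^{n})\,{\subseteq}\,\Qq^{n}\,{\cup}\,\Rad^{-}(C_{V})$ of Lemma~\ref{l4.21} forces $\stt(\alphaup)\,{\in}\,\Qq^{n}$.

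For the third identity I would rewrite its left-hand side as $\{\alphaup\,{\in}\,\Rad^{+}(C_{S})\,{\mid}\,\stt(\alphaup)\,{\in}\,\Rad^{-}(C_{S})\}$ and split it by $\stt$-type. Real roots cannot contribute (they satisfy $\stt(\alphaup)\,{=}\,\alphaup\,{\in}\,\Rad^{+}(C_{S})$); imaginary roots $\alphaup\,{\in}\,\Rad^{+}(C_{S})\,{\cap}\,\Rad^{\stt}_{\,\;\bullet}$ always contribute, since $\stt(\alphaup)\,{=}\,{-}\alphaup$; and for a complex root $\alphaup\,{\in}\,\Rad^{+}(C_{S})$ with $\stt(\alphaup)\,{\in}\,\Rad^{-}(C_{S})$ the case $\alphaup\,{\in}\,\Qq^{r}$ is excluded by Lemma~\ref{l4.20} (which for an $S$-fit chamber would give $\stt(\alphaup)\,{\in}\,\Rad^{+}(C_{S})$), so $\alphaup\,{\in}\,\Qq^{n}$, and by the first identity these are exactly the elements of $\Qq^{n}\,{\cap}\,\stt(\Qc)$. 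The reverse inclusion is immediate: $\Rad^{+}(C_{S})\,{\cap}\,\Rad^{\stt}_{\,\;\bullet}$ lies in the left-hand side directly, while $\Qq^{n}\,{\subseteq}\,\Rad^{+}(C_{S})$ together with the first identity handles the second piece. The fourth identity is proved by the same scheme with the roles of real and imaginary roots interchanged, using the $V$-fit half of Lemma~\ref{l4.20} and the second identity.

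The one step that needs genuine care—and where the $S$- or $V$-fitness hypothesis is actually consumed—is ruling out $\alphaup\,{\in}\,\Qq^{r}$ for a complex root lying in the left-hand side of the third and fourth identities; everything else is bookkeeping with the partition of $\Rad$ and the admissibility inclusions. I do not expect a serious obstacle beyond keeping the four sign conventions straight, namely $\Rad^{\pm}$ relative to $C_{S}$ versus $C_{V}$, and $\Qq^{n}$ versus $\Qc$.
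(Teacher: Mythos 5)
Your proof is correct and follows essentially the same route as the paper: both arguments reduce everything to Lemma~\ref{l4.20} (you mostly via its consequence Lemma~\ref{l4.21}), the inclusions $\Qq^{n}\,{\subseteq}\,\Rad^{+}(C)$ and $\Qc\,{\subseteq}\,\Rad^{-}(C)$ for admissible $C$, and the trichotomy for $\stt(\alphaup)$ (respectively the $\stt$-type of $\alphaup$). The paper's proof is terser — it disposes of the last two identities with a bare reference to Lemma~\ref{l4.20} — and your case analysis by real/imaginary/complex roots is exactly the bookkeeping it leaves implicit.
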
 
\begin{proof} Let $\alphaup\,{\in}\,\Qq^{n}$. If $\stt(\alphaup)\,{\in}\,\Qq^{r}$, then $\alphaup\,{\in}\,
\Rad^{\stt}_{\,\;\star}$ and 
$\stt(\alphaup)\,{\in}\,\Rad^{+}(C_{S})\,{\cap}\,\Rad^{-}(C_{V})$,
because $C_{S}$ and $C_{V}$ are $S$-fit and 
$V$-fit, respectively. Hence, 
$\stt(\alphaup)\,{\in}\,\Rad^{+}(C_{V})$
iff $\stt(\alphaup)\,{\in}\,\Qq^{n}$, and { $\stt(\alphaup)\,{\in}\,\Rad^{-}(C_{S})$}
iff 
$\stt(\alphaup)\,{\in}\,\Qc$. The last two lines follow from  Lemma\,\ref{l4.20}.
\end{proof} 
The $\chiup_{\Qq}$-degree of the largest root $\deltaup_{C}$ is the same for all  
$C\,{\in}\,\Cd(\Rad,\Qq)$. This may not be the case for {the}
$\chiup_{\Qq}$-degree of their $\stt$-conjugate.
\begin{prop}
 If $C_{S}$ and $C_{V}$ are an $S$\!{-} and a $V$-fit chamber for $(\Qq\,,\stt)$, respectively, then 
\begin{equation}
\chiup(\stt(\deltaup_{C_{V}}))\leq \chiup(\stt(\deltaup_{C}))\leq \chiup(\stt(\deltaup_{C_{S}})),\;\;
\forall C\,{\in}\,\Cd(\Rad,\Qq).
\end{equation}
\end{prop}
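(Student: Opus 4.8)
The plan is to reduce the two inequalities to a single positivity statement about the additive map $\chiup_{\Qq}\,{\circ}\,\stt$ on the root lattice, evaluated on the positive roots of the reductive part $\Qq^{r}$. First I would record that, by Corollary~\ref{c2.4}, the group $\Wf(\Rad,\Qq)$ generated by the reflections $\rtt{\betaup}$ with $\betaup\,{\in}\,\Qq^{r}$ acts simply transitively on $\Cd(\Rad,\Qq)$, so every admissible chamber is $w\,{\cdot}\,C_{S}$ (with $\Rad^{+}(w\,{\cdot}\,C_{S})\,{=}\,w(\Rad^{+}(C_{S}))$) for a unique $w\,{\in}\,\Wf(\Rad,\Qq)$; since $w$ is an isometry carrying the positive system to the positive system, the largest root transforms as $\deltaup_{w\cdot C_{S}}\,{=}\,w(\deltaup_{C_{S}})$. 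Writing $\thetaup\,{=}\,\deltaup_{C_{S}}$ and using that $\chiup_{\Qq}\,{\in}\,\Hom(\Z[\Rad],\Z)$ is additive, the upper bound $\chiup(\stt(\deltaup_{C}))\,{\leq}\,\chiup(\stt(\deltaup_{C_{S}}))$ becomes
\begin{equation*}
 \chiup_{\Qq}(\stt(\thetaup))-\chiup_{\Qq}(\stt(w\thetaup))=\chiup_{\Qq}\big(\stt(\thetaup-w\thetaup)\big)\geq 0,\quad \forall w\,{\in}\,\Wf(\Rad,\Qq).
\end{equation*}

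Next I would invoke the standard fact that a dominant weight is maximal in its Weyl-orbit for the dominance order. Since $\thetaup\,{=}\,\deltaup_{C_{S}}$ is the highest root for $\Rad^{+}(C_{S})$ it satisfies $\langle\thetaup\,|\,\betaup\rangle\,{\geq}\,0$ for every $\betaup\,{\in}\,\Qq^{r}\,{\cap}\,\Rad^{+}(C_{S})$, hence is dominant for the root subsystem $\Qq^{r}$; therefore, for each $w\,{\in}\,\Wf(\Rad,\Qq)$,
\begin{equation*}
 \thetaup-w\thetaup={\sum}_{\betaup}n_{\betaup}\,\betaup,\qquad n_{\betaup}\,{\in}\,\Z_{\geq 0},
\end{equation*}
the sum running over the simple roots $\betaup$ of $\Qq^{r}$, all of which lie in $\Qq^{r}\,{\cap}\,\Rad^{+}(C_{S})$. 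Applying the additive map $\chiup_{\Qq}\,{\circ}\,\stt$, it then suffices to prove $\chiup_{\Qq}(\stt(\betaup))\,{\geq}\,0$, i.e. $\stt(\betaup)\,{\in}\,\Qq$ by Proposition~\ref{p2.5}, for every $\betaup\,{\in}\,\Qq^{r}\,{\cap}\,\Rad^{+}(C_{S})$.

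This last inclusion is where $S$-fitness enters, and it is checked by splitting $\betaup$ according to the partition $\Rad\,{=}\,\Rad^{\stt}_{\,\;\circ}\,{\cup}\,\Rad^{\stt}_{\,\;\bullet}\,{\cup}\,\Rad^{\stt}_{\,\;\star}$. If $\betaup$ is real then $\stt(\betaup)\,{=}\,\betaup\,{\in}\,\Qq^{r}\,{\subseteq}\,\Qq$; if $\betaup$ is imaginary then $\stt(\betaup)\,{=}\,{-}\betaup\,{\in}\,\Qq^{r}\,{\subseteq}\,\Qq$; and if $\betaup$ is complex then Lemma~\ref{l4.20} gives $\stt(\betaup)\,{\in}\,\Rad^{+}(C_{S})\,{\subseteq}\,\Qq$, precisely because $C_{S}$ is $S$-fit. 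Hence $\stt(\Qq^{r}\,{\cap}\,\Rad^{+}(C_{S}))\,{\subseteq}\,\Qq$ and the upper bound follows. The lower bound is entirely symmetric: taking $C_{V}$ as base chamber and $\thetaup\,{=}\,\deltaup_{C_{V}}$, the same orbit computation reduces the claim to $\chiup_{\Qq}(\stt(\betaup))\,{\leq}\,0$, i.e. $\stt(\betaup)\,{\in}\,\Qq^{\vee}$, for $\betaup\,{\in}\,\Qq^{r}\,{\cap}\,\Rad^{+}(C_{V})$; the real and imaginary cases again land in $\Qq^{r}\,{\subseteq}\,\Qq^{\vee}$, while for complex $\betaup$ the $V$-fit condition with Lemma~\ref{l4.20} gives $\stt(\betaup)\,{\in}\,\Rad^{-}(C_{V})\,{\subseteq}\,\Qq^{\vee}$.

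The main obstacle, and the step I would set up most carefully, is the reduction in the first paragraph: identifying $\chiup(\stt(\deltaup_{C}))$ with $\chiup_{\Qq}\,{\circ}\,\stt$ applied to a Weyl-orbit point $w\thetaup$, and then recognising that the dominance-order inequality $\thetaup\,{-}\,w\thetaup\,{\in}\,\Z_{\geq 0}[\Qq^{r}\,{\cap}\,\Rad^{+}]$ converts the global comparison over \emph{all} admissible chambers into a single sign condition on the simple roots of $\Qq^{r}$. Once this is in place the $S$- and $V$-fit hypotheses feed in through Lemma~\ref{l4.20} with no further work, so no delicate estimate remains; note also that $\chiup_{\Qq}\,{\circ}\,\rtt{\betaup}\,{=}\,\chiup_{\Qq}$ for $\betaup\,{\in}\,\Qq^{r}$, which re-proves Corollary~\ref{c2.10} and guarantees all the chambers in play are genuinely admissible.
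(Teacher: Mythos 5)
Your proof is correct, and it reaches the same final step as the paper — reducing both inequalities to the sign of $\chiup_{\Qq}(\stt(\betaup))$ for $\betaup\,{\in}\,\Qq^{r}\,{\cap}\,\Rad^{+}(C_{S})$ (resp.\ $\Rad^{+}(C_{V})$), settled by the real/imaginary/complex trichotomy together with Lemma~\ref{l4.20} — but you get there by a genuinely different route. The paper's proof takes the difference $\deltaup_{C_{S}}{-}\,\deltaup_{C}$ and produces a minimal-length sequence $(\betaup_{1},\hdots,\betaup_{r})$ in $\Qq^{r}$ whose partial sums subtracted from $\deltaup_{C_{S}}$ remain roots, then proves that all the $\betaup_{i}$ are $C_{S}$-positive by a contradiction argument carried out with higher-order commutators in a Chevalley system; this occupies most of the paper's proof. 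You replace that entire construction by Corollary~\ref{c2.4} (every admissible chamber is $w\,{\cdot}\,C_{S}$ with $w\,{\in}\,\Wf(\Rad,\Qq)$, so $\deltaup_{C}\,{=}\,w(\deltaup_{C_{S}})$) plus the textbook fact that a dominant vector dominates its Weyl orbit, giving $\deltaup_{C_{S}}{-}\,w\deltaup_{C_{S}}\,{\in}\,\Z_{\geq 0}[\Phi^{\vee}_{C_{S}}]$ directly; dominance of $\deltaup_{C_{S}}$ with respect to $\Qq^{r}\,{\cap}\,\Rad^{+}(C_{S})$ is immediate from its being the highest root. Your version is shorter and leans on a standard lemma; the paper's version is self-contained and produces the stronger ``root chain'' datum (consecutive partial sums are roots), a structure the authors reuse in the same style elsewhere (e.g.\ in Proposition~\ref{p2.57} and the $H$-index arguments), which is presumably why they argue that way here. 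Your closing observation that $\chiup_{\Qq}\,{\circ}\,\rtt{\betaup}\,{=}\,\chiup_{\Qq}$ for $\betaup\,{\in}\,\Qq^{r}$, recovering Corollary~\ref{c2.10}, is also accurate.
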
 
\begin{proof}
 {If $C,C'\,{\in}\,\Cd(\Rad,\Qq)$, by 
 Lemma\,\ref{l4.7} we can 
 obtain $\deltaup_{C}$ by applying to $\deltaup_{C'}$ 
 a sequence of reflections with respect to roots in $\Qq^{r}$. Hence
 we can find a minimal length sequence $(\betaup_{1},\hdots,\betaup_{r})$ in $\Qq^{r}$
 such that 
\begin{equation*}\tag{$*$}
 \deltaup_{C'}{-}{\sum}_{i=1}^{h}\betaup_{i}\,{\in}\,\Rad\;\forall\, 1{\leq}h{\leq}{r}
 \;\text{and}\;\; \deltaup_{C}=
 \deltaup_{C'}{-}{\sum}_{i=1}^{r}\betaup_{i}.
\end{equation*}
We claim that $\betaup_{i}\,{\in}\,\Rad^{+}(C')$ for all $i$. This is true for $i\,{=}\,1$, because
$\deltaup_{C'}$ is the largest root in $\Rad^{+}(C')$. Assume by contradiction that there
is a smallest $p$, with $1{<}p{\leq}r$, for which $\betaup_{p}\,{\in}\,\Rad^{-}(C)$. 
Let $(X_{\alphaup},H_{\alphaup})_{\alphaup\,{\in}\,\Rad}$ be a Chevalley system for $(\gt,\hg)$.
Then $(*)$ translates into the fact that 
\begin{equation*}
 [X_{\deltaup_{C'}},X_{-\betaup_{1}},\hdots,X_{-\betaup_{r}}]=
 [[[X_{\deltaup_{C'}},X_{-\betaup_{1}},\hdots,X_{-\betaup_{p-1}}],X_{-\betaup_{p}}],X_{-\betaup_{p+1}},
 \hdots,X_{-\betaup_{r}}]
\end{equation*}
is different from zero and proportional to $X_{\deltaup_{C}}$.
Since
$(\deltaup_{C'}{-}\,\betaup_{p})$ is not a root, we have 
\begin{align*}
 [[X_{\deltaup_{C'}},X_{-\betaup_{1}},\hdots,X_{-\betaup_{p-1}}],X_{-\betaup_{p}}]=
 [X_{\deltaup_{C'}},[X_{-\betaup_{1}},X_{-\betaup_{p}}],X_{-\betaup_{2}},\hdots,X_{-\betaup_{p-1}}]\\
 +[X_{\deltaup_{C'}},X_{-\betaup_{1}},\hdots,[X_{-\betaup_{i}},X_{-\betaup_{p}}],\hdots,X_{-\betaup_{p-1}}]\\
 +[X_{\deltaup_{C'}},X_{-\betaup_{1}},\hdots,X_{-\betaup_{p-2}},[X_{\betaup_{p-1}},X_{-\betaup_{p}}]].
\end{align*}
At least one  summand in the right hand side is different from $0$. Hence there is an index  $j$, 
with $1{\leq}j{<}p$,  for which $\betaup_{j}{+}\betaup_{p}$ is a root, and,
 by taking $\betaup'_{i}{=}\,\betaup_{i}$
for $i{\neq}j$ and
$1{\leq}i{<}p$, $\betaup'_{j}{=}\,\betaup_{j}{+}\,\betaup_{p}$,
 and $\betaup_{h}'{=}\,\betaup_{h+1}$
for $p{\leq}h{\leq}r{-}1$
we obtain 
a shorter sequence $(\betaup'_{1},\hdots,\betaup'_{r-1})$ in $\Qq^{r}$, with 
\begin{equation*}
 \deltaup_{C'}{-}{\sum}_{i=1}^{h}\betaup'_{i}\in\Rad,\;\forall 1{\leq}h{\leq}r{-}1,\;\;
 \deltaup_{C}=\deltaup_{C'}{-}{\sum}_{i=1}^{r-1}\betaup'_{i},
\end{equation*}
contradicting our choice of $r$.
This proves our claim. } \par
{
If $C'\,{=}\,C_{S}$ is  $S$-fit, then $\chiup_{\Qq}(\stt(\betaup_{i}))\,{\geq}\,0$ for all
$\betaup_{i}\,{\in}\,\Rad^{+}(C_{S})$, because the value is $0$ if $\betaup_{i}\,{\in}\,
\Qq^{r}{\cap}(\Rad^{\stt}_{\,\;\circ}{\cup}\,\Rad^{\stt}_{\,\;\bullet})$ and nonnegative if
$\betaup_{i}\,{\in}\,\Qq^{r}{\cap}\,\Rad^{\stt}_{\,\star}{\cap}\,\Rad^{+}(C)$,
because  by Lemma\,\ref{l4.20} $\stt(\betaup_{i})\,{\in}\,\Rad^{+}(C_{S})$ in this case. Hence 
\begin{equation*}
 \chiup_{\Qq}(\stt(\deltaup_{C}))=\chiup_{\Qq}(\deltaup_{C_{S}})-{\sum}_{h=1}^{p}\chiup_{\Qq}(\stt(\betaup_{h}))
 \leq \chiup_{\Qq}(\deltaup_{C_{S}}).
\end{equation*}}
\par
{Analogously, if $C'\,{=}\,C_{V}$ is $V$-fit, all $\chiup_{\Qq}(\stt(\betaup_{i}))$
are nonpositive, because they are $0$ for $\betaup_{i}\,{\in}\,
\Qq^{r}{\cap}(\Rad^{\stt}_{\,\;\circ}{\cup}\,\Rad^{\stt}_{\,\;\bullet})$ and nonpositive if
$\betaup_{i}\,{\in}\,\Qq^{r}{\cap}\,\Rad^{\stt}_{\,\star}{\cap}\,\Rad^{+}(C)$,
because, again  by Lemma\,\ref{l4.20}, $\stt(\betaup_{i})\,{\in}\,\Rad^{-}(C_{S})$ in this case.
This proves 
that $\chiup(\stt(\deltaup_{C}))\,{\geq}\,\chiup_{\Qq}(\stt(\deltaup_{C_{V}}))$ if $C_{V}$ is $V$-fit.}
\end{proof}
\subsection{Open and integrable orbits} (see \cite[\S{7}]{Wolf69}) 
Recall that 
a $CR$-algebra $(\gs,\qt)$ is \textit{totally complex} when $\qt\,{+}\,\sigmaup(\qt)\,{=}\,\gt$.
For the corresponding parabolic set $\Qq$ and involution $\stt$, 
this means
that $\Qq\,{\cup}\,\stt(\Qq)\,{=}\,\Rad$. 
{This is equivalent to $\Qc\,{\cap}\,\stt(\Qc)\,{=}\,\emptyset$ and
thus, since $\Qq^{n}\,{=}\,\{{-}\alphaup\,{\mid}\,\alphaup\,{\in}\,\Qc\}$, we obtain} 
\begin{prop}\label{p5.10} 
The pair $(\Qq\, ,\stt)$  is totally complex if and only if 
 \begin{equation}\label{e2.13} \vspace{-18pt}
 \Qq^{n}\,{\cap}\,\stt(\Qq^{n})\,{=}\,\emptyset.
 \end{equation}\qed
\end{prop}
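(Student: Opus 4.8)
The plan is to turn the defining condition for total complexity, namely $\Qq\,{\cup}\,\stt(\Qq)\,{=}\,\Rad$, into the asserted condition $\Qq^{n}\,{\cap}\,\stt(\Qq^{n})\,{=}\,\emptyset$ by two successive passages, first to complements and then from $\Qc$ to $\Qq^{n}$. First I would pass to complements inside $\Rad$. Since $\stt$ is an involution of $\Rad$, hence a bijection of $\Rad$ onto itself, it commutes with complementation, so $\Rad{\setminus}\stt(\Qq)\,{=}\,\stt(\Rad{\setminus}\Qq)\,{=}\,\stt(\Qc)$. Therefore
\begin{equation*}
 \Rad{\setminus}(\Qq\,{\cup}\,\stt(\Qq))=(\Rad{\setminus}\Qq)\,{\cap}\,(\Rad{\setminus}\stt(\Qq))=\Qc\,{\cap}\,\stt(\Qc),
\end{equation*}
and consequently $\Qq\,{\cup}\,\stt(\Qq)\,{=}\,\Rad$ is equivalent to $\Qc\,{\cap}\,\stt(\Qc)\,{=}\,\emptyset$.

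Next I would invoke the structural identity $\Qc\,{=}\,{-}\Qq^{n}$, which holds for any parabolic $\Qq$ (recalled above, where it is noted that $\Qq^{n}\,{=}\,\{{-}\alphaup\,{\mid}\,\alphaup\,{\in}\,\Qc\}$), together with the fact that $\stt$, being induced by a linear involution of $\hr^{*}$, commutes with the negation $\alphaup\,{\mapsto}\,{-}\alphaup$. This gives $\stt(\Qq^{n})\,{=}\,\stt({-}\Qc)\,{=}\,{-}\stt(\Qc)$, whence
\begin{equation*}
 \Qq^{n}\,{\cap}\,\stt(\Qq^{n})=({-}\Qc)\,{\cap}\,({-}\stt(\Qc))={-}\bigl(\Qc\,{\cap}\,\stt(\Qc)\bigr).
\end{equation*}
In particular $\Qq^{n}\,{\cap}\,\stt(\Qq^{n})\,{=}\,\emptyset$ if and only if $\Qc\,{\cap}\,\stt(\Qc)\,{=}\,\emptyset$. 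Chaining this with the equivalence obtained in the first step yields the proposition.

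The argument is a short set-theoretic computation once its two ingredients are in place, so there is no substantial obstacle; the only points deserving care are precisely those two ingredients. The first is that $\stt$ is a bijection of $\Rad$, so that it may be pulled through complementation; the second is the identity $\Qc\,{=}\,{-}\Qq^{n}$ together with the commutation of $\stt$ with negation. Both are immediate from the definitions and from the established description of parabolic subsets, and I would simply cite them rather than reprove them.
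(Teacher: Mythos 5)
Your argument is correct and is essentially identical to the paper's own justification, which likewise observes that $\Qq\,{\cup}\,\stt(\Qq)\,{=}\,\Rad$ is equivalent to $\Qc\,{\cap}\,\stt(\Qc)\,{=}\,\emptyset$ and then uses $\Qc\,{=}\,{-}\Qq^{n}$ together with the compatibility of $\stt$ with negation. Nothing is missing.
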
 
For the corresponding parabolic subalgebra $\qt$ and a lift $\sigmaup\,{\in}\,\Invs$, 
\eqref{e2.13} is equivalent to $\qt^{n}\,{\cap}\,\sigmaup(\qt^{n})\,{=}\,\{0\}$. Thus we get 
\begin{cor} \label{c6.10}
If $\bt$ is Borel and $(\gs,\bt)$ effective and totally complex, then 
$\bt$ and $\sigmaup(\bt)$ are opposite and $\hst\,{=}\,\bt\,{\cap}\,\gs$ 
is a maximally compact Cartan subalgebra of~$\gs$.
 \qed
\end{cor}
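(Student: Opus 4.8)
The plan is to derive the whole statement from the purely combinatorial criterion recorded in Proposition~\ref{p5.10}. First I would observe that, $\bt$ being a Borel subalgebra, its associated parabolic set is $\Qq\,{=}\,\Rad^{+}(C)$ for the unique admissible Weyl chamber $C$, so that $\Qq^{n}\,{=}\,\Rad^{+}(C)$ and $\Qq^{r}\,{=}\,\emptyset$. Since $(\gs,\bt)$ is totally complex, the equivalent condition \eqref{e2.13} reads $\Qq^{n}\,{\cap}\,\stt(\Qq^{n})\,{=}\,\emptyset$, that is $\Rad^{+}(C)\,{\cap}\,\stt(\Rad^{+}(C))\,{=}\,\emptyset$. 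As $\stt$ is a bijection of $\Rad$ with $\#\Rad^{+}(C)\,{=}\,\#\Rad^{-}(C)\,{=}\,\tfrac12\#\Rad$, this disjointness already forces the stronger equality $\stt(\Rad^{+}(C))\,{=}\,\Rad^{-}(C)\,{=}\,{-}\Rad^{+}(C)$, which is the combinatorial core of the corollary.

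Next I would transfer this equality back to the Lie algebras. Because $\sigmaup(\gt^{\alphaup})\,{=}\,\gt^{\stt(\alphaup)}$ for every root and $\sigmaup(\hg)\,{=}\,\hg$, the conjugate Borel is $\sigmaup(\bt)\,{=}\,\hg\,{\oplus}\,{\sum}_{\alphaup\in\Rad^{+}(C)}\gt^{\stt(\alphaup)}$, whose set of roots is $\stt(\Rad^{+}(C))\,{=}\,\Rad^{-}(C)$. Hence $\sigmaup(\bt)\,{=}\,\hg\,{\oplus}\,{\sum}_{\alphaup\in\Rad^{-}(C)}\gt^{\alphaup}$ is precisely the Borel subalgebra opposite to $\bt$, so $\bt\,{\cap}\,\sigmaup(\bt)\,{=}\,\hg$ and $\bt,\sigmaup(\bt)$ are opposite. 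Moreover every $X\,{\in}\,\bt\,{\cap}\,\gs$ is $\sigmaup$-fixed, hence lies in $\sigmaup(\bt)$ as well; therefore $\bt\,{\cap}\,\gs\,{=}\,\bt\,{\cap}\,\sigmaup(\bt)\,{\cap}\,\gs\,{=}\,\hg\,{\cap}\,\gs\,{=}\,\hst$, which identifies the isotropy with the adapted Cartan subalgebra $\hst$.

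Finally I would read off maximal compactness from the absence of real roots. A root $\alphaup$ is real exactly when $\stt(\alphaup)\,{=}\,\alphaup$, in which case $\alphaup$ and $\stt(\alphaup)$ have the same $C$-sign; but $\stt(\Rad^{+}(C))\,{=}\,\Rad^{-}(C)$ means that $\stt$ reverses the $C$-sign of every root, so $\Rad^{\stt}_{\,\;\circ}\,{=}\,\emptyset$. I would then invoke the standard Cayley-transform description of the conjugacy classes of Cartan subalgebras of a real semisimple Lie algebra (a $\sigmaup$-stable Cartan subalgebra is maximally compact if and only if its root system has no real root, a real root being exactly what an inverse Cayley transform needs in order to enlarge the toral part; cf.~\cite{MMN23} and the Lie-theoretic references therein): the vanishing of $\Rad^{\stt}_{\,\;\circ}$ says that the vector part $\hs^{+}$ of $\hst$ cannot be diminished, so $\hst$ is maximally compact. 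The main obstacle is exactly this last step, where the root-theoretic fact $\Rad^{\stt}_{\,\;\circ}\,{=}\,\emptyset$ must be converted into a statement about conjugacy classes of Cartan subalgebras; the opposition of $\bt$ and $\sigmaup(\bt)$ and the identity $\bt\,{\cap}\,\gs\,{=}\,\hst$ are then purely formal. The hypothesis of effectiveness requires no separate argument here, since a Cartan subalgebra of the semisimple $\gs$ contains no nonzero ideal and thus $(\gs,\bt)$ is automatically effective.
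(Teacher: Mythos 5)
Your proof is correct and follows the same route the paper intends: the corollary is presented as an immediate consequence of Proposition~\ref{p5.10} (equivalently $\qt^{n}\,{\cap}\,\sigmaup(\qt^{n})\,{=}\,\{0\}$), and you simply supply the details left implicit there --- the cardinality argument forcing $\stt(\Rad^{+}(C))\,{=}\,\Rad^{-}(C)$, the identification $\bt\,{\cap}\,\gs\,{=}\,\hg\,{\cap}\,\gs\,{=}\,\hst$, and the standard no-real-roots criterion for maximal compactness of a Cartan subalgebra. Your remark that the effectiveness hypothesis does no work in the argument is also accurate.
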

\begin{prop}
 If $(\gt_{\sigmaup},\qt)$ is a totally complex effective parabolic $CR$-algebra, then 
 {$\qt\,{\cap}\,\gs$} contains a
 maximally compact Cartan subalgebra of~$\gs$.
\end{prop}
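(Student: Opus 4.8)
The plan is to produce, inside the isotropy $\qt\cap\gs$, a Cartan subalgebra of $\gs$ having no real roots, since a Cartan subalgebra of a semisimple real Lie algebra is maximally compact exactly when it carries no real root. By the reduction of effective parabolic $CR$-algebras to those with simple symmetry algebra (a reduction which preserves total complexity), I may assume $\gs$ is simple, so that $\gt$ is simple and the notion of a maximally compact Cartan subalgebra of $\gs$ is the standard one. Concretely, I would show that an adapted Cartan subalgebra whose vector (split) part has minimal dimension is automatically maximally compact, eliminating hypothetical real roots by Cayley transforms that are forced to stay inside $\qt\cap\gs$.

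First I would extract the decisive consequence of total complexity. Fix an adapted Cartan subalgebra $\hst$ with associated pair $(\Qq,\stt)$, and let $\alphaup$ be a real root, i.e. $\stt(\alphaup)=\alphaup$. If $\alphaup\in\Qq^{n}$, then $\stt(\alphaup)=\alphaup\in\stt(\Qq^{n})$, so $\alphaup\in\Qq^{n}\cap\stt(\Qq^{n})$, which is empty by Proposition~\ref{p5.10}; hence $\alphaup\notin\Qq^{n}$, and likewise $-\alphaup\notin\Qq^{n}$. Since $\Qq\cup\stt(\Qq)=\Rad$ and $\stt$ fixes $\alphaup$, one has $\alphaup\in\stt(\Qq)\Leftrightarrow\alphaup\in\Qq$, which forces $\alphaup\in\Qq$, and the same for $-\alphaup$. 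Thus $\pm\alphaup\in\Qq^{r}$, so $\gt^{\alphaup},\gt^{-\alphaup}\subseteq\qt^{r}\cap\sigmaup(\qt^{r})\subseteq\qt\cap\sigmaup(\qt)$: every real-root space lies in the complexified isotropy.

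Next I would run the Cayley-transform argument. Among all Cartan subalgebras adapted to $(\gs,\qt)$ — each contained in $\qt\cap\gs$ — choose one, $\hst$, whose vector part has minimal dimension. Suppose it had a real root $\alphaup$. Its coroot $H_{\alphaup}$ lies in the vector part of $\hst$, while by the previous step the whole triple $\langle X_{\alphaup},X_{-\alphaup},H_{\alphaup}\rangle$ sits in $\qt\cap\sigmaup(\qt)$ and is $\sigmaup$-stable (because $\stt(\alphaup)=\alphaup$), hence spans a split $\slt_{2}(\R)$ inside $\qt\cap\gs$. The Cayley transform with respect to $\alphaup$ alters $\hst$ only within this $\slt_{2}(\R)$, replacing the noncompact direction $\R H_{\alphaup}$ by a compact one; the resulting $\hst'$ therefore still lies in $\qt\cap\gs$, is again adapted, and has vector part of strictly smaller dimension — contradicting minimality. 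Consequently $\hst$ has no real root, so it is a maximally compact Cartan subalgebra of $\gs$, and it lies in $\qt\cap\gs$ by construction, which is the assertion.

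The main obstacle is precisely this Cayley step: one must guarantee that the transformed Cartan subalgebra does not escape the isotropy, and that is exactly where total complexity enters, via the fact that real-root $\slt_{2}$-triples lie in $\qt\cap\sigmaup(\qt)$. The remaining ingredients — that a Cartan subalgebra with no real roots is maximally compact, that one Cayley transform lowers the vector part by one, and that total complexity is intrinsic to $(\gs,\qt)$ so that Proposition~\ref{p5.10} may be reapplied after each transform — are standard. As an alternative conclusion, once a real-root-free adapted $\hst$ is at hand, I would pick a $V$-fit chamber $C$ for $(\Qq,\stt)$ and verify directly that $\Rad^{+}(C)\cup\stt(\Rad^{+}(C))=\Rad$; then $\bt_{C}\subseteq\qt$ yields an effective totally complex $(\gs,\bt_{C})$, and Corollary~\ref{c6.10} gives that $\bt_{C}\cap\gs=\hst$ is maximally compact.
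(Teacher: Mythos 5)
Your proof is correct, but it takes a genuinely different route from the paper's. The paper argues globally: it fixes a Borel subalgebra $\at\,{\subseteq}\,\qt$, uses the $\Gf$-equivariant fibration of the full flag manifold $\sfB$ of Borel subalgebras onto $\sfF\,{=}\,\Gf/\Qf$, notes that the preimage of the open orbit $\sfM$ is a finite union of $\Gfs$-orbits of which at least one is open, and applies Corollary~\ref{c6.10} to that open Borel orbit to obtain a maximally compact Cartan subalgebra $\bt\,{\cap}\,\gs\,{\subseteq}\,\qt\,{\cap}\,\gs$. You argue infinitesimally: total complexity, in the form $\Qq^{n}\cap\stt(\Qq^{n})=\emptyset$ of Proposition~\ref{p5.10}, forces every real root into $\Qq^{r}\cap\stt(\Qq^{r})$, so the associated split $\slt_{2}(\R)$-triples lie in $\qt\cap\gs$, Cayley transforms with respect to real roots preserve adaptedness, and minimizing the vector part of an adapted Cartan subalgebra eliminates all real roots; the classical criterion (no real roots if and only if maximally compact) then concludes. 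Your route is more self-contained at the Lie-algebra level --- it avoids the finiteness of the $\Gfs$-orbit decomposition of $\sfB$ and the topology of open orbits --- at the price of importing the standard Cayley-transform machinery and the real-root characterization of maximally compact Cartan subalgebras, which the paper does not develop but which are classical. Two immaterial remarks: the reduction to simple $\gs$ does not make $\gt$ simple when $\gs$ is of complex type, though nothing in your argument uses simplicity of $\gt$; and your closing alternative via a $V$-fit chamber is redundant once a real-root-free adapted Cartan subalgebra is in hand, since it only re-derives maximal compactness through the paper's characterization of totally complex pairs.
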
 
\begin{proof} Let $\sfB$ and $\sfF$ be complex flag manifolds of $\gt$,  the first 
consisting
of the Borel and the second 
of the conjugates of $\qt$ in $\gt$.
 Let us fix an $\at\,{\in}\,\sfB$ contained in $\qt$. The adjoint action of a  
 complex semisimple Lie group  $\Gf$ with Lie algebra $\gt$, being transitive on $\sfF$ and $\sfB$,
 defines a projection 
 $\piup\,{:}\,\sfB\,{\to}\,\sfF$ by 
\begin{equation*}
 \piup(\Ad(a)(\at))\,{=}\,\Ad(a)(\qt),\;\;\forall\,a\,{\in}\,\Gf.
\end{equation*}
 By assumption the orbit $\sfM$ of a real form 
{$\Gfs$} of $\Gf$ through $\qt$ 
 in $\sfF$
 is open. Its inverse image $\piup^{-1}(\sfM)$ is a union of finitely many orbits of 
{$\Gfs$}
 in $\sfB$.
 At least one of them is open and 
 then by Cor.\ref{c6.10}
 has $CR$-algebra $(\gt_{\sigmaup},\bt)$, 
with $\sigmaup(\bt)$ 
opposite of $\bt$, and 
{$\bt\,{\cap}\,\sigmaup(\bt)$} 
is a maximally compact Cartan subalgebra
 of $\gt_{\sigmaup}$ contained in 
{$\qt\,{\cap}\,\gs$.}
\end{proof}
\begin{prop} 
Let $\Qq\,{\in}\Pcr(\Rad)$ and 
{$\stt$ an involution of $\Rad$.}
 The following are equivalent: 
\begin{enumerate}
 \item $(\Qq\, ,\stt)$ is totally complex;
 \item if $C$ is a $V$-fit Weyl chamber for $(\gs,\qt)$, then 
 $\stt(\Qq^{n})\,{\subseteq}\,\Rad^{-}(C).$
\item if $C$ is a $V$-fit Weyl chamber for $(\gs,\qt)$, then 
$ \stt(\Phi_{C})\,{\subseteq}\,\Rad^{-}(C).
$
\end{enumerate}
\end{prop}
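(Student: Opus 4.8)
The plan is to establish the cycle $(1)\Rightarrow(2)\Rightarrow(3)\Rightarrow(1)$, reducing total complexity at the outset to the combinatorial condition $\Qq^{n}\cap\stt(\Qq^{n})\,{=}\,\emptyset$ via Proposition\,\ref{p5.10}. For $(1)\Rightarrow(2)$ I would fix an arbitrary $V$-fit chamber $C$ and invoke the second identity of Lemma\,\ref{l4.22}, $\Qq^{n}\cap\stt(\Qq^{n})\,{=}\,\{\alphaup\,{\in}\,\Qq^{n}\mid\stt(\alphaup)\,{\in}\,\Rad^{+}(C)\}$. Since the left-hand side is empty by total complexity, no $\alphaup\,{\in}\,\Qq^{n}$ has $\stt(\alphaup)\,{\in}\,\Rad^{+}(C)$, and as $\stt(\alphaup)$ is always a root this forces $\stt(\Qq^{n})\,{\subseteq}\,\Rad^{-}(C)$, which is $(2)$. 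The implication $(2)\Rightarrow(3)$ is then immediate, because $\Phi_{C}\,{=}\,\Qq^{n}\cap\Bz(C)\,{\subseteq}\,\Qq^{n}$ gives $\stt(\Phi_{C})\,{\subseteq}\,\stt(\Qq^{n})\,{\subseteq}\,\Rad^{-}(C)$.

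The substance lies in $(3)\Rightarrow(1)$. Here I would fix a $V$-fit chamber $C$, which exists by the proposition guaranteeing the existence of $V$-fit chambers, and analyse the additive homomorphism $\chiup_{\Qq}\,{\circ}\,\stt$ through its values on the simple roots $\Bz(C)$. The key preliminary observation is that $\chiup_{\Qq}$ is nonnegative on $\Bz(C)$ — it equals $1$ on $\Phi_{C}$ and $0$ on $\Bz(C)\cap\Qq^{r}$ — and hence $\chiup_{\Qq}$ is nonpositive on $\Rad^{-}(C)$. I would then verify $\chiup_{\Qq}(\stt(\betaup))\,{\leq}\,0$ for every $\betaup\,{\in}\,\Bz(C)$ by cases: for $\betaup\,{\in}\,\Phi_{C}$ hypothesis $(3)$ gives $\stt(\betaup)\,{\in}\,\Rad^{-}(C)$; for $\betaup$ real $\stt(\betaup)\,{=}\,\betaup$ with $\chiup_{\Qq}(\betaup)\,{=}\,0$; for $\betaup$ imaginary $\stt(\betaup)\,{=}\,{-}\betaup$ with $\chiup_{\Qq}({-}\betaup)\,{=}\,0$; and for $\betaup\,{\in}\,\Bz_{\star}^{\stt}(C){\setminus}\Phi_{C}$ the $V$-fitness of $C$ (cf. Lemma\,\ref{l4.20}) gives $\stt(\betaup)\,{\in}\,\Rad^{-}(C)$. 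Thus $\chiup_{\Qq}(\stt(\betaup))\,{\leq}\,0$ throughout $\Bz(C)$.

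Expanding any $C$-positive root as $\alphaup\,{=}\,{\sum}_{\betaup\in\Bz(C)}k_{\alphaup,\betaup}\betaup$ with $k_{\alphaup,\betaup}\,{\geq}\,0$, linearity then yields $\chiup_{\Qq}(\stt(\alphaup))\,{\leq}\,0$. In particular, for $\alphaup\,{\in}\,\Qq^{n}$ this forces $\stt(\alphaup)\,{\notin}\,\Qq^{n}$, so $\Qq^{n}\cap\stt(\Qq^{n})\,{=}\,\emptyset$ and $(1)$ follows by Proposition\,\ref{p5.10}. I expect the main obstacle to be precisely this case analysis establishing $\chiup_{\Qq}(\stt(\betaup))\,{\leq}\,0$ for all simple roots: the real simple roots have $C$-\emph{positive} $\stt$-image yet contribute $0$ to $\chiup_{\Qq}\,{\circ}\,\stt$, and both hypotheses $(3)$ and $V$-fitness are genuinely needed — the former to control the crossed simple roots in $\Phi_{C}$, the latter to control the complex simple roots outside $\Phi_{C}$. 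Making all four types of simple roots line up simultaneously so that the composed grading is globally nonpositive on $C$-positive roots is the delicate point of the argument.
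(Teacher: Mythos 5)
Your proof is correct. The implications $(1)\Rightarrow(2)$ and $(2)\Rightarrow(3)$ coincide in substance with the paper's (the paper quotes Lemma\,\ref{l4.21}, $\stt(\Qq^{n})\subseteq\Qq^{n}\cup\Rad^{-}(C)$ for a $V$-fit $C$, where you quote the equivalent identity of Lemma\,\ref{l4.22}; and $(2)\Rightarrow(3)$ is trivial in both). The interesting divergence is in $(3)\Rightarrow(1)$. The paper observes that $(3)$ together with $V$-fitness forces \emph{every} complex simple root to have $C$-negative conjugate, so that $C$ is a $V$-chamber for $\stt$ in the sense of \cite{MMN23}; it then imports the structure theory of $V$-chambers ($\Bz_{\circ}^{\stt}(C)$ is a basis of $\Rad_{\,\;\circ}^{\stt}$, which lands in $\Qq^{r}$, and $\stt$ interchanges positive and negative non-real roots) to conclude the stronger statement $\stt(\Qq^{n})\subseteq\Rad^{-}(C)$, i.e.\ it actually re-proves $(2)$ and then deduces $(1)$. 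You instead avoid the $V$-chamber machinery entirely by tracking only the sign of the composed grading $\chiup_{\Qq}\circ\stt$: your case analysis on $\Bz(C)$ shows $\chiup_{\Qq}(\stt(\betaup))\leq 0$ for every simple root, hence $\chiup_{\Qq}(\stt(\alphaup))\leq 0$ for every $C$-positive $\alphaup$, which is weaker than $\stt(\alphaup)\in\Rad^{-}(C)$ but exactly enough to rule out $\stt(\alphaup)\in\Qq^{n}$ and so to get $\Qq^{n}\cap\stt(\Qq^{n})=\emptyset$ via Proposition\,\ref{p5.10}. Your route is more self-contained (it uses only the definition of $\chiup_{\Qq}$ and of $V$-fitness); the paper's route yields the sharper intermediate conclusion $(2)$ as a by-product. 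One cosmetic remark: in your second and third cases you write $\chiup_{\Qq}(\betaup)=0$ resp.\ $\chiup_{\Qq}(-\betaup)=0$, which is literally true only for $\betaup\in\Phi_{C}^{\vee}$; since your first case already disposes of all of $\Phi_{C}$ (and shows, e.g., that real simple roots cannot lie in $\Phi_{C}$ under $(3)$), the needed bound $\chiup_{\Qq}(\stt(\betaup))\leq 0$ holds in every case and the argument is sound.
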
 
\begin{proof} $(1)\,{\Rightarrow}\,(2)$. If $C$ is $V$-fit for $(\Qq\, ,\stt)$, then
by Lemma\,\ref{l4.21}, we have $\stt(\Qq^{n})\,{\subseteq}\,
\Qq^{n}\,{\cup}\,\Rad^{-}(C)$ and therefore (2) follows from $\Qq^{n}\,{\cap}\,\stt(\Qq^{n})\,{=}\,\emptyset$.
\par
Clearly $(2){\Rightarrow}(3)$.\par
{If $(3)$ is satisfied, then
$\stt(\alphaup)\,{\in}\,\Rad^{-}(C)$ for all $\alphaup\,{\in}\,\Bz^{\stt}_{\star}(C)$.} This means
that $C$ is a $V$-chamber for $\stt$ (cf.\,\cite[\S{2.6}]{MMN23}). In particular, 
$\Bz_{\circ}^{\stt}(C)$ is a basis of $\Rad_{\,\;\circ}^{\stt}$ and $\Rad_{\,\;\circ}^{\stt}\,{\subseteq}\,\Qq^{r}$.
Thus $\Qq^{n}\,{\subset}\,
(\Rad_{\,\;\bullet}^{\stt}\,{\cup}\,
\Rad_{\,\;\star}^{\stt})$ and therefore $\stt(\Qq^{n})\,{\subseteq}\,\Rad^{-}(C)$
because 
\begin{equation*}\stt(\Rad^{+}(C)\,{\cap}\,(\Rad_{\,\;\bullet}^{\stt}\,{\cup}\,
\Rad_{\,\;\star}^{\stt}))\,{=}\,\Rad^{-}(C)\,{\cap}\,(\Rad_{\,\;\bullet}^{\stt}\,{\cup}\,
\Rad_{\,\;\star}^{\stt}).\end{equation*}
{This implies that $\Qq^{n}{\cap}\,\stt(\Qq^{n})\,{=}\,\emptyset$ and thus}
$(\Qq\, ,\stt)$ is totally complex.
\end{proof}
We denote by $\Inv_{\C}^{\vee}(\gt)$ the set of anti-$\C$-linear involution of the complex
Lie algebra $\gt$.
\begin{thm}
 Let $\qt$ be a parabolic subalgebra of $\gt,$ $\qt^{n}$ its nilradical, 
{$\sigmaup$ an anti-$\C$-linear involution of $\gt$.}
The following are equivalent 
\begin{enumerate}
 \item $(\gs,\qt)$ is integrable, i.e. 
 $\qt\,{+}\,\sigmaup(\qt)$ is a Lie subalgebra of $\gt$;
 \item $\qt\,{+}\,\sigmaup(\qt)$
 is the normaliser of $\qt^{n}\,{\cap}\,\sigmaup(\qt^{n})$ in $\gt$;
 \item $\qt\,{+}\,\sigmaup(\qt)$ is a Lie subalgebra of $\gt$ and $\qt^{n}\,{\cap}\,\sigmaup(\qt^{n})$
 its nilradical;
 \item $\qt^{n}\,{\cap}\,\sigmaup(\qt^{n})$ is the nilradical 
 of its normaliser in $\gt$;
 \item $\qt$ is contained in the normaliser 
 of $\qt^{n}\,{\cap}\,\sigmaup(\qt^{n})$ in $\gt$. 
\end{enumerate}
\end{thm}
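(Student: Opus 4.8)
The plan is to reduce everything to two structural inputs about parabolic subalgebras of the semisimple algebra $\gt$: first, that $\sigmaup(\qt)$ is again parabolic, with nilradical $\sigmaup(\qt^{n})$; and second, Killing duality, namely that a parabolic $\mathfrak{r}$ with nilradical $\mathfrak{r}^{n}$ satisfies $\mathfrak{r}^{\perp}=\mathfrak{r}^{n}$ for the orthogonal with respect to the Killing form $\kil$ of $\gt$, and that $\perp$ is an involution on subspaces. Writing $\pt:=\qt+\sigmaup(\qt)$ and $\nt:=\qt^{n}\cap\sigmaup(\qt^{n})$, duality yields the \emph{unconditional} identities
\[
 \pt^{\perp}=\qt^{\perp}\cap\sigmaup(\qt)^{\perp}=\qt^{n}\cap\sigmaup(\qt^{n})=\nt,\qquad \nt^{\perp}=\pt .
\]
I will also use repeatedly that a subalgebra containing a parabolic is parabolic, that a nilpotent ideal lies in the nilradical, and the elementary inclusions $\qt\cap\sigmaup(\qt)\subseteq N$ and $\nt\subseteq N$, where $N:=\mathrm{N}_{\gt}(\nt)$; note that $\nt$ and $N$ are $\sigmaup$-invariant, since $\sigmaup(\nt)=\nt$.

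First I would run the cycle $(1)\Rightarrow(3)\Rightarrow(2)\Rightarrow(5)\Rightarrow(1)$. For $(1)\Rightarrow(3)$: if $\pt$ is a subalgebra it is parabolic (it contains the parabolic $\qt$), so its nilradical is $\pt^{\perp}=\nt$. For $(3)\Rightarrow(2)$: as $\nt$ is the nilradical of $\pt$ it is an ideal, whence $\pt\subseteq N$; since $N$ contains the parabolic $\pt$ it is parabolic, so $N^{\perp}=\nt_{N}$, and $\nt\subseteq\nt_{N}=N^{\perp}\subseteq\pt^{\perp}=\nt$ forces $N^{\perp}=\nt=\pt^{\perp}$, hence $N=\pt$. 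Then $(2)\Rightarrow(5)$ is immediate from $\qt\subseteq\pt=N$. For $(5)\Rightarrow(1)$: from $\qt\subseteq N$ and the $\sigmaup$-invariance of $N$ we get $\sigmaup(\qt)\subseteq N$, so $\pt\subseteq N$; as $N$ contains $\qt$ it is parabolic, $N^{\perp}=\nt_{N}$, and $\nt\subseteq\nt_{N}=N^{\perp}\subseteq\pt^{\perp}=\nt$ gives $\nt_{N}=\nt$ and $N=\nt^{\perp}=\pt$, a subalgebra. Finally $(2)\Rightarrow(4)$ is immediate, since then $N=\pt$ is parabolic with nilradical $\nt$.

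It remains to close the loop at $(4)$, i.e. to prove $(4)\Rightarrow(2)$; this is where the real work lies, because $(4)$ only asserts that $\nt$ is the nilradical of $N$ and does \emph{not} a priori say that $N$ is parabolic — once parabolicity of $N$ is known, $N^{\perp}=\nt=\pt^{\perp}$ again yields $N=\pt$. So the plan is to show that $(4)$ forces $N$ to be parabolic. I would pass to the bigrading $(\chiup_{\Qq},\chiup_{\Qq}\circ\stt)$ of \eqref{e2.14}, in which $\nt=\bigoplus_{p>0,q>0}\qt^{p,q}$ and $N=\bigoplus_{p,q}(N\cap\qt^{p,q})$ is graded. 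The always-valid inclusion $\qt\cap\sigmaup(\qt)\subseteq N$ places the whole quadrant $\{p\ge0,q\ge0\}$ in the root set $\Qq_{N}$; combined with $\nt_{N}=\nt$, the roots with $p=0$ or $q=0$ must lie in the reductive part $\Qq_{N}^{r}$, so their negatives lie in $\Qq_{N}$ as well, giving $\qt^{r},\sigmaup(\qt^{r})\subseteq N$ (the two coordinate axes). Parabolicity of $N$ is precisely the vanishing of the set $E$ of roots $\gamma$ with $\pm\gamma\notin\Qq_{N}$; by Killing duality $N^{\perp}=\nt\oplus E$, and $\nt\subseteq N$ forces $E\subseteq\pt$, while closure of $E$ under negation and $\stt$ confines it to $\{p>0,q<0\}\cup\{p<0,q>0\}$, disjoint from the axes.

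The main obstacle is exactly this last point: proving $E=\emptyset$ under $(4)$. I expect to settle it by an extremal argument on the closed, $\stt$-invariant set $\Qq_{N}$: choosing $\gamma\in E\cap\{p>0,q<0\}$ with $\chiup_{\Qq}(\stt\gamma)$ maximal, the relation $\gamma\notin\Qq_{N}$ produces a root $\gamma+\beta$ with $\beta\in\Qq^{n}\cap\stt(\Qq^{n})$ lying strictly closer to the axis $\{q=0\}$, so $\gamma+\beta\notin E$ by maximality; tracking whether $\gamma+\beta$ or its negative lies in $\Qq_{N}$ and invoking closedness together with the axis inclusions should contradict $\nt_{N}=\nt$. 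Once $E=\emptyset$ the algebra $N$ is parabolic, whence $N=\pt$ and $(4)\Rightarrow(2)$, completing the equivalence. Everything apart from this combinatorial step is formal bookkeeping of Killing duality and of the standard facts that a parabolic is the normaliser of its nilradical and that enlarging a parabolic to any containing subalgebra preserves parabolicity.
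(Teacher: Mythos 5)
Your cycle $(1)\Rightarrow(3)\Rightarrow(2)\Rightarrow(5)\Rightarrow(1)$ together with $(2)\Rightarrow(4)$ is correct and is essentially the paper's own argument: both rest on the Killing-duality identities $(\qt{+}\sigmaup(\qt))^{\perp}=\qt^{n}\cap\sigmaup(\qt^{n})$, on the fact that a subalgebra containing a parabolic is parabolic, and on the nilradical of a parabolic being its Killing orthogonal. Where you genuinely add something is in isolating $(4)\Rightarrow(2)$ as the one implication that does not follow from this bookkeeping. The paper's proof only establishes $(1)\Rightarrow(3),(5)$ and $(5)\Rightarrow(1),(2),(3),(4)$; the missing closures $(2)\Rightarrow(5)$ and $(3)\Rightarrow(1)$ are indeed immediate, but the paper never addresses what $(4)$ implies, and that implication is \emph{not} immediate. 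On this point your proposal is more careful than the source.

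As written, though, your treatment of $(4)\Rightarrow(2)$ is a plan rather than a proof (``I expect to settle it by an extremal argument\dots''), so strictly one step is unexecuted. I checked that your plan does close. With $E=\{\gamma\mid{\pm}\gamma\notin\Qq_{N}\}$ confined, as you correctly argue, to $\{p{>}0,q{<}0\}\cup\{p{<}0,q{>}0\}$, and with the axes $\{p{=}0\}$ and $\{q{=}0\}$ contained in $\Qq_{N}$ (both deductions use $(4)$ via $\Qq_{N}^{n}=\Qq^{n}\cap\stt(\Qq^{n})\subseteq\{p{>}0,q{>}0\}$): pick $\gamma\in E$ with $p_{\gamma}>0$, $q_{\gamma}<0$ and $q_{\gamma}$ maximal. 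Since $p_{\gamma}>0$ excludes $\gamma={-}\beta$ for $\beta$ in the root set of $\nt$, the failure of $\gamma\in\Qq_{N}$ produces such a $\beta$ (so $p_{\beta},q_{\beta}>0$) with $\gamma{+}\beta\in\Rad$ outside the root set of $\nt$; then $p_{\gamma+\beta}>0$ forces $q_{\gamma+\beta}\leq 0$, so $\gamma{+}\beta\notin\Qq_{N}^{n}$, and by maximality of $q_{\gamma}$ (or by the axis inclusion when $q_{\gamma+\beta}=0$) also $\gamma{+}\beta\notin E$; these two facts together force $-(\gamma{+}\beta)\in\Qq_{N}$, and closedness of $\Qq_{N}$ then gives $-\gamma=-(\gamma{+}\beta)+\beta\in\Qq_{N}$, contradicting $\gamma\in E$. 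Hence $E=\emptyset$, $N$ is parabolic, and $N=\nt^{\perp}=\qt{+}\sigmaup(\qt)$. (Equivalently, this is the Lie-algebra form of the Borel--Tits statement that a nil subalgebra equal to the nilradical of its normaliser has parabolic normaliser, implicit in the paper's later lemma on iterated normalisers.) So: correct in substance and same method as the paper on the easy implications, with the one nontrivial step left as a sketch that does, in fact, work once written out.
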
 
\begin{proof} Assume that $(1)$ holds true. 
Since $\qt\,{+}\,\sigmaup(\qt)$ is parabolic, 
its nilradical is equal to
its orthogonal for the Killing form, which is the intersection
$\qt^n\,{\cap}\,\sigmaup(\qt^{n})$. Hence $(1)$ implies
$(3)$ and $(5)$.\par 
If $(5)$ holds true, then the normaliser $\at$ of  
$\qt^n\,{\cap}\,\sigmaup(\qt^{n})$ in $\gt$,
containing $\qt$, is a parabolic
subalgebra of $\gt$. Since $\qt^n\,{\cap}\,\sigmaup(\qt^{n})$
is $\sigmaup$-invariant, $\at$ contains
$\qt\,{+}\,\sigmaup(\qt)$. By construction the nilradical
$\nt$ 
of $\at$ contains $\qt^n\,{\cap}\,\sigmaup(\qt^{n})$
and hence coincides with it, because $\at^\perp{\subseteq}\,
\qt^n\,{\cap}\,\sigmaup(\qt^{n})$. This shows that $(5)$
implies $(4)$, $(3)$, $(2)$ and $(1)$. The proof is complete.
\end{proof}

 \subsection{Levi-nondegenerate 
 reduction}  
Levi-nondegenerate reductions 
 of parabolic $CR$-algebras
 are better
 described by employing
  $V$-fit Weyl chambers (cf. \cite[Thm.6.4]{AMN06b}).
\begin{lem}\label{l4.30} Let 
$\Qq\,{\in}\Pcr(\Rad)$, $C\,{\in}\,\Cd(\Rad,\Qq)$, 
$\Phi_{C}\,{=}\,\Bz(C)\,{\cap}\,\Qq^{n}$ and
$\stt$ an involution
of $\Rad$. 
 If $C$ is $V$-fit and 
 {$\alphaup_{0}\,{\in}\,\Phi_{C}$,}
then 
\begin{equation}\label{e2.19} \Qq_{\,\;(\Phi_{C}{\setminus}\{\alphaup_{0}\})_{C}}
 \subseteq\Qq\cup\stt(\Qq)\;\;\Longleftrightarrow\;\; 
 {\stt(\alphaup_{0})\in\Rad^{-}(C).}
\end{equation}
\end{lem}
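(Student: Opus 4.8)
The plan is first to describe the set $\Qq_{(\Phi_{C}{\setminus}\{\alphaup_{0}\})_{C}}{\setminus}\Qq$ explicitly and then to reformulate the inclusion as a statement about $\stt$-conjugates. Put $\Qq'\,{=}\,\Qq_{(\Phi_{C}{\setminus}\{\alphaup_{0}\})_{C}}$ and $\Phi'\,{=}\,\Phi_{C}{\setminus}\{\alphaup_{0}\}$. By Lemma\,\ref{l2.5} and Remark\,\ref{r1.7} one has $\Qq\,{\subseteq}\,\Qq'$ and
\begin{equation*}
 \Qq'{\setminus}\Qq=\big\{{-}\gammaup\,{\mid}\,\gammaup\,{\in}\,\Qn,\ \supp_{C}(\gammaup)\,{\cap}\,\Phi_{C}\,{=}\,\{\alphaup_{0}\}\big\}.
\end{equation*}
Indeed, positive roots lie in both $\Qq$ and $\Qq'$, so the difference consists of negative roots; a negative $\alphaup$ lies in $\Qq'$ iff $\supp_{C}(\alphaup)$ avoids $\Phi'$, and lies in $\Qq$ iff it avoids $\Phi_{C}$, so $\alphaup\,{\in}\,\Qq'{\setminus}\Qq$ means $\supp_{C}(\alphaup)$ meets $\Phi_{C}$ exactly in $\{\alphaup_{0}\}$, whence $\gammaup\,{=}\,{-}\alphaup\,{\in}\,\Qn$ as claimed. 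Since each such $\alphaup$ lies in $\Qc$ (being the negative of a root of $\Qn$), we have $\alphaup\,{\in}\,\Qq\,{\cup}\,\stt(\Qq)$ iff $\stt(\alphaup)\,{=}\,{-}\stt(\gammaup)\,{\in}\,\Qq$. Thus the asserted inclusion is equivalent to the requirement that ${-}\stt(\gammaup)\,{\in}\,\Qq$ for every $\gammaup\,{\in}\,\Qn$ with $\supp_{C}(\gammaup)\,{\cap}\,\Phi_{C}\,{=}\,\{\alphaup_{0}\}$.

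The bridge to $\Rad^{-}(C)$ is the observation that, for any $\gammaup\,{\in}\,\Qn$,
\begin{equation*}
 {-}\stt(\gammaup)\,{\in}\,\Qq\quad\Longleftrightarrow\quad \stt(\gammaup)\,{\in}\,\Rad^{-}(C).
\end{equation*}
Since $C$ is $V$-fit, Lemma\,\ref{l4.21} gives $\stt(\Qn)\,{\subseteq}\,\Qn\,{\cup}\,\Rad^{-}(C)$; if $\stt(\gammaup)\,{\in}\,\Qn$ then ${-}\stt(\gammaup)\,{\notin}\,\Qq$, while if $\stt(\gammaup)\,{\in}\,\Rad^{-}(C)$ then ${-}\stt(\gammaup)\,{\in}\,\Rad^{+}(C)\,{\subseteq}\,\Qq$ because $C\,{\in}\,\Cd(\Rad,\Qq)$. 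Combining this with the previous paragraph, the inclusion holds iff $\stt(\gammaup)\,{\in}\,\Rad^{-}(C)$ for all $\gammaup\,{\in}\,\Qn$ with $\supp_{C}(\gammaup)\,{\cap}\,\Phi_{C}\,{=}\,\{\alphaup_{0}\}$. Necessity of $\stt(\alphaup_{0})\,{\in}\,\Rad^{-}(C)$ is then immediate: take $\gammaup\,{=}\,\alphaup_{0}$, which is admissible since $\alphaup_{0}\,{\in}\,\Phi_{C}\,{\subseteq}\,\Qn$ and $\supp_{C}(\alphaup_{0})\,{=}\,\{\alphaup_{0}\}$.

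For sufficiency I would assume $\stt(\alphaup_{0})\,{\in}\,\Rad^{-}(C)$ and fix $\gammaup\,{\in}\,\Qn$ with $\supp_{C}(\gammaup)\,{\cap}\,\Phi_{C}\,{=}\,\{\alphaup_{0}\}$. The tool is the $\Z$-grading $\chiup_{\Qq}$ of Proposition\,\ref{p2.5}, which satisfies $\chiup_{\Qq}\,{=}\,1$ on $\Phi_{C}$ and $\chiup_{\Qq}\,{=}\,0$ on $\Bz(C){\setminus}\Phi_{C}$, hence $\chiup_{\Qq}\,{\geq}\,0$ on all simple roots and $\Qn\,{=}\,\{\chiup_{\Qq}\,{>}\,0\}$. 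Writing $\gammaup\,{=}\,m\alphaup_{0}\,{+}\,{\sum}_{\betaup\in\Bz(C){\setminus}\Phi_{C}}m_{\betaup}\betaup$ with $m\,{\geq}\,1$ and $m_{\betaup}\,{\geq}\,0$, and applying the linear map $\chiup_{\Qq}\,{\circ}\,\stt$, I would bound $\chiup_{\Qq}(\stt(\gammaup))$ term by term: the term $m\,\chiup_{\Qq}(\stt(\alphaup_{0}))\,{\leq}\,0$ since $\stt(\alphaup_{0})$ is $C$-negative; for complex $\betaup\,{\notin}\,\Phi_{C}$ the $V$-fit property gives $\stt(\betaup)\,{\in}\,\Rad^{-}(C)$, so $\chiup_{\Qq}(\stt(\betaup))\,{\leq}\,0$; and for real or imaginary $\betaup$ one has $\stt(\betaup)\,{=}\,{\pm}\betaup$ with $\chiup_{\Qq}(\betaup)\,{=}\,0$. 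Hence $\chiup_{\Qq}(\stt(\gammaup))\,{\leq}\,0$, so $\stt(\gammaup)\,{\notin}\,\Qn$, and Lemma\,\ref{l4.21} forces $\stt(\gammaup)\,{\in}\,\Rad^{-}(C)$, as needed.

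The main obstacle is precisely the contribution of the \emph{real} simple roots in $\supp_{C}(\gammaup)$: their $\stt$-image $\stt(\betaup)\,{=}\,\betaup$ remains $C$-positive, so a naive sign count on the simple-root expansion of $\stt(\gammaup)$ does not reveal its negativity. The resolution is to test positivity not with the full lexicographic order but with the single functional $\chiup_{\Qq}$, which vanishes on every simple root outside $\Phi_{C}$ — in particular on the real ones — so that only the node $\alphaup_{0}$ (controlled by the hypothesis) and the complex nodes (controlled by $V$-fitness) can contribute, and both contribute nonpositively.
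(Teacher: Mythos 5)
Your proof is correct. The skeleton is the same as the paper's — both identify $\Qq_{\,\;(\Phi_{C}\setminus\{\alphaup_{0}\})_{C}}{\setminus}\Qq$ as the $C$-negative roots whose support meets $\Phi_{C}$ exactly in $\{\alphaup_{0}\}$, and both get necessity by testing the single root ${-}\alphaup_{0}$ — but the sufficiency direction is carried out by a genuinely different device. The paper expands $\alphaup\,{=}\,{-}k_{\alphaup_{0}}\alphaup_{0}{-}\sum k_{\betaup}\betaup$ and concludes directly that $\stt(\alphaup)\,{\in}\,\Rad^{+}(C)$ from the signs of the $\stt$-conjugates of the simple roots involved; as written this leaves implicit exactly the point you flag, namely that real simple roots of $\Phi^{\vee}_{C}$ are fixed by $\stt$ and so contribute \emph{positively} to the expansion of $\stt(\alphaup)$, which means the positivity of the root $\stt(\alphaup)$ ultimately rests on the all-coefficients-of-one-sign property together with the (unstated) observation that $\supp_{C}(\stt(\alphaup_{0}))$ must contain a non-real node. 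Your substitution of the lexicographic test by the single functional $\chiup_{\Qq}$ of Proposition\,\ref{p2.5} sidesteps this entirely, since $\chiup_{\Qq}$ vanishes on every simple root outside $\Phi_{C}$ — real and imaginary alike — and the remaining terms are controlled by the hypothesis and by $V$-fitness; the price is the extra appeal to Lemma\,\ref{l4.21} to upgrade $\stt(\gammaup)\,{\notin}\,\Qq^{n}$ to $\stt(\gammaup)\,{\in}\,\Rad^{-}(C)$, and the reward is an argument with no hidden case analysis. Your "bridge" equivalence also unifies the necessity step, which the paper instead handles by a contradiction via Lemma\,\ref{l4.20}. Both routes are sound; yours is the more self-contained of the two.
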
 
\begin{proof} Assume that $\stt(\alphaup_{0})\,{\in}\,\Rad^{-}(C).$ 
A root $\alphaup$ in $\Qq_{\,\;(\Phi_{C}{\setminus}\{\alphaup_{0}\})_{C}}\!{\setminus}\Qq$
is $C$-negative {and has $\supp_{C}(\alphaup)\,{\cap}\,\Phi_{C}\,{=}\,\{\alphaup_{0}\}$. Then
\begin{equation*}
 \alphaup={-}k_{\alphaup_{0}}\alphaup_{0}-{\sum}_{\betaup\in\Phi_{C}^{\vee}}k_{\betaup}\betaup,
\end{equation*}
with nonnegative integral coefficients $k_{\betaup}$ and $k_{\alphaup_{0}}{>}\,0$.
By the assumption that $C$ is $V$-fit, $\stt(\betaup)\,{\in}\,\Rad^{-}(C)$ for all nonreal
$\betaup$ in $\Phi^{\vee}_{C}$. Hence $\stt(\alphaup)\,{\in}\,\Rad^{+}(C)\,{\subseteq}\,\Qq$.
} \par
{If, vice versa, $\Qq_{\,\;(\Phi{\setminus}\{\alphaup_{0}\})_{C}}
 \,{\subseteq}\,\Qq\cup\stt(\Qq)$, then ${-}\alphaup_{0}$ belongs to $\stt(\Qq)$.}
If $\alphaup_{0}$ has $C$-positive $\stt$-conjugate,
then ${\pm}\stt(\alphaup_{0})$ both belong to $\Qq$. 
Since $\stt(\alphaup_{\alphaup_{0}})\,{\notin}\,\Rad_{\,\;\circ}^{\stt},$
we get $\alphaup_{0}\,{=}\,\stt(\stt(\alphaup_{0}))\,{\in}\,\Rad^{-}(C)$, because $C$ is $V$-fit
for $(\Qq\, ,\stt)$. This is a contradiction, showing that in fact $\stt(\alphaup_{0})\,{\in}\,\Rad^{-}(C)$.
The proof is complete.
\end{proof}
\begin{thm}\label{t4.30}
 Let 
 $\Qq\,{\in}\Pcr(\Rad)${, $C\,{\in}\,\Cd(\Rad,\Qq)$, 
$\Phi_{C}\,{=}\,\Bz(C)\,{\cap}\,\Qq^{n}$ and
$\stt$ an involution
of $\Rad$.} 
If $C$ is $V$-fit for $(\Qq\, ,\stt)$, { set 
\begin{equation*}
 \Phi_{C}^{\stt,+}=\{\alphaup\in\Phi_{C}\mid \stt(\alphaup)\in\Rad^{+}(C)\}.
\end{equation*} Then}
$\Qq_{\,\;\Phi_{C}^{\stt,+}}$ is the largest closed subset of $\Rad$ which
contains $\Qq$ and is contained in $\Qq\,{\cup}\,\stt(\Qq)$. \par
In particular, for a $V$-fit $C$ the following are equivalent:
\begin{itemize}
\item $(\Qq\, ,\stt)$ 
is 
Levi-nondegenerate;
\item 
$\stt(\Phi_{C})\,{\subseteq}\,\Rad^{+}(C)$;
\item  $\stt(\Phi_{C})\,{\subseteq}\,\Qq^{n}$.
\end{itemize}
\end{thm}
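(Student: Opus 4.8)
The plan is to prove the first assertion by identifying $\Qq_{\Phi_{C}^{\stt,+}}$ as the maximal closed subset of $\Rad$ lying between $\Qq$ and $\Qq\,{\cup}\,\stt(\Qq)$, and then to read off the three equivalent conditions for Levi-nondegeneracy. First I would record the easy inclusions. By Remark~\ref{r1.7}, since $\Phi_{C}^{\stt,+}\,{\subseteq}\,\Phi_{C}\,{\subseteq}\,\Bz(C)$, the set $\Qq_{\Phi_{C}^{\stt,+}}$ is parabolic, hence closed; and because enlarging the defining subset only excludes more $C$-negative roots, the relation $\Phi_{C}^{\stt,+}\,{\subseteq}\,\Phi_{C}$ together with $\Qq\,{=}\,\Qq_{\Phi_{C}}$ (Lemma~\ref{l2.5}) yields $\Qq\,{\subseteq}\,\Qq_{\Phi_{C}^{\stt,+}}$.

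The crux is the upper containment $\Qq_{\Phi_{C}^{\stt,+}}\,{\subseteq}\,\Qq\,{\cup}\,\stt(\Qq)$, and here I would exploit the $\Z$-grading $\chiup_{\Qq}$ of Proposition~\ref{p2.5}. The key elementary observation is that admissibility gives $\Qn\,{\subseteq}\,\Rad^{+}(C)$ (Lemma~\ref{l2.5}); since $\chiup_{\Qq}$ is positive exactly on $\Qn$, every root of $\Rad^{-}(C)$ has $\chiup_{\Qq}\,{\leq}\,0$. Take $\alphaup\,{\in}\,\Qq_{\Phi_{C}^{\stt,+}}$; if $\alphaup\,{\in}\,\Rad^{+}(C)$ then $\alphaup\,{\in}\,\Qq$, so assume $\alphaup\,{\in}\,\Rad^{-}(C)$ and write $\alphaup\,{=}\,{-}\sum_{\betaup}k_{\betaup}\betaup$ with $k_{\betaup}\,{\geq}\,0$. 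For each $\betaup$ in its support, either $\betaup\,{\in}\,\Phi_{C}\,{\setminus}\,\Phi_{C}^{\stt,+}$, whence $\stt(\betaup)\,{\in}\,\Rad^{-}(C)$ by the definition of $\Phi_{C}^{\stt,+}$, or $\betaup\,{\in}\,\Phi_{C}^{\vee}\,{=}\,\Bz(C)\,{\cap}\,\Qr$, in which case $\stt(\betaup)\,{\in}\,\Rad^{-}(C)$ when $\betaup$ is complex ($V$-fitness, via Lemma~\ref{l4.20}) and $\chiup_{\Qq}(\stt(\betaup))\,{=}\,\chiup_{\Qq}({\pm}\betaup)\,{=}\,0$ when $\betaup$ is real or imaginary. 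In every case $\chiup_{\Qq}(\stt(\betaup))\,{\leq}\,0$, so $\chiup_{\Qq}(\stt(\alphaup))\,{=}\,{-}\sum_{\betaup}k_{\betaup}\chiup_{\Qq}(\stt(\betaup))\,{\geq}\,0$, giving $\stt(\alphaup)\,{\in}\,\Qq$, i.e.\ $\alphaup\,{\in}\,\stt(\Qq)$. This step, controlling $\stt$ on arbitrary $C$-negative roots, is the main obstacle, and the grading is what makes it painless.

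For maximality I would take any closed $\Qq'$ with $\Qq\,{\subseteq}\,\Qq'\,{\subseteq}\,\Qq\,{\cup}\,\stt(\Qq)$. Being closed and containing the parabolic $\Qq$, $\Qq'$ is parabolic and admissible for $C$, so $\Qq'\,{=}\,\Qq_{\Phi'}$ with $\Phi'\,{=}\,\Bz(C)\,{\cap}\,(\Qq')^{n}\,{\subseteq}\,\Phi_{C}$. If $\betaup\,{\in}\,\Phi_{C}\,{\setminus}\,\Phi'$, then ${\pm}\betaup\,{\in}\,\Qq'$ while $\betaup\,{\in}\,\Qn$ forces ${-}\betaup\,{\notin}\,\Qq$, so ${-}\betaup\,{\in}\,\Qq'{\setminus}\Qq\,{\subseteq}\,\stt(\Qq)$ and ${-}\stt(\betaup)\,{\in}\,\Qq$. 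Combining ${-}\stt(\betaup)\,{\in}\,\Qq\,{\cap}\,\Rad^{-}(C)\,{\subseteq}\,\Qr$ (again using $\Qn\,{\subseteq}\,\Rad^{+}(C)$) with $V$-fitness (Lemma~\ref{l4.20}) rules out $\stt(\betaup)\,{\in}\,\Rad^{+}(C)$, hence $\betaup\,{\notin}\,\Phi_{C}^{\stt,+}$. Thus $\Phi_{C}^{\stt,+}\,{\subseteq}\,\Phi'$ and $\Qq'\,{=}\,\Qq_{\Phi'}\,{\subseteq}\,\Qq_{\Phi_{C}^{\stt,+}}$, establishing maximality and completing the first assertion.

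Finally, the equivalences follow by specialisation. By the defining condition of Levi-nondegeneracy for pairs and the first part, $(\Qq,\stt)$ is Levi-nondegenerate exactly when the maximal set $\Qq_{\Phi_{C}^{\stt,+}}$ equals $\Qq$, which (since for a simple $\betaup$ the root ${-}\betaup$ enters $\Qq_{\Phi}$ precisely when $\betaup\,{\notin}\,\Phi$) happens iff $\Phi_{C}^{\stt,+}\,{=}\,\Phi_{C}$, i.e.\ iff $\stt(\Phi_{C})\,{\subseteq}\,\Rad^{+}(C)$. The equivalence with $\stt(\Phi_{C})\,{\subseteq}\,\Qn$ is then immediate: the inclusion $\Qn\,{\subseteq}\,\Rad^{+}(C)$ gives one direction, while Lemma~\ref{l4.21} (which for $V$-fit $C$ gives $\stt(\Qn)\,{\subseteq}\,\Qn\,{\cup}\,\Rad^{-}(C)$) together with $\stt(\Phi_{C})\,{\subseteq}\,\Rad^{+}(C)$ forces $\stt(\Phi_{C})\,{\subseteq}\,\Qn$, giving the other.
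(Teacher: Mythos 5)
Your proof is correct, and it reaches the key containment $\Qq_{\,\;\Phi_{C}^{\stt,+}}\subseteq\Qq\cup\stt(\Qq)$ by a route that differs in execution from the paper's. The paper deduces the first statement by iterating Lemma\,\ref{l4.30}, removing one root of $\Phi_{C}{\setminus}\Phi_{C}^{\stt,+}$ at a time and checking that the chamber $C$ remains $V$-fit for each intermediate parabolic; you instead give a one-shot argument, writing a $C$-negative root $\alphaup$ of $\Qq_{\,\;\Phi_{C}^{\stt,+}}$ over $\Bz(C){\setminus}\Phi_{C}^{\stt,+}$ and using additivity of $\chiup_{\Qq}\circ\stt$ together with the observation that $\chiup_{\Qq}(\stt(\betaup))\leq 0$ for every admissible simple $\betaup$ (by $V$-fitness for complex roots, by $\chiup_{\Qq}|_{\Qq^{r}}=0$ for real and imaginary ones, and by $\Qq^{n}\subseteq\Rad^{+}(C)$ for the roots dropped from $\Phi_{C}$). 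This buys two things: you avoid having to verify preservation of $V$-fitness along the iteration, and you only ever need the (correct) conclusion $\stt(\alphaup)\in\Qq$ rather than the stronger $\stt(\alphaup)\in\Rad^{+}(C)$ asserted in the proof of Lemma\,\ref{l4.30}, which is slightly delicate when real simple roots of $\Phi_{C}^{\vee}$ occur in the support. Your maximality argument is essentially the converse direction of Lemma\,\ref{l4.30} (it is worth making explicit that if $\stt(\betaup)\in\Qq^{r}\cap\Rad^{+}(C)$ then $\betaup$ must be complex, since $\betaup$ real would force $\stt(\betaup)=\betaup\in\Qq^{n}$, before invoking Lemma\,\ref{l4.20}), and your treatment of the three equivalences via Lemmas\,\ref{l2.5} and \ref{l4.21} coincides with the paper's.
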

\begin{proof} The first statement follows by iterating the argument of 
Lemma\,\ref{l4.30}, after noticing that, if 
{$\alphaup\,{\in}\,\Phi_{C}\,{\cap}\,\stt(\Rad^{-}(C))$,}
then a chamber $C$ which is
$V$-fit for $(\Qq_{\;\Phi_{C}},\stt)$ is still adapted to $\Qq_{\;(\Phi_{C}\setminus\{\alphaup\})_{C}}$
and $V$-fit for $(\Qq_{\;(\Phi_{C}\setminus\{\alphaup\})_{C}},\stt)$.\par
The second is a consequence of the first and the fact that a parabolic $\Qq'$ containing $\Qq$
is of the form $\Qq'\,{=}\,\Qq_{\;\Psi_{C}}$ for a subset 
$\Psi_{C}$ of $\Phi_{C}$.
The equivalence of
the last two items follows from Lemma\,\ref{l4.21}.
\end{proof}
\begin{cor} Let {$\Qq$, $C$, $\stt$, $\Phi_{C}$, $\Phi_{C}^{\stt,+}$}
be as in the statement of  Theorem\,\ref{t4.30}. 
\par
If $\sigmaup\,{\in}\,\Invs$, 
then 
$(\gs,\qt_{\Phi_{C}^{\stt,+}})$ is the basis of the 
Levi-nondegenerate reduction of $(\gs,\qt_{\Phi_{C}})$ 
and $C$ is $V${-}fit for $(\gs,\qt_{\Phi_{C}^{\stt,+}})$. \par
\par
In particular, $(\gs,\qt_{\Phi_{C}})$ is 
Levi-nondegenerate iff $\stt(\Phi_{C})\,{\subseteq}\,\Rad^{+}(C)$.
\qed
\end{cor}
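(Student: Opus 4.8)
The plan is to reduce the whole statement to Theorem~\ref{t4.30}, which already identifies, purely at the level of root systems, the maximal closed subset $\Qq_{\Phi_C^{\stt,+}}$ lying between $\Qq$ and $\Qq\,{\cup}\,\stt(\Qq)$. By Theorem~\ref{t3.29} the basis of the Levi-nondegenerate reduction of $(\gs,\qt_{\Phi_C})$ is $(\gs,(\qt_{\Phi_C})_{\{\sigmaup\}})$, where $(\qt_{\Phi_C})_{\{\sigmaup\}}$ is the $\sigmaup$-extension; by Theorem~\ref{t3.6} this is the \emph{maximal} complex Lie subalgebra $\qt'$ with $\qt_{\Phi_C}\,{\subseteq}\,\qt'\,{\subseteq}\,\qt_{\Phi_C}{+}\,\sigmaup(\qt_{\Phi_C})$. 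So the heart of the matter is the identity
\[
(\qt_{\Phi_C})_{\{\sigmaup\}}=\qt_{\Phi_C^{\stt,+}}.
\]

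First I would fix the adapted Cartan subalgebra $\hst$ with complexification $\hg$, so that $\qt_{\Phi_C}\,{\supseteq}\,\hg$ and $\sigmaup(\hg)\,{=}\,\hg$. Since $\sigmaup$ carries $\gt^{\alphaup}$ to $\gt^{\stt(\alphaup)}$, the sum $\qt_{\Phi_C}{+}\,\sigmaup(\qt_{\Phi_C})$ is the $\hg$-regular subalgebra with root set $\Qq\,{\cup}\,\stt(\Qq)$. Any complex Lie subalgebra $\qt'$ squeezed between $\qt_{\Phi_C}$ and $\qt_{\Phi_C}{+}\,\sigmaup(\qt_{\Phi_C})$ then contains $\hg$, hence is $\hg$-regular, hence equals $\hg\,{\oplus}\,{\sum}_{\alphaup\in\Qq'}\gt^{\alphaup}$ for the closed set $\Qq'\,{=}\,\{\alphaup\,{\mid}\,\gt^{\alphaup}\,{\subseteq}\,\qt'\}$, which satisfies $\Qq\,{\subseteq}\,\Qq'\,{\subseteq}\,\Qq\,{\cup}\,\stt(\Qq)$. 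This sets up an inclusion-preserving bijection between the intermediate subalgebras and the closed sets between $\Qq$ and $\Qq\,{\cup}\,\stt(\Qq)$. Matching the two maxima---the subalgebra $(\qt_{\Phi_C})_{\{\sigmaup\}}$ from Theorem~\ref{t3.6} and the closed set $\Qq_{\Phi_C^{\stt,+}}$ from Theorem~\ref{t4.30}---yields the displayed identity and proves the first assertion.

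Next I would verify that $C$ is $V$-fit for $(\gs,\qt_{\Phi_C^{\stt,+}})$. As a simple root lies only in its own $\supp_C$, one has $\Phi_C(\Qq_{\Phi_C^{\stt,+}})\,{=}\,\Phi_C^{\stt,+}$, so I must check $\stt(\alphaup)\,{\in}\,\Rad^{-}(C)$ for every $\alphaup\,{\in}\,\Bz_{\star}^{\stt}(C){\setminus}\Phi_C^{\stt,+}$. Writing this set as $(\Bz_{\star}^{\stt}(C){\setminus}\Phi_C)\,{\cup}\,(\Bz_{\star}^{\stt}(C)\,{\cap}\,(\Phi_C{\setminus}\Phi_C^{\stt,+}))$, the first piece is handled by the $V$-fitness of $C$ for $(\Qq\,,\stt)$, while for the second piece the very definition of $\Phi_C^{\stt,+}$ forces $\stt(\alphaup)\,{\notin}\,\Rad^{+}(C)$, i.e. $\stt(\alphaup)\,{\in}\,\Rad^{-}(C)$. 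Finally, the ``in particular'' clause follows by combining the displayed identity with the criterion that $(\gs,\qt_{\Phi_C})$ is Levi-nondegenerate iff its $\sigmaup$-extension is trivial (Definition~\ref{d2.2} together with Theorem~\ref{t3.6}, or Theorem~\ref{t3.29}): this reads $\qt_{\Phi_C^{\stt,+}}\,{=}\,\qt_{\Phi_C}$, equivalently $\Phi_C^{\stt,+}\,{=}\,\Phi_C$, which is exactly $\stt(\Phi_C)\,{\subseteq}\,\Rad^{+}(C)$.

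The step I expect to be the main obstacle is the clean establishment of the inclusion-preserving dictionary between intermediate subalgebras and intermediate closed root sets---specifically, the argument that \emph{every} Lie subalgebra lying between $\qt_{\Phi_C}$ and $\qt_{\Phi_C}{+}\,\sigmaup(\qt_{\Phi_C})$ is automatically $\hg$-regular and therefore recoverable from its roots. Once this dictionary is in place, transporting the maximality statement of Theorem~\ref{t4.30} to the $\sigmaup$-extension of Theorem~\ref{t3.6} is immediate, and the remaining verifications are routine.
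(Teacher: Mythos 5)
Your proposal is correct and follows exactly the route the paper intends: the corollary is stated with \verb|\qed| as an immediate consequence of Theorem~\ref{t4.30} together with Theorems~\ref{t3.6} and~\ref{t3.29}, via the standard dictionary between $\hg$-regular intermediate subalgebras and closed root sets squeezed between $\Qq$ and $\Qq\,{\cup}\,\stt(\Qq)$. The step you flag as the main obstacle is in fact routine here, since any subalgebra containing the Cartan subalgebra $\hg$ is automatically an $\ad(\hg)$-module and hence decomposes into $\hg$ plus full root spaces.
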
 
\begin{exam}{
 The typical fibre of the  Levi-nondegenerate reduction of a parabolic $CR$-algebra may fail to be
 parabolic. }\par
 {
 Consider, e.g., in the full complex flag manifold 
 $\sfF$ of $\SL_{3}(\C)$,
 the orbit $\sfM$ of $\SU(1,2)$, consisting of 
 positive lines contained in  isotropic planes. To describe 
 its $CR$-algebra
 and its Levi-nondegenerate reduction, 
 we may consider on $\Rad\,{=}\,\{{\pm}(\e_{i}{-}\e_{j})\,{\mid}\,
 1{\leq}i{<}j{\leq}3\}$
 the conjugation $\stt$  defined by 
\begin{equation*}
 \stt(\e_{i})\,{=}\,{-}\e_{1},\;\;\stt(\e_{2})\,{=}\,{-}\e_{3},\;\; \stt(\e_{3})\,{=}\,{-}\e_{2}.
\end{equation*}}
{
Let $\qt$ be the Borel subalgebra $\bt_{C}$,
with $\Rad^{+}(C)\,{=}\,\{\e_{i}{-}\e_{j}\,{\mid}\,
1{\leq}i{<}j{\leq}3\}.$} \par
{ 
Then $\qt_{\{\sigmaup(\qt)\}}$ is the parabolic subalgebra associated to  
$ \Qq'\,{=}\,\Rad^{+}(C){\cup}\{\e_{2}{-}\e_{1}\}$
and
$
 \Qq'{\cap}\,\stt(\Qq')\,{=}\,\{\e_{2}{-}\e_{1},\,\e_{1}{-}\e_{3},\,\e_{2}{-}\e_{3}\}
$
shows that $\qt'\,{\cap}\,\sigmaup(\qt')$ is a Borel subalgebra of $\slt_{3}(\C)$. 
Then $\qt\,{\cap}\,\sigmaup(\qt')$ 
is not parabolic in $\qt'\,{\cap}\,\sigmaup(\qt')$,
because it is different from it.}
\end{exam}
\begin{exam} \label{e3.32}
Let $\Rad\,{=}\{{\pm}\e_{i},\,i=1,2,3,\, {\pm}\e_{i}{\pm}\e_{j},\, 1{\leq}i{<}j{\leq}3\}$ 
be an irreducible root system of type $\textsc{B}_{3}$.
 We consider the pair $(\Qq\,,\stt)$ described by the cross-marked  diagram 
 \begin{equation*} 
 \xymatrix @M=0pt @R=4pt @!C=8pt{   \alphaup_{1}& \alphaup_{2} & \alphaup_{3}
\\
 \oplus \ar@{-}[r] & \medbullet   \ar@{=>}[r] & \medbullet
  \\
&\times&\times
}
\end{equation*}
where $\alphaup_{1}{=}\e_{1}{-}\e_{2}$, $\alphaup_{2}{=}\e_{2}{-}\e_{3}$, $\alphaup_{3}\,{=}\e_{3}$
and the involution $\stt$ is given by 
\begin{equation*}
 \e_{1}\leftrightarrow\e_{1},\;\;\e_{2}\leftrightarrow{-}\e_{2},\;\; \e_{3}\leftrightarrow{-}\e_{3}.
\end{equation*}
We obtain a $V$-chamber 
 \begin{equation*} 
 \xymatrix @M=0pt @R=4pt @!C=8pt{   \alphaup'_{1}& \alphaup'_{2} & \alphaup'_{3}
\\
 \ominus \ar@{-}[r] & \oplus  \ar@{=>}[r] & \medbullet
  \\
&\times&\times
}
\end{equation*}
by choosing $\Bz(C')\,{=}\{\alphaup_{1}'{=}\e_{2}{-}\e_{1},\,
\alphaup'_{2}{=}\e_{1}{-}\e_{3},\, \alphaup'_{3}{=}\e_{3}\}$. This shows that $(\Qq\,,\stt)$,
and any corresponding $CR$-algebra, is Levi-degenerate. Its Levi-non\-de\-gen\-er\-ate reduction 
$(\Qq',\stt)$ has cross-marked diagram 
 \begin{equation*} 
 \xymatrix @M=0pt @R=4pt @!C=8pt{   \alphaup_{1}& \alphaup_{2} & \alphaup_{3}
\\
 \oplus \ar@{-}[r] & \medbullet   \ar@{=>}[r] & \medbullet
  \\
&\times
}
\end{equation*}
Since a $CR$-algebra $(\gs,\qt)$ is contact-nondegenerate if and only if its Levi-nondegenerate
reduction is contact-nondegenerate, $CR$-algebras corresponding to $(\Qq\,,\stt)$ are
Levi-degenerate, but contact-nondegenerate.
\end{exam}
\subsection{Fundamental reduction} Keep the notation of the previous
subsections. For $C\,{\in}\,\Cd(\Rad,\Qq)$ we set 
\begin{equation}
 \Bz^{\stt,+}(C)=\Bz(C)\cap\stt(\Rad^{+}(C)),\;\;\Bz^{s,-}(C)=\Bz(C)\cap\stt(\Rad^{-}(C)).
\end{equation}

\begin{lem}\label{l4.70}
 Let $\Qq\,{\in}\Pcr(\Rad)$, 
 $C\,{\in}\,\Cd(\Rad,\Qq)${, $\alphaup_{0}{\in}\,\Phi_{C}$ and $\stt$ an involution of~$\Rad$. 
 Set $\Phi_{C}^{\stt,-}{=}\,\Phi_{C}\,{\cap}\,\Bz^{\stt,-}(C)$. }
Then 
\begin{equation}\label{e4.52}
 \Qq\cup\stt(\Qq)\subseteq\Qq_{\,\;\{\alphaup_{0}\}_{C}} \;
 \Leftrightarrow\; \begin{cases}\stt(\alphaup_{0})\in\Qq^{n}\;\,\text{and}\,\;\\
  \alphaup_{0}\,{\notin}\,
 {\bigcup}_{\betaup\,{\in}\,\Phi^{\vee}_{C}\cup\,\Phi_{C}^{\stt,-}}\supp_{C}(\stt(\betaup)).
 \end{cases}
\end{equation}
 \end{lem}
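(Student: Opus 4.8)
The plan is to translate the containment into a sign condition on the $\alphaup_{0}$-coefficient of $\stt$-images of roots. First I would note that, since $\{\alphaup_{0}\}\subseteq\Phi_{C}$, Remark~\ref{r1.7} gives $\Qq=\Qq_{\Phi_{C}}\subseteq\Qq_{\,\;\{\alphaup_{0}\}_{C}}$, so the asserted inclusion is equivalent to $\stt(\Qq)\subseteq\Qq_{\,\;\{\alphaup_{0}\}_{C}}$. Applying Proposition~\ref{p2.5} to the maximal parabolic set $\Qq_{\,\;\{\alphaup_{0}\}_{C}}$, its associated $\Z$-gradation, which I denote $\chiup_{0}$, is the linear form sending a root $\alphaup$ to its coefficient $k_{\alphaup,\alphaup_{0}}$ in the basis $\Bz(C)$. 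Thus $\Qq_{\,\;\{\alphaup_{0}\}_{C}}=\{\alphaup\mid\chiup_{0}(\alphaup)\geq 0\}$, its reductive part is $\{\chiup_{0}=0\}$ (equivalently $\alphaup_{0}\notin\supp_{C}(\alphaup)$), and the condition to be characterised becomes $\chiup_{0}(\stt(\gammaup))\geq 0$ for every $\gammaup\in\Qq$.

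For the necessity of (ii) I would test this inequality on distinguished simple roots. For $\betaup\in\Phi_{C}^{\vee}=\Bz(C)\cap\Qq^{r}$ both $\pm\betaup$ lie in $\Qq^{r}\subseteq\Qq$, forcing $\chiup_{0}(\stt(\betaup))=0$; for $\betaup\in\Phi_{C}^{\stt,-}$ one has $\betaup\in\Qq^{n}\subseteq\Qq$ while $\stt(\betaup)\in\Rad^{-}(C)$, and a negative root can lie in $\Qq_{\,\;\{\alphaup_{0}\}_{C}}$ only when $\alphaup_{0}\notin\supp_{C}(\stt(\betaup))$. Together these give $\alphaup_{0}\notin\supp_{C}(\stt(\betaup))$ for all $\betaup\in\Phi_{C}^{\vee}\cup\Phi_{C}^{\stt,-}$, i.e. (ii). Condition (i) I would instead extract by a complementary argument: since $-\alphaup_{0}\in\Rad^{-}(C)$ has $\alphaup_{0}\in\supp_{C}(-\alphaup_{0})$, it lies outside $\Qq_{\,\;\{\alphaup_{0}\}_{C}}$, and $\stt(\Qq)\subseteq\Qq_{\,\;\{\alphaup_{0}\}_{C}}$ forces $\stt(\Rad{\setminus}\Qq_{\,\;\{\alphaup_{0}\}_{C}})\subseteq\Qc$; applied to $-\alphaup_{0}$ this yields $-\stt(\alphaup_{0})\in\Qc$, so $\stt(\alphaup_{0})\in\Qq$ with $-\stt(\alphaup_{0})\notin\Qq$, that is $\stt(\alphaup_{0})\in\Qq^{n}$, which is (i).

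For sufficiency I would assume the two right-hand conditions and run the estimate on an arbitrary $\gammaup\in\Qq$. Every $\gammaup\in\Qq^{r}$ is supported on $\Phi_{C}^{\vee}$, so (ii) together with linearity of $\chiup_{0}$ gives $\chiup_{0}(\stt(\gammaup))=0$; a root $\gammaup\in\Qq^{n}\subseteq\Rad^{+}(C)$ expands with nonnegative coefficients over $\Bz(C)=\Phi_{C}^{\vee}\sqcup\Phi_{C}^{\stt,+}\sqcup\Phi_{C}^{\stt,-}$, where the $\Phi_{C}^{\vee}$ and $\Phi_{C}^{\stt,-}$ contributions vanish by (ii) and the $\Phi_{C}^{\stt,+}$ contributions are nonnegative because there $\stt(\betaup)\in\Rad^{+}(C)$, whence $\chiup_{0}(\stt(\gammaup))\geq 0$. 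This is the mirror image of Lemma~\ref{l4.30}, now for enlarging rather than shrinking the cross set, and, in contrast to that lemma, it requires no $V$-fit hypothesis. The main subtlety I anticipate is the asymmetry between $\Phi_{C}^{\stt,+}$, which is automatically harmless, and $\Phi_{C}^{\stt,-}$, which must be annihilated by the support condition; pinning down exactly this set, and recognising that (i) must come from the parabolic-complement step rather than from a support computation, is where care is needed, while the remaining manipulations are routine uses of the gradation and of Remark~\ref{r1.7}.
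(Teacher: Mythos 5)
Your proof is correct and follows essentially the same route as the paper's: both reduce the inclusion to controlling the sign of the $\alphaup_{0}$-coefficient of $\stt$-images of roots (your gradation $\chiup_{0}$ is exactly the paper's implicit bookkeeping via $\supp_{C}$), extract condition (i) from the element $\pm\alphaup_{0}$ itself, and extract condition (ii) by testing on simple roots of $\Phi^{\vee}_{C}$ and $\Phi_{C}^{\stt,-}$. The only difference is organisational: the paper passes to the horocyclic parts and proves sufficiency by contradiction with a case split on the sign of $\stt(\alphaup)$, whereas you prove it directly by linearity of $\chiup_{0}$ over $\Bz(C)=\Phi_{C}^{\vee}\sqcup\Phi_{C}^{\stt,+}\sqcup\Phi_{C}^{\stt,-}$, which is a cosmetic (arguably cleaner) repackaging of the same computation.
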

 \begin{proof}
 The 
 left hand side of \eqref{e4.52} is equivalent to 
$\stt(\Qq){\subseteq}\Qq_{\,\;\{\alphaup_{0}\}_{C}}$ and hence to 
the opposite inclusion  
 $\Qq_{\,\;\{\alphaup_{0}\}_{C}}^{n}
 {\subseteq}\,\stt(\Qq^{n})$ of their horocyclic parts. \par
 Assume that $\Qq_{\,\;\{\alphaup_{0}\}_{C}}^{n}
 {\subseteq}\,\stt(\Qq^{n})$. In particular,  $\stt(\alphaup_{0})\,{\in}\,\Qq^{n}$.
Let us  prove 
that, if 
$\betaup\,{\in}\, \Bz(C)$ and  $\alphaup_{0}\,{\in}\,\supp_{C}(\stt(\betaup))$,
then $\stt(\betaup)\,{\in}\,\Rad^{+}(C)$. Indeed, 
{if} 
 $\stt(\betaup)\,{\in}\,\Rad^{-}(C)$, then
\begin{equation*}
 {-}\stt(\betaup)\,{\in}\,\Qq^{n}_{\,\;\{\alphaup_0\}_{C}}\Rightarrow
  {-}\stt(\betaup)\,{\in}\,\stt(\Qq^{n}) \Leftrightarrow {-}\betaup\,{\in}\,\Qq^{n},
\end{equation*}
{yields}
a contradiction, because $\betaup$ is $C$-positive. Finally, if $\stt(\betaup)\,{\in}\,
\Rad^{+}(C)$, then $\stt(\betaup)\,{\in}\,\Qq_{\,\;\{\alphaup_{0}\}_{C}}^{n}\,{\subseteq}\,\stt(\Qq^{n})$
shows that $\betaup\,{\in}\,\Phi_{C}^{\stt,+}{\,{=}\,\Phi_{C}\,{\cap}\,\Bz^{\stt,+}(C)}$.
\par\smallskip
To prove the vice versa, we assume by contradiction 
that there is a root $\alphaup$ in $\Qq^{n}_{\,\;\{\alphaup_{0}\}_{C}}
 {\setminus}\stt(\Qq^{n})$, with $\alphaup\,{\neq}\,\alphaup_{0}$. We consider the two cases
 that may occur.
\smallskip\par 
If 
 ${\stt(\alphaup)\,{\in}\,\Rad^{+}(C)}$, then  
\begin{equation*}
 \stt(\alphaup)={\sum}_{\betaup\,{\in}\,\Phi_{C}^{\vee}}k_{\stt(\alphaup),\betaup}\betaup \;
 \Longrightarrow \; \alphaup_{0}\,{\prec}_{\;C}\,\alphaup=
 {\sum}_{\betaup\,{\in}\,\Phi_{C}^{\vee}}k_{\stt(\alphaup),\betaup}\stt(\betaup)
\end{equation*}
 implies that $\alphaup_{0}$ belongs to the support of $\stt(\betaup)$ for some 
 $\betaup\,{\in}\,\Phi_{C}^{\vee}$.
 \smallskip\par
 If 
 ${\stt(\betaup)\,{\in}\,\Rad^{-}(C)}$, then  
\begin{gather*}
 \stt(\alphaup)={\sum}_{\betaup\,{\in}\,\Bz(C)}k_{\stt(\alphaup),\betaup}\betaup,\;\;\text{with\;
 $k_{\stt(\alphaup),\betaup}\,{\leq}\,0$\; for all $\betaup\,{\in}\,\Bz(C)$\; and}
\\
 \alphaup_{0}\prec_{\;C}\alphaup =
{\sum}_{\betaup\,{\in}\,\Bz(C)}k_{\stt(\alphaup),\betaup}\stt(\betaup),\;\;\text{with\;
 $k_{\stt(\alphaup),\betaup}\,{\leq}\,0$\; for all $\betaup\,{\in}\,\Bz(C)$}
\end{gather*}
 implies that $\alphaup_{0}$ belongs to the support of $\stt(\betaup)$ for 
 some  $\betaup$ having a $C$-neg\-a\-tive $\stt$-conjugate, hence belonging to 
 $\Phi_{C}^{\vee}\,{\cup}\,\Phi_{C}^{\stt,-}$. 
 The proof is complete.
\end{proof}
\begin{thm}\label{t2.31}
Let $\Qq\,{\in}\Pcr(\Rad)$, 
 $C\,{\in}\,\Cd(\Rad,\Qq)$ and $\stt$ an involution of
 $\Rad$. With $\Phi_{C}^{\stt,-}{=}\,\Phi_{C}\,{\cap}\,\Bz^{\stt,-}(C)$,
 $\Phi_{C}^{\stt,+}{=}\,\Phi_{C}\,{\cap}\,\Bz^{\stt,+}(C)$, set 
\begin{equation}
 \Psi_{C}=\left\{\alphaup\,{\in}\,\Phi_{C}^{\stt,+}\left| \stt(\alphaup)\,{\in}\,\Qq^{n},\; 
 \alphaup\notin{\bigcup}_{\betaup\,{\in}\,\Phi_{C}^{\vee}\cup\,\Phi_{C}^{\stt,-}}\supp_{C}(\stt(\betaup))\right\}\right.
\end{equation}
Then $(\Qq_{\,\;\Psi_{C}},\stt)$ is the totally real basis of the fundamental reduction of $(\Qq\, ,\stt)$. \par
The pair $(\Qq\, ,\stt)$ is fundamental if and only if $\Psi_{C}\,{=}\,\emptyset$.
\end{thm}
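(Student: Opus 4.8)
The plan is to identify, at the level of root sets, the totally real basis $\Lie(\qt\,{+}\,\sigmaup(\qt))$ produced by the fundamental reduction of Theorem~\ref{t4.4} with the parabolic set $\Qq_{\,\;\Psi_{C}}$, and then to read off the fundamentality criterion from this identification.

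First I would recall that, by Theorem~\ref{t4.4}, the totally real basis of the fundamental reduction of $(\gs,\qt)$ is $(\gs,\Lie(\qt\,{+}\,\sigmaup(\qt)))$. Since $\sigmaup\,{\in}\,\Invs$ carries $\gt^{\alphaup}$ to $\gt^{\stt(\alphaup)}$, the subspace $\qt\,{+}\,\sigmaup(\qt)$ equals $\hg\,{\oplus}\,{\sum}_{\alphaup\in\Qq\cup\stt(\Qq)}\gt^{\alphaup}$. By Lemma~\ref{l2.1}, together with the relations $[\gt^{\alphaup},\gt^{\betaup}]\,{\subseteq}\,\gt^{\alphaup+\betaup}$, the generated Lie subalgebra $\Lie(\qt\,{+}\,\sigmaup(\qt))$ is the $\hg$-regular subalgebra whose root set $\widehat{\Qq}$ is the smallest closed subset of $\Rad$ containing $\Qq\,{\cup}\,\stt(\Qq)$. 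As $\widehat{\Qq}$ contains the parabolic set $\Qq$, it is itself parabolic, and since $\Qq\,{\subseteq}\,\widehat{\Qq}$ and $C$ is admissible for $\widehat{\Qq}$, Remark~\ref{r1.7} gives $\widehat{\Qq}\,{=}\,\Qq_{\,\;\Psi}$ for a unique $\Psi\,{\subseteq}\,\Phi_{C}$. Being the smallest parabolic containing $\Qq\,{\cup}\,\stt(\Qq)$ means that $\Psi$ is the largest subset of $\Phi_{C}$ for which $\Qq\,{\cup}\,\stt(\Qq)\,{\subseteq}\,\Qq_{\,\;\Psi}$.

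The decisive step is the elementary identity
\begin{equation*}
 \Qq_{\,\;\Psi}={\bigcap}_{\alphaup\in\Psi}\Qq_{\,\;\{\alphaup\}_{C}},
\end{equation*}
which follows at once from the description $\Qq_{\,\;\Psi}\,{=}\,\Rad^{+}(C)\,{\cup}\,\{\alphaup\,{\in}\,\Rad^{-}(C)\,{\mid}\,\supp_{C}(\alphaup)\,{\cap}\,\Psi\,{=}\,\emptyset\}$ of Lemma~\ref{l2.5}. Consequently $\Qq\,{\cup}\,\stt(\Qq)\,{\subseteq}\,\Qq_{\,\;\Psi}$ holds if and only if $\Qq\,{\cup}\,\stt(\Qq)\,{\subseteq}\,\Qq_{\,\;\{\alphaup\}_{C}}$ for every $\alphaup\,{\in}\,\Psi$. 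By Lemma~\ref{l4.70}---noting that $\stt(\alphaup)\,{\in}\,\Qq^{n}\,{\subseteq}\,\Rad^{+}(C)$ already forces $\alphaup\,{\in}\,\Phi_{C}^{\stt,+}$---this single-cross containment is equivalent to $\alphaup\,{\in}\,\Psi_{C}$. Hence $\Qq\,{\cup}\,\stt(\Qq)\,{\subseteq}\,\Qq_{\,\;\Psi}$ is equivalent to $\Psi\,{\subseteq}\,\Psi_{C}$, so the largest admissible $\Psi$ is $\Psi_{C}$ itself and $\widehat{\Qq}\,{=}\,\Qq_{\,\;\Psi_{C}}$. This exhibits $(\Qq_{\,\;\Psi_{C}},\stt)$ as the totally real basis of the fundamental reduction of $(\Qq\, ,\stt)$.

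For the last assertion, $(\Qq\, ,\stt)$ is fundamental exactly when $\Qq\,{\cup}\,\stt(\Qq)$ generates $\Rad$, that is when $\widehat{\Qq}\,{=}\,\Rad$, hence when $\Qq_{\,\;\Psi_{C}}\,{=}\,\Rad$. Since $\Psi_{C}\,{\subseteq}\,\Bz(C)$ consists of simple roots, every $\alphaup\,{\in}\,\Psi_{C}$ excludes $-\alphaup$ from $\Qq_{\,\;\Psi_{C}}$, so $\Qq_{\,\;\Psi_{C}}\,{=}\,\Rad$ if and only if $\Psi_{C}\,{=}\,\emptyset$; this is the stated criterion. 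The point requiring most care is the first paragraph, namely the passage from the abstract subalgebra $\Lie(\qt\,{+}\,\sigmaup(\qt))$ to the combinatorial closure $\widehat{\Qq}$, whereas Lemma~\ref{l4.70} already carries the essential single-cross computation.
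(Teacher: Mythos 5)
Your proof is correct and follows essentially the same route as the paper's: the paper likewise observes that the totally real basis must be of the form $\Qq_{\,\;\Psi_{C}}$, that $\Qq_{\,\;\Psi_{C}}$ is the intersection of the single-cross parabolics $\Qq_{\,\;\{\alphaup\}_{C}}$ containing $\Qq\,{\cup}\,\stt(\Qq)$, and then invokes Lemma~\ref{l4.70}. Your write-up merely fills in the details the paper leaves implicit (the identification of $\Lie(\qt\,{+}\,\sigmaup(\qt))$ with the combinatorial closure, and the redundancy of the condition $\alphaup\,{\in}\,\Phi_{C}^{\stt,+}$), which is fine.
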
 
\begin{proof}
 The fundamental reduction of $(\Qq\, ,\stt)$ is of the form $(\Qq_{\,\;\Psi_{C}},\stt)$ for some
 $\Psi_{C}\,{\subseteq}\,\Phi_{C}$. This $\Qq_{\,\;\Psi_{C}}$ is the intersection of all
 $\Qq_{\,\;\{\alphaup\}_{C}}$ containing $\Qq\,{\cup}\,\stt(\Qq)$. Hence the statement is
 a straightforward consequence of Lemma\,\ref{l4.70}.
\end{proof}
%
\begin{exam} 
Consider  the involution  $\stt$ 
on $\Rad(\textsc{A}_{3})$
defined by 
\begin{equation*}
 \e_{1}\,{\leftrightarrow}{-}\e_{3},\; \e_{2}\,{\leftrightarrow}\,{-}\e_{2},\;
\e_{4}\,{\leftrightarrow}\,{-}\e_{4},
\end{equation*}
yielding, on the canonical basis $\Bz(C)\,{=}\,\{\alphaup_{i}{=}\e_{i}{-}\e_{i+1}\,{\mid}\,1=1,2,3\}$,
\begin{equation*}
 \stt(\alphaup_{1})\,{=}\,\alphaup_{2},\;\;\stt(\alphaup_{2})\,{=}\,\alphaup_{1},\; 
 \stt(\alphaup_{3})\,{=}\,{-}(\alphaup_{1}{+}\alphaup_{2}{+}\alphaup_{3}),
\end{equation*}
and the parabolic 
$\Qq\,{=}\,\{\alphaup\,{\in}\,\Rad\,{\mid}\,(\alphaup|2\e_{1}{+}\e_{2})\,{\geq}\,0\}$. \par
The Weyl chamber 
$C$ with $\Bz(C)\,{=}\,\{\alphaup_{i}{=}\e_{i}{-}\e_{i+1}\,{\mid}\,1{\leq}i{\leq}3\}$
 is  $V$-fit for $(\Qq\, ,\stt)$, yielding the  
cross-marked diagram \vspace{10pt}
\begin{equation*}
   \xymatrix @M=0pt @R=2pt @!C=20pt{
\oplus \ar@/^10pt/@{<->}[r] \ar@{-}[r]&\oplus \ar@{-}[r]& \ominus\\
\times & \times
}
\end{equation*}
with $\Phi_{C}\,{=}\,\Phi_{C}^{\stt,+}\,{=}\,\{\alphaup_{1},\alphaup_{2}\}. $ 
This equality tells us that $(\Qq\, ,\stt)$ is 
Levi-non\-de\-gen\-er\-ate. 
Since $\alphaup_{3}\,{\in}\,\Phi_{C}^{\vee}$ and 
$\alphaup_{1},\alphaup_{2}$ belong to the support of $\stt(\alphaup_{3})$,
the pair $(\Qq\, ,\stt)$ is also fundamental.  
\end{exam}

\begin{rmk}
Since a $CR$-algebra and its 
Levi-nondegenerate reduction
have the same fundamental reduction, 
while characterising fundamental reduction 
we may restrain  
to Levi-nondegenerate
parabolic $CR$-algebras. \end{rmk}
\begin{prop}\label{p2.33}
{Let $\Qq\,{\in}\Pcr(\Rad)$,  
 $C\,{\in}\,\Cd(\Rad,\Qq)$ and $\stt$ an involution of
 $\Rad$.} 
 We assume that $(\Qq\, ,\stt)$ is  Levi
 nondegenerate
 and 
  $C $ is $V$-fit  for $(\Qq\, ,\stt)$. 
Then the fundamental reduction $(\Qq_{\,\;\Psi_{C}},\stt)$ of $(\Qq\, ,\stt)$ has 
\begin{equation}\label{e4.50}
 \Psi_{C}=\left\{\alphaup\,{\in}\,
 \Phi_{C}
 \left| \stt(\alphaup)\,{\in}\,\Qq^{n},\; 
 \alphaup\notin{\bigcup}_{\betaup\,{\in}\,\Phi_{C}^{\vee}}\supp_{C}(\stt(\betaup))\right\}\right. .
\end{equation}
\end{prop}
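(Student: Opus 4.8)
The plan is to derive \eqref{e4.50} as a specialization of the general combinatorial description of the fundamental reduction furnished by Theorem~\ref{t2.31}, using the two extra hypotheses—Levi-nondegeneracy and $V$-fitness of $C$—to collapse the ingredients that distinguish the two formulas. By Theorem~\ref{t2.31}, with no assumption on $(\Qq\,,\stt)$ beyond $C\,{\in}\,\Cd(\Rad,\Qq)$, the totally real basis of the fundamental reduction is $(\Qq_{\,\;\Psi_{C}},\stt)$ with
\begin{equation*}
 \Psi_{C}=\left\{\alphaup\,{\in}\,\Phi_{C}^{\stt,+}\left| \stt(\alphaup)\,{\in}\,\Qq^{n},\;
 \alphaup\notin{\bigcup}_{\betaup\,{\in}\,\Phi_{C}^{\vee}\cup\,\Phi_{C}^{\stt,-}}\supp_{C}(\stt(\betaup))\right\}\right. .
\end{equation*}
Comparing this with \eqref{e4.50}, the only discrepancies are that the index of $\alphaup$ runs over $\Phi_{C}^{\stt,+}$ rather than all of $\Phi_{C}$, and that the union is taken over $\Phi_{C}^{\vee}\cup\Phi_{C}^{\stt,-}$ rather than over $\Phi_{C}^{\vee}$. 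Hence it suffices to show that our hypotheses force $\Phi_{C}^{\stt,+}\,{=}\,\Phi_{C}$ and $\Phi_{C}^{\stt,-}\,{=}\,\emptyset$.

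The decisive step is to feed Levi-nondegeneracy into Theorem~\ref{t4.30}. Since $(\Qq\,,\stt)$ is Levi-nondegenerate and $C$ is $V$-fit, that theorem gives $\stt(\Phi_{C})\,{\subseteq}\,\Rad^{+}(C)$. Thus every $\alphaup\,{\in}\,\Phi_{C}$ satisfies $\stt(\alphaup)\,{\in}\,\Rad^{+}(C)$; applying the involution $\stt$, this reads $\alphaup\,{\in}\,\stt(\Rad^{+}(C))$, so that $\Phi_{C}\,{\subseteq}\,\Bz^{\stt,+}(C)$ and therefore $\Phi_{C}^{\stt,+}\,{=}\,\Phi_{C}\,{\cap}\,\Bz^{\stt,+}(C)\,{=}\,\Phi_{C}$. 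By the same reasoning, no $\alphaup\,{\in}\,\Phi_{C}$ can have $\stt(\alphaup)\,{\in}\,\Rad^{-}(C)$, whence $\Phi_{C}^{\stt,-}\,{=}\,\Phi_{C}\,{\cap}\,\Bz^{\stt,-}(C)\,{=}\,\emptyset$. Substituting $\Phi_{C}^{\stt,+}\,{=}\,\Phi_{C}$ and $\Phi_{C}^{\stt,-}\,{=}\,\emptyset$ into the displayed formula reduces the index set of the union from $\Phi_{C}^{\vee}\cup\Phi_{C}^{\stt,-}$ to $\Phi_{C}^{\vee}$ and enlarges the range of $\alphaup$ from $\Phi_{C}^{\stt,+}$ to $\Phi_{C}$, yielding exactly \eqref{e4.50}.

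I expect essentially no obstacle beyond this bookkeeping: the substantive content is already carried by Theorem~\ref{t2.31} (which identifies the fundamental reduction) and by the $V$-fit characterization of Levi-nondegeneracy in Theorem~\ref{t4.30}. The one point worth flagging is that $V$-fitness of $C$ is genuinely required to invoke Theorem~\ref{t4.30} in the equivalent form $\stt(\Phi_{C})\,{\subseteq}\,\Rad^{+}(C)$, and that the very same input $\stt(\Phi_{C})\,{\subseteq}\,\Qq^{n}$ makes the condition $\stt(\alphaup)\,{\in}\,\Qq^{n}$ in \eqref{e4.50} automatically satisfied on $\Phi_{C}$; we nonetheless keep it in the statement to match the form of Theorem~\ref{t2.31}.
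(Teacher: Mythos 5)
Your proposal is correct and follows essentially the same route as the paper: the paper's own proof is the one-line observation that Proposition~\ref{p2.33} follows from Theorem~\ref{t2.31} because $\Phi_{C}^{\stt,-}\,{=}\,\Phi_{C}\,{\cap}\,\Bz^{\stt,-}(C)\,{=}\,\emptyset$, which is exactly the consequence of Levi-nondegeneracy and $V$-fitness (via Theorem~\ref{t4.30}) that you spell out. Your version merely makes explicit the complementary identity $\Phi_{C}^{\stt,+}\,{=}\,\Phi_{C}$ and the redundancy of the condition $\stt(\alphaup)\,{\in}\,\Qq^{n}$, which the paper leaves implicit.
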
 
\begin{proof} This follows from Theorem\,\ref{t2.31}
because $\Phi_{C}^{\stt,-}\,{=}\,\Phi_{C}\,{\cap}\,\Bz^{\stt,-}(C)\,{=}\,\emptyset$.
\end{proof}
\begin{cor} \label{c2.34}
Let 
$\Qq\,,\,C,\,\stt,\,\Psi_{C}$
be as in the statement of Theorem\,\ref{t2.31},
 $\sigmaup\,{\in}\,\Invs$.
Then 
the parabolic $CR$-algebra $(\gs,\qt_{\Psi_{C}})$ is the basis of the 
fundamental reduction of $(\gs,\qt_{\Phi_{C}})$.\par
The last one is fundamental if and only if $\Psi_{C}\,{=}\,\emptyset$.\qed
\end{cor}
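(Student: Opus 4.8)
The plan is to reduce the statement to the combinatorial Theorem~\ref{t2.31} via the abstract fundamental reduction of Theorem~\ref{t4.4}. By Theorem~\ref{t4.4}, the fundamental reduction of the parabolic $CR$-algebra $(\gs,\qt_{\Phi_{C}})$ is the $\gs$-equivariant fibration whose totally real basis is $(\gs,\qt')$ with $\qt'\,{=}\,\Lie(\qt_{\Phi_{C}}\,{+}\,\sigmaup(\qt_{\Phi_{C}}))$. Hence the first assertion amounts to the identity $\qt'\,{=}\,\qt_{\Psi_{C}}$, which I would establish through the dictionary between $\hg$-regular parabolic subalgebras and parabolic subsets of $\Rad$.

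First I would record that, since $\sigmaup\,{\in}\,\Invs$ lifts $\stt$ and preserves the complexification $\hg$ of the adapted Cartan subalgebra $\hst$, the subalgebra $\sigmaup(\qt_{\Phi_{C}})$ is again $\hg$-regular, with set of roots $\stt(\Qq)$. Consequently $\qt_{\Phi_{C}}\,{+}\,\sigmaup(\qt_{\Phi_{C}})$ is $\hg$-regular with root set $\Qq\,{\cup}\,\stt(\Qq)$, and the Lie subalgebra $\qt'$ it generates is the $\hg$-regular subalgebra attached to the smallest closed subset of $\Rad$ containing $\Qq\,{\cup}\,\stt(\Qq)$. Because $\Qq$ is parabolic it contains some $\Rad^{+}(C)$, so this closed set is itself parabolic and $\qt'$ contains the Borel subalgebra $\bt_{C}$; in particular $\qt'$ is parabolic, and $C$ is admissible for its root set.

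The key step is then to identify this minimal parabolic set containing $\Qq\,{\cup}\,\stt(\Qq)$ with $\Qq_{\Psi_{C}}$. Every parabolic set containing the parabolic $\Qq$ is of the form $\Qq_{\Psi}$ for some $\Psi\,{\subseteq}\,\Phi_{C}$, so the root set of $\qt'$ is the smallest $\Qq_{\Psi}$ containing $\Qq\,{\cup}\,\stt(\Qq)$; by the proof of Theorem~\ref{t2.31} this is precisely $\Qq_{\Psi_{C}}$, realised there as the intersection of all $\Qq_{\,\;\{\alphaup\}_{C}}$ containing $\Qq\,{\cup}\,\stt(\Qq)$, and displayed as the totally real basis of the fundamental reduction of $(\Qq\, ,\stt)$. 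Thus $\qt'\,{=}\,\qt_{\Psi_{C}}$, which is the first assertion. The one point deserving care—and the main obstacle—is confirming that the abstract object $\Lie(\qt_{\Phi_{C}}\,{+}\,\sigmaup(\qt_{\Phi_{C}}))$ of Theorem~\ref{t4.4} matches the combinatorial $\Qq_{\Psi_{C}}$ of Theorem~\ref{t2.31}; this is resolved by the observation that both are characterised as the smallest parabolic object containing $\Qq\,{\cup}\,\stt(\Qq)$.

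Finally, for the closing equivalence I would invoke the Proposition relating properties of $(\gs,\qt)$ to those of the pair $(\Qq\, ,\stt)$: the parabolic $CR$-algebra $(\gs,\qt_{\Phi_{C}})$ is fundamental if and only if $(\Qq\, ,\stt)$ is fundamental, which by the last clause of Theorem~\ref{t2.31} holds if and only if $\Psi_{C}\,{=}\,\emptyset$. This completes the plan.
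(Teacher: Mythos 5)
Your argument is correct and is precisely the reasoning the paper leaves implicit (the corollary is stated with \qed as an immediate consequence): you translate Theorem\,\ref{t4.4}'s abstract fundamental reduction into root-system terms, identify $\Lie(\qt_{\Phi_{C}}{+}\sigmaup(\qt_{\Phi_{C}}))$ with the smallest parabolic set containing $\Qq\,{\cup}\,\stt(\Qq)$, and then invoke Theorem\,\ref{t2.31}. Nothing is missing.
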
 
\begin{prop}
 The $CR$-algebra of the typical fibre of the fundamental reduction of a parabolic $CR$-algebra
 is isomorphic to a parabolic $CR$-algebra. 
\end{prop}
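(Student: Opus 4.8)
The plan is to unwind the definition of the typical fibre and reduce the claim to a purely structural statement about parabolic subalgebras. By Theorem~\ref{t4.4} the fundamental reduction of $(\gs,\qt)$ is the $\gs$-equivariant fibration onto the totally real basis $(\gs,\qt')$, where $\qt'=\Lie(\qt+\sigmaup(\qt))$, and its typical fibre is $(\gs',\qt)$ with $\gs'=\gs\cap\qt'$. First I would note that $\qt+\sigmaup(\qt)$ is $\sigmaup$-invariant, hence so is the Lie subalgebra $\qt'$ that it generates; therefore $\gs'=\gs\cap\qt'$ is a real form of $\qt'$ and $\qt'$ is precisely the complexification of $\gs'$. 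Consequently, to prove that $(\gs',\qt)$ is a parabolic $CR$-algebra it suffices to show that $\qt$ is a parabolic subalgebra of $\qt'$.

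The heart of the argument is a Borel-subalgebra comparison. Since $\qt$ is parabolic in $\gt$, it contains a Borel subalgebra $\bt$ of $\gt$; from $\bt\subseteq\qt\subseteq\qt'$ it follows that $\qt'$ too contains $\bt$, so $\qt'\in\Pt(\gt)$. I would then invoke the fact recalled in \S\ref{s1.3} that the Borel subalgebras of a parabolic subalgebra of $\gt$ are exactly the Borel subalgebras of $\gt$ contained in it. Applied to $\qt'$, this shows that $\bt$ is a Borel subalgebra of $\qt'$. As $\bt\subseteq\qt\subseteq\qt'$, the subalgebra $\qt$ contains a Borel subalgebra of $\qt'$ and is therefore parabolic in $\qt'$, which is exactly what is needed.

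The point I expect to require the most care is not the Borel argument itself but the book-keeping around it: checking that $\qt'$ is genuinely the complexification of $\gs'$, and that \emph{parabolic} is meaningful inside $\qt'$, which is only reductive-plus-nilradical and not semisimple. Both are covered by the conventions of \S\ref{s4}, where parabolicity is defined for an arbitrary complex Lie algebra through containment of a maximal solvable subalgebra, so the comparison above is legitimate with $\qt'$ in place of a semisimple ambient algebra. As an independent check I would also verify the conclusion combinatorially: fixing a parabolic $CR$-triple $(\gs,\qt,\hst)$ with associated pair $(\Qq,\stt)$ and a $V$-fit $C\in\Cd(\Rad,\Qq)$, Theorem~\ref{t2.31} identifies the totally real basis with $(\Qq_{\Psi_{C}},\stt)$, where $\Psi_{C}\subseteq\Phi_{C}$, so $\qt'=\qt_{\Psi_{C}}$; restricting $\Qq$ to the root subsystem carried by the reductive Levi factor of $\qt'$ then exhibits $\qt$ as parabolic in $\qt'$ directly.
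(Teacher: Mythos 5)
Your proof is correct, and it reaches the conclusion by a slightly more direct route than the paper. The paper's own argument identifies the basis of the fundamental reduction with the parabolic $(\gs,\qt_{\Psi_{C}})$ of Corollary~\ref{c2.34}, observes that $\gs'=\gs\cap\qt_{\Psi_{C}}$ is a real parabolic subalgebra of $\gs$ whose radical $\rt'_{\sigmaup}$ (with complexification $\rt'$) lies inside $\qt\cap\gs'$, and then invokes the effective reduction to replace the fibre $(\gs',\qt)$ by the locally isomorphic pair $\bigl(\gs'/\rt'_{\sigmaup},\,\qt/\rt'\bigr)$, which is parabolic over a \emph{semisimple} symmetry algebra. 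You instead stop one step earlier: your Borel-comparison argument ($\bt\subseteq\qt\subseteq\qt'$, and a Borel subalgebra of $\gt$ contained in $\qt'$ is maximal solvable in $\qt'$) shows that $\qt$ is already parabolic in $\qt'=\Lie(\qt+\sigmaup(\qt))$, so $(\gs',\qt)$ is itself a parabolic $CR$-algebra under the general definition of \S\ref{s4}; this is clean and legitimate, and it explains the easy direction of the remark on $\Bt(\qt)$ that you cite. What the paper's extra quotient buys is compatibility with how parabolic $CR$-algebras are actually used in the rest of \S\ref{s4}--\S{5} (adapted Cartan subalgebras, root systems, the diagram surgery of Remark~\ref{r2.37} all presuppose a semisimple symmetry algebra), and it accounts for the word ``isomorphic'' in the statement; your closing combinatorial check, passing to the root subsystem of the reductive Levi factor of $\qt'$, is in effect the same reduction, so if you make that remark into the final step (noting $\rt'\subseteq\qt$ so that the effective reduction applies) the two proofs coincide.
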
 
\begin{proof}
 We keep the notation of Corollary\,\ref{c2.34}. If $\qt_{\Psi_{C}}\,{=}\,\gt$, the basis is the  
 trivial $CR$-algebra, the typical fibre is $(\gs,\qt)$ 
 and we have nothing to prove. Assume that  $\qt_{\Psi_{C}}\,{\subsetneqq}\,\gt$.
 Then $\gt_{\Psi_{C}}$ is the complexification of a real parabolic subalgebra $\gs'$ of $\gs$.
Its radical $\rt'$ is the complexification of the radical $\rt'_{\sigmaup}$ of $\gs'$ and 
is contained in the radical of $\qt$. 
The typical
fibre of the fundamental reduction 
$\id^{\sharp}\,{:}\,(\gs,\qt)\,{\to}\,(\gs,\qt_{\Psi_{C}})$ has $CR$-algebra
$(\gs',\qt)\simeq((\gs'/\rt'_{\sigmaup}),(\qt/{\rt'}))$, the last one being a parabolic $CR$-algebra. 
\end{proof}
\begin{rmk}\label{r2.37}
A cross marked diagram of $((\gs'/\rt'_{\sigmaup}),(\qt/{\rt'}))$
can be obtained from that of $(\gs,\qt)$ by erasing the nodes corresponding to roots in $\Psi_{C}$
and the issuing lines and the remaining connected parts of the diagram not containing crossed nodes. 
\end{rmk}
\begin{exam} An example of the procedure outlined above in Remark\,\ref{r2.37} is represented by: 
 \begin{gather*} 
  \xymatrix @M=0pt @R=2pt @!C=8pt{ \medcirc \ar@{-}[r] \ar@/^7pt/@{<->}[r] 
&\medcirc
\\
\times}
\\
  \xymatrix @M=0pt @R=2pt @!C=4pt{   \medcirc \ar@{-}[r] \ar@/^18pt/@{<->}[rrrrr] &
  \medcirc \ar@{-}[r] \ar@/^14pt/@{<->}[rrr] & \medcirc \ar@{-}[r]
\ar@/^10pt/@{<->}[r] &\medcirc
 \ar@{-}[r] &\medcirc\ar@{-}[r] &\medcirc\\
&\times&\times && \times} 
\quad\xrightarrow{\quad\qquad}\quad
  \xymatrix @M=0pt @R=2pt @!C=4pt{   \medcirc \ar@{-}[r] \ar@/^18pt/@{<->}[rrrrr] &
  \medcirc \ar@{-}[r] \ar@/^14pt/@{<->}[rrr] & \medcirc \ar@{-}[r]
\ar@/^10pt/@{<->}[r] &\medcirc
 \ar@{-}[r] &\medcirc\ar@{-}[r] &\medcirc\\
&\times& &&\times} 
\end{gather*}
\end{exam}
\begin{rmk}  Theorem\,\ref{t4.30} and Proposition\,\ref{p2.33} yield a recipe for manufacturing examples of 
Levi-nondegenerate  fundamental 
$CR$-algebras: having fixed any Weyl chamber $C$ 
and {an involution $\stt$ of $\Rad$,} 
we can take any $\Phi_{C}$ satisfying 
\begin{equation*}
 \{\betaup\,{\in}\,\Bz_{\star}^{\stt}(C)\,{\mid}\,\stt(\betaup)\,{\in}\,\Rad^{+}(C)\}
 \subseteq \Phi_{C} \subseteq \{\betaup\,{\in}\,\Bz(C)\,{\mid}\,\stt(\betaup)\,{\in}\,\Rad^{+}(C)\}.
\end{equation*}
\par
 If $\sigmaup\,{\in}\,\Invs$, then
$C$ is a $V$-fit chamber for $(\gs,\qt_{\Phi_{C}})$, which
is  Levi-nondegenerate
 (we recall that, by Definition\,\ref{d2.2},
totally real $CR$-al\-ge\-bras
are  finitely 
Levi-nondegenerate).
Fundamental examples are next obtained by the procedure explained in Remark\,\ref{r2.37}.
\end{rmk}
\begin{exam}
 Consider in an irreducible root system $\Rad$
of type $\textsc{B}_{4}$ the 
Borel subalgebra $\Qq\,{=}\,\Rad^{+}(C)$ for the
Weyl chamber $C$ with 
\begin{equation*}
 \Bz(C)=\{\alphaup_{i}\,{=}\,\e_{i}{-}\e_{i+1}\mid i=1,2,3\}\cup\{\alphaup_{4}\,{=}\,\e_{4}\}
\end{equation*}
and the involution $\stt$ defined by 
\begin{equation*}
 \stt(\e_{i})= 
\begin{cases}
 \e_{i}, & i=1,2,\\
 {-}\e_{4}, & i=3,\\
 {-}\e_{3}, & i=4.
\end{cases}
\end{equation*}
The corresponding diagram is:
 \begin{equation*} 
 \xymatrix @M=0pt @R=4pt @!C=8pt{   \alphaup_{1}& \alphaup_{2} & \alphaup_{3}
 & \alphaup_{4}\\
 \medcirc \ar@{-}[r] & \oplus \ar@{-}[r] & \medcirc
  \ar@{=>}[r] & \ominus
  \\
\times&\times&\times&\times
}
\end{equation*}
The basis of the 
Levi-nondegenerate reduction 
is obtained by  dropping from $\Phi_{C}\,{=}\,\Bz(C)$ the root $\alphaup_{4}$
with $C$-negative $\stt$-conjugate and has diagram 
 \begin{equation*} 
 \xymatrix @M=0pt @R=4pt @!C=8pt{   \alphaup_{1}& \alphaup_{2} & \alphaup_{3}
 & \alphaup_{4}\\
 \medcirc \ar@{-}[r] & \oplus \ar@{-}[r] & \medcirc
  \ar@{=>}[r] & \ominus\\
\times&\times&\times&}
\end{equation*}
We have $\Phi^{\vee}_{C}\,{=}\,\{\alphaup_{4}\}$ and 
$\stt(\alphaup_{4})\,{=}\,{-}\e_{3}\,{=}\,{-}(\alphaup_{3}{+}\alphaup_{4})$. Hence
$\Psi_{C}\,{=}\,\{\alphaup_{1},\alphaup_{2}\}$
and the basis of the fundamental reduction has diagram 
 \begin{equation*} 
 \xymatrix @M=0pt @R=4pt @!C=8pt{   \alphaup_{1}& \alphaup_{2} & \alphaup_{3}
 & \alphaup_{4}\\
 \medcirc \ar@{-}[r] & \oplus \ar@{-}[r] & \medcirc
  \ar@{=>}[r] & \ominus\\
\times&\times&&}
\end{equation*}
Taking any $\sigmaup\,{\in}\,\Invs$, we obtain the sequence of reductions 
\begin{equation*}
 (\gs,\qt_{\{\alphaup_{1},\alphaup_{2},\alphaup_{3},\alphaup_{4}\}_{C}})
 \xrightarrow{\text{Levi-nondegenerate}}
  (\gs,\qt_{\{\alphaup_{1},\alphaup_{2},\alphaup_{3}\}_{C}})
 \xrightarrow{\text{fundamental}}
  (\gs,\qt_{\{\alphaup_{1},\alphaup_{2}\}_{C}}).
\end{equation*}
The typical fibre of the fundamental reduction has diagrams 
\begin{equation*}
\xymatrix @M=0pt @R=4pt @!C=8pt{    \alphaup_{3}
 & \alphaup_{4}\\
 \medcirc
  \ar@{=>}[r] & \ominus\\
\times} \qquad
\begin{matrix}
 \quad\\
 \quad\\
 \text{or}\\
 \quad
\end{matrix}\qquad\quad
\xymatrix @M=0pt @R=4pt @!C=8pt{    \alphaup'_{3}
 & \alphaup'_{4}\\
 \medbullet
  \ar@{=>}[r] & \oplus\\
\times}
\end{equation*}
where in the second one $\alphaup'_{3}{=}\e_{3}{+}\e_{4}$, $\alphaup'_{4}{=}{-}\e_{4}$.
\end{exam}
\subsection{Depth of fundamental  parabolic $CR$-algebras} 
As we noted before, 
contact depth of $CR$ manifolds is important to investigate 
various properties of their $CR$ functions
(cf. \cite{AHNP08a, NP2015}). In the homogeneous case, it coincides with the depth
of associated $CR$ algebras.
\par
In this subsection we study contact depth of homogeneous $CR$ manifolds
associated to parabolic $CR$ algebras. \par
If $\Gf$ is a semisimple complex Lie group
with Lie algebra $\gt$, $\Qf$ its parabolic subgroup with Lie algebra $\qt$ and
$\Gf_{\sigmaup}$ a real form of $\Qf$, then the contact depth of the orbit $\sfM$ of
$\Gf_{\sigmaup}$ through the base point of $\Gf{\! /}\Qf$  equals 
the \textit{$H$-index},  
introduced in  
Definition\,\ref{d4.9} below, of 
an associated pair 
$(\Qq\,,\stt)$.
\subsubsection{Depth caracterisation}
{Keep the notation of the previous subsections}. 

By applying  Lemma\,\ref{l2.1} to elements  
of a Chevalley system 
$(X_{\alphaup},H_{\alphaup})_{\alphaup\in\Rad}$, we get 
\begin{prop} 
 The parabolic $CR$-algebra $(\gs,\qt)$ is fundamental iff for every root $\alphaup$ 
\begin{equation}\label{e4.21}
\exists\,\betaup_{1},\hdots,\betaup_{r}\,{\in}\,\Qq\cup\stt(\Qq)\;\;
 \text{such that}\;\; \alphaup_{h}={\sum}_{i=1}^{h}\betaup_{i}\in\Rad\;\;\text{and}\;\;\alphaup_{r}=\alpha.
\end{equation}
\end{prop}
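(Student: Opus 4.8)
The plan is to translate fundamentality into a purely combinatorial condition on $\Qq$ and its $\stt$-image, and then to identify that condition with \eqref{e4.21}. Recall from the Proposition on the canonical filtration that $(\gs,\qt)$ is fundamental precisely when $\Ft_{-\infty}(\gs,\qt)=\gt_{\sigmaup}$, i.e. when $\qt+\sigmaup(\qt)$ generates $\gt$ as a complex Lie algebra. Since $\hg\subseteq\qt$, we have $\qt+\sigmaup(\qt)=\hg\oplus\bigoplus_{\betaup\in\Qq\cup\stt(\Qq)}\gt^{\betaup}$, so the generated subalgebra contains $\hg$ and is therefore $\hg$-regular. By the description of $\hg$-regular subalgebras in \eqref{e1.19}, it equals $\hg\oplus\bigoplus_{\alphaup\in\mathcal{S}}\gt^{\alphaup}$ for a unique closed subset $\mathcal{S}\subseteq\Rad$, and $\mathcal{S}$ is the smallest closed subset of $\Rad$ containing $\Qq\cup\stt(\Qq)$. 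Thus fundamentality is equivalent to $\mathcal{S}=\Rad$.

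Next I would introduce the set $P$ of roots reachable by admissible chains, namely $P=\{\alphaup\in\Rad\mid \text{\eqref{e4.21} holds}\}$, so that the Proposition is exactly the assertion $P=\Rad\Leftrightarrow\mathcal{S}=\Rad$; in fact I will prove $P=\mathcal{S}$. The inclusion $P\subseteq\mathcal{S}$ is immediate: given a chain $\betaup_{1},\dots,\betaup_{r}\in\Qq\cup\stt(\Qq)$ with every partial sum $\alphaup_{h}$ a root, an induction on $h$ using that $\mathcal{S}$ is closed and contains $\Qq\cup\stt(\Qq)$ shows $\alphaup_{h}\in\mathcal{S}$, whence $\alphaup=\alphaup_{r}\in\mathcal{S}$. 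For the reverse inclusion it suffices to prove that $P$ is itself closed: since $\Qq\cup\stt(\Qq)\subseteq P\subseteq\Rad$ and $\mathcal{S}$ is the smallest closed set containing $\Qq\cup\stt(\Qq)$, closedness of $P$ forces $\mathcal{S}\subseteq P$. This is the point at which Lemma \ref{l2.1} enters, applied to the Chevalley generators $X_{\betaup}$ ($\betaup\in\Qq\cup\stt(\Qq)$): a root $\alphaup$ lies in $P$ exactly when some left-nested commutator $[X_{\betaup_{1}},\dots,X_{\betaup_{r}}]$ is a nonzero multiple of $X_{\alphaup}$ with all intermediate weights roots, and Lemma \ref{l2.1} guarantees that the subalgebra generated by these vectors is the span of such commutators.

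The main obstacle is precisely the closedness of $P$: given $\alphaup,\gammaup\in P$ with $\alphaup+\gammaup\in\Rad$, one must splice an admissible chain for $\alphaup$ and one for $\gammaup$ into a single admissible chain for $\alphaup+\gammaup$, keeping every partial sum a root. I would reduce this, by induction on the length of the chain for $\gammaup$ (writing $\gammaup=\gammaup'+\deltaup_{s}$ with $\gammaup'\in P$ of shorter length and $\deltaup_{s}\in\Qq\cup\stt(\Qq)$), to the three-root lemma: if $\alphaup,\gammaup',\deltaup_{s}$ are roots with $\gammaup'+\deltaup_{s}$ and $\alphaup+\gammaup'+\deltaup_{s}$ roots, then at least one of $\alphaup+\gammaup'$, $\alphaup+\deltaup_{s}$ is a root or zero. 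The base case (appending a single $\deltaup$ when $\alphaup+\deltaup\in\Rad$) is trivial, and the inductive step reroutes the chain through whichever of $\alphaup+\gammaup'$, $\alphaup+\deltaup_{s}$ is a root. The three-root lemma follows from the standard estimate (see \cite{Bou68}): were none of the pairwise sums a root or zero, all three pairwise scalar products would be $\geq 0$, forcing $\|\alphaup+\gammaup'+\deltaup_{s}\|^{2}\geq\|\alphaup\|^{2}+\|\gammaup'\|^{2}+\|\deltaup_{s}\|^{2}$, which exceeds the squared length of every root unless $\Rad$ is of type $\textsc{G}_{2}$, where equality would require three mutually orthogonal short roots, impossible in rank two. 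This contradiction completes the induction, yielding $P=\mathcal{S}$ and hence the Proposition.
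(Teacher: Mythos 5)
Your overall strategy is sound and in fact close to the paper's: both arguments reduce fundamentality to the statement that the smallest closed subset of $\Rad$ containing $\Qq\,{\cup}\,\stt(\Qq)$ is all of $\Rad$, via Lemma~\ref{l2.1} applied to a Chevalley system. The paper reads the admissible chains directly off the nonvanishing of the higher-order commutators $[X_{\betaup_{1}},\hdots,X_{\betaup_{r}}]$ (each of which lies in a single weight space, so nonvanishing forces every partial sum to be a root, after excising any prefix summing to zero); you instead isolate the combinatorial set $P$ of reachable roots and try to prove directly that $P$ is closed. That reduction is legitimate, and the three-root splicing lemma you need is \emph{true} --- but your proof of it is not. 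Under your contradiction hypothesis you only control the two inner products involving $\alphaup$: nothing forces $(\gammaup'\,|\,\deltaup_{s})\,{\geq}\,0$, since $\gammaup'{+}\deltaup_{s}$ is \emph{assumed} to be a root and such pairs typically have negative inner product. Hence the inequality $\|\alphaup{+}\gammaup'{+}\deltaup_{s}\|^{2}\,{\geq}\,\|\alphaup\|^{2}{+}\|\gammaup'\|^{2}{+}\|\deltaup_{s}\|^{2}$ does not follow. What does follow is only $(\alphaup\,|\,\gammaup'{+}\deltaup_{s})\,{\geq}\,0$, and this is perfectly compatible with $\alphaup{+}\gammaup'{+}\deltaup_{s}$ being a root in the non-simply-laced types: in $\textsc{B}_{2}$ the orthogonal short roots $\e_{1}$ and $\e_{2}$ sum to the long root $\e_{1}{+}\e_{2}$. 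So the length estimate cannot close the argument.

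The repair is immediate and is the same mechanism as Lemma~\ref{l3.2a}: in a Chevalley system, $[X_{\alphaup},[X_{\gammaup'},X_{\deltaup_{s}}]]$ is a nonzero multiple of $X_{\alphaup+\gammaup'+\deltaup_{s}}$ (both $\gammaup'{+}\deltaup_{s}$ and $\alphaup{+}\gammaup'{+}\deltaup_{s}$ being roots), while the Jacobi identity writes it as $[[X_{\alphaup},X_{\gammaup'}],X_{\deltaup_{s}}]\,{+}\,[X_{\gammaup'},[X_{\alphaup},X_{\deltaup_{s}}]]$; if neither $\alphaup{+}\gammaup'$ nor $\alphaup{+}\deltaup_{s}$ were a root or zero, both summands would vanish. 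With that substitution your induction goes through. One further small point worth flagging: your assertion that $\alphaup\,{\in}\,P$ ``exactly when'' some nonzero left-nested commutator of weight $\alphaup$ exists needs the observation that a nonzero commutator whose partial sums pass through $0$ (landing in $\hg$) can be shortened by dropping the offending prefix; the paper glosses this too, but it is part of the equivalence you are invoking.
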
 
\begin{proof} Both $\qt\,{+}\,\sigmaup(\qt)$ and its complement
$\qt^{c}\,{\cap}\,\sigmaup(\qt^{c})$ are $\hg$-modules and hence decompose into direct sums
of subspaces of $\hg$ and  
root spaces. 
Thus, by Lemma\,\ref{l2.1},  
$(\gs,\qt)$ is fundamental if and only if for each root $\alphaup$ we can find 
$\betaup_{1},\hdots,\betaup_{r}\,{\in}\,\Qq\,{\cup}\,\stt(\Qq)$ such that 
$\betaup_{1}{+}\cdots{+}\betaup_{r}\,{=}\,\alphaup$ and 
$[X_{\betaup_{1}},\hdots,X_{\betaup_{r}}]\,{\neq}\,0$. 
These conditions are
equivalent to 
\eqref{e4.21}. 
\end{proof} 
\begin{dfn}\label{d4.9}
We call a string 
$(\betaup_{1},\hdots,\betaup_{r})$ satisfying \eqref{e4.21}
 an \emph{$H$-sequence} for $\alphaup$ with respect to $(\Qq\, ,\stt)$.
The minimal length of  an $H$-sequence for $\alphaup$ is called its \emph{$H$-index} and
will be denoted by $\nuup(\alphaup)$.
The supremum of $H$-indices of roots of $\Rad$ is
the
 $H$-\emph{index} $\nuup\,{=}\,\nuup(\Qq\,,\stt)$ 
 of~$(\Qq\, ,\stt)$. \par
To simplify notation,  we will drop explicit reference to $(\Qq\,,\stt)$ when this would not cause
 confusion.
\end{dfn}
Roots in $\Qq\,{\cup}\,\stt(\Qq)$ have $H$-index $1$.
We set $\nuup(\alphaup)\,{=}\,\infty$ for a root $\alphaup$ which is not a sum of roots
in $\Qq\,{\cup}\,\stt(\Qq)$. 

\begin{prop}
 The depth of the parabolic $CR$-algebra $(\gs,\qt)$ equals the $H$-index of  
 its associated pair $(\Qq\, ,\stt)$. \qed
\end{prop}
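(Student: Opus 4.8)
The plan is to transport the whole discussion into root-space language and to show that the canonical filtration $\Ft_{-h}(\gs,\qt)$ grows at exactly the rate measured by $H$-sequences. First I would complexify the filtration. Since $\qt\,{+}\,\sigmaup(\qt)$ is $\sigmaup$-invariant, it is the complexification of the lifted contact module $\Ft_{-1}(\gs,\qt)\,{=}\,(\qt\,{+}\,\sigmaup(\qt))\,{\cap}\,\gs$, and, because complexification commutes with sums and brackets, the recursion defining the $\Ft_{-h}(\gs,\qt)$ complexifies verbatim. As $\hg\,{\subseteq}\,\qt\,{+}\,\sigmaup(\qt)$, an induction with the Jacobi identity shows that each complexified level $\Ft_{-h}(\gs,\qt)^{\C}$ is $\hg$-invariant, hence of the form $\hg\,{\oplus}\,{\sum}_{\alphaup\in\Qq_{h}}\gt^{\alphaup}$ for a set of roots $\Qq_{h}$, with $\Qq_{1}\,{=}\,\Qq\,{\cup}\,\stt(\Qq)$.

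Next I would read the bracket recursion through the Chevalley rule $[\gt^{\gammaup},\gt^{\betaup}]\,{=}\,\gt^{\gammaup+\betaup}$ when $\gammaup{+}\betaup\,{\in}\,\Rad$, with $[\gt^{\gammaup},\gt^{-\gammaup}]\,{\subseteq}\,\hg$ and the bracket vanishing otherwise. This converts the recursion into the purely combinatorial rule $\Qq_{h}\,{=}\,\Qq_{h-1}\cup\{\gammaup{+}\betaup\,{\in}\,\Rad\mid\gammaup\,{\in}\,\Qq_{h-1},\,\betaup\,{\in}\,\Qq_{1}\}$. The heart of the argument is then the claim $\Qq_{h}\,{=}\,\{\alphaup\,{\in}\,\Rad\mid\nuup(\alphaup)\,{\leq}\,h\}$, which I would prove by induction on $h$ (the base case being that roots of $\Qq\,{\cup}\,\stt(\Qq)$ are exactly those of $H$-index one): appending one summand $\betaup\,{\in}\,\Qq_{1}$ to a minimal $H$-sequence for $\gammaup$ is precisely the operation appearing in the recursion, and the constraint $\gammaup{+}\betaup\,{\in}\,\Rad$ is exactly the requirement that the new partial sum be a root.

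Finally I would conclude. By the claim, $\Ft_{-\infty}(\gs,\qt)^{\C}$ corresponds to the roots of finite $H$-index, which is all of $\Rad$ by fundamentality, equivalently (by the preceding Proposition) because $\Qq\,{\cup}\,\stt(\Qq)$ generates $\Rad$. Since a real subspace is recovered from its complexification, $\Ft_{-h}(\gs,\qt)\,{=}\,\Ft_{-\infty}(\gs,\qt)$ holds iff $\Qq_{h}\,{=}\,\Rad$, i.e.\ iff $\nuup(\alphaup)\,{\leq}\,h$ for every root; the least such $h$ is $\sup_{\alphaup}\nuup(\alphaup)$, the $H$-index $\nuup(\Qq\,,\stt)$. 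The step I expect to be most delicate is the bookkeeping in the middle claim: I must check that the one-bracket-at-a-time growth of the filtration matches the ``all partial sums are roots'' clause in the definition of an $H$-sequence, and that the abelian contributions $[\gt^{\gammaup},\gt^{-\gammaup}]\,{\subseteq}\,\hg$ never let a root enter by a shorter route than its $H$-index predicts. I would also keep the standing fundamentality hypothesis of this subsection explicit, since otherwise the $H$-index would be $+\infty$ while the depth stays finite.
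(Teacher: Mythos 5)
Your argument is correct and is essentially the argument the paper leaves implicit behind its omitted proof: decomposing the complexified filtration into $\hg$-weight spaces and matching the one-bracket-at-a-time recursion for $\Ft_{-h}(\gs,\qt)$ against the partial-sum condition defining $H$-sequences is exactly the mechanism used to prove the preceding proposition (fundamentality $\Leftrightarrow$ all roots have finite $H$-index), refined level by level. Your observation that the statement tacitly relies on the standing fundamentality hypothesis of the subsection (otherwise the $H$-index is $+\infty$ while the depth is finite) is also correct.
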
  
Lemma\,\ref{l3.2a}, applied to a Chevalley  system for $\Rad$, yields:
\begin{cor}\label{c2.46}
  If 
 $\betaup_{1},\hdots,\betaup_{r}$ are roots such that  ${\sum}_{i=1}^{j}\betaup_{i}\,{\in}\,\Rad,$\, 
 for $1{\leq}j{\leq}r$, 
 then we can find $h,$ with $1{\leq}h{<}r$, such that $\betaup_{h}{+}\betaup_{r}$
 is a root and, with $\betaup'_{i}{=}\betaup_{i}$ for $i{\neq}h$, $\betaup'_{h}{=}\betaup_{h}{+}\betaup_{r},$
we have ${\sum}_{i=1}^{p}\betaup'_{i}\,{\in}\,\Rad$ for $1{\leq}p{\leq}r{-}1$.
  \qed
\end{cor}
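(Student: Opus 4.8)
The plan is to transfer the purely root-combinatorial statement into the Lie algebra $\gt$ by fixing a Chevalley system $(X_{\alphaup},H_{\alphaup})_{\alphaup\in\Rad}$ and then invoking Lemma~\ref{l3.2a}, exactly as the parenthetical remark suggests. Write $\alphaup_{j}={\sum}_{i=1}^{j}\betaup_{i}$, so that by hypothesis every $\alphaup_{j}$ is a root and $\alphaup_{j}=\alphaup_{j-1}+\betaup_{j}$. Since $[X_{\gammaup},X_{\deltaup}]=N_{\gammaup,\deltaup}X_{\gammaup+\deltaup}$ with $N_{\gammaup,\deltaup}\neq 0$ whenever $\gammaup+\deltaup\in\Rad$, a one-line induction on $j$ shows that $[X_{\betaup_{1}},\hdots,X_{\betaup_{j}}]$ is a nonzero scalar multiple of $X_{\alphaup_{j}}$; in particular $[X_{\betaup_{1}},\hdots,X_{\betaup_{r}}]\neq 0$.

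Next I would feed $X_{\betaup_{1}},\hdots,X_{\betaup_{r}}$ into Lemma~\ref{l3.2a}, making explicit the expansion used in its proof,
\begin{equation*}
 [X_{\betaup_{1}},\hdots,X_{\betaup_{r}}]={\sum}_{i=1}^{r-1}[X_{\betaup_{1}},\hdots,[X_{\betaup_{i}},X_{\betaup_{r}}],\hdots,X_{\betaup_{r-1}}].
\end{equation*}
Every summand has $\hg$-weight $\alphaup_{r}$ and hence lies in the one-dimensional space $\gt^{\alphaup_{r}}$; as the left-hand side is nonzero, at least one summand, say the $h$-th, is nonzero, which forces $[X_{\betaup_{h}},X_{\betaup_{r}}]\neq 0$. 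Because $\alphaup_{r}\neq 0$, this surviving term really sits in a root space, so $[X_{\betaup_{h}},X_{\betaup_{r}}]$ is a nonzero multiple of $X_{\betaup_{h}+\betaup_{r}}$ and $\betaup_{h}+\betaup_{r}\in\Rad$. Setting $\betaup'_{i}=\betaup_{i}$ for $i\neq h$ and $\betaup'_{h}=\betaup_{h}+\betaup_{r}$, the chosen summand is then a nonzero multiple of $[X_{\betaup'_{1}},\hdots,X_{\betaup'_{r-1}}]$.

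Finally, the nonvanishing of $[X_{\betaup'_{1}},\hdots,X_{\betaup'_{r-1}}]$ propagates to each left-nested partial commutator $[X_{\betaup'_{1}},\hdots,X_{\betaup'_{p}}]$, which is therefore nonzero and lies in a weight space of $\gt$; reading off the weight yields ${\sum}_{i=1}^{p}\betaup'_{i}\in\Rad$ for every $1\leq p\leq r-1$ (explicitly this partial sum equals $\alphaup_{p}$ for $p<h$ and $\alphaup_{p}+\betaup_{r}$ for $p\geq h$), which is the second conclusion. The hard part will be the degenerate alternative $\betaup_{h}+\betaup_{r}=0$: there $[X_{\betaup_{h}},X_{\betaup_{r}}]=H_{\betaup_{h}}$ falls into $\hg$ rather than into a root space, and the corresponding summand does not match the merging template. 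The crux is thus to guarantee that one can pick the index $h$ among the surviving summands so that $\betaup_{h}+\betaup_{r}$ is a genuine root, and, in tandem, that none of the intermediate weights $\alphaup_{p}+\betaup_{r}$ (for $h\leq p<r-1$) vanishes so that each partial sum is an actual root rather than $0$. This is precisely the point where the combinatorics of the root chain $\alphaup_{1},\hdots,\alphaup_{r}$ must be exploited, beyond the bare nonvanishing furnished by Lemma~\ref{l3.2a}.
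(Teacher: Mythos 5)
Your proposal follows the paper's own route for Corollary~\ref{c2.46} --- the paper offers nothing beyond the sentence ``Lemma~\ref{l3.2a}, applied to a Chevalley system for $\Rad$, yields'' followed by a \qed, and your translation into Chevalley generators, the expansion of $[X_{\betaup_{1}},\hdots,X_{\betaup_{r}}]$ into the summands $[X_{h,1},\hdots,X_{h,r-1}]$, and the weight-space reading of a surviving summand is exactly what that sentence intends. But the ``hard part'' you flag at the end is a genuine gap, not a technicality you can expect to dispatch: Lemma~\ref{l3.2a} only gives $[X_{\betaup_{h}},X_{\betaup_{r}}]\,{\neq}\,0$, i.e. $\betaup_{h}{+}\betaup_{r}\,{\in}\,\Rad\,{\cup}\,\{0\}$, and the degenerate alternative $\betaup_{h}{+}\betaup_{r}\,{=}\,0$ can be the \emph{only} one available. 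Concretely, in $\Rad\,{=}\,\{{\pm}(\e_{i}{-}\e_{j})\}$ of type $\textsc{A}_{2}$ take $r\,{=}\,3$, $\betaup_{1}\,{=}\,\e_{1}{-}\e_{2}$, $\betaup_{2}\,{=}\,\e_{2}{-}\e_{3}$, $\betaup_{3}\,{=}\,\e_{2}{-}\e_{1}$: all partial sums $\e_{1}{-}\e_{2}$, $\e_{1}{-}\e_{3}$, $\e_{2}{-}\e_{3}$ are roots, yet $\betaup_{1}{+}\betaup_{3}\,{=}\,0$ and $\betaup_{2}{+}\betaup_{3}\,{=}\,2\e_{2}{-}\e_{1}{-}\e_{3}\,{\notin}\,\Rad$, so no admissible $h$ exists and the stated conclusion fails. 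Here the only nonzero summand is the one with $[X_{\betaup_{1}},X_{\betaup_{3}}]\,{=}\,H_{\betaup_{1}}\,{\in}\,\hg$, exactly the configuration you warned about.

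So the statement cannot be proved at this level of generality; it needs the extra hypothesis that ${-}\betaup_{r}$ does not occur among $\betaup_{1},\hdots,\betaup_{r-1}$ (and, for the second conclusion, that none of the intermediate weights $\alphaup_{p}{+}\betaup_{r}$ vanishes, so that each partial bracket lands in a root space rather than in $\hg$), or it must be established only in the restricted settings where it is invoked. In Lemma~\ref{l2.46} and Proposition~\ref{p2.42} the terms come from $(\Qq\,{\cap}\,\stt(\Qc))\,{\cup}\,(\Qc\,{\cap}\,\stt(\Qq))$ together with a term of $\Qq\,{\cap}\,\stt(\Qq)$, and one should check there separately that the opposite-pair degeneracy is excluded, e.g. by $\chiup_{\Qq}$-degree considerations and by the fact that the relevant partial sums stay in $\Qc\,{\cap}\,\stt(\Qc)$ and hence are nonzero. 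Your instinct to isolate this case as the crux is correct; a complete argument must either add that hypothesis to the corollary or supply that case analysis where it is used.
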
 

\begin{lem}\label{l2.46} \textsc{(1)}
 Let $\alphaup\,{\in}\,\Qc\,{\cap}\,\stt(\Qc)$ and $(\betaup_{1},\hdots,\betaup_{r})$
 an $H$-sequence of minimal length for $\alphaup$. Then
 $\betaup_{i}\,{\in}\,(\Qq\,{\cap}\,\stt(\Qc))\,{\cup}\,(\Qc\,{\cap}\,\stt(\Qq))$ for  $i\,{=}\,1,\hdots,r$.
\par \textsc{(2)}
 If $\betaup\,{\in}\,\Qq\,{\cap}\,\stt(\Qq)$ and $\alphaup\,{+}\,\betaup$ is a root, then
 $\nuup(\alphaup\,{+}\,\betaup)\,{\leq}\,\nuup(\alphaup)$. 
\end{lem}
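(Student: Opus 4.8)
The plan is to prove part \textsc{(2)} first and then deduce part \textsc{(1)} from it by applying \textsc{(2)} to the prefixes of a minimal $H$-sequence. The whole argument is governed by the bigrading $(\chiup_{\Qq},\chiup_{\Qq}\,{\circ}\,\stt)$ of $\Rad$ from \eqref{e2.14}. Writing $\chiup\,{=}\,\chiup_{\Qq}$ and $\chiup'\,{=}\,\chiup_{\Qq}\,{\circ}\,\stt$, Proposition\,\ref{p2.5} gives that a root $\gammaup$ lies in $\Qq$ exactly when $\chiup(\gammaup)\,{\geq}\,0$ and in $\stt(\Qq)$ exactly when $\chiup'(\gammaup)\,{\geq}\,0$; in particular $\betaup\,{\in}\,\Qq\,{\cap}\,\stt(\Qq)$ means $\chiup(\betaup)\,{\geq}\,0$ and $\chiup'(\betaup)\,{\geq}\,0$. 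Since $\chiup$ and $\chiup'$ are additive homomorphisms on $\Z[\Rad]$, adding such a $\betaup$ to any root can only raise both gradings; hence if $\gammaup\,{\in}\,\Qq\,{\cup}\,\stt(\Qq)$ and $\gammaup\,{+}\,\betaup\,{\in}\,\Rad$, then $\gammaup\,{+}\,\betaup\,{\in}\,\Qq\,{\cup}\,\stt(\Qq)$ as well. This closure property is the key elementary observation.

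For part \textsc{(2)}, I would assume $\nuup(\alphaup)\,{<}\,\infty$ (otherwise the inequality is vacuous) and fix a minimal $H$-sequence $(\deltaup_{1},\hdots,\deltaup_{s})$ for $\alphaup$, with $s\,{=}\,\nuup(\alphaup)$. Appending $\betaup$ yields $(\deltaup_{1},\hdots,\deltaup_{s},\betaup)$, whose partial sums are all roots: the first $s$ because the original is an $H$-sequence, and the last because $\alphaup\,{+}\,\betaup\,{\in}\,\Rad$ by hypothesis. So this is an $H$-sequence for $\alphaup\,{+}\,\betaup$ of length $s{+}1$. Applying Corollary\,\ref{c2.46} with $\betaup$ as the final term produces an index $h$ with $\deltaup_{h}\,{+}\,\betaup\,{\in}\,\Rad$ and a new sequence of length $s$, obtained by deleting $\betaup$ and replacing $\deltaup_{h}$ by $\deltaup_{h}\,{+}\,\betaup$, whose total sum is still $\alphaup\,{+}\,\betaup$ and all of whose partial sums remain roots. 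By the closure property, $\deltaup_{h}\,{+}\,\betaup\,{\in}\,\Qq\,{\cup}\,\stt(\Qq)$, so this is an $H$-sequence for $\alphaup\,{+}\,\betaup$ of length $s$, whence $\nuup(\alphaup\,{+}\,\betaup)\,{\leq}\,s\,{=}\,\nuup(\alphaup)$.

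For part \textsc{(1)}, I would first record that every prefix of a minimal $H$-sequence is itself minimal for its own partial sum: if the prefix $(\betaup_{1},\hdots,\betaup_{j})$ admitted a shorter $H$-sequence for $\alphaup_{j}$, splicing it in front of the tail $(\betaup_{j+1},\hdots,\betaup_{r})$ would give a shorter $H$-sequence for $\alphaup$, contradicting minimality. Thus $\nuup(\alphaup_{j})\,{=}\,j$ and $\nuup(\alphaup_{j-1})\,{=}\,j{-}1$ for each $j$. Now suppose, for contradiction, that $\betaup_{j}\,{\in}\,\Qq\,{\cap}\,\stt(\Qq)$ for some $j$. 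Writing $\alphaup_{j}\,{=}\,\alphaup_{j-1}\,{+}\,\betaup_{j}$ and invoking part \textsc{(2)} would give $\nuup(\alphaup_{j})\,{\leq}\,\nuup(\alphaup_{j-1})\,{=}\,j{-}1\,{<}\,j$, contradicting $\nuup(\alphaup_{j})\,{=}\,j$. Hence no $\betaup_{j}$ lies in $\Qq\,{\cap}\,\stt(\Qq)$, and since each $\betaup_{j}\,{\in}\,\Qq\,{\cup}\,\stt(\Qq)$, it must lie in $(\Qq\,{\cap}\,\stt(\Qc))\,{\cup}\,(\Qc\,{\cap}\,\stt(\Qq))$.

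The main obstacle is part \textsc{(2)}: the naive $H$-sequence for $\alphaup\,{+}\,\betaup$ has length one too large, and the crux is to use Corollary\,\ref{c2.46} to absorb $\betaup$ into an earlier term while checking, through additivity of the bigrading, that the merged root $\deltaup_{h}\,{+}\,\betaup$ does not leave $\Qq\,{\cup}\,\stt(\Qq)$. Once this is secured, part \textsc{(1)} follows as a clean bookkeeping argument on prefixes, with no further root-system input required.
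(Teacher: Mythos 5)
Your proof of part (2) is correct and is essentially the paper's argument: append $\betaup$ to a minimal $H$-sequence for $\alphaup$, use Corollary~\ref{c2.46} to absorb $\betaup$ into an earlier term $\deltaup_{h}$, and check via additivity of $\chiup_{\Qq}$ and $\chiup_{\Qq}{\circ}\,\stt$ (equivalently, via closedness of $\Qq$ and of $\stt(\Qq)$) that the merged root $\deltaup_{h}{+}\,\betaup$ stays in $\Qq\,{\cup}\,\stt(\Qq)$. Your prefix-minimality observation $\nuup(\alphaup_{j})\,{=}\,j$ is also correct, and your contradiction argument does dispose of every index $j\,{\geq}\,2$ in part (1).

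The gap is the case $j\,{=}\,1$ of part (1). If $\betaup_{1}\,{\in}\,\Qq\,{\cap}\,\stt(\Qq)$, your argument would have to invoke part (2) with $\alphaup_{0}\,{=}\,0$, which is not a root, so neither $\nuup(\alphaup_{0})$ nor the hypothesis of (2) is meaningful; and prefix minimality only yields $\nuup(\alphaup_{1})\,{=}\,1$, which is perfectly consistent with $\betaup_{1}\,{\in}\,\Qq\,{\cap}\,\stt(\Qq)$. So the first term of the sequence is not excluded by the argument as written. The repair is short, and it is exactly where the hypothesis $\alphaup\,{\in}\,\Qc\,{\cap}\,\stt(\Qc)$ (which you never use) enters: it forces $r\,{\geq}\,2$, so $\alphaup_{2}\,{=}\,\betaup_{1}{+}\,\betaup_{2}$ exists and is a root; by your closure property $\betaup_{1}{+}\,\betaup_{2}\,{\in}\,\Qq\,{\cup}\,\stt(\Qq)$, so $(\betaup_{1}{+}\betaup_{2},\betaup_{3},\hdots,\betaup_{r})$ is an $H$-sequence for $\alphaup$ of length $r{-}1$, contradicting minimality (this is what the paper does); alternatively, apply your part (2) with the roles reversed to get $\nuup(\betaup_{2}{+}\,\betaup_{1})\,{\leq}\,\nuup(\betaup_{2})\,{=}\,1\,{<}\,2\,{=}\,\nuup(\alphaup_{2})$. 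For comparison, the paper does not route (1) through (2): it proves (1) directly by the same merging device, treating the cases $i\,{=}\,1$ and $i\,{>}\,1$ separately, and then obtains (2) by a single merge as you do.
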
 
\begin{proof} \textsc{(1)}
Since $\alphaup\,{\in}\,\Qc\,{\cap}\,\stt(\Qc)$, its $H$-index $r\,{=}\,\nuup(\alphaup)$
is at least two. Assume by contradiction that there is a smallest index $i$ for which 
$\betaup_{i}\,{\in}\,\Qq\,{\cap}\,\stt(\Qq)$. If $i{=}1,$ 
then $\betaup_{1}{+}\betaup_{2}\,{\in}\,\Qq\,{\cup}\,\stt(\Qq)$
and $(\betaup_{1}{+}\betaup_{2},\betaup_{3},\hdots,\betaup_{r})$ is an $H$-sequence 
for $\alphaup$ of length $r{-}1$, contradicting minimality.
If $i{>}1$, by Corollary\,\ref{c2.46} we can find $j,$ with $1{\leq}j{<}i$, such that 
$\betaup_{j}{+}\betaup_{i}$ is still a root and, since $\betaup_{j}{+}\betaup_{i}\,{\in}\,\Qq\,{\cup}\,\stt(\Qq)$,
the sequence $(\betaup'_{1},\hdots,\betaup'_{r-1})$ with
\begin{equation*}
 \betaup'_{h}= 
\begin{cases}
 \betaup_{h}, &\text{if $h{\neq}j$ and $h{<}i$},\\
 \betaup_{j}{+}\betaup_{i}, &\text{if $h=j$,}\\
 \betaup_{h+1}, & \text{if $i{\leq}h{\leq}r-1$,}
\end{cases}
\end{equation*}
is an $H$-sequence of length $r{-}1$ for $\alphaup$, yielding again a contradiction. 
\par \noindent
\textsc{(2)}
Let $(\betaup_{1},\hdots,\betaup_{r})$ be a minimal $H$-sequence for $\alphaup$ and
$\betaup\,{\in}\,\Rad$ be such that $\alphaup\,{+}\,\betaup\,{\in}\,\Rad$. 
Then there is at least one index $j$ for which $\betaup_{j}{+}\betaup$ is a root. If
$\betaup\,{\in}\,\Qq\,{\cap}\,\stt(\Qq)$, then also
$\betaup_{j}{+}\betaup$ belongs to $\Qq\,{\cup}\,\stt(\Qq)$ and 
the $r$-tuple $(\betaup'_{1},\hdots,\betaup'_{r})$ with $\betaup'_{i}\,{=}\,\betaup_{i}$ for $i\,{\neq}\,j$
and $\betaup'_{j}{=}\,\betaup_{j}{+}\,\betaup$ is an $H$-sequence for $(\alphaup\,{+}\,\betaup)$. 
\end{proof}
\begin{prop} \label{p2.42}
Let $\alphaup\,{\in}\,\Qc\,{\cap}\,\stt(\Qc)$ be a root of finite $H$-index  
and let $(\betaup_{1},\hdots,\betaup_{r})$ be a minimal-length sequence of roots 
with 
\begin{equation}\label{e4.53}
 \betaup_{1}+\cdots+\betaup_{r}=\alphaup,\quad \betaup_{i}\,{\in}\,(\Qq\cap\stt(\Qc))\cup(\Qc\cap
 \stt(\Qq)).
\end{equation}
Then we can find a permutation $(i_{1},\hdots,i_{r})$ of $1,\hdots,r$
such that $(\betaup_{i_{1}},\hdots,\betaup_{i_{r}})$ is an $H$-sequence for $\alphaup$.
\end{prop}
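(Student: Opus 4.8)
The plan is to prove a statement slightly more general than the one stated, in which the root $\alphaup\,{\in}\,\Qc\,{\cap}\,\stt(\Qc)$ is replaced by an \emph{arbitrary} root, and to argue by induction on the length $r$. First I would record two elementary remarks. Since $\stt$ permutes $\Rad$, we have $\Qc\,{\cap}\,\stt(\Qc)\,{=}\,\Rad{\setminus}(\Qq\,{\cup}\,\stt(\Qq))$, and this set is disjoint from the admissible set $(\Qq\,{\cap}\,\stt(\Qc))\,{\cup}\,(\Qc\,{\cap}\,\stt(\Qq))$; hence $\alphaup$ itself is not an admissible term, and the minimal length satisfies $r\,{\geq}\,2$. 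Moreover $(\Qq\,{\cap}\,\stt(\Qc))\,{\cup}\,(\Qc\,{\cap}\,\stt(\Qq))\,{\subseteq}\,\Qq\,{\cup}\,\stt(\Qq)$, so every $\betaup_i$ already lies in $\Qq\,{\cup}\,\stt(\Qq)$; consequently, to exhibit an $H$-sequence, the only thing the permutation must arrange is that all partial sums $\betaup_{i_1}{+}\cdots{+}\betaup_{i_h}$ be roots.

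The inductive step rests on the standard root-system fact that, for a root $\muup\,{=}\,\betaup_{1}{+}\cdots{+}\betaup_{r}$, one has $(\muup\,|\,\muup)\,{=}\,{\sum}_{i}(\muup\,|\,\betaup_{i})\,{>}\,0$, so some index $j$ satisfies $(\muup\,|\,\betaup_{j})\,{>}\,0$. For $r\,{\geq}\,2$ the root $\muup$ is not admissible (otherwise the one-term sequence $(\muup)$ would contradict minimality), so $\betaup_{j}\,{\neq}\,\muup$; as $(\muup\,|\,\betaup_{j})\,{>}\,0$ also excludes $\muup\,{=}\,{-}\betaup_{j}$, the difference $\muup\,{-}\,\betaup_{j}$ is again a root. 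I would then delete $\betaup_{j}$: the remaining $(r{-}1)$-term family consists of admissible roots, sums to $\muup\,{-}\,\betaup_{j}$, and is of minimal length for $\muup\,{-}\,\betaup_{j}$ (any strictly shorter admissible representation of $\muup\,{-}\,\betaup_{j}$ would, after re-inserting $\betaup_{j}$, contradict minimality of the original representation of $\muup$). Applying the inductive hypothesis to $\muup\,{-}\,\betaup_{j}$ yields a reordering of the remaining terms with all partial sums roots and total $\muup\,{-}\,\betaup_{j}$; appending $\betaup_{j}$ produces an $H$-sequence for $\muup$, the last partial sum being $(\muup\,{-}\,\betaup_{j})\,{+}\,\betaup_{j}\,{=}\,\muup$.

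The main obstacle is precisely what forces this generalization: when $\betaup_{j}$ is peeled off there is no reason for $\muup\,{-}\,\betaup_{j}$ to stay in $\Qc\,{\cap}\,\stt(\Qc)$ — inspecting the bigrading $(\chiup_{\Qq},\chiup_{\Qq}{\circ}\stt)$ of \eqref{e2.14} shows that removing a single admissible term controls only one of the two degrees — so an induction phrased only for $\alphaup\,{\in}\,\Qc\,{\cap}\,\stt(\Qc)$ does not close. Carrying the statement for an arbitrary root $\muup$, with the trivial base case $r\,{=}\,1$, removes the difficulty, since the argument above never uses the position of $\muup$ relative to $\Qq$ and $\stt(\Qq)$, only that $\muup$ is a root and that the representation has minimal length. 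Proposition~\ref{p2.42} is then the special case $\muup\,{=}\,\alphaup$.

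Finally, combining this with Lemma~\ref{l2.46}(1) gives, as a byproduct, $r\,{=}\,\nuup(\alphaup)$: a minimal $H$-sequence for $\alphaup$ has all its terms in $(\Qq\,{\cap}\,\stt(\Qc))\,{\cup}\,(\Qc\,{\cap}\,\stt(\Qq))$ and is therefore one of the admissible representations, so the minimal admissible length equals the $H$-index. If one prefers to remain within the bracket formalism, the key implication $(\muup\,|\,\betaup_{j})\,{>}\,0\,{\Rightarrow}\,\muup\,{-}\,\betaup_{j}\,{\in}\,\Rad$ may be replaced by the non-vanishing criterion of Lemma~\ref{l3.2a} applied to a Chevalley system $(X_{\alphaup},H_{\alphaup})_{\alphaup\in\Rad}$, but the inner-product formulation is the most direct.
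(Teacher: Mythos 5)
Your argument is correct, and its engine is the same as the paper's: from $0\,{<}\,\|\alphaup\|^{2}\,{=}\,{\sum}_{i}(\alphaup|\betaup_{i})$ extract a term $\betaup_{j}$ with $(\alphaup|\betaup_{j})\,{>}\,0$, note that $\alphaup\,{-}\,\betaup_{j}$ is a root, check that the remaining $(r{-}1)$ terms form a minimal admissible representation of it, and induct on $r$. The one genuine divergence is how the induction is framed. You enlarge the statement to an arbitrary root $\muup$ so that nothing need be said about where $\muup\,{-}\,\betaup_{j}$ sits relative to $\Qq$ and $\stt(\Qq)$; the paper instead keeps the induction inside $\Qc\,{\cap}\,\stt(\Qc)$ (base case $r\,{=}\,2$) and closes it by observing that $\alphaup\,{-}\,\betaup_{r}$ must again lie in $\Qc\,{\cap}\,\stt(\Qc)$: otherwise $(\alphaup{-}\betaup_{r},\betaup_{r})$ would be an $H$-sequence of length $2$ for $\alphaup$, while $\nuup(\alphaup)\,{\geq}\,r\,{>}\,2$ because, by Lemma~\ref{l2.46}(1), a minimal $H$-sequence for a root of $\Qc\,{\cap}\,\stt(\Qc)$ is itself an admissible representation. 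So your objection that the restricted induction ``does not close'' is overstated — you are right that the bigrading alone does not control $\stt(\muup{-}\betaup_{j})$, but minimality together with Lemma~\ref{l2.46}(1) does the job — although your generalization is a clean way to avoid invoking that lemma mid-proof, at the cost of proving slightly more than is stated. Your closing observation that $r\,{=}\,\nuup(\alphaup)$ is correct and is exactly the identity the paper's terse phrase ``the $H$-index of $\alphaup$ is larger than $2$'' relies on.
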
 
\begin{proof}
 The statement is trivial when $r\,{=}\,2$. Assume that $r\,{>}\,2$ and that the statement is true
 for roots of $\Qc\,{\cap}\,\stt(\Qc)$ which can be expressed as sums of less than
 $r$ roots of $(\Qq\,{\cap}\,\stt(\Qc))\,{\cup}\,(\Qc\,{\cap}\,
 \stt(\Qq))$. Since 
\begin{equation*}
 0<\|\alphaup\|^{2}={\sum}_{i=1}^{r}(\alphaup|\betaup_{i}),
\end{equation*}
there is at least an index $i$, with $1{\leq}i{\leq}r$, for which $(\alphaup|\betaup_{i})\,{>}\,0$.
We can assume this happens for $i\,{=}\,r$. Then $(\alphaup{-}\betaup_{r})$
is a root in $\Qc\,{\cap}\,\stt(\Qc)$, because the $H$-index of $\alphaup$ is larger than $2$ and,
by the recursive assumption, we can find a permutation $(i_{1},\hdots,i_{r-1})$ of $1,\hdots,r{-}1$
such that $(\betaup_{i_{1}},\hdots,\betaup_{i_{r-1}})$ is an $H$-sequence for $(\alphaup{-}\betaup_{r})$
and hence $(\betaup_{i_{1}},\hdots,\betaup_{i_{r-1}},\betaup_{r})$ is an $H$-sequence for~$\alphaup$.
\end{proof}

\begin{prop}\label{p2.44}
 A parabolic $CR$-algebra $(\gs,\qt)$ and its 
 Levi-nondegerate reduction $(\gs,\qt')$
 have the same  depth. 
\end{prop}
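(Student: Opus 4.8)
The plan is to reduce the statement to the elementary fact that the \emph{depth} of a $CR$-algebra depends only on its lifted contact module, that is, only on the subspace $\qt+\sigmaup(\qt)$, and then to observe that passing to the Levi-nondegenerate reduction leaves this subspace unchanged. The parabolicity of $(\gs,\qt)$ plays no essential role in the argument; it only guarantees that the reduction is again parabolic and allows the combinatorial reformulation noted at the end.

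First I would recall that the Levi-nondegenerate reduction has basis $(\gs,\qt')$ whose lifted $CR$-structure $\qt'$ satisfies $\qt\,{\subseteq}\,\qt'\,{\subseteq}\,\qt\,{+}\,\sigmaup(\qt)$, this being the maximality property of the $\sigmaup$-extension established in Theorem~\ref{t3.6} and used in Theorem~\ref{t3.29}. A short two-inclusion argument then yields $\qt'\,{+}\,\sigmaup(\qt')\,{=}\,\qt\,{+}\,\sigmaup(\qt)$: from $\qt\,{\subseteq}\,\qt'$ one gets $\qt\,{+}\,\sigmaup(\qt)\,{\subseteq}\,\qt'\,{+}\,\sigmaup(\qt')$, while $\qt'\,{\subseteq}\,\qt\,{+}\,\sigmaup(\qt)$ together with $\sigmaup^{2}\,{=}\,\id$ gives $\sigmaup(\qt')\,{\subseteq}\,\qt\,{+}\,\sigmaup(\qt)$ and hence the reverse inclusion. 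Consequently the lifted contact modules coincide:
\[
\Ft_{-1}(\gs,\qt)=(\qt+\sigmaup(\qt))\cap\gs=(\qt'+\sigmaup(\qt'))\cap\gs=\Ft_{-1}(\gs,\qt').
\]

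Next I would invoke the recursive construction of the canonical filtration. For $h\,{\geq}\,1$ the subspaces $\Ft_{-h}(\gs,\qt)$ are built from $\Ft_{-1}(\gs,\qt)$ alone by the rule $\Ft_{-h}\,{=}\,\Ft_{1-h}\,{+}\,[\Ft_{1-h},\Ft_{-1}]$, so they are completely determined by $\Ft_{-1}$ and the bracket of $\gs$; the isotropy $\Ft_{0}$ never enters the negative part. Since $\Ft_{-1}(\gs,\qt)\,{=}\,\Ft_{-1}(\gs,\qt')$, it follows at once that $\Ft_{-h}(\gs,\qt)\,{=}\,\Ft_{-h}(\gs,\qt')$ for every $h\,{\geq}\,1$, whence $\Ft_{-\infty}(\gs,\qt)\,{=}\,\Ft_{-\infty}(\gs,\qt')$ and the two chains stabilize at the same index. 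By the definition of depth this is precisely the claim that $(\gs,\qt)$ and $(\gs,\qt')$ have equal depth.

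Finally, in the parabolic setting I would remark that the same argument can be phrased combinatorially: depth equals the $H$-index $\nuup(\Qq\,,\stt)$, which by definition uses only $H$-sequences of roots from $\Qq\,{\cup}\,\stt(\Qq)$ and therefore depends only on this set; the reduced pair $(\Qq'\,,\stt)$ satisfies $\Qq\,{\subseteq}\,\Qq'\,{\subseteq}\,\Qq\,{\cup}\,\stt(\Qq)$, which gives $\Qq'\,{\cup}\,\stt(\Qq')\,{=}\,\Qq\,{\cup}\,\stt(\Qq)$ and hence equal $H$-indices. I do not expect a genuine obstacle: the whole content is the identity $\qt\,{+}\,\sigmaup(\qt)\,{=}\,\qt'\,{+}\,\sigmaup(\qt')$ (equivalently $\Qq\,{\cup}\,\stt(\Qq)\,{=}\,\Qq'\,{\cup}\,\stt(\Qq')$), with everything else being a formal consequence of the recursive definition of the negative part of the filtration.
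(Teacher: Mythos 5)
Your proof is correct and follows essentially the same route as the paper's: the paper's entire argument is the observation that $\qt'\,{+}\,\sigmaup(\qt')\,{=}\,\qt\,{+}\,\sigmaup(\qt)$ forces $\Ft_{-h}(\gs,\qt')\,{=}\,\Ft_{-h}(\gs,\qt)$ for all $h\,{\geq}\,1$, which is exactly your central step. You merely supply the two-inclusion verification of that identity (which the paper leaves implicit) and add an equivalent combinatorial reformulation via the $H$-index; neither changes the substance of the argument.
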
 
\begin{proof}
Since $\qt'{+}\,\sigmaup(\qt')\,{=}\,\qt\,{+}\,\sigmaup(\qt)$, we have $\Ft_{ -h}(\gs,\qt')
 \,{=}\,\Ft_{{-}h}(\gs,\qt)$
 for all positive integers $h$ (we used the notation of p.\pageref{contct}).
 This implies the statement. 
\end{proof}
\begin{lem} \label{l2.53}
Let $\Rad$ be an irreducible root system,
$C$ a Weyl chamber and $\deltaup_C$ the largest root in $\Rad^{+}(C)$. If 
$\alphaup\,{\in}\,\Rad^{+}(C){\setminus}\{\deltaup_C\}$, then 
$(\deltaup_C{-}\,\alphaup)$ is either a $C$-positive root or the sum of two 
$C$-positive roots. \par
If $\deltaup_C{-}\,\alphaup$ is not a 
root
 and 
\begin{equation*}
 \deltaup_C=\alphaup+\betaup_{1}+\betaup_{2},\;\;\text{with}\;\; \betaup_{1},\betaup_{2}\in\Rad^{+}(C)\,\;
 \text{and}\; \betaup_{1}{+}\betaup_{2}\notin\Rad,
\end{equation*}
then both $\alphaup{+}\betaup_{1}$ and $\alphaup{+}\betaup_{2}$ are roots and $\betaup_{1},\betaup_{2}$
have the following 
properties\footnote{{By $\betaup_{1}\,{\Perp}\,\betaup_{2}$ we mean that
the two roots are \textit{strongly orthogonal}, i.e. that $\betaup_{1}{\neq}\,{\pm}\,\betaup_{2}$
and that neither $\betaup_{1}{+}\,\betaup_{2}$, nor $\betaup_{1}{-}\,\betaup_{2}$ ia a  root.}}:
\begin{itemize}
 \item If $\Rad$ is of type $\textsc{A}_{n-1}$, $\textsc{D}_{n}$, $\textsc{E}_{6}$, $\textsc{E}_{7}$,
$ \textsc{E}_{8}$, then $\betaup_{1}\Perp\betaup_{2}$.
\item If $\Rad$ is of type $\textsc{B}_{n}$, then: \begin{enumerate}
\item if $\alphaup$ is short, then $|\betaup_{1}|\,{\neq}\,
|\betaup_{2}|$ and $\betaup_{1}\Perp\betaup_{2}$,
\item if $\alphaup$ is long, then either $\betaup_{1}$ and $\betaup_{2}$ equal a same
short root, or  $\betaup_{1},\, \betaup_{2}$ are strongly orthogonal long roots.
\end{enumerate}
\item If $\Rad$ is of type $\textsc{C}_{n}$, then 
\begin{enumerate}
 \item if $\alphaup$ is long, then $\betaup_{1}$ and $\betaup_{2}$ equal a same
short root,
\item if $\alphaup$ is short, then $\betaup_{1},\, \betaup_{2}$ are long and $\betaup_{1}{-}\betaup_{2}
\,{\in}\,\Rad.$
\end{enumerate}
\item If $\Rad$ is of type $\textsc{F}_{4}$, then 
\begin{enumerate}
 \item if $\alphaup$ is long, then either $\betaup_{1},\, \betaup_{2}$ equal a same short root, or
 $|\betaup_{1}|\,{\neq}\,|\betaup_{2}|$ and $\betaup_{1}\Perp\betaup_{2}$,
 \item if $\alphaup$ is short, either $|\betaup_{1}|\,{\neq}\,|\betaup_{2}|$ and $\betaup_{1}\Perp\betaup_{2}$,
 or $\betaup_{1},\,\betaup_{2}$ are short and $\betaup_{1}{-}\betaup_{2}$ is a root.
\end{enumerate}
\item If $\Rad$ is of type $\textsc{G}_{2}$, then $\alphaup$ is short and $\betaup_{1},\,\betaup_{2}$
equal a same short root. 
 \end{itemize}
\end{lem}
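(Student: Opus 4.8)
The plan is to set $\theta\coloneqq\deltaup_{C}$ and to exploit throughout that $\theta$ is the highest root, hence dominant and maximal in the root order, so that $\theta-\alphaup$ has nonnegative coordinates on $\Bz(C)$ for every $\alphaup\in\Rad^{+}(C)$. First I would dispose of the dichotomy of the first assertion by a root–string argument. Since $\theta$ is dominant, $(\theta\,|\,\alphaup)\geq 0$, and since $\theta$ is highest, $\theta+\alphaup\notin\Rad$; reading this off the $\alphaup$–string through $\theta$ gives: if $(\theta\,|\,\alphaup)>0$ then $\theta-\alphaup\in\Rad$, and it is $C$–positive because its $\Bz(C)$–coordinates are nonnegative and it is nonzero when $\alphaup\neq\theta$; if $(\theta\,|\,\alphaup)=0$ the same string computation forces $p=q=0$, i.e. $\theta-\alphaup\notin\Rad$. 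Thus $\deltaup_{C}-\alphaup$ fails to be a root \emph{exactly} when $\alphaup\,{\perp}\,\theta$, which is precisely the hypothesis of the second assertion.

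In the orthogonal case I would next produce the splitting $\theta-\alphaup=\betaup_{1}+\betaup_{2}$ into two $C$–positive roots, completing the first assertion. The quantitative input is the identity $\|\theta-\alphaup\|^{2}=\|\theta\|^{2}+\|\alphaup\|^{2}$, valid because $(\theta\,|\,\alphaup)=0$. In the simply–laced types this reads $\|\theta-\alphaup\|^{2}=4$, and I would show that a positive–cone lattice vector $\mu$ of squared length $4$ is a sum of two strongly orthogonal roots: choose a $C$–positive root $\betaup_{1}$ with $(\mu\,|\,\betaup_{1})$ maximal, note by Cauchy–Schwarz that this value is $2$, and verify that $\betaup_{2}\coloneqq\mu-\betaup_{1}$ is then a root with $(\betaup_{1}\,|\,\betaup_{2})=0$. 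For $\textsc{B}_{n},\textsc{C}_{n},\textsc{F}_{4},\textsc{G}_{2}$ the highest root is long, so the norm identity gives $\|\theta-\alphaup\|^{2}$ equal to $2S$ or $S{+}L$ according as $\alphaup$ is long or short, and I would exhibit the splitting directly in the Bourbaki coordinate model of each system.

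For the second assertion, fix an admissible splitting $\theta=\alphaup+\betaup_{1}+\betaup_{2}$ with $\betaup_{1}+\betaup_{2}\notin\Rad$. The cleanest route to ``$\alphaup+\betaup_{1}$ and $\alphaup+\betaup_{2}$ are roots'' uses a Chevalley system $(X_{\alphaup},H_{\alphaup})_{\alphaup\in\Rad}$: since $\betaup_{1}+\betaup_{2}\notin\Rad$ we have $[X_{\betaup_{1}},X_{\betaup_{2}}]=0$, so the Jacobi identity yields $[[X_{\alphaup},X_{\betaup_{1}}],X_{\betaup_{2}}]=[[X_{\alphaup},X_{\betaup_{2}}],X_{\betaup_{1}}]$; the left side is nonzero iff $\alphaup+\betaup_{1}\in\Rad$ and the right side iff $\alphaup+\betaup_{2}\in\Rad$, whence $\alphaup+\betaup_{1}\in\Rad\Leftrightarrow\alphaup+\betaup_{2}\in\Rad$. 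That at least one of them is a root follows from the sign criterion ``$(\muup\,|\,\nuup)<0\Rightarrow\muup+\nuup\in\Rad$'' together with the norm identity: if both $(\alphaup\,|\,\betaup_{1})\geq0$ and $(\alphaup\,|\,\betaup_{2})\geq0$ then, using $(\betaup_{1}\,|\,\betaup_{2})\geq0$, one gets $\|\theta\|^{2}\geq\|\alphaup\|^{2}+\|\betaup_{1}\|^{2}+\|\betaup_{2}\|^{2}$, which is impossible once $\|\theta\|^{2}$ is compared with the three summands (the rank–$2$ constraint handling the $\textsc{G}_{2}$ borderline). Combining the equivalence with ``at least one'' gives both; alternatively this can be phrased through Lemma\,\ref{l3.2a}, respectively Corollary\,\ref{c2.46}, applied to the commutator expressing $X_{\theta}$. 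Finally, the length– and orthogonality–relations between $\betaup_{1}$ and $\betaup_{2}$ listed type by type are read off by inspection in the explicit models, distinguishing short and long $\alphaup$, with $\betaup_{1}+\betaup_{2}\notin\Rad$ and the maximality of $\theta$ used to constrain the admissible angles (so that $\betaup_{1}\,{\Perp}\,\betaup_{2}$ or, in the prescribed cases, $\betaup_{1}-\betaup_{2}\in\Rad$).

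The uniform simply–laced argument covers $\textsc{A}_{n-1}$, $\textsc{D}_{n}$ and the exceptional $\textsc{E}_{6},\textsc{E}_{7},\textsc{E}_{8}$ at once, so no separate exceptional computation is needed there. The main obstacle I expect is the non–simply–laced analysis: the norm identity no longer pins the configuration down, since a long $\theta$ may split as two short roots summing to it, so the clean ``squared length $4$'' argument must be replaced by a careful case check in $\textsc{B}_{n},\textsc{C}_{n},\textsc{F}_{4},\textsc{G}_{2}$ that keeps track of which of $\betaup_{1},\betaup_{2}$ is long and of the two subcases according to the length of $\alphaup$.
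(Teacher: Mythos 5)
Your overall strategy coincides with the paper's at its core: reduce the first assertion to the case $\alphaup\,{\perp}\,\deltaup_{C}$ by the root-string argument (which the paper leaves implicit), then settle that case by exhibiting explicit decompositions $\deltaup_{C}{-}\,\alphaup\,{=}\,\betaup_{1}{+}\,\betaup_{2}$ in the coordinate models of each irreducible type --- and the paper does exactly this for \emph{every} type, including $\textsc{A}$, $\textsc{D}$ and $\textsc{E}$. Two of your ingredients are genuine additions. The Jacobi-identity argument ($[X_{\betaup_{1}},X_{\betaup_{2}}]\,{=}\,0$ gives $[[X_{\alphaup},X_{\betaup_{1}}],X_{\betaup_{2}}]\,{=}\,[[X_{\alphaup},X_{\betaup_{2}}],X_{\betaup_{1}}]$, hence $\alphaup{+}\betaup_{1}\,{\in}\,\Rad\Leftrightarrow\alphaup{+}\betaup_{2}\,{\in}\,\Rad$), combined with the norm computation showing that at least one of $(\alphaup\,|\,\betaup_{i})$ must be negative, establishes ``both $\alphaup{+}\betaup_{i}$ are roots'' for an \emph{arbitrary} decomposition with $\betaup_{1}{+}\betaup_{2}\,{\notin}\,\Rad$; the paper only reads this off from the particular decompositions it lists, so your argument handles the universal quantification in the statement more cleanly.

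The one step that does not work as written is the uniform simply-laced splitting. For $\muup\,{=}\,\deltaup_{C}{-}\,\alphaup$ with $\|\muup\|^{2}\,{=}\,4$, the maximum of $(\muup\,|\,\betaup_{1})$ over $C$-positive roots is indeed $2$ --- but only because $(\muup\,|\,\deltaup_{C})\,{=}\,\|\deltaup_{C}\|^{2}\,{=}\,2$, so $\betaup_{1}\,{=}\,\deltaup_{C}$ is itself a maximizer, and for it $\betaup_{2}\,{=}\,\muup{-}\,\betaup_{1}\,{=}\,{-}\alphaup$ is a \emph{negative} root. Maximality of the inner product therefore does not secure the $C$-positivity of $\betaup_{2}$ that the first assertion requires; you would still have to produce a maximizer different from $\deltaup_{C}$ with $\muup{-}\,\betaup_{1}\,{\in}\,\Rad^{+}(C)$, which is essentially the content you were hoping to bypass. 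Since your declared fallback is the explicit coordinate check (the paper's method), this does not sink the proof, but the claim that ``no separate exceptional computation is needed'' for $\textsc{E}_{6},\textsc{E}_{7},\textsc{E}_{8}$ is not yet justified as stated.
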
 
\begin{proof} In the proof we can fix any  Weyl chamber
and  restrain to the cases where $\alphaup\,{\Perp}\,\deltaup_C$. 
We consider below the different cases.
\par
\noindent $\textsc{A}_{n-1}$. \; With
 $\Rad^{+}(C)\,{=}\,\{\e_{i}{-}\e_{j}\,{\mid}\,1{\leq}i{<}j{\leq}n\}$
and $\deltaup_C\,{=}\,\e_{1}{-}\e_{n}$,
we have
\begin{equation*}
 (\e_{1}{-}\e_{n})-(\e_{i}{-}\e_{j})=
(\e_{1}{-}\e_{i})+(\e_{j}{-}\e_{n}),\\
 \quad
  \forall\, 2{\leq}i{<}j{<}n.
\end{equation*} 
\par\smallskip
\noindent $\textsc{B}_{n}$. \;
With  $\Rad^{+}(C)\,{=}\,\{\e_{i}\,{\mid}\,1{\leq}i{\leq}n\}\,{\cup}\,\{
\e_{i}{\pm}\e_{j}\,{\mid}\,1{\leq}i{<}j{\leq}n\}$ and $\deltaup_C\,{=}\,\e_{1}{+}\e_{2}$, we get
\begin{equation*}\begin{cases} (\e_{1}{+}\e_{2})-\e_{i}= 
\begin{cases}
 (\e_{1}{-}\e_{i})+\e_{2},\\
 \e_{1}+(\e_{2}-\e_{i}),
\end{cases}
& 3{\leq}i{\leq}n,\\[3pt]
(\e_{1}{+}\e_{2})-(\e_{1}{-}\e_{2})=2\cdot\e_{2},\\[3pt]
(\e_{1}{+}\e_{2})-(\e_{i}{\pm}\e_{j})= 
\begin{cases}
 (\e_{1}{-}\e_{i})+(\e_{2}{\mp}\e_{j}),\\
 (\e_{1}{\mp}\e_{j})+(\e_{2}{-}\e_{i}),
\end{cases}& 3{\leq}i{<}j{\leq}n.
 \end{cases}
\end{equation*}
\par\smallskip
\noindent $\textsc{C}_{n}$. \; With $\Rad^{+}(C)\,{=}\,\{2\e_{i}\,{\mid}\,1{\leq}i{\leq}n\}\,{\cup}\,\{
\e_{i}{\pm}\e_{j}\,{\mid}\,1{\leq}i{<}j{\leq}n\}$ and $\deltaup_C\,{=}\,2\e_{1}$, we get
\begin{equation*}\begin{cases} (2\e_{1})-(2\e_{i})=2\cdot(\e_{1}{-}\e_{i}), 
 & 2{\leq}i{\leq}n,\\ 
 2\e_{1}{-}(\e_{i}{\pm}\e_{j})= 
 (\e_{1}{-}\e_{i})+(\e_{1}{\mp}\e_{j}),
& 2{\leq}i{<}j{\leq}n.
 \end{cases}
\end{equation*}
\par\smallskip
\noindent $\textsc{D}_{n}$. \;\; With $\Rad^{+}(C)\,{=}\,\{
\e_{i}{\pm}\e_{j}\,{\mid}\,1{\leq}i{<}j{\leq}n\}$ and $\deltaup_C\,{=}\,\e_{1}{+}\e_{2}$, we get 
\begin{equation*} \begin{cases}
(\e_{1}{+}\e_{2})-(\e_{1}{-}\e_{2})=(\e_{2}{-}\e_{i})+(\e_{2}{+}\e_{i}), \qquad\;\; 3{\leq}i{\leq}n,\\[3pt]
 (\e_{1}{+}\e_{2})-(\e_{i}{\pm}\e_{j})= \begin{cases}
 (\e_{1}{-}\e_{i})+(\e_{2}{\mp}\e_{j}), 
 \\
 (\e_{1}{\mp}\e_{j})+(\e_{2}{-}\e_{i}),  \end{cases}
\quad 3{\leq}i{<}j{\leq}n.
\end{cases}
\end{equation*}
\par\smallskip
In \cite{MMN23},
for describing root systems of type $\textsc{E}$, we found convenient to introduce 
the notation ($\e_{1},\hdots,\e_{8}$ is the canonical basis of $\R^{8}$)
\begin{equation*}
 \zetaup_{\emptyset}={-}\tfrac{1}{2}{\sum}_{i=1}^{8}\e_{i},\quad \zetaup_{i_{1},\hdots,i_{h}}=\zetaup_{\emptyset}+
 {\sum}_{p=1}^{h}\e_{i_{p}},\;\text{for $1{\leq}i_{1},\hdots,i_{h}\leq{8}$}.
\end{equation*}
\par\medskip
\noindent $\textsc{E}_{6}$. \;\; We use, for the root system of type $\textsc{E}_{6}$,
 the description 
\begin{equation*}\Rad(\textsc{E}_{6})\,{=}\{{\pm}(\e_{7}{-}\e_{8})\}\cup
\{{\pm}(\e_{i}{-}\e_{j})\,{\mid}\, 
 1{\leq}i{<}{j}{\leq}6
\}\cup\{{\pm}\zetaup_{i,j,h,7}\,{\mid}\, 
 1{\leq}i{<}j{<}h{\leq}6
\}.
\end{equation*}
Taking the basis of simple roots 
\begin{equation*}
 \Bz(C)\,{=}\,\{\alphaup_{i}\,{=}\,\e_{i}{-}\e_{i+1}\,{\mid}\,1{\leq}i{\leq}5\}
 \cup\{\alphaup_{6}{=}\zetaup_{4,5,6,7}\},
\end{equation*}
we obtain 
\begin{equation*}
 \Rad^{+}(C)=\{\e_{7}{-}\e_{8}\}\,{\cup}\,
 \{\e_{i}{-}\e_{j}\mid 1{\leq}i{<}j{\leq}6\}\cup\{\zetaup_{i,j,h,7}\mid 1{\leq}i{<}j{<}h{\leq}6\},\;
 \deltaup_C{=}\e_{7}{-}\e_{8}.
\end{equation*}
The positive roots of the form $\zetaup_{i,j,h,7},$ with
$1{\leq}i{<}j{<}k{\leq}6$, 
are not orthogonal to 
$\deltaup_C$ and we obtain, for the remaining positive roots,   
\begin{equation*} 
 (\e_{7}{-}\e_{8})-(\e_{i}{-}\e_{j})= \zetaup_{j,h,k,7}+\zetaup_{j,p,q,7}, \;\;\text{with $\{i,j,h,k,p,q\}\,{=}\,
 \{1,2,3,4,5,6\}$}.
\end{equation*}
\par\smallskip
\noindent $\textsc{E}_{7}$. \;\; We use, for the root system of type $\textsc{E}_{7}$,
 the description  
\begin{equation*}
 \Rad(\textsc{E}_{7})\,{=}\,\{{\pm}(\e_{i}{-}\e_{j})\,{\mid}\, 1{\leq}i{<}j{\leq}8\}\cup\{\zetaup_{i,j,h,k}\,{\mid}\,
 1{\leq}i{<}j{<}h{<}k{\leq}8\}.
\end{equation*}
Taking the basis of simple roots 
\begin{equation*}
 \Bz(C)\,{=}\,\{\alphaup_{i}\,{=}\,\e_{i}{-}\e_{i+1}\,{\mid}\,1{\leq}i{\leq}6\}
 \cup\{\alphaup_{7}{=}\zetaup_{4,5,6,7}\},
\end{equation*}
we obtain 
\begin{equation*}
 \Rad^{+}(C)=\{\e_{i}{-}\e_{j}\mid 1{\leq}i{<}j{\leq}8\}\cup\{\zetaup_{i,j,h,k}\mid 1{\leq}i{<}j{<}h{<}k{\leq}7\},
 \; \deltaup_C\,{=}\,\e_{1}{-}\e_{8}.
\end{equation*}
Noting that that $\{\zetaup_{i,j,h,k}\,{\mid}\, 1{\leq}i{<}j{<}h{<}k{\leq}7\}\,{=}\,
\{{-}\zetaup_{i,j,h,8}\,{\mid}\,1{\leq}i{<}j{<}h{\leq}7\}$), we get, for the positive roots which are 
orthogonal to $\deltaup_C$, 
\begin{equation*} 
\begin{cases}
 (\e_{1}{-}\e_{8})-(\e_{i}{-}\e_{j})= 
 (\e_{1}{-}\e_{i})+(\e_{j}{-}\e_{8}),
& 2{\leq}i{<}j{\leq}7,
\\
(\e_{1}{-}\e_{8})-\zetaup_{i,j,h,k}= (\e_{1}{-}\e_{i})+\zetaup_{1,i,p,q},
& \{i,j,h,k,p,q\}\,{=}\,\{2,3,4,5,6,7\}.
\end{cases}
\end{equation*}

\par\smallskip
\noindent $\textsc{E}_{8}$. \;\; We may take 
\begin{equation*}
 \Rad(\textsc{E}_{8})\,{=}\,\{{\pm}\e_{i}{\pm}\e_{j}\,{\mid}\,1{\leq}i{<}j{\leq}8\}\,{\cup}\,
 \{{\pm}\zetaup_{i}\,{\mid}\,1{\leq}i{\leq}8\}\,{\cup}\,\{{\pm}\zetaup_{i,j,k}\,{\mid}\,1{\leq}i{<}j{<}k{\leq}8\}
\end{equation*}
and choose $C$ with 
\begin{equation*} 
\begin{cases}
 \Bz(C)=\{\alphaup_{i}\,{=}\,\e_{i+1}{-}\e_{i}\,{\mid}\, 1{\leq}i{\leq}6\}\cup\{\alphaup_{7}\,{=}\,\e_{1}{+}\e_{2},\;
 \alphaup_{8}\,{=}\,\zetaup_{8}\},\\
 \begin{aligned} \Rad^{+}(C)\,{=}\,\{\e_{j}{\pm}\e_{i}\,{\mid}\,1{\leq}i{<}j{\leq}8\}\,{\cup}\,\{\zetaup_{8}\}\,{\cup}\, \{{-}\zetaup_{i}{\mid}
 1{\leq}i{\leq}7\}\,{\cup}\,
 \{\zetaup_{i,j,8}\,{\mid}\,
 1{\leq}i{<}j{\leq}7\}\qquad\\
 {\cup}\,\{{-}\zetaup_{i,j,k}\,{\mid}\, 1{\leq}i{<}j{<}k{\leq}7\},\end{aligned}\\
 \deltaup_C\,{=}\,\e_{7}{+}\e_{8}.
\end{cases}
\end{equation*} We have 
\begin{equation*} 
\begin{cases}
(\e_{7}{+}\e_{8}){-}(\e_{8}{-}\e_{7})=(\e_{7}{-}\e_{i})+(\e_{7}{+}\e_{i}), & 1{\leq}i{\leq}6,\\
 (\e_{7}{+}\e_{8})-(\e_{j}{\pm}\e_{i})= 
\begin{cases}
 (\e_{7}{-}\e_{j})+(\e_{8}{\mp}\e_{i}),\\
 (\e_{7}{\mp}\e_{i})+(\e_{8}{-}\e_{j}),
\end{cases} & 1{\leq}i{<}j{\leq}6,\\
(\e_{7}{+}\e_{8})-\zetaup_{8}= (\e_{7}{+}\e_{i})+({-}\zetaup_{i}), & 1{\leq}i{\leq}6,\\
(\e_{7}{+}\e_{8})+\zetaup_{7}=(\e_{7}{-}\e_{i})+\zetaup_{i,7,8}, & 1{\leq}i{\leq}6,\\
(\e_{7}{+}\e_{8})-\zetaup_{i,j,8}=(\e_{7}{-}\e_{i})+({-}\zetaup_{j}), & \{i\neq{j}\}\subset\{1,2,3,4,5,6\},\\
(\e_{7}{+}\e_{8})+\zetaup_{i,j,7}=(\e_{7}{+}\e_{i})+\zetaup_{j,7,8}, & \{i\neq{j}\}\subset\{1,2,3,4,5,6\}.
\end{cases}
\end{equation*}
\par\smallskip
\noindent $\textsc{F}_{4}$. \;\; We take for $\textsc{F}_{4}$ the root
system 
\begin{equation}\label{rf4}
 \Rad(\textsc{F}_{4})\,{=}\,\{{\pm}\e_{i}\,{\mid}\,1{\leq}i{\leq}4\}\cup\{{\pm}\e_{i}{\pm}\e_{j}\,{\mid}\,1{\leq}i{<}j{\leq}4\}\cup
 \{\tfrac{1}{2}({\pm}\e_{1}{\pm}\e_{2}{\pm}\e_{3}{\pm}\e_{4})\},
\end{equation}
and we choose the Weyl chamber $C$ for which 
\begin{equation*}\begin{cases}
 \Bz(C)=\{\alphaup_{1}{=}\e_{3}{-}\e_{2},\, \alphaup_{2}{=}\e_{2}{-}\e_{1},\,\alphaup_{3}{=}\e_{1},\,
 \alphaup_{4}{=}\tfrac{1}{2}(\e_{4}{-}\e_{1}{-}\e_{2}{-}\e_{3})\},\\
 \Rad^{+}(C)=\{\e_{i}\,{\mid}\,1{\leq}i{\leq}4\}\cup
 \{\e_{j}{\pm}\e_{i}\,{\mid}\,1{\leq}i{<}j{\leq}4\}\cup\big\{\tfrac{1}{2}({\pm}\e_{1}{\pm}\e_{2}{\pm}\e_{3}{+}
 \e_{4})\big\},\\
 \deltaup_C\,{=}\,\e_{3}{+}\e_{4}.
 \end{cases}
\end{equation*}
We have, for positive roots orthogonal to $\deltaup_C$,  
\begin{equation*} 
\begin{cases}
(\e_{3}{+}\e_{4})-(\e_{4}{-}\e_{3})=2\cdot\e_{3}, \\
 (\e_{3}{+}\e_{4}){-}\e_{i}= 
 (\e_{3}{-}\e_{i})+\e_{4} = 
 \e_{3}+(\e_{4}{-}\e_{i}),
\qquad i=1,2,
\\
(\e_{3}{+}\e_{4})-(\e_{2}{\pm}\e_{1})= 
 (\e_{3}{-}\e_{2})+(\e_{4}{\mp}\e_{1}) 
 =
 (\e_{3}{\mp}\e_{1})+(\e_{4}-\e_{2}),
\\
(\e_{3}{+}\e_{4})-\tfrac{1}{2}({\pm}\e_{1}{\pm}\e_{2}{-}\e_{3}{+}\e_{4})=
\e_{3}+\tfrac{1}{2}({\mp}\e_{1}{\mp}\e_{2}{+}\e_{3}{+}\e_{4})
\end{cases}
\end{equation*}

\par\smallskip \noindent
$\textsc{G}_{2}$. \;\; We take for $\textsc{G}_{2}$ the root
system 
\begin{equation}\label{rg2}
 \Rad(\textsc{G}_{2})
 =\{{\pm}(\e_{i}{-}\e_{j}\,{\mid}\,1{\leq}i{<}j{\leq}3\}\cup\{{\pm}(2\e_{i}{-}\e_{j}{-}\e_{h})\mid \{i,j,k\}{=}\{1,2,3\}\}
\end{equation}
and fix $C$ in such a way that $\Bz(C)\,{=}\,\{\alphaup_{1}{=}(\e_{2}{-}\e_{3}),\,\alphaup_{2}\,{=}\,
(2\e_{3}{-}\e_{1}{-}\e_{2}\}$, so that 
\begin{equation*}
 \Rad^{+}(C)=\{\e_{2}{-}\e_{1},\,\e_{3}{-}\e_{1},\,\e_{2}{-}\e_{3}, 2\e_{2}{-}\e_{1}{-}\e_{3}, 2\e_{3}{-}\e_{1}{-}\e_{2},\,
 \e_{3}{+}\e_{2}{-}2\e_{1}\}, 
\end{equation*}
with $\deltaup_C\,{=}\,\e_{2}{+}\e_{3}{-}2\e_{1}$. The root $\e_{2}{-}\e_{3}$ is the only positive root
orthogonal to $\deltaup_C$ and we have 
\begin{equation*} 
(\e_{2}{+}\e_{3}{-}2\e_{1})-(\e_{2}{-}\e_{3})=2\cdot (\e_{3}{-}\e_{1}).
\end{equation*}
This completes the proof of the Lemma. 
\end{proof} 
\begin{ntz}
 If $\Rad$ is irreducible and $C\,{\in}\,\Cd(\Rad)$, we denote by $\deltaup_{C}$ and
 $\gammaup_{C}$ 
 the largest and lowest roots
  for the lexicographic order induced by $C$.  [$\gammaup_{C}{=}{-}\deltaup_{C}$.]
\end{ntz}

\begin{exam} Lowest roots of admissible Weyl chambers may have different $H$-indices.
Take the simple example with $\Rad\,{=}\,\{{\pm}(\e_{i}{-}\e_{j})\,{\mid}\,1{\leq}i{<}j{\leq}3\}$ 
of type $\textsc{A}_{2}$ and {root system involution $\stt$}
defined by 
\begin{equation*}
 \e_{1}\leftrightarrow{-}\e_{3},\;\; \e_{2}\leftrightarrow{-}\e_{2},
\end{equation*}
and $\Qq\,{=}\,\{\e_{1}{\pm}\e_{i}\,{\mid}\,i{=}2,3\}$.
The chamber $C_{S}$ with $\Bz(C_{S})\,{=}\,\{\alphaup_{i}{=}\e_{i}{-}\e_{i+1}\,{\mid}\,i{=}1,2\}$
is $S$-fit, while $C_{V}$ with $\Bz(C_{V})\,{=}\,
\{\alphaup'_{1}{=}\e_{1}{-}\e_{3},\,\alphaup'_{2}{=}\e_{3}{-}\e_{2}\}$
is $V$-fit. The two cross-marked diagrams are 
\begin{equation*}
   \xymatrix @M=0pt @R=2pt @!C=8pt{   \alphaup_{1}\;\;\;& \;\;\;\alphaup_{2} 
 \\
  \oplus \ar@{-}[r]\ar@/^7pt/@{<->}[r] &  \oplus  & \text{and}\\
\times}\quad
 \xymatrix @M=0pt @R=2pt @!C=8pt{   \alphaup'_{1}& \alphaup'_{2} 
 \\
 \medcirc \ar@{-}[r] &  \ominus  \\
\times}
\end{equation*}
with lowest roots $\gammaup_{C_{S}}{=}\e_{3}{-}\e_{1}$ 
and $\gammaup_{C_{V}}{=}\e_{2}{-}\e_{1}$, respectively.\par 
We have $\nuup(\gammaup_{C_{S}}){=}2$, $\nuup(\gammaup_{C_{V}}){=}1$.
\end{exam}
\begin{prop} \label{p2.57}
Assume that $\Rad$ is irreducible,  
$(\Qq\, ,\stt)$ 
nontrivial and fundamental, 
{and that $C_{V}$ and $C_{S}$ are $V$\!- and $S$\!-fit for $(\Qq\, ,\stt)$, respectively. Then} 
\begin{equation}
 \nuup(\gammaup_{C_{V}})\leq\nuup(\gammaup_{C})\leq\nuup(\gammaup_{S}),\;\;\forall C\,{\in}\,
 \Cd(\Rad\,,\Qq).
\end{equation}
\end{prop}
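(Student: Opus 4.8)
The plan is to reduce both inequalities to the monotonicity of the $H$-index under addition of roots from $\Qq\,{\cap}\,\stt(\Qq)$ recorded in Lemma\,\ref{l2.46}(2), feeding it with the reflection description of admissible Weyl chambers from Lemma\,\ref{l4.7}. Throughout I use that $(\Qq\, ,\stt)$ fundamental makes every $H$-index finite, so that the estimates never involve $\infty$, and that $\gammaup_C\,{=}\,{-}\deltaup_C$.

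First I would treat the upper bound $\nuup(\gammaup_C)\,{\leq}\,\nuup(\gammaup_{C_S})$. By Lemma\,\ref{l4.7}, refining the reflection sequence by means of Corollary\,\ref{c2.46} exactly as in the proof of the preceding Proposition (on the $\chiup_\Qq$-degrees of $\stt(\deltaup_C)$), I can find $\betaup_1,\hdots,\betaup_r\,{\in}\,\Qq^r\,{\cap}\,\Rad^+(C_S)$ with $\deltaup_{C_S}{-}\sum_{i=1}^h\betaup_i\,{\in}\,\Rad$ for $1{\leq}h{\leq}r$ and $\deltaup_C\,{=}\,\deltaup_{C_S}{-}\sum_{i=1}^r\betaup_i$; passing to opposites, $\gammaup_{C_S}{+}\sum_{i=1}^h\betaup_i\,{\in}\,\Rad$ and $\gammaup_C\,{=}\,\gammaup_{C_S}{+}\sum_{i=1}^r\betaup_i$. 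The crucial point is that each $\betaup_i$ lies in $\Qq\,{\cap}\,\stt(\Qq)$: it already lies in $\Qq^r\,{\subseteq}\,\Qq$, while $\stt(\betaup_i)\,{\in}\,\Qq$ holds because for $\betaup_i$ real or imaginary $\stt(\betaup_i)\,{\in}\,\{\betaup_i,{-}\betaup_i\}\,{\subseteq}\,\Qq^r$, and for $\betaup_i$ complex the $S$-fitness of $C_S$ gives, via Lemma\,\ref{l4.20}, $\stt(\betaup_i)\,{\in}\,\Rad^+(C_S)\,{\subseteq}\,\Qq$. Adding the $\betaup_i$ one at a time and invoking Lemma\,\ref{l2.46}(2) at each step (legitimate since every intermediate sum is a root) then yields $\nuup(\gammaup_C)\,{\leq}\,\nuup(\gammaup_{C_S})$.

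For the lower bound $\nuup(\gammaup_{C_V})\,{\leq}\,\nuup(\gammaup_C)$ I would run the same construction but starting from the $V$-fit chamber: I choose $\etaup_1,\hdots,\etaup_s\,{\in}\,\Qq^r\,{\cap}\,\Rad^+(C_V)$ with $\gammaup_{C_V}{+}\sum_{j=1}^h\etaup_j\,{\in}\,\Rad$ and $\gammaup_C\,{=}\,\gammaup_{C_V}{+}\sum_{j=1}^s\etaup_j$. Here the relevant elements are the negatives $-\etaup_j$: each lies in $\Qq^r\,{\subseteq}\,\Qq$, and $\stt(-\etaup_j)\,{\in}\,\Qq$ because for $\etaup_j$ complex the $V$-fitness of $C_V$ gives $\stt(\etaup_j)\,{\in}\,\Rad^-(C_V)$ by Lemma\,\ref{l4.20}, whence $-\stt(\etaup_j)\,{\in}\,\Rad^+(C_V)\,{\subseteq}\,\Qq$ (the real and imaginary cases being handled as before). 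Rewriting $\gammaup_{C_V}\,{=}\,\gammaup_C{+}\sum_{j=1}^s(-\etaup_j)$ and reconstructing $\gammaup_{C_V}$ from $\gammaup_C$ by adjoining $-\etaup_s,-\etaup_{s-1},\hdots,-\etaup_1$ in this reversed order keeps every partial sum a root, so repeated use of Lemma\,\ref{l2.46}(2) gives $\nuup(\gammaup_{C_V})\,{\leq}\,\nuup(\gammaup_C)$.

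I expect the main obstacle to be orienting the two arguments correctly. Lemma\,\ref{l2.46}(2) only lets the $H$-index decrease when one adjoins roots of $\Qq\,{\cap}\,\stt(\Qq)$, so it is essential that for the upper bound the connecting roots $\betaup_i$ themselves (obtained with $C_S$ as the starting chamber) lie in $\Qq\,{\cap}\,\stt(\Qq)$, whereas for the lower bound it is their negatives $-\etaup_j$ (obtained with $C_V$ as the starting chamber) that do. This is precisely where the $S$-fit versus $V$-fit dichotomy enters through Lemma\,\ref{l4.20}: starting from the wrong chamber produces connecting roots whose $\stt$-conjugate has the wrong sign, rendering Lemma\,\ref{l2.46}(2) inapplicable. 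Once the membership in $\Qq\,{\cap}\,\stt(\Qq)$ and the root-ness of all partial sums are secured, the chaining is routine; the argument parallels that of the preceding Proposition, with $\nuup(\gammaup_C)$ playing the role of $\chiup_\Qq(\stt(\deltaup_C))$.
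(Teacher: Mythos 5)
Your proof is correct, and it rests on the same two pillars as the paper's own argument: the fitness criterion of Lemma~\ref{l4.20}, which places the connecting roots (resp.\ their negatives, in the $V$-fit case) in $\Qq\,{\cap}\,\stt(\Qq)$, and the monotonicity $\nuup(\alphaup{+}\betaup)\,{\leq}\,\nuup(\alphaup)$ of Lemma~\ref{l2.46}(2). Where you genuinely differ is in how the two lowest roots are connected. You transport $\deltaup_{C_S}$ to $\deltaup_{C}$ by the reflection chain of Lemma~\ref{l4.7}, refined via Corollary~\ref{c2.46} exactly as in the proof of the preceding proposition on the $\chiup_{\Qq}$-degree of $\stt(\deltaup_{C})$, which produces a string $\betaup_{1},\hdots,\betaup_{r}\,{\in}\,\Qq^{r}\,{\cap}\,\Rad^{+}(C_{S})$ of a priori unbounded length with all partial sums roots; membership in $\Qq^{r}$ comes for free from Lemma~\ref{l4.7}, and you then apply Lemma~\ref{l2.46}(2) iteratively. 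The paper instead invokes Lemma~\ref{l2.53} to write $\gammaup_{C}{-}\gammaup_{C_{S}}$ either as a single root or as a sum of two $C_{S}$-positive roots whose sum is not a root, and forces these summands into $\Qq^{r}$ by combining Corollary~\ref{c2.10} (all lowest roots of admissible chambers have the same $\chiup_{\Qq}$-degree) with the nonnegativity of $\chiup_{\Qq}$ on $\Rad^{+}(C_{S})$. The trade-off: your route leans on the positivity claim $\betaup_{i}\,{\in}\,\Rad^{+}(C_{S})$ established inside that earlier proof (which you correctly cite), while the paper's route needs the case analysis and the type-by-type classification underlying Lemma~\ref{l2.53}; your version treats the one- and two-summand cases uniformly and avoids the degree bookkeeping. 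Your diagnosis of where $S$- versus $V$-fitness enters --- the sign of the $\stt$-conjugates of the connecting roots, hence whether the roots themselves or their negatives lie in $\Qq\,{\cap}\,\stt(\Qq)$ --- matches the paper's exactly, including the careful reversal of the chain in the $V$-fit case so that every partial sum remains a root.
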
 
\begin{proof} We observe that{, for all $C\,{\in}\,\Cd(\Rad,\Qq)$,}
$\gammaup_{C}\,{\in}\,\Qc$ 
and, in particular, belongs to $\Rad^{-}(C')$ for all $C'\,{\in}\,\Cd(\Rad,\Qq)$.\par 
 Let $C_{S}$ be an $S$-fit Weyl chamber and $C\,{\in}\,\Cd(\Rad,\Qq)$. Assume that 
$\gammaup_{C}{\neq}\,\gammaup_{C_{S}}$. If $\betaup\,{=}\,\gammaup_{C}{-}\gammaup_{C_{S}}
\,{\in}\,\Rad$, we have 
$\chiup_{\Qq}(\betaup)\,{=}\,\chiup_{\Qq}(\gammaup_{C}){-}\chiup_{\Qq}(\gammaup_{C_{S}})\,{=}\,0$
by Corollary\,\ref{c2.10} and thus $\betaup\,{\in}\,\Qq^{r}$.
If $\betaup\,{\in}\,\Rad_{\,\;\circ}^{\stt}\,{\cup}\,\Rad_{\,\;\bullet}^{\stt}$, then 
$\betaup\,{\in}\,\Qq^{r}\,{\cap}\,\stt(\Qq^{r})$ and
$\nuup(\gammaup_{C})\,{=}\,\nuup(\gammaup_{C_{S}})$.
If $\betaup$ is complex, then $\stt(\betaup)$ is still positive, because $C_{S}$ is $S$-fit,
and thus belongs to $\Qq$.
Thus $\betaup\,{\in}\,\Qq\,{\cap}\,\stt(\Qq)$ and,
from $\gammaup_{C}{=}\,\gammaup_{C_{S}}{+}\,\betaup$, 
we get    $\nuup(\gammaup_{C})\,{\leq}\,\nuup(\gammaup_{C_{S}})$.\par
If $\gammaup_{C}{-}\,\gammaup_{C_{S}}$ is not a root, then 
\begin{equation*}
 \gammaup_{C}=\gammaup_{C_{S}}+\betaup_{1}+\betaup_{2},\;\;\text{with $\betaup_{1},\betaup_{2}\,{\in}\,
 \Rad^{+}(C_{S})\subseteq\Qq,\; \betaup_{1}{+}\betaup_{2}\,{\notin}\,\Rad$.}
\end{equation*}
Since $\chiup_{\Qq}(\betaup_{1})$ and $\chiup_{\Qq}(\betaup_{2})$ are both nonnegative and
$\chiup_{\Qq}(\gammaup_{C}){=}\,\chiup_{\Qq}(\gammaup_{C_{S}})$,
we obtain $\betaup_{1},\betaup_{2}\,{\in}\,\Qq^{r}$. This implies that 
 $\betaup_{1},\betaup_{2}$ belong to $\Qq\,{\cap}\,\stt(\Qq)$,
because $C_{S}$ is $S$-fit. Thus
$\gammaup_{C}\,{=}\,\gammaup_{C_{S}}{+}\betaup_{1}{+}\betaup_{2}$ yields
$\nuup(\gammaup_{C})\,{\leq}\,\nuup(\gammaup_{C_{S}})$.
\par\smallskip
The proof that $\nuup(\gammaup_{C_{V}})\,{\leq}\,\nuup(\gammaup_{C})$ if $C_{V}$ is $V$-fit and
$C$ belongs to $\Cd(\Rad,\Qq)$ is similar. \par
 Let $C_{V}$ be a $V$-fit Weyl chamber and $C\,{\in}\,\Cd(\Rad,\Qq)$. Assume that 
$\gammaup_{C}{\neq}\,\gammaup_{C_{V}}$. 
{If $\betaup\,{=}\,\gammaup_{C}{-}\gammaup_{C_{V}}
\,{\in}\,\Rad$, then $\betaup\,{\in}\,\Rad^{+}(C_{V})\,{\cap}\,\Qq^{r}$,  since 
$\chiup_{\Qq}(\betaup)\,{=}\,\chiup_{\Qq}(\gammaup_{C}){-}\chiup_{\Qq}(\gammaup_{C_{V}})\,{=}\,0$.
If $\betaup\,{\in}\,\Rad_{\,\;\circ}^{\stt}\,{\cup}\,\Rad_{\,\;\bullet}^{\stt}$, then 
$\betaup\,{\in}\,\Qq^{r}\,{\cap}\,\stt(\Qq^{r})$ and
$\nuup(\gammaup_{C})\,{=}\,\nuup(\gammaup_{C_{S}})$.
If $\betaup$ is complex, then $\stt(\betaup)\,{\in}\,\Rad^{-}(C)$, because $C_{V}$ is $V$-fit.
Thus
$({-}\betaup)\,{\in}\,\Qq\,{\cap}\,\stt(\Qq)$ and, 
from $\gammaup_{C_{V}}{=}\,\gammaup_{C}{-}\,\betaup$ 
we get    $\nuup(\gammaup_{C_{V}})\,{\leq}\,\nuup(\gammaup_{C})$.
If $\gammaup_{C}{-}\,\gammaup_{C_{V}}$} is not a root, then 
\begin{equation*}
 \gammaup_{C}=\gammaup_{C_{V}}+\betaup_{1}+\betaup_{2},\;\;\text{with $\betaup_{1},\betaup_{2}\,{\in}\,
 \Rad^{+}(C_{V})\subseteq\Qq,\; \betaup_{1}{+}\betaup_{2}\,{\notin}\,\Rad$.}
\end{equation*}
Since $\chiup_{\Qq}(\betaup_{1})$ and $\chiup_{\Qq}(\betaup_{2})$ are both nonnegative and
$\chiup_{\Qq}(\gammaup_{C}){=}\,\chiup_{\Qq}(\gammaup_{C_{V}})$,
we obtain $\betaup_{1},\betaup_{2}\,{\in}\,\Qq^{r}$. Since $C_{V}$ is $V$-fit, 
$({-}\betaup_{1}),\,({-}\betaup_{2})$ belong to $\Qq\,{\cap}\,\stt(\Qq)$. Thus
$\gammaup_{C_{V}}\,{=}\,\gammaup_{C_{S}}{-}\betaup_{1}{-}\betaup_{2}$ implies that
$\nuup(\gammaup_{C_{V}})\,{\leq}\,\nuup(\gammaup_{C})$.
\end{proof}
\begin{rmk}
 In particular, all lowest roots of $V$-fit Weyl chambers have the same $H$-index and
 all lowest roots of $S$-fit Weyl chambers have the same $H$-index.
\end{rmk}

\begin{exam}
 Consider an irreducible root system of type $\textsc{C}_{3}$
\begin{equation*}
 \Rad=\{{\pm}2\e_{i}\mid i=1,2,3\}\cup\{{\pm}\e_{i}{\pm}\e_{j}\,{\mid}\,1{\leq}i{<}j{\leq}3\},
\end{equation*}
 $\e_{1},\e_{2},\e_{3}$ being an orthonormal basis
 of $\R^{3}$ and  $\Qq\,{=}\,\{\alphaup\,{\in}\,\Rad\,{\mid}\,\chiup(\alphaup)\,{\geq}\,0\}$, with
\begin{equation*}
 \chiup(\alphaup)=(\alphaup|\vq), \;\;\text{with}\;\; \vq=\e_{1}{+}\e_{2}.
\end{equation*}
Fix  the
involution $\stt$ of $\Rad$ given by 
\begin{equation*}
\e_{1}\,{\leftrightarrow}\,{-}\e_{3},\quad \e_{2}\,{\leftrightarrow}\,\e_{2}.
\end{equation*}
The canonical Weyl chamber is $V$-fit, with 
corresponding cross-marked diagram 
\begin{equation*} 
  \xymatrix @M=0pt @R=2pt @!C=8pt{   \alphaup_{1}& \alphaup_{2} & \alphaup_{3}
 \\
  \ominus \ar@{-}[r] &  \oplus \ar@{<=}[r] & \ominus  \\
&\times}
\end{equation*}
showing that $(\Qq\,,\stt)$ is 
Levi-nondegenerate. It is also fundamental, because 
$\stt(\alphaup_{3})\,{=}\,{-}2\e_{1}\,{=}\,{-}2(\alphaup_{1}{+}\alphaup_{2}){-}\alphaup_{3}$ 
has support equal to $\Bz(C)$. From
\begin{equation*} 
\begin{array} {| c | c | c | c |} \hline
\alphaup & \stt(\alphaup) & \chiup(\alphaup) & \chiup(\stt(\alphaup)) \\
\hline
2\e_{1} & {-}2\e_{3} & 2 & 0 \\
\hline
2\e_{2} & 2\e_{2} & 2 & 2 \\
\hline
2\e_{3} & {-} 2\e_{1} & 0 & {-}2 \,\;\\
\hline
\e_{1}{-}\e_{2}& {-}\e_{2}{-}\e_{3} & 0 & {-}1\,\; \\
\hline
\e_{1}{+}\e_{2}& \e_{2}{-}\e_{3} & 2 & 1 \\
\hline
\e_{1}{-}\e_{3}& \e_{1}{-}\e_{3} & 1 & 1 \\
\hline
\e_{1}{+}\e_{3}& {-}\e_{1}{-}\e_{3} & 1 & {-}1\;\, \\
\hline
\e_{2}{-}\e_{3} & \e_{1}{+}\e_{2} & 1 & 2 \\
\hline
\e_{2}{+}\e_{3} & \e_{2}{-}\e_{1} & 1 & 0 \\
\hline
\end{array}
\end{equation*}
we obtain 
\begin{equation*} 
\begin{cases}
 \Qc\cap\stt(\Qc)=\{{-}2\e_{2},\, {-}\e_{1}{-}\e_{2},\,\e_{3}{-}\e_{1},\, \e_{3}{-}\e_{2} \},\\ 
\Qq\cap\stt(\Qc)=\{2\e_{3},\, \e_{1}{-}\e_{2},\, \e_{1}{+}\e_{3}\},\\
 \Qc\cap\stt(\Qq)=\{{-}2\e_{1},\, {-}\e_{2}{-}\e_{3},\, {-}\e_{1}{-}\e_{3}\}.
 \end{cases}\end{equation*}
 The minimal root $({-}2\e_{1})$ belongs to $\Qc\,{\cap}\,\stt(\Qq)$ and has $H$-index $1$,
 while $({-}2\e_{2})$ has $H$-index $3$. 
 In fact 
 {\small
\begin{equation*}
 {-}2\e_{2}{-}2\e_{3}\notin\Rad,\;\; {-}2\e_{2}{-}(\e_{1}{-}\e_{2})\,{=}\,({-}\e_{1}{-}\e_{2})\,{\in}\,\Qc\,{\cap}\,
 \stt(\Qc),\;\; {-}2\e_{2}{-}(\e_{1}{+}\e_{3})\,{\notin}\,\Rad
\end{equation*}}
proves that $\nuup({-}2\e_{2})\,{>}\,2$ and thus equals $3$ because $\nuup({-}2\e_{2}){\leq}\,\nuup({-}2\e_{1})
{+}2\,{=}\,3.$ \par
The pair $(\Rad^{+}(C),\stt)$, corresponding to the cross-marked diagram 
\begin{equation*} 
  \xymatrix @M=0pt @R=2pt @!C=8pt{   \alphaup_{1}& \alphaup_{2} & \alphaup_{3}
 \\
  \ominus \ar@{-}[r] &  \oplus \ar@{<=}[r] & \ominus  \\
\times&\times&\times}
\end{equation*}
has the same $H$-index as $(\Qq\, ,\stt)$. Clearly $C$ is $S$-fit for $(\Rad^{+}(C),\stt)$,
which is  Levi-degenerate. 
\end{exam}

\begin{exam}
 We consider on the root system $\Rad\,{=}\,\{{\pm}(\e_{i}{-}\e_{j})\,{\mid}\,1{\leq}i{<}j{\leq}2m\}$
 of type $\textsc{A}_{2m-1}$, with $m\,{\geq}\,2$,  the involution 
\begin{equation*}
 \stt(\e_{2i-1})=\e_{2i},\;\;\stt(\e_{2i})=\e_{2i-1},\;\; 1{\leq}i{\leq}m, 
\end{equation*}
and the parabolic 
{$\Qq\,{=}\,\{\alphaup\,{\in}\,\Rad\,{\mid}\,(\alphaup\,|\, \epi)\,{\geq}\,0\}$, with
$\epi\,{=}\,\tfrac{1}{2}{\sum}_{i=1}^{m}(\e_{2i-1}{-}\,\e_{2i})$.}
The  $S$-fit and $V$-fit diagrams for $(\Qq\, ,\stt)$ are 
\begin{gather*} 
  \xymatrix @M=0pt @R=3pt @!C=8pt{   \alphaup_{1}& \alphaup_{2} & \alphaup_{3}
 & \alphaup_{4}&  & \alphaup_{2m-3}&\alphaup_{2m-2}&\alphaup_{2m-1}\\
  \medbullet \ar@{-}[r] &  \oplus\ar@{-}[r] & \medbullet \ar@{-}[r] & \oplus \ar@{-}[r]&\cdots\cdots \ar@{-}[r] 
  & \medbullet \ar@{-}[r] & \oplus \ar@{-}[r] & \medbullet
   \\
\times&&\times &&&\times&&\times}\\[4pt]
  \xymatrix @M=0pt @R=3pt @!C=8pt{   \alphaup'_{1}& \alphaup'_{2} & \alphaup'_{3}
 & \alphaup'_{4}&  & \alphaup'_{2m-3}&\alphaup'_{2m-2}&\alphaup'_{2m-1}\\
  \oplus \ar@{-}[r] &  \ominus\ar@{-}[r] & \oplus \ar@{-}[r] & \ominus \ar@{-}[r]&\cdots\cdots \ar@{-}[r] 
  & \oplus \ar@{-}[r] & \ominus \ar@{-}[r] & \oplus
   \\
\times&&\times &&&\times&&\times}
\end{gather*}
where $\alphaup_{i}{=}\,\e_{i}{-}\e_{i+1}$ for $1{\leq}i{<}m$, 
while
$\alphaup'_{1}{=}\,\e_{1}{-}\e_{3}$, $\alphaup'_{2i+1}{=}\,\e_{2i}{-}\e_{2i{+}3}$ for $1{\leq}i{\leq}{m{-}2}$,
$\alphaup'_{2m-1}{=}\,\e_{2m-2}{-}\e_{2m}$ 
and  
$\alphaup'_{2i}{=}\,{-}\alphaup_{2i},$ 
 for $1{\leq}i{<}m$. With $\Rad^{+}(C){=}\{\e_{i}{-}\e_{j}\,{\mid}1{\leq}i{<}j{\leq}2m\}$,
we obtain 
\begin{equation*} 
\begin{cases}
 \Qq=\Rad^{+}(C)\cup\{{-}\alphaup_{2i}\,{\mid}\, 1\,{\leq}\,i\,{<}\,m\},\\
 \begin{aligned}
 \stt(\Qq)=(\Rad^{+}(C){\setminus}\{\alphaup_{2i-1}\,{\mid}\,1\,{\leq}\,i\,{\leq}\,m\})\cup\{{-}\alphaup_{2i-1}\,{\mid}\,
 1\,{\leq}\,{i}\,\leq\,{m}\}\qquad \\
 \cup\; \{{-}(\alphaup_{2i-1}{+}\alphaup_{2i}{+}\alphaup_{2i+1})\,{\mid}\, 1\,{\leq}\,\,i\,{<}\,m\},
 \end{aligned}\\
\Qc\cap\stt(\Qc)=\{\e_{j}-\e_{i}\mid 1{\leq}i{<}j{\leq}2m,\; j\,{-}\,i\,{\geq}\,2\},
\\
 \Qq\cap\stt(\Qc)=\{\alphaup_{2i-1},\,{-}\alphaup_{2i}\mid 1{\leq}i{<}m\},\\
 \Qc\cap\stt(\Qq)=\{{-}\alphaup_{2i-1},\; {-}(\alphaup_{2i-1}{+}\alphaup_{2i}{+}\alphaup_{2i+1})\mid
 1{\leq}i{<}m\}.
 \end{cases}
\end{equation*}
We have $\nuup(\Qq\, ,\stt)\,{=}\,m$ and the $H$-index of the lowest root, {which
is the same for the $S$\!- and the $V$\!-fit chamber, is} 
\begin{equation*}
\nuup(\e_{2m}{-}\e_{1})= 
\begin{cases}
 m{-}1, & \text{if $m$ is even,}\\
 \,\; m, & \text{if $m$ is odd.}
\end{cases}
  \end{equation*}
  Indeed 
\begin{equation*}
 \e_{2m}{-}\e_{1}= 
\begin{cases} \begin{aligned}
{-}{\sum}_{1\leq{i}<\left[\tfrac{m}{2}\right]}\left( (\alphaup_{4i-3}{+}\alphaup_{4i-2}
{+}\alphaup_{4i-1})+\alphaup_{4i})\right)-(\alphaup_{2m-3}{+}\alphaup_{2m-2}{+}\alphaup_{2m-1}),\;\\[-3pt]
\text{if $m$ is even,}\end{aligned}\\[14pt]
\begin{aligned}
{-}{\sum}_{1\leq{i}\leq\left[\tfrac{m}{2}\right]}\left((\alphaup_{4i-3}{+}\alphaup_{4i-2}
{+}\alphaup_{4i-1})+\alphaup_{4i})\right)-\alphaup_{2m-1}, \qquad
 \text{if $m$ is odd.}
\end{aligned}
\end{cases}
\end{equation*}
\par
When $m$ is even, the roots with largest $H$-index $m$ are $(\e_{2m-2}{-}\e_{1})$ and
$(\e_{2}{-}\e_{m-1})$: 
a minimal  $H$-sequence for $(\e_{2m-2}{-}\e_{1})$ 
{being}
\begin{equation*}
 ({-}(\alphaup_{1}{+}\alphaup_{2}{+}\alphaup_{3}), {-}\alphaup_{4}, \hdots,
 {-}(\alphaup_{2m-7}{+}\alphaup_{2m-6}{+}{-}\alphaup_{2m-5}), 
 {-}\alphaup_{2m-4},\,{-}\alphaup_{2m-3},\,{-}\alphaup_{2m-2}).
\end{equation*}
\end{exam} 

\begin{thm}
 Assume that $\Rad$ is irreducible, $\stt$ an involution of $\Rad$ and 
 $\Qq$ a parabolic subset of $\Rad$.
 \begin{enumerate}
 \item Let $C$ be any Weyl chamber in $\Cd(\Rad,\Qq)$.
 Then
 \begin{equation}
 \nuup(\Qq\,,\stt)\leq\nuup(\gammaup_{C})+2.
\end{equation} In particular,  the
  pair $(\Qq\,,\stt)$ if fundamental if and only if the  \mbox{$H$-index} of
  $\gammaup_{C}$ is finite. 
 \item Assume that $(\Qq\,,\stt)$ 
 is fundamental and 
 Levi-nondegenerate. {If $C_{S}$ is $S$-fit for $(\Qq\,,\stt)$,}
then  
\begin{equation}\label{e4.57}
 \nuup(\Qq\,,\stt)\leq \nuup(\gammaup_{C_{S}})+1.
\end{equation}
\end{enumerate}
\end{thm}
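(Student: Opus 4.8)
The plan is to manufacture an $H$-sequence for an arbitrary root out of one for the lowest root $\gammaup_C$, adjoining at most one or two roots of $\Qq$ at the end. Two facts will be used repeatedly: first, since $\stt$ is an automorphism of $\Rad$ carrying $\Qq\cup\stt(\Qq)$ onto itself, it turns an $H$-sequence for $\alphaup$ into one of equal length for $\stt(\alphaup)$, so that $\nuup(\alphaup)=\nuup(\stt(\alphaup))$; second, by Lemma~\ref{l2.46}(2), adjoining a root of $\Qq\cap\stt(\Qq)$ to an $H$-sequence never raises the index.

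For (1), fix $C\in\Cd(\Rad,\Qq)$. Every root of $\Qq\cup\stt(\Qq)$ has index $1\leq\nuup(\gammaup_C)+2$, so only $\alphaup\in\Qc\cap\stt(\Qc)$ with $\alphaup\neq\gammaup_C$ needs attention (the case $\alphaup=\gammaup_C$ being trivial). Then $-\alphaup\in\Qq^{n}\subseteq\Rad^{+}(C)$ and $-\alphaup\neq\deltaup_C$, so Lemma~\ref{l2.53}, applied to the positive root $-\alphaup$, expresses $\alphaup-\gammaup_C=\deltaup_C-(-\alphaup)$ either as a single positive root $\betaup$, or as $\betaup_1+\betaup_2$ with $\betaup_1,\betaup_2\in\Rad^{+}(C)$; in the second case the same lemma guarantees that $\gammaup_C+\betaup_1=\alphaup-\betaup_2$ and $\gammaup_C+\betaup_2=\alphaup-\betaup_1$ are again roots. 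Because $\Rad^{+}(C)\subseteq\Qq$, appending $\betaup$ (resp.\ $\betaup_1,\betaup_2$) to a minimal $H$-sequence for $\gammaup_C$ produces an $H$-sequence for $\alphaup$ all of whose partial sums are roots, of length $\nuup(\gammaup_C)+1$ (resp.\ $\nuup(\gammaup_C)+2$). Taking the supremum over $\alphaup$ gives the inequality; the ``in particular'' is then immediate, since fundamentality is exactly the finiteness of all $H$-indices, and the bound makes every index finite as soon as $\nuup(\gammaup_C)$ is.

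For (2) I run the same argument with $C=C_S$ an $S$-fit chamber and show the ``two-root'' alternative never actually costs two steps. The single-root alternative already yields $\nuup(\alphaup)\leq\nuup(\gammaup_{C_S})+1$. In the two-root alternative $\alphaup=\gammaup_{C_S}+\betaup_1+\betaup_2$ with $\betaup_1,\betaup_2\in\Rad^{+}(C_S)\subseteq\Qq$; if some $\betaup_i\in\stt(\Qq)$, then $\betaup_i\in\Qq\cap\stt(\Qq)$ and, adjoining first the other root and then $\betaup_i$ via Lemma~\ref{l2.46}(2), one gets $\nuup(\alphaup)\leq\nuup(\gammaup_{C_S})+1$. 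The only survivor is $\betaup_1,\betaup_2\in\Qq\cap\stt(\Qc)$; here $\stt(\betaup_i)\in\Qc\subseteq\Rad^{-}(C_S)$ while $\betaup_i\in\Rad^{+}(C_S)$, so $\betaup_i$ is not real and cannot lie in $\Qq^{r}$ (a complex root of $\Qq^{r}\cap\Rad^{+}(C_S)$ has positive $\stt$-conjugate, by $S$-fitness and Lemma~\ref{l4.20}, and an imaginary root of $\Qq^{r}$ has its conjugate $-\betaup_i$ again in $\Qq^{r}\subseteq\Qq$), whence $\betaup_1,\betaup_2\in\Qq^{n}$. Using $\nuup(\alphaup)=\nuup(\stt(\alphaup))$, I may apply the same trichotomy to $\stt(\alphaup)\in\Qc\cap\stt(\Qc)$, and conclude unless both $\alphaup$ and $\stt(\alphaup)$ sit in this last configuration.

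Thus (2) reduces to excluding the symmetric bad case in which $\alphaup$ and $\stt(\alphaup)$ each split off $\gammaup_{C_S}$ as a sum of two roots of $\Qq^{n}\cap\stt(\Qc)$; equivalently, since (1) is sharp here, to proving that no root can realize $\nuup(\Qq,\stt)=\nuup(\gammaup_{C_S})+2$ when $(\Qq,\stt)$ is Levi-nondegenerate. This is the crux and the main obstacle. The plan is to carry it out type by type, inserting the explicit shapes of $\betaup_1,\betaup_2$ furnished by Lemma~\ref{l2.53} (strongly orthogonal roots, or a repeated short root, and so on) into the Levi-nondegeneracy hypothesis written, for a $V$-fit chamber $C_V$ obtained from $C_S$ by reflections in $\Qq^{r}$ (Lemma~\ref{l4.7}), as $\stt(\Phi_{C_V})\subseteq\Rad^{+}(C_V)$ (Theorem~\ref{t4.30}). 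I expect this to produce from the two bad summands a root of $\Qq\cap\stt(\Qq)$ rerouting the passage from $\gammaup_{C_S}$ to $\alphaup$ so that only one costly step survives, or else a parabolic strictly between $\Qq$ and $\Qq\cup\stt(\Qq)$, contradicting Levi-nondegeneracy. Checking that this rerouting is always available is the delicate computational point; granting it, every root has index at most $\nuup(\gammaup_{C_S})+1$, and the supremum gives \eqref{e4.57}.
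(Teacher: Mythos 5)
Part (1) of your argument is correct and is essentially the paper's proof: you apply Lemma~\ref{l2.53} to the positive root $-\alphaup$, write $\alphaup=\gammaup_{C}+\betaup$ or $\alphaup=\gammaup_{C}+\betaup_{1}+\betaup_{2}$ with the $\betaup$'s in $\Rad^{+}(C)\subseteq\Qq$ and the partial sums roots, and append them to a minimal $H$-sequence for $\gammaup_{C}$. The ``in particular'' also goes through.

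Part (2), however, has a genuine gap, and you acknowledge it yourself: after reducing to the case $\alphaup=\gammaup_{C_{S}}+\betaup_{1}+\betaup_{2}$ with $\betaup_{1},\betaup_{2}\in\Qq^{n}\cap\stt(\Qc)$ (and the symmetric configuration for $\stt(\alphaup)$), you only offer a plan (``I expect this to produce\dots'', ``granting it\dots'') for excluding it. The missing idea is that one should not keep the chamber fixed. The paper instead runs the dichotomy from a \emph{$V$-fit} chamber $C_{V}$: there, if $\nuup(\alphaup_{0})=\nuup(\gammaup_{C_{V}})+2$, the two summands must lie in $\stt(\Qc)$ (else one of them is in $\Qq\cap\stt(\Qq)$ and Lemma~\ref{l2.46} saves a step), and Levi-nondegeneracy expressed via Theorem~\ref{t4.30} forces them into $\Qq^{r}$ --- not into $\Qq^{n}$ as in your $S$-fit setup. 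Since $\betaup_{1}\in\Qq^{r}$, the reflection $\rtt{\betaup_{1}}$ maps $\Rad^{+}(C_{V})$ to the positive system of another \emph{admissible} chamber $C$ (Lemma~\ref{l4.7}), whose lowest root is $\gammaup_{C_{V}}-\langle\gammaup_{C_{V}}\,|\,\betaup_{1}\rangle\betaup_{1}$; the case analysis of Lemma~\ref{l2.53} (long $\betaup_{1}$; two equal short roots; two short roots with $\betaup_{2}-\betaup_{1}\in\Rad$; the $\textsc{G}_{2}$ case) then shows $\alphaup_{0}$ is reached from $\gammaup_{C}$ in at most one further step, and Proposition~\ref{p2.57} transfers the bound $\nuup(\Qq,\stt)\leq\nuup(\gammaup_{C})+1$ to the $S$-fit chamber since $\nuup(\gammaup_{C})\leq\nuup(\gammaup_{C_{S}})$. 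Without this change of admissible chamber your ``rerouting'' is not available: knowing that $\betaup_{1},\betaup_{2}$ are, say, strongly orthogonal roots of $\Qq^{n}$ relative to $C_{S}$ gives no length-$(\nuup(\gammaup_{C_{S}})+1)$ $H$-sequence for $\alphaup$, and no contradiction with Levi-nondegeneracy arises. You should rework the argument along these lines.
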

\begin{proof} $(1)$ Let $C\,{\in}\,\Cd(\Rad,\Qq)$.
Roots in $\Qc\,{\cap}\,\stt(\Qc)$ are $C$-negative. \par
Take $\alphaup\,{\in}\,\Rad^{-}(C)$. By
 Lemma\,\ref{l2.53} there are the possibilities: 
\begin{equation*} 
\begin{cases}
 \alphaup\,{=}\,\gammaup_{C},\\
 \exists\,\betaup\,{\in}\,\Rad^{+}(C)\;\;\text{s.t.}\;\; \alphaup=\gammaup_{C}{+}\betaup ,\\
  \exists\,\betaup_{1},\betaup_{2}\,{\in}\,\Rad^{+}(C)\;\;\text{s.t.}\;\; \betaup_{1}{+}\betaup_{2}\notin\Rad\;\;
  \text{and}\;\;
  \alphaup
  =\gammaup_{C}{+}\betaup_{1}{+}\betaup_{2} .
\end{cases}
\end{equation*}
Since positive roots belong to $\Qq$, this implies that the $H$-index of $\alphaup$ is
equal to $\nuup(\gammaup_{C})$ in the first, less or equal than $\nuup(\gammaup_{C}){+}1$ in the second,
less or equal than $\nuup(\gammaup_{C}){+}2$ in the third case.
\par\smallskip
$(2)$
By Proposition\,\ref{p2.57}
it suffices to find an adapted Weyl chamber $C$ for which
$\nuup(\Qq,\stt)\,{\leq}\,\nuup(\gammaup_{C}){+}1$. \par
The statement is trivial when $(\Qq\,,\stt)$ has $H$-index less or equal two. \par
Assume that 
$\nuup(\Qq\, ,\stt)\,{\geq}\,3$ and let 
$\alphaup_{0}$ be a root with $\nuup(\alphaup_{0})\,{=}\,\nuup(\Qq\,,\stt)$.
Fix a $V$-fit chamber $C_{V}$. 
If $\nuup(\alphaup_{0})\,{\leq}\,\nuup(\gammaup_{C_{V}}){+}1$, then there is nothing to prove.  
If $\nuup(\alphaup_{0})\,{=}\,\nuup(\gammaup_{V}){+}2$,  then
$\alphaup_{0}{-}\gammaup_{C_{V}}$ is not a root and, by Lemma\,\ref{l2.53} 
\begin{equation*}\tag{$*$}
 \alphaup_{0}=\gammaup_{C_{V}}+\betaup_{1}+\betaup_{2},\;\;\text{with}\;\;
 \begin{cases} \betaup_{1},\betaup_{2}\,{\in}\,\Rad^{+}(C),\;\; \betaup_{1}+\betaup_{2}\notin\Rad,\\
 \gammaup_{C_{V}}{+}\betaup_{1}\,{=}\,\alphaup_{0}{-}\betaup_{2}\in\Rad^{-}(C),\\
  \gammaup_{C_{V}}{+}\betaup_{2}\,{=}\,\alphaup_{0}{-}\betaup_{1}\in\Rad^{-}(C).
 \end{cases}
\end{equation*}
\par 
Moreover, as $\betaup_{1},\betaup_{2}\,{\in}\,\Rad^{+}(C)\,{\subseteq}\,\Qq$,
we get  $\betaup_{1},\,\betaup_{2}\,{\in}\,\stt(\Qc)$ because otherwise
$\nuup(\alphaup_{0})\,{<}\,\nuup(\gammaup_{V}){+}2.$  
From the assumptions that 
$C_{V}$ is $V$-fit and $(\Qq\,,\stt)$ 
Levi-nondegenerate, it follows that 
$\betaup_{1},\betaup_{2}$ belong
to $\Qq^{r}$, 
because roots in $\stt(\Qc)$
have negative $\stt$-conjugate. We assume, as we can,  that $|\betaup_{1}|\,{\geq}\,|\betaup_{2}|$.
\par
Let us consider the Weyl chamber $C$ with
$\Rad^{+}(C)\,{=}\,\rtt{\betaup_{1}}(\Rad^{+}(C_{V}))$. Its lowest root is 
\begin{equation*}
 \gammaup_{C}=\gammaup_{V}-\langle\gammaup\,|\,\betaup_{1}\rangle\betaup_{1}.
\end{equation*}
The coefficient $({-}\langle\gammaup_{V}\,|\,\betaup_{1}\rangle)$ is a positive integer.
If $\betaup_{1}$ is long, then this coefficient is $1$ and from
$\alphaup_{0}\,{=}\,\gammaup_{C}{+}\betaup_{2}$ it follows that $\nuup(\alphaup_{0})\,{\leq}\,
\nuup(\gammaup_{C}){+}1$. \par 
If $\betaup_{1},\betaup_{2}$ are short, and $\Rad$ is not of type $\textsc{G}_{2}$, 
then there are two possibilities: \par
\noindent
1. $\betaup_{1}$ and $\betaup_{2}$ equal the same short root $\betaup$. Then
$\rtt{\betaup}(\Rad^{+}(C_{V}))\,{=}\,\Rad^{+}(C)$ for a $C\,{\in}\,\Cd(\Rad,\Qq)$
with lowest root $\gammaup_{C}\,{=}\,\gammaup_{V}{+}2\betaup\,{=}\alphaup_{0}$
and $\nuup(\alphaup_{0})\,{=}\,\nuup(\gammaup_{C})$. \par\noindent
2. $\betaup_{1},\betaup_{2}$ are short and $\betaup_{2}{-}\betaup_{1}$ is a root.
We note that $(\betaup_{2}{-}\betaup_{1})\,{\in}\,\Qq^{r}$. With
$\rtt{\betaup_{1}}(\Rad^{+}(C_{V}))\,{=}\,\Rad^{+}(C))$ we obtain an admissible Weyl chamber
$C$ with lowest root $\gammaup_{C}\,{=}\,\gammaup_{V}{+}2\betaup_{1}$ and 
$\alphaup_{0}\,{=}\,\gammaup_{C}\,{+}\,(\betaup_{2}{-}\betaup_{1})$ implies that
$\nuup(\alphaup_{0})\,{\leq}\,\nuup(\gammaup_{C}){+}1$. \par\smallskip
If $\Rad$ is of type $\textsc{G}_{2}$ then 
$
 \alphaup_{0}\,{=}\,\gammaup_{V}\,{+}\,2\betaup
$
for a short root $\betaup\,{\in}\,\Qq^{r}$. \par
The Weyl chamber $C$ with $\Rad^{+}(C)\,{=}\,\rtt{\betaup}(\Rad^{+}(C_{V})$
is admissible and has lowest root $\gammaup_{C}\,{=}\,\gammaup_{V}\,{+}\,3\betaup$.
Then $\alphaup_{0}\,{=}\,\gammaup_{C}{+}({-}\betaup)$ yields $\nuup(\alphaup)\,{\leq}\,\nuup(\gammaup_{C})
{+}1$.
The proof is complete.
\end{proof} 
\begin{exam}
 We consider on the irreducible $\Rad\,{=}\,\{{\pm}\e_{i}{\pm}\e_{j}{\neq}0\,{\mid}\,1{\leq}i,j{\leq}4\}$
of type  $\textsc{C}_{4}$ the involution $\stt$ with 
\begin{equation*}
 \e_{1}\,{\leftrightarrow}\,\e_{4},\; \e_{2}\,{\leftrightarrow}{-}\e_{2},\; 
\e_{3}\,{\leftrightarrow}\,{-}\e_{3},
\end{equation*}
and the parabolic set 
$\Qq\,{=}\,\{\alphaup\,{\mid}\,\chiup(\alphaup)\,{\geq}\,0\}$ with $\chiup(\e_{i})\,{=}\, 2$
for $i\,{=}\,1,2$, $\chiup(\e_{3})\,{=}\,1$, $\chiup(\e_{4})\,{=}\,0$. 
The standard Weyl chamber is 
 $S$-fit for $(\Qq\, ,\stt)$, yielding the diagram
\begin{equation*}
  \xymatrix @M=0pt @R=1pt @!C=8pt{   \alphaup_{1}& \alphaup_{2} & \alphaup_{3}
 & \alphaup_{4}\\
  \oplus \ar@{-}[r] &  \medbullet\ar@{-}[r] & \ominus \ar@{<=}[r] & \oplus &\qquad(\text{$S$-fit $C$}) . \\
&\times&\times}
\end{equation*}
Since $\stt(\alphaup_{4})\,{=}2\alphaup_{1}{+}2\alphaup_{2}{+}2\alphaup_{3}{+}\alphaup_{4}$ the pair
$(\Qq\, ,\stt)$ is fundamental. \par
With $\Bz(C')\,{=}\,\{\alphaup'_{1}{=}\e_{2}{-}\e_{1},\, \alphaup'_{2}{=}\e_{1}{-}\e_{3},\,
\alphaup'_{3}{=}\e_{3}{+}\e_{4},\, \alphaup'_{4}{=}{-}\e_{4}\}$ we get a $V$-fit $C'\,{\in}\,\Cd(\Rad,\Qq)$, 
with diagram
\begin{equation*}
  \xymatrix @M=0pt @R=1pt @!C=8pt{   \alphaup'_{1}& \alphaup'_{2} & \alphaup'_{3}
 & \alphaup'_{4}\\
  \ominus \ar@{-}[r] &  \oplus\ar@{-}[r] & \oplus \ar@{<=}[r] & \ominus &\qquad(\text{$V$-fit $C'$}) . \\
&\times&\times}
\end{equation*}
showing that $(\Qq\,,\stt)$ is also
Levi-nondegenerate.
 We obtain 
\begin{equation*}
 \begin{array} {| c | c | c | c ||} \hline
\alphaup & \stt(\alphaup) & \chiup(\alphaup) & \chiup(\stt(\alphaup)) \\
\hline
2\e_{1} & 2\e_{4} & 4 & 0 \\
\hline
2\e_{2} & {-}2\e_{2} & 4 & {-}4\,\; \\
\hline
2\e_{3} & {-}2\e_{3} & 2 & {-}2\,\;\\
\hline
2\e_{4}& 2\e_{1} & 0 & 4 \\
\hline
 \e_{1}{-}\e_{2} & \e_{2}{+}\e_{4} & 0 & 2 \\
 \hline
 \e_{1}{+}\e_{2}&\e_{4}{-}\e_{2} & 4 & {-}2 \,\; \\
 \hline
 \e_{1}{-}\e_{3}&\e_{3}{+}\e_{4} & 1 & 1 \\
 \hline
 \e_{1}{+}\e_{3}&\e_{4}{-}\e_{3} & 3 & {-}1\,\; \\
 \hline \end{array}
  \begin{array} {| c | c | c | c |} \hline
\alphaup & \stt(\alphaup) & \chiup(\alphaup) & \chiup(\stt(\alphaup)) \\
\hline
\e_{1}{-}\e_{4}&\e_{4}{-}\e_{1} & 2 & {-}2\,\; \\
 \hline
 \e_{1}{+}\e_{4}&\e_{1}{+}\e_{4} & 2 & 2 \\
 \hline 
 \e_{2}{-}\e_{3}&\e_{3}{-}\e_{2} & 1 & {-}1 \\
 \hline
 \e_{2}{+}\e_{3}&{-}\e_{2}{-}\e_{3} & 3 & {-}3\,\; \\
 \hline 
 \e_{2}{-}\e_{4}&{-}\e_{1}{-}\e_{2} & 2 & {-}4\,\;\\
 \hline
 \e_{2}{+}\e_{4}&\e_{1}{-}\e_{2} & 2 & 0\\
 \hline 
 \e_{3}{-}\e_{4}&{-}\e_{1}{-}\e_{3} & 1 & {-}3\,\;\\
 \hline
 \e_{3}{+}\e_{4}&\e_{1}{-}\e_{3} & 1 & 1\\
 \hline 
\end{array}
\end{equation*}
Hence:
\begin{equation*} 
\begin{cases}
 \Qc\,{\cap}\,\stt(\Qc)\,{=}\,\{\e_{3}{-}\e_{1},\, {-}\e_{1}{-}\e_{4},\, {-}\e_{3}{-}\e_{4}\}\\
\begin{aligned}
 \Qq\,{\cap}\,\stt(\Qc)\,{=}\,\{ 2\e_{2},\,2\e_{3},\, {-}2\e_{4},\,
 \e_{2}{-}\e_{1},\, \e_{1}{+}\e_{2},\, \e_{1}{+}\e_{3},\, 
 \e_{1}{-}\e_{4},\, \e_{2}{-}\e_{3},\,\\ 
 \e_{2}{+}\e_{3},\, 
 \e_{2}{-}\e_{4},\, \e_{3}{-}\e_{4}\},\end{aligned}\\
 \begin{aligned}
 \Qc\,{\cap}\,\stt(\Qq)\,{=}\,\{ {-}2\e_{1},\,{-}2\e_{2},\,{-}2\e_{3},\, 
 {-}\e_{2}{-}\e_{4},\, \e_{4}{-}\e_{2},\, \e_{4}{-}\e_{3},\,
 \e_{4}{-}\e_{1},\, \qquad\\\e_{3}{-}\e_{2},\, {-}\e_{2}{-}\e_{3},\, 
 {-}\e_{1}{-}\e_{2},\, {-}\e_{1}{-}\e_{3}\}.\end{aligned}
 \end{cases}
\end{equation*}
In this case $\nuup(\gammaup_{C}){=}\nuup({-}2\e_{1})\,{=}\,1$, $\nuup(\gammaup_{C'}){=}\nuup({-}2e_{2}){=}1$ 
and $\nuup(\Qq\, ,\stt)\,{=}\,2$. Indeed 
\begin{equation*} 
\begin{cases}
 \e_{3}{-}\e_{1}=({-}2\e_{1})+(\e_{1}{+}\e_{3}),\\
 {-}\e_{1}{-}\e_{4}=({-}2\e_{1}){+}(\e_{1}{-}\e_{4}),\\
 {-}\e_{3}{-}\e_{4}=({-}2\e_{4})+(\e_{4}{-}\e_{3}).
\end{cases}
\end{equation*}
\end{exam} 
Dropping the assumption that $\Rad$ is irreducible, we have 
\begin{thm}
 Let $\Rad\,{=}\,
 {\bigcup}_{h=1}^{p}{\Rad_{\; h}}$ be the decomposition of $\Rad$ into the disjoint union of 
 its irreducible components. \par
 Given $C\,{\in}\,\Cd(\Rad)$, denote by $\gammaup_{C,h}$  the $C$-lowest
 root in $\Rad_{\; h}$. \par
 Consider a pair $(\Qq\,,\stt)$. 
\begin{enumerate}
 \item Let $C$ be any Weyl chamber in $\Cd(\Rad,\Qq)$.
 Then
 \begin{equation}
 \nuup(\alphaup)\leq\nuup(\gammaup_{C,h})+2,\;\; \forall h=1,\hdots,p, \;\forall \alphaup\,{\in}\,\Rad_{\,h}
\end{equation} In particular,  the
  pair $(\Qq\,,\stt)$ if fundamental if and only if the  \mbox{$H$-index} of all 
  $\gammaup_{C,h}$, with $h\,{=}\,1,\hdots,p$,   is finite. 
 \item If $C_{V},\,C_{S}\,{\in}\,\Cd(\Rad,\Qq)$ are $V$-fit and $S$-fit, respectively, then 
\begin{equation}
 \nuup(\gammaup_{C_{V},h})\leq\nuup(\gammaup_{C,h})\leq\nuup(\gammaup_{C_{S},h}),\;\;
 \forall h=1,\hdots,p,\; \forall C\in\Cd(\Rad,\Qq).
\end{equation}
\item  Assume that $(\Qq\,,\stt)$ is fundamental and 
Levi-nondegenerate.
If $C_{S}$ is any $S$-fit Weyl chamber for $(\Qq\,,\stt)$, then 
\begin{equation}\vspace{-20pt}
 \nuup(\alphaup)\leq\nuup(\gammaup_{C_{S},h})\,{+}\,1,\;\;\forall \alphaup\,{\in}\,\Rad_{\;h}.
\end{equation}\qed
\end{enumerate}
\end{thm}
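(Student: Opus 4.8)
The plan is to deduce all three statements from the irreducible case already established, by localizing to the irreducible components $\Rad_{\,h}$ of $\Rad$. The decisive structural fact is that distinct irreducible components are mutually orthogonal, so no sum $\betaup+\betaup'$ of a root $\betaup\,{\in}\,\Rad_{\,h}$ with a root $\betaup'\,{\in}\,\Rad_{\,k}$, $h{\neq}k$, is ever a root. Hence, if $(\betaup_{1},\hdots,\betaup_{r})$ is an $H$-sequence for a root $\alphaup\,{\in}\,\Rad_{\,h}$, so that every partial sum ${\sum}_{i=1}^{j}\betaup_{i}$ is a root and the total sum is $\alphaup$, then all the $\betaup_{i}$ already lie in $\Rad_{\,h}$. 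Thus $\nuup(\alphaup)$, for $\alphaup\,{\in}\,\Rad_{\,h}$, is computed entirely inside the irreducible system $\Rad_{\,h}$, using only the roots of $(\Qq\,{\cup}\,\stt(\Qq))\,{\cap}\,\Rad_{\,h}$; in particular the positive roots $\Rad^{+}(C)\,{\cap}\,\Rad_{\,h}\,{\subseteq}\,\Qq\,{\cap}\,\Rad_{\,h}$ remain available as index-one roots of $\Rad_{\,h}$.

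For $(1)$, fix $C\,{\in}\,\Cd(\Rad,\Qq)$ and $\alphaup\,{\in}\,\Rad_{\,h}$. Since $\Rad_{\,h}$ is irreducible with lowest root $\gammaup_{C,h}$, Lemma\,\ref{l2.53} applied inside $\Rad_{\,h}$ writes $\alphaup$ as $\gammaup_{C,h}$, or $\gammaup_{C,h}{+}\betaup$, or $\gammaup_{C,h}{+}\betaup_{1}{+}\betaup_{2}$, with $\betaup,\betaup_{1},\betaup_{2}$ positive roots of $\Rad_{\,h}$ whose relevant partial sums are roots. As these positive roots lie in $\Qq\,{\cap}\,\Rad_{\,h}$, prolonging a minimal $H$-sequence for $\gammaup_{C,h}$ by them yields an $H$-sequence for $\alphaup$, giving $\nuup(\alphaup)\,{\leq}\,\nuup(\gammaup_{C,h}){+}2$ exactly as in the irreducible proof. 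Since a pair is fundamental precisely when every root has finite $H$-index, and every root lies in some $\Rad_{\,h}$ while each $\gammaup_{C,h}$ is a root of $\Rad_{\,h}$, this bound shows that $(\Qq\,,\stt)$ is fundamental if and only if all the $\gammaup_{C,h}$ have finite $H$-index. This part is completely uniform and uses nothing about how $\stt$ permutes the components.

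For $(2)$ and $(3)$ the sign information carried by $S$- and $V$-fitness enters, through Lemma\,\ref{l4.20}, Corollary\,\ref{c2.10}, and the estimate $\nuup(\alphaup{+}\betaup)\,{\leq}\,\nuup(\alphaup)$ for $\betaup\,{\in}\,\Qq\,{\cap}\,\stt(\Qq)$ of Lemma\,\ref{l2.46}. When $\stt$ stabilizes $\Rad_{\,h}$, the triple $(\Rad_{\,h},\Qq\,{\cap}\,\Rad_{\,h},\stt|_{\Rad_{\,h}})$ is an irreducible pair to which the preceding Theorem and Proposition\,\ref{p2.57} apply directly, once one notes that fundamentality and Levi-nondegeneracy pass to this restriction, again because $H$-sequences and any parabolic set squeezed between $\Qq$ and $\Qq\,{\cup}\,\stt(\Qq)$ decompose along components. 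The main obstacle is the case where $\stt$ swaps $\Rad_{\,h}$ with a distinct component $\Rad_{\,h'}\,{=}\,\stt(\Rad_{\,h})$: then $\stt$ does not restrict to an involution of $\Rad_{\,h}$, so Proposition\,\ref{p2.57} cannot be quoted as a black box. I would resolve this by re-running its comparison argument, and the proof of part $(2)$ of the irreducible Theorem, inside $\Rad_{\,h}$: every root manipulated there—the differences $\gammaup_{C,h}{-}\gammaup_{C_{S},h}$ and $\gammaup_{C,h}{-}\gammaup_{C_{V},h}$, and the roots $\betaup_{1},\betaup_{2}$ supplied by Lemma\,\ref{l2.53}—lies in $\Rad_{\,h}$, while its $\stt$-image lies in $\Rad_{\,h'}$, and the only facts used about these images are the sign statements ``$\stt(\betaup)\,{\in}\,\Rad^{+}(C_{S})$'' and ``$\stt(\betaup)\,{\in}\,\Rad^{-}(C_{V})$'' of Lemma\,\ref{l4.20}, which are global and insensitive to the swap. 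The monotonicity of $(2)$ needs no fundamentality, as the comparison via Lemma\,\ref{l2.46} remains valid in $\N\,{\cup}\,\{{+}\infty\}$. In this way each step of the irreducible arguments survives localization, yielding $\nuup(\gammaup_{C_{V},h})\,{\leq}\,\nuup(\gammaup_{C,h})\,{\leq}\,\nuup(\gammaup_{C_{S},h})$ for every component and, under fundamentality and Levi-nondegeneracy, $\nuup(\alphaup)\,{\leq}\,\nuup(\gammaup_{C_{S},h}){+}1$ for all $\alphaup\,{\in}\,\Rad_{\,h}$.
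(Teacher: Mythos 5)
Your proof is correct and follows the route the paper intends: the theorem is stated with no proof (only \qed), the implicit argument being exactly the componentwise reduction to the irreducible case that you carry out, hinging on the observation that every partial sum in an $H$-sequence must stay inside one irreducible component. Your explicit treatment of the case where $\stt$ swaps two components (re-running the arguments of Proposition\,\ref{p2.57} and the irreducible theorem using only the global sign statements of Lemma\,\ref{l4.20} and Corollary\,\ref{c2.10}) fills in the one genuinely non-automatic point that the paper's \qed glosses over, and it is handled correctly.
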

\medskip
\begin{exam}
 Let $\Rad$ consist of two copies of $\textsc{B}_{2}$ and take $C\,{\in}\,\Cd(\Rad)$ with 
\begin{equation*}
 \Bz(C)=\{\alphaup_{1}{=}\e_{1}{-}\e_{2},\,\alphaup_{2}{=}\e_{2}\}\cup
 \{\alphaup'_{1}{=}\epi_{1}{-}\epi_{2},\,\alphaup'_{2}{=}\epi_{2}\},
\end{equation*}
$\e_{1},\e_{2},\epi_{1},\epi_{2}$ being an orthonormal basis of $\R^{4}$. \par
We have $\gammaup_{C,1}\,{=}\,{-}\e_{1}{-}\e_{2},$ $\gammaup_{C,2}\,{=}\,{-}\epi_{1}{-}\epi_{2}$.
Let 
\begin{equation*}
 \Qq\,{=}\,\{\e_{1},\,\e_{2},\,{\pm}(\e_{1}{-}\e_{2}),\, \e_{1}{+}\e_{2}\}\cup
 \{\epi_{1},\,{\pm}\epi_{2}, \epi_{2}{\pm}\epi_{1}\}
\end{equation*}
and define an involution $\stt$ by 
\begin{equation*}
 \e_{1}\leftrightarrow {-}\epi_{2},\quad \e_{2}\leftrightarrow \epi_{1}.
\end{equation*}
We associate to the pair $(\Qq\,,\stt)$ the cross-marked diagram 
\begin{equation*}
 \xymatrix @M=0pt @R=1pt @!C=8pt{\alphaup_{1} & \alphaup_{2}&\alphaup'_{1}&\alphaup'_{2}\\
 \ominus\ar@{=>}[r]& \oplus & \oplus \ar@{=>}[r] & \ominus & \qquad\quad\;\;\text{($C$ is $V$-fit).}\\
 &\times & \times} 
\end{equation*}
The pair $(\Qq\,,\stt)$ is fundamental and 
Levi-nondegenerate. Being 
\begin{equation*}
 \begin{array} {| c | c | c | c |} \hline
\alphaup & \stt(\alphaup) & \chiup(\alphaup) & \chiup(\stt(\alphaup)) \\
\hline
\e_{1} & {-}\epi_{2} & 1 & 0\\
\hline
\e_{2} & \epi_{1} & 1 & 1\\
\hline
\e_{1}{-}\e_{2}& {-}\epi_{1}{-}\epi_{2}& 0 & {-}1\,\;\\
\hline
\e_{1}{+}\e_{2}&\epi_{1}{-}\epi_{2}& 2 & 1\\
\hline
\end{array} 
\end{equation*}
we obtain 
\begin{equation*} 
\begin{cases}
 \Qc\cap\stt(\Qc)=\{{-}\e_{2},\;{-}\e_{1}{-}\e_{2},\; {-}\epi_{1},\; \epi_{2}{-}\epi_{1}\},\\
 \Qq\,{\cap}\,\stt(\Qc)\,{=}\, \{\e_{1}{-}\e_{2},\; \epi_{2}\},\\
 \Qc\,{\cap}\,\stt(\Qq)\,{=}\,\{{-}\e_{1},\; {-}\epi_{1}{-}\epi_{2}\}.
\end{cases}
\end{equation*}
We have $\nuup(\Qq\,,\stt)\,{=}\,3$ and $\nuup(\gammaup_{C,1})\,{=}\,3$, $\nuup(\gammaup_{C,2})\,{=}\,1$.
Indeed 
\begin{equation*}\begin{cases}
 {-}\e_{2}{=}(\e_{1}{-}\e_{2}){+}\e_{1},\;\; {-}\epi_{1}{=}\epi_{2}{+}({-}\epi_{1}{-}\epi_{2}) & \text{($H$-index $2$)},\\
 {-}\e_{1}{-}\e_{2}{=}(\e_{1}{-}\e_{2})\,{+}2{\cdot}({-}\e_{1}),\;\;
 \epi_{2}{-}\epi_{1}{=}({-}\epi_{1}{-}\epi_{2}){+}2{\cdot}\epi_{2}, & \text{($H$-index $3$)}.
 \end{cases}
\end{equation*}
\end{exam}


 \subsection{Variation of the $CR$-structure} 
 The quotients $\gt{/}\qt$ and $\gs{/}\qts$ may be viewed as infinitesimal descriptions
 of complex and real smooth homogeneous spaces. In particular, if $\mts$ is a real Lie subalgebra
 of $\gs,$ the  complex Lie subalgebras $\qt$ of $\gt$ with $\qt\,{\cap}\,\gs\,{=}\,\mts$
define the \textit{$\gs$-covariant}
 $CR$-structures on $\gs{/}\mts$. They form the set $\Qt(\gt,\gs,\mts)$ of \eqref{e2.2a},
that we consider here in the special context of
parabolic $CR$-structures.\par
 We recall from
\cite[\S{5}]{AMN06b}: 
\begin{thm} \label{t4.62} 
Let $\sfF,\sfF^{\,\prime}$ be  flag manifolds of a
semisimple complex Lie group $\Gf$
and $\sfM,\sfM^{\,\prime}$ orbits
of its real form $\Gfs,$  with $CR$-algebras $(\gs,\qt)$, $(\gs,\qt')$
at their base points $\pct_{0},\pct_{0}'.$ If
\begin{equation*}
 \qts\,{\subseteq}\,{\qt'}\cap\sigmaup(\qt'),
\end{equation*}
then:
\begin{itemize}
 \item  There is a smooth $\Gfs${-}equivariant submersion 
\begin{equation}\label{4.33}\vspace{-4pt}
 \piup:\sfM\,{\to}\,\sfM^{\,\prime},\;\;\text{with}\;\; \piup(\pct_{0})=\pct_{0}',
\end{equation}
which is a diffeomorphism when  $\qts\,{=}\,{\qt'}\,{\cap}\,\sigmaup(\qt')$. 
\item If a maximally noncompact Cartan subalgebra of $\qt\,{\cap}\,\gs$ is also maximally noncompact in 
${\qt'}\,{\cap}\,\gs$,
then the fibres of \eqref{4.33} are connected.
\item The map \eqref{4.33} is  $CR$  iff $\qt\,{\subseteq}\,\qt'$.
\item The map \eqref{4.33} is a $CR$-submersion iff $\qt'\,{=}\,\qt\,{+}\,\qt'\,{\cap}\,\sigmaup(\qt')$. \qed
\end{itemize}
 \end{thm}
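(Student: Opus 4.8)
The plan is to realise both orbits as real homogeneous spaces and to reduce the analytic assertions to linear algebra on the associated $CR$-algebras. Writing $\Qf\,{=}\,N_{\Gf}(\qt)$ and $\Qf'\,{=}\,N_{\Gf}(\qt')$ for the (self-normalising, connected) parabolic subgroups stabilising the base points, I would first identify $\sfM\,{\simeq}\,\Gfs{/}\Kf_{0}$ and $\sfM^{\,\prime}\,{\simeq}\,\Gfs{/}\Kf_{0}'$, with isotropy subgroups $\Kf_{0}\,{=}\,\Gfs\,{\cap}\,\Qf$ and $\Kf_{0}'\,{=}\,\Gfs\,{\cap}\,\Qf'$. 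Since elements of $\Gfs$ are $\sigmaup$-fixed, one has $\Kf_{0}\,{=}\,\Gfs\,{\cap}\,(\Qf\,{\cap}\,\sigmaup(\Qf))$ and likewise for $\Kf_{0}'$; here $\Lf\,{:=}\,\Qf\,{\cap}\,\sigmaup(\Qf)$ and $\Lf'\,{:=}\,\Qf'\,{\cap}\,\sigmaup(\Qf')$ are intersections of two parabolic subgroups of the connected group $\Gf$, hence \emph{connected} closed algebraic subgroups with Lie algebras $\qts$ and $\qt'\,{\cap}\,\sigmaup(\qt')$. The hypothesis $\qts\,{\subseteq}\,\qt'\,{\cap}\,\sigmaup(\qt')$ then forces $\Lf\,{\subseteq}\,\Lf'$, since in characteristic zero inclusion of algebraic Lie subalgebras corresponds to inclusion of the associated connected algebraic subgroups; whence $\Kf_{0}\,{\subseteq}\,\Kf_{0}'$ and the natural projection $\piup\,{:}\,\Gfs{/}\Kf_{0}\,{\to}\,\Gfs{/}\Kf_{0}'$ is a well-defined smooth $\Gfs$-equivariant submersion with $\piup(\pct_{0})\,{=}\,\pct_{0}'$.

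For the diffeomorphism claim I would exploit the connectedness of $\Lf,\Lf'$ again: when $\qts\,{=}\,\qt'\,{\cap}\,\sigmaup(\qt')$ the two connected closed subgroups share the same Lie algebra and therefore coincide, so $\Kf_{0}\,{=}\,\Kf_{0}'$ and $\piup$ is a bijective local diffeomorphism. The two purely $CR$-theoretic items are then immediate from the description of the differential. Identifying $\C\,{\otimes}\,\T_{\pct_{0}}\sfM\,{\simeq}\,\gt{/}\qts$ and $\T^{0,1}_{\pct_{0}}\sfM$ with the image of $\qt$, the map $\piup_{*}$ at $\pct_{0}$ is the canonical projection $\gt{/}\qts\,{\to}\,\gt{/}(\qt'\,{\cap}\,\sigmaup(\qt'))$. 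Hence $\piup$ is $CR$ iff the image of $\qt$ lands in $\qt'{/}(\qt'\,{\cap}\,\sigmaup(\qt'))$, i.e. iff $\qt\,{\subseteq}\,\qt'$, and it is a $CR$-submersion iff in addition that image exhausts $\qt'{/}(\qt'\,{\cap}\,\sigmaup(\qt'))$, i.e. iff $\qt'\,{=}\,\qt\,{+}\,\qt'\,{\cap}\,\sigmaup(\qt')$; $\Gfs$-equivariance propagates both conditions from $\pct_{0}$ to all of $\sfM$, and surjectivity of $\piup$ is automatic because $\sfM^{\,\prime}$ is a single orbit.

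The connectedness of the fibres is the delicate point. The fibre over $\pct_{0}'$ is $\Kf_{0}'{/}\Kf_{0}$, which is connected precisely when $\Kf_{0}$ meets every connected component of $\Kf_{0}'$, that is when $\pi_{0}(\Kf_{0})\,{\to}\,\pi_{0}(\Kf_{0}')$ is onto. Here I would invoke the structure of real points of reductive groups: passing to the reductive parts (the $\sigmaup$-invariant unipotent radicals being contractible, hence irrelevant for $\pi_{0}$), the group of connected components of such a group is generated by a maximally split Cartan subgroup, which therefore meets every component. The assumption that a maximally noncompact Cartan subalgebra of $\qt\,{\cap}\,\gs$ stays maximally noncompact in $\qt'\,{\cap}\,\gs$ says exactly that a maximally split Cartan subgroup lying in $\Kf_{0}$ remains maximally split in $\Kf_{0}'$; such a subgroup then surjects onto $\pi_{0}(\Kf_{0}')$, yielding $\Kf_{0}\,{\cdot}\,(\Kf_{0}')^{\circ}\,{=}\,\Kf_{0}'$ and the connected fibre $\Kf_{0}'{/}\Kf_{0}\,{\simeq}\,(\Kf_{0}')^{\circ}{/}((\Kf_{0}')^{\circ}\,{\cap}\,\Kf_{0})$. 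I expect the main obstacle to be exactly this component-group bookkeeping: checking that the common maximally noncompact Cartan subalgebra really produces a Cartan subgroup of $\Kf_{0}'$ that is contained in $\Kf_{0}$ and detects all of $\pi_{0}(\Kf_{0}')$, so that the reduction to a single connected-component statement is legitimate.
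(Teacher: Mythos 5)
Your proposal is essentially correct, but be aware that the paper itself gives no proof of this statement: Theorem~\ref{t4.62} is recalled verbatim from \cite{AMN06b}, and your argument reconstructs the standard one given in that source. The first, third and fourth bullets are handled completely as written: the connectedness of $\Qf\,{\cap}\,\sigmaup(\Qf)$ (an intersection of two parabolic subgroups of the connected group $\Gf$) legitimately upgrades the inclusion $\qt\,{\cap}\,\sigmaup(\qt)\,{\subseteq}\,\qt'\,{\cap}\,\sigmaup(\qt')$ to an inclusion of isotropy subgroups, and identifying the complexified differential with the projection $\gt/(\qt\,{\cap}\,\sigmaup(\qt))\,{\to}\,\gt/(\qt'\,{\cap}\,\sigmaup(\qt'))$ gives the two $CR$ criteria directly. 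The ``component-group bookkeeping'' you flag for the second bullet is the real content of that item, but it is not a gap in your approach: the input needed is that every connected component of the group of real points of a connected reductive $\R$-group meets the real points of a maximally split maximal torus (a standard consequence of the Bruhat decomposition, established as a lemma in \cite{AMN06b}). The Cartan subgroup $H_{0}\,{=}\,\Gfs\,{\cap}\,Z_{\Gf}(\hg)$ attached to the common maximally noncompact Cartan subalgebra automatically lies in $\Kf_{0}$, because the maximal torus $Z_{\Gf}(\hg)$ is connected with Lie algebra contained in $\qt$ and hence contained in $\Qf$; since the unipotent radical of $\Qf'\,{\cap}\,\sigmaup(\Qf')$ is $\sigmaup$-stable with contractible real points, the structural fact applied to a Levi factor yields $\Kf_{0}'\,{=}\,H_{0}\cdot(\Kf_{0}')^{0}$, so the fibre $\Kf_{0}'/\Kf_{0}$ is the continuous image of the connected group $(\Kf_{0}')^{0}$, as you anticipated.
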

 Let $\mts$ be a real Lie subalgebra of $\gs$ and 
 \begin{equation}
 \Pt(\gt,\gs,\mts)=\{\qt\,{\in}\,\Pt(\gt)
 \,{\mid}\,\qts\,{=}\,\mts\}
\end{equation}
the subset of parabolic subalgebras in  $\Qt(\gt,\gs,\mts)$.
 In view of Thm.\ref{t4.62} all orbits of  $\Gfs$ having
 $CR$-algebra $(\gs,\qt)$ with $\qt$ in $\Pt(\gt,\gs,\mts)$  
are diffeomorphic as smooth manifolds. 
\par
We recall that, 
if $\qt,\qt'\,{\in}\,\Qt(\gt,\gs,\mts)$, we say that \emph{$\qt'$ defines a stronger $CR$-structure
than $\qt$}, or that \emph{$\qt$ defines a weaker $CR$-structure
than $\qt'$}, if $\qt\,{\subset}\,\qt'$. \par
Any $CR$-structure which is stronger than a parabolic one
is parabolic.
\subsubsection{Polarization} \label{s.pol} Polarization yields 
weakest parabolic $CR$-struc\-tures.
\begin{lem}\label{l5.9}
 Let $\qt,\wt$ be parabolic subalgebras of $\gt$. Then 
\begin{equation}
 \qt_{[\wt]}=(\qt\,{\cap}\,\wt)+\qt^{n}
\end{equation}
is the smallest parabolic subalgebra containing $\qt\,{\cap}\,\wt$ and contained in $\qt$.\par
Its nilradical is 
\begin{equation}
 \qt_{[\wt]}^{n}=\qt^{n}+\wt^{n}\,{\cap}\,\qt.
\end{equation}
\end{lem}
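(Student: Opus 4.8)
The plan is to fix a Cartan subalgebra common to $\qt$ and $\wt$ and reduce everything to the combinatorics of parabolic sets of roots, exploiting the $\Z$-grading $\chiup_{\Qq}$ of Proposition\,\ref{p2.5}. First I would observe that $\qt\,{\cap}\,\wt$ contains a Cartan subalgebra $\hg$ of $\gt$: choosing Borel subalgebras $\bt_{1}\,{\subseteq}\,\qt$ and $\bt_{2}\,{\subseteq}\,\wt$, any two Borel subalgebras contain a common Cartan subalgebra, and such an $\hg$ then lies in $\qt\,{\cap}\,\wt$. Consequently $\qt$, $\wt$ and $\qt\,{\cap}\,\wt$ are all $\hg$-regular and are encoded by their root sets $\Qq\,{=}\,\{\alphaup\,{\mid}\,\gt^{\alphaup}\,{\subseteq}\,\qt\}$ and $\Wi\,{=}\,\{\alphaup\,{\mid}\,\gt^{\alphaup}\,{\subseteq}\,\wt\}$, both parabolic, together with $\Qq\,{\cap}\,\Wi$. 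Writing $\chiup_{\Qq}$ for the grading with $\Qq^{n}\,{=}\,\{\chiup_{\Qq}\,{>}\,0\}$, $\Qq^{r}\,{=}\,\{\chiup_{\Qq}\,{=}\,0\}$, $\Qc\,{=}\,\{\chiup_{\Qq}\,{<}\,0\}$, the subalgebra $\qt_{[\wt]}\,{=}\,(\qt\,{\cap}\,\wt)\,{+}\,\qt^{n}$ is $\hg$-regular with root set
\[
 P=(\Qq\,{\cap}\,\Wi)\cup\Qq^{n}=\Qq^{n}\cup(\Qq^{r}\,{\cap}\,\Wi).
\]

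Next I would verify that $P$ is parabolic. For closedness, let $\alphaup,\betaup\,{\in}\,P$ with $\alphaup\,{+}\,\betaup\,{\in}\,\Rad$: if one of them lies in $\Qq^{n}$ then $\chiup_{\Qq}(\alphaup\,{+}\,\betaup)\,{>}\,0$, so $\alphaup\,{+}\,\betaup\,{\in}\,\Qq^{n}$; otherwise both lie in $\Qq^{r}\,{\cap}\,\Wi$, whence $\alphaup\,{+}\,\betaup\,{\in}\,\Wi$ (as $\Wi$ is closed) and $\chiup_{\Qq}(\alphaup\,{+}\,\betaup)\,{=}\,0$, so $\alphaup\,{+}\,\betaup\,{\in}\,\Qq^{r}\,{\cap}\,\Wi$; in either case $\alphaup\,{+}\,\betaup\,{\in}\,P$. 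For the condition $P\,{\cup}\,({-}P)\,{=}\,\Rad$, I use $-\Qq^{n}\,{=}\,\Qc$ and the symmetry of $\Qq^{r}$ to write $P\,{\cup}\,({-}P)\,{=}\,(\Qq^{n}\,{\cup}\,\Qc)\,{\cup}\,(\Qq^{r}\,{\cap}\,(\Wi\,{\cup}\,({-}\Wi)))$; since $\Wi$ is parabolic, $\Wi\,{\cup}\,({-}\Wi)\,{=}\,\Rad$, so this equals $(\Rad{\setminus}\Qq^{r})\,{\cup}\,\Qq^{r}\,{=}\,\Rad$. Thus $\qt_{[\wt]}\,{=}\,\qt_{P}$ is parabolic, and by construction $\qt\,{\cap}\,\wt\,{\subseteq}\,\qt_{[\wt]}\,{\subseteq}\,\qt$.

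I would then settle minimality and the nilradical. Any parabolic $\qt'$ with $\qt\,{\cap}\,\wt\,{\subseteq}\,\qt'\,{\subseteq}\,\qt$ contains $\hg$, hence is $\hg$-regular with parabolic root set $P'$ satisfying $\Qq\,{\cap}\,\Wi\,{\subseteq}\,P'\,{\subseteq}\,\Qq$; for each $\alphaup\,{\in}\,\Qq^{n}$ one has $-\alphaup\,{\in}\,\Qc$, so $-\alphaup\,{\notin}\,\Qq\,{\supseteq}\,P'$, and parabolicity of $P'$ forces $\alphaup\,{\in}\,P'$, giving $\Qq^{n}\,{\subseteq}\,P'$ and therefore $P\,{\subseteq}\,P'$. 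This proves $\qt_{[\wt]}$ is the smallest such parabolic. For the nilradical I would compute $P^{n}\,{=}\,\{\alphaup\,{\in}\,P\,{\mid}\,{-}\alphaup\,{\notin}\,P\}$ root by root: every $\alphaup\,{\in}\,\Qq^{n}$ lies in $P^{n}$, since $-\alphaup\,{\in}\,\Qc$ is disjoint from $P\,{\subseteq}\,\Qq$; and for $\alphaup\,{\in}\,\Qq^{r}\,{\cap}\,\Wi$ one has $-\alphaup\,{\in}\,\Qq^{r}$, so $-\alphaup\,{\in}\,P$ iff $-\alphaup\,{\in}\,\Wi$, i.e. $\alphaup\,{\in}\,P^{n}$ iff $\alphaup\,{\in}\,\Wi^{n}$. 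Hence $P^{n}\,{=}\,\Qq^{n}\,{\cup}\,(\Qq^{r}\,{\cap}\,\Wi^{n})\,{=}\,\Qq^{n}\,{\cup}\,(\Qq\,{\cap}\,\Wi^{n})$, which is precisely the root set of $\qt^{n}\,{+}\,\wt^{n}\,{\cap}\,\qt$.

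The step I expect to be the main obstacle is the very first one, namely producing a Cartan subalgebra common to $\qt$ and $\wt$ (equivalently, a common Cartan in $\bt_{1}\,{\cap}\,\bt_{2}$); once $\hg$ is fixed, the remaining assertions are routine bookkeeping with the parabolic sets $\Qq$, $\Wi$ and the grading $\chiup_{\Qq}$, relying only on $-\Qq^{n}\,{=}\,\Qc$, the closedness of $\Qq^{n}$ and of $\Wi$, and $\Wi\,{\cup}\,({-}\Wi)\,{=}\,\Rad$.
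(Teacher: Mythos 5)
Your proof is correct and follows essentially the same route as the paper: reduce to a common Cartan subalgebra $\hg\,{\subseteq}\,\qt\,{\cap}\,\wt$ and translate everything into the combinatorics of the parabolic root sets $\Qq$ and $\Wi$, with minimality obtained by forcing $\Qq^{n}$ into the root set of any intermediate parabolic. The only divergence is local: the paper gets parabolicity of $(\Qq\,{\cap}\,\Wi)\,{\cup}\,\Qq^{n}$ in one stroke by exhibiting it as $\{\alphaup\,{\mid}\,\chiup_{\qt}(\alphaup)\,{+}\,\epsilonup\,\chiup_{\wt}(\alphaup)\,{\geq}\,0\}$ for small $\epsilonup\,{>}\,0$, where you verify closedness and $P\,{\cup}\,({-}P)\,{=}\,\Rad$ by hand, and your explicit root-by-root computation of the nilradical spells out a step the paper leaves implicit.
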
 
\begin{proof}
 The intersection $\qt\,{\cap}\,\wt$ contains a 
 Cartan subalgebra $\hg$ of $\gt$
 (see e.g. \cite[Ch.8,\S{3},Prop.10]{Bou82}). Hence we can describe $\qt$ and $\wt$
 by using their parabolic sets of roots $\Qq$ and $\mathpzc{W}$ in $\Rad\,{=}\,\Rad(\gt,\hg)$:
\begin{equation*}
 \Qq\,{=}\,\{\alphaup\,{\in}\,\Rad\mid \chiup_{\qt}(\alphaup)\geq{0}\},\;\;
 \mathpzc{W}\,{=}\,\{\alphaup\,{\in}\,\Rad\mid \chiup_{\wt}(\alphaup)\geq{0}\}.
\end{equation*}
If $0\,{<}\,\epsilonup\,{<}\,(\min_{\alphaup\in\Qq^{n}}|\chiup_{\qt}(\alphaup))|/
(\max_{\alphaup\,{\in}\,\Rad}|\chiup_{\wt}(\alphaup)|)$, then $ \qt_{[\wt]}$
is the parabolic subalgebra associated to the parabolic set 
\begin{equation*}
 \{\alphaup\,{\in}\,\Rad\mid \chiup_{\qt}(\alphaup)\,{+}\,\epsilonup\,{\cdot}\,\chiup_{\wt}(\alphaup)\,{\geq}\,0\}
 =(\Qq\cap\mathpzc{W})\cup\Qq^{n}.
\end{equation*}
The nilradical of every parabolic subalgebra $\qt'$ contained in $\qt$ contains $\qt^{n}$ 
and therefore, if we add the condition that $\qt\,{\cap}\,\wt$ is contained in $\qt'$, we obtain that
$\qt'\,{=}\, \qt_{[\wt]}$ is the minimal parabolic subalgebra with these properties.
\end{proof}
\begin{dfn}(cf. \cite[\S{9}]{Wolf69})
 Let $\sigmaup\,{\in}\,\Inv^{\vee}_{\C}(\gt)$. 
 We call $\sigmaup$-polarized a parabolic 
 subalgebra of $\gt$ admitting a $\sigmaup$-invariant reductive Levi factor.\par
A parabolic $CR$-algebra $(\gs,\qt)$ is called \emph{polarized} if $\qt$ is $\sigmaup$-polarized.
\end{dfn} 
\begin{ntz}
 For a parabolic $\qt$ of $\gt$ and $\sigmaup\,{\in}\,\Inv^{\vee}_{\C}(\gt)$
 we set $\qt_{[\sigmaup]}{=}\,\qt_{[\sigmaup(\qt)]}$.
\end{ntz}
\begin{rmk}
 Any Borel subalgebra of $\gt$ is $\sigmaup$-polarized for every $\sigmaup$. 
\end{rmk}
\begin{prop}\label{p5.11}
 Let $(\gs,\qt)$ be a parabolic $CR$-algebra. Then $\qt_{[\sigmaup]}$ is the smallest
 parabolic subalgebra in $\Pt(\gt,\gs,\qt\,{\cap}\,\gs)$
and the largest $\sigmaup$-polarized
 parabolic subalgebra of $\qt$.
\end{prop}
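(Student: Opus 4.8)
The plan is to fix a Cartan subalgebra adapted to $(\gs,\qt)$ and to reduce both extremal properties to the combinatorics of the associated pair $(\Qq,\stt)$, reading them off from Lemma~\ref{l5.9} and the canonical $\Z$-grading of Proposition~\ref{p2.5}. First I would choose an adapted $\hst$, with complexification $\hg\subseteq\qt\cap\sigmaup(\qt)$, root system $\Rad=\Rad(\gt,\hg)$, induced involution $\stt$, and the parabolic set $\Qq$ of $\qt$ together with its canonical grading $\chiup_{\Qq}$. Since $\sigmaup(\gt^{\alphaup})=\gt^{\stt(\alphaup)}$, the conjugate $\sigmaup(\qt)$ is the $\hg$-regular parabolic with root set $\stt(\Qq)$ and grading $\chiup_{\Qq}\circ\stt$. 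Running the perturbation argument from the proof of Lemma~\ref{l5.9} with $\wt=\sigmaup(\qt)$ then shows that $\qt_{[\sigmaup]}$ is the parabolic subalgebra attached to the grading $\chiup_{\Qq}+\epsilonup\,(\chiup_{\Qq}\circ\stt)$ for small $\epsilonup>0$; equivalently its parabolic set is $(\Qq\cap\stt(\Qq))\cup\Qq^{n}$.

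With this description the membership $\qt_{[\sigmaup]}\in\Pt(\gt,\gs,\qt\cap\gs)$ is a short computation: the parabolic set of $\qt_{[\sigmaup]}\cap\sigmaup(\qt_{[\sigmaup]})$ equals $[(\Qq\cap\stt(\Qq))\cup\Qq^{n}]\cap[(\Qq\cap\stt(\Qq))\cup\stt(\Qq^{n})]=(\Qq\cap\stt(\Qq))\cup(\Qq^{n}\cap\stt(\Qq^{n}))=\Qq\cap\stt(\Qq)$, the last step using $\Qq^{n}\cap\stt(\Qq^{n})\subseteq\Qq\cap\stt(\Qq)$. Hence $\qt_{[\sigmaup]}\cap\sigmaup(\qt_{[\sigmaup]})=\qt\cap\sigmaup(\qt)$, and taking $\sigmaup$-fixed points gives $\qt_{[\sigmaup]}\cap\gs=\qt\cap\gs$. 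For minimality I would note that every $\qt'$ in $\Pt(\gt,\gs,\qt\cap\gs)$ satisfies $\qt\cap\sigmaup(\qt)=\qt'\cap\sigmaup(\qt')\subseteq\qt'$; for those $\qt'$ weaker than $\qt$ (i.e. $\qt'\subseteq\qt$, the range relevant to classifying the $\gs$-invariant parabolic $CR$-structures on $\gs/(\qt\cap\gs)$ dominated by $\qt$, since any structure stronger than a parabolic one is again parabolic) Lemma~\ref{l5.9}, characterising $\qt_{[\sigmaup]}$ as the smallest parabolic between $\qt\cap\sigmaup(\qt)$ and $\qt$, forces $\qt_{[\sigmaup]}\subseteq\qt'$. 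This exhibits $\qt_{[\sigmaup]}$ as the weakest such parabolic $CR$-structure, matching the heading that polarization yields weakest parabolic $CR$-structures.

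For polarization I would compute the reductive Levi factor of $\qt_{[\sigmaup]}$: its root set is $\big[(\Qq\cap\stt(\Qq))\cup\Qq^{n}\big]^{r}=\Qq^{r}\cap\stt(\Qq^{r})$, the roots of $\Qq^{n}$ dropping out because $-\Qq^{n}=\Qc$ is disjoint from $\Qq$. This set is $\stt$-invariant, so the Levi factor $\qt^{r}\cap\sigmaup(\qt^{r})=\hg\oplus\sum_{\alphaup\in\Qq^{r}\cap\stt(\Qq^{r})}\gt^{\alphaup}$ is $\sigmaup$-invariant, which proves that $\qt_{[\sigmaup]}$ is $\sigmaup$-polarized.

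I expect the \emph{maximality} in the polarized statement to be the main obstacle. Given a $\sigmaup$-polarized parabolic $\qt''\subseteq\qt$ with $\sigmaup$-invariant reductive Levi factor $\lt''$, one immediately gets $\lt''=\sigmaup(\lt'')\subseteq\qt''\cap\sigmaup(\qt'')$, whence $\qt''=\lt''\oplus(\qt'')^{n}$, $(\qt'')_{[\sigmaup]}=\qt''$, and $\lt''\subseteq\qt\cap\sigmaup(\qt)\subseteq\qt_{[\sigmaup]}$. The crux is then to show $(\qt'')^{n}\subseteq\qt_{[\sigmaup]}$, i.e.\ that no nilradical root of $\qt''$ lies in $\Qq^{r}\setminus\stt(\Qq)$. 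I would attempt this by picking a grading element $\zeta\in\hr$ whose centraliser is $\lt''$ (so that $\sigmaup$-stability of $\lt''$ puts $\sigmaup(\zeta)$ in the same facet as $\zeta$) and the grading element $\xi$ of $\qt$, and proving $\alphaup(\zeta)\ge0\Rightarrow\alphaup(\xi+\epsilonup\,\sigmaup(\xi))\ge0$; the subtle case is $\alphaup\in\Qq^{r}$ with $\alphaup(\zeta)\ge0$, where one must use the $\stt$-stability of the zero–weight space of $\zeta$ to force $\stt(\alphaup)\in\Qq$. Controlling precisely this case is where the genuine difficulty lies, and I anticipate it requires the explicit $\hg$-regular structure of polarized parabolics (and possibly a reduction, via the first assertion, to competitors sharing the isotropy $\qt\cap\gs$) rather than grading inequalities alone.
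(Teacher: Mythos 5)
Your identification of $\qt_{[\sigmaup]}$ as the $\hg$-regular parabolic with root set $(\Qq\cap\stt(\Qq))\cup\Qq^{n}$, the computation $\qt_{[\sigmaup]}\cap\sigmaup(\qt_{[\sigmaup]})=\qt\cap\sigmaup(\qt)$, the minimality among the $\qt'\in\Pt(\gt,\gs,\qt\cap\gs)$ with $\qt'\subseteq\qt$, and the $\stt$-invariance of $\Qq^{r}\cap\stt(\Qq^{r})$ are all correct and agree with what the paper packages into the Levi--Chevalley decomposition \eqref{e4.38a}. The genuine gap is that you never prove the maximality assertion: you correctly isolate it as the crux, propose a grading-element strategy, and stop exactly where that strategy becomes delicate. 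As written, your proposal establishes only three of the four claims.

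The missing step is, however, much shorter than you anticipate, and the key is precisely the parenthetical you offer at the end: one must (and may) restrict to competitors sharing the isotropy, i.e.\ to $\sigmaup$-polarized parabolics $\qt''$ with $\qt\cap\sigmaup(\qt)\subseteq\qt''\subseteq\qt$. Such a $\qt''$ is $\hg$-regular with root set $\Qq''$ satisfying $\Qq\cap\stt(\Qq)\subseteq\Qq''\subseteq\Qq$ and $\stt((\Qq'')^{r})=(\Qq'')^{r}$. If some $\alphaup\in\Qq''$ lay outside $\Qq_{[\stt]}=(\Qq\cap\stt(\Qq))\cup\Qq^{n}$, then $\alphaup\in\Qq^{r}$ and $\stt(\alphaup)\in\Qq^{c}$; since $\Qq^{c}={-}\Qq^{n}$, we get $\stt({-}\alphaup)\in\Qq^{n}$, so $-\alphaup\in\Qq\cap\stt(\Qq)\subseteq\Qq''$, hence $\alphaup\in(\Qq'')^{r}$ and, by $\stt$-invariance, $\stt(\alphaup)\in\Qq''\subseteq\Qq$ --- a contradiction. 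No grading elements are needed. Note that the restriction is not cosmetic: without it both extremal statements fail. For $\su(1,2)$ acting on the full flag of $\C^{3}$, with $\stt(\e_{1})={-}\e_{1}$, $\stt(\e_{2})={-}\e_{3}$ on $\Rad(\textsc{A}_{2})$, the two Borel subalgebras with positive systems $\{\e_{1}{-}\e_{2},\,\e_{2}{-}\e_{3},\,\e_{1}{-}\e_{3}\}$ and $\{\e_{2}{-}\e_{1},\,\e_{2}{-}\e_{3},\,\e_{3}{-}\e_{1}\}$ are both $\sigmaup$-polarized, have the same isotropy $\hg\oplus\gt^{\e_{2}-\e_{3}}$, and neither contains the other; so both ``smallest in $\Pt(\gt,\gs,\qt\cap\gs)$'' and ``largest $\sigmaup$-polarized parabolic subalgebra of $\qt$'' must be read relative to the interval $\qt\cap\sigmaup(\qt)\subseteq\qt''\subseteq\qt$, exactly as your restricted minimality argument already does. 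The paper's own proof compresses the maximality into the single remark that a $\sigmaup$-invariant reductive Levi factor of such a $\qt''$ lies in a reductive Levi factor of $\qt\cap\sigmaup(\qt)$, which is this same restriction in disguise; your instinct that the unrestricted statement needs more care is therefore well founded, but the fix is the two-line root argument above, not a new grading construction.
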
 
\begin{proof}
Having chosen a Cartan subalgebra $\hst$ of  $\qt\,{\cap}\,\gs$, its complexification $\hg$ is  a Cartan subalgebra
of both $\qt$ and $\sigmaup(\qt)$. By Lemma\,\ref{l5.9} we obtain 
a Levi-Chevalley decomposition
\begin{equation}\label{e4.38a}
 \qt_{[\sigmaup]}
 =\qt^{r}\,{\cap}\,\sigmaup(\qt^{r})\,{\oplus}\,\left(\qt^{n}\,{\oplus}\,(\sigmaup(\qt^{n})\,{\cap}\qt^{r})
 \right)
\end{equation}
in which the first addendum is a $\sigmaup$-invariant
reductive Levi factor. This shows that $(\gs,\qt_{[\sigmaup]})$ is polarized.
Any $\sigmaup$-invariant reductive Levi factor of a $\sigmaup${-}polarized 
parabolic subalgebra of $\qt$ is contained in 
a reductive Levi factor of $\qt\,{\cap}\,\sigmaup(\qt)$: this implies the maximality of $\qt_{[\sigmaup]}$.
Every parabolic subalgebra in $\Pt(\gt,\gs,\qt\,{\cap}\,\gs)$ is $\hg$-regular 
and contains the reductive subalgebra $\qt^{r}{\cap}\,\sigmaup(\qt^{r})$. Thus \eqref{e4.38a}
shows that $\qt_{[\sigmaup]}$ is minimal in $\Pt(\gt,\gs,\qt\,{\cap}\,\gs)$.
\end{proof} 

\begin{dfn}
 We call $(\gs,\qt_{[\sigmaup]})$ the $\sigmaup$-\emph{polarization} of $(\gs,\qt)$. 
\end{dfn} 
\begin{rmk} By Remark\,\ref{r4.16},
 the $CR$-structure of $(\gs,\qt_{[\sigmaup]})$ is isomorphic to the Lie subalgebra
\begin{equation*}
 \qt_{[\sigmaup]}\cap\sigmaup(\qt_{[\sigmaup]}^{c})=\qt^{n}\cap\sigmaup(\qt^{c})
 \end{equation*}
 and the complexification of the Reeb subalgebra to 
\begin{equation*}
 \qt_{[\sigmaup]}^{c}\cap\sigmaup(\qt_{[\sigmaup]}^{c})
 =\qt^{c}\cap\sigmaup(\qt^{c})\oplus\qt^{r}\cap\sigmaup(\qt^{c})
 \oplus\qt^{c}\cap\sigmaup(\qt^{r}).
 \end{equation*}
\end{rmk}
Polarization may be computed by using the root presentation of \S\ref{s4.3}.
It is convenient in this context to reformulate the notions in terms of parabolic sets and
involutions of the root system. 
\begin{dfn} Let $\stt$ be an involution of the root system $\Rad$ and $\Qq\,{\subseteq}\,\Rad$.
We say that $(\Qq\, ,\stt)$ is \emph{$\stt$-polarized} if $\Qq^{r}\,{=}\,\stt(\Qq^{r})$. 
For $\Qq\,{\subseteq}\,\Rad$ we set 
\begin{equation}\Qq_{\,\;[\stt]}\,{\coloneqq}\,\big(\Qq\,{\cap}\stt(\Qq)\big)\,{\cup}\,\Qq^{n}\end{equation}
and call  $(\Qq_{\,\;[\stt]},\stt)$ the \emph{polarized} of $(\Qq\, ,\stt)$. 
 \end{dfn} 
\begin{lem} Let $\Qq\,{\in}\Pcr(\Rad)$, $\qt$ the corresponding parabolic subalgebra of $\gt$ and 
$\stt$ an involution of $\Rad$.
 The following are equivalent: 
\begin{enumerate}
 \item $(\Qq\, ,\stt)$ is polarized;
 \item we can find $\sigmaup\,{\in}\,\Invs$ such that $(\gs,\qt)$ is polarized;
 \item for all $\sigmaup\,{\in}\,\Invs$, the $CR$-algebra $(\gs,\qt)$ is polarized.\qed
\end{enumerate}
\end{lem}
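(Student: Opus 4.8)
The plan is to route everything through Proposition~\ref{p5.11}, which identifies $\qt_{[\sigmaup]}$ as the largest $\sigmaup$-polarized parabolic subalgebra of $\qt$, and then to exploit the fact that $\qt_{[\sigmaup]}$ depends on $\sigmaup$ only through the induced involution $\stt$. Recall that $\Invs$ consists precisely of the anti-$\C$-linear involutions of $\gt$ inducing the given $\stt$, and that this set is nonempty, so the three statements all concern liftings of one fixed $\stt$.

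First I would record the elementary fact that, for any $\sigmaup\,{\in}\,\Invs$, one has $\sigmaup(\gt^{\alphaup})\,{=}\,\gt^{\stt(\alphaup)}$ for every root $\alphaup$. Consequently $\sigmaup(\qt)\,{=}\,\hg\,{\oplus}\,{\sum}_{\alphaup\in\stt(\Qq)}\gt^{\alphaup}$ is again $\hg$-regular and parabolic, and $\qt\,{\cap}\,\sigmaup(\qt)\,{=}\,\hg\,{\oplus}\,{\sum}_{\alphaup\in\Qq\cap\stt(\Qq)}\gt^{\alphaup}$. Applying Lemma~\ref{l5.9} with $\wt\,{=}\,\sigmaup(\qt)$ then gives
\[
\qt_{[\sigmaup]}=(\qt\,{\cap}\,\sigmaup(\qt))+\qt^{n}=\hg\,{\oplus}\,{\sum}_{\alphaup\in\Qq_{\,\;[\stt]}}\gt^{\alphaup},
\qquad \Qq_{\,\;[\stt]}=(\Qq\,{\cap}\,\stt(\Qq))\,{\cup}\,\Qq^{n}.
\]
The decisive observation is that the right-hand side is manifestly independent of which lift $\sigmaup$ of $\stt$ was chosen; that is, $\qt_{[\sigmaup]}\,{=}\,\qt_{\Qq_{[\stt]}}$ is determined by $\stt$ alone.

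Next I would translate the polarization condition combinatorially. By Proposition~\ref{p5.11}, $(\gs,\qt)$ is polarized for a given $\sigmaup$ exactly when $\qt$ is $\sigmaup$-polarized, i.e.\ when $\qt$ coincides with the largest $\sigmaup$-polarized parabolic subalgebra it contains; hence $(\gs,\qt)$ is polarized iff $\qt\,{=}\,\qt_{[\sigmaup]}$, iff $\Qq\,{=}\,\Qq_{\,\;[\stt]}$, using the bijection between $\hg$-regular parabolic subalgebras and parabolic subsets of $\Rad$. Since $\Qq_{\,\;[\stt]}\,{\subseteq}\,\Qq$ always, the equality $\Qq\,{=}\,\Qq_{\,\;[\stt]}$ is equivalent to $\Qq^{r}\,{\subseteq}\,\stt(\Qq)$. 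Here I would invoke the symmetry $\Qq^{r}\,{=}\,{-}\Qq^{r}$: for $\alphaup\,{\in}\,\Qq^{r}$ both $\pm\alphaup\,{\in}\,\stt(\Qq)$, whence $\stt(\alphaup)\,{\in}\,\Qq^{r}$, so $\stt(\Qq^{r})\,{\subseteq}\,\Qq^{r}$ and, by finiteness together with $\stt^{2}\,{=}\,\id$, $\Qq^{r}\,{=}\,\stt(\Qq^{r})$; the reverse implication is immediate. Thus $\qt\,{=}\,\qt_{[\sigmaup]}$ iff $(\Qq\, ,\stt)$ is $\stt$-polarized.

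Finally I would assemble the equivalences. Because $\qt_{[\sigmaup]}$ depends only on $\stt$, the property ``$(\gs,\qt)$ is polarized'' holds for one lift $\sigmaup\,{\in}\,\Invs$ iff it holds for every lift, and in either case iff $(\Qq\, ,\stt)$ is polarized. This gives $(1)\,{\Rightarrow}\,(3)$ at once; $(3)\,{\Rightarrow}\,(2)$ is trivial since $\Invs\,{\neq}\,\emptyset$; and $(2)\,{\Rightarrow}\,(1)$ follows by applying the last equivalence to the chosen $\sigmaup$. The only genuine subtlety is the symmetric-set argument promoting the one-sided inclusion $\Qq^{r}\,{\subseteq}\,\stt(\Qq)$ to the equality $\Qq^{r}\,{=}\,\stt(\Qq^{r})$; the remaining steps are the bookkeeping that identifies $\qt_{[\sigmaup]}$ with the parabolic set $\Qq_{\,\;[\stt]}$ and records its independence of the lift.
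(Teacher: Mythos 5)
Your proof is correct. The paper itself offers no argument for this lemma (it is stated with a \qed), so there is nothing to clash with; what you supply is a sound formalization. Your route through Proposition~\ref{p5.11} and the identity $\qt_{[\sigmaup]}=(\qt\cap\sigmaup(\qt))+\qt^{n}=\qt_{\Qq_{[\stt]}}$ is valid, and the two nontrivial steps check out: the reduction of ``$(\gs,\qt)$ polarized'' to ``$\qt=\qt_{[\sigmaup]}$'' is legitimate because $\qt_{[\sigmaup]}$ is the \emph{largest} $\sigmaup$-polarized parabolic contained in $\qt$ (this is where the possibility of a non-$\hg$-regular invariant Levi factor is absorbed), and the promotion of $\Qq^{r}\subseteq\stt(\Qq)$ to $\Qq^{r}=\stt(\Qq^{r})$ via the symmetry $\Qq^{r}={-}\Qq^{r}$ is exactly right. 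The argument the authors presumably had in mind is slightly more direct: for any $\sigmaup\in\Invs$ one has $\sigmaup(\gt^{\alphaup})=\gt^{\stt(\alphaup)}$, so $\sigmaup(\qt^{r})=\hg\oplus\sum_{\alphaup\in\stt(\Qq^{r})}\gt^{\alphaup}$, and hence the $\sigmaup$-invariance of the $\hg$-regular reductive Levi factor $\qt^{r}$ is literally the condition $\stt(\Qq^{r})=\Qq^{r}$, independently of the lift. That shortcut trades your appeal to the maximality statement in Proposition~\ref{p5.11} for a direct appeal to the conjugacy of reductive Levi factors when passing from ``some invariant Levi factor exists'' to ``$\qt^{r}$ is invariant''; either way the burden lands on the same structural fact, so the two arguments are of essentially equal depth.
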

\begin{prop}
 Let $\Qq\,{\in}\Pcr(\Rad)$, 
 $C\,{\in}\,\Cd(\Rad,\Qq)$, ${\Phi_{C}}\,{=}\,\Bz(C)\,{\cap}\,\Qq^{n}$ and $\stt$ an involution of
 $\Rad$. 
The 
following are equivalent 
\begin{align}
 \label{e4.38}& \text{$(\Qq\,,\stt)$ is polarized},\\
  \label{e4.39}& \supp_{C}(\stt(\alphaup))\cap\Phi_{C}\,{=}\,\emptyset,\;\;\forall\alphaup\,{\in}\,\Phi^{\vee}_{C},\\
  \label{e4.40} & \stt(\Qq^{n})\subset\Qq^{n}\cup\Qq^{c}.
\end{align}
\end{prop}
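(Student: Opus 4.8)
The plan is to pivot everything through the single reformulation that $(\Qq\,,\stt)$ is polarized precisely when the linear involution $\stt$ maps the symmetric set $\Qq^{r}$ onto itself, i.e.\ $\stt(\Qq^{r})\,{=}\,\Qq^{r}$. Because $\stt$ is an involution, this two-sided equality is equivalent to the one-sided inclusion $\stt(\Qq^{r})\,{\subseteq}\,\Qq^{r}$, and I will exploit this symmetry throughout. Thus I would prove the two equivalences $\eqref{e4.38}\,{\Leftrightarrow}\,\eqref{e4.40}$ and $\eqref{e4.38}\,{\Leftrightarrow}\,\eqref{e4.39}$ separately, each by a short set-theoretic manipulation.

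First I would collect the facts I need from the earlier sections: for an admissible $C\,{\in}\,\Cd(\Rad,\Qq)$ one has (Remark\,\ref{r1.7}) the description $\Qq^{r}\,{=}\,\{\alphaup\,{\in}\,\Rad\,{\mid}\,\supp_{C}(\alphaup)\,{\cap}\,\Phi_{C}\,{=}\,\emptyset\}\,{=}\,\Z[\Phi^{\vee}_{C}]\,{\cap}\,\Rad$; the partition $\Rad\,{=}\,\Qq^{r}\,{\cup}\,\Qq^{n}\,{\cup}\,\Qq^{c}$ into pairwise disjoint pieces; and, since $\Qq$ is parabolic, the identities $\Qq^{c}\,{=}\,{-}\Qq^{n}$ and $\Qq^{r}\,{=}\,{-}\Qq^{r}$.

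For $\eqref{e4.38}\,{\Leftrightarrow}\,\eqref{e4.40}$, I would note that $\Qq^{n}\,{\cup}\,\Qq^{c}$ is stable under negation, as $-(\Qq^{n}\,{\cup}\,\Qq^{c})\,{=}\,\Qq^{c}\,{\cup}\,\Qq^{n}$. Assuming \eqref{e4.40}, linearity of $\stt$ gives $\stt(\Qq^{c})\,{=}\,{-}\stt(\Qq^{n})\,{\subseteq}\,{-}(\Qq^{n}\,{\cup}\,\Qq^{c})\,{=}\,\Qq^{n}\,{\cup}\,\Qq^{c}$, so $\stt$ carries the finite set $\Qq^{n}\,{\cup}\,\Qq^{c}$ into itself; being an injective involution of $\Rad$ it then fixes this set, hence fixes its complement $\Qq^{r}$, which is \eqref{e4.38}. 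Conversely, $\stt(\Qq^{r})\,{=}\,\Qq^{r}$ forces $\stt(\Qq^{n}\,{\cup}\,\Qq^{c})\,{=}\,\Rad{\setminus}\stt(\Qq^{r})\,{=}\,\Qq^{n}\,{\cup}\,\Qq^{c}$, and in particular $\stt(\Qq^{n})\,{\subseteq}\,\Qq^{n}\,{\cup}\,\Qq^{c}$. For $\eqref{e4.38}\,{\Leftrightarrow}\,\eqref{e4.39}$, I would use that $\Phi^{\vee}_{C}\,{\subseteq}\,\Qq^{r}$ (each $\betaup\,{\in}\,\Phi^{\vee}_{C}$ has $\supp_{C}(\betaup)\,{=}\,\{\betaup\}$ disjoint from $\Phi_{C}$). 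If $(\Qq\,,\stt)$ is polarized then every $\alphaup\,{\in}\,\Phi^{\vee}_{C}\,{\subseteq}\,\Qq^{r}$ satisfies $\stt(\alphaup)\,{\in}\,\Qq^{r}$, i.e.\ $\supp_{C}(\stt(\alphaup))\,{\cap}\,\Phi_{C}\,{=}\,\emptyset$, which is \eqref{e4.39}. Conversely \eqref{e4.39} says exactly that $\stt(\betaup)\,{\in}\,\Z[\Phi^{\vee}_{C}]$ for each $\betaup\,{\in}\,\Phi^{\vee}_{C}$; writing a general $\alphaup\,{\in}\,\Qq^{r}\,{=}\,\Z[\Phi^{\vee}_{C}]\,{\cap}\,\Rad$ as $\alphaup\,{=}\,{\sum}_{\betaup\in\Phi^{\vee}_{C}}k_{\betaup}\betaup$ and applying the linear map $\stt$ yields $\stt(\alphaup)\,{\in}\,\Z[\Phi^{\vee}_{C}]\,{\cap}\,\Rad\,{=}\,\Qq^{r}$, so $\stt(\Qq^{r})\,{\subseteq}\,\Qq^{r}$ and $(\Qq\,,\stt)$ is polarized.

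The argument is essentially formal, relying only on linearity of the root-system involution and the bookkeeping of the partition of $\Rad$. The one step requiring genuine care is the converse in $\eqref{e4.38}\,{\Leftrightarrow}\,\eqref{e4.39}$: passing from control of $\stt$ on the generators $\Phi^{\vee}_{C}$ to control on all of $\Qq^{r}$. This is precisely where the $\Z$-span characterization $\Qq^{r}\,{=}\,\Z[\Phi^{\vee}_{C}]\,{\cap}\,\Rad$ is indispensable, since without it one could only verify the polarization condition on simple roots; I expect this to be the main (if mild) obstacle, and it is the reason condition \eqref{e4.39} is phrased in terms of $\Phi^{\vee}_{C}$ rather than all of $\Qq^{r}$.
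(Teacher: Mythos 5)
Your proof is correct and takes essentially the same route as the paper, which simply asserts that all three conditions are equivalent to $\stt(\Qq^{r})\,{=}\,\Qq^{r}$; you supply the details (the $\Z[\Phi^{\vee}_{C}]$-span characterization of $\Qq^{r}$, the identity $\Qq^{c}\,{=}\,{-}\Qq^{n}$, and the finiteness/involution argument) that the paper leaves implicit.
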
 
\begin{proof} 
We have \eqref{e4.38}$\Leftrightarrow$\eqref{e4.39}$\Leftrightarrow$\eqref{e4.40},
because all these conditions are equivalent to $\stt(\Qq^{r})\,{=}\,\Qq^{r}$.
\end{proof}
\begin{prop}\label{p4.70}
 Let $(\gs,\qt)$ be an effective parabolic $CR$-algebra and $\Rad$ the root system of the
 complexification of an adapted Cartan subalgebra $\hst$. 
\begin{enumerate}
 \item An $S$-fit chamber for $(\Qq\, ,\stt)$ is also $S$-fit for its polarized $(\Qq_{\,\;[\stt]},\stt)$;
 \item let $C$ be any $S$-fit chamber for $(\Qq\, ,\stt)$ and $\Phi_{C}\,{=}\,\Bz(C)\,{\cap}\,\Qq^{n}$. 
 Then $\Qq_{\,\;[\stt]}\,{=}\,\Qq_{\,\;\Psi(C)}$ with 
\begin{equation*}
 \Psi_{C}\,{=}\,\Phi_{C}\cup\{\alphaup\,{\in}\,\Phi^{\vee}_{C}\mid \supp_{C}(\stt(\alphaup))\,{\cap}\,
 \Phi_{C}\,{\neq}\,\emptyset\}.
\end{equation*}
\end{enumerate}
\end{prop}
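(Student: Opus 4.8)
The plan is to reduce everything to a Levi--Chevalley description of the polarized set $\Qq_{\,\;[\stt]}\,{=}\,(\Qq\,{\cap}\,\stt(\Qq))\,{\cup}\,\Qq^{n}$. First I would record, purely combinatorially from this definition, the two identities
\begin{equation*}
 (\Qq_{\,\;[\stt]})^{r}\,{=}\,\Qq^{r}\cap\stt(\Qq^{r}),\qquad
 (\Qq_{\,\;[\stt]})^{n}\,{=}\,\Qq^{n}\cup(\Qq^{r}\cap\stt(\Qq^{n})).
\end{equation*}
Both follow by splitting $\Qq\,{\cap}\,\stt(\Qq)$ into the four pieces $\Qq^{r}\cap\stt(\Qq^{r})$, $\Qq^{r}\cap\stt(\Qq^{n})$, $\Qq^{n}\cap\stt(\Qq^{r})$, $\Qq^{n}\cap\stt(\Qq^{n})$ (the last three lying in $\Qq^{n}$) and checking, for $\alphaup$ versus $-\alphaup$, that an element of $\Qq^{n}$ or of $\Qq^{r}\cap\stt(\Qq^{n})$ always has its opposite outside $\Qq_{\,\;[\stt]}$, whereas $\Qq^{r}\cap\stt(\Qq^{r})$ is symmetric. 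This simultaneously re-proves that $(\Qq_{\,\;[\stt]},\stt)$ is polarized and, via the root-set form of Lemma~\ref{l5.9}, that $\Qq_{\,\;[\stt]}$ is parabolic; the whole argument stays at the combinatorial level of $(\Qq\, ,\stt)$.

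For (1), let $C$ be $S$-fit for $(\Qq\, ,\stt)$. I would first check $\Rad^{+}(C)\,{\subseteq}\,\Qq_{\,\;[\stt]}$: a root in $\Rad^{+}(C)\,{\cap}\,\Qq^{n}$ lies in $\Qq^{n}\,{\subseteq}\,\Qq_{\,\;[\stt]}$, while for $\alphaup\,{\in}\,\Rad^{+}(C)\,{\cap}\,\Qq^{r}$ the three cases give $\stt(\alphaup)\,{\in}\,\Qq$ — real ($\stt(\alphaup)\,{=}\,\alphaup$), imaginary ($\stt(\alphaup)\,{=}\,{-}\alphaup\,{\in}\,\Qq^{r}$), and complex, which is exactly $S$-fitness by Lemma~\ref{l4.20} — so $\alphaup\,{\in}\,\Qq\,{\cap}\,\stt(\Qq)\,{\subseteq}\,\Qq_{\,\;[\stt]}$. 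Hence $C\,{\in}\,\Cd(\Rad,\Qq_{\,\;[\stt]})$. Then, since $(\Qq_{\,\;[\stt]})^{r}\,{=}\,\Qq^{r}\cap\stt(\Qq^{r})\,{\subseteq}\,\Qq^{r}$, one has $(\Qq_{\,\;[\stt]})^{r}\,{\cap}\,\Rad^{+}(C)\,{\cap}\,\Rad^{\stt}_{\,\;\star}\,{\subseteq}\,\Qq^{r}\,{\cap}\,\Rad^{+}(C)\,{\cap}\,\Rad^{\stt}_{\,\;\star}$, so the $S$-fitness criterion of Lemma~\ref{l4.20} for $(\Qq\, ,\stt)$ transfers verbatim to $(\Qq_{\,\;[\stt]},\stt)$.

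For (2), since $C$ is admissible for the parabolic set $\Qq_{\,\;[\stt]}$, Remark~\ref{r1.7} together with Lemma~\ref{l2.5} gives $\Qq_{\,\;[\stt]}\,{=}\,\Qq_{\,\;\Psi_{C}}$ for the unique $\Psi_{C}\,{=}\,\Bz(C)\,{\cap}\,(\Qq_{\,\;[\stt]})^{n}$. Intersecting the decomposition of $(\Qq_{\,\;[\stt]})^{n}$ with $\Bz(C)$ produces $\Phi_{C}$ from $\Qq^{n}$ (as $\Phi_{C}\,{=}\,\Bz(C)\,{\cap}\,\Qq^{n}$) and $\{\alphaup\,{\in}\,\Phi^{\vee}_{C}\mid \stt(\alphaup)\,{\in}\,\Qq^{n}\}$ from $\Qq^{r}\cap\stt(\Qq^{n})$ (as $\Phi^{\vee}_{C}\,{=}\,\Bz(C)\,{\cap}\,\Qq^{r}$). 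The remaining task, and the main obstacle, is to match the condition $\stt(\alphaup)\,{\in}\,\Qq^{n}$ with $\supp_{C}(\stt(\alphaup))\,{\cap}\,\Phi_{C}\,{\neq}\,\emptyset$ for $\alphaup\,{\in}\,\Phi^{\vee}_{C}$. Using $\Qq^{n}\,{=}\,\{\betaup\,{\in}\,\Rad^{+}(C)\mid \supp_{C}(\betaup)\,{\cap}\,\Phi_{C}\,{\neq}\,\emptyset\}$ from Lemma~\ref{l2.5}, the forward implication is immediate, and the converse needs $\stt(\alphaup)\,{\in}\,\Rad^{+}(C)$: this is automatic for $\alphaup$ real or imaginary, where $\stt(\alphaup)\,{=}\,{\pm}\alphaup$ has support $\{\alphaup\}\,{\subseteq}\,\Phi^{\vee}_{C}$ disjoint from $\Phi_{C}$ and $\stt(\alphaup)\,{\notin}\,\Qq^{n}$ (both sides false), and for $\alphaup$ complex it is precisely the $S$-fitness of $C$ via Lemma~\ref{l4.20}. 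Collecting the cases yields $\Psi_{C}\,{=}\,\Phi_{C}\,{\cup}\,\{\alphaup\,{\in}\,\Phi^{\vee}_{C}\mid \supp_{C}(\stt(\alphaup))\,{\cap}\,\Phi_{C}\,{\neq}\,\emptyset\}$, as asserted.
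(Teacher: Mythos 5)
Your proposal is correct and follows essentially the same route as the paper: both rest on the identity $(\Qq_{\,\;[\stt]})^{n}\,{=}\,\Qq^{n}\cup\{\alphaup\,{\in}\,\Qq^{r}\,{\mid}\,\stt(\alphaup)\,{\in}\,\Qq^{n}\}$, use $S$-fitness (via Lemma~\ref{l4.20}) to place the newly added nilradical roots in $\Rad^{+}(C)$, and read off $\Psi_{C}$ as $\Bz(C)\,{\cap}\,(\Qq_{\,\;[\stt]})^{n}$. You simply spell out in full the admissibility check and the case analysis behind the equivalence $\stt(\alphaup)\,{\in}\,\Qq^{n}\Leftrightarrow\supp_{C}(\stt(\alphaup))\,{\cap}\,\Phi_{C}\,{\neq}\,\emptyset$, which the paper leaves as ``a straightforward consequence.''
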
 
\begin{proof} Let $\Qq\, , \, \mathpzc{P}$ be the parabolic sets of $\qt,$
$\qt_{[\sigmaup]}$, respectively. By \eqref{e4.39} we have 
\begin{equation*}
 \mathpzc{P}^{n}=\Qq^{n}\cup\{\alphaup\,{\in}\,\Qq^{r}\,{\mid}\,\stt(\alphaup)\,{\in}\,\Qq^{n}\}.
\end{equation*}
We note  that $\mathpzc{P}\,{\cap}\,\Rad_{\,\;\bullet}^{\stt}\,{\subseteq}\,\Qq^{n}$. It follows that, 
if $C$ is $S$-fit for $(\gs,\qt)$, then $\{\alphaup\,{\in}\,\Qq^{r}\,{\mid}\,\stt(\alphaup)\,{\in}\,\Qq^{n}\}
\,{\subseteq}\,\Rad^{+}(C)$, proving $(1)$. Finally we observe that $(2)$ is a straightforward
consequence of $(1)$.
\end{proof}

\begin{rmk} The 
condition that $\stt(\Phi_{C})\,{\cap}\,\Qq^{r}\,{=}\,\emptyset$ is necessary, but not sufficient for
polarization. This condition is in fact satisfied by  the $CR$-algebra $(\gs,\qt)$, with
$\gs\,{\simeq}\,\su(1,3)$, 
described by the cross-marked
$\Sigma$-diagram \par\smallskip
\begin{equation*}
  \xymatrix @M=0pt @R=2pt @!C=8pt{ \medcirc \ar@{-}[r] \ar@/^10pt/@{<->}[rr] & \medbullet
 \ar@{-}[r] &\medcirc\\
 &\times}
\end{equation*}\par\smallskip\noindent
which is not polarized because  $\stt(\Bz(C){\setminus}\Phi_{C})\,{\subseteq}\,\Qq^{n}$. 
By Prop.\ref{p4.70} its polarization $(\gs,\qt_{[\sigmaup]})$ 
has $\Sigma$-diagram 
\begin{equation*}
  \xymatrix @M=0pt @R=2pt @!C=8pt{ \medcirc \ar@{-}[r] \ar@/^10pt/@{<->}[rr] & \medbullet
 \ar@{-}[r] &\medcirc\\
 \times&\times&\times}
\end{equation*}
\par 
\smallskip
The map $\ell_{2}\,{\to}\,(\ell_{2}{\cap}\ell_{2}^{\perp},\,\ell_{2},\, \ell_{2}{+}\ell_{2}^{\perp})$ 
is a diffeomorphism of the manifold  $\sfM^{3,1}$ of isotropic $2$-planes and 
the minimal orbit $\sfM^{1,5}$ of $\SU(1,3)$ in the full complex flag manifold of $\SL_{4}(\C)$.
\end{rmk}
\begin{prop}
 A polarized parabolic $CR$-algebra $(\gs,\qt)$ is either totally real
 or Levi-degenerate.  
\end{prop}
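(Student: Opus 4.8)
The plan is to argue by contraposition inside the combinatorial model of §\ref{s4.3}--§\ref{s5.3}, proving that a polarized parabolic $CR$-algebra which is \emph{not} Levi-degenerate must be totally real. First I would fix an adapted Cartan subalgebra $\hst$ and pass to the associated pair $(\Qq\,,\stt)$, where $\Qq\,{\in}\,\Pcr(\Rad)$ is the parabolic set of $\qt$ and $\stt$ is the involution induced by $\sigmaup$ on $\Rad$. By the Proposition translating the properties of $(\gs,\qt)$ into those of $(\Qq\,,\stt)$, together with the Lemma characterising polarization, the hypotheses become purely combinatorial: $(\Qq\,,\stt)$ is polarized, i.e. $\stt(\Qr)\,{=}\,\Qr$, and $(\Qq\,,\stt)$ is Levi-nondegenerate; the conclusion to be reached is $\stt(\Qq)\,{=}\,\Qq$.

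Next I would select a $V$-fit Weyl chamber $C$ for $(\Qq\,,\stt)$, which exists by the Proposition guaranteeing $S$- and $V$-fit chambers, and set $\Phi_{C}\,{=}\,\Bz(C)\,{\cap}\,\Qn$. By the equivalences of Theorem\,\ref{t4.30}, Levi-nondegeneracy for a $V$-fit $C$ yields $\stt(\Phi_{C})\,{\subseteq}\,\Qn$. Since the complementary simple roots $\Phi_{C}^{\vee}\,{=}\,\Bz(C){\setminus}\Phi_{C}$ lie in $\Qr$ and $\stt$ preserves $\Qr$ by polarization, I would record that $\chiup_{\Qq}(\stt(\betaup))\,{>}\,0$ for $\betaup\,{\in}\,\Phi_{C}$ and $\chiup_{\Qq}(\stt(\betaup))\,{=}\,0$ for $\betaup\,{\in}\,\Phi_{C}^{\vee}$. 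The core step is then a one-line computation: any $\alphaup\,{\in}\,\Qn$ is a nonnegative integral combination $\alphaup\,{=}\,{\sum}_{\betaup\in\Bz(C)}k_{\alphaup,\betaup}\betaup$ whose support meets $\Phi_{C}$, so
\begin{equation*}
\chiup_{\Qq}(\stt(\alphaup))={\sum}_{\betaup\in\Bz(C)}k_{\alphaup,\betaup}\,\chiup_{\Qq}(\stt(\betaup))>0,
\end{equation*}
whence $\stt(\alphaup)\,{\in}\,\Qn$. This gives $\stt(\Qn)\,{\subseteq}\,\Qn$, and applying the involution $\stt$ once more yields $\stt(\Qn)\,{=}\,\Qn$; combined with $\stt(\Qr)\,{=}\,\Qr$ this is exactly $\stt(\Qq)\,{=}\,\Qq$, so $(\Qq\,,\stt)$, and hence $(\gs,\qt)$, is totally real.

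The points requiring care, rather than genuine difficulty, are the bookkeeping facts I am invoking: that for a chamber admissible for $\Qq$ one has $\Qn\,{\subseteq}\,\Rad^{+}(C)$ with $\Qn\,{=}\,\{\alphaup\,{\in}\,\Rad^{+}(C)\,{\mid}\,\supp_{C}(\alphaup)\,{\cap}\,\Phi_{C}\,{\neq}\,\emptyset\}$, recorded in Lemma\,\ref{l2.5} and Remark\,\ref{r1.7}; and that $\Bz(C)$ splits as $\Phi_{C}\,{\cup}\,\Phi_{C}^{\vee}$ with $\Phi_{C}^{\vee}\,{\subseteq}\,\Qr$, which follows from $\Qq^{r}_{\Phi_{C}}\,{=}\,\Z[\Phi^{\vee}_{C}]\,{\cap}\,\Rad$. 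I expect the main obstacle to be purely expository: ensuring that the single $V$-fit chamber $C$ used for the Levi-nondegeneracy criterion of Theorem\,\ref{t4.30} is the same chamber with respect to which $\Phi_{C}$, the support decomposition, and the grading $\chiup_{\Qq}$ are all computed, so that the signs of $\chiup_{\Qq}(\stt(\betaup))$ on the two blocks $\Phi_{C}$ and $\Phi_{C}^{\vee}$ are unambiguous. No argument at the level of root strings or $H$-sequences is needed here: polarization linearises the problem, reducing it to the single grading inequality displayed above.
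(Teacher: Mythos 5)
Your proposal is correct and follows essentially the same route as the paper's proof: both fix a $V$-fit chamber, invoke the equivalence of Theorem\,\ref{t4.30} to read off Levi-nondegeneracy as $\stt(\Phi_{C})\,{\subseteq}\,\Qq^{n}$, and combine this with the polarization condition $\stt(\Qq^{r})\,{=}\,\Qq^{r}$ to force $\stt(\Qq^{n})\,{=}\,\Qq^{n}$. Your explicit computation with the grading $\chiup_{\Qq}$ merely fills in, in contrapositive form, a step the paper's two-line case split leaves implicit.
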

\begin{proof} Let $\hs$ be a Cartan subalgebra adapted to $(\gs,\qt)$, $\Rad$ the root system of
its complexification $\hg$, $(\Qq\,,\stt)$ the associated pair 
and $C$ a $V$-fit Weyl chamber.
If all roots in $\Phi_{C}\,{=}\,\Qq^{n}\,{\cap}\,\Bz(C)$ have a $C$-positive $\stt$-conjugate,
then by \eqref{e4.40} we obtain $\stt(\Qq^{n})\,{=}\,\Qq^{n}$ and $(\gs,\qt)$ is totally real.
Otherwise there is at least a simple root in $\Phi_{C}$ having a $C$-negative $\stt$-conjugate,
and $(\gs,\qt)$ is Levi-degenerate by Theorem\,\ref{t4.30}.
The proof is complete.
\end{proof}

\subsubsection{Strenghtening} Every $CR$ structure in $\Pt(\gt,\gs,\qt\,{\cap}\,\gs)$ can be
strengthened to a maximal one (see \cite[Proposition 5.9]{AMN06b}). 
\begin{dfn}
The $CR$-structure of 
$(\gs,\qt)$ is
 \textit{maximal} if 
 \begin{equation*}\big( \qt'\,{\in}\,\Pt(\gt,\gs,\qt\,{\cap}\,\gs),\;\; \qt\subseteq\qt' \big)\;
 \Longrightarrow \; \qt'=\qt.
\end{equation*}
The pair $(\Qq\, ,\stt)$, with $\Qq\,{\in}\Pcr(\Rad)$ and $\stt$
an involution of $\Rad$,
is \emph{maximal} if 
\begin{equation}
 \big( \Qq'\in\Pcr(\Rad),\; \Qq\subseteq\Qq',\; \Qq'\cap\,\stt(\Qq')=\Qq\cap\,\stt(\Qq)\big)\,
 \Longrightarrow \Qq'=\Qq\, .
\end{equation}
\end{dfn}
\begin{prop}
 Let $\Qq\,{\in}\Pcr(\Rad)$ be the set of roots of a parabolic $\qt\,{\in}\,\Pt(\gt)$,
 $\stt$~an involution of $\Rad$ and $\sigmaup\,{\in}\,\Invs$. Then: 
\begin{itemize}
 \item we can find $\qt'\,{\in}\,\Pt(\gt)$ such that $\qt\,{\subseteq}\,\qt'$ and 
 $(\gs,\qt')$ is $CR$-maximal; 
 \item we can find $\Qq'\,{\in}\Pcr(\Rad)$ such that $\Qq\,{\subseteq}\,\Qq'$ and
 $(\Qq',\stt)$ is $\stt$-maximal;
 \item $(\gs,\qt)$ is maximal if and only if $(\Qq\,,\stt)$ is maximal. \qed
\end{itemize}
\end{prop}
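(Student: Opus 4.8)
The plan is to handle the three assertions in turn, reducing the first two to finiteness and the third to the dictionary between $\hg$-regular parabolic subalgebras and parabolic subsets of $\Rad$ set up in \S\ref{s1.3}, \S\ref{s4.3} and \S\ref{s4.4}. Throughout I fix an adapted Cartan subalgebra $\hst$ with complexification $\hg\subseteq\qt$, the root system $\Rad=\Rad(\gt,\hg)$, and the involution $\stt$ of $\Rad$ induced by $\sigmaup$, so that $\Qq=\{\alphaup\in\Rad\mid\gt^{\alphaup}\subseteq\qt\}$.

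For the first two bullets I would argue by finiteness. Since $\gt$ has only finitely many parabolic subalgebras, the collection $\{\qt'\in\Pt(\gt)\mid \qt\subseteq\qt',\ \qt'\cap\gs=\qt\cap\gs\}$ is a finite nonempty poset under inclusion (it contains $\qt$), so it has maximal elements, and any such element is by definition a $CR$-maximal strengthening $(\gs,\qt')$ of $(\gs,\qt)$; this recovers \cite[Proposition 5.9]{AMN06b}. The same argument applied to the finite set $\Pcr(\Rad)$ produces a maximal $\Qq'\in\Pcr(\Rad)$ with $\Qq\subseteq\Qq'$ and $\Qq'\cap\stt(\Qq')=\Qq\cap\stt(\Qq)$. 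Such a $\Qq'$ is $\stt$-maximal: if $\Qq'\subseteq\Qq''\in\Pcr(\Rad)$ with $\Qq''\cap\stt(\Qq'')=\Qq'\cap\stt(\Qq')$, then by transitivity $\Qq''\cap\stt(\Qq'')=\Qq\cap\stt(\Qq)$ and $\Qq\subseteq\Qq''$, so maximality of $\Qq'$ in the above poset forces $\Qq''=\Qq'$.

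For the third bullet I would first observe that every parabolic $\qt'\supseteq\qt$ already contains $\hg$, hence is $\hg$-regular and equals $\qt_{\Qq'}$ for a unique parabolic $\Qq'\supseteq\Qq$; by the Propositions of \S\ref{s1.3} and \S\ref{s4.3} this sets up an inclusion-preserving bijection between parabolic subalgebras containing $\qt$ and parabolic subsets containing $\Qq$. Next, since $\sigmaup$ preserves $\hg$ and carries $\gt^{\alphaup}$ onto $\gt^{\stt(\alphaup)}$ (exactly as in the Levi--Chevalley description of \S\ref{s4}), one gets $\sigmaup(\qt_{\Qq'})=\qt_{\stt(\Qq')}$ and therefore the identification of isotropies
\begin{equation*}
 \qt_{\Qq'}\cap\sigmaup(\qt_{\Qq'})=\qt_{\,\Qq'\cap\stt(\Qq')}.
\end{equation*}

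It then remains to match the two isotropy conditions and transport the definition of maximality through the bijection. Because $\qt_{\Qq'}\cap\sigmaup(\qt_{\Qq'})$ is $\sigmaup$-invariant with real form $\qt'\cap\gs$, the equality $\qt'\cap\gs=\qt\cap\gs$ holds if and only if $\qt_{\Qq'}\cap\sigmaup(\qt_{\Qq'})=\qt\cap\sigmaup(\qt)$, which by the displayed identity is equivalent to $\Qq'\cap\stt(\Qq')=\Qq\cap\stt(\Qq)$. Hence $\qt'\leftrightarrow\Qq'$ carries the poset defining $CR$-maximality onto the poset defining $\stt$-maximality, and $(\gs,\qt)$ is maximal precisely when $(\Qq\,,\stt)$ is maximal. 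I expect the only delicate points to be the verification that $\sigmaup(\gt^{\alphaup})=\gt^{\stt(\alphaup)}$ (so that passing to $\sigmaup$-images is exactly the combinatorial involution $\stt$) and the remark that restricting to parabolics containing $\hg$ costs no generality; both are routine given $\sigmaup\in\Invs$ and $\hg\subseteq\qt$.
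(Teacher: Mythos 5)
The paper offers no argument for this proposition: it is stated with a terminal \qed, the existence of maximal strengthenings being attributed to \cite[Proposition 5.9]{AMN06b} in the sentence immediately preceding it, and the rest being regarded as immediate from the dictionary between $\hg$-regular parabolic subalgebras and parabolic subsets of $\Rad$. Your write-up supplies exactly the details the authors suppress, and it is sound: the finiteness-of-posets argument (with the transitivity check that a maximal element of $\{\Qq''\,{\in}\,\Pcr(\Rad)\mid\Qq\,{\subseteq}\,\Qq'',\;\Qq''\cap\stt(\Qq'')\,{=}\,\Qq\cap\stt(\Qq)\}$ really is $\stt$-maximal) handles the two existence claims, and the order-preserving bijection $\qt'\leftrightarrow\Qq'$ together with $\sigmaup(\gt^{\alphaup})\,{=}\,\gt^{\stt(\alphaup)}$ and the identification $\qt_{\Qq'}\cap\sigmaup(\qt_{\Qq'})\,{=}\,\qt_{\,\Qq'\cap\stt(\Qq')}$ gives the equivalence in the third bullet. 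Reading the first bullet with the implicit requirement $\qt'\cap\gs\,{=}\,\qt\cap\gs$ built into your poset is also the correct interpretation (otherwise $\qt'\,{=}\,\gt$ would trivialise the claim).

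One sentence needs repair: it is not true that $\gt$ has only finitely many parabolic subalgebras (already $\slt_{2}(\C)$ has a $\CP^{1}$ worth of Borel subalgebras). What is true, and what you actually need, is that there are only finitely many parabolic subalgebras containing the \emph{fixed} $\qt$: each such $\qt'$ contains $\hg$, hence is $\hg$-regular and equals $\qt_{\Qq'}$ for a parabolic subset $\Qq'\,{\supseteq}\,\Qq$ of the finite set $\Rad$. This is precisely the bijection you set up for the third bullet, so the fix costs one line; but as written the finiteness of your first poset rests on a false premise and should be restated.
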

\begin{thm} \label{t4.74}
Let $\Qq\,{\in}\,\Pcr(\Rad)$, $\stt$ an involution of $\Rad$ and $C$ an $S$-fit chamber
for $(\Qq\,,\stt)$. The pair $(\Qq\,,\stt)$ is maximal if and only if 
\begin{equation}\label{e4.43}
 \supp_{C}(\stt(\alphaup))\cap\Phi_{C}\subseteq\{\alphaup\},\;\;\forall\alphaup\,{\in}\,\Phi^{+}(C).
\end{equation}
\end{thm}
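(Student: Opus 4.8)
The plan is to convert the global maximality requirement into a node‑by‑node test over $\Phi_{C}$, in the spirit of Theorem~\ref{t4.30}. First I would record that, by the argument in the proof of Theorem~\ref{t4.30}, every parabolic $\Qq'$ with $\Qq\,{\subseteq}\,\Qq'$ has the form $\Qq'\,{=}\,\Qq_{\,\;\Psi_{C}}$ for some $\Psi_{C}\,{\subseteq}\,\Phi_{C}$, with larger $\Qq'$ corresponding to smaller $\Psi_{C}$. Since $\Qq\,{\subseteq}\,\Qq'$ forces $\Qq\,{\cap}\,\stt(\Qq)\,{\subseteq}\,\Qq'\,{\cap}\,\stt(\Qq')$, a failure of maximality witnessed by some $\Qq'\,{\supsetneq}\,\Qq$ can be localized: choosing $\alphaup_{0}\,{\in}\,\Phi_{C}{\setminus}\Psi_{C}$ and inserting $\Qq''\,{=}\,\Qq_{\,\;(\Phi_{C}{\setminus}\{\alphaup_{0}\})_{C}}$ between $\Qq$ and $\Qq'$, one gets $\Qq''\,{\supsetneq}\,\Qq$ with $\Qq''\,{\cap}\,\stt(\Qq'')\,{=}\,\Qq\,{\cap}\,\stt(\Qq)$. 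Hence $(\Qq\, ,\stt)$ is maximal if and only if, for every $\alphaup_{0}\,{\in}\,\Phi_{C}$, erasing the cross at $\alphaup_{0}$ strictly enlarges the isotropy $\Qq\,{\cap}\,\stt(\Qq)$.

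Next I would analyse a single erasure. A direct computation with supports gives the disjoint decomposition $\Qq_{\,\;(\Phi_{C}{\setminus}\{\alphaup_{0}\})_{C}}\,{=}\,\Qq\,{\cup}\,\Delta$, where $\Delta\,{=}\,\{\deltaup\,{\in}\,\Rad^{-}(C)\mid\supp_{C}(\deltaup)\,{\cap}\,\Phi_{C}\,{=}\,\{\alphaup_{0}\}\}$; note that $\,{-}\alphaup_{0}\,{\in}\,\Delta$. A short bookkeeping argument then shows that the isotropy grows precisely when there is a $\deltaup\,{\in}\,\Delta$ with $\stt(\deltaup)\,{\in}\,\Qq''$: any genuinely new element of $\Qq''\,{\cap}\,\stt(\Qq'')$ is either such a $\deltaup$ itself or the $\stt$‑image of one. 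This reduces the whole theorem to deciding, for each $\alphaup_{0}$, whether $\stt(\Delta)$ meets $\Qq''$.

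I would then split on the sign of $\stt(\alphaup_{0})$, writing $\Phi^{+}(C)$ for the set of $\alphaup\,{\in}\,\Phi_{C}$ with $\stt(\alphaup)\,{\in}\,\Rad^{+}(C)$. If $\stt(\alphaup_{0})\,{\in}\,\Rad^{-}(C)$, i.e. $\alphaup_{0}\,{\notin}\,\Phi^{+}(C)$, then $\stt({-}\alphaup_{0})\,{=}\,{-}\stt(\alphaup_{0})\,{\in}\,\Rad^{+}(C)\,{\subseteq}\,\Qq''$, so the isotropy always grows; such nodes never obstruct maximality, which is exactly why the condition is imposed only on $\Phi^{+}(C)$. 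For $\alphaup_{0}\,{\in}\,\Phi^{+}(C)$, taking $\deltaup\,{=}\,{-}\alphaup_{0}$ gives the easy implication: if $\supp_{C}(\stt(\alphaup_{0}))\,{\cap}\,\Phi_{C}\,{\subseteq}\,\{\alphaup_{0}\}$, then $\stt({-}\alphaup_{0})$ is a negative root whose support avoids $\Phi_{C}{\setminus}\{\alphaup_{0}\}$, hence lies in $\Qq''$, and the isotropy grows.

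The hard direction — and the step I expect to be the main obstacle — is the converse at $\alphaup_{0}\,{\in}\,\Phi^{+}(C)$: assuming \eqref{e4.43} fails, so some $\alphaup_{1}\,{\in}\,\Phi_{C}{\setminus}\{\alphaup_{0}\}$ lies in $\supp_{C}(\stt(\alphaup_{0}))$, I must show that $\stt(\deltaup)\,{\notin}\,\Qq''$ for every $\deltaup\,{\in}\,\Delta$, so that the isotropy cannot grow. Writing $\deltaup\,{=}\,{-}\big(k_{0}\alphaup_{0}{+}{\sum}_{\betaup\in\Phi_{C}^{\vee}}k_{\betaup}\betaup\big)$ with $k_{0}\,{\geq}\,1$ and $k_{\betaup}\,{\geq}\,0$, I would track the coefficient of $\alphaup_{1}$ in $\stt(\deltaup)\,{=}\,{-}k_{0}\stt(\alphaup_{0}){-}{\sum}_{\betaup}k_{\betaup}\stt(\betaup)$. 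The term ${-}k_{0}\stt(\alphaup_{0})$ contributes at most ${-}k_{0}\,{<}\,0$, since $\stt(\alphaup_{0})\,{\in}\,\Rad^{+}(C)$ has positive $\alphaup_{1}$‑coefficient. The $S$‑fit hypothesis is precisely what controls the remaining terms: for complex $\betaup\,{\in}\,\Phi_{C}^{\vee}$ one has $\stt(\betaup)\,{\in}\,\Rad^{+}(C)$ by Lemma~\ref{l4.20}, so its contribution to the $\alphaup_{1}$‑coefficient is nonpositive, while for real $\betaup$ ($\stt(\betaup)\,{=}\,\betaup$) and imaginary $\betaup$ ($\stt(\betaup)\,{=}\,{-}\betaup$) the relevant simple root is distinct from $\alphaup_{1}$ and the contribution is $0$. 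Thus the $\alphaup_{1}$‑coefficient of $\stt(\deltaup)$ is strictly negative, so $\stt(\deltaup)\,{\in}\,\Rad^{-}(C)$ and $\alphaup_{1}\,{\in}\,\supp_{C}(\stt(\deltaup))\,{\cap}\,(\Phi_{C}{\setminus}\{\alphaup_{0}\})$, forcing $\stt(\deltaup)\,{\notin}\,\Qq''$. The delicacy is that this sign bookkeeping must hold uniformly over all $\deltaup\,{\in}\,\Delta$, which is exactly what $S$‑fitness secures; assembling the equivalences over all $\alphaup_{0}\,{\in}\,\Phi^{+}(C)$ then yields the stated criterion.
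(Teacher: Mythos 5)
Your proposal is correct and follows essentially the same route as the paper's proof: the witness $-\alphaup_{0}$ for growth of the isotropy when the support condition holds (or when $\stt(\alphaup_{0})$ is $C$-negative), and the $S$-fit-controlled tracking of the coefficient of $\alphaup_{1}$ in $\stt(\deltaup)$ for the converse, are exactly the paper's computations. The only difference is presentational: you make explicit the reduction of arbitrary $\Qq'\supsetneq\Qq$ to single-node erasures via monotonicity of $\Qq'\cap\stt(\Qq')$, which the paper leaves implicit.
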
 
\begin{proof}
 A parabolic $\Pq$ containing $\Qq$ is equal to $\Qq_{\;\Psi_{C}}$ for a subset $\Psi_{C}$ of
 $\Phi_{C}\,{=}\,\Bz(C)\,{\cap}\,\Qq^{n}$. 
  Let $\alphaup\,{\in}\,\Phi_{C}$ and set $\Psi_{C}\,{=}\,\Phi_{C}{\setminus}
 \{\alphaup\}$. \par If either $\alphaup\,{\in}\,\Phi^{-}(C)$,  or $\alphaup\,{\in}\,\Phi_{C}^{\stt,+}$ and satisfies
 \eqref{e4.43}, 
 then 
$({-}\alphaup)$ belongs to $(\Qq_{\,\Psi_{C}}{\cap}\,\stt(\Qq_{\,\Psi_{C}})){\setminus}(\Qq\,{\cap}\,\stt(\Qq))$.\par
Assume vice versa that 
$\supp_{C}(\stt(\alphaup))\,{\cap}\,\Phi_{C}$, contains a root $\alphaup'$ distinct from~$\alphaup$. 
 A root $\xiup\,{\in}\,\Qq_{\;\Psi_{C}}\!{\setminus}\Qq$ has the form 
\begin{equation*}
 \xiup\,{=}\,k_{\xiup,\alphaup}\alphaup+{\sum}_{\betaup\in\Phi^{\vee}_{C}}k_{\xiup,\betaup}\betaup,\;\;
 \text{with $k_{\xiup,\alphaup}\,{<}\,0$, $k_{\xiup,\betaup}\,{\leq}\,0$ for all $\betaup$ in $\Phi^{\vee}_{C}$.}
\end{equation*}
Since we assumed that $C$ is $S$-fit, the $\stt$-conjugate of complex roots in $\Phi^{\vee}_{C}$
are $C$-positive. Thus  $\stt(\xiup)$ is $C$-negative and its support contains $\alphaup'$.
Therefore $\stt(\Qq_{\;\Psi_{C}}{\setminus}\Qq)$ is disjoint from $\Qq_{\;\Psi_{C}}$,
showing that $\Qq_{\;\Psi_{C}}{\cap}\,\stt(\Qq_{\;\Psi_{C}})\,{=}\,\Qq{\cap}\,\stt(\Qq)$
and hence that $(\Qq,\stt)$ is not maximal. 
\end{proof}
\begin{rmk}
In general, starting with a given $(\Qq\,,\stt)$, a maximal $(\Qq'\,,\stt)$ satisfying
$\Qq'{\cap}\,\stt(\Qq')\,{=}\,\Qq\,{\cap}\,\stt(\Qq)$ and $\Qq\,{\subseteq}\,\Qq'$
is not uniquely determined. 
Theorem\,\ref{t4.74} suggests 
a recursive construction of maximal parabolic sets $\Pq$ 
with $(\Pq\,{\cap}\,\stt(\Pq))\,{=}\,(\Qq\,{\cap}\,\stt(\Qq))$.
Indeed, if we start with a $C$-fit chamber for $(\Qq,\stt)$ and take a $\Qq_{\,\Psi_{C}}$
with $\Psi_{C}{=}\,\Phi_{C}\!{\setminus}\{\alphaup\}$ for an $\alphaup\,{\in}\,\Phi_{C}^{\stt,+}$ not
satisfying \eqref{e4.43}, then $C$ is also $S$-fit for $(\Qq_{\,\Psi_{C}},\stt)$. 
Thus we can construct
a sequence $\Phi_{C}^{(h)}$ of subsets of 
$\Phi_{C}$, with $\Phi_{C}^{(0)}\,{=}\,\Phi_{C}$ and, when
$(\Qq_{\;\Phi^{(h)}_{C}}\,,\stt)$ is not maximal, defining $\Phi^{(h+1)}_{C}$ equal
to $\Qq_{\;\Phi^{(h)}_{C}}{\setminus}\{\alphaup_{h}\}$ for an $\alphaup_{h}\,{\in}\,{\Phi_{C}^{(h),+}}$
with $\supp_{C}(\stt(\alphaup_{h}))\,{\cap}\,\Phi^{(h)}_{C}\,{\not\subseteq}\,\{\alphaup_{h}\}$. 
\end{rmk}

\begin{exam} We consider the minimal parabolic $CR$-algebra described by the cross-marked
Satake diagram \vspace{15pt}
\begin{equation*}
   \xymatrix @M=0pt @R=2pt @!C=4pt{   \medcirc \ar@{-}[r] \ar@/^18pt/@{<->}[rrrrr] &
  \medcirc \ar@{-}[r] \ar@/^14pt/@{<->}[rrr] & \medbullet \ar@{-}[r]
 &\medbullet
 \ar@{-}[r] &\medcirc\ar@{-}[r] &\medcirc\\
\times&&\times && \times} 
\end{equation*}
with set of simple roots $\Bz(C)\,{=}\,\{\alphaup_{i}\,{=}\,\e_{i}{-}\e_{i+1}\,{\mid}\,1{\leq}i{\leq}6\}$ and
$\stt(\e_{i})\,{=}\,{-}\e_{8-i}$ for $i\,{=}\,1,2,6,7$,\, $\stt(\e_{i})\,{=}\,{-}\e_{i}$ for $i\,{=}\,3,4,5$. \par
We have $\Phi_{C}\,{=}\,\{\alphaup_{1},\alphaup_{3},\alphaup_{5}\}$. Since $\stt(\alphaup_{5})\,{=}\,
\alphaup_{2}{+}\alphaup_{3}{+}\alphaup_{4}$ contains in its $C$-support the root $\alphaup_{3}$
of $\Phi_{C}$, different from $\alphaup_{5}$, we can \textit{strengthen} the $CR$-structure
by \textit{dropping} $\alphaup_{5}$, 
stepping to $\Psi_{C}\,{=}\,\{\alphaup_{1},\alphaup_{3}\}$. $CR$-structures
corresponding to $(\Qq_{\,\;\Psi_{C}},\stt)$ 
are maximal, since
$\stt(\alphaup_{1})\,{=}\,\alphaup_{6}\,{\in}\,\Qq^{r}_{\,\;\Psi_{C}}$ and
$\stt(\alphaup_{3})\,{=}\,{-}\alphaup_{3}$. 
\end{exam}
\begin{exam} The two cross-marked diagrams for $\Rad(\textsc{B}_{6})$:
 \begin{gather*}
 \xymatrix @M=0pt @R=4pt @!C=8pt{   \alphaup_{1}& \alphaup_{2} & \alphaup_{3}
 & \alphaup_{4}& \alphaup_{5} & \alphaup_{6}\\
  \oplus \ar@{-}[r] &  \ominus\ar@{-}[r] & \oplus \ar@{-}[r] & \ominus \ar@{-}[r] & \oplus
  \ar@{=>}[r] & \ominus\\
&\times&&\times&&\times}
\\[5pt]
 \xymatrix @M=0pt @R=4pt @!C=8pt{   \alphaup'_{1}& \alphaup'_{2} & \alphaup'_{3}
 & \alphaup'_{4}& \alphaup'_{5} & \alphaup'_{6}\\
  \ominus \ar@{-}[r] &  \oplus\ar@{-}[r] & \ominus \ar@{-}[r] & \oplus \ar@{-}[r] & \ominus
  \ar@{=>}[r] & \oplus\\
&\times&&\times&&\times}
\end{gather*}
with 
\begin{equation*}
\begin{array}{l l l l l l}
 \alphaup_{1}{=}\e_{1}{-}\e_{2} & 
  \alphaup_{1}{=}\e_{2}{-}\e_{3} & 
   \alphaup_{1}{=}\e_{3}{-}\e_{4} & 
  \alphaup_{1}{=}\e_{4}{-}\e_{5} & 
   \alphaup_{1}{=}\e_{5}{-}\e_{6} & 
  \alphaup_{6}=\e_{6} ,\\
  \alphaup'_{1}{=}\e_{2}{-}\e_{1} & 
  \alphaup'_{1}{=}\e_{1}{-}\e_{4} & 
   \alphaup'_{1}{=}\e_{4}{-}\e_{3} & 
  \alphaup'_{1}{=}\e_{3}{-}\e_{6} & 
   \alphaup'_{1}{=}\e_{6}{-}\e_{5} & 
  \alphaup'_{6}=\e_{5} ,
\end{array}
 \end{equation*}
 correspond to an $S$-fit and $V$-fit Weyl chamber  for a pair $(\Qq\, ,\stt)$ with 
\begin{equation*}\begin{cases}
\Qq=\{\alphaup\in\Rad\mid (\alphaup\mid 3\e_{1}{+}3\e_{2}{+}2\e_{3}{+}2\e_{4}{+}\e_{5}{+}\e_{6})
\,{\geq}\,0\},\\
 \stt(\e_{i})= 
\begin{cases}
 {+}\e_{i}, & i=1,3,5,\\
 {-}\e_{i}, & i=2,4,6.
\end{cases}
\end{cases}
\end{equation*}
The second diagram shows that $(\Qq\, ,\stt)$ is 
Levi-nondegenerate, the first that it is maximal. 
\end{exam}
\subsection{Holomorphic foliations} Let $\sfF$ be a flag manifold of a  complex
semisimple Lie group $\Gf$, $\sfM$ the orbit in $\sfF$ of a real form $\Gfs$ of $\Gf$
and $(\gs,\qt)$ the $CR$ algebra at its point $\pct$. The lifted $CR$ structure $\qt$
is the complex parabolic Lie subalgebra  of the stabiliser $\Qf$ of $\pct$
for the action of $\Gf$. 
If $\textfrak{F}$ is a $\Gfs$-invariant foliation of $\sfM$, then 
the stabiliser in $\Gfs$ 
of the leaf of $\textfrak{F}$ through $\pct$ is a closed Lie subgroup $\Lfs$ of $\Gfs$
containing the isotropy 
subgroup $\Kfs$. Its Lie algebra $\lts$  is a real Lie subalgebra of $\gs$ 
containing $\qt\,{\cap}\,\gs$.
We showed  in \S\ref{s3.9}  that 
the leaves of $\textfrak{F}$ are totally complex  if and only if 
the complexification $\lt$ of
$\lts$ satisfies condition \eqref{e3.14}. Here we specialise to the case of \textit{parabolic}
$CR$-algebras. 
Some results below are inspired by \cite{AMN06b, Wolf69}.

\begin{lem}\label{l4.78}
If $(\gs,\qt)$ be a parabolic $CR$-algebra, then the nilpotent radical $\lt^{n}$ of a
complex Lie subalgebra $\lt$ of $\gt$
 satisfying \eqref{e3.14}  contains $\qt^{n}\,{\cap}\,\sigmaup(\qt^{n})$. 
\end{lem}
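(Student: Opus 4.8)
The plan is to translate everything into the language of root systems and reduce the statement to an elementary set-theoretic inclusion. First I fix a Cartan subalgebra $\hst$ of $\gs$ adapted to $(\gs,\qt)$, with complexification $\hg$, obtaining the root system $\Rad\,{=}\,\Rad(\gt,\hg)$, the involution $\stt$ induced by $\sigmaup$, and the parabolic subset $\Qq$ associated to $\qt$. Since $\hg\,{\subseteq}\,\qt^{r}$ and $\sigmaup(\hg)\,{=}\,\hg$, the Cartan subalgebra $\hg$ is contained in $\qt\,{\cap}\,\sigmaup(\qt)$, hence, by \eqref{e3.14}, in $\lt$. Thus $\lt$ is $\hg$-regular, and by the structure of $\hg$-regular subalgebras (the Proposition around \eqref{e1.19}) it is described by a closed set of roots $\Lcr\,{=}\,\{\alphaup\,{\in}\,\Rad\,{\mid}\,\gt^{\alphaup}\,{\subseteq}\,\lt\}$, with nilradical $\lt^{n}\,{=}\,{\sum}_{\alphaup\in\Lcr^{n}}\gt^{\alphaup}$, where $\Lcr^{n}\,{=}\,\{\alphaup\,{\in}\,\Lcr\,{\mid}\,{-}\alphaup\,{\notin}\,\Lcr\}$.

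Next I record the root-space descriptions coming from the Levi--Chevalley decompositions of the previous subsection: $\qt\,{\cap}\,\sigmaup(\qt)$ has root set $\Qq\,{\cap}\,\stt(\Qq)$ and $\qt\,{+}\,\sigmaup(\qt)$ has root set $\Qq\,{\cup}\,\stt(\Qq)$, while $\qt^{n}\,{\cap}\,\sigmaup(\qt^{n})\,{=}\,{\sum}_{\alphaup\in\Qq^{n}\cap\,\stt(\Qq^{n})}\gt^{\alphaup}$. Consequently \eqref{e3.14} is equivalent to the inclusions $\Qq\,{\cap}\,\stt(\Qq)\,{\subseteq}\,\Lcr\,{\subseteq}\,\Qq\,{\cup}\,\stt(\Qq)$, and the assertion of the lemma becomes the purely combinatorial inclusion $\Qq^{n}\,{\cap}\,\stt(\Qq^{n})\,{\subseteq}\,\Lcr^{n}$.

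To verify this, I would take $\alphaup\,{\in}\,\Qq^{n}\,{\cap}\,\stt(\Qq^{n})$ and check the two defining conditions for membership in $\Lcr^{n}$. For containment, $\alphaup\,{\in}\,\Qq\,{\cap}\,\stt(\Qq)\,{\subseteq}\,\Lcr$. For the asymmetry condition, I use $\alphaup\,{\in}\,\Qq^{n}$ to get $-\alphaup\,{\notin}\,\Qq$, and $\alphaup\,{\in}\,\stt(\Qq^{n})$, i.e. $\stt(\alphaup)\,{\in}\,\Qq^{n}$, to get $-\stt(\alphaup)\,{\notin}\,\Qq$, equivalently $-\alphaup\,{\notin}\,\stt(\Qq)$. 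Hence $-\alphaup\,{\notin}\,\Qq\,{\cup}\,\stt(\Qq)\,{\supseteq}\,\Lcr$, so $-\alphaup\,{\notin}\,\Lcr$. Therefore $\alphaup\,{\in}\,\Lcr^{n}$, whence $\gt^{\alphaup}\,{\subseteq}\,\lt^{n}$, and summing over such $\alphaup$ gives $\qt^{n}\,{\cap}\,\sigmaup(\qt^{n})\,{\subseteq}\,\lt^{n}$.

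The only point requiring care---the main, though mild, obstacle---is the passage from the abstract hypothesis that $\lt$ is merely a complex Lie subalgebra satisfying \eqref{e3.14} to a genuine root-space description: this rests on observing that $\lt$ contains $\hg$ and is therefore $\hg$-regular, so that its nilradical is literally the span of the root spaces over $\Lcr^{n}$. Once this is secured, the argument is a direct comparison of the four sets $\Qq^{n}\,{\cap}\,\stt(\Qq^{n})$, $\Qq\,{\cap}\,\stt(\Qq)$, $\Qq\,{\cup}\,\stt(\Qq)$ and $\Lcr$, using no further input from the Lie structure of $\lt$ beyond the closedness of $\Lcr$ and the involutivity of $\stt$.
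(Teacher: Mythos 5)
Your proof is correct and follows essentially the same route as the paper's: pass to an adapted Cartan subalgebra, observe that $\lt\supseteq\qt\cap\sigmaup(\qt)\supseteq\hg$ forces $\lt$ to be $\hg$-regular with root set $\Lcr$ squeezed between $\Qq\cap\stt(\Qq)$ and $\Qq\cup\stt(\Qq)$, and then note that any $\alphaup\in\Qq^{n}\cap\stt(\Qq^{n})$ lies in $\Lcr$ while $-\alphaup$ lies outside $\Qq\cup\stt(\Qq)$ and hence outside $\Lcr$. Your write-up merely spells out the combinatorial step that the paper compresses into the single observation $\Lcr\cap\Qq^{c}\cap\stt(\Qq^{c})=\emptyset$.
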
 
\begin{proof}
 Let $\hst$ be an adapted Cartan subalgebra of $(\gs,\qt)$ and $\Rad$ the root system of its complexification $\hg$.
 Since $(\qt\,{\cap}\,\sigmaup(\qt))\,{\subseteq}\,\lt$, the subalgebra $\lt$ is regular for $\hg$ and hence
$\lt\,{=}\,\hg\,{\oplus}\,{\sum}_{\alphaup\,{\in}\,\Lq}\,\gt^{\alphaup}$ for a closed subset $\Lq$ of $\Rad$
and $\lt^{n}\,{=}\,{\sum}_{\alphaup\in\Lq^{n}}\gt^{\alphaup}$, with $\Lq^{n}\,{=}\,\{\alphaup\,{\in}\,\Lq\,{\mid}\,
{-}\alphaup\,{\notin}\,\Lq\}$.
We have $\Qq^{n}\,{\cap}\,\stt(\Qq^{n})\,{\subseteq}\,\Lq^{n}$, because
$\Lq\,{\cap}\,\Qq^{c}\,{\cap}\,\stt(\Qq^{c})\,{=}\,\emptyset$.
This implies that $\qt^{n}\,{\cap}\,\sigmaup(\qt^{n})\,{\subseteq}\,\lt^{n}$.
\end{proof}
\begin{lem}\label{l4.83}
 Let $(\gs,\qt)$ be a parabolic $CR$-algebra and $\lt$ a complex Lie subalgebra of $\gt$
 satisfying \eqref{e3.14}. 
 The normaliser
\begin{equation}
 \Nt_{\gt}(\lt^{n})=\big\{Z\in\gt\mid [Z,\lt^{n}]\subseteq
\lt^{n}\big\}
\end{equation}
of its nilpotent radical  is a Lie subalgebra of $\gt$ satisfying \eqref{e3.14}.
\end{lem}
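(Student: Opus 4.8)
The plan is to work entirely in root-theoretic terms, as is done throughout \S\ref{s4}. I would fix an adapted Cartan subalgebra $\hst$ of $(\gs,\qt)$ with complexification $\hg$, giving the root system $\Rad=\Rad(\gt,\hg)$ and the associated pair $(\Qq\,,\stt)$. By Lemma\,\ref{l4.78}, $\lt$ is $\hg$-regular (since it contains $\qt\,{\cap}\,\sigmaup(\qt)$, hence $\hg$), so $\lt\,{=}\,\hg\,{\oplus}\,{\sum}_{\alphaup\in\Lq}\gt^{\alphaup}$ for a closed subset $\Lq$ of $\Rad$, and its nilradical is $\lt^{n}\,{=}\,{\sum}_{\alphaup\in\Lq^{n}}\gt^{\alphaup}$. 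Since $\lt^{n}$ is $\hg$-stable, its normaliser $\Nt_{\gt}(\lt^{n})$ is automatically $\hg$-regular, so it too is determined by a subset of roots, namely
\begin{equation*}
 \Nt_{\gt}(\lt^{n})=\hg\,{\oplus}\,{\sum}_{\alphaup\in\Nq}\gt^{\alphaup},\quad
 \Nq=\{\alphaup\in\Rad\mid \alphaup{+}\Lq^{n}\subseteq\Lq^{n}\cup\{0\}\}\cup\{0\text{-part}\}.
\end{equation*}
That $\Nt_{\gt}(\lt^{n})$ is a Lie subalgebra is a general fact (the normaliser of any subalgebra is a subalgebra), so the only real content is the inclusion condition \eqref{e3.14}, i.e. $\qt\,{\cap}\,\sigmaup(\qt)\,{\subseteq}\,\Nt_{\gt}(\lt^{n})\,{\subseteq}\,\qt\,{+}\,\sigmaup(\qt)$.

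\textbf{The two inclusions.} The left inclusion is the easy half: since $\lt$ satisfies \eqref{e3.14}, we have $\qt\,{\cap}\,\sigmaup(\qt)\,{\subseteq}\,\lt$, and $\lt$ normalises its own nilradical $\lt^{n}$, so $\qt\,{\cap}\,\sigmaup(\qt)\,{\subseteq}\,\lt\,{\subseteq}\,\Nt_{\gt}(\lt^{n})$. For the right inclusion I would argue that $\Nt_{\gt}(\lt^{n})\,{\subseteq}\,\lt$, whence $\Nt_{\gt}(\lt^{n})\,{\subseteq}\,\lt\,{\subseteq}\,\qt\,{+}\,\sigmaup(\qt)$ by \eqref{e3.14}. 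The natural way to see $\Nt_{\gt}(\lt^{n})\,{\subseteq}\,\lt$ is to use that $\lt$, satisfying \eqref{e3.14}, sits between $\qt\,{\cap}\,\sigmaup(\qt)$ and $\qt\,{+}\,\sigmaup(\qt)$, and that the latter is parabolic with nilradical $\qt^{n}\,{\cap}\,\sigmaup(\qt^{n})$; one can enlarge $\lt$ to the parabolic $\lt\,{+}\,(\qt\,{+}\,\sigmaup(\qt))$'s reductive part to recognise $\lt$ as $\hg$-regular with reductive Levi factor containing $\hg$, so that $\lt$ itself equals the normaliser of $\lt^{n}$ within the ambient parabolic. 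In root terms: one shows $\Nq\,{=}\,\Lq$, i.e. that a root $\alphaup$ normalising $\Lq^{n}$ already lies in $\Lq$.

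\textbf{The main obstacle.} The delicate point is precisely the equality $\Nq\,{=}\,\Lq$ (equivalently $\Nt_{\gt}(\lt^{n})\,{=}\,\lt$). This is true when $\lt$ is parabolic — the normaliser of the nilradical of a parabolic is the parabolic itself — but $\lt$ here is only required to satisfy \eqref{e3.14} and need not be parabolic. I expect the key is to exploit that $\lt$ is \emph{reductive-plus-nilradical with $\hg$-regular reductive Levi factor}, using the root-system description: if $\alphaup\,{\in}\,\Nq^{n}$, then $\ad(\gt^{\alphaup})$ maps $\lt^{n}$ into $\lt^{n}$, which forces $\alphaup$ to be nonnegative on the grading element cutting out $\Lq^{n}$; combined with closedness of $\Lq$ and the symmetry $\qt\,{\cap}\,\sigmaup(\qt)\,{\subseteq}\,\lt$, this pins $\alphaup$ into $\Lq$. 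The verification that $\Lq^{r}\,{=}\,{-}\Lq^{r}$ controls the reductive part — so that no extra roots sneak into $\Nq$ — is the step I would handle most carefully, likely by checking that $\lt$ and $\lt\,{\cap}\,\sigmaup(\lt)$ share the same reductive Levi factor under \eqref{e3.14}, which is guaranteed by Lemma\,\ref{l3.22}.
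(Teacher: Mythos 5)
Your argument for the inclusion $\qt\,{\cap}\,\sigmaup(\qt)\,{\subseteq}\,\Nt_{\gt}(\lt^{n})$ is fine, but your strategy for the other inclusion rests on a false claim. You propose to show $\Nt_{\gt}(\lt^{n})\,{\subseteq}\,\lt$ (equivalently $\Nq\,{=}\,\Lq$) and then invoke \eqref{e3.14} for $\lt$. This fails: the normaliser of the nilradical of a non-parabolic $\lt$ satisfying \eqref{e3.14} is in general strictly larger than $\lt$. For a concrete failure, take $(\gs,\qt)$ totally complex with $\qt$ a Borel and $\lt\,{=}\,\qt\,{\cap}\,\sigmaup(\qt)\,{=}\,\hg$; then $\lt^{n}\,{=}\,\{0\}$ and $\Nt_{\gt}(\lt^{n})\,{=}\,\gt\,{\supsetneqq}\,\lt$ (the lemma still holds, since here $\qt\,{+}\,\sigmaup(\qt)\,{=}\,\gt$). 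More tellingly, the very next result, Lemma\,\ref{l4.80}, builds the increasing chain $\at_{h}\,{=}\,\Nt_{\gt}(\at_{h-1}^{n})$ precisely because each normaliser may be strictly bigger than its predecessor; were your equality $\Nq\,{=}\,\Lq$ true, that recursion would stabilise at the first step and the lemma would be vacuous. Your sketched justification (``forces $\alphaup$ to be nonnegative on the grading element cutting out $\Lq^{n}$'', recognising $\lt$ as the normaliser inside an ambient parabolic) does not survive this counterexample.

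The correct argument is shorter and goes in a different direction. Since $\Nt_{\gt}(\lt^{n})$ is $\hg$-regular, the inclusion $\Nt_{\gt}(\lt^{n})\,{\subseteq}\,\qt\,{+}\,\sigmaup(\qt)$ is equivalent to saying that no root $\alphaup\,{\in}\,\Qc\,{\cap}\,\stt(\Qc)$ has $\gt^{\alphaup}$ normalising $\lt^{n}$. For such an $\alphaup$ one has ${-}\alphaup\,{\in}\,\Qq^{n}\,{\cap}\,\stt(\Qq^{n})$, so $\gt^{-\alphaup}\,{\subseteq}\,\qt^{n}\,{\cap}\,\sigmaup(\qt^{n})\,{\subseteq}\,\lt^{n}$ by Lemma\,\ref{l4.78}; but $[\gt^{\alphaup},\gt^{-\alphaup}]$ is a nonzero subspace of $\hg$, hence disjoint from the nilradical $\lt^{n}$, so $\gt^{\alphaup}\,{\not\subseteq}\,\Nt_{\gt}(\lt^{n})$. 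Note that this is exactly where Lemma\,\ref{l4.78} is needed --- your proposal cites it only to get $\hg$-regularity, missing its actual role.
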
 
\begin{proof} 
Let $\hst$ be an adapted Cartan subalgebra of $(\gs,\qt)$ and $\Rad$ the root system of its complexification $\hg$.
We keep the notation in the proof of Lemma\,\ref{l4.78}. We observe that $ \Nt_{\gt}(\lt^{n})$ is regular for
$\hg$ and denote by $\Lq'$ the set of roots $\alphaup$ for which $\gt^{\alphaup}\,{\subseteq}\,\Nt_{\gt}(\lt^{n})$.
We claim that $\Lq'$ cannot contain any $\alphaup$ of $\Qq^{c}{\cap}\,\stt(\Qq^{c})$, because
$\gt^{-\alphaup}\,{\subseteq}\,\qt^{n}{\cap}\,\sigmaup(\qt^{n})\,{\subseteq}\,\lt^{n}$ 
and $\{0\}\,{\neq}\,[\gt^{\alphaup},\gt^{-\alphaup}]
\,{\subseteq}\,\hg$ is disjoint from $\lt^{n}$.
\end{proof} 
\begin{lem}\label{l4.80}
 Let $(\gs,\qt)$ be a parabolic $CR$-algebra and $\lt$ a complex Lie subalgebra of $\gt$
 satisfying \eqref{e3.14}. Then we can find a Lie subalgebra $\at$ of $\gt$, satisfying \eqref{e3.14},
with nilpotent radical $\at^{n}$ satisfying
\begin{equation*}\tag{$*$}
 \lt\subseteq\at,\quad \at\,{=}\,\Nt_{\gt}(\at^{n})=\Nt_{\gt}(\at).
\end{equation*}
\end{lem}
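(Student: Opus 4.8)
The plan is to obtain $\at$ by iterating the normaliser-of-nilradical construction of Lemma~\ref{l4.83} until it stabilises, and then to identify the stabilising term with its own normaliser.

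First I would set $\lt_{0}\,{=}\,\lt$ and define recursively $\lt_{k+1}\,{=}\,\Nt_{\gt}(\lt_{k}^{n})$, the normaliser in $\gt$ of the nilpotent radical of $\lt_{k}$. Starting from the hypothesis that $\lt_{0}$ satisfies \eqref{e3.14}, Lemma~\ref{l4.83} ensures by induction on $k$ that every $\lt_{k}$ is a complex Lie subalgebra of $\gt$ satisfying \eqref{e3.14}; in particular each $\lt_{k}$ contains $\qt\,{\cap}\,\sigmaup(\qt)$ and is $\hg$-regular for an adapted Cartan subalgebra $\hg$. Since $\lt_{k}^{n}$ is an ideal of $\lt_{k}$, one has $[\lt_{k},\lt_{k}^{n}]\,{\subseteq}\,\lt_{k}^{n}$, whence $\lt_{k}\,{\subseteq}\,\Nt_{\gt}(\lt_{k}^{n})\,{=}\,\lt_{k+1}$. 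Thus $(\lt_{k})_{k\geq 0}$ is a nondecreasing chain of subspaces of the finite-dimensional $\gt$ and must stabilise at some $\lt_{N}\,{=}\,\lt_{N+1}$. I would then take $\at\,{=}\,\lt_{N}$, which satisfies \eqref{e3.14}, contains $\lt\,{=}\,\lt_{0}$, and obeys $\at\,{=}\,\lt_{N+1}\,{=}\,\Nt_{\gt}(\at^{n})$.

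It remains to upgrade the equality $\at\,{=}\,\Nt_{\gt}(\at^{n})$ to $\at\,{=}\,\Nt_{\gt}(\at)$. The inclusion $\at\,{\subseteq}\,\Nt_{\gt}(\at)$ is immediate, as $\at$ is a subalgebra. For the reverse inclusion I would argue that, for $Z\,{\in}\,\Nt_{\gt}(\at)$, the operator $\ad(Z)$ maps $\at$ into itself, so its restriction $\ad(Z)|_{\at}$ is a derivation of $\at$. The nilpotent radical $\at^{n}$, being the largest nilpotent ideal of $\at$, is a characteristic ideal and hence invariant under every derivation of $\at$; consequently $[Z,\at^{n}]\,{=}\,\ad(Z)(\at^{n})\,{\subseteq}\,\at^{n}$, so $Z\,{\in}\,\Nt_{\gt}(\at^{n})\,{=}\,\at$. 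This gives $\Nt_{\gt}(\at)\,{\subseteq}\,\at$ and establishes $(*)$.

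I expect the only delicate point to be this last step, namely deducing $\Nt_{\gt}(\at)\,{\subseteq}\,\Nt_{\gt}(\at^{n})$ from the invariance of the nilradical under derivations; alternatively, exploiting that $\at$ is $\hg$-regular one could verify directly on root spaces that $\Nt_{\gt}(\at)$ normalises $\at^{n}$, using that the root spaces $\gt^{-\alphaup}$ for $\alphaup\,{\in}\,\Qc\,{\cap}\,\stt(\Qc)$ lie in $\at^{n}$ as in the proofs of Lemmas~\ref{l4.78} and~\ref{l4.83}. Everything else is the routine stabilisation of an increasing chain in a finite-dimensional Lie algebra, combined with the closure properties already recorded in Lemma~\ref{l4.83}.
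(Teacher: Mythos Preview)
Your proof is correct and follows essentially the same approach as the paper: iterate the normaliser-of-nilradical construction until it stabilises, and use that the nilradical is a characteristic ideal to conclude $\Nt_{\gt}(\at)\subseteq\Nt_{\gt}(\at^{n})$. Your version is in fact more explicit than the paper's, which does not spell out the inductive appeal to Lemma~\ref{l4.83} for condition~\eqref{e3.14} nor the final deduction of $\at=\Nt_{\gt}(\at)$, though both are implicit there.
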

\begin{proof}
Since the nilpotent radical is a characteristic ideal (see e.g. \cite[3.8.3]{Varad}), 
for every complex Lie subalgebra $\at$ of $\gt$ 
the normaliser 
 $\Nt_{\gt}(\at)$ of $\at$ is contained in the normaliser $\Nt_{\gt}(\at^{n})$ of its nilpotent radical $\at^{n}$.
 Starting from $\at_{0}\,{=}\,\lt$, let us define by recurrence 
\begin{equation*} 
 \at_{h}=\Nt_{\gt}(\at_{h-1}^{n}),\quad\text{for $h\geq{1}$.}
\end{equation*}
Since $\at_{0}\,{\subseteq}\,\at_{1}\,{\subseteq}\,\cdots$ is an increasing sequence of Lie subalgebras
of $\gt$, their sum is a Lie subalbebra $\at$ of $\gt$ satisfying $(*)$.
\end{proof}
\begin{prop}\label{p4.81}
If $(\gs,\qt)$ is a parabolic $CR$-algebra and $\lt$ a complex Lie subalgebra of $\gt$
 satisfying \eqref{e3.14}, then we can find a parabolic complex  Lie subalgebra $\pt$ of
 $\gt$ satisfying \eqref{e3.14} and containing $\lt$.
\end{prop}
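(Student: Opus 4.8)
The plan is to build on Lemma~\ref{l4.80}, which already produces a complex Lie subalgebra $\at$ satisfying \eqref{e3.14} together with the self-normalising property $\at\,{=}\,\Nt_{\gt}(\at^{n})\,{=}\,\Nt_{\gt}(\at)$ and $\lt\,{\subseteq}\,\at$. The key observation I would exploit is that a complex Lie subalgebra of a semisimple complex Lie algebra $\gt$ which coincides with the normaliser of its own nilpotent radical is automatically parabolic. So the whole statement reduces to proving that $\at$ is parabolic; the inclusion $\lt\,{\subseteq}\,\at$ and the validity of \eqref{e3.14} for $\at$ come for free from Lemma~\ref{l4.80}.

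First I would fix an adapted Cartan subalgebra $\hst$ of $(\gs,\qt)$ and pass to the root system $\Rad$ of the complexification $\hg$, exactly as in the proofs of Lemmas~\ref{l4.78} and~\ref{l4.83}. Since $\qt\,{\cap}\,\sigmaup(\qt)\,{\subseteq}\,\lt\,{\subseteq}\,\at$ and $\hg\,{\subseteq}\,\qt\,{\cap}\,\sigmaup(\qt)$, the subalgebra $\at$ is $\hg$-regular, so by Proposition of \S\ref{s1.3} it is described by a closed set of roots $\Aq\,{=}\,\{\alphaup\,{\in}\,\Rad\,{\mid}\,\gt^{\alphaup}\,{\subseteq}\,\at\}$, with $\at\,{=}\,\hg\,{\oplus}\,{\sum}_{\alphaup\in\Aq}\gt^{\alphaup}$ and nilradical $\at^{n}\,{=}\,{\sum}_{\alphaup\in\Aq^{n}}\gt^{\alphaup}$. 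To conclude that $\at$ is parabolic it suffices, by the Proposition characterising parabolic subsets, to show that $\Aq\,{\cup}\,({-}\Aq)\,{=}\,\Rad$; equivalently, that no root $\alphaup$ has both $\alphaup\,{\notin}\,\Aq$ and ${-}\alphaup\,{\notin}\,\Aq$.

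I would argue this by the self-normalising property. Suppose $\alphaup\,{\in}\,\Rad$ satisfies $\pm\alphaup\,{\notin}\,\Aq$. Then neither $\gt^{\alphaup}$ nor $\gt^{-\alphaup}$ normalises $\at^{n}$ (since $\at\,{=}\,\Nt_{\gt}(\at^{n})$). The core computation is that for such a root one can find $\betaup\,{\in}\,\Aq^{n}$ with $\alphaup{+}\betaup\,{\in}\,\Rad{\setminus}\Aq$ or ${-}\alphaup{+}\betaup\,{\in}\,\Rad{\setminus}\Aq$, contradicting the fact that $\Aq^{n}$ generates a nilpotent ideal whose normaliser is all of $\at$: concretely, $[\gt^{\pm\alphaup},\at^{n}]\,{\not\subseteq}\,\at^{n}$ forces a root vector $\gt^{\pm\alphaup{+}\betaup}$ lying outside $\at$ while the bracket relations pull $\pm\alphaup$ back into $\Aq$. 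I expect \textbf{this last step to be the main obstacle}: one has to rule out the possibility that $\at$ is merely regular-but-not-parabolic (i.e. that $\Aq$ is closed and symmetric on a proper subsystem), and the clean way is to invoke that in a semisimple $\gt$ the normaliser of any nilpotent subalgebra contained in a Borel is parabolic, or equivalently that a self-normalising regular subalgebra whose nilradical equals its Killing-orthogonal complement must have $\Aq\,{\cup}\,({-}\Aq)\,{=}\,\Rad$. Setting $\pt\,{=}\,\at$ then finishes the proof, with \eqref{e3.14} inherited verbatim from Lemma~\ref{l4.80}.
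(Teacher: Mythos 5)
You have correctly reduced the problem, via Lemma~\ref{l4.80}, to showing that a complex Lie subalgebra $\at$ with $\qt\cap\sigmaup(\qt)\subseteq\at\subseteq\qt+\sigmaup(\qt)$ and $\at=\Nt_{\gt}(\at^{n})=\Nt_{\gt}(\at)$ is parabolic, and that is indeed where the real content of Proposition~\ref{p4.81} lies; but this step is left genuinely open. The contradiction you sketch for a root $\alphaup$ with $\pm\alphaup\notin\Aq$ is circular: from $\at=\Nt_{\gt}(\at^{n})$ you can only conclude that neither $\gt^{\alphaup}$ nor $\gt^{-\alphaup}$ normalises $\at^{n}$, which is exactly consistent with the hypothesis rather than in conflict with it, so no contradiction is actually produced. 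The two classical statements you offer as a fallback do not close the gap either: ``the normaliser of any nilpotent subalgebra contained in a Borel is parabolic'' is false as stated (in $\slt_{3}(\C)$ the normaliser of the line spanned by a principal nilpotent element has dimension three, hence is not parabolic), while the reformulation ``a self-normalising regular subalgebra whose nilradical equals its Killing-orthogonal complement has $\Aq\cup({-}\Aq)=\Rad$'' presupposes the identity $\at^{n}=\at^{\perp}$, which is essentially the parabolicity you are trying to establish. The fact you actually need is Morozov's theorem (equivalently, the endpoint of the Borel--Tits construction): if a nilpotent subalgebra coincides with the nilradical of its normaliser, then that normaliser is parabolic. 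Citing that correctly would complete your argument, but as written the decisive step is missing.

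For comparison, the paper performs the same reduction via Lemma~\ref{l4.80} but then does not argue abstractly that $\at$ is parabolic; it constructs the parabolic explicitly. With $\Lq$ the root set of the (now self-normalising) $\lt$, it sets $\deltaup_{\Lq}={\sum}_{\alphaup\in\Lq}\alphaup$ and $\Pq=\{\alphaup\in\Rad\mid\langle\deltaup_{\Lq}\,|\,\alphaup\rangle\geq 0\}$, then uses root strings together with $\lt=\Nt_{\gt}(\lt)$ to check that $\langle\deltaup_{\Lq}\,|\,\alphaup\rangle\geq 0$ for $\alphaup\in\Lq$ (hence $\lt\subseteq\pt$) and that the pairing is strictly positive on $\Qq^{n}\cap\stt(\Qq^{n})$ (hence $\pt\subseteq\qt+\sigmaup(\qt)$). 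This is in effect a self-contained proof of the Morozov-type statement you want to invoke, adapted so that the resulting parabolic stays inside the band \eqref{e3.14} --- a point your plan obtains only because Lemma~\ref{l4.80} already keeps $\at$ there.
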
 
\begin{proof} By Lemma\,\ref{l4.80} we can assume that $\lt\,{=}\,\Nt_{\gt}(\lt^{n})\,{=}\,\Nt_{\gt}(\lt)$. 
Fix an admissible
Cartan subalgebra $\hst$ for $(\gs,\qt)$, let $\Rad$ the set of roots of its complexification $\hg$ and
$\Lq\,{\subseteq}\,\Rad$ the closed set of roots $\alphaup$ with $\gt^{\alphaup}\,{\subset}\,\lt$.
Set 
\begin{equation*}
 \deltaup_{\Lq}={\sum}_{\alphaup\in\Lq}\,{\alphaup},\quad \Pq\,{=}\,\{\alphaup\,{\in}\,\Rad
 \mid \langle{\deltaup_{\Lq}\,|\,\alphaup}\rangle\geq{0}\},\quad\pt\,{=}\,\hg\,{+}\,{\sum}_{\alphaup\in\Pq}\,\gt^{\alphaup}.
\end{equation*}
By their definition, $\Pq$ is a parabolic subset of $\Rad$ and $\pt$ a complex parabolic  Lie subalgebra of $\gt$.
We need to prove that $\lt\,{\subseteq}\,\pt$ and $\pt\,{\subseteq}\,\qt\,{+}\,\sigmaup(\qt)$. \par
Having fixed a root $\alphaup\,{\in}\,\Rad$, let us denote by $\st_{\alphaup}$ the simple three dimensional
complex Lie subalgebra
$\slt_{\alphaup}\,{\coloneqq}\,\gt^{\alphaup}\,{\oplus}\,\gt^{-\alphaup}\,{\oplus}\,[\gt^{\alphaup},\gt^{-\alphaup}]$.\par
Consider first the case where $\alphaup\,{\in}\,\Lq$. For each root $\betaup\,{\in}\,\Rad$  we denote by
$\Sd_{\alphaup}(\betaup)\,{=}\,\{\betaup\,{+}j\,{\cdot}\,\alphaup\}_{-r\leq{j}\leq{s}}$ the maximal $\alphaup$-string 
through $\betaup$ in $\Rad$. 
Since $\lt\,{=}\,\Nt(\lt)$,
if $\betaup\,{\in}\,\Lq{\setminus}\{{\pm}\alphaup\}$ there is $j_{0}$ with ${-}r{\leq}j_{0}{\leq}0$,
such that $\betaup\,{+}j\,{\cdot}\,\alphaup\,{\in}\,\Lq$ for $j_{0}{\leq}j\,{\leq}\,s$. The trace
$\left(\alphaup\,{|}\,{\sum}_{j=-r}^{s}(\betaup\,{+}\,j\alphaup)\right)$ of of $H_{\alphaup}\,{\in}\,[\gt^{\alphaup},
\gt^{-\alphaup}]$ on $\Sd_{\alphaup}(\betaup)$ is zero. Hence 
$\left.\left\langle{\sum}_{j=j_{0}}^{s}(\betaup\,{+}\,j\alphaup)\right|\alphaup\right\rangle\geq{0}$ and
is strictly positive when $j_{0}\,{>}\,{-}r$. Since $\Lq$ is the union of the intersections of $\Lq$ with
the $\alphaup$-strings through its roots and its intersection with $\{{\pm}\alphaup\}$, we conclude that
$\langle\deltaup_{\Lq}\,{|}\,\alphaup\rangle\,{\geq}\,0$. This proves that $\lt\,{\subseteq}\,\pt$.\par
The argument above shows that $\langle\deltaup_{\Lq}\,{|}\,\alphaup\rangle\,{>}\,0$ when
$\alphaup\,{\in}\,\Qq^{n}\,{\cap}\,\stt(\Qq^{n})$, which is equivalent to the fact that
$\pt\,{\subseteq}\,\qt\,{+}\,\sigmaup(\qt)$. 
\end{proof}
By Proposition\,\ref{p4.81}, looking, in our quest, 
for larger leafs, we can reduce to complex foliations
associated to projections onto real generalised flag manifolds. 
It was noted in \cite{AMN06b} that one can find real parabolic subgroups $\mathbf{P}_{\sigmaup}$
of $\Gfs$ containing $\Kfs$ and such that the $\Gfs$-equi\-vari\-ant fibration 
\begin{equation}\sfM\,{\simeq}\,\Gfs{/}\Kfs\,{\to}\,\Gfs{/}\mathbf{P}_{\sigmaup}\end{equation}
has totally complex fibres. To this aim it suffices to fix an adapted Cartan subalgebra $\hst$, pick  
any $A$ in the facet describing the associated parabolic set $\Qq$ of $\qt$, and take the parabolic
set 
\begin{equation*}
 \Lq=\{\alphaup\in\Rad\mid (\alphaup+\stt(\alphaup))(A)\geq{0}\}. 
\end{equation*}
Then the  parabolic Lie subalgebra $\lt\,{=}\,\hg\,{+}\,{\sum}_{\alphaup\in\Lq}\gt^{\alphaup}$ of $\gt$
satisfies \eqref{e3.14}.\par
All parabolic $\lt$ satisfying \eqref{e3.14}, containing a same $\sigmaup$-invariant Cartan subalgebra of $\gt$,
can be described  by using 
a same root system $\Rad$.
\begin{lem}
Let 
$\Qq\,,\Pq\,{\in}\Pcr(\Rad)$  and
$\stt$ be an involution
of $\Rad$. Then 
\begin{align}\label{e4.44}
 \Pq\subseteq\Qq\cup\stt(\Qq)\; \Longleftrightarrow \; \Qq^{n}\cap\stt(\Qq^{n})\subseteq\Pq^{\,n},\\
 \label{e4.45}
\Qq\cap\stt(\Qq)\subseteq\Pq\; \Longleftrightarrow \; \Pq^{n}\subseteq\Qq^{n}\cup\stt(\Qq^{n}).
\end{align} 
\end{lem}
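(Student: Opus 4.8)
The plan is to prove both biconditionals by working with the $\Z$-gradation $\chiup_{\Qq}$ of $\Rad$ furnished by Proposition\,\ref{p2.5}, together with the symmetry relation $\Qc\,{=}\,{-}\Qq^{n}$ that holds for any parabolic set. The key observation is that for a parabolic $\Qq$ we have the disjoint-type decomposition $\Rad\,{=}\,\Qq^{r}\,{\cup}\,\Qq^{n}\,{\cup}\,\Qc$ with $\Qc\,{=}\,{-}\Qq^{n}$, and applying $\stt$ (an involution of $\Rad$) gives the analogous decomposition for $\stt(\Qq)$, so that $\stt(\Qq)^{n}\,{=}\,\stt(\Qq^{n})$ and $\stt(\Qq)^{c}\,{=}\,\stt(\Qc)\,{=}\,{-}\stt(\Qq^{n})$. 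This lets me translate every set-theoretic condition into statements about the horocyclic parts $\Qq^{n}$, $\stt(\Qq^{n})$, $\Pq^{n}$.

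For \eqref{e4.44}, the right-to-left direction is the natural one: assuming $\Qq^{n}\cap\stt(\Qq^{n})\subseteq\Pq^{n}$, I would show $\Pq\subseteq\Qq\cup\stt(\Qq)$ by taking the complement. A root $\alphaup\,{\notin}\,\Qq\cup\stt(\Qq)$ lies in $\Qc\cap\stt(\Qc)$, hence ${-}\alphaup\,{\in}\,\Qq^{n}\cap\stt(\Qq^{n})\subseteq\Pq^{n}\subseteq\Pq$; since $\Pq^{n}$ has no root whose negative is also in it, this forces $\alphaup\,{\notin}\,\Pq$, giving the inclusion. The converse is the contrapositive of the same chain. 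For \eqref{e4.45} I would run a parallel argument: $\Qq\cap\stt(\Qq)\subseteq\Pq$ is equivalent, by passing to horocyclic components and using that $\Pq^{\,c}\,{=}\,{-}\Pq^{\,n}$, to the assertion that $\Pq^{n}$ avoids $\Qc\cap\stt(\Qc)\,{=}\,{-}(\Qq^{n}\cap\stt(\Qq^{n}))$; a short check shows this is exactly the statement $\Pq^{n}\subseteq\Qq^{n}\cup\stt(\Qq^{n})$, after noting $\Pq^{n}\cap\Qq^{r}$ and $\Pq^{n}\cap\stt(\Qq^{r})$ cause no trouble because the reductive parts are symmetric and closed.

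Concretely, the main bookkeeping device I would set up first is the following equivalence, valid for any two parabolic sets and proved once and reused: for $\alphaup\,{\in}\,\Rad$ one has $\alphaup\,{\in}\,\Qq\cup\stt(\Qq)$ iff ${-}\alphaup\,{\notin}\,\Qq^{n}\cap\stt(\Qq^{n})$, and dually $\alphaup\,{\in}\,\Qq\cap\stt(\Qq)$ iff ${-}\alphaup\,{\notin}\,\Qc\cap\stt(\Qc)\,{=}\,{-}(\Qq^{n}\cap\stt(\Qq^{n}))$. With these in hand, both \eqref{e4.44} and \eqref{e4.45} reduce to comparing the single set $\Qq^{n}\cap\stt(\Qq^{n})$ against $\Pq^{n}$, and the proofs become a couple of lines each.

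The step I expect to require the most care is handling the boundary between the reductive part $\Pq^{r}$ and the horocyclic part $\Pq^{n}$ when translating ``$\subseteq\Pq$'' into ``$\subseteq\Pq^{n}$'': a root could a priori land in $\Pq^{r}$ rather than $\Pq^{n}$, and I must verify that the roots in play—namely those of $\Qq^{n}\cap\stt(\Qq^{n})$ or $\Pq^{n}$—genuinely sit in the horocyclic pieces and not the reductive ones. The resolution is that $\alphaup\,{\in}\,\Qq^{n}\cap\stt(\Qq^{n})$ has ${-}\alphaup\,{\in}\,\Qc\cap\stt(\Qc)$, so $\alphaup$ and ${-}\alphaup$ cannot both lie in any $\Pq^{r}$; this pins each relevant root into the nilradical and closes the gap. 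Once this is dispatched, the remaining manipulations are routine set complementation and the symmetry $\Qc\,{=}\,{-}\Qq^{n}$.
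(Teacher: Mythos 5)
Your overall strategy --- complementation followed by negation of roots, using $\Qc\,{=}\,{-}\Qq^{n}$, $\Pq^{\,c}\,{=}\,{-}\Pq^{\,n}$ and $\stt({-}\alphaup)\,{=}\,{-}\stt(\alphaup)$ --- is exactly the paper's, and your argument for \eqref{e4.44} is correct: the only delicate point is that a root $\betaup\,{\in}\,\Qq^{n}\cap\stt(\Qq^{n})$ with ${-}\betaup\,{\notin}\,\Pq$ lies in $\Pq^{\,n}$ and not merely in $\Pq$, which follows from $\Pq\,{\cup}\,({-}\Pq)\,{=}\,\Rad$, as you indicate.

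Your treatment of \eqref{e4.45}, however, has a genuine gap. The condition $\Qq\,{\cap}\,\stt(\Qq)\,{\subseteq}\,\Pq$ is \emph{not} equivalent to ``$\Pq^{\,n}$ avoids $\Qc\,{\cap}\,\stt(\Qc)$'', and that avoidance is \emph{not} equivalent to $\Pq^{\,n}\,{\subseteq}\,\Qq^{n}\,{\cup}\,\stt(\Qq^{n})$. Take $\Rad$ of type $\textsc{A}_{2}$, $\stt\,{=}\,\id$, $\Qq\,{=}\,\{{\pm}(\e_{1}{-}\e_{2}),\,\e_{1}{-}\e_{3},\,\e_{2}{-}\e_{3}\}$ and $\Pq\,{=}\,\{\e_{1}{-}\e_{2},\,\e_{1}{-}\e_{3},\,\e_{2}{-}\e_{3}\}$: then $\Pq^{\,n}\,{=}\,\Pq$ is disjoint from $\Qc\,{\cap}\,\stt(\Qc)\,{=}\,\{\e_{3}{-}\e_{1},\e_{3}{-}\e_{2}\}$, yet $\e_{2}{-}\e_{1}\,{\in}\,(\Qq\,{\cap}\,\stt(\Qq)){\setminus}\Pq$ and $\e_{1}{-}\e_{2}\,{\in}\,\Pq^{\,n}{\setminus}(\Qq^{n}{\cup}\,\stt(\Qq^{n}))$, so both sides of \eqref{e4.45} fail while your intermediate condition holds. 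The culprit is exactly a root of $\Pq^{\,n}\,{\cap}\,\Qq^{r}\,{\cap}\,\stt(\Qq^{r})$, the case you dismiss as causing ``no trouble''. The same slip infects your ``dual'' bookkeeping identity: since $\complement(\Qq\,{\cap}\,\stt(\Qq))\,{=}\,\Qc\,{\cup}\,\stt(\Qc)\,{=}\,{-}(\Qq^{n}{\cup}\,\stt(\Qq^{n}))$, the correct statement is that $\alphaup\,{\in}\,\Qq\,{\cap}\,\stt(\Qq)$ iff ${-}\alphaup\,{\notin}\,\Qq^{n}\,{\cup}\,\stt(\Qq^{n})$ --- a union, not an intersection. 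With that correction the proof is again two lines, as in the paper: $\Qq\,{\cap}\,\stt(\Qq)\,{\subseteq}\,\Pq$ iff $\Pq^{\,c}\,{\subseteq}\,\Qc\,{\cup}\,\stt(\Qc)$, iff, changing signs, $\Pq^{\,n}\,{\subseteq}\,\Qq^{n}\,{\cup}\,\stt(\Qq^{n})$.
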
 
\begin{proof}
 In fact, passing to complements, the first inclusion in \eqref{e4.44} is equivalent to
 $\Qq^{c}\,{\cap}\,\stt(\Qq^{c})\,{\subseteq}\,\Pq^{\,c}$, which, by changing the signs of roots, 
 is equivalent to the inclusion in the right
 hand side. Likewise, the first inclusion in \eqref{e4.45} is equivalent to $\Pq^{\,c}\,{\subseteq}\,
 \Qq^{c}{\cup}\,\stt(\Qq^{c})$, which, changing signs, becomes the right hand side of \eqref{e4.45}.
\end{proof} 
\begin{lem}\label{l4.83}
 Let $C$ be an $S$-fit chamber for $(\Qq\,,\stt)$, set $\Phi_{C}\,{=}\,\Qq^{n}\,{\cap}\,\Bz(C)$, take
 distinct roots 
 $\{\alphaup_{i}\}_{1{\leq}i{\leq}k}
 \,{\subseteq}\,\Phi_{C}^{\vee}$ and define $\Psi_{C}\,{=}\,\Phi_{C}\,{\cup}\,\{\alphaup_{i}\}_{1\leq{i}\leq{k}}$. Then 
\begin{equation*}\tag{$*$}
 \Qq_{\;\Psi_{C}}\cap\stt(\Qq_{\;\Psi_{C}})=\Qq\cap\stt(\Qq) \Longleftrightarrow \supp_{C}(\stt(\alphaup_{i}))\cap
 \Phi_{C}\neq\emptyset,\;\forall 1{\leq}i{\leq}k.
 \end{equation*}
\end{lem}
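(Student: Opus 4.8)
The plan is to set $\Pq\,{\coloneqq}\,\Qq_{\,\;\Psi_{C}}$ and first reduce the asserted identity to a statement about individual roots. Since $\Phi_{C}\,{\subseteq}\,\Psi_{C}$, the description of a parabolic set from a subset of $\Bz(C)$ (Lemma\,\ref{l2.5} and \eqref{e2.5}) gives $\Pq\,{\subseteq}\,\Qq$, hence $\Pq\,{\cap}\,\stt(\Pq)\,{\subseteq}\,\Qq\,{\cap}\,\stt(\Qq)$ automatically, and moreover
\[
\Qq{\setminus}\Pq=\{\etaup\,{\in}\,\Rad^{-}(C)\mid \supp_{C}(\etaup)\,{\cap}\,\Phi_{C}=\emptyset,\;\supp_{C}(\etaup)\,{\cap}\,\{\alphaup_{1},\hdots,\alphaup_{k}\}\neq\emptyset\}\subseteq\Qq^{r}.
\]
A short set-theoretic argument, applying $\stt$ to interchange the two ways an element of $\Qq\,{\cap}\,\stt(\Qq)$ can fail to lie in $\Pq\,{\cap}\,\stt(\Pq)$, then shows that the equality $\Qq_{\,\;\Psi_{C}}{\cap}\,\stt(\Qq_{\,\;\Psi_{C}})\,{=}\,\Qq{\cap}\,\stt(\Qq)$ holds if and only if no $\etaup\,{\in}\,\Qq{\setminus}\Pq$ satisfies $\stt(\etaup)\,{\in}\,\Qq$. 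This is the form I will actually verify.

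For the implication ``$\Leftarrow$'' I would first observe that the support hypothesis forces each $\alphaup_{i}$ to be complex: were $\alphaup_{i}$ real or imaginary, then, being simple, $\supp_{C}(\stt(\alphaup_{i}))=\{\alphaup_{i}\}\,{\subseteq}\,\Phi_{C}^{\vee}$ would be disjoint from $\Phi_{C}$. Assuming the hypothesis, suppose for contradiction that some $\etaup\,{\in}\,\Qq{\setminus}\Pq$ has $\stt(\etaup)\,{\in}\,\Qq$. Writing $-\etaup=\sum_{\betaup\in\Phi_{C}^{\vee}}k_{\betaup}\betaup\,{\in}\,\Rad^{+}(C)\,{\cap}\,\Qq^{r}$ and using additivity of the grading $\chiup_{\Qq}$ (Proposition\,\ref{p2.5}) together with $S$-fitness (Lemma\,\ref{l4.20}: $\stt(\betaup)\,{\in}\,\Rad^{+}(C)$ for complex $\betaup\,{\in}\,\Phi_{C}^{\vee}$, while $\chiup_{\Qq}(\stt(\betaup))=0$ for real or imaginary $\betaup$), I obtain $\chiup_{\Qq}(\stt(-\etaup))\,{\geq}\,0$; on the other hand $\stt(\etaup)\,{\in}\,\Qq$ forces $\chiup_{\Qq}(\stt(-\etaup))\,{\leq}\,0$. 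Thus a sum of nonnegative terms vanishes, so $\chiup_{\Qq}(\stt(\betaup))=0$, i.e.\ $\supp_{C}(\stt(\betaup))\,{\cap}\,\Phi_{C}=\emptyset$, for every complex $\betaup$ in $\supp_{C}(-\etaup)$. Since some $\alphaup_{i}$ lies in this support and is complex, this contradicts the hypothesis, so no such $\etaup$ exists.

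For ``$\Rightarrow$'' I argue by contraposition: if $\supp_{C}(\stt(\alphaup_{i_{0}}))\,{\cap}\,\Phi_{C}=\emptyset$ for some $i_{0}$, I take $\etaup=-\alphaup_{i_{0}}$, whose $C$-support $\{\alphaup_{i_{0}}\}\,{\subseteq}\,\Phi_{C}^{\vee}$ meets $\{\alphaup_{1},\hdots,\alphaup_{k}\}$ but not $\Phi_{C}$, so $\etaup\,{\in}\,\Qq{\setminus}\Pq$. A short case check on the $\stt$-type of $\alphaup_{i_{0}}$ then gives $\stt(\etaup)=-\stt(\alphaup_{i_{0}})\,{\in}\,\Qq$: for $\alphaup_{i_{0}}$ complex, $S$-fitness puts $\stt(\alphaup_{i_{0}})\,{\in}\,\Rad^{+}(C)$, so $\stt(\etaup)$ is $C$-negative with $C$-support disjoint from $\Phi_{C}$, whence $\stt(\etaup)\,{\in}\,\Qq$; for $\alphaup_{i_{0}}$ real, $\stt(\etaup)=\etaup\,{\in}\,\Qq$; for $\alphaup_{i_{0}}$ imaginary, $\stt(\etaup)=\alphaup_{i_{0}}\,{\in}\,\Rad^{+}(C)\,{\subseteq}\,\Qq$. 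Hence $\etaup\,{\in}\,(\Qq\,{\cap}\,\stt(\Qq)){\setminus}\Pq$ and the asserted equality fails.

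The set-theoretic reduction of the first paragraph and the root-type bookkeeping of the last are routine. The crux, and the main obstacle, is the sign control in the ``$\Leftarrow$'' direction: one must convert the membership $\stt(\etaup)\,{\in}\,\Qq$ into a statement about $\chiup_{\Qq}$ and then use $S$-fitness to ensure that all the relevant conjugates $\stt(\betaup)$ are $C$-positive, so that a single vanishing of a sum of nonnegative integers pins down $\chiup_{\Qq}(\stt(\alphaup_{i}))=0$ for a support root $\alphaup_{i}$. The delicate point is that $S$-fitness must be invoked precisely for the \emph{simple} roots $\betaup\,{\in}\,\Phi_{C}^{\vee}$ occurring in $\supp_{C}(-\etaup)$, not for $\etaup$ itself.
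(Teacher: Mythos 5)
Your proof is correct, and for the forward implication (support condition $\Rightarrow$ equality) it takes a genuinely different route from the paper's. The paper disposes of that direction in one line by invoking the polarization machinery: by Proposition\,\ref{p4.70}, $\Qq_{\,[\stt]}$ equals $\Qq_{\;\Psi'_{C}}$ for the \emph{maximal} admissible $\Psi'_{C}$, so under the hypothesis $\Qq_{\,[\stt]}\subseteq\Qq_{\;\Psi_{C}}\subseteq\Qq$, and the sandwich property of the polarization ($\pt\cap\sigmaup(\pt)=\qt\cap\sigmaup(\qt)$ for every parabolic $\pt$ with $\qt_{[\sigmaup]}\subseteq\pt\subseteq\qt$, from Proposition\,\ref{p5.11}) gives the equality at once. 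You instead reduce the set identity to the nonexistence of a root $\etaup\in\Qq{\setminus}\Qq_{\;\Psi_{C}}$ with $\stt(\etaup)\in\Qq$ and rule such an $\etaup$ out by a direct computation with the grading $\chiup_{\Qq}$: $S$-fitness forces $\chiup_{\Qq}(\stt(\betaup))\geq 0$ for every simple $\betaup\in\Phi_{C}^{\vee}$, the membership $\stt(\etaup)\in\Qq$ forces the opposite inequality, and the resulting vanishing of a sum of nonnegative terms pins down $\supp_{C}(\stt(\alphaup_{i}))\cap\Phi_{C}=\emptyset$ for a complex $\alphaup_{i}$ in the support, contradicting the hypothesis. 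Your argument is self-contained and elementary, at the cost of redoing by hand bookkeeping that the polarization results already encapsulate; the paper's is shorter but leans on Propositions\,\ref{p4.70} and \ref{p5.11}. The reverse implication is handled identically in both: the witness $-\alphaup_{i_{0}}$ lies in $(\Qq\cap\stt(\Qq)){\setminus}\Qq_{\;\Psi_{C}}$, with the same case check on the $\stt$-type of $\alphaup_{i_{0}}$.
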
 
\begin{proof} The fact that the right implies the left hand side of $(*)$ follows by the
characterisation of the polarization in Proposition\,\ref{p4.70}. Indeed $\qt_{[\sigmaup]}$ is the smallest
parabolic Lie subalgebra of $\gt$ containing $\qt\,{\cap}\,\sigmaup(\qt)$ and contained in $\qt\,{+}\,\sigmaup(\qt)$
and thus $\pt\,{\cap}\,\sigmaup(\pt)\,{=}\,\qt\,{\cap}\,\sigmaup(\qt)$ for all parabolic $\pt$ with
$\qt_{[\sigmaup]}\,{\subseteq}\,\pt\,{\subseteq}\,\qt$.
\par
To complete the proof, it suffices to notice that, if e.g. $\supp_{C}(\stt(\alphaup_{1}))\,{\subseteq}\,\Phi_{C}^{\vee}$,
then $({-}\alphaup_{1})\,{\in}\,(\Qq\,{\cap}\,\stt(\Qq)){\setminus}(\Qq_{\;\Psi_{C}}\,{\cap}\,\stt(\Qq_{\;\Psi_{C}})$. 
\end{proof}
\begin{exam} \label{ex4.84}
Consider the complex flag manifold $\sfF$ consisting of the pairs $(\ell_{1},\ell_{3})$,
with $\dim_{\C}\ell_{i}\,{=}\,i$ and $\ell_{1}\,{\subset}\,\ell_{3}\,{\subset}\,\C^{5}$. Fix  on $\C^{5}$ 
a Hermitian-symmetric
form $\bil$ of signature $(2,3)$. The group $\SU(2,3)$ of determinant one matrices leaving 
$\bil$ invariant is a real form of $\SL_{5}(\C)$ and acts on~$\sfF$. 
Fix a base of $\C^{5}$ in which $\e_{1},\e_{2},\e_{4},\e_{5}$ are $\bil$-isotropic vectors and 
$\langle\e_{1},\e_{5}\rangle$, $\langle\e_{2},\e_{4}\rangle$,
$\langle\e_{3}\rangle$ are pairwise $\bil$-orthogonal. The Cartan subalgebra of the diagonal matrices
of $\su(2,3)$ yield the root system $\Rad\,{=}\,\{{\pm}(\e_{i}{-}\e_{j})\,{\mid}\, 1{\leq}i{<}j{\leq}5\}$
and the real form  the involution
 $\stt(\e_{i})\,{=}\,{-}\e_{6-i}\; (1{\leq}i{\leq}5)$.
The minimal orbit $\sfM$ of $\SU(2,3)$ in $\sfF$ is Levi-nondegenerate, 
having at its point $(\langle\e_{1}\rangle,\langle\e_{1},\e_{2},\e_{3}\rangle)$
the $CR$-algebra $(\gs,\qt)$ 
described, by   the cross-marked $\Sigma$-diagram \par\smallskip
  \begin{equation*}
  \xymatrix @M=0pt @R=2pt @!C=8pt{ \oplus \ar@{-}[r] \ar@/^12pt/@{<->}[rrr] & \oplus
  \ar@{-}[r] \ar@/^6pt/@{<->}[r] &\oplus 
 \ar@{-}[r] &\oplus\\
 \times&&\times}
\end{equation*}
Its polarization is totally real having cross-marked $\Sigma$-diagram \par\smallskip
  \begin{equation*}
  \xymatrix @M=0pt @R=2pt @!C=8pt{ \oplus \ar@{-}[r] \ar@/^12pt/@{<->}[rrr] & \oplus
  \ar@{-}[r] \ar@/^6pt/@{<->}[r] &\oplus 
 \ar@{-}[r] &\oplus\\
 \times&\times&\times&\times}
\end{equation*}
By Lemma\,\ref{l4.83} we obtain weaker $CR$ structures on $\sfM$ by taking any of the two diagrams 
\begin{equation*}
  \xymatrix @M=0pt @R=2pt @!C=8pt{ \oplus \ar@{-}[r] \ar@/^12pt/@{<->}[rrr] & \oplus
  \ar@{-}[r] \ar@/^6pt/@{<->}[r] &\oplus 
 \ar@{-}[r] &\oplus\\
 \times&\times&\times}\qquad
  \xymatrix @M=0pt @R=2pt @!C=8pt{ \oplus \ar@{-}[r] \ar@/^12pt/@{<->}[rrr] & \oplus
  \ar@{-}[r] \ar@/^6pt/@{<->}[r] &\oplus 
 \ar@{-}[r] &\oplus\\
 \times&&\times&\times}
\end{equation*}
We obtain two complex foliations on $\sfM$, whose fibres correspond to the projections onto
the totally real $CR$-algebras corresponding  to the cross-marked diagrams\par\smallskip
\begin{equation*}
  \xymatrix @M=0pt @R=2pt @!C=8pt{ \oplus \ar@{-}[r] \ar@/^12pt/@{<->}[rrr] & \oplus
  \ar@{-}[r] \ar@/^6pt/@{<->}[r] &\oplus 
 \ar@{-}[r] &\oplus\\
 &\times&\times}\qquad
  \xymatrix @M=0pt @R=2pt @!C=8pt{ \oplus \ar@{-}[r] \ar@/^12pt/@{<->}[rrr] & \oplus
  \ar@{-}[r] \ar@/^6pt/@{<->}[r] &\oplus 
 \ar@{-}[r] &\oplus\\
 \times&&&\times}
\end{equation*}
respectively. 
Geometrically, the fibre of the first consists, given an $\ell_{3}$ with $\ell_{3}^{\perp}\,{\subset}\,\ell_{3}$,
of all  lines $\ell_{1}$ contained in $\ell_{3}^{\perp}$. 
  The fibres of the second one
are obtained by fixing an isotropic line $\ell_{1}$ and taking all $3$-planes $\ell_{3}$ 
contained in $\ell_{1}^{\perp}$. In both cases the fibre is isomorphic to $\CP^{1}$.
\end{exam}
\begin{exam} \label{ex4.85}
Consider the complex flag manifold $\sfF$ consisting of the pairs $(\ell_{2},\ell_{3})$,
with $\dim_{\C}\ell_{i}\,{=}\,i$ and $\ell_{2}\,{\subset}\,\ell_{3}\,{\subset}\,\C^{5}$. Fix  on $\C^{5}$ 
a Hermitian-symmetric
form $\bil$ of signature $(1,4)$. The group $\SU(1,4)$ of determinant one matrices leaving 
$\bil$ invariant is a real form of $\SL_{5}(\C)$ and acts on~$\sfF$. 
Fix a base of $\C^{5}$ in which $\e_{1},\e_{5}$ are $\bil$-isotropic vectors and 
$\langle\e_{1},\e_{5}\rangle$, $\langle\e_{2}\rangle$, $\langle\e_{3}\rangle$, $\langle\e_{4}\rangle$,
are pairwise $\bil$-orthogonal. The Cartan subalgebra of the diagonal matrices
of $\su(1,4)$ yield the root system $\Rad\,{=}\,\{{\pm}(\e_{i}{-}\e_{j})\,{\mid}\, 1{\leq}i{<}j{\leq}5\}$
and the real form  the involution
 $\stt(\e_{1})\,{=}\,{-}\e_{5}$, $\stt(\e_{i})\,{=}\,{-}\e_{i},\; (2{\leq}{i}{\leq}4)$.
The minimal orbit $\sfM$ of $\SU(1,4)$ in $\sfF$ is Levi-nondegenerate, 
having at its point $(\langle\e_{1},\e_{2}\rangle,\langle\e_{1},\e_{2},\e_{3}\rangle)$
the $CR$-algebra $(\gs,\qt)$ 
described by   the cross-marked $\Sigma$-diagram 
  \begin{equation*}
  \xymatrix @M=0pt @R=2pt @!C=8pt{ \oplus \ar@{-}[r] \ar@/^12pt/@{<->}[rrr] & \medbullet
  \ar@{-}[r]  &\medbullet
 \ar@{-}[r] &\oplus\\
&\times&\times}
\end{equation*}
We obtain weaker $CR$ structures on $\sfM$ by taking its polarization, having cross-marked
$\Sigma$-diagram 
\begin{equation*}
  \xymatrix @M=0pt @R=2pt @!C=8pt{ \oplus \ar@{-}[r] \ar@/^12pt/@{<->}[rrr] & \medbullet
  \ar@{-}[r]  &\medbullet
 \ar@{-}[r] &\oplus\\
\times&\times&\times&\times}
\end{equation*}\par\smallskip
We obtain a complex foliation on $\sfM$ by taking the Levi-nondegenerate reduction of
the polarization, whose basis has the cross-marked $\Sigma$-diagram \par\smallskip
\begin{equation*}
  \xymatrix @M=0pt @R=2pt @!C=8pt{ \oplus \ar@{-}[r] \ar@/^12pt/@{<->}[rrr] & \medbullet
  \ar@{-}[r]  &\medbullet
 \ar@{-}[r] &\oplus\\
\times&&&\times}
\end{equation*}
\par\smallskip
Geometrically, the fibre through $(\ell_{2},\ell_{3})$ is obtained by setting 
$\ell_{1}\,{=}\,\ell_{2}\,{\cap}\,\ell_{2}^{\perp}$ 
and taking  all pairs $(\rq_{\,2},\rq_{\,3})$ with
$\ell_{1}\,{\subset}\,\rq_{\,2}\,{\subset}\,\rq_{\,3}\,{\subset}\ell_{1}^{\perp}$.
It has complex dimension three,  being 
isomorphic to the complete flag in $\C^{3}$.
\end{exam}
\begin{exam} \label{ex4.86}
Consider the complex flag manifold $\sfF$ consisting of the triples $(\ell_{1},\ell_{3},\ell_{4})$,
with $\dim_{\C}\ell_{i}\,{=}\,i$ and $\ell_{1}\,{\subset}\,\ell_{3}\,{\subset}\,\ell_{4}\,{\subset}\,\C^{6}$. 
Fix  on $\C^{6}$ 
a Her\-mit\-ian-symmetric
form $\bil$ of signature $(2,4)$. The group $\SU(2,4)$ of determinant one matrices leaving 
$\bil$ invariant is a real form of $\SL_{6}(\C)$ and acts on~$\sfF$. 
Fix a 
base $\e_{1},\hdots,\e_{6}$ 
of $\C^{6}$ in which $\e_{1},\e_{2},\e_{5},\e_{6}$ are $\bil$-isotropic and 
$\langle\e_{1},\e_{6}\rangle$, $\langle\e_{2}.\e_{5}\rangle$, $\langle\e_{3}\rangle$, $\langle\e_{4}\rangle$, 
are pairwise $\bil$-orthogonal. The Cartan subalgebra of the diagonal matrices
of $\su(2,4)$ yield the root system $\Rad\,{=}\,\{{\pm}(\e_{i}{-}\e_{j})\,{\mid}\, 1{\leq}i{<}j{\leq}6\}$
and the real form  the involution
 $\stt(\e_{1})\,{=}\,{-}\e_{6}$, $\stt(\e_{2})\,{=}\,{-}\e_{5}$, 
 $\stt(\e_{i})\,{=}\,{-}\e_{i},\, {i}{=}3,4$.
The minimal orbit $\sfM$ of $\SU(2,4)$ in $\sfF$ is Levi-nondegenerate, 
having at its point $(\langle\e_{1}\rangle, \langle\e_{1},\e_{2}.\e_{3}\rangle,\langle\e_{1},\e_{2},\e_{3},\e_{4}\rangle)$
the $CR$-algebra $(\gs,\qt)$ 
described by   the cross-marked $\Sigma$-diagram 
  \begin{equation*}
  \xymatrix @M=0pt @R=2pt @!C=8pt{ \oplus \ar@{-}[r] \ar@/^14pt/@{<->}[rrrr] & \oplus
  \ar@/^7pt/@{<->}[rr]\ar@{-}[r] & \medbullet
  \ar@{-}[r]  &\oplus
 \ar@{-}[r] &\oplus\\
\times&&\times&\times}
\end{equation*}
We obtain weaker $CR$ structures on $\sfM$ by taking its polarization, having cross-marked
$\Sigma$-diagram 
  \begin{equation*}
  \xymatrix @M=0pt @R=2pt @!C=8pt{ \oplus \ar@{-}[r] \ar@/^14pt/@{<->}[rrrr] & \oplus
  \ar@/^7pt/@{<->}[rr]\ar@{-}[r] & \medbullet
  \ar@{-}[r]  &\oplus
 \ar@{-}[r] &\oplus\\
\times&\times&\times&\times&\times}
\end{equation*}
\par\smallskip
The Levi-nondegenerate reduction of its polarization has basis with cross-marked diagram 
\begin{equation*}\tag{1}
   \xymatrix @M=0pt @R=2pt @!C=8pt{ \oplus \ar@{-}[r] \ar@/^14pt/@{<->}[rrrr] & \oplus
  \ar@/^7pt/@{<->}[rr]\ar@{-}[r] & \medbullet
  \ar@{-}[r]  &\oplus
 \ar@{-}[r] &\oplus\\
\times&\times&&\times&\times}
\end{equation*}\par\medskip
This foliation is not maximal, as maximal foliations have basis with cross-marked diagrams
\par\smallskip

\begin{equation*}\tag{2}
   \xymatrix @M=0pt @R=2pt @!C=8pt{ \oplus \ar@{-}[r] \ar@/^14pt/@{<->}[rrrr] & \oplus
  \ar@/^7pt/@{<->}[rr]\ar@{-}[r] & \medbullet
  \ar@{-}[r]  &\oplus
 \ar@{-}[r] &\oplus\\
&\times&&\times&\times}
\end{equation*}\par\medskip
\begin{equation*}\tag{3}
   \xymatrix @M=0pt @R=2pt @!C=8pt{ \oplus \ar@{-}[r] \ar@/^14pt/@{<->}[rrrr] & \oplus
  \ar@/^7pt/@{<->}[rr]\ar@{-}[r] & \medbullet
  \ar@{-}[r]  &\oplus
 \ar@{-}[r] &\oplus\\
\times&\times&&\times&}
\end{equation*}
\par\smallskip
Geometrically, in the different cases 
the fibre through $(\ell_{1},\ell_{3},\ell_{4})$ is obtained in the following way:
\begin{itemize}
 \item[(1)] having fixed $\ell_{2},\ell_{4}$, the fibre consists of $(\ell_{2},\rq_{\,3},\ell_{4})$ with 
 $\ell_{4}^{\perp}\,{\subset}\,\rq_{\,3}\,{\subset}\ell_{4}$;
  \item[(2)] having fixed $\ell_{4}$, the fibre consists of $(\rq_{\,1},\rq_{\,3},\ell_{4})$ with 
 $\rq_{1}\,{\subset}\,\ell_{4}^{\perp}\,{\subset}\,\rq_{\,3}\,{\subset}\ell_{4}$;
  \item[(3)] having fixed $\ell_{1}$, the fibre consists of $(\ell_{1},\rq_{\,3},\rq_{4})$ with 
 $\ell_{1}\,{\subset}\,\rq_{\,3}\,{\subset}\,\rq_{\,4}{\subset}\,\ell_{1}^{\perp}$.
\end{itemize}
The fibres are isomorphic to $\CP^{1}$ in the first and to $\CP^{1}{\times}\,\CP^{1}$ in the last two cases.
\end{exam}
\begin{rmk}
 Lemmas\,\ref{l4.30},\,\ref{l4.83} can be used  
 to construct equivariant complex foliations. Starting with a given
 pair $(\Qq\,,\stt)$ we may proceed in the following way.
First we check if the pair is Levi-degenerate. If this is the case, 
the Levi-non\-de\-gen\-er\-ate reduction yields a complex
foliation with a $CR$-basis. If $(\Qq\,,\stt)$ is Levi-non\-de\-gen\-er\-ate, we choose an $S$-fit chamber
$C$ and take a new pair $(\Qq_{\;\Psi_{C}},\stt)$ with $\Psi_{C}$ obtained by adding to $\Phi_{C}$ 
roots $\alphaup$ of $\Phi^{\vee}_{C}$ with $\supp_{C}(\stt(\alphaup))\,{\cap}\,\Phi_{C}\,{\neq}\,\emptyset$.
A complex foliation is obtained by  
the Levi-non\-de\-gen\-er\-ate reduction of $(\Qq_{\;\Psi_{C}},\stt)$.
Iteration of this procedure yields after finitely many steps 
complex leafs on a compact real analytic homogeneous basis.
This was the construction of the \emph{real core} in~\cite{AMN06b}, where at each step the weakening of
the structure was obtained by polarization. Example\,\ref{ex4.85} illustrates this procedure, while in 
Example\,\ref{ex4.84} the weakening of the $CR$ structure does not  fit in the scheme of the real core.
\end{rmk}
\begin{prop}
 Let $(\gs,\qt)$ be a parabolic $CR$-algebra. If  
 $\qt\,{\cap}\,\sigmaup(\qt)$ is the unique complex Lie subalgebra of $\gt$ satisfying \eqref{e3.14},
then the polarization $(\gs,\qt_{[\sigmaup]})$ of $(\gs,\qt)$ is totally real. 
 \end{prop}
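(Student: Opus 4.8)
The plan is to observe that the polarization $\qt_{[\sigmaup]}$ is itself an admissible choice of $\lt$ in \eqref{e3.14}, so that the uniqueness hypothesis collapses it onto $\qts$; total reality is then a one-line symmetry argument. First I would invoke Lemma\,\ref{l5.9} with $\wt=\sigmaup(\qt)$ to write $\qt_{[\sigmaup]}=(\qts)+\qt^{n}$, which is in particular a parabolic, hence complex Lie, subalgebra of $\gt$.

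Next I would verify that $\qt_{[\sigmaup]}$ satisfies \eqref{e3.14}. The inclusion $\qts\subseteq\qt_{[\sigmaup]}$ is read off the formula, while $\qt_{[\sigmaup]}\subseteq\qt{+}\sigmaup(\qt)$ follows since both $\qts$ and $\qt^{n}\subseteq\qt$ lie in $\qt{+}\sigmaup(\qt)$. Hence $\qt_{[\sigmaup]}$ is a complex Lie subalgebra of $\gt$ sandwiched exactly as required in \eqref{e3.14}.

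At this point the uniqueness hypothesis forces $\qt_{[\sigmaup]}=\qts$, and I would finish by applying $\sigmaup$ and using $\sigmaup^{2}=\id$:
\begin{equation*}
 \sigmaup(\qt_{[\sigmaup]})=\sigmaup(\qts)=\sigmaup(\qt)\cap\qt=\qts=\qt_{[\sigmaup]},
\end{equation*}
so that $\sigmaup(\qt_{[\sigmaup]})=\qt_{[\sigmaup]}$ and $(\gs,\qt_{[\sigmaup]})$ is totally real by definition.

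I do not expect a serious obstacle: the whole content lies in recognising that $\qt_{[\sigmaup]}$ belongs to the family of subalgebras constrained by \eqref{e3.14}, which is the verification carried out in the second step. For completeness I would also note that $\qts$ itself satisfies \eqref{e3.14}, being a Lie subalgebra trapped between itself and $\qt{+}\sigmaup(\qt)$, so that the hypothesis is non-vacuous and the forced identification $\qt_{[\sigmaup]}=\qts$ is meaningful rather than empty.
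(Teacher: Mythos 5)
Your proof is correct, and it takes a genuinely different (and more economical) route than the paper's. The paper first applies Proposition~\ref{p4.81} to the subalgebra $\qt\,{\cap}\,\sigmaup(\qt)$ itself: that proposition produces a \emph{parabolic} subalgebra satisfying \eqref{e3.14} and containing $\qt\,{\cap}\,\sigmaup(\qt)$, which the uniqueness hypothesis forces to coincide with $\qt\,{\cap}\,\sigmaup(\qt)$; having thus learned that $\qt\,{\cap}\,\sigmaup(\qt)$ is parabolic, the paper invokes the minimality of $\qt_{[\sigmaup]}$ among parabolic subalgebras containing $\qt\,{\cap}\,\sigmaup(\qt)$ to conclude $\qt_{[\sigmaup]}\,{=}\,\qt\,{\cap}\,\sigmaup(\qt)$. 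You argue in the opposite direction: you check directly that $\qt_{[\sigmaup]}\,{=}\,(\qt\,{\cap}\,\sigmaup(\qt))\,{+}\,\qt^{n}$ is itself a complex Lie subalgebra sandwiched as in \eqref{e3.14}, and let the uniqueness hypothesis collapse it onto $\qt\,{\cap}\,\sigmaup(\qt)$. This bypasses Proposition~\ref{p4.81} entirely (a nontrivial result whose proof runs through $\alphaup$-strings in the root system) and needs only the explicit formula of Lemma~\ref{l5.9} together with the trivial inclusions $\qt\,{\cap}\,\sigmaup(\qt)\,{\subseteq}\,\qt_{[\sigmaup]}\,{\subseteq}\,\qt\,{\subseteq}\,\qt\,{+}\,\sigmaup(\qt)$. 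The closing step --- applying $\sigmaup$ to $\qt\,{\cap}\,\sigmaup(\qt)$ to get $\sigmaup$-invariance, hence total reality --- is the same in both arguments. The only thing the paper's longer detour buys is the extra piece of information that $\qt\,{\cap}\,\sigmaup(\qt)$ is parabolic under the hypothesis, which is not needed for the stated conclusion.
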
 
\begin{proof}
 By Proposition\,\ref{p4.81}, 
 $\qt\,{\cap}\,\sigmaup(\qt)$ is parabolic. Since
 $\qt_{[\sigmaup]}$ is the smallest parabolic subalgebra containing $\qt\,{\cap}\,\sigmaup(\qt)$ and
 contained in $\qt\,{+}\,\sigmaup(\qt)$, we get that $\qt_{[\sigmaup]}\,{=}\,\qt\,{\cap}\,\sigmaup(\qt)$
 and therefore that $(\gs,\qt_{[\sigmaup]})$ is totally real. 
\end{proof}
\section{Levi-order of parabolic $CR$-algebras}
Let $(\gs,\qt)$ be a parabolic $CR$-algebra, $\hst$ an adapted Cartan subalgebra,
$\Qq$ the parabolic subset of $\Rad\,{=}\,\Rad(\gt,\hg)$ associated to $\qt$. 
We keep the notation of the previous sections.
Since all subalgebras in the formula below are $\hg$-regular, 
we have  Lie subalgebras, and corresponding root sets, decompositions 
\begin{equation*}
 \qt=\big(\qt\,{\cap}\,\sigmaup(\qt)\big)\oplus\big(\qt\,{\cap}\,\sigmaup(\qt^{c})\big),
 \quad \Qq=(\Qq\cap\,\stt(\Qq))\sqcup(\Qq\cap\,\stt(\Qc)).
\end{equation*}
To compute Levi-orders we may restrain to elements of $\qt\,{\cap}\,\sigmaup(\qt^{c})$ 
or, equivalently, work
with roots in $\Qq\,{\cap}\,\stt(\Qc)$. 
\subsection{Levi-sequences}
\begin{dfn}
Let $\alphaup\,{\in}\,\Qq\,{\cap}\,\stt(\Qc)$. A \emph{Levi-sequence} for
 $\alphaup$ is a sequence $(\betaup_{1},\hdots,\betaup_{q})$ satisfying 
\begin{equation} \label{e5.1}
\begin{cases}
 \betaup_{1},\hdots,\betaup_{q}\in\Qc\cap\,\stt(\Qq) ,\\
 \alphaup+{\sum}_{i=1}^{h}\betaup_{i}\,{\in}\,\Rad,\;\forall 1{\leq}h{\leq}q,\\
 \alphaup+{\sum}_{i=1}^{q}\betaup_{i}\in\Qc\cap\,\stt(\Qc).
\end{cases}
\end{equation}\par
A root 
$\alphaup$ admitting a Levi-sequence is said to have \emph{finite Levi-order}
and the minimal length  $\ord(\alphaup)$ of a Levi-sequence for $\alphaup$ is called 
its \emph{Levi-order}. \par
Otherwise, we say that $\alphaup$ has \emph{infinite} Levi-order.
\end{dfn}
\begin{lem}
 Let $\alphaup\,{\in}\,\Qq\,{\cap}\,\stt(\Qc)$ have finite Levi-order and let $(\betaup_{1},\hdots,\betaup_{q})$
 be a minimal Levi-sequence for $\alphaup$. Then\footnote{We indicate with $\Sb_{q}$ the permutations
 of $\{1,\hdots,q\}$.} 
\begin{equation}
 \label{e5.2} 
\begin{cases}
 (\betaup_{i_{1}},\hdots,\betaup_{i_{q}})\;\text{is a minimal Levi-sequence for $\alphaup$ for all $i\,{\in}\,\Sb_{q}$}\,
 ;
 \\ \text{if $q\,{\geq}\,2$, then}\;\;
 \betaup_{i_{1}}+\cdots+\betaup_{i_{h}}\notin\Rad,\;\forall 
 2\,{\leq}\,h\,{\leq}\,q\, ;\\
 \gammaup_{h}\,{=}\,\alphaup+{\sum}_{j=1}^{h}\betaup_{i}\in\Qq\cap\stt(\Qc),
 ,\;\;\forall 1{\leq}h{<}q\, .
\end{cases}
\end{equation}
\end{lem}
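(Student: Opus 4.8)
The plan is to fix a Chevalley system $(X_{\alphaup},H_{\alphaup})_{\alphaup\in\Rad}$ and to translate every condition into the bigrading $(\chiup_{\Qq},\chiup_{\Qq}\circ\stt)$ of \eqref{e2.14}, writing $\chiup=\chiup_{\Qq}$ and $\psi=\chiup_{\Qq}\circ\stt$. By Proposition~\ref{p2.5} the four blocks of $\Rad=(\Qq\cap\stt(\Qq))\sqcup(\Qq\cap\stt(\Qc))\sqcup(\Qc\cap\stt(\Qq))\sqcup(\Qc\cap\stt(\Qc))$ are exactly the loci where $(\chiup,\psi)$ has signs $(\ge0,\ge0)$, $(\ge0,<0)$, $(<0,\ge0)$, $(<0,<0)$, respectively. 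Thus a Levi-sequence $(\betaup_{1},\dots,\betaup_{q})$ for $\alphaup$ satisfies $\chiup(\alphaup)\ge0$, $\psi(\alphaup)<0$, each $\chiup(\betaup_{i})<0$, each $\psi(\betaup_{i})\ge0$, and $\chiup(\gammaup_{q})<0$, $\psi(\gammaup_{q})<0$, where $\gammaup_{h}=\alphaup+\sum_{j=1}^{h}\betaup_{j}$.

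First I would prove the last line of \eqref{e5.2}, namely $\gammaup_{h}\in\Qq\cap\stt(\Qc)$ for $1\le h<q$. Along the sequence $\chiup(\gammaup_{h})$ is strictly decreasing (each $\chiup(\betaup_{j})<0$) and $\psi(\gammaup_{h})$ is non-decreasing (each $\psi(\betaup_{j})\ge0$); since $\psi(\gammaup_{q})<0$, monotonicity forces $\psi(\gammaup_{h})<0$, i.e. $\gammaup_{h}\in\stt(\Qc)$, for every $h$. It then remains to check $\chiup(\gammaup_{h})\ge0$ for $h<q$: if some $\chiup(\gammaup_{h})<0$ with $h<q$, then $\gammaup_{h}\in\Qc\cap\stt(\Qc)$ and $(\betaup_{1},\dots,\betaup_{h})$ would be a Levi-sequence for $\alphaup$ of length $h<q$, contradicting minimality.

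Next I would establish permutation invariance. Using the same commutator identity as in the proof of Lemma~\ref{l3.8}, I would expand
\begin{equation*}[X_{\alphaup},X_{\betaup_{1}},\dots,X_{\betaup_{q}}]=[X_{\alphaup},X_{\betaup_{q}},X_{\betaup_{1}},\dots,X_{\betaup_{q-1}}]+{\sum}_{i=1}^{q-1}[X_{\alphaup},X_{\betaup_{1}},\dots,[X_{\betaup_{i}},X_{\betaup_{q}}],\dots,X_{\betaup_{q-1}}].\end{equation*}
Because the structure constants of a Chevalley system are nonzero exactly when the sum of the two roots lies in $\Rad$, the left-nested commutator on the left is a nonzero multiple of $X_{\gammaup_{q}}$. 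Each correction term, if nonzero, is a nonzero multiple of $X_{\gammaup_{q}}$ coming from the shortened sequence $(\betaup_{1},\dots,\betaup_{i}+\betaup_{q},\dots,\betaup_{q-1})$, all of whose entries lie in $\Qc\cap\stt(\Qq)$ and whose total is $\gammaup_{q}\in\Qc\cap\stt(\Qc)$; nonvanishing would make it a Levi-sequence of length $q-1$, against minimality. Hence every correction term vanishes and $(\betaup_{q},\betaup_{1},\dots,\betaup_{q-1})$ is again a minimal Levi-sequence for $\alphaup$. This yields invariance under cyclic shifts; combining it with an induction on $q$ in which I pass to $\gammaup_{1}\in\Qq\cap\stt(\Qc)$, for which $(\betaup_{2},\dots,\betaup_{q})$ is a minimal Levi-sequence, I would recover invariance under all of $\Sb_{q}$.

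Finally, for the second line of \eqref{e5.2} I would first show that no two of the $\betaup_{i}$ sum to a root: if $\betaup_{a}+\betaup_{b}\in\Rad$, permutation invariance lets me place them last and replace them by $\deltaup=\betaup_{a}+\betaup_{b}\in\Qc\cap\stt(\Qq)$; the partial sums of the resulting sequence are among the roots $\gammaup_{h}$, so it is a Levi-sequence of length $q-1$, a contradiction. The general statement then follows by an inner-product descent: if $\deltaup=\sum_{j\in S}\betaup_{j}$ were a root with $|S|\ge2$, then $0<(\deltaup\,|\,\deltaup)=\sum_{j\in S}(\deltaup\,|\,\betaup_{j})$ yields $j_{0}$ with $(\deltaup\,|\,\betaup_{j_{0}})>0$, whence $\deltaup-\betaup_{j_{0}}$ is a shorter sub-sum that is still a root, and iterating down to two summands contradicts the pairwise statement. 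I expect the cyclic-shift step to be the main obstacle, since that is where the bookkeeping of which higher commutators are forced to vanish by minimality—and of the roots to which they are proportional—must be carried out carefully; once permutation invariance is secured, the remaining assertions follow cleanly from the two gradings and standard root-system inequalities.
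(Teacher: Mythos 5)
Your proposal is correct and follows essentially the same route as the paper: establish $\gammaup_{h}\in\Qq\cap\stt(\Qc)$ first (the paper uses closedness of $\stt(\Qq)$ where you use the bigrading, which amounts to the same thing), then derive permutation invariance from a Chevalley-system commutator identity whose correction terms are killed by minimality (the paper uses adjacent transpositions where you use cyclic shifts plus induction, exactly as in the paper's own Lemma~\ref{l3.8}), and finally deduce the non-root condition on sub-sums from permutation invariance and minimality. Your explicit inner-product descent for the last step, and the observation that $\psi$-monotonicity keeps all partial sums nonzero, supply details the paper leaves implicit, but the underlying argument is the same.
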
 
\begin{proof} The fact that $\gammaup_{h}$ belongs to $\stt(\Qc)$ for all $h$ with 
$1{\leq}h{<}q$ follows because,
if one of them belongs to $\stt(\Qq)$, then also  
$\alphaup\,{+}\,{\sum}_{i=1}^{q}\betaup_{i}$
would belong to $\stt(\Qq)$.
Moreover, $\gammaup_{h}\,{\in}\,\Qq$ for $1{\leq}h{<}q$ 
and $\betaup_{1}{+}\,\cdots\,{+}\betaup_{h}\notin\Rad$ for $2\,{\leq}\,h{\leq}\,q$ 
by minimality. \par 
 Let $(X_{\betaup},H_{\betaup})_{\betaup\in\Rad}$ be a Chevalley system (cf. \cite[\S{1}]{MMN23}).
 The condition that $\gammaup_{q}\,{\in}\,\Qc\,{\cap}\,\stt(\Qc)$
is equivalent to 
\begin{equation*} \,
[X_{\alphaup},X_{\betaup_{1}},\hdots,X_{\betaup_{q}}]=[\hdots[[X_{\alphaup},X_{\betaup_{1}}],X_{\betaup_{2}}],
\hdots,X_{\betaup_{q}}]
 \notin\qt+\sigmaup(\qt).
\end{equation*}
The fact that permutations preserve  minimal Levi-sequences follows from 
\begin{equation*}
 [X_{\alphaup},\hdots,X_{\betaup_{j+1}},X_{\betaup_{j}},\hdots]=
  [X_{\alphaup},\hdots,X_{\betaup_{j}},X_{\betaup_{j+1}},\hdots]+
   [X_{\alphaup},\hdots,[X_{\betaup_{j+1}},X_{\betaup_{j}}],\hdots]
\end{equation*}
the last addendum on the right hand side belonging to $\qt\,{+}\,\stt(\qt)$ by minimality. 
Once we got invariance under permutations, minimality implies that no sums $\betaup_{i_{1}}{+}
\cdots\,{+}\,\betaup_{i_{h}}$ with distinct indices $1{\leq}i_{1}{<}\hdots{<}i_{h}{\leq}q$ can be a root.
\end{proof}
\begin{lem}\label{l3.2}
An $\alphaup\,{\in}\,\Qq^{r}\,{\cap}\,\stt(\Qc)$ has either Levi-order $1$ or ${+}\infty$.
\end{lem}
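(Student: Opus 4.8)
The plan is to reduce everything to the integral $\Z$-gradation $\chiup_{\Qq}$ of $\Rad$ furnished by Proposition~\ref{p2.5}, which records $\Qq^{r}$, $\Qq^{n}$ and $\Qc$ as the loci where $\chiup_{\Qq}$ vanishes, is positive, and is negative, respectively. The whole point is that an $\alphaup\,{\in}\,\Qq^{r}$ sits at the bottom level $\chiup_{\Qq}(\alphaup)\,{=}\,0$, so that adding a single root of $\Qc$ already pushes it strictly below $0$; I intend to show that this is incompatible with a Levi-sequence of length $\geq 2$.

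More precisely, I would argue by contradiction, assuming that $\alphaup\,{\in}\,\Qq^{r}\,{\cap}\,\stt(\Qc)$ has finite Levi-order $q\,{=}\,\ord(\alphaup)\,{\geq}\,2$. Fixing a minimal Levi-sequence $(\betaup_{1},\hdots,\betaup_{q})$, the previous lemma (formula~\eqref{e5.2}) tells me that each intermediate partial sum $\gammaup_{h}\,{=}\,\alphaup\,{+}\,{\sum}_{j=1}^{h}\betaup_{j}$ with $1\,{\leq}\,h\,{<}\,q$ lies in $\Qq\,{\cap}\,\stt(\Qc)$; in particular, since $q\,{\geq}\,2$, the index $h\,{=}\,1$ is admissible, so $\gammaup_{1}\,{=}\,\alphaup\,{+}\,\betaup_{1}\,{\in}\,\Qq$.

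The contradiction then comes from evaluating $\chiup_{\Qq}$, which is additive because it is a group homomorphism on $\Z[\Rad]$. By Proposition~\ref{p2.5} we have $\chiup_{\Qq}(\alphaup)\,{=}\,0$ (as $\alphaup\,{\in}\,\Qq^{r}$) and $\chiup_{\Qq}(\betaup_{1})\,{<}\,0$ (as $\betaup_{1}\,{\in}\,\Qc\,{\cap}\,\stt(\Qq)\,{\subseteq}\,\Qc$), whence $\chiup_{\Qq}(\gammaup_{1})\,{=}\,\chiup_{\Qq}(\betaup_{1})\,{<}\,0$, so $\gammaup_{1}\,{\in}\,\Qc$, contradicting $\gammaup_{1}\,{\in}\,\Qq$. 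Thus no minimal Levi-sequence can have length $\geq 2$, so either $\alphaup$ admits a Levi-sequence, necessarily of length $1$, giving $\ord(\alphaup)\,{=}\,1$, or it admits none and $\ord(\alphaup)\,{=}\,{+}\infty$.

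The argument is short and I do not anticipate a genuine obstacle: the only points that need care are the bookkeeping of the sign conventions for $\chiup_{\Qq}$ from Proposition~\ref{p2.5}, and the verification that the index $h\,{=}\,1$ really falls in the range $1\,{\leq}\,h\,{<}\,q$ guaranteed by \eqref{e5.2}, which is exactly where the hypothesis $q\,{\geq}\,2$ is used. Conceptually, the statement simply reflects that a horizontal root (first $\chiup_{\Qq}$-degree zero) has no room for intermediate steps inside $\Qq$, so either one bracket already leaves $\qt\,{+}\,\sigmaup(\qt)$ or no iterated bracket ever does.
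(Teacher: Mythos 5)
Your proof is correct and takes essentially the same route as the paper: both isolate the first term $\betaup_{1}$ of a minimal Levi-sequence and use the fact that $\alphaup\,{\in}\,\Qq^{r}$ together with $\betaup_{1}\,{\in}\,\Qc$ forces $\alphaup\,{+}\,\betaup_{1}\,{\in}\,\Qc$ (the paper asserts this directly, you justify it via the gradation $\chiup_{\Qq}$ of Proposition~\ref{p2.5}, which is the natural reason). The only cosmetic difference is that you phrase it as a contradiction from $q\,{\geq}\,2$, whereas the paper concludes directly that $(\betaup_{1})$ is already a Levi-sequence of length one.
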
 
\begin{proof}
 Assume that $\alphaup\,{\in}\,\Qq^{r}\,{\cap}\,\stt(\Qc)$ has finite Levi-order. In particular, by \eqref{e5.1}
 and \eqref{e5.2} we can find $\betaup\,{\in}\,\Qc\,{\cap}\,\stt(\Qq)$
 such that $\alphaup\,{+}\,\betaup$ is a root in $\stt(\Qc)$. If $\alphaup\,{\in}\,\Qq^{r}$,
 then $\alphaup{+}\betaup\,{\in}\,\Qc$ and hence $\ord(\alphaup)\,{=}\,1$.
\end{proof} 
\begin{cor}
If $(\Qq,\stt)$ is Levi nondegenerate, then its Levi order is either one, or the
maximum Levi order a root in $\Qq^{n}{\cap}\,\stt(\Qq^{c})$.\qed  
\end{cor}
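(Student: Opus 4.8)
The plan is to read the Levi-order of $(\Qq\, ,\stt)$ directly off the partition of $\Qq\,{\cap}\,\stt(\Qc)$ into its $\Qq^{r}$- and $\Qq^{n}$-parts. First I would recall that, as emphasised at the opening of this subsection, the Levi-order of $(\Qq\, ,\stt)$ is the supremum of the Levi-orders $\ord(\alphaup)$ taken over the roots $\alphaup\,{\in}\,\Qq\,{\cap}\,\stt(\Qc)$, since these are precisely the roots relevant to the computation. Using the decomposition $\Qq\,{=}\,\Qq^{r}\,{\sqcup}\,\Qq^{n}$ I would then split
\begin{equation*}
 \Qq\,{\cap}\,\stt(\Qc)\,{=}\,(\Qq^{r}\,{\cap}\,\stt(\Qc))\,{\sqcup}\,(\Qq^{n}\,{\cap}\,\stt(\Qc))
\end{equation*}
and estimate the contribution of each piece to the supremum separately.

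Next I would invoke Lemma\,\ref{l3.2}, which guarantees that every root in $\Qq^{r}\,{\cap}\,\stt(\Qc)$ has Levi-order either $1$ or ${+}\infty$. Here the Levi-nondegeneracy hypothesis enters decisively: by the Proposition relating a parabolic $CR$-algebra $(\gs,\qt)$ to its associated pair $(\Qq\, ,\stt)$, together with Corollary\,\ref{c2.7}, Levi-nondegeneracy of $(\Qq\, ,\stt)$ is equivalent to the finiteness of all Levi-orders. Consequently the value ${+}\infty$ is excluded, and each root of $\Qq^{r}\,{\cap}\,\stt(\Qc)$ must in fact have Levi-order exactly $1$.

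From this the dichotomy follows immediately. Since the $\Qq^{r}$-part contributes only the value $1$ to the supremum, the Levi-order of $(\Qq\, ,\stt)$ equals $\max\!\big(1,\,M\big)$, where $M$ denotes the maximum Levi-order attained on $\Qq^{n}\,{\cap}\,\stt(\Qc)$. When $M\,{\geq}\,1$ this is $M$, and otherwise — the set $\Qq^{n}\,{\cap}\,\stt(\Qc)$ being empty or carrying only order-one roots — it is $1$, which is exactly the claimed statement.

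The only genuinely delicate point is the second step, namely the legitimacy of ruling out the ${+}\infty$ alternative of Lemma\,\ref{l3.2}. I expect this to be the main obstacle to a clean formulation, since it rests on the equivalence between the combinatorial notion of Levi-nondegeneracy of $(\Qq\, ,\stt)$ and the finiteness of Levi-orders; once that equivalence is in hand, the rest is a routine bookkeeping of the supremum over the two parts of $\Qq\,{\cap}\,\stt(\Qc)$.
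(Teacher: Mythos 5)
Your proposal is correct and is essentially the argument the paper intends: the corollary is stated with \qed precisely because it follows at once from Lemma\,\ref{l3.2} (roots of $\Qq^{r}\,{\cap}\,\stt(\Qc)$ have Levi-order $1$ or ${+}\infty$), the reduction of the computation to roots of $\Qq\,{\cap}\,\stt(\Qc)$ stated at the start of the section, and the exclusion of the value ${+}\infty$ via the equivalence between Levi-nondegeneracy and finiteness of Levi-orders. Your identification of that exclusion as the only delicate point is accurate, and your handling of it is the right one.
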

\begin{dfn} We call a  sequence $(\betaup_{1},\hdots,\betaup_{q})$ in $\Rad$  
\emph{admissible} for $\alphaup\,{\in}\,\Rad$~if 
\begin{equation} \label{e5.3}
\begin{cases}
  {\sum}_{i=1}^{h}\betaup_{j_{i}}\notin\Rad\,{\cup}\,\{0\},&\;\text{if $h\,{\geq}2$} , \\[3pt]
 \alphaup+
{\sum}_{i=1}^{h}\betaup_{j_{i}} \in\Rad{\setminus}\{\alphaup\},&\;\text{if $h\,{\geq}1$} ,
\end{cases} \qquad \forall 1{\leq}j_{1}{<}\cdots{<}j_{h}{\leq}q,
\end{equation}
and define 
\emph{index} of $\alphaup$,  denoted by  $\muup(\alphaup)$, the maximal length
of its admissible sequences.
\end{dfn}
By  \eqref{e5.1} and \eqref{e5.2}, the \textit{index} of a root $\alphaup$  
yields 
bounds for its Levi-order. This index, for all different irreducible root systems,
 was computed in 
\cite[\S{2.5}]{MMN23}. Let us recall the procedure. \par
For a given $\alphaup\,{\in}\,\Rad$, we define
\begin{equation*}
 \Rad^{\!\text{add}}(\alphaup)\,{=}\,\{\betaup\,{\in}\,\Rad\,{\mid}\,\alphaup{+}\betaup\,{\in}\,\Rad\}.
\end{equation*}
For the irreducible root systems we use the notation of \cite[\S{2.4}]{MMN23}.
\subsubsection*{$\textsc{A}_{\ell}$} We can take $\alphaup\,{=}\,\e_{2}{-}\e_{1}$. Then 
\begin{equation*}\tag{$*\textsc{A}$}
 \Rad^{\!\text{add}}(\e_{2}{-}\e_{1})=\{\e_{1}{-}\e_{i},
 \mid 3{\leq}i{\leq}\ell{+}1\}
 \cup\{
 \e_{i}{-}\e_{2}\mid 3{\leq}i{\leq}\ell{+}1\}.
\end{equation*}
We have $\muup(\e_{2}{-}\e_{1})$ equal $0$ if $\ell\,{=}\,1$ and equal $1$ for $\ell\,{=}\,2$. 
If $\ell{\geq}3$, taking into account that an admissible sequence does contain at most one root from each
of the two sets in the right hand side of $(*\textsc{A})$,
 we find that $\muup(\e_{2}{-}\e_{1})\,{=}\,2$ and $(\e_{1}{-}\e_{3},\,\e_{4}{-}\e_{2})$
is a maximal length admissible sequence.
\subsubsection*{$\textsc{B}_{\ell{\geq}2}$}  If $\alphaup$ is short, we can take $\alphaup\,{=}\,{-}\e_{1}$. Then 
\begin{equation*}\tag{$*\textsc{B}$}
 \Rad^{\!\text{add}}({-}\e_{1})=\{{\pm}\e_{i}\mid 2{\leq}i{\leq}\ell\}\cup\{\e_{1}{\pm}\e_{i}\mid 2{\leq}i{\leq}\ell\}.
\end{equation*}
An admissible sequence for $({-}\e_{1})$ may contain at most one from the first and two from the second set in
the right hand side of $(*\textsc{B})$. We obtain 
\begin{itemize}
 \item If $\ell{=}2,$ then $\muup(-\e_{1})\,{=}\,2$ and $(\e_{1}{-}\e_{2},\,\e_{1}{+}\e_{2})$ is a maximal length
 admissible sequence;
 \item If $\ell{\geq}3$, then $\muup(-\e_{1})\,{=}\,3$ and $(\e_{1}{-}\e_{2},\,\e_{1}{+}\e_{2},\,\e_{3})$ is a maximal length
 admissible sequence.
\end{itemize} \par
If $\alphaup$ is a long root, then we can take $\alphaup\,{=}\,{-}\e_{1}{-}\e_{2}$.  Then
\begin{equation*}
 \tag{$*\! *\textsc{B}$} 
 \Rad^{\!\text{add}}({-}\e_{1}{-}\e_{2})=\{\e_{1},\,\e_{2}\}\cup\{\e_{1}{\pm}\e_{i}\mid 3{\leq}i{\leq}\ell\}
 \cup \{\e_{2}{\pm}\e_{i}\mid 3{\leq}i{\leq}\ell\}.
\end{equation*}
[The last two sets are empty if $\ell\,{=}\,2$.]\par
An admissible sequence may contain at most four terms, because each  root of $\Rad^{\text{add}}({-}\e_{1}{-}\e_{2})$ 
adds 
either one $\e_{1}$ or one $\e_{2}$.
 We obtain: 
\begin{itemize}
 \item if $\ell\,{=}\,2$, then $\muup({-}\e_{1}{-}\e_{2})\,{=}\,2$ and $(\e_{1},\e_{1})$ is an admissible sequence of 
 maximal length;
  \item if $\ell\,{\geq}\,3$, then $\muup({-}\e_{1}{-}\e_{2})\,{=}\,4$ and $(\e_{1},\e_{1},\,\e_{2}{-}\e_{3},
  \,\e_{2}{+}\e_{3})$ is an admissible sequence of 
 maximal length. Moreover, all sums of the four terms on a maximal length admissible sequence add to $\e_{1}{+}\e_{2}\,{=}\,{-}\alphaup$.
\end{itemize}
\subsubsection*{$\textsc{C}_{\ell{\geq}3}$} Let us start with the case of a long root: 
we can take $\alphaup\,{=}\,{-}2\e_{1}$. Then 
\begin{equation*}
 \tag{$*\textsc{C}$}
 \Rad^{\!\text{add}}({-}2\e_{1})\,{=}\,\{\e_{1}{\pm}\e_{i}\,{\mid}\, 2{\leq}i{\leq}\ell\}.
\end{equation*}
Since each term adds one $\e_{1}$, the maximal length of an admissible sequence is ${\leq}\,4$.
But we cannot take both $\e_{1}{-}\e_{i}$ and $\e_{1}{+}\e_{i}$ in a same admissible sequence.
In fact 
$\muup({-}2\e_{1})\,{=}\,2$; we can take e.g. the sequence $(\e_{1}{+}\e_{2},\e_{1}{+}\e_{3})$.
\par
We consider next the case of a short root. Take $\alphaup\,{=}\,{-}\e_{1}{-}\e_{2}$. Then 
\begin{equation*}\tag{$*\! *\textsc{C}$}
 \Rad^{\!\text{add}}({-}\e_{1}{-}\e_{2})=\{2\e_{1},2\e_{2}\}\cup\{{\pm}(\e_{1}{-}\e_{2})\}\cup
 \{\e_{i}{\pm}\e_{j}\mid i{=}1,2,\; 3{\leq}j{\leq}\ell\} .
\end{equation*}
An admissible sequence may contain both roots of the first
set on the right  hand side of $(**\textsc{C})$, but at most one from 
the second and two from the third, with different indices $i$. 
Moreover, if it contains a root $2\e_{i}$ ($i\,{=}\,1,2$)
of the first, it
does not contain a root from the second and from $\{\e_{i}{\pm}\e_{j}\,{\mid}\,3{\leq}j{\leq}\ell\}$. 
Admissible sequences containing a root from the second set have length $1$.
These arguments show that $\muup({-}\e_{1}{-}\e_{2}){=}2$: admissible sequences of maximal length
are e.g. $(2\e_{1},2\e_{2})$, $(2\e_{1},\e_{2}{+}\e_{3})$, $(\e_{1}{+}\,\e_{3},\e_{2}{+}\,\e_{3})$.
\subsubsection*{$\textsc{D}_{\ell{\geq}4}$} We can take e.g. $\alphaup\,{=}\,{-}\e_{1}{-}\e_{2}$. Then 
\begin{equation*}\tag{$*\textsc{D}$}
 \Rad^{\!\text{add}}({-}\e_{1}{-}\e_{2})=\{\e_{1}{\pm}\e_{i}\,{\mid}\,3{\leq}i{\leq}\ell\}\cup
 \{\e_{2}{\pm}\e_{i}\,{\mid}\,3{\leq}i{\leq}\ell\}.
\end{equation*}
An admissible sequence may contain at most two roots from each of the sets in the right
hand side of $(*\textsc{D})$. Thus $\muup({-}\e_{1}{-}\e_{2})$ is less or equal $4$ and actually
equals $4$, as the sequence $(\e_{1}{-}\e_{3},\e_{1}{+}\e_{3},\e_{2}{-}\e_{4},\e_{2}{+}\e_{4})$
is admissible. 
\subsubsection*{$\textsc{E}_{\ell}$} We take, as in \cite{MMN23}, 
$\Rad(\textsc{E}_{6})\,{\subset}\,\Rad(\textsc{E}_{7})
\,{\subset}\,\Rad(\textsc{E}_{8})$. We will reduce the discussion to the case $\textsc{E}_{8}$ and
we take $\alphaup\,{=}\,{-}\e_{1}{-}\e_{2}$. Then 
\begin{equation*}
 \tag{$*\textsc{E}$} \left\{ 
\begin{aligned}
 \Rad^{\!\text{add}}({-}\e_{1}{-}\e_{2})= \{\e_{1}{\pm}\e_{i}\,{\mid}\, 3{\leq}i{\leq}8\}\cup
 \{\e_{2}{\pm}\e_{i}\,{\mid}\, 3{\leq}i{\leq}8\}\cup\{{-}\zetaup_{\emptyset}\}
 \cup\{\zetaup_{1,2}\}\\
 \cup\,\{\zetaup_{1,2,i,j}\,{\mid}\, 3{\leq}i{<}j{\leq}8\}
 \cup\{{-}\zetaup_{i,j}\,{\mid}\, 3{\leq}i{<}j{\leq}8\}.
\end{aligned}\right.
\end{equation*}
Let us consider any admissible sequence $(\betaup_{1},\hdots,\betaup_{q})$ 
for ${-}\e_{1}{-}\e_{2}$ and let $q_{i}$ be the number of roots from each of the six sets in the right
hand side of $(*\textsc{E})$. Counting the coefficient of $\e_{i}$, for $i{=}1,2$, in $\alphaup{+}\betaup_{1}{+}
\cdots{+}\betaup_{q}$
we obtain  
\begin{equation*}\begin{cases}
 q_{1}+\tfrac{1}{2}(q_{3}+\cdots+q_{6})\leq 2,\\
  q_{2}+\tfrac{1}{2}(q_{3}+\cdots+q_{6})\leq 2,
 \end{cases}
\end{equation*}
showing that $q\,{=}\,q_{1}{+}\cdots{+}q_{6}{\leq}4$. \par
We get $\muup({-}\e_{1}{-}\e_{2})\,{=}\,4$,
since $(\e_{1}{-}\e_{3},\e_{1}{+}\e_{3},\e_{2}{-}\e_{4},\e_{2}{+}\e_{4})$ is admissible and of length $4$,
for all systems $\textsc{E}_{\ell}$ ($6{\leq}\ell{\leq}8$). We note that for an admissible system of
maximal length we get $\alphaup{+}\betaup_{1}{+}\betaup_{2}{+}\betaup_{3}{+}\betaup_{4}\,{=}\,{-}\alphaup$.
\subsubsection*{$\textsc{F}_{4}$} Consider first the case of a short root. We can take $\alphaup\,{=}\,{-}\e_{1}$.
Then 
\begin{equation*}\tag{$*\textsc{F}$}
 \Rad^{\!\text{add}}({-}\e_{1})=\{{\pm}\e_{i}\,{\mid}\,2{\leq}i{\leq}4\}
 \,{\cup}\,\{\tfrac{1}{2}(\e_{1}{\pm}\e_{2}{\pm}\e_{3}{\pm}\e_{4}\}\,{\cup}\,
 \{\e_{1}{\pm}\e_{i}\,{\mid}\,2{\leq}i{\leq}4\}.
\end{equation*}
Consider an admissible sequence $(\betaup_{1},\hdots,\betaup_{q})$ for ${-}\e_{1}$ and let
$q_{1},q_{2},q_{3}$ be the number of roots taken from each of the three sets in the right hand
side of $(*\textsc{F})$. Then $q_{1}{\leq}1$ and $\tfrac{1}{2}q_{2}{+}q_{3}{\leq}2$. Moreover,
$q_{1}{\cdot}q_{3}\,{=}\,0$. This implies that $\muup({-}\e_{1})\,{=}\,q_{1}{+}q_{2}{+}q_{3}{\leq}3$
and actually equality holds for an admissible system of maximal length, as we can take
e.g. $(\e_{1}{+}\e_{2},\,\e_{1}{-}\e_{2},\,\e_{3}).$ \par
As a long root we can take $\alphaup\,{=}\,{-}\e_{1}{-}\e_{2}$. Then 
\begin{equation*}\tag{$* \! *\!\textsc{F}_{4}$}
 \Rad^{\!\text{add}}({-}\e_{1}{-}\e_{2}){=}\{\e_{1}{\pm}\e_{i}\,{\mid}\,i{=}3,4\}{\cup}
 \{\e_{2}{\pm}\e_{i}\,{\mid}\,i{=}3,4\}{\cup}\{\tfrac{1}{2}(\e_{1}{+}\e_{2}{\pm}\e_{3}{\pm}\e_{4})\}.
\end{equation*}
Let  $(\betaup_{1},\hdots,\betaup_{q})$ be an admissible sequence
for ${-}\e_{1}{-}\e_{2}$ and
 $q_{1},q_{2},q_{3}$ the number of its roots in each of the three sets. We get 
\begin{equation*} 
\begin{cases}
 q_{1}+\tfrac{1}{2}q_{3}\leq{2},\\
 q_{2}+\tfrac{1}{2}q_{3}\leq{2}.
\end{cases}
\end{equation*}
This yields $q\,{=}\,q_{1}{+}q_{2}{+}q_{3}{\leq}4$. Hence $\muup({-}\e_{1}{-}\e_{2}){\leq}4$ and in fact
equality holds as $(\e_{1}{+}\e_{3},\,\e_{1}{-}\e_{3},\,\e_{2}{+}\e_{4},\,\e_{2}{-}\e_{4})$ is an
admissible sequence of length four. For an admissible sequence of length four we have
$\alphaup{+}\betaup_{1}{+}\cdots{+}\betaup_{4}{=}\,{-}\alphaup$. 
\subsubsection*{$\textsc{G}_{2}$} As a short root we can take $\alphaup\,{=}\,\e_{2}{-}\e_{1}$. We get 
\begin{equation*}\tag{$*\textsc{G}$}
 \Rad^{\!\text{add}}(\e_{2}{-}\e_{1})\,{=}\, \{\e_{1}{-}\e_{3}\}\,{\cup}\, \{\e_{3}{-}\e_{2}\}\,{\cup}\,
 \{2\e_{1}{-}\e_{2}{-}\e_{3}\}\,{\cup}\, \{\e_{1}{+}\e_{3}{-}2\e_{2}\}.
\end{equation*}
Let $(\betaup_{1},\hdots,\betaup_{q})$ be an admissible system and $q_{1},q_{2},q_{3},q_{4}$ be
the number of its
roots belonging to the corresponding set in the right hand side. Then 
\begin{equation*}
\begin{cases}
 q_{1}+2q_{3}+q_{4}\leq 2,\\
 q_{2}+q_{3}+2q_{4}\leq{2},
\end{cases}\;\; \Longrightarrow \;\; q_{1}{+}q_{2}{+}3q_{3}{+}3q_{4}\leq{4}.
\end{equation*}
We obtain that $q\,{=}\,q_{1}{+}q_{2}{+}q_{3}{+}q_{4}{\leq}2$ and in fact $\muup(\e_{2}{-}\e_{1})\,{=}\,2$,
as we can take e.g. $(\e_{1}{-}\e_{3},\, \e_{1}{+}\e_{3}{-}2\e_{2})$. We have $\alphaup{+}\betaup_{1}{+}
\betaup_{2}\,{=}\,
{-}\alphaup$.\par\smallskip
As a long root we can take $\alphaup\,{=}\,\e_{2}{+}\e_{3}{-}2\e_{1}$. Then 
\begin{equation*}\tag{$*\! *\textsc{G}$}
 \Rad^{\!\text{add}}(\e_{2}{+}\e_{3}{-}2\e_{1})=\{\e_{1}{-}\e_{2},\,\e_{1}{-}\e_{3}\}\,{\cup}\,
 \{\e_{1}{+}\e_{2}{-}2\e_{3},\, \e_{1}{+}\e_{3}{-}2\e_{2}\}.
\end{equation*}
As each root of an admissible system $(\betaup_{1},\hdots,\betaup_{q})$ adds one unit of $\e_{1}$,
we have $q{\leq}4$. In fact $\muup(\e_{2}{+}\e_{3}{-}2\e_{1})\,{=}\,4$, as we can take the sequence
$$(\e_{1}{-}\e_{2},\e_{1}{-}\e_{2},\e_{1}{-}\e_{2},\e_{1}{+}\e_{2}{-}2\e_{3}).$$ \par\medskip
We can summarize the discussion above in the following table:
\begin{equation*}
\begin{array}{| c | c | c | c |} 
\hline
\text{type} & \alphaup &  \betaup_{1},\hdots,\betaup_{q} & \alphaup{+}\betaup_{1}{+}\cdots
{+}\betaup_{q} \\
\hline\hline
\textsc{A}_{\ell} & \e_{2}{-}\e_{1} & \e_{1}{-}\e_{4},\; \e_{3}{-}\e_{2} & \e_{3}{-}\e_{4} \\
\hline\hline
\textsc{B}_{\ell} & {-}\e_{1} & \e_{1}{-}\e_{2},\;\e_{1}{+}\e_{2},\; \e_{3} & \e_{1}{+}\e_{3} \\
\hline
\textsc{B}_{\ell} & {-}(\e_{1}{+}\e_{2}) & \e_{1},\,\e_{1},\, \e_{2}{-}\e_{3},\;\e_{2}{+}\e_{3}& \e_{1}{+}\e_{2}\\
\hline\hline
\textsc{C}_{\ell} & {-}2\e_{1} & \e_{1}{+}\e_{2},\;\e_{1}{-}\e_{3} & \e_{2}{-}\e_{3}\\
\hline
\textsc{C}_{\ell} & {-}(\e_{1}{+}\e_{2}) & 2\e_{1},\; 2\e_{2} & \e_{1}{+}\e_{2} \\
\hline
\textsc{C}_{\ell} & {-}(\e_{1}{+}\e_{2}) & 2\e_{1},\; \e_{2}{+}\e_{3} & \e_{1}{+}\e_{3} \\
\hline
\textsc{C}_{\ell} & {-}(\e_{1}{+}\e_{2}) & \e_{1}{+}\e_{3},\; \e_{2}{+}\e_{4} & \e_{3}{+}\e_{4} \\
\hline\hline
\textsc{D}_{\ell} & {-}(\e_{1}{+}\e_{2}) & \e_{1}{-}\e_{3},\,\e_{1}{+}\e_{3},\, \e_{2}{-}\e_{4},\, \e_{2}{-}\e_{4} 
& \e_{1}{+}\e_{2}\\
\hline\hline
\textsc{E}_{\ell} &  {-}(\e_{1}{+}\e_{2}) & \e_{1}{-}\e_{3},\,\e_{1}{+}\e_{3},\, \e_{2}{-}\e_{4},\, \e_{2}{-}\e_{4} 
& \e_{1}{+}\e_{2}\\
\hline\hline
\textsc{F}_{4} & {-}\e_{1} & \e_{1}{-}\e_{2},\, \e_{1}{+}\e_{2},\, \e_{3} & \e_{1}{+}\e_{3}\\
\hline
\textsc{F}_{4} & {-}(\e_{1}{+}\e_{2}) & \e_{1}{-}\e_{3},\, \e_{1}{+}\e_{3},\, \e_{2}{-}\e_{4},\,\e_{2}{+}\e_{4}
& \e_{1}{+}\e_{2} \\
\hline \hline
\textsc{G}_{2} & \e_{2}{-}\e_{1} & \e_{1}{-}\e_{3},\; \e_{1}{+}\e_{3}{-}2\e_{2} & \e_{1}{-}\e_{2}\\
\hline 
\textsc{G}_{2} & \e_{2}{+}\e_{3}{-}2\e_{1} & \e_{1}{-}\e_{2},\; \e_{1}{-}\e_{2},\; \e_{1}{-}\e_{2},\; \e_{1}{+}\e_{2}{-}2\e_{3}
&
2\e_{1}{-}\e_{2}{-}\e_{3}\\
\hline 
 \end{array}
\end{equation*}
Thus we got 
\begin{prop} The following table yields the index of an $\alphaup$ belonging to an irreducible
root system.
\begin{equation*} 
\begin{array}{| c | c | c | c | c |}
 \hline 
 \text{type} & \alphaup & \ell{=}1 & \ell{=}2 & \ell{\geq}3 \\
 \hline
 \textsc{A}_{\ell} & \text{any} & 0 & 1 & 2\\
 \hline
 \textsc{B}_{\ell} & \text{short} & & 2 & 3 \\
 \hline
 \textsc{B}_{\ell}& \text{long} & & 2 & 4 \\
 \hline
 \textsc{C}_{\ell} &\text{any}& & & 2 \\
 \hline
 \textsc{D}_{\ell} &\text{any}&&& 4 \\
 \hline
 \textsc{E}_{\ell} &\text{any}&&& 4 \\
 \hline
 \textsc{F}_{4} & \text{short} &&& 3\\
 \hline
 \textsc{F}_{4} & \text{long} &&& 4\\
 \hline
 \textsc{G}_{2} & \text{short} &&2 & \\
 \hline
 \textsc{G}_{2} & \text{long} &&4 & \\
 \hline
\end{array}
\end{equation*}
\par
When the index of $\alphaup\,{\in}\,\Rad$ is four, we obtain 
$\alphaup{+}\betaup_{1}{+}\betaup_{2}{+}\betaup_{3}{+}\betaup_{4}\,{=}\,{-}\alphaup.$ \par
For a short root in $\textsc{G}_{2}$ we get $\alphaup{+}\betaup_{1}{+}\betaup_{2}\,{=}\,{-}\alphaup.$ 
\qed
\end{prop}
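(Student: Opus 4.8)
The plan is to establish the table by an explicit, type-by-type verification, exactly along the lines of the computations recorded in the preceding subsubsections. The first reduction is to note that the index $\muup(\alphaup)$ depends only on the $\Wf(\Rad)$-orbit of $\alphaup$: elements of the Weyl group are isometries preserving $\Rad$, hence preserve the property of being a root, being zero, and all the partial-sum conditions in \eqref{e5.3}, so they carry admissible sequences for $\alphaup$ to admissible sequences for $w(\alphaup)$ of the same length. Since in an irreducible root system the roots of a fixed length form a single Weyl orbit, it suffices to fix one representative $\alphaup$ of each length in each irreducible type and compute $\muup(\alphaup)$ there; this is precisely the set of representatives chosen in the subsubsections.

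For a fixed $\alphaup$ the central auxiliary object is $\Rad^{\!\text{add}}(\alphaup)\,{=}\,\{\betaup\,{\in}\,\Rad\,{\mid}\,\alphaup{+}\betaup\,{\in}\,\Rad\}$. The case $h\,{=}\,1$ of the second line of \eqref{e5.3} forces $\alphaup{+}\betaup_{i}\,{\in}\,\Rad$ for every term, so every $\betaup_{i}$ of an admissible sequence lies in $\Rad^{\!\text{add}}(\alphaup)$; I would therefore begin each case by writing this set down explicitly in the standard coordinate model of $\Rad$. The upper bound on the length $q$ then follows from a counting argument: reading off the coefficients of the relevant basis vectors $\e_{i}$ in $\alphaup{+}\sum_{i}\betaup_{i}$ and using that this must remain a root bounds how many terms may contribute in each coordinate direction. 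Finally, for each type I would exhibit a concrete admissible sequence attaining the bound (the ones listed in the summarizing table), checking in each instance both that all proper subset-sums $\alphaup{+}\sum\betaup_{j_{i}}$ are roots distinct from $\alphaup$ and that no subset-sum of two or more $\betaup$'s is a root or zero.

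The delicate part, and the main obstacle, is the interplay of the two halves of \eqref{e5.3}: the coefficient count bounds $q$ from above only when one simultaneously enforces that no partial sum $\betaup_{j_1}{+}\cdots{+}\betaup_{j_h}$ with $h\,{\geq}\,2$ is a root or zero. This second constraint is what rules out, for instance, taking both $\e_{1}{-}\e_{i}$ and $\e_{1}{+}\e_{i}$ in a type-$\textsc{C}$ sequence, and it is what drives the index below the naive coefficient count in several cases. For the exceptional systems the geometry is tighter. For the $\textsc{E}$ series I would exploit the inclusions $\Rad(\textsc{E}_{6})\,{\subset}\,\Rad(\textsc{E}_{7})\,{\subset}\,\Rad(\textsc{E}_{8})$, running the count once in $\textsc{E}_{8}$ with $\alphaup\,{=}\,{-}\e_{1}{-}\e_{2}$ and verifying that both the governing additive set and a length-four attaining sequence survive in each of the three systems, so that $\muup\,{=}\,4$ uniformly. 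For $\textsc{F}_{4}$ and $\textsc{G}_{2}$ the short and long roots must be treated separately, with the half-integral roots and those of the form $\tfrac{1}{2}({\pm}\e_{1}{\pm}\cdots)$ handled via the mixed inequalities (such as $q_{1}{+}\tfrac{1}{2}q_{3}\,{\leq}\,2$) obtained by counting each coordinate's contribution.

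For the two supplementary assertions I would simply inspect the attaining sequences. Whenever $\muup(\alphaup)\,{=}\,4$ the displayed maximal sequence satisfies $\alphaup{+}\betaup_{1}{+}\betaup_{2}{+}\betaup_{3}{+}\betaup_{4}\,{=}\,{-}\alphaup$ by direct addition in coordinates: in the classical and $\textsc{F}_{4}$ cases $\alphaup\,{=}\,{-}\e_{i}{-}\e_{j}$ and the four terms supply $2\e_{i}{+}2\e_{j}$, whereas for $\textsc{G}_{2}$ one checks it for the long root $\alphaup\,{=}\,\e_{2}{+}\e_{3}{-}2\e_{1}$ against its attaining sequence directly. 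For the short root of $\textsc{G}_{2}$, with $\alphaup\,{=}\,\e_{2}{-}\e_{1}$ and the sequence $(\e_{1}{-}\e_{3},\,\e_{1}{+}\e_{3}{-}2\e_{2})$, the same inspection gives $\alphaup{+}\betaup_{1}{+}\betaup_{2}\,{=}\,\e_{1}{-}\e_{2}\,{=}\,{-}\alphaup$, which completes the verification.
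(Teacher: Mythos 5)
Your proposal follows essentially the same route as the paper: reduce to one representative per root length via Weyl-invariance of the index and transitivity of $\Wf(\Rad)$ on roots of a given length, write down $\Rad^{\!\text{add}}(\alphaup)$ explicitly in coordinates, bound the length of an admissible sequence by counting coordinate contributions combined with the constraint that no partial sum of two or more $\betaup$'s is a root, and then exhibit an attaining sequence in each case; this is exactly the paper's case-by-case computation, including the use of $\Rad(\textsc{E}_{6})\subset\Rad(\textsc{E}_{7})\subset\Rad(\textsc{E}_{8})$ and the separate treatment of short and long roots in $\textsc{B}_{\ell}$, $\textsc{C}_{\ell}$, $\textsc{F}_{4}$, $\textsc{G}_{2}$.

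One small point: for the supplementary assertion that $\alphaup{+}\betaup_{1}{+}\betaup_{2}{+}\betaup_{3}{+}\betaup_{4}={-}\alphaup$ when the index is four, you verify only the displayed attaining sequences, whereas the claim (as the paper uses it, e.g.\ ``all sums of the four terms on a maximal length admissible sequence add to $\e_{1}{+}\e_{2}$'') is for \emph{every} maximal-length admissible sequence. This is closed immediately by the counting inequalities you already set up: when $q=4$ they are forced to be equalities, which pins the coefficient of each relevant $\e_{i}$ in $\alphaup{+}\sum\betaup_{j}$ and hence identifies the sum with ${-}\alphaup$ (and similarly for the length-two case of a short root in $\textsc{G}_{2}$). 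With that one-line addition the argument is complete.
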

This yields (see \cite[Theorem\,2.21]{MMN21}): 
\begin{thm}
 Let $(\gs,\qt)$ be a parabolic $CR$-algebra with $\gt$ simple. Then 
\begin{itemize}
 \item If $\gt$ is either of type $\textsc{A}_{\ell}$, $\textsc{B}_{2}$  
 or $\textsc{C}_{\ell}$, then the Levi-order of
 $(\gs,\qt)$ is less or equal two;
  \item If $\gt$ is of one of the types $\textsc{B}_{\ell{\geq}3}$, 
  $\textsc{D}_{\ell\geq{4}}$, $\textsc{F}_{4}$, $\textsc{G}_{2}$,
  then the Levi-order of
 $(\gs,\qt)$ is less or equal three. 
 For each of these types there are $CR$-algebras with Levi-order $3$.
 \qed
\end{itemize}
\end{thm}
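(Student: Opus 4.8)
The plan is to reduce the statement to the computation of the \emph{indices} $\muup(\alphaup)$ carried out in the preceding proposition, and then to sharpen the resulting bound by one unit precisely in the cases where the index equals four. First I would fix an adapted Cartan subalgebra $\hst$, pass to the associated pair $(\Qq\,,\stt)$, and recall that Levi-orders need only be computed on roots $\alphaup\,{\in}\,\Qq\,{\cap}\,\stt(\Qc)$, in view of the decomposition $\qt\,{=}\,(\qt\,{\cap}\,\sigmaup(\qt))\,{\oplus}\,(\qt\,{\cap}\,\sigmaup(\qt^{c}))$. By Lemma\,\ref{l3.2} a root of $\Qq^{r}\,{\cap}\,\stt(\Qc)$ has Levi-order $1$ or $+\infty$, so in the Levi-nondegenerate case (the only one in which a finite bound can hold) the Levi-order of $(\gs,\qt)$ equals the maximum of the finite $\ord(\alphaup)$ over $\alphaup\,{\in}\,\Qq^{n}\,{\cap}\,\stt(\Qc)$, by the corollary following Lemma\,\ref{l3.2}.

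The key inequality is $\ord(\alphaup)\,{\leq}\,\muup(\alphaup)$ for every such $\alphaup$. Indeed, by \eqref{e5.1} and \eqref{e5.2} a minimal-length Levi-sequence $(\betaup_{1},\hdots,\betaup_{q})$ for $\alphaup$ enjoys the defining properties of an \emph{admissible} sequence \eqref{e5.3}: no subfamily of the $\betaup_{i}$ has a root or zero as its sum (no cancellation is possible, since each $-\betaup_{i}$ lies in $\Qq^{n}$, a set of roots of strictly positive $\chiup_{\Qq}$-degree), and every partial sum $\alphaup\,{+}\,\sum\betaup_{i_{j}}$ is a root distinct from $\alphaup$. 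Hence $q\,{=}\,\ord(\alphaup)\,{\leq}\,\muup(\alphaup)$. Reading off the index table, $\muup(\alphaup)\,{\leq}\,2$ for the types $\textsc{A}_{\ell}$, $\textsc{B}_{2}$, $\textsc{C}_{\ell}$, which yields the first bullet at once.

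For the remaining types $\textsc{B}_{\ell\geq3}$, $\textsc{D}_{\ell\geq4}$, $\textsc{F}_{4}$, $\textsc{G}_{2}$ (and $\textsc{E}_{6},\textsc{E}_{7},\textsc{E}_{8}$, handled identically) the index can reach $4$, so the bare inequality only gives $\ord(\alphaup)\,{\leq}\,4$, and a refinement is the heart of the argument. Here I would invoke the observation recorded in the same proposition: whenever $\muup(\alphaup)\,{=}\,4$, any admissible sequence of maximal length satisfies $\alphaup\,{+}\,\betaup_{1}\,{+}\,\cdots\,{+}\,\betaup_{4}\,{=}\,{-}\alphaup$. Since $\alphaup\,{\in}\,\Qq^{n}\,{\cap}\,\stt(\Qc)$ forces $-\alphaup\,{\in}\,\Qc\,{\cap}\,\stt(\Qq)$, which is disjoint from $\Qc\,{\cap}\,\stt(\Qc)$, such a sequence cannot satisfy the terminal requirement $\alphaup\,{+}\,\sum\betaup_{i}\,{\in}\,\Qc\,{\cap}\,\stt(\Qc)$ of \eqref{e5.1}; hence no Levi-sequence has length $4$ and $\ord(\alphaup)\,{\leq}\,3$. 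The same reasoning applied to the short roots of $\textsc{G}_{2}$, where $\muup\,{=}\,2$ and again the maximal admissible sequence lands on $-\alphaup$, forces $\ord(\alphaup)\,{\leq}\,1$ there, so the global bound for $\textsc{G}_{2}$ stays $3$. This proves the upper bound of the second bullet.

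It remains to exhibit, for each of these types, a parabolic $CR$-algebra realising Levi-order exactly $3$. The plan is to use the manufacturing recipe of the remark following Proposition\,\ref{p2.33}: choose $\stt$ and a $\Z$-grading of $\Rad$ so that a distinguished root $\alphaup$ admits a genuine three-term Levi-sequence whose endpoint differs from $-\alphaup$ and lies in $\Qc\,{\cap}\,\stt(\Qc)$, while all one- and two-term subconfigurations fail to be Levi-sequences. For $\textsc{B}_{\ell\geq3}$ and $\textsc{F}_{4}$ one takes a short root (of index $3$), whereas for $\textsc{D}_{\ell\geq4}$, $\textsc{E}_{\ell}$ and $\textsc{G}_{2}$ one truncates an index-$4$ root to a length-$3$ admissible sequence whose total is not $-\alphaup$ (for instance $(\e_{1}{-}\e_{3},\,\e_{1}{+}\e_{3},\,\e_{2}{-}\e_{4})$ for $\textsc{D}_{\ell}$). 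Levi-nondegeneracy of each resulting $(\Qq\,,\stt)$ is then verified on a $V$-fit chamber through the criterion $\stt(\Phi_{C})\,{\subseteq}\,\Rad^{+}(C)$ of Theorem\,\ref{t4.30}. I expect this last part to be the main obstacle: the upper bound is a clean consequence of the index table together with the $-\alphaup$ observation, while producing the examples demands the case-by-case checking that a three-term Levi-sequence exists, that none shorter does, and that the chosen grading is compatible with $\stt$; the explicit cross-marked diagrams are those of \cite[Theorem\,2.21]{MMN21}.
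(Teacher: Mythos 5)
Your proposal is correct and follows essentially the same route as the paper: bound $\ord(\alphaup)$ by the index $\muup(\alphaup)$ via the observation that minimal Levi-sequences are admissible (using the permutation-invariance of \eqref{e5.2} and the fact that the $\betaup_{i}$ have negative $\chiup_{\Qq}$-degree), then exclude length-four sequences because an index-four admissible sequence terminates at ${-}\alphaup\,{\in}\,\Qc\,{\cap}\,\stt(\Qq)$, which cannot lie in $\Qc\,{\cap}\,\stt(\Qc)$, and finally defer the order-three examples to \cite{MMN21} and the explicit diagrams. The only (immaterial) overreach is your side remark that short roots of $\textsc{G}_{2}$ have Levi-order at most one: not every length-two admissible sequence for a short root of $\textsc{G}_{2}$ sums to ${-}\alphaup$ (e.g. $(\e_{1}{-}\e_{3},\e_{1}{-}\e_{3})$ for $\alphaup\,{=}\,\e_{2}{-}\e_{1}$), but this does not affect the stated bound of three for that type.
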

In \cite{MMN21} examples of $CR$-algebras of Levi-order $3$ 
are given for each type $\textsc{B}_{\ell{\geq}3}$, $\textsc{D}_{\ell\geq{4}}$,
$\textsc{G}_{2}$. We rehearse below an example for $\textsc{B}_{3}$,
give an extra example for the case $\textsc{F}_{4}$
and examples for $\textsc{B}_{4}$ and non maximal parabolic $\qt$.
\begin{exam}  Let $\qt$ be the maximal parabolic subalgebra 
of a semisimple complex
Lie algebra $\gt$ of type $\textsc{B}_{3}$. \par
With
$\Rad(\textsc{B}_{3})\,{=}\,\{{\pm}\e_{i}\,{\mid}\, i{=}1,2,3\}\,{\cup}\,\{{\pm}\e_{i}{\pm}\e_{j}\,{\mid}\,
1{\leq}i{<}j{\leq}3\}$, we take the parabolic
$\Qq\,{=}\,\{\alphaup\,{\in}\,\Rad(\textsc{B}_{3})\,{\mid}\,\chiup_{\Qq}(\alphaup)\,{\geq}0\}$ with 
$\chiup_{\Qq}(\e_{1}){=}1,$ $\chiup_{\Qq}(\e_{2}){=}1$, $\chiup_{\Qq}(\e_{3}){=}0$ and the involution $\stt$ of $\Rad$ with
$\stt(\e_{1}){=}{-}\e_{1}$, $\stt(\e_{2}){=}{-}\e_{3}$, $\stt(\e_{3}){=}{-}\e_{3}$.\par
The canonical chamber, with basis 
$\alphaup_{1}{=}\e_{1}{-}\e_{2}$, $\alphaup_{2}{=}\e_{2}{-}\e_{3}$, $\alphaup_{3}{=}\e_{3}$,
is $V$-fit for $(\Qq\,,\stt)$ and every $(\gs,\qt)$ with $\sigmaup\,{\in}\,\Invs$, which are 
Levi-non\-de\-gen\-er\-ate, 
having cross-marked diagram 
 \begin{equation*}
  \xymatrix @M=0pt @R=4pt @!C=8pt{ 
  \alphaup_{1}&\alphaup_{2}&\alphaup_{3}\\
 \ominus 
 \ar@{-}[r] &\medcirc \ar@{=>}[r] &\ominus\\
 &\times}
\end{equation*}
From 
\begin{equation*} 
\begin{array}{ | c | c | c | c |}
 \hline
 \alphaup & \stt(\alphaup) & \chiup_{\Qq}(\alphaup) & \chiup_{\Qq}(\stt(\alphaup)) \\
 \hline
 \e_{1} & {-}\e_{1} & 1 & {-}1\;\, \\
 \hline
 \e_{2} & {-}\e_{3} & 1 & 0\\
 \hline
 \e_{3}& {-}\e_{2} & 0 & {-}1\;\, \\
 \hline
 \e_{1}{+}\e_{2} & {-}\e_{1}{-}\e_{3} & 2 & {-1}\;\; \\
 \hline
 \e_{1}{-}\e_{2} & \e_{3}{-}\e_{1} & 0 & {-}1\;\, \\
 \hline
 \e_{1}{+}\e_{3} & {-}\e_{1}{-}\e_{2} & 1 & {-}2\,\; \\
 \hline
 \e_{1}{-}\e_{3} & \e_{2}{-}\e_{1} & 1 & 0 \\
 \hline
 \e_{2}{+}\e_{3}& {-}\e_{1}{-}\e_{2} & 1 & {-}2\,\; \\
 \hline
 \e_{2}{-}\e_{3} & \e_{2}{-}\e_{3} & 1 & 1 \\
 \hline
\end{array}
\end{equation*}
we obtain that 
\begin{equation*} 
\begin{cases}
 \Qc\cap\,\stt(\Qc)=\{\e_{3}{-}\e_{2}\},\\
 \Qq\,{\cap}\,\stt(\Qc)=\{\e_{1},\e_{3},\e_{1}{+}\e_{2},\e_{1}{-}\e_{2}, \e_{1}{+}\e_{3},\e_{2}{+}\e_{3}\},\\
 \Qq^{n}\cap\stt(\Qc)=\{\e_{1},\e_{1}{+}\,\e_{2},\e_{1}{+}\,\e_{3},\e_{2}{+}\,\e_{3}\},\\
 \Qc\,{\cap}\,\stt(\Qq)=\{{-}\e_{1},{-}\e_{2},{-}\e_{1}{-}\e_{3}, {-}\e_{1}{+}\e_{3},{-}\e_{1}{-}\e_{2},{-}\e_{2}
 {-}\e_{3}\}.
\end{cases}
\end{equation*}
Let us compute Levi-orders it suffices to consider roots in $\Qq^{n}\,{\cap}\,\stt(\Qc)$. We get 
\begin{align*}
& \e_{1}+(\e_{3}{-}\e_{1})+({-}\e_{2})=\e_{3}{-}\e_{2} && \Rightarrow \ord(\e_{1})=2,\\
& (\e_{1}{+}\e_{2})+({-}\e_{2})+({-}\e_{2}){+}(\e_{3}{-}\e_{1})=\e_{3}{-}\e_{2}
&& \Rightarrow \ord(\e_{1}{+}\e_{2})=3,\\
& (\e_{1}{+}\e_{3}){+}({-}\e_{1}{-}\e_{2})=\e_{3}{-}\e_{2}
&& \Rightarrow \ord(\e_{1}{+}\e_{3})=1,\\
& (\e_{2}{+}\e_{3}){+}({-}\e_{2}){+}({-}\e_{2})=\e_{3}{-}\e_{2}
&& \Rightarrow \ord(\e_{2}{+}\e_{3})=2.
\end{align*}
The fact that $\ord(\e_{1}{+}\e_{2})\,{\geq}\,3$  follows because $\chiup_{\Qq}(\e_{1}{+}\e_{2})\,{=}\,2$
and all roots $\betaup$ in $\Qc\,{\cap}\,\stt(\Qq)$ that may be added to $\e_{1}{+}\e_{2}$ 
have $\chiup_{\Qq}(\betaup)\,{=}\,{-}1$.
\end{exam} 
\begin{exam}
  Let $\qt$ be the maximal parabolic subalgebra 
of a semisimple complex
Lie algebra $\gt$ of type $\textsc{F}_{4}$. The root system $\Rad(\textsc{F}_{4})$
is obtained by adding to $\Rad({B}_{4})$ 
the elements $\tfrac{1}{2}({\pm}\e_{1}{\pm}\e_{2}{\pm}\e_{3}{\pm}\e_{4})$. We found convenient to introduce
for these roots the notation 
\begin{align*}
 &\etaup_{0}{=}\,\tfrac{1}{2}(\e_{1}{+}\e_{2}{+}\e_{3}{+}\e_{4}), \quad \etaup_{i}{=}\,\etaup_{0}{-}\e_{i},\;
 1{\leq}i{\leq}4, \quad \etaup_{i,j}{=}\,\etaup_{0}{-}\e_{i}{-}\e_{j},\;
 1{\leq}i{<}j{\leq}4, \\
 &\etaup_{i,j,k}{=}\,\etaup_{0}{-}\e_{i}{-}\e_{j}{-}\e_{k},\;
 1{\leq}i{<}j{<}k{\leq}4,\qquad \etaup_{1,2,3,4}{=}\,{-}\etaup_{0}.
\end{align*}\par
 We consider the parabolic $\Qq$ in $\Rad(\textsc{F}_{4})$ with $\chiup_{\Qq}(\e_{1}){=}1$,
 $\chiup_{\Qq}(\e_{2}){=}1$, $\chiup_{\Qq}(\e_{3}){=}0$, $\chiup_{\Qq}(\e_{4}){=}2$ and the involution
 $\stt$ 
 with $\stt(\e_{1}){=}{-}\e_{1}$, 
 $\stt(\e_{2}){=}{-}\e_{3}$,
 $\stt(\e_{3}){=}{-}\e_{2}$, \par\noindent
 $\stt(\e_{4}){=} {-}\e_{4}.$
 Then 
 $ \alphaup_{1}{=}\e_{1}{-}\e_{2}$,  $\alphaup_{2}{=}\e_{2}{-}\e_{3}$, $\alphaup_{3}{=}\e_{3}$,
 $\alphaup_{4}{=}\tfrac{1}{2}(\e_{4}{-}\e_{1}{-}\e_{2}{-}\e_{3})$
 are the simple roots of a $V$-fit chamber $C$ for $(\Qq\,,\stt)$. We obtain indeed the cross-marked
 diagram  \begin{equation*}
  \xymatrix @M=0pt @R=4pt @!C=8pt{ 
  \alphaup_{1}&\alphaup_{2}&\alphaup_{3}&\alphaup_{4}\\
  \ominus \ar@{-}[r]  & \medcirc 
 \ar@{=>}[r] &\ominus \ar@{-}[r] &\medbullet\\
 &\times}
\end{equation*}
Since $\qt$ is maximal, it follows that $(\Qq\,,\stt)$ and 
$(\gs,\qt)$, for all $\sigmaup\,{\in}\,\Invs$,  is fundamental and 
Levi-nondegenerate.
From the table: 
\begin{equation*} 
\begin{array}{| c | c | c | c || c | c | c | c |}
 \hline
 \alphaup & \stt(\alphaup) & {\begin{smallmatrix} \chiup_{\Qq}(\alphaup)
 \end{smallmatrix}} &  {\begin{smallmatrix} \chiup_{\Qq}(\alphaup)
 \end{smallmatrix}}  &
 \alphaup & \stt(\alphaup) & {\begin{smallmatrix} \chiup_{\Qq}(\alphaup)
 \end{smallmatrix}} &  {\begin{smallmatrix} \chiup_{\Qq}(\alphaup)
 \end{smallmatrix}} \\
 \hline
\e_{1} & {-}\e_{1} & 1 & {-1} & \e_{4}{+}\e_{2} & {-}\e_{4}{-}\e_{3} & 3 & {-}2\,\; \\
 \hline
  \e_{2} & {-}\e_{3} & 1 & 0 & \e_{4}{-}\e_{2} & {-}\e_{4}{+}\e_{3} & 1 & -2\,\; \\
 \hline
  \e_{3}&{-}\e_{2}& 0 & {-1} &  \e_{4}{+}\e_{3} & {-}\e_{4}{-}\e_{2} & 2 & {-}3\,\; \\
 \hline
  \e_{4}&{-}\e_{4} & 2 & {-}2 &  \e_{4}{-}\e_{3} & {-}\e_{4}{+}\e_{2}& 2 & -1\,\; \\
 \hline
  \e_{1}{+}\e_{2} & {-}\e_{1}{-}\e_{3}& 2 & {-1}\;\; &  \etaup_{0} & \etaup_{1,2,3,4} & 2 & {-}2\,\; \\
 \hline
  \e_{1}{-}\e_{2}&{-}\e_{1}{+}\e_{3} & 0 & {-}1\;\; &  \etaup_{1} & \etaup_{2,3,4} & 1 & {-1}\;\; \\
 \hline
  \e_{1}{+}\e_{3} & {-}\e_{1}{-}\e_{2} & 1 & {-}2\,\; &  \etaup_{2} & \etaup_{1,2,4} & 1 & {-}2 \,\; \\
 \hline 
  \e_{1}{-}\e_{3} & {-}\e_{1}{+}\e_{2} & 1 & 0 &  \etaup_{3} & \etaup_{2,3,4} & 2 & {-1}\;\; \\
 \hline
  \e_{4}{+}\e_{1} & {-}\e_{4}{-}\e_{1} & 3 & {-}3\,\; &  \etaup_{1,2} & \etaup_{2,4} & 0 & {-1}\;\; \\
 \hline
  \e_{4}{-}\e_{1} & {-}\e_{4}{+}\e_{1} & 1 & {-1}\;\; &  \etaup_{1,3} & \etaup_{3,4} & 1 & 0 \\
 \hline
  \e_{2}{+}\e_{3} & {-}\e_{2}{-}\e_{3}& 1 & {-1}\;\; & \etaup_{2,3} & \etaup_{1,4} & 1 & {-1}\;\; \\
 \hline
  \e_{2}{-}\e_{3} & \e_{2}{-}\e_{3} & 1 & 1 &  \etaup_{1,2,3} & \etaup_{4} & 0 & 0 \\
 \hline
\end{array}
\end{equation*}
We have $\Qq^{c}{\cap}\,\stt(\Qq^{c})\,{=}\,\{\e_{3}{-}\e_{2}\}$. The
element $(\e_{1}{+}\e_{2})\,{\in}\,\Qn\,{\cap}\,\stt(\Qc)$ has Levi-order $3$.
Noting that $(\e_{3}{-}\e_{2}){-}(\e_{1}{+}\e_{2})$ is not a root of $\Rad(\textsc{F}_{4})$,
the elements which can compose a minimal Levi sequence for $(\e_{1}{+}\e_{2})$ belong to
\begin{equation*}
 \{\betaup\,{\in}\,\Qc\,{\cap}\,\stt(\Qq)\,{\mid}\,(\e_{1}{+}\e_{2}){+}\betaup\,{\in}\,\Qq\,{\cap}\,\stt(\Qc)\}
 =\{{-}\e_{2},\e_{3}{-}\,\e_{1}\},
\end{equation*}
and, since $(\e_{1}{+}\e_{2})$, $\e_{2}$ and $(\e_{3}{-}\e_{1})$ are linearly independent, 
\begin{equation*}
 (\e_{1}{+}\e_{2})+({-}\e_{2}){+}({-}\e_{2}){+}(\e_{3}{-}\e_{1})=\e_{3}{-}\e_{2}
\end{equation*}
is the unique possible minimal Levi-sequence for $(\e_{1}{+}\e_{2})$. \par
The largest root $(\e_{1}{+}\e_{4})$ is purely imaginary and belongs to $\Qn\,{\cap}\,\stt(\Qc)$.
Since $(\e_{3}{-}\e_{2})\,{-}\,(\e_{1}{+}\e_{4})\,{=}\,2\etaup_{1,2,4}$, with $\etaup_{1,2,4}{\in}\,\Qq^{c}{\cap}\,\stt(\Qn)$,
the Levi order of $(\e_{1}{+}\e_{4})$ is two. This shows that the Levi order of the $CR$ algebra can  be
different from that of the maximal root in $\Qn{\cap}\stt(\Qc)$.
\end{exam} 
\begin{exam} All examples of  Levi order three we gave so far have a maximal parabolic $\qt$. 
 Our further example, in which we consider again 
a Lie algebra of type $\textsc{F}_{4}$, produces a 
Levi-nondegenerate
 $CR$-algebra $(\gs,\qt)$ of Levi-order three 
 with a non maximal $\qt$. We keep the notation introduced in the previous
 example and consider the parabolic $\qt$ with parabolic set $\Qq\,{=}\,\{\alphaup\,{\in}\,\Rad(\textsc{F}_{4})
 \,{\mid}\,\chiup_{\Qq}(\alphaup)\,{\geq}\,0\}$ for $\chiup_{\Qq}(\e_{1}){=}1$, 
 $\chiup_{\Qq}(\e_{2}){=}1$, $\chiup_{\Qq}(\e_{3}){=}0$, $\chiup_{\Qq}(\e_{4}){=}4$, and the 
 involution $\stt$  with $\stt(\e_{1}){=}{-}\e_{4}$, $\stt(\e_{2}){=}{-}\e_{3}$, 
 $\stt(\e_{3}){=}{-}\e_{2}$, $\stt(\e_{4}){=}{-}\e_{1}$. 
 In the  basis 
  $ \alphaup_{1}{=}\e_{1}{-}\e_{2}$,  $\alphaup_{2}{=}\e_{2}{-}\e_{3}$, $\alphaup_{3}{=}\e_{3}$,
 $\alphaup_{4}{=}\tfrac{1}{2}(\e_{4}{-}\e_{1}{-}\e_{2}{-}\e_{3})$ ($V$\!{-}fit)
 and in the basis 
  $ \alphaup'_{1}{=}\e_{2}{-}\e_{1}$,  $\alphaup'_{2}{=}\e_{1}{+}\e_{3}$, $\alphaup'_{3}{=}{-}\e_{3}$,
 $\alphaup'_{4}{=}\tfrac{1}{2}(\e_{4}{-}\e_{1}{-}\e_{2}{+}\e_{3})$ ($S$\!{-}fit)
we get  the cross-marked diagrams
  \begin{gather*}
  \xymatrix @M=0pt @R=4pt @!C=8pt{ 
  \alphaup_{1}&\alphaup_{2}&\alphaup_{3}&\alphaup_{4}\\
  \ominus \ar@{-}[r]  & \medcirc
 \ar@{=>}[r] &\ominus  \ar@{-}[r] &\oplus\\
 &\times&&\times}
 \qquad
  \xymatrix @M=0pt @R=4pt @!C=8pt{ 
  \alphaup'_{1}&\alphaup'_{2}&\alphaup'_{3}&\alphaup'_{4}\\
  \oplus \ar@{-}[r]  & \ominus
 \ar@{=>}[r] &\oplus  \ar@{-}[r] &\medcirc\\
 &\times&&\times}
\end{gather*}
The first diagram shows that
$(\Qq\,,\stt)$ is Levi-nondegenerate and the second, since  
$\stt(\alphaup'_{1})\,{=}\,\alphaup'_{1}{+}2\alphaup'_{2}{+}
4\alphaup'_{3}{+}2\alphaup'_{4}$, shows that $(\Qq\,,\stt)$ is
fundamental. We have the table:
\begin{equation*} 
\begin{array}{| c | c | c | c || c | c | c | c |}
 \hline
 \alphaup & \stt(\alphaup) & \begin{smallmatrix}\chiup_{\Qq}(\alphaup) 
 \end{smallmatrix}& \begin{smallmatrix}\chiup_{\Qq}(\stt(\alphaup)) \end{smallmatrix}
 & \alphaup & \stt(\alphaup) & \begin{smallmatrix}\chiup_{\Qq}(\alphaup) 
 \end{smallmatrix}& \begin{smallmatrix}\chiup_{\Qq}(\stt(\alphaup)) \end{smallmatrix}\\
 \hline
 \e_{1} & {-}\e_{4} & 1 & {-}4\,\; &  \e_{4}{+}\e_{2} & {-}\e_{1}{-}\e_{3} & 5 & {-1}\;\; \\
 \hline
 \e_{2} & {-}\e_{3} & 1 & 0 & \e_{4}{-}\e_{2} & \e_{3}{-}\e_{1} & 3 & -1\,\; \\
 \hline
 \e_{3}&{-}\e_{2}& 0 & {-1}\;\; & \e_{4}{+}\e_{3} & {-}\e_{1}{-}\e_{2} & 4 & {-}2\,\; \\
 \hline
 \e_{4}&{-}\e_{1} & 4 & {-1}\;\; &  \e_{4}{-}\e_{3} & \e_{2}{-}\e_{1}& 4 & 0\,\; \\
 \hline
 \e_{1}{+}\e_{2} & {-}\e_{4}{-}\e_{3}& 2 & {-}4\,\; &\etaup_{0} & \etaup_{1,2,3,4} & 3 & {-}3\,\; \\
 \hline
 \e_{1}{-}\e_{2}&\e_{4}{+}\e_{2} & 0 & {-}4\,\; & \etaup_{1} & \etaup_{1,2,3} & 2 & 1 \\
 \hline
 \e_{1}{+}\e_{3} & {-}\e_{2}{-}\e_{4} & 1 & {-}5\,\; & \etaup_{2} & \etaup_{1,2,4} & 2 & {-}3 \,\; \\
 \hline
 \e_{1}{-}\e_{3} & \e_{2}{-}\e_{4} & 1 & {-}3\,\; &  \etaup_{3} & \etaup_{1,3,4} & 3 & {-}2\,\; \\
 \hline
 \e_{4}{+}\e_{1} & {-}\e_{4}{-}\e_{1} & 5 & {-}5\,\; &  \etaup_{1,2} & \etaup_{1,2} & 1 & 1 \\
 \hline
 \e_{4}{-}\e_{1} & \e_{4}{-}\e_{1} & 3 & 3 \,\; &  \etaup_{1,3} & \etaup_{1,3} & 2 & 2 \\
 \hline
 \e_{2}{+}\e_{3} & {-}\e_{2}{-}\e_{3}& 1 & {-1}\;\; &  \etaup_{2,3} & \etaup_{1,4} & 2 & {-}2\,\; \\
 \hline
 \e_{2}{-}\e_{3} & \e_{2}{-}\e_{3} & 1 & 1 &  \etaup_{1,2,3} & \etaup_{1} & 2 & 2 \\
 \hline
\end{array}
\end{equation*}\par
In particular,  $\Qc{\cap}\,\stt(\Qc)\,{=}\,\{\e_{1}{-}\e_{4},\e_{3}{-}\e_{2},\etaup_{2,3,4},\etaup_{2,4},\etaup_{3,4},
\etaup_{4}\}$.\par
We claim that $\gammaup\,{=}\,(\e_{2}{+}\e_{4})$ has Levi-order three. Indeed, by subtracting 
$\gammaup$ from $\Qc\,{\cap}\,\stt(\Qc)$, as this set does not contain any root of 
$\Qc\,{\cap}\,\stt(\Qq)$, we show that $\ord(\gammaup)\,{>}\,1$. Then the candidate
roots of  $\Qc\,{\cap}\,\stt(\Qq)$ to be part of a minimal Levi sequence are those in 
$$
 \{\betaup\,{\in}\,\Qc\,{\cap}\,\stt(\Qq)\,{\mid}\,\betaup{+}\gammaup\,{\in}\,\Qq\,{\cap}\,\stt(\Qc)\}=
 \{{-}\e_{2},\e_{3}{-}\e_{4}\}
$$
and 
$
 \gammaup+({-}\e_{2}){+}({-}\e_{2}){+}(\e_{3}{-}\e_{4})=\e_{3}{-}\e_{2}
$
shows that $\ord(\gammaup)\,{=}\,3$.
\end{exam}

\begin{exam}
  Let $\qt$ be the non maximal parabolic subalgebra 
of a semisimple complex
Lie algebra $\gt$ of type $\textsc{B}_{4}$, associated with the parabolic set
$\Qq\,{=}\,\{\alphaup\,{\in}\,\Rad(\textsc{B}_{4})\,{\mid}\,\chiup_{\Qq}(\alphaup)\,{\geq}\,0\}$, for
$\chiup_{\Qq}(\e_{1}){=}2$, $\chiup_{\Qq}(\e_{2}){=}2$, $\chiup_{\Qq}(\e_{3}){=}1$, $\chiup_{\Qq}(\e_{4}){=}0$.
Consider the symmetry $\stt$ of the root system with 
$\stt(\e_{1})={-}\e_{2}$, $\stt(\e_{2})={-}\e_{1}$, $\stt(\e_{3})={-}\e_{4}$, $\stt(\e_{4})={-}\e_{3}$. 
The canonical chamber, with basis 
$\alphaup_{1}{=}\e_{1}{-}\e_{2}$, $\alphaup_{2}{=}\e_{2}{-}\e_{3}$, $\alphaup_{3}{=}\e_{3}{-}\e_{4}$,
$\alphaup_{4}{=}\e_{4}$ 
is $V$-fit for $(\Qq\,,\stt)$, while we obtain an $S$-fit chamber in the basis 
$\alphaup'_{1}{=}\e_{1}{-}\e_{3}$, $\alphaup'_{2}{=}\e_{3}{-}\e_{2}$, $\alphaup'_{3}{=}\e_{2}{+}\e_{4}$,
$\alphaup'_{4}{=}{-}\e_{4}$.
The corresponding  $S$ and $V$-diagrams are
 \begin{equation*}
  \xymatrix @M=0pt @R=4pt @!C=8pt{ 
  \alphaup_{1}&\alphaup_{2}&\alphaup_{3}&\alphaup_{4}\\
 \medcirc \ar@{-}[r] &\ominus 
 \ar@{-}[r] &\medcirc \ar@{=>}[r] &\ominus\\
 \times&&\times}\qquad 
   \xymatrix @M=0pt @R=4pt @!C=8pt{ 
  \alphaup'_{1}&\alphaup'_{2}&\alphaup'_{3}&\alphaup'_{4}\\
\ominus \ar@{-}[r] &\oplus 
 \ar@{-}[r] &\ominus \ar@{=>}[r] &\oplus\\
 \times&&\times}
\end{equation*}
The first shows that $(\Qq\,,\stt)$ is Levi-nondegenerate, $\stt(\alphaup'_{2})\,{=}\alphaup'_{1}{+}
\alphaup'_{2}{+}\alphaup'_{3}{+}2\alphaup'_{4}$ shows that $(\Qq\,,\stt)$ is fundamental. We get
\begin{equation*} 
\begin{array}{| c | c | c | c || c | c | c | c |}
 \hline
 \alphaup & \stt(\alphaup) &\begin{smallmatrix} \xiup_{\Qq}(\alphaup) 
 \end{smallmatrix}& \begin{smallmatrix}\xiup_{\Qq}(\stt(\alphaup)) \end{smallmatrix} &
 \alphaup & \stt(\alphaup) &\begin{smallmatrix} \xiup_{\Qq}(\alphaup) 
 \end{smallmatrix}& \begin{smallmatrix}\xiup_{\Qq}(\stt(\alphaup)) \end{smallmatrix} 
 \\
 \hline
 \e_{1} & {-}\e_{2} & 2 & {-1}\;\; &  \e_{1}{+}\e_{4} & {-}\e_{2}{-}\e_{3} & 2 & {-}2 \\
 \hline
 \e_{2} & {-}\e_{1} & 1 & {-}2\,\; &  \e_{1}{-}\e_{4} & {-}\e_{2}{+}\e_{3} & 2 & 0 \\
 \hline
 \e_{3}&{-}\e_{4}& 1 & 0 &  \e_{2}{+}\e_{3} & {-}\e_{1}{-}\e_{4}& 2 & {-}2\,\;\\
 \hline
 \e_{4}&{-}\e_{3} & 0 & {-1}\;\; &  \e_{2}{-}\e_{3} & {-}\e_{1}{+}\e_{4} & 0 & {-}2 \;\;\\
 \hline
 \e_{1}{+}\e_{2} & {-}\e_{1}{-}\e_{2}& 3 & {-}3\,\; &  \e_{2}{+}\e_{4} & {-}\e_{1}{-}\e_{3} & 1 & {-}3\,\; \\
 \hline
 \e_{1}{-}\e_{2}&\e_{1}{-}\e_{2} & 1 & 1 &  \e_{2}{-}\e_{4} & {-}\e_{1}{+}\e_{3} & 1 & {-1}\;\; \\
 \hline
 \e_{1}{+}\e_{3} & {-}\e_{2}{-}\e_{4} & 3 & {-1}\;\;&  \e_{3}{+}\e_{4} & {-}\e_{3}{-}\e_{4} & 1 & {-1}\;\; \\
 \hline
 \e_{1}{-}\e_{3} & {-}\e_{2}{+}\e_{4} & 1 & {-1}\;\; &  \e_{3}{-}\e_{4} & \e_{3}{-}\e_{4}& 1 & 1 \\
 \hline
\end{array}
\end{equation*}\par\smallskip
We claim that $\ord(\e_{1}{+}\e_{3})\,{=}\,3$. Indeed, $\Qq^{c}{\cap}\,\stt(\Qq^{c})\,{=}\,\{\e_{2}{-}\e_{1},\,
\e_{4}{-}\e_{3}\}$ and 
\begin{equation*}\tag{$*$}
 (\e_{2}{-}\e_{1})\,{-}\,(\e_{1}{+}\e_{3})\,{=}\,\e_{2}{-}\,2\e_{1}{-}\,\e_{3},\quad  ( \e_{4}{-}\e_{3})-(\e_{1}{+}\e_{3})
 \,{=}\, \e_{4}{-}\,\e_{1}{-}\,2\e_{3}.
\end{equation*}
Since the right hand sides of $(*)$ are not roots and the left hand sides are the only decompositions
of the right hand sides as sums of two roots,  the Levi order of $(\e_{1}{+}\e_{3})$ must be at least three
and indeed 
$$
 (\e_{1}{+}\e_{3}){+}({-}\e_{3}){+}({-}\e_{3}){+}(\e_{4}{-}\e_{1})\,{=}\,\e_{4}{-}\e_{3}
$$
shows that $\ord(\e_{1}{+}\e_{3})\,{=}\,3$. 
Therefore, for any $\sigmaup\,{\in}\,\Invs$, the
$CR$-algebra $(\gs,\qt)$ is fundamental, 
Levi-nondegenerate with Levi-order  three. 
\end{exam}

\subsection{Minimal type} Parabolic $CR$-algebras of minimal orbits have 
Levi-order less than or equal to two (see e.g. \cite{MMN21, mmn2022}). Since $CR$-algebras
associated to the same pair $(\Qq\,,\stt)$ have equal Levi-orders, this bound applies
to a larger set of orbits, that we called in \cite{MMN21} \emph{of minimal type}. 
\begin{dfn}\label{d5.3}
 A triple $(\gs,\qt,\hst)$, consisting of a parabolic $CR$-algebra $(\gs,\qt)$ and an adapeted
 Cartan subalgebra $\hst$ is said to be \emph{of the minimal type} if the associated
 pair $(\Qq\,,\stt)$ satisfies 
\begin{equation}\label{e5.5}
 \Qq^{n}\cap\stt(\Qq^{c})\subset\Rad^{\stt}_{\,\;\bullet}.
\end{equation}
\end{dfn}
\begin{rmk}
A major  flaw of this definition is its dependence  on the involution $\stt$ of the root
system, which in turn depends on the choice of the admissible Cartan subalgebra
$\hst$, as we illustrate 
by the following example.  
\begin{exam}\label{ex8.1}
 Let us consider the orbits of $\SU(3,3)$ in the Grassmannian $\sfG$ of $3$-planes of $\C^{6}$.
 The minimal orbit is a totally real submanifold of real dimension $9$, consisting of the totally
 isotropic $3$-planes of $\C^{6}$. \par
 Consider the intermediate orbit $\sfM,$ consisting
 of isotropic $3$-planes containing a $2$-plane of signature $(1,1).$ It is 
 a Levi-nondegenerate $CR$-sub\-man\-i\-fold
 of hypersurface type
 $(8,1)$. Let us give two alternative
 descriptions of $\sfM$, involving different choices of adapted Cartan subalgebras. \par
First we consider  a description utilising a 
 maximally vector 
 Cartan subalgebra. 
The corresponding involution $\stt$ of  
$\Rad\,{=}\,\{\pm(\e_{i}{-}\e_{j})\,{\mid}\,1{\leq}{i}{<}j{\leq}6\}$
is described by $\stt(\e_{i})\,{=}\,{-}\e_{i+1}$ for $i{=}1,3,5$ and  $\stt(\e_{i})\,{=}\,{-}\e_{i-1}$ for $i{=}2,4,6$.
Then $\Rad_{\,\;\bullet}^{\stt}\,{=}\,\emptyset$, while $\Qq^{n}{\cap}\,\stt(\Qq^{c}){=}\{\e_{i}{-}\e_{j}\,{\mid}\,
i{=}1,2\,j{=}5,6\}\,{\neq}\emptyset$ shows that \eqref{e5.5} is not satisfied. \par
We can choose  an adapted $\hg_{\stt'}$ corresponding to the involution $\stt'$ defined by
$\stt(\e_{1}){=}{-}\e_{6}$, $\stt(\e_{6}){=}{-}\e_{1}$, $\stt(\e_{i}){=}{-}\e_{i}$ for $2{\leq}i{\leq}5$,
by taking a basis of $\C^{6}$ in which, for a hermitian symmetric form of signature $(3,3)$, 
$\{\e_{1},\e_{6}\}$ is the isotropic basis of a 
hyperbolic $2$-plane, $\{\e_{2},\e_{4}\}$ the orthogonal basis of a positive $2$-plane
and $\{\e_{3},\e_{5}\}$ the orthogonal basis of a negative $2$-plane. In this case
$\Qq^{n}\,{\cap}\,\stt'(\Qq^{c}){=}\,\{\e_{i}{-}\e_{j}\,{\mid}\, i{=}2,3,\,j{=}4,5\}\,{\subset}\,\Rad^{\stt'}_{\,\;\bullet}$
and \eqref{e5.5} is satisfied.
\end{exam}
\end{rmk}
\subsection{Weak integrability}
We give in this subsection a new notion which generalises the one given in 
\cite{MMN21} and 
is independent of the choice of adapted Cartan subalgebras. 
\begin{prop}
 For a parabolic $CR$ algebra the following are equivalent 
\begin{equation}\label{e5.6}\vspace{-18pt}
\begin{cases}
(\mathrm{a})\; \qt\cap\sigmaup(\qt)+\qt^{n}+\sigmaup(\qt^{n}) \;\;\text{is a Lie subalgebra of $\gt$};\\
(\mathrm{b})\; [\qt^{n},\sigmaup(\qt^{n})]\subset \qt\cap\sigmaup(\qt)+\qt^{n}+\sigmaup(\qt^{n});\\
(\mathrm{c})\; \text{the polarized $(\gs,\qt_{[\sigmaup]})$ is integrable.}
 \end{cases}
\end{equation}\qed

\end{prop}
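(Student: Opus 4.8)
The plan is to reduce the whole proposition to the single identity
\begin{equation*}
 \qt_{[\sigmaup]}+\sigmaup(\qt_{[\sigmaup]})=(\qt\cap\sigmaup(\qt))+\qt^{n}+\sigmaup(\qt^{n}),
\end{equation*}
after which the three conditions become essentially tautologically equivalent. First I would invoke Lemma\,\ref{l5.9} with $\wt\,{=}\,\sigmaup(\qt)$ (equivalently, the formula recorded in Proposition\,\ref{p5.11}) to write $\qt_{[\sigmaup]}\,{=}\,(\qt\cap\sigmaup(\qt))+\qt^{n}$. Applying the involution $\sigmaup$ and using $\sigmaup(\qt\cap\sigmaup(\qt))\,{=}\,\sigmaup(\qt)\cap\qt\,{=}\,\qt\cap\sigmaup(\qt)$ (since $\sigmaup^{2}\,{=}\,\id$) gives $\sigmaup(\qt_{[\sigmaup]})\,{=}\,(\qt\cap\sigmaup(\qt))+\sigmaup(\qt^{n})$, and summing yields the displayed identity. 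I will denote this common space by $V$. The equivalence $(\mathrm{a})\Leftrightarrow(\mathrm{c})$ is then immediate: by Definition\,\ref{d2.2} the polarized $CR$-algebra $(\gs,\qt_{[\sigmaup]})$ is integrable precisely when $\qt_{[\sigmaup]}+\sigmaup(\qt_{[\sigmaup]})\,{=}\,V$ is a Lie subalgebra of $\gt$, which is exactly assertion $(\mathrm{a})$.

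For $(\mathrm{a})\Rightarrow(\mathrm{b})$ I would simply observe that $\qt^{n}\subseteq V$ and $\sigmaup(\qt^{n})\subseteq V$, so if $V$ is a subalgebra then $[\qt^{n},\sigmaup(\qt^{n})]\subseteq[V,V]\subseteq V$. The substantive direction is $(\mathrm{b})\Rightarrow(\mathrm{a})$, where I must verify $[V,V]\subseteq V$ by checking pairwise the brackets of the three summands $\mt\coloneqq\qt\cap\sigmaup(\qt)$, $\qt^{n}$ and $\sigmaup(\qt^{n})$. The structural facts I would use are that $\mt$ is a Lie subalgebra contained in both $\qt$ and $\sigmaup(\qt)$, that $\qt^{n}$ is an ideal of $\qt$, and that $\sigmaup(\qt^{n})$, being the nilradical of $\sigmaup(\qt)$ (as $\sigmaup$ carries $\qt$ isomorphically onto $\sigmaup(\qt)$, preserving brackets), is an ideal of $\sigmaup(\qt)$. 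These give $[\mt,\mt]\subseteq\mt$, $[\mt,\qt^{n}]\subseteq[\qt,\qt^{n}]\subseteq\qt^{n}$, $[\mt,\sigmaup(\qt^{n})]\subseteq[\sigmaup(\qt),\sigmaup(\qt^{n})]\subseteq\sigmaup(\qt^{n})$, together with $[\qt^{n},\qt^{n}]\subseteq\qt^{n}$ and $[\sigmaup(\qt^{n}),\sigmaup(\qt^{n})]\subseteq\sigmaup(\qt^{n})$. The only bracket not landing in $V$ for formal reasons is $[\qt^{n},\sigmaup(\qt^{n})]$, and this is precisely what hypothesis $(\mathrm{b})$ supplies. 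Hence every bracket lies in $V$, so $V$ is a Lie subalgebra and $(\mathrm{a})$ holds.

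The main point to get right is not a genuine obstacle but the bookkeeping of the opening step: everything rests on correctly identifying $V$ with the polarized sum $\qt_{[\sigmaup]}+\sigmaup(\qt_{[\sigmaup]})$, which is where Lemma\,\ref{l5.9} and the Levi--Chevalley description in Proposition\,\ref{p5.11} are genuinely needed. I would therefore isolate that identity as the first line of the proof and present the three implications as the short consequences above, emphasizing in $(\mathrm{b})\Rightarrow(\mathrm{a})$ that the ideal property of $\qt^{n}$ in $\qt$ and of $\sigmaup(\qt^{n})$ in $\sigmaup(\qt)$ disposes of all the mixed brackets automatically, leaving $[\qt^{n},\sigmaup(\qt^{n})]$ as the single condition that must be imposed.
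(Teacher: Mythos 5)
Your proof is correct; the paper states this proposition without proof, and your argument supplies exactly the routine verification being left to the reader — indeed the identity $\qt_{[\sigmaup]}+\sigmaup(\qt_{[\sigmaup]})=\qt\cap\sigmaup(\qt)+\qt^{n}+\sigmaup(\qt^{n})$ on which you rest everything is recorded explicitly in the paper's remark following the definition of weak integrability. The reduction of $(\mathrm{b})\Rightarrow(\mathrm{a})$ to the single mixed bracket $[\qt^{n},\sigmaup(\qt^{n})]$, using that $\qt^{n}$ and $\sigmaup(\qt^{n})$ are ideals of $\qt$ and $\sigmaup(\qt)$ respectively, is the intended argument.
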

\begin{dfn}\label{d5.4} We call 
\emph{weakly integrable}  a parabolic $CR$ algebra $(\gs,\qt)$ satisfying the equivalent conditions
 \eqref{e5.6}.
\end{dfn} 
\begin{rmk} We denoted by $\qt_{[\sigmaup]}$ the parabolic subalgebra
$\qt\,{\cap}\,\sigmaup(\qt){+}\qt^{n}$, which is the lifted $CR$-structure of the
polarization of $(\gs,\qt)$. We showed in \S{\ref{s.pol}} how to compute polarization
by using the root system of the complexification of an adapted Cartan subalgebra and the simple roots
of an $S$-fit Weyl chamber. When $(\gs,\qt)$ is weakly integrable,
the right hand side of \eqref{e5.6}(b) is the lifted $CR$ structure $\qt'$ of the basis of the fundamental
reduction of $(\gs,\qt_{[\sigmaup]})$, otherwise is properly contained in $\qt'$.  
 \par
This follows because
 the nilradical of $\qt_{[\sigmaup]}$ is 
 $(\qt^{r}\,{\cap}\,\sigmaup(\qt^{n}))
 \,{\oplus}\,\qt^{n}$ and 
 $$\left(\qt_{[\sigmaup]}{\cap}\sigmaup(\qt_{[\sigmaup]})\right)\,{+}\,(\qt_{[\sigmaup]})^{n}
 +\sigmaup\left( (\qt_{[\sigmaup]})^{n}\right)=
 \qt_{[\sigmaup]}\,{+}\,\sigmaup(\qt_{[\sigmaup]}).$$ 
\end{rmk}
\begin{lem} Let $(\gs,\qt)$ be a parabolic $CR$-algebra, $\hst$ an adapted Cartan subalgebra and 
$\Qq$ the parabolic set of $\qt$ for the complexification $\hg$ of $\hst$. 
Then $(\gs,\qt)$ is weakly integrable if and only if 
\begin{equation}\label{e5.7}
 \{\alphaup-\betaup\in\Rad \mid \alphaup,\betaup\in\Qq^{n}\cap\stt(\Qq^{c})\}\subseteq (\Qq\cap\stt(\Qq))
 \cup\Qq^{n}\cup\stt(\Qq^{n}).
\end{equation}
\end{lem}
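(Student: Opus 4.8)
The plan is to reduce the weak integrability condition \eqref{e5.6}(b) to a purely root-combinatorial statement, exploiting that every subspace involved is $\hg$-regular. First I would record that $\hg\subseteq\qt\cap\sigmaup(\qt)$, so that $\qt\cap\sigmaup(\qt)$, $\qt^{n}$, $\sigmaup(\qt^{n})$ are the $\hg$-regular subspaces with root sets $\Qq\cap\stt(\Qq)$, $\Qn$ and $\stt(\Qn)$ respectively. Consequently $\mathfrak{m}:=\qt\cap\sigmaup(\qt)+\qt^{n}+\sigmaup(\qt^{n})$ is the $\hg$-regular subspace $\hg\oplus{\sum}_{\gammaup\in\Rt}\gt^{\gammaup}$, where $\Rt=(\Qq\cap\stt(\Qq))\cup\Qn\cup\stt(\Qn)$ is exactly the right-hand side of \eqref{e5.7}. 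By \eqref{e5.6}(b), $(\gs,\qt)$ is weakly integrable if and only if $[\qt^{n},\sigmaup(\qt^{n})]\subseteq\mathfrak{m}$.

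Next I would split $\qt^{n}$ along $\Qn=(\Qn\cap\stt(\Qq))\sqcup S$, where $S:=\Qn\cap\stt(\Qc)$, writing $\qt^{n}=(\qt^{n}\cap\sigmaup(\qt))\oplus(\qt^{n}\cap\sigmaup(\qtc))$. Since $\Qn\cap\stt(\Qq)\subseteq\Qq\cap\stt(\Qq)$, the first summand lies in $\qt\cap\sigmaup(\qt)\subseteq\mathfrak{m}$, and, applying the $\sigmaup$-invariant $\qt\cap\sigmaup(\qt)$, the analogous summand of $\sigmaup(\qt^{n})$ does too. Expanding $[\qt^{n},\sigmaup(\qt^{n})]$ into four brackets, I would show three of them land in $\mathfrak{m}$ automatically, using the additive bigrading $(\chiup_{\Qq},\chiup_{\Qq}\circ\stt)$: the bracket of the two $\qt\cap\sigmaup(\qt)$-parts stays in the subalgebra $\qt\cap\sigmaup(\qt)$; for $[\qt^{n}\cap\sigmaup(\qt),\sigmaup(\qt^{n})\cap\qtc]$ one has $\chiup_{\Qq}(\stt(\cdot))\ge0$ on the first factor and $>0$ on the second, so the sum lies in $\stt(\Qn)\subseteq\Rt$; symmetrically $[\qt^{n}\cap\sigmaup(\qtc),\sigmaup(\qt^{n})\cap\qt]$ yields $\chiup_{\Qq}>0$, hence a root in $\Qn\subseteq\Rt$. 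Thus weak integrability reduces to $[\qt^{n}\cap\sigmaup(\qtc),\sigmaup(\qt^{n}\cap\sigmaup(\qtc))]\subseteq\mathfrak{m}$, i.e. to the requirement that $\alphaup+\stt(\betaup)\in\Rt$ whenever $\alphaup,\betaup\in S$ and $\alphaup+\stt(\betaup)\in\Rad$ (brackets with $\alphaup+\stt(\betaup)=0$ land in $\hg\subseteq\mathfrak{m}$ and are harmless).

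Finally I would observe that $\betaup\mapsto-\stt(\betaup)$ is an involution of $S$: using $\Qc=-\Qn$ one computes $-\stt(\Qn\cap\stt(\Qc))=\stt(\Qc)\cap\Qn=S$. Reindexing $\betaup$ by $-\stt(\betaup)$ and writing $\alphaup+\stt(\betaup)=\alphaup-(-\stt(\betaup))$ identifies the set $\{\alphaup+\stt(\betaup)\in\Rad\mid\alphaup,\betaup\in S\}$ with $\{\alphaup-\betaup\in\Rad\mid\alphaup,\betaup\in S\}$, so the reduced condition becomes precisely the inclusion \eqref{e5.7} into $\Rt$. Assembling the equivalences completes the argument.

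The main obstacle is the sign bookkeeping in the second step: one must check, for each of the three ``easy'' mixed brackets, that a fixed coordinate of the bigrading remains strictly positive, guaranteeing that no bracket other than $[\qt^{n}\cap\sigmaup(\qtc),\sigmaup(\qt^{n}\cap\sigmaup(\qtc))]$ can escape $\mathfrak{m}$; this is exactly where the asymmetric set $S$ enters. The reindexing by $-\stt$ is the conceptual point that produces the difference $\alphaup-\betaup$ in \eqref{e5.7}, but it is formally immediate once $-\stt(S)=S$ has been verified.
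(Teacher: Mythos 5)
Your proposal is correct and follows essentially the same route as the paper: translate \eqref{e5.6}(b) into the root-theoretic inclusion for sums $\alphaup+\stt(\betaup)$ with $\alphaup,\betaup\in\Qq^{n}$, dispose of the cases where one of the roots lies in $\stt(\Qq)$ (your four-bracket bigrading check is just a more explicit version of the paper's one-line observation), and reindex via $\stt(\Qq^{n}\cap\stt(\Qq^{c}))=-(\Qq^{n}\cap\stt(\Qq^{c}))$ to obtain the difference form \eqref{e5.7}.
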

\begin{proof} Equation \eqref{e5.6}$(\mathrm{b})$ translates into 
\begin{equation*}\tag{$*$}
 \{\alphaup{+}\stt(\betaup)\,{\in}\,\Rad\,{\mid}\,\alphaup,\betaup\,{\in}\,\Qq^{n}\}\subseteq
( \Qq\,{\cap}\,\stt(\Qq))\cup\Qq^{n}\cup\stt(\Qq^{n}).
\end{equation*}
Let $\alphaup,\betaup\,{\in}\,\Qq^{n}$. If one of the two roots belongs to $\stt(\Qq)$,
then $\alphaup{+}\stt(\betaup)$ trivially belongs to the right hand side of $(*)$.
 Then it is necessary and sufficient to require that roots  which are sums
 $\alphaup{+}\stt(\betaup)$, with $\alphaup,\betaup\,{\in}\,\Qq^{n}\,{\cap}\,\stt(\Qq^{c})$,
belong to the right hand side of $(*)$, and
the equivalence with \eqref{e5.7} follows because $
\stt(\Qq^{n}\,{\cap}\,\stt(\Qq^{c}))\,{=}\,\{{-}\betaup\,{\mid}\,\betaup\,{\in}\,\Qq^{n}\,{\cap}\,\stt(\Qq^{c})\}.
$
\end{proof}
\begin{prop} If the triple $(\gs,\qt,\hst)$ is of the minimal type, then $(\gs,\qt)$ is
weakly integrable.
\end{prop}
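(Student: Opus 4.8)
The plan is to reduce the statement to the root-combinatorial criterion for weak integrability furnished by the Lemma immediately preceding this Proposition, i.e. the inclusion \eqref{e5.7}. Since $(\gs,\qt)$ is weakly integrable if and only if \eqref{e5.7} holds for the parabolic set $\Qq$ and the involution $\stt$ attached to the adapted Cartan subalgebra $\hst$, it suffices to fix an arbitrary root $\gammaup = \alphaup - \betaup \in \Rad$ with $\alphaup,\betaup \in \Qq^{n}\cap\stt(\Qq^{c})$ and to verify that $\gammaup$ lies in $(\Qq\cap\stt(\Qq))\cup\Qq^{n}\cup\stt(\Qq^{n})$.

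The decisive step is to feed in the minimal-type hypothesis \eqref{e5.5}, which says exactly that $\Qq^{n}\cap\stt(\Qq^{c})\subseteq\Rad^{\stt}_{\,\;\bullet}$. Thus both $\alphaup$ and $\betaup$ are imaginary, i.e. $\stt(\alphaup)=-\alphaup$ and $\stt(\betaup)=-\betaup$, and therefore
\[
 \stt(\gammaup)=\stt(\alphaup)-\stt(\betaup)=-\alphaup+\betaup=-\gammaup ,
\]
so that $\gammaup$ is itself imaginary. The single effect of the hypothesis is thus to force the difference $\gammaup$ into $\Rad^{\stt}_{\,\;\bullet}$, and the whole matter reduces to the elementary observation that an imaginary root always belongs to the target set.

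To settle this last point I would invoke the $\Z$-grading $\chiup_{\Qq}$ of $\Rad$ from Proposition~\ref{p2.5}, which describes $\Qq^{r},\Qq^{n},\Qq^{c}$ by the sign of $\chiup_{\Qq}$. From $\stt(\gammaup)=-\gammaup$ we get $\chiup_{\Qq}(\stt(\gammaup))=-\chiup_{\Qq}(\gammaup)$, and I then split into three cases on the sign of $\chiup_{\Qq}(\gammaup)$: if it is positive, $\gammaup\in\Qq^{n}$; if it is negative, then $\chiup_{\Qq}(\stt(\gammaup))>0$, hence $\stt(\gammaup)\in\Qq^{n}$ and $\gammaup\in\stt(\Qq^{n})$ (using that $\stt$ is an involution); and if it is zero, then $\gammaup\in\Qq^{r}\subseteq\Qq$ while likewise $\stt(\gammaup)=-\gammaup\in\Qq^{r}\subseteq\Qq$, so $\gammaup\in\Qq\cap\stt(\Qq)$. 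In every case $\gammaup$ lands in $(\Qq\cap\stt(\Qq))\cup\Qq^{n}\cup\stt(\Qq^{n})$, which is precisely \eqref{e5.7}.

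I do not expect a genuine obstacle in this argument: once the criterion \eqref{e5.7} is available, the entire content is that minimal type renders the relevant root differences imaginary, after which the grading disposes of the membership at once. The only points demanding care are bookkeeping of the definitions — that membership in $\Rad^{\stt}_{\,\;\bullet}$ means $\stt(\alphaup)=-\alphaup$, and that $\stt^{2}=\mathrm{id}$ gives $\gammaup\in\stt(S)\Leftrightarrow\stt(\gammaup)\in S$ — together with the remark, worth stating explicitly, that the difference of two imaginary roots is again imaginary. For context I would also note that weak integrability in the form \eqref{e5.6} is intrinsic, independent of $\hst$, whereas minimal type is not; the Proposition therefore presents the $\hst$-dependent minimal-type condition as a convenient sufficient criterion for the intrinsic property.
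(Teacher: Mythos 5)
Your proof is correct and follows essentially the same route as the paper's (very terse) proof: minimal type forces the roots $\alphaup,\betaup\in\Qq^{n}\cap\stt(\Qq^{c})$, and hence their difference, to be imaginary, and the paper then simply asserts the inclusion $\Rad^{\stt}_{\,\;\bullet}\subseteq(\Qq\cap\stt(\Qq))\cup\Qq^{n}\cup\stt(\Qq^{n})$, which your three-way case analysis on the sign of $\chiup_{\Qq}(\gammaup)$ verifies in detail. Your write-up merely makes explicit the steps the paper leaves implicit.
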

\begin{proof}
This follows because 
 $\Rad^{\stt}_{\,\;\bullet}\,{\subseteq}\,(\Qq\,{\cap}\,\stt(\Qq))\,{\cup}\,\Qq^{n}\,{\cup}\,
 \stt(\Qq^{n})$. 
\end{proof} 
\begin{thm} The Levi-order of a  weakly integrable 
 parabolic $CR$-algebra  is either ${+}\infty$ or
less or equal two.
\end{thm}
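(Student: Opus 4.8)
The plan is to reduce the statement to a finite root‑combinatorial assertion and then to exploit the permutation invariance of minimal Levi‑sequences together with the root form \eqref{e5.7} of weak integrability. First I would recall that, by Lemma~\ref{l3.2}, every root in $\Qq^{r}\cap\stt(\Qc)$ has Levi‑order $1$ or $+\infty$; consequently the Levi‑order of $(\gs,\qt)$ is $+\infty$ unless each $\alphaup\in\Qq^{n}\cap\stt(\Qc)$ has finite Levi‑order, in which case it is the supremum of the integers $\ord(\alphaup)$ over such $\alphaup$. Thus it suffices to prove that every $\alphaup\in\Qq^{n}\cap\stt(\Qc)$ of finite Levi‑order satisfies $\ord(\alphaup)\le 2$.

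Arguing by contradiction, I would assume $\ord(\alphaup)=q\ge 3$ and fix a minimal Levi‑sequence $(\betaup_{1},\dots,\betaup_{q})$, so that the truncations $\gammaup_{h}=\alphaup+\sum_{i\le h}\betaup_{i}$ satisfy \eqref{e5.1} and \eqref{e5.2}. In particular $\gammaup_{h}\in\Qq\cap\stt(\Qc)$ for $1\le h<q$, the sequence is invariant under permutations of the $\betaup_{i}$, and each tail $(\betaup_{h+1},\dots,\betaup_{q})$ is a minimal Levi‑sequence for $\gammaup_{h}$. Combining this with Lemma~\ref{l3.2} I would show that $\gammaup_{h}\in\Qq^{n}\cap\stt(\Qc)$ for all $h\le q-2$: otherwise $\gammaup_{h}\in\Qq^{r}\cap\stt(\Qc)$ would have Levi‑order $1$, while its tail of length $q-h\ge 2$ is minimal, a contradiction.

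Next I would feed these truncations into weak integrability. Applying \eqref{e5.7} to consecutive pairs $\gammaup_{h-1},\gammaup_{h}\in\Qq^{n}\cap\stt(\Qc)$ (for $1\le h\le q-2$), whose difference is the root $\betaup_{h}$, and using that $\chiup_{\Qq}(\betaup_{h})<0$, the only membership allowed by the right–hand side of \eqref{e5.7} is $\stt(\betaup_{h})\in\Qq^{n}$. Since $q\ge 3$ lets any chosen $\betaup_{i}$ head the sequence by permutation invariance, I would conclude that $\stt(\betaup_{i})\in\Qq^{n}$ for \emph{every} $i$, equivalently $-\betaup_{i}\in\Qq^{n}\cap\stt(\Qc)$ for every $i$. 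Read off the bigrading $(\chiup_{\Qq},\chiup_{\Qq}\circ\stt)$, this forces $\chiup_{\Qq}(\betaup_{i})<0$ and $\chiup_{\Qq}(\stt(\betaup_{i}))>0$ simultaneously, while the closing relation $\gammaup_{q}\in\Qc\cap\stt(\Qc)$ imposes $\chiup_{\Qq}(\stt(\alphaup))+\sum_{i}\chiup_{\Qq}(\stt(\betaup_{i}))<0$.

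The last step, which I expect to be the main obstacle, is to turn these numerical constraints into a genuine contradiction by exhibiting a Levi‑sequence of length at most $2$ for $\alphaup$. The bigrading alone does not close the argument, since $\chiup_{\Qq}$ is a priori unbounded on roots; one must instead use the explicit additive structure of the root system. Concretely I would invoke the root‑string analysis of Lemma~\ref{l2.53} and the index bounds computed above (index $\le 4$, with the end relation $\alphaup+\betaup_{1}+\cdots+\betaup_{4}=-\alphaup$ in the index‑four cases) to split the total shift $\sum_{i}\betaup_{i}$ through a single intermediate root lying in $\Qq\cap\stt(\Qc)$, so that the two extreme brackets already leave $\qt+\sigmaup(\qt)$. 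The point is precisely that weak integrability, in the form $[\qt^{n},\sigmaup(\qt^{n})]\subseteq \qt\cap\sigmaup(\qt)+\qt^{n}+\sigmaup(\qt^{n})$, collapses the middle of a putative length‑$q$ sequence; making this collapse effective will require a careful, essentially type‑by‑type, verification that the first two steps of the sequence suffice, contradicting the minimality of $q\ge 3$ and thereby forcing $\ord(\alphaup)\le 2$.
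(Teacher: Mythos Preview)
Your setup is correct and matches the paper's: you reduce to roots $\alphaup\in\Qq^{n}\cap\stt(\Qc)$, take a minimal Levi-sequence $(\betaup_{1},\dots,\betaup_{q})$ with $q\ge 3$, and use permutation invariance together with \eqref{e5.7} to conclude that every $\betaup_{i}\in\stt(\Qq^{n})$ (equivalently $-\betaup_{i}\in\Qq^{n}\cap\stt(\Qc)$). This part is right and is exactly what the paper does.

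The gap is in your final step. You try to manufacture a shorter Levi-sequence for $\alphaup$ and foresee a ``careful, essentially type-by-type, verification'' using Lemma~\ref{l2.53} and the index tables. None of this is needed, and it is the wrong direction. The contradiction is immediate once you have $\alphaup\in\Qq^{n}$ and all $\betaup_{i}\in\stt(\Qq^{n})$: by definition \eqref{e5.6}(a), weak integrability says precisely that
\[
\at\;=\;\qt\cap\sigmaup(\qt)+\qt^{n}+\sigmaup(\qt^{n})
\]
is a Lie subalgebra of $\gt$, equivalently that the root set $(\Qq\cap\stt(\Qq))\cup\Qq^{n}\cup\stt(\Qq^{n})$ is closed. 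Since $X_{\alphaup}\in\qt^{n}\subseteq\at$ and each $X_{\betaup_{i}}\in\sigmaup(\qt^{n})\subseteq\at$, the iterated bracket $[X_{\alphaup},X_{\betaup_{1}},\dots,X_{\betaup_{q}}]$ stays in $\at$. Hence the terminal root $\alphaup+\betaup_{1}+\cdots+\betaup_{q}$ lies in $(\Qq\cap\stt(\Qq))\cup\Qq^{n}\cup\stt(\Qq^{n})$, which is disjoint from $\Qc\cap\stt(\Qc)$. This contradicts the defining property \eqref{e5.1} of a Levi-sequence. That is the whole argument; no bigrading inequalities, root-string analysis, or case-by-case verification are required.
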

\begin{proof} Let $(\gs,\qt)$ be a  weakly integrable parabolic $CR$ algebra,
$\hst$ an admissible Cartan subalgebra of $(\gs,\qt)$, $\hg$ the complexification of $\hst$ 
and 
$\Qq$ the parabolic set of $\qt$ in the root system $\Rad(\gt,\hg)$.\par
 Assume by contradiction 
 that there is a root $\alphaup\,{\in}\,\Qq\,{\cap}\,\stt(\Qq^{c})$ having Levi-order larger
 or equal three 
and let $(\betaup_{1},\betaup_{2},\hdots,\betaup_{r})$ be a minimal Levi-sequence in $\Qq^{c}{\cap}\,\stt(\Qq)$
for $\alphaup$. Since roots in $\Qq^{r}{\cap}\,\stt(\Qq^{c})$ have Levi order one, $\alphaup$ and
$\gammaup_{i}{=}\alphaup{+}\betaup_{i}$, 
for $1{\leq}i{\leq}r$, 
belong to $\Qq^{n}{\cap}\,\stt(\Qq^{c})$.
Then by \eqref{e5.7} the roots $\betaup_{i}{=}\gammaup_{i}{-}\alphaup$ belong to 
$(\Qq\cap\stt(\Qq))
 \cup\Qq^{n}\cup\stt(\Qq^{n})$ and hence to $\Qq^{c}{\cap}\,\stt(\Qq^{n})\,{=}\,\stt(\Qq^{n}{\cap}\,\stt(\Qq^{c}))$. 
By definition of  minimal 
type this implies that $\alphaup{+}\betaup_{1}{+}\betaup_{2}{+}{\cdots}{+}\betaup_{r}$
belongs to  $(\Qq\cap\stt(\Qq))
 \cup\Qq^{n}\cup\stt(\Qq^{n})$, contradicting the fact that $(\betaup_{1},\betaup_{2},\hdots,\betaup_{r})$
 is a Levi-sequence. The proof is complete.
\end{proof}
\begin{exam}
 Take in the root system  $\Rad{=}\{{\pm}\e_{1},{\pm}\e_{2}\}{\cup}\,\{{\pm}\e_{1}{\pm}\e_{2}\}$,
 of type $\textsc{B}_{2}$,
 the Borel subalgebra $\Qq\,{=}\,\Rad^{+}(C)\,{=}\{\e_{1},\e_{2},\e_{1}{\pm}\e_{2}\}$ 
 and the involution $\stt$ with $\stt(\e_{1}){=}{-}\e_{2},$ $\stt(\e_{2}){=}{-}\e_{1}$. 
 Then $\Qq\,{\cap}\,\stt(\Qq){\cup}\Qq^{n}{\cup}\stt(\Qq^{n})\,{=}\,\Rad(\textsc{B}_{2}){\setminus}\{\e_{2}{-}\e_{1}\}$
 is not a closed set of roots  and therefore $(\Qq,\stt)$ is not weakly integrable. 
\end{exam}
\begin{exam} We consider, for a root system $\Rad\,{=}\,\{{\pm}\e_{i}{\pm}\e_{j}\,{\mid}\,1{\leq}i{<}j{\leq}4\}$
of type~$\textsc{D}_{4}$, the pair $(\Qq\,,\stt)$ consisting of the parabolic $\Qq{=}\{\alphaup\,{\in}\,\Rad
\,{\mid}\,(\alphaup\,|\,\e_{1}{+}\e_{2}){\geq}0\}$ and the involution $\stt$ of $\Rad$ with
$\stt(\e_{1}){=}{-}\e_{3}$, $\stt(\e_{2}){=}\e_{4}$, $\stt(\e_{3}){=}{-}\e_{1}$, $\stt(\e_{4}){=}\e_{2}$. 
The canonical chamber with $\Bz(C)\,{=}\,\{\alphaup_{i}{=}\e_{i}{-}\e_{i+1}\,{\mid}\,1{\leq}i{\leq}3\}\,{\cup}\,
\{\alphaup_{4}{=}\e_{3}{+}\e_{4}\}$ is $V$-fit, the one with $\Bz(C')\,{=}\,\{\alphaup'_{1}{=}\e_{2}{-}\e_{1},\,
\alphaup_{2}'{=}\e_{1}{-}\e_{4},\, \alphaup'_{3}{=}\e_{4}{-}\e_{3},\, \alphaup'_{4}{=}\e_{4}{+}\e_{3}\}$
is $S$-fit and we obtain the diagrams 
\begin{equation*}
  \xymatrix @M=0pt @R=4pt @!C=8pt{ 
  &&\alphaup_{3}&\\
 \alphaup_{1} &\alphaup_{2}& \ominus \\
  \ominus\ar@{-}[r] & \oplus \ar@{-}[ru] \ar@{-}[rd]\\
  &\times& \ominus\\ 
&& \alphaup_{4}}\quad
  \xymatrix @M=0pt @R=4pt @!C=8pt{ 
  &&\alphaup'_{3}&\\
 \alphaup'_{1} &\alphaup'_{2}& \oplus \\
  \oplus\ar@{-}[r] & \ominus \ar@{-}[ru] \ar@{-}[rd]\\
  &\times& \oplus\\ 
&& \alphaup'_{4}}
\end{equation*}
The first shows that $(\Qq\,,\stt)$ is Levi-nondegenerate. 
We get 
\begin{equation*} 
\begin{array}{| c | c | c | c || c | c | c | c |}
 \hline
 \alphaup & \stt(\alphaup) &\begin{smallmatrix} \xiup_{\Qq}(\alphaup) 
 \end{smallmatrix}& \begin{smallmatrix}\xiup_{\Qq}(\stt(\alphaup)) \end{smallmatrix} &
 \alphaup & \stt(\alphaup) &\begin{smallmatrix} \xiup_{\Qq}(\alphaup) 
 \end{smallmatrix}& \begin{smallmatrix}\xiup_{\Qq}(\stt(\alphaup)) \end{smallmatrix} 
 \\
 \hline
 \e_{1}{-}\e_{2}& \e_{3}{-}\e_{4} & 0 & 0 & \e_{2}{-}\e_{3} & \e_{1}{+}\e_{4} & 1 & 1\\
 \hline
 \e_{1}{+}\e_{2}&\e_{4}{-}\e_{3} & 2 & 0 & \e_{2}{+}\e_{3}& \e_{4}{-}\e_{1}& 1 & {-}1\,\; \\
\hline
\e_{1}{-}\e_{3}& \e_{1}{-}\e_{3} & 1 & 1 & \e_{2}{-}\e_{4}& \e_{4}{-}\e_{2} & 1 & {-}1\,\;\\
\hline
\e_{1}{+}\e_{3}& {-}\e_{1}{-}\e_{3}& 1 & {-}1\,\; & \e_{2}{+}\e_{4} & \e_{2}{+}\e_{4} & 1 & 1\\
\hline
\e_{1}{-}\e_{4} & {-}\e_{2}{-}\e_{3}& 1 & {-}1\,\; & \e_{3}{-}\e_{4} &{-}\e_{1}{-}\e_{2}& 0 &{-}2\,\;\\
\hline
\e_{1}{+}\e_{4} & \e_{2}{-}\e_{3} & 1 & 1 & \e_{3}{+}\e_{4} & \e_{2}{-}\e_{1} & 0 & 0 \\
\hline 
\end{array}
\end{equation*}
from which we obtain that 
\begin{equation*} 
 \Qq^{n}\cap\stt(\Qq^{c})=\{\e_{1}{+}\e_{3},\e_{1}{-}\e_{4},\e_{2}{+}\e_{3},\e_{2}{-}\e_{4}\}.
\end{equation*}
The roots which are differences of roots in $ \Qq^{n}{\cap}\,\stt(\Qq^{c})$ are 
{\small
\begin{equation*} 
\begin{array}
 {c c}
 (\e_{1}{+}\e_{3}){-}(\e_{1}{-}\e_{4}){=}\e_{3}{+}\e_{4}\,{\in}\,\Qq\,{\cap}\,\stt(\Qq), 
 &  (\e_{1}{+}\e_{3}){-}(\e_{2}{+}\e_{3}){=}\e_{1}{-}\e_{2}\,{\in}\,\Qq\,{\cap}\,\stt(\Qq),\\
 (\e_{1}{-}\e_{4}){-}(\e_{1}{+}\e_{3}){=}{-}\e_{3}{-}\e_{4}\,{\in}\,\Qq\,{\cap}\,\stt(\Qq),  &
 (\e_{1}{-}\e_{4}){-}(\e_{2}{-}\e_{4}){=}\e_{1}{-}\e_{2}\,{\in}\,\Qq\,{\cap}\,\stt(\Qq), \\
 (\e_{2}{+}\e_{3}){-}(\e_{1}{+}\e_{3}){=}\e_{2}{-}\e_{1}\,{\in}\,\Qq\,{\cap}\,\stt(\Qq), &
 (\e_{2}{+}\e_{3}){-}(\e_{2}{-}\e_{4}){=}\e_{3}{-}\e_{4}\,{\in}\,\Qq\,{\cap}\,\stt(\Qq),\\
 (\e_{2}{-}\e_{4}){-}(\e_{1}{-}\e_{4}){=}\e_{2}{-}\e_{1}\,{\in}\,\Qq\,{\cap}\,\stt(\Qq), &
 (\e_{2}{-}\e_{4}){-}(\e_{2}{+}\e_{3}){=}{-}\e_{3}{-}\e_{4}\,{\in}\,\Qq\,{\cap}\,\stt(\Qq).
\end{array}
\end{equation*}}
This shows that \eqref{e5.7} and therefore the weak integrability conditions \eqref{e5.6} are
satisfied. The roots $(\e_{1}{-}\e_{4})$ and $(\e_{2}{+}\e_{3})$ are not imaginary and
therefore the minimal type condition is not satisfied. \par
The pair $(\Qq\,,\stt)$ described above is associated with the $CR$-algebra 
$(\gs,\qt)$ 
of an orbit
of $\Ub_{4}(\Hb)$ in the Grassmannian of $\SO(8,\C)$
consisting of isotropic two-planes of an orthogonal structure of $\C^{8}$.
 The Satake diagram of the corresponding
real form is 
\begin{equation*}
  \xymatrix @M=0pt @R=4pt @!C=8pt{ 
& &\medbullet \\
  \medbullet\ar@{-}[r] & \oplus \ar@{-}[ru] \ar@{-}[rd]\\
  && \medcirc}
\end{equation*}
and all admissible Cartan subalgebras of $(\gs,\qt)$ may be obtained, modulo equivalence,  
by taking Cayley transforms
with respect to real roots in $\Rad^{\stt}_{\,\;\circ}$. From $\Qq^{r}{\cap}\,\Rad^{\stt}_{\,\;\circ}{=}\emptyset$
we conclude that no choice of an admissibe Cartan subalgebra of $(\gs,\qt)$ would lead to
an involution $\stt'$ for which $\Qq^{n}{\cap}\,\stt'(\Qq^{c})\,{\subset}\,\Rad^{\stt'}_{\,\;\bullet}$. 
This shows that Definition\,\ref{d5.4} is more general than 
requiring minimal type
(Definition\,\ref{d5.3}) for some choice of an adapted Cartan subalgebra.
\end{exam}
\subsection{The case of Borel $CR$-algebras}
 A \emph{Borel $CR$ algebra} 
 is a parabolic $CR$ algebra $(\gs,\qt)$ in which $\qt$ is a Borel subalgebra of $\gt$. \par
 All its admissible Cartan subalgebras are equivalent. Let $\Rad$ be the root system
for the complexification $\hg$ of an admissible Cartan subalgebra $\hst$ and 
$(\Qq\,,\stt)$ the pair associated  to $(\gs,\qt,\hst)$. Then
$\Qq\,{=}\,\Qq^{n}\,{=}\,\Rad^{+}(C)$ for a $C\,{\in}\,\Cd(\Rad)$, $\Phi_{C}{=}\,\Bz(C)$ and
$(\gs,\qt)$ is weakly integrable if and only if $\Qq\,{\cup}\,\stt(\Qq)$ is closed in $\Rad$.
Being a parabolic set containing $\Rad^{+}(C)$, it should be of the form
$\Qq_{\;\Psi_{C}}$,  for some $\Psi_{C}\,{\subseteq}\,\Bz(C)$. Let $\alphaup\,{\in}\,\Bz(C)$.
If $\stt(\alphaup)\,{\in}\,\Rad^{+}(C)$, then $({-}\alphaup)\,{\notin}\,\stt(\Qq)$ and hence $\alphaup\,{\in}\,\Psi_{C}$.
If $\stt(\alphaup)\,{\in}\,\Rad^{-}(C)$, then $({-}\alphaup)\,{\in}\,\stt(\Qq)$ and hence $\alphaup\,{\notin}\,\Psi_{C}$.
Therefore $\Qq\,{\cup}\,\stt(\Qq)$ is a closed set of roots if and only if, setting 
$\Psi_{C}{=}\{\alphaup\,{\in}\,\Bz(C)\,{\mid}\,
\stt(\alphaup)\,{\in}\,\Rad^{+}(C)\}$,
 it coincides with
the parabolic $\Qq_{\;\Psi_{C}}$.  This is equivalent to the fact 
that roots in $\Qq^{r}_{\;\Psi_{C}}{\cap}\,\Rad^{+}(C)$
have $C$-negative $\stt$-conjugate, i.e. that 
$\stt(\Qq^{n}_{\;\Psi_{C}})\,{=}\,\Qq^{n}_{\;\Psi_{C}}$. This means that  the pair $(\Qq_{\;\Psi_{C}},\stt)$
is totally real and  corresponds to the Levi-nondegenerate reduction of $(\gs,\qt)$.
Thus we obtined 
\begin{prop} A
 Borel $CR$-algebra is weakly integrable if and only if its Levi-nondegenerate reduction
 is totally real.\qed
\end{prop}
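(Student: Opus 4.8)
The plan is to translate everything into the combinatorics of the associated pair $(\Qq\,,\stt)$ and then apply the Levi-nondegenerate reduction for the Borel case. First I would fix an admissible Cartan subalgebra $\hst$ — all such are equivalent for a Borel $\qt$ — and write $\Qq\,{=}\,\Qq^{n}\,{=}\,\Rad^{+}(C)$ for a suitable $C\,{\in}\,\Cd(\Rad)$, so that $\Phi_{C}\,{=}\,\Bz(C)$ and $\Qq^{r}\,{=}\,\emptyset$. Using the Proposition characterising weak integrability through condition $(\mathrm{a})$ of \eqref{e5.6}, I would observe that
\[
 \big(\qt\,{\cap}\,\sigmaup(\qt)\big)\,{+}\,\qt^{n}\,{+}\,\sigmaup(\qt^{n})\,{=}\,\hg\,{\oplus}\,{\sum}_{\alphaup\in\Qq\cup\stt(\Qq)}\gt^{\alphaup}
\]
is $\hg$-regular, whence $(\gs,\qt)$ is weakly integrable if and only if $\Qq\,{\cup}\,\stt(\Qq)$ is a closed set of roots.

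Next I would identify the Levi-nondegenerate reduction explicitly. Since $\Phi_{C}\,{=}\,\Bz(C)$, the conditions defining $S$- and $V$-fitness are vacuous, so $C$ is simultaneously $S$-fit and $V$-fit for $(\Qq\,,\stt)$. Applying the Corollary to Theorem\,\ref{t4.30} with this $\Phi_{C}$, the basis of the Levi-nondegenerate reduction of $(\gs,\qt)$ is $(\gs,\qt_{\Psi_{C}})$, where $\Psi_{C}\,{=}\,\Phi_{C}^{\stt,+}\,{=}\,\{\alphaup\,{\in}\,\Bz(C)\,{\mid}\,\stt(\alphaup)\,{\in}\,\Rad^{+}(C)\}$ and $\qt_{\Psi_{C}}$ corresponds to the parabolic set $\Qq_{\,\Psi_{C}}$. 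By Theorem\,\ref{t4.30}, $\Qq_{\,\Psi_{C}}$ is the largest closed subset of $\Rad$ containing $\Qq$ and contained in $\Qq\,{\cup}\,\stt(\Qq)$; in particular the inclusion $\Qq_{\,\Psi_{C}}\,{\subseteq}\,\Qq\,{\cup}\,\stt(\Qq)$ holds unconditionally.

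Finally I would close the equivalence by a short inclusion argument. If $\Qq\,{\cup}\,\stt(\Qq)$ is closed, it is itself a closed set lying between $\Qq$ and $\Qq\,{\cup}\,\stt(\Qq)$, so maximality forces $\Qq_{\,\Psi_{C}}\,{=}\,\Qq\,{\cup}\,\stt(\Qq)$; applying $\stt$ and using $\stt^{2}\,{=}\,\mathrm{id}$ gives $\stt(\Qq_{\,\Psi_{C}})\,{=}\,\Qq_{\,\Psi_{C}}$, i.e. $(\Qq_{\,\Psi_{C}},\stt)$ is totally real. Conversely, if $(\Qq_{\,\Psi_{C}},\stt)$ is totally real, then $\stt(\Qq)\,{\subseteq}\,\stt(\Qq_{\,\Psi_{C}})\,{=}\,\Qq_{\,\Psi_{C}}$, so $\Qq\,{\cup}\,\stt(\Qq)\,{\subseteq}\,\Qq_{\,\Psi_{C}}$; combined with the reverse inclusion this yields $\Qq\,{\cup}\,\stt(\Qq)\,{=}\,\Qq_{\,\Psi_{C}}$, a closed set, so $(\gs,\qt)$ is weakly integrable. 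Passing from ``totally real pair'' to ``totally real $CR$-algebra'' via the Proposition translating properties of $(\Qq\,,\stt)$ into properties of $(\gs,\qt_{\Psi_{C}})$ then gives the statement. The main obstacle I anticipate is purely the bookkeeping that pins $\Psi_{C}$ down as $\{\alphaup\,{\in}\,\Bz(C)\,{\mid}\,\stt(\alphaup)\,{\in}\,\Rad^{+}(C)\}$ and that legitimises $C$ as a $V$-fit chamber in Theorem\,\ref{t4.30}; once these are settled, the equivalence with total reality is immediate from the maximality property of the reduction.
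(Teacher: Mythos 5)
Your proof is correct and follows essentially the same route as the paper: both reduce weak integrability of a Borel $CR$-algebra to the closedness of $\Qq\,{\cup}\,\stt(\Qq)$, identify the Levi-nondegenerate reduction as $\Qq_{\,\Psi_{C}}$ with $\Psi_{C}\,{=}\,\{\alphaup\,{\in}\,\Bz(C)\,{\mid}\,\stt(\alphaup)\,{\in}\,\Rad^{+}(C)\}$, and conclude via the equality $\Qq\,{\cup}\,\stt(\Qq)\,{=}\,\Qq_{\,\Psi_{C}}$. Your use of the maximality statement in Theorem~\ref{t4.30} to get both implications by a two-line inclusion argument is a slightly cleaner packaging of the step the paper carries out by an explicit support computation, but it is the same argument in substance.
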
 


\begin{prop}\label{p5.19} Let $\stt$ be an involution of $\Rad$ and 
$\sigmaup\,{\in}\,\Invs$. For $C\,{\in}\,\Cd(\Rad)$, denote by 
$
 \bt_{C}=\hg\oplus{\sum}_{\alphaup\in\Rad^{+}(C)}\gt^{\alphaup}
$
the corresponding Borel subalgebra. 
Let $C_{0}{\in}\,\Cd(\Rad)$.  
  The 
 following  are equivalent: 
\begin{enumerate}
 \item $\dim_{\R}(\gs,\bt_{C_{0}})=\min_{C\in\Cd(\Rad)}\dim_{\R}(\gs,\bt_{C})$;
 \item $C_{0}$ is an $S$\!-chamber for $\stt$;
  \item $\dim_{\R}(\gs,\bt_{C_{0}})=\tfrac{1}{2}(\#\Rad\,{+}\,\#\Rad_{\,\;\bullet}^{\stt})$;
   \item $\Rad^{+}(C_{0})\,{\cup}\,\Rad_{\,\;\bullet}^{\stt}$ is closed
 and $\stt$-invariant;
 \item $\Rad^{+}(C_{0})\,{\setminus}\,\Rad_{\,\;\bullet}^{\stt}$ is closed and 
 $\stt$-invariant;
 \end{enumerate}
 and imply that $(\gs,\bt_{C_{0}})$ is weakly integrable.
\end{prop}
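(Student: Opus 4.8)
The plan is to reduce the whole statement to a single cardinality count. First I would make the real dimension of a Borel $CR$-algebra explicit. Using $\dim_{\R}(\gs,\qt)=\dim_{\C}\gt-\dim_{\C}(\qt\cap\sigmaup(\qt))$ together with $\sigmaup(\gt^{\alphaup})=\gt^{\stt(\alphaup)}$, one gets $\bt_{C}\cap\sigmaup(\bt_{C})=\hg\oplus\sum_{\alphaup\in\Rad^{+}(C)\cap\stt(\Rad^{+}(C))}\gt^{\alphaup}$, whence $\dim_{\R}(\gs,\bt_{C})=\#\Rad-\#\bigl(\Rad^{+}(C)\cap\stt(\Rad^{+}(C))\bigr)$. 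Thus minimising the dimension amounts to maximising $N(C):=\#(\Rad^{+}(C)\cap\stt(\Rad^{+}(C)))$. Splitting $\Rad^{+}(C)$ into its real, imaginary and complex parts, I would observe that every positive real root lies in the intersection, no positive imaginary root does, and a positive complex root $\alphaup$ does exactly when $\stt(\alphaup)\in\Rad^{+}(C)$. Since positive real (resp.\ complex) roots number $\tfrac12\#\Rad^{\stt}_{\circ}$ (resp.\ $\tfrac12\#\Rad^{\stt}_{\star}$), this yields the sharp bound $N(C)\le\tfrac12(\#\Rad-\#\Rad^{\stt}_{\bullet})$, with equality precisely when every positive complex root has positive $\stt$-conjugate, i.e.\ precisely when $C$ is an $S$-chamber.

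Given this, the chain (1)$\Leftrightarrow$(2)$\Leftrightarrow$(3) is immediate: by the classical existence of $S$-chambers (cf.\ \cite{MMN23}) the upper bound is attained, so the minimum of $\dim_{\R}(\gs,\bt_{C})$ equals $\tfrac12(\#\Rad+\#\Rad^{\stt}_{\bullet})$ and is realised exactly by the $S$-chambers. For (2)$\Leftrightarrow$(4) and (2)$\Leftrightarrow$(5) I would argue that $\stt$-invariance alone already forces the $S$-chamber condition: a positive complex root $\alphaup$ has $\stt(\alphaup)$ again complex, so $\stt$-invariance of either $\Rad^{+}(C_{0})\cup\Rad^{\stt}_{\bullet}$ or $\Rad^{+}(C_{0})\setminus\Rad^{\stt}_{\bullet}$ requires $\stt(\alphaup)\in\Rad^{+}(C_{0})$. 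Conversely, assuming $C_{0}$ is an $S$-chamber, I would check closedness of both sets by a short case analysis on root types.

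The delicate case, and the one I expect to be the main obstacle, is closedness of $\Rad^{+}(C_{0})\cup\Rad^{\stt}_{\bullet}$ when one summand is a positive real or complex root $\alphaup$ and the other a negative imaginary root $\betaup$. Here I would apply $\stt$: since $C_{0}$ is an $S$-chamber, $\stt(\alphaup)\in\Rad^{+}(C_{0})$ and $\stt(\betaup)=-\betaup\in\Rad^{+}(C_{0})$, so $\stt(\alphaup+\betaup)$ is positive; this rules out $\alphaup+\betaup$ being a negative real or negative complex root (an $S$-chamber fixes real roots and sends negative complex roots to negative ones), leaving only $\alphaup+\betaup\in\Rad^{+}(C_{0})\cup\Rad^{\stt}_{\bullet}$. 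The analogous, slightly easier computation gives closedness of $\Rad^{+}(C_{0})\setminus\Rad^{\stt}_{\bullet}$, so $S$-chamber implies both (4) and (5), and (4)$\Leftrightarrow$(5) follows through (2).

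Finally, for weak integrability I would invoke the Borel criterion established just above in this subsection, namely that $(\gs,\bt_{C_{0}})$ is weakly integrable iff $\Rad^{+}(C_{0})\cup\stt(\Rad^{+}(C_{0}))$ is closed. Under the $S$-chamber condition $\stt$ sends positive complex roots to positive complex roots, fixes positive real roots, and sends positive imaginary roots to negative ones, so $\Rad^{+}(C_{0})\cup\stt(\Rad^{+}(C_{0}))=\Rad^{+}(C_{0})\cup\Rad^{\stt}_{\bullet}$, which is exactly the closed set of item (4). Hence conditions (1)--(5) imply weak integrability, completing the argument.
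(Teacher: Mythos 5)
Your proposal is correct and follows essentially the same route as the paper: both reduce everything to the count $\dim_{\R}(\gs,\bt_{C})=\#\Rad-\#\bigl(\Rad^{+}(C)\cap\stt(\Rad^{+}(C))\bigr)$, observe that the intersection avoids $\Rad_{\,\;\bullet}^{\stt}$ and is maximised exactly by $S$-chambers, and then note that (4), (5) and weak integrability are restatements of the $S$-chamber condition. Your version merely fills in the case analysis for closedness and the identification $\Rad^{+}(C_{0})\cup\stt(\Rad^{+}(C_{0}))=\Rad^{+}(C_{0})\cup\Rad_{\,\;\bullet}^{\stt}$ that the paper leaves implicit.
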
 
\begin{proof} If $C\,{\in}\,\Cd(\Rad)$, then by \eqref{e2.3}, 
\begin{equation}\label{e5.8}
 \dim_{\R}(\gs,\bt_{C})=\#\Rad - \#(\Rad^{+}(C)\,{\cap}\,\stt(\Rad^{+}(C))).
\end{equation}
Therefore  the real dimension of $(\gs,\bt_{C})$ is minimal when the number of roots of the intersection
$\Rad^{+}(C)\,{\cap}\,\stt(\Rad^{+}(C))$ is maximal. Since 
\begin{equation*}
 \Rad^{+}(C)\,{\cap}\,\stt(\Rad^{+}(C))\,{\cap}\,\Rad_{\,\;\bullet}^{\stt}=\emptyset,
\end{equation*}
this is true when $C$ is an $S$-chamber for $\stt$, i.e. when 
$\Rad^{+}(C){\setminus}\stt(\Rad^{+}(C))\,{\subset}\,\Rad_{\,\;\bullet}^{\stt}$. 
In this case 
\begin{equation*}
 \Rad^{+}(C)\,{\cap}\,\stt(\Rad^{+}(C)=\tfrac{1}{2}\big(\#\Rad\,{-}\,\#\Rad_{\,\;\bullet}^{\stt}\big).
\end{equation*}
This argument proves the equivalence of $(1)$, $(2)$ and $(3)$, while conditions $(4)$ and $(5)$ 
characterize the $S$-chambers and are therefore equivalent to $(2)$. 
Clearly the equivalent conditions $(1),\hdots,(5)$ imply that $\bt\,{+}\,\sigmaup(\bt)$ is a Lie
subalgebra of $\gt$, ansd therefore that $(\gs,\bt_{C_{0}})$ is weakly integrable.  
 \end{proof}
 \begin{exam} Condition (1) in the statement of Proposition\,\ref{p5.19} explains using the term
 \textit{minimal type} in Definition\,\ref{d5.3}. Let us illustrate by a further simple example the 
 broader scope of Definition\,\ref{d5.4}.
 \par
 Let $\gs\,{\simeq}\,\slt_{3}(\R)$ be the real form
\begin{equation*}
 \gs=\left.\left\{ \left( \begin{smallmatrix}
 \lambdaup & \bar{z}_{3} & \bar{z}_{2}\\
 z_{1} & {-}(\lambdaup\,{+}\bar{\,\lambdaup}) & \bar{z}_{1}\\
 z_{2} & z_{3} & \bar{\,\lambdaup}
\end{smallmatrix}\right) \;\right| \lambdaup,z_{1},z_{2},z_{3}\,{\in}\,\C\right\}
\end{equation*}
 of $\gt\,{=}\,\slt_{3}(\C)$ defined by the anti-$\C$-linear involution 
\begin{equation*}
 \sigmaup(X)=\Jd\,\bar{X}\,\Jd,\;\;\text{with}\;\; \Jd= 
\left(\begin{smallmatrix}
 0 & 0 & 1\\
 0 & 1 & 0\\
 1 & 0 & 0
\end{smallmatrix}\right).
\end{equation*}
Its diagonal matrices form its Cartan subalgebra
\begin{equation*}
 \hst=\left.\left\{\diag(\lambdaup,\,{-}(\lambdaup{+}\bar{\,\lambdaup}),\,\bar{\,\lambdaup})\right| 
 \lambdaup\,{\in}\,\C\right\}
\end{equation*}
and $\sigmaup$ defines on $\Rad$ the involution $\stt\,{=}\,\rtt{\e_{1}{-}\e_{3}}$.
\par
 The regular element $H\,{=}\,\diag(1,0,{-}1)$ defines the standard Weyl chamber $C$, whose corresponding
 Borel subalgebra $\bt_{C}$ consists of lower triangular matrices of $\slt_{3}(\C)$. The orbit of $\Gfs$ is open,
 since $\bt_{C}\,{+}\,\sigmaup(\bt_{C})\,{=}\,\gt$ (the minimal type condition is trivially satisfied)
 and is not the one with minimal dimension. 
The orbits of $\Gfs$ which are of minimal dimension among those 
having $CR$-algebras for which $\hst$  is adapted 
correspond to the choice of 
$\Rad^{+}(C_{i})\,{=}\,\{\alphaup\,{\in}\,\Rad\,{\mid}\,\alphaup(H_{i})\,{\geq}\,0\}$, for 
$H_{1}\,{=}\,\diag(1,{-}1,0)$ and $H_{2}\,{=}\,(0,1,{-}1)$,
having cross-marked $\Sigma$-diagrams 
 \begin{equation*}
 \xymatrix @M=0pt @R=2pt @!C=20pt{\oast\ar@{-}[r] & \medcirc\\
 \times &\times}\quad\text{and}\quad 
  \xymatrix @M=0pt @R=2pt @!C=20pt{\medcirc\ar@{-}[r] & \oast\\
 \times &\times}
\end{equation*}
If $\bt_{1},\bt_{2}$ are the Borel subalgebras corresponding to $H_{1}$ and $H_{2}$, 
the cross-marked $\Sigma$-diagrams for $(\gs,\bt_{1}{+}\sigmaup(\bt_{1}))$ and
$(\gs,\bt_{2}{+}\sigmaup(\bt_{2}))$ are 
 \begin{equation*}
 \xymatrix @M=0pt @R=2pt @!C=20pt{\oast\ar@{-}[r] & \medcirc\\
&\times}\quad\text{and}\quad 
  \xymatrix @M=0pt @R=2pt @!C=20pt{\medcirc\ar@{-}[r] & \oast\\
 \times }
\end{equation*}
respectively.
\end{exam}

\subsection{The case of general parabolic $CR$-algebras} Fix a parabolic $CR$-triple $(\gs,\qt,\hst)$,
with associated pair $(\Qq\,,\stt)$,
and consider the set $\mathcal{M}(\gs,\qt,\hst)$ of 
orbits of $\Gf_{\sigmaup}$ 
in $\sfF\,{=}\,\Gf/\Qf$ 
admitting a $CR$-algebra $(\gs,\qt)$ with adapted Cartan subalgebra $\hst$.
The Weyl group $\Wf(\Rad)$ acts transitively on $\mathcal{M}(\gs,\qt,\hst)$.
Orbits in $\mathcal{M}(\gs,\qt,\hst)$
have parabolic $CR$-triples $(\gs,\qt_{\wb},\hst)$, with
associated pairs  $(\wb(\Qq), \stt)$, with $\wb\,{\in}\,\Wf(\Rad)$. 
Denote by $\mathfrak{M}(\gs,\qt,\hst)$ the set of these \mbox{$CR$-triples}.
\begin{thm}\label{t5.21}
 Let $(\gs,\qt,\hst)$ be a parabolic $CR$-triple. If the real dimension of 
 the corresponding orbit is minimal in $\mathcal{M}(\gs,\qt,\hst)$, 
 then $(\gs,\qt,\hst)$ is of minimal type. 
\end{thm}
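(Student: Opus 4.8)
The plan is to reduce the statement to a combinatorial extremal property of the pair $(\Qq,\stt)$ and then argue by contraposition. First I would record the dimension formulas. Since $\qt\cap\sigmaup(\qt)$ is $\hg$-regular with root set $\Qq\cap\stt(\Qq)$ and contains $\hg$, formula \eqref{e2.3} gives $\dim_{\R}(\gs,\qt)=\#\Rad-\#(\Qq\cap\stt(\Qq))$ and, likewise, $\dim_{\C}(\gs,\qt)=\#\Qq-\#(\Qq\cap\stt(\Qq))=\#(\Qq\cap\stt(\Qq^{c}))$. Because every orbit in $\mathcal{M}(\gs,\qt,\hst)$ has associated pair $(\wb(\Qq),\stt)$ with $\#\wb(\Qq)=\#\Qq$ constant, minimality of the real dimension over $\mathcal{M}(\gs,\qt,\hst)$ is equivalent to maximality of $\#(\Qq\cap\stt(\Qq))$, hence to minimality of the $CR$-dimension $\#(\Qq\cap\stt(\Qq^{c}))$. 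I would also note at the outset that a root in $\Qq^{n}\cap\stt(\Qq^{c})$ cannot be real, since $\alphaup=\stt(\alphaup)\in\Qq^{n}$ would contradict $\stt(\alphaup)\in\Qq^{c}$; thus the minimal type condition \eqref{e5.5} is precisely the requirement that $\Qq^{n}\cap\stt(\Qq^{c})$ contain no complex root.

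Next I would reorganise the count $\#(\Qq\cap\stt(\Qq^{c}))$ according to the $\stt$-orbits of roots, using the grading $\chiup_{\Qq}$ of Proposition\,\ref{p2.5}. For a root $\mu$ set $(p,q)=(\chiup_{\Qq}(\mu),\chiup_{\Qq}(\stt(\mu)))$. A short sign analysis shows that an imaginary pair $\{\mu,-\mu\}$ contributes $1$ to $\#(\Qq\cap\stt(\Qq^{c}))$ exactly when $\mu\in\Qq^{n}$ and $0$ otherwise, while a complex quadruple $\{\pm\mu,\pm\stt(\mu)\}$ contributes $2$ when $p,q$ have opposite strict signs, $1$ when exactly one of $p,q$ vanishes, and $0$ when $pq>0$ or $p=q=0$; real roots never contribute. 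Hence $\#(\Qq\cap\stt(\Qq^{c}))=\#(\Qq^{n}\cap\Rad^{\stt}_{\,\;\bullet})+\sum_{\text{complex}}c(\mu)$, and a complex root lies in $\Qq^{n}\cap\stt(\Qq^{c})$ precisely for the quadruples with $c(\mu)=2$. Consequently $(\gs,\qt,\hst)$ is of minimal type if and only if no complex quadruple contributes $2$.

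With this bookkeeping the theorem becomes: a minimiser of $\#(\Qq\cap\stt(\Qq^{c}))$ in the $\Wf(\Rad)$-orbit of $\Qq$ has no complex quadruple of weight $2$. I would prove this by contraposition: assuming a complex root $\alphaup\in\Qq^{n}\cap\stt(\Qq^{c})$, I would exhibit $\wb\in\Wf(\Rad)$ with $\#(\wb(\Qq)\cap\stt(\wb(\Qq)^{c}))<\#(\Qq\cap\stt(\Qq^{c}))$. For a reflection $\rtt{\gammaup}$ the count transforms as $\#\{\mu\mid\chiup_{\Qq}(\mu)\geq0>\chiup_{\Qq}(\rtt{\gammaup}\stt\rtt{\gammaup}\mu)\}$, i.e. it replaces $\stt$ by the conjugate involution $\rtt{\gammaup}\stt\rtt{\gammaup}$ while keeping $\Qq$ fixed; since $\rtt{\gammaup}$ only moves roots $\mu$ with $\langle\mu\,|\,\gammaup\rangle\neq0$, the quadruple weights change only near $\gammaup$ and the net effect is computable pair by pair. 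The plan is to take $\gammaup$ adapted to the violating root $\alphaup$ (chosen simple for a convenient admissible chamber, via Lemma\,\ref{l4.7} and Corollary\,\ref{c2.4}) and to verify, through the $\gammaup$-string combinatorics of Corollary\,\ref{c2.46} and Lemma\,\ref{l2.53}, that the weight-$2$ quadruple of $\alphaup$ drops to weight $\leq 1$ without creating new weight-$2$ quadruples.

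The hard part will be exactly this last step: pinning down a reflection $\gammaup$ that strictly lowers the count and controlling the side effects of the conjugation $\stt\mapsto\rtt{\gammaup}\stt\rtt{\gammaup}$ on all the other $\stt$-quadruples. I expect that reducing to $\alphaup$ simple in an $S$-fit chamber, so that Lemma\,\ref{l4.21} and Lemma\,\ref{l4.22} pin down the $\stt$-conjugates of the roots of $\Qq^{n}$, renders the local sign change transparent, and that the Borel case settled in Proposition\,\ref{p5.19}, read along a fibration $\Gf/\Bf\to\Gf/\Qf$, supplies the template for organising the general parabolic computation.
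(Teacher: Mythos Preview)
Your overall strategy matches the paper's: argue by contraposition, pick a simple root $\alphaup$ witnessing the failure of \eqref{e5.5}, and show that the reflection $\rtt{\alphaup}$ (equivalently, conjugating $\stt$ to $\stt'=\rtt{\alphaup}\stt\rtt{\alphaup}$) strictly lowers the $CR$-dimension. Your bookkeeping via weighted $\stt$-quadruples is the dual formulation of the paper's count of $\#(\Qq^{n}\cap\stt(\Qq^{n}))$.

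However, you are missing the paper's decisive preliminary step: \emph{reduce to the polarized case}. Since $\qt$ and $\qt_{[\sigmaup]}$ share the same isotropy $\qt\cap\sigmaup(\qt)$, the real dimension is unchanged by polarization, and by the remark after Definition~\ref{d3.13} the set $\Qq^{n}\cap\stt(\Qc)$ is also unchanged; so the hypotheses and conclusion both transfer. Once $(\Qq,\stt)$ is polarized one has $\stt(\Qq^{r})=\Qq^{r}$, and this is precisely what makes the reflection argument go through. The paper does \emph{not} attempt your per-quadruple ``no new weight-$2$ quadruples are created'' claim; instead it proves directly that $\rtt{\alphaup}$ maps $\Qq^{n}\cap\stt(\Qq^{n})$ injectively into $\Qq^{n}\cap\stt'(\Qq^{n})$, and then exhibits one extra element (found by a case split on $\langle\stt(\alphaup)\,|\,\alphaup\rangle\in\{0,\pm1\}$). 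Both halves of that injection rely on polarization: when $\rtt{\alphaup}(\betaup)$ lands in $\Qq^{r}$, one needs $\stt(\Qq^{r})=\Qq^{r}$ to force $\stt(\betaup)\in\Qc$; and in the second half one again uses polarization to conclude that a certain positive root lies in $\Qq^{n}$ rather than $\Qq^{r}$.

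Without polarization, the $\stt$-conjugates of roots in $\Qq^{r}$ are uncontrolled, and your hoped-for ``local sign change is transparent'' via Lemmas~\ref{l4.21}--\ref{l4.22} alone does not settle the side effects: those lemmas only constrain $\stt(\Qq^{n})$ and $\stt(\Qq^{r}\cap\Rad^{+}(C))$, not the finer interaction between $\rtt{\alphaup}$ and $\stt$. Your suggestion to pull back Proposition~\ref{p5.19} along a fibration $\Gf/\Bf\to\Gf/\Qf$ is an interesting alternative route, but it is not what the paper does and you have not indicated how to relate minimality over $\mathcal{M}(\gs,\bt,\hst)$ to minimality over $\mathcal{M}(\gs,\qt,\hst)$.
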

\begin{proof} Let $(\gs,\qt,\hst)$ be a parabolic $CR$-triple, with associated pair $(\Qq\,,\stt)$. 
The Cartan subalgebra $\hst$
is admissible also for the polarized $(\gs,\qt_{[\sigmaup]})$
and $(\gs,\qt)$ has the same real dimension of its polarized $(\gs,\qt_{[\sigmaup]})$.
Thus we can and will assume in the proof that $(\gs,\qt)$ is polarized.
By \eqref{e2.4} the real dimension  is minimal when 
the $CR$-codi\-men\-sion is maximal. We recall that
\begin{equation*}
 \text{$CR$-codim\,}(\gs,\qt)=\#(\Qq^{n}\cap\stt(\Qq^{n})).
\end{equation*}
Different orbits in $\mathcal{M}(\gs,\qt,\hst)$
have parabolic $CR$-triples $(\gs,\qt_{\wb},\hst)$, whose 
associated pairs  are $(\wb(\Qq), \stt)$, with $\wb$ in the Weyl group $\Wf(\Rad)$.  Setting
$\stt_{\wb}{=}\,\wb^{-1}{\circ}\,\stt\,{\circ}\,\wb$, we have 
\begin{equation*}
 \#(\wb(\Qq^{n})\cap\stt(\wb(\Qq^{n}))=\#(\Qq^{n}\cap\stt_{\wb}(\Qq^{n})).
\end{equation*}\par\smallskip
We will prove that, if $\Qq^{n}\,{\cap}\,\stt(\Qq^{c})\,{\not\subseteq}\,\Rad^{\stt}_{\,\;\bullet}$,
then the real dimension of $(\gs,\qt)$ is not minimal in $\mathfrak{M}(\gs,\qt,\hst)$.\par 
Let $C$ be an $S$-fit chamber for $(\gs,\qt,\hst)$. If $C$ is not an $S$\!-chamber, 
by \eqref{e4.40}
we can find
$\alphaup\,{\in}\,\Phi_{C}{\setminus}\Rad^{\stt}_{\,\;\bullet}$ with $\stt(\alphaup)\,{\in}\Qq^{c}$.
Set $\stt'\,{=}\,\rtt{\alphaup}{\circ}\,\stt\,{\circ}\,\rtt{\alphaup}$.
\par
We claim that \; \textsl{$\rtt{\alphaup}$ maps $\Qq^{n}\,{\cap}\,\stt(\Qq^{n})$
into $\Qq^{n}\,{\cap}\,\stt'(\Qq^{n})$.}\par 
If $\betaup\,{\in}\,\Qq^{n}$ and $\rtt{\alphaup}(\betaup)\,{\in}\,\Qq^{r}$, then 
$(\betaup|\alphaup)\,{>}\,0$, because $\rtt{\alphaup}(\betaup)\,{\in}\,\Rad^{+}(C)$, 
$\rtt{\alphaup}(\betaup)\,{\neq}\,\betaup$ and 
$\alphaup\,{\notin}\,\supp_{C}(\rtt{\alphaup}(\betaup))$,
 and $\stt(\rtt{\alphaup}(\betaup))\,{\in}\,\Qq^{r}$ because
$(\Qq\,,\stt)$ is polarized. Then  
\begin{equation*}
 \stt(\rtt{\alphaup}(\betaup))=\stt(\betaup)-\langle\betaup|\alphaup\rangle\stt(\alphaup)
 \Rightarrow\stt(\betaup)\,{=}\,\stt(\rtt{\alphaup}(\betaup))+\langle\betaup|\alphaup\rangle\stt(\alphaup)\,{\in}\,\Qq^{c}.
\end{equation*} \par 
This shows that $\rtt{\alphaup}(\Qq^{n}\,{\cap}\,\stt(\Qq^{n}))\,{\subseteq}\,\Qq^{n}$.\par
To prove  that $\rtt{\alphaup}(\Qq^{n}\,{\cap}\,\stt(\Qq^{n}))\,{\subseteq}\,\stt'(\Qq^{n})$,
we use the equalities
\begin{align*}
 &\stt'(\rtt{\alphaup}(\betaup))=\rtt{\alphaup}(\stt(\betaup))=\stt(\betaup)-\langle\stt(\betaup)|\alphaup\rangle\alphaup,\\
 &\stt(\stt'(\rtt{\alphaup}(\betaup)))=\betaup-\langle\stt(\betaup)|\alphaup\rangle\stt(\alphaup).
\end{align*}
The root in the first line is $C$-positive because $\stt(\betaup)$ is $C$-positive and distinct from $\alphaup$.
If $\langle\stt(\betaup)|\alphaup\rangle\,{\leq}\,0$, then $\stt'(\rtt{\alphaup}(\betaup))$ belongs to $\Qq^{n}$
being the sum of a root in $\Qq^{n}$ and a nonnegative multiple of the positive root $\alphaup$. If 
$\langle\stt(\betaup)|\alphaup\rangle\,{>}\,0$, then 
$ \stt(\stt'(\rtt{\alphaup}(\betaup)))$ belongs to $\Qq^{n}$, being the sum
of a root in $\Qq^{n}$ and a positive multiple of the  root $({-}\stt(\alphaup))$ of $\Qq^{n}$.
Since $(\Qq\,,\stt)$ is polarized, we conclude that its image by $\stt$, which is $\stt'(\rtt{\alphaup}(\betaup))$,
belongs to $\Qq^{n}\,{\cup}\,\Qq^{c}$ and hence, being positive, to $\Qq^{n}$. 
\par\smallskip
Since the map $\rtt{\alphaup}$ is injective, we get 
$\#(\Qq^{n}\,{\cap}\,\stt(\Qq^{n})\,{\leq}\,\#(\Qq^{n}\,{\cap}\,\stt'(\Qq^{n}))$.
\par
Let us show that in fact the inequality is
strict. 
 Since $\alphaup$ and $\stt(\alphaup)$ have the same length, $\langle\stt(\alphaup)|\alphaup\rangle$
 is either $0$ or ${\pm}1$. We get
\begin{equation*} 
\begin{array}{c c c c c}
 \langle\alphaup|\stt(\alphaup)\rangle\,{=}\,0 & \Rightarrow & \stt'(\alphaup)={-}\stt(\alphaup) &\Rightarrow
 & \alphaup,\,{-}\stt(\alphaup)\in\Qq^{n}\cap\stt'(\Qq^{n}),\\
  \langle\alphaup|\stt(\alphaup)\rangle\,{=}\,1 & \Rightarrow & \stt'(\alphaup)=\alphaup{-}\stt(\alphaup) &\Rightarrow
 & \alphaup,\,\alphaup{-}\stt(\alphaup)\in\Qq^{n}\cap\stt'(\Qq^{n}),\\
   \langle\alphaup|\stt(\alphaup)\rangle\,{=}\,{-}1 & \Rightarrow & \stt'({-}\stt(\alphaup))={-}\stt(\alphaup) &\Rightarrow
 & {-}\stt(\alphaup)\in\Qq^{n}\cap\stt'(\Qq^{n})\cap\Rad^{\stt'}_{\,\;\circ},\\
  \langle\alphaup|\stt(\alphaup)\rangle\,{=}\,{-}1 & \Rightarrow & \stt'(\alphaup)={-}\stt(\alphaup){-}\alphaup.&
 & 
\end{array}
\end{equation*}
\par
If $ \langle\alphaup|\stt(\alphaup)\rangle\,{\geq}\,0$, then $\alphaup$ is an element of $\Qq^{n}\,{\cap}\,
\stt'(\Qq^{n})$ which does not belong to $\rtt{\alphaup}(\Qq^{n}\,{\cap}\,\stt(\Qq^{n}))$. 
\par
When  $\langle\stt(\alphaup)|\alphaup\rangle\,{=}\,-1$,
we have $\rtt{\alphaup}(\stt({-}\alphaup))\,{=}\,{-}(\alphaup\,{+}\,\stt(\alphaup))$. \par
If this root belongs to
$\Qq^{r}$, then $({-}\stt(\alphaup))$, 
belonging to $\rtt{\alphaup}(\Qq^{r})$, is a root in $\Qq^{n}\,{\cap}\,\stt'(\Qq^{n})$ and 
not in 
$\rtt{\alphaup}(\Qq^{n}\,{\cap}\,\stt(\Qq^{n}))$. \par
Otherwise $\stt'(\alphaup)\,{=}\,{-}(\alphaup\,{+}\,\stt(\alphaup))$ belongs
to $\Qq^{n}$ and hence $\alphaup$ is a root of $\Qq^{n}\,{\cap}\,\stt'(\Qq^{n})$
which does not belong to $\rtt{\alphaup}(\Qq^{n}\,{\cap}\,\stt(\Qq^{n}))$.\par
This completes the proof.\end{proof}
\begin{exam} We consider in the projective plane $\CP^{2}$ 
the orbits of $\SL_{3}(\R)$. 
We choose on the root system 
$\Rad\,{=}\,\{{\pm}(\e_{i}{-}\e_{j}\,{\mid}\,1{\leq}i{<}j{\leq}3\}$ the involution $\stt$ with 
$\stt(\e_{1})\,{=}\,\e_{2}$, $\stt(\e_{2})\,{=}\,\e_{1}$, $\stt(\e_{3})\,{=}\,\e_{3}$. Then 
both the cross-marked diagrams 
  \begin{equation*}
 \xymatrix @M=0pt @R=2pt @!C=20pt{\oast\ar@{-}[r] & \medcirc\\
 \times }\quad\text{and}\quad 
  \xymatrix @M=0pt @R=2pt @!C=20pt{\medcirc\ar@{-}[r] & \oast\\
 \times }
\end{equation*}
describe orbits of the minimal type, although the first describes the open orbit $\CP^{2}{\setminus}\RP^{2}$,
the second the compact orbit $\RP^{2}$, having real dimensions $4$ and $2$, respectively.
\end{exam}
\subsection{Levi and contact-nondegeneracy} In this subsection we discuss relationships
between Levi and contact-nondegeneracy. 
\begin{lem}
 A $CR$ algebra $(\gs,\qt)$ and its Levi-nondegenerate reduction $(\gs,\qt_{\{\sigmaup\}})$
 have the same contact order. \qed
\end{lem}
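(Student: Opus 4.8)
The plan is to reduce everything to the observation that the two $CR$-algebras have literally the same lifted contact module, and indeed the same negative part of the canonical filtration, so that the only thing that can differ is the domain over which the supremum defining the contact order is taken. I assume here the setting of Theorem~\ref{t3.29}, in which $\qt_{\{\sigmaup\}}$ is the $\sigmaup$-extension of $\qt$ and $(\gs,\qt_{\{\sigmaup\}})$ is the Levi-nondegenerate reduction; the filtration part of the argument does not use fundamentality.

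First I would record, from Theorem~\ref{t3.6}, the inclusions $\qt\subseteq\qt_{\{\sigmaup\}}\subseteq\qt+\sigmaup(\qt)$, with $\qt_{\{\sigmaup\}}$ a Lie subalgebra. Applying $\sigmaup$ and adding gives $\qt+\sigmaup(\qt)\subseteq\qt_{\{\sigmaup\}}+\sigmaup(\qt_{\{\sigmaup\}})\subseteq(\qt+\sigmaup(\qt))+(\sigmaup(\qt)+\qt)=\qt+\sigmaup(\qt)$, hence $\qt_{\{\sigmaup\}}+\sigmaup(\qt_{\{\sigmaup\}})=\qt+\sigmaup(\qt)$. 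Therefore the lifted contact modules coincide, $\Ft_{-1}(\gs,\qt_{\{\sigmaup\}})=\Ft_{-1}(\gs,\qt)$, and since each $\Ft_{-h}$ is produced from $\Ft_{-1}$ by iterated brackets, induction yields $\Ft_{-h}(\gs,\qt_{\{\sigmaup\}})=\Ft_{-h}(\gs,\qt)$ for all $h\geq 1$ — exactly the step used in Proposition~\ref{p2.44} for depth. In particular the integer $k^{c}(X)$ attached to an $X\in\Ft_{-1}(\gs,\qt)$ depends only on $\Ft_{-1}(\gs,\qt)$, so it is literally the same number for both algebras, and the largest ideal $\at_0$ of $\Ft_{-\infty}$ contained in $\Ft_{-1}(\gs,\qt)$ (the contact-degeneracy kernel) is common to both.

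The remaining discrepancy is only in the isotropy: for $(\gs,\qt)$ the supremum runs over $\Ft_{-1}(\gs,\qt)\setminus\Ft_{0}(\gs,\qt)$ with $\Ft_{0}(\gs,\qt)=\qt\cap\gs$, whereas for the reduction it runs over $\Ft_{-1}(\gs,\qt)\setminus\Ft_{0}(\gs,\qt_{\{\sigmaup\}})$ with $\Ft_{0}(\gs,\qt_{\{\sigmaup\}})=\qt_{\{\sigmaup\}}\cap\gs$. Since $\Ft_{0}(\gs,\qt)\subseteq\Ft_{0}(\gs,\qt_{\{\sigmaup\}})$, the first domain is the larger one and $k^{c}(\gs,\qt)\geq k^{c}(\gs,\qt_{\{\sigmaup\}})$ is immediate. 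For the reverse inequality I must control the extra elements $X\in\Ft_{0}(\gs,\qt_{\{\sigmaup\}})\setminus\Ft_{0}(\gs,\qt)$, i.e.\ the directions tangent to the \emph{totally complex} fibre of the reduction. The structural input is that for such $X$ one has $\ad(X)\,(\qt+\sigmaup(\qt))\subseteq\qt+\sigmaup(\qt)$: indeed $X$ lies in $\qt_{\{\sigmaup\}}\cap\sigmaup(\qt_{\{\sigmaup\}})$, and because $\qt_{\{\sigmaup\}}$ and $\sigmaup(\qt_{\{\sigmaup\}})$ are subalgebras containing $\qt$ and $\sigmaup(\qt)$ respectively, $[X,\qt]\subseteq\qt_{\{\sigmaup\}}$ and $[X,\sigmaup(\qt)]\subseteq\sigmaup(\qt_{\{\sigmaup\}})$, so $[X,\qt+\sigmaup(\qt)]\subseteq\qt+\sigmaup(\qt)$. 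The same computation shows $\ad(X)$ preserves every $\Ft_{-h}$ and the subalgebra $\Ft_{0}(\gs,\qt_{\{\sigmaup\}})$, hence descends to an operator on the contact module of the reduction.

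The hard part is precisely this last point: to conclude that a fibre direction cannot carry contact order strictly larger than $k^{c}(\gs,\qt_{\{\sigmaup\}})$. The naive estimate — peeling off the first bracket $[X,Y_1]\in\Ft_{-1}(\gs,\qt)$ and recursing — loses one unit and only gives $k^{c}(X)\leq k^{c}(\gs,\qt_{\{\sigmaup\}})+1$. To close this gap I would establish a contact analogue of Lemma~\ref{l3.8}, namely permutation invariance of minimal contact sequences, and combine it with total complexness of the fibre (Theorem~\ref{t3.29}), which via $\bigl(\Ft_{0}(\gs,\qt_{\{\sigmaup\}})\bigr)^{\C}=\ft+\sigmaup(\ft)$ with $\ft=\qt\cap\sigmaup(\qt_{\{\sigmaup\}})\subseteq\qt$ allows one to rewrite $X$ modulo $\Ft_{0}(\gs,\qt)$ as $\re(Z)$ with $Z\in\qt_{\{\sigmaup\}}\cap\sigmaup(\qt)$ and thereby replace the head of a minimal escaping bracket by a genuinely transverse element of $\Ft_{-1}(\gs,\qt)\setminus\Ft_{0}(\gs,\qt_{\{\sigmaup\}})$ realising the full order. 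Making that replacement rigorous, rather than the routine bookkeeping of the coinciding filtration, is where the real content of the lemma sits.
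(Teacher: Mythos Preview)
The paper's proof is literally the \qed\ symbol and nothing else. The intended reasoning is almost certainly what you give in your first two paragraphs: since $\qt_{\{\sigmaup\}}+\sigmaup(\qt_{\{\sigmaup\}})=\qt+\sigmaup(\qt)$, all the spaces $\Ft_{-h}$ for $h\geq 1$ coincide (exactly the argument used in the proof of Proposition~\ref{p2.44} for depth), and since $k^{c}(X)$ is defined purely in terms of iterated brackets leaving $\Ft_{-1}$, the invariant is the same. That part of your proposal is correct and matches the paper.

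You have, however, put your finger on something the paper passes over in silence: the supremum defining $k^{c}(\gs,\qt)$ runs over $\Ft_{-1}\setminus\Ft_{0}$, and $\Ft_{0}(\gs,\qt)\subsetneq\Ft_{0}(\gs,\qt_{\{\sigmaup\}})$ when the reduction is nontrivial. For the extra fibre directions $X$ you correctly show $[X,\Ft_{-1}]\subseteq\Ft_{-1}$, which already forces $k^{c}(X)\geq 2$; so if the reduction happened to have contact order~$1$ this would break the statement outright. Your proposed repair --- a permutation lemma for minimal contact sequences --- is precisely where the analogy with Lemma~\ref{l3.8} fails: that proof relies on $[Z_i,Z_k]\in\qt$ because $\qt$ is a Lie subalgebra, so the shortened commutator still has entries in $\qt$ and minimality applies. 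For contact sequences the analogous product $[X_i,X_r]$ lands only in $\Ft_{-2}$, not in $\Ft_{-1}$, so the reduction-by-one step is unavailable. The head-swapping manoeuvre via the totally-complex fibre does not obviously remove the $+1$ loss either. In short: your first half is the paper's argument; your second half identifies a genuine gap that the bare \qed\ does not address, and your sketched fix is not yet a proof.
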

\begin{prop}
 Let $(\gs,\qt)$ be a Levi-nondegenerate $CR$ algebra. If its Levi order is less than or equal to two,
 then it equals its contact-order. 
\end{prop}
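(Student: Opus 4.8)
The plan is to combine the general inequality $k^{c}(\gs,\qt)\,{\le}\,k(\gs,\qt)$ (recorded in the remark following the definition of contact order, and established at the manifold level by Proposition\,1.6) with a matching lower bound obtained by exhibiting a single element whose contact order equals the Levi order. Since $k^{c}\,{\le}\,k$ always holds, it suffices to prove $k^{c}(\gs,\qt)\,{\ge}\,k(\gs,\qt)$ under the hypothesis $k(\gs,\qt)\,{\le}\,2$. The degenerate totally real case $\sigmaup(\qt)\,{=}\,\qt$, in which $\Ft_{-1}(\gs,\qt)\,{=}\,\Ft_{0}(\gs,\qt)$ and both orders concern the empty set, is trivial and can be dismissed at the outset.

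First I would reduce to a single witnessing element. Because the Levi orders of elements of $\sigmaup(\qt){\setminus}\qt$ are positive integers bounded by $k(\gs,\qt)\,{\le}\,2$, the supremum defining $k(\gs,\qt)$ is attained: there is $Z\,{\in}\,\sigmaup(\qt){\setminus}\qt$ with $k(Z)\,{=}\,k(\gs,\qt)$. Set $X\,{=}\,\re(Z)\,{=}\,\tfrac12(Z{+}\sigmaup(Z))$. Then $X\,{\in}\,\Ft_{-1}(\gs,\qt){\setminus}\Ft_{0}(\gs,\qt)$, since $X\,{\in}\,(\qt{+}\sigmaup(\qt))\,{\cap}\,\gs$ while $X\,{\notin}\,\qt\,{\cap}\,\gs$ precisely because $Z\,{\notin}\,\qt$. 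I would also record the elementary fact that every element of $\Ft_{-1}(\gs,\qt)$ is a real part of some $W\,{\in}\,\qt$, i.e.\ $X_{1}\,{=}\,\tfrac12(W{+}\sigmaup(W))$ with $W\,{\in}\,\qt$; this follows because $\qt{+}\sigmaup(\qt)$ is $\sigmaup$-invariant, hence the complexification of its real points.

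The heart of the argument is the lower bound $k^{c}(X)\,{\ge}\,\min(k(Z),2)$. When $k(Z)\,{=}\,1$ this is immediate, as every $X\,{\in}\,\Ft_{-1}(\gs,\qt){\setminus}\Ft_{0}(\gs,\qt)$ has $k^{c}(X)\,{\ge}\,1$. When $k(Z)\,{\ge}\,2$, I would show directly that $[X,X_{1}]\,{\in}\,\Ft_{-1}(\gs,\qt)$ for every $X_{1}\,{\in}\,\Ft_{-1}(\gs,\qt)$, so that $X$ cannot leave the contact module after a single bracket and therefore $k^{c}(X)\,{\ge}\,2$. Writing $X_{1}\,{=}\,\tfrac12(W{+}\sigmaup(W))$ with $W\,{\in}\,\qt$ and expanding
\begin{equation*}
[X,X_{1}]=\tfrac14\big([Z,W]+[Z,\sigmaup(W)]+[\sigmaup(Z),W]+[\sigmaup(Z),\sigmaup(W)]\big),
\end{equation*}
each of the four summands lands in $\qt{+}\sigmaup(\qt)$: the term $[Z,\sigmaup(W)]$ lies in $\sigmaup(\qt)$ and $[\sigmaup(Z),W]$ lies in $\qt$ because $\sigmaup(\qt)$ and $\qt$ are subalgebras; the term $[Z,W]$ lies in $\qt{+}\sigmaup(\qt)$ because $k(Z)\,{\ge}\,2$ means $Z$ admits no length-one Levi-sequence, i.e.\ $[Z,W]\,{\in}\,\qt{+}\sigmaup(\qt)$ for every $W\,{\in}\,\qt$; and the last term is $\sigmaup([Z,W])$, again in $\qt{+}\sigmaup(\qt)$. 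Since $[X,X_{1}]$ is $\sigmaup$-fixed, it lies in $\Ft_{-1}(\gs,\qt)$.

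Finally I would assemble the conclusion. With $k\,{=}\,k(\gs,\qt)\,{\le}\,2$ attained by $Z$, the previous paragraph gives $k^{c}(X)\,{\ge}\,\min(k,2)\,{=}\,k$, whence $k^{c}(\gs,\qt)\,{\ge}\,k^{c}(X)\,{\ge}\,k(\gs,\qt)$; together with the reverse inequality this yields $k^{c}(\gs,\qt)\,{=}\,k(\gs,\qt)$. The main obstacle is conceptual rather than computational: one must translate the real contact-module condition into the complex bracket calculus via the identification $X\,{=}\,\re(Z)$ and $X_{1}\,{=}\,\re(W)$, and then recognize that the hypothesis $k(Z)\,{\ge}\,2$ is exactly what neutralizes the only dangerous summand $[Z,W]$. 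Once the four-term expansion is in place, the case analysis is routine, and the bound $\min(k(Z),2)$ makes transparent why the equality can fail as soon as the Levi order exceeds two.
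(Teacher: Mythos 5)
Your proof is correct and follows essentially the same route as the paper: the upper bound is the general inequality $k^{c}(\gs,\qt)\,{\leq}\,k(\gs,\qt)$, and the lower bound reduces to the single fact that Levi order at least two forces contact order at least two (equivalently, contact order one implies Levi order one), which the paper merely invokes from an earlier remark while you prove it explicitly via the four-term expansion of $[\re(Z),\re(W)]$. The only difference is that you supply the computation behind that cited fact, which is a welcome addition rather than a deviation.
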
 
\begin{proof}
 Inded the contact order is less than or equal to the Levi order and, when the contact order in one,
 also the Levi order is one. 
\end{proof}

\providecommand{\bysame}{\leavevmode\hbox to3em{\hrulefill}\thinspace}
\providecommand{\MR}{\relax\ifhmode\unskip\space\fi MR }
\providecommand{\MRhref}[2]{%
  \href{http://www.ams.org/mathscinet-getitem?mr=#1}{#2}
}
\providecommand{\href}[2]{#2}

\end{document}